\documentclass[11pt,reqno]{amsart}
\usepackage[english]{babel}
\usepackage[a4paper,twoside]{geometry}
\usepackage{microtype}
\usepackage[shortlabels]{enumitem}

\usepackage{csquotes}
\usepackage{longtable}
\usepackage{bm}

\usepackage{xspace}
\voffset=-1cm \textheight=23cm
\textwidth=16cm \evensidemargin=0.3cm \oddsidemargin=-0.3cm
\patchcmd{\subsection}{-.5em}{.5em}{}{}
\setlength{\parindent}{0pt}
\emergencystretch=1em

\usepackage[hyperfootnotes=false]{hyperref} 
\usepackage{color}
\hypersetup{
	colorlinks = true,
	linkcolor = {blue},
	urlcolor = {blue},
	citecolor = {blue}
}

\makeatletter
\setcounter{tocdepth}{2}
\newcommand{\mysubsubsection}[1]{\subsubsection*{\bfseries #1}}
\renewcommand{\tocsection}[3]{
  \indentlabel{\@ifnotempty{#2}{\ignorespaces#1 #2\quad}}\bfseries#3}
\renewcommand{\tocsubsection}[3]{
  \indentlabel{\@ifnotempty{#2}{\ignorespaces#1 #2\quad}}#3}

\newcommand\@dotsep{4.5}
\def\@tocline#1#2#3#4#5#6#7{\relax
  \ifnum #1>\c@tocdepth
  \else
    \par \addpenalty\@secpenalty\addvspace{#2}
    \begingroup \hyphenpenalty\@M
    \@ifempty{#4}{
      \@tempdima\csname r@tocindent\number#1\endcsname\relax
    }{
      \@tempdima#4\relax
    }
    \parindent\z@ \leftskip#3\relax \advance\leftskip\@tempdima\relax
    \rightskip\@pnumwidth plus1em \parfillskip-\@pnumwidth
    #5\leavevmode\hskip-\@tempdima{#6}\nobreak
    \leaders\hbox{$\m@th\mkern \@dotsep mu\hbox{.}\mkern \@dotsep mu$}\hfill
    \nobreak
    \hbox to\@pnumwidth{\@tocpagenum{\ifnum#1=1\bfseries\fi#7}}\par
    \nobreak
    \endgroup
  \fi}
\AtBeginDocument{
\expandafter\renewcommand\csname r@tocindent0\endcsname{0pt}
}
\def\l@subsection{\@tocline{2}{0pt}{2.5pc}{5pc}{}}
\makeatother

\makeatletter
\renewcommand\part{%
   \if@noskipsec \leavevmode \fi
   \par
   \addvspace{4ex}%
   \@afterindentfalse
   \secdef\@part\@spart}

\def\@part[#1]#2{%
    \ifnum \c@secnumdepth >\m@ne
      \refstepcounter{part}%
      \addcontentsline{toc}{part}{Part \thepart.\hspace{0.5em}#1 \vspace{0.3em}}%
    \else
      \addcontentsline{toc}{part}{#1}%
    \fi
    {\parindent \z@ \raggedright
     \interlinepenalty \@M
     \normalfont
     \ifnum \c@secnumdepth >\m@ne
       \Large\bfseries \partname\nobreakspace\thepart.
       \nobreak
     \fi
     \Large \bfseries #2%
     \par}%
    \nobreak
    \vskip 3ex
    \@afterheading}
\def\@spart#1{%
    {\parindent \z@ \raggedright
     \interlinepenalty \@M
     \normalfont
     \huge \bfseries #1\par}%
     \nobreak
     \vskip 3ex
     \@afterheading}
\makeatother
\renewcommand{\thepart}{\Roman{part}}

\usepackage{amsmath}
\usepackage{amsthm}
\usepackage{amssymb}
\usepackage{mathrsfs} 
\usepackage{eucal}
\usepackage{mathtools}

\usepackage{graphicx}
\usepackage{tikz}
\usetikzlibrary{cd}

\newcounter{results}[section]

\theoremstyle{plain}
\newtheorem{theorem}[results]{Theorem}
\newtheorem{lemma}[results]{Lemma}
\newtheorem{proposition}[results]{Proposition}
\newtheorem{corollary}[results]{Corollary}
\newtheorem{definition}[results]{Definition}

\theoremstyle{remark}
\newtheorem{remark}[results]{Remark}
\newtheorem{example}[results]{Example}

\theoremstyle{definition}

\numberwithin{equation}{section}



\newcommand{\R}{\ensuremath{\mathbb R}} 
\newcommand{\N}{\ensuremath{\mathbb N}} 

\newcommand{\eps}{\ensuremath{\varepsilon}} 
\newcommand{\restr}[1]{\lower3pt\hbox{$|_{#1}$}} 

\newcommand{\dom}{\ensuremath{\mathrm{D}}}
\newcommand{\domf}[1]{\ensuremath{\mathrm{D}_f(#1)}}
\newcommand{\pdom}[2]{\ensuremath{\mathrm{D}_{#1}(#2)}}

\renewcommand{\P}{\mathbb P} 
\newcommand{\E}{\mathbb E} 

\newcommand{\upds}{{\frac{\d}{\d s}}^{\kern-3pt +}} 
\newcommand{\updt}{{\frac{\d}{\d t}}^{\kern-3pt +}} 
\newcommand{\lodt}{{\frac{\d}{\d t}}_{\kern-1pt +}} 

\newcommand{\la}{\langle} 
\newcommand{\ra}{\rangle} 

\newcommand{\de}{\ensuremath{\,\mathrm d}} 
\renewcommand{\d}{\mathrm d}  

\newcommand{\intt}[1]{\operatorname{int}\left(#1\right)} 

\newcommand{\weakto}{\ensuremath{\rightharpoonup}} 

\newcommand{\bb}{\bm b} 
\newcommand{\ff}{\bm f}

\newcommand{\Bb}{\bm B} 
\newcommand{\Gg}{\bm G} 
\newcommand{\Mm}{\bm M} 
\newcommand{\ggamma}{\bm\gamma} 
\newcommand{\rrho}{\bm\rho} 
\newcommand{\ii}{\bm i} 
\newcommand{\jJ}{\bm J} 
\newcommand{\jj}{\bm j} 
\renewcommand{\ss}{\bm s} 
\newcommand{\mmu}{\bm\mu} 
\newcommand{\rr}{\bm r} 
\newcommand{\ssigma}{\bm\sigma} 
\newcommand{\ttheta}{\bm\vartheta} 
\newcommand{\Ttheta}{\bm\Theta} 
\newcommand{\iiota}[1]{\iotaT_{#1}}
\newcommand{\uu}{\bm u} 
\newcommand{\vv}{\bm v} 
\newcommand{\ww}{\bm w} 
\newcommand{\xx}{{\bm x}} 
\newcommand{\yy}{{\bm y}} 
\newcommand{\zz}{{\bm z}} 
\newcommand{\pfrF}[1]{\bm B_{#1}} 

\renewcommand{\H}{\mathcal H} 
\newcommand{\sfv}{\mathsf v} 
\newcommand{\sfx}{\mathsf x} 
\newcommand{\sfy}{\mathsf y} 
\newcommand{\X}{\mathsf X} 
\newcommand{\Y}{\mathsf Y} 

\newcommand{\interval}{\mathcal I} 
\newcommand{\cN}{{\mathfrak N}} 
\newcommand{\cB}{{\mathcal B}} 
\newcommand{\IIN}{{\mathscr I_N}} 

\newcommand{\EVI}{{\rm EVI}\xspace} 
\newcommand{\wEVI}{{\rm EVI}\xspace} 
\newcommand{\MPVF}{{\rm MPVF}\xspace} 
\newcommand{\PVF}{{\rm PVF}\xspace} 

\newcommand{\leb}{\mathscr{L}} 
\newcommand{\relcP}[3]{\prob_{#1}(#3|#2)} 
\newcommand{\prob}{\ensuremath{\mathcal{P}}} 
\DeclareMathOperator{\supp}{supp} 

\newcommand{\CondGamma}[3]{\Gamma({#2},{#3}|#1)} 
\DeclareMathOperator{\Tan}{Tan} 

\newcommand{\sqm}[1]{\mathsf m_2^2(#1)} 
\newcommand{\rsqm}[1]{\mathsf m_2(#1)} 
\newcommand{\iotaT}{\iota^2}

\newcommand{\TRd}{\mathsf{T}\R^d} 
\newcommand{\TX}{\mathsf {T\kern-1.5pt X}} 
\newcommand{\TY}{\mathsf {T\kern-1.5pt Y}} 

\newcommand{\rmC}{\mathrm C} 
\DeclareMathOperator{\Cyl}{Cyl} 
\newcommand{\Lip}{\mathrm {Lip}} 
\newcommand{\testsw}[1]{\rmC^{sw}_{2}(#1)} 
\newcommand{\sfO}{{\mathsf O}} 
\newcommand{\cO}[1]{\mathcal O_{#1}} 
\newcommand{\Sym}[1]{{\mathrm{Sym}(#1)}} 

\newcommand{\scalprod}[2]{\ensuremath{\langle #1, #2\rangle}} 
\newcommand{\bram}[2]{\ensuremath{\left [ #1, #2\right ]_{r}}} 
\newcommand{\brap}[2]{\ensuremath{\left [ #1, #2\right ]_{l}}} 

\newcommand{\directionalm}[3]{[#1,#2]_{r,#3}} 
\newcommand{\directionalp}[3]{[#1,#2]_{l,#3}} 

\newcommand{\frF}{{\bm{\mathrm F}}}
\newcommand{\frG}{{\bm{\mathrm G}}}
\newcommand{\fF}{{\bm B}}
\newcommand{\core}{\rmC} 
\newcommand{\maxim}[2]{{\hat{#1}_{#2}}} 
\newcommand{\clconv}[1]{\overline{\operatorname{co}}\left(#1\right)} 
\newcommand{\clo}[1]{\operatorname{cl}(#1)} 
\newcommand{\bri}[1]{\operatorname{bar}\left (#1\right )}
\newcommand{\maps}[1]{\operatorname{map}\left (#1\right )}


\newcommand{\bry}[1]{\boldsymbol b_{#1}} 



\newcommand{\conv}[1]{\operatorname{co}(#1)} 

\newcommand{\rmS}{\mathrm S} 




\newcommand{\dir}[1]{\operatorname{dir}(#1)}
\newcommand{\dN}{N}
\newcommand{\udN}{\dN^{-1}}
\newcommand{\symg}[1]{{\mathrm{Sym}(#1)}} 
\newcommand{\DDom}[1]{\mathcal D_{#1}}
\newcommand{\newDDom}[1]{\mathcal D_{#1}}
\newcommand{\newODDom}[1]{{\mathcal C_{#1}}}

\newcommand{\resolvent}[1]{{\jJ_{#1}}}

\newcommand{\mmo}{{\boldsymbol B}}

\newcommand{\mm}{\mathfrak m}
\newcommand{\Sgp}{\bm S}
\makeatletter

\makeatother
\newcommand{\cY}{\mathcal Y}

\newcommand{\cH}{\mathcal X}
\newcommand{\Sp}[1]{\mathcal{S}\left(#1\right)}

\newcommand{\menolambda}{-\lambda}
\newcommand{\piulambda}{\lambda}
\newcommand{\piulambdapar}{\lambda}
\newcommand{\menolambdapar}{(-\lambda)}

\title[A Lagrangian approach to totally dissipative evolutions in Wasserstein spaces]{A Lagrangian approach to totally dissipative evolutions in Wasserstein spaces}

\author{Giulia Cavagnari}
\address{Giulia Cavagnari: Politecnico di Milano, Dipartimento di Matematica, Piazza Leonardo Da Vinci 32, 20133 Milano (Italy)}
\email{giulia.cavagnari@polimi.it}

\author{Giuseppe Savar\'e}
\address{Giuseppe Savar\'e: Bocconi University,
  Department of Decision Sciences and BIDSA, Via Roentgen 1, 20136 Milano (Italy)}
\email{giuseppe.savare@unibocconi.it}

\author{Giacomo Enrico Sodini}
\address{Giacomo Enrico Sodini: Institut für Mathematik - Fakultät für Mathematik - Universität Wien, Oskar-Morgenstern-Platz 1, 1090 Wien (Austria)}
\email{giacomo.sodini@univie.ac.at}

\subjclass{Primary: 34A06, 47B44, 49Q22; Secondary: 34A12, 34A60, 28D05}
 \keywords{ Measure differential equations/inclusions in Wasserstein spaces, probability vector fields, dissipative operators, measure-preserving isomorphisms, geodesically convex functionals, JKO scheme.}

\begin{document}

\begin{abstract}
We 
introduce 
and study the class of 
\emph{totally dissipative}
multivalued probability vector fields (\MPVF) $\frF$ on the Wasserstein space
$(\prob_2(\X),W_2)$ of Euclidean or Hilbertian probability measures.
We show that such class of {\MPVF}s is 
in one to one correspondence with law-invariant 
dissipative operators in a Hilbert space $L^2(\Omega,\cB,\P;\X)$ 
of random variables, preserving
a natural maximality property.
This allows us to import in the Wasserstein framework many of the powerful tools from the theory of 
maximal dissipative operators in Hilbert spaces, 
deriving existence, uniqueness, stability, and approximation results
for the flow generated by a maximal totally dissipative \MPVF
and the equivalence of its Eulerian and Lagrangian characterizations.

We will show that 
demicontinuous single-valued probability vector fields
satisfying a metric dissipativity condition as in \cite{CSS} are in fact totally dissipative. 
Starting from a sufficiently rich set of discrete measures, we 
will also show how to recover a unique maximal totally dissipative version of a \MPVF, proving that its flow
provides a general mean field characterization of the asymptotic limits of the corresponding family of discrete particle systems.
Such an approach also reveals new interesting structural properties 
for gradient flows of displacement convex functionals 
with a core of discrete measures dense in energy.
\end{abstract}
\thispagestyle{empty}
\dedicatory{Dedicated to Ricardo H.~Nochetto on the occasion of his $70^{th}$ birthday.}
\maketitle
\vspace{-1cm}
\tableofcontents

\section{Introduction}
The theory of evolutions of probability measures is experiencing an ever growing interest from the scientific community. On one side, this is justified by its numerous applications in modeling real-life dynamics: social dynamics, crowd dynamics for multi-agent systems, opinion formation, evolution of financial markets just to name a few. We refer the reader to the recent survey \cite{Piccoli-soa23} for a more complete overview of the many applications of control theory for multi-agent systems, i.e.~large systems of interacting particles/individuals.
On the other side, dealing with mean-field evolutions, expecially in the framework of optimal control theory in Wasserstein spaces \cite{FLOS,CLOS,CapuaniM-lift}, provides interesting insights into mathematical research. We mention for instance the recent contributions \cite{Averb2022,bonnet2020mean,BF2023} for the study of a well-posedness theory for differential inclusions in Wasserstein spaces,
\cite{AK-pmp,BF-pmp,pogodaev2016} for
necessary conditions for optimality in the form of a Pontryagin maximum principle,
the references \cite{AOZ,BadF-hjb,CMPrainbow,Jimenez,JMQ} for the study of Hamilton-Jacobi-Bellman equations in this framework. Finally, other contributions devoted to the development of a
viability theory for control problems in the space of probability measures are e.g.~\cite{AMQ2021,BF-viability,BadF-viability,CMQ}.

In addition to these studies, we have all the applications of the theory of gradient flows  in Wasserstein spaces \cite{ags}
which are impossible to summarize here even briefly.
In particular, in the case of geodesically convex (resp.~$\lambda$-convex) functionals \cite{McCann97}, the geometric viewpoint and the variational approach introduced by \cite{Otto01,JKO98} have been 
extremely powerful to construct a semigroup of contractions (resp.~Lipschitz maps) \cite{ags}, which provides a robust background for various applications.
\medskip

In the present paper, we continue the project, started in \cite{CSS}, to extend the theory beyond gradient flows. 
Our aim is to investigate the  evolution semigroups generated by a $\lambda$-dissipative multivalued probability vector field (in short, \MPVF)
$\frF$ in the Wasserstein space 
$(\mathcal P_2(\X),W_2)$. The space $\mathcal P_2(\X)$ denotes the set of Borel probability measures with finite quadratic moment on a separable Hilbert space $\X$. 
The geometric notion of dissipativity is
intimately related to the $L^2$-Kantorovich-Rubinstein-Wasserstein distance $W_2$ between two measures $\mu_0,\mu_1\in \mathcal P_2(\X)$, which can be expressed 
by the solution of the Optimal Transport problem
\begin{equation}
    \label{eq:Wass-intro}
    W_2^2(\mu_0,\mu_1):=
    \min\Big\{\int_{\X^2} |x_0-x_1|^2\,\d\mmu(x_0,x_1):
    \mmu\in \Gamma(\mu_0,\mu_1)\Big\},
\end{equation}
where $\Gamma(\mu_0,\mu_1)$
denotes the set of couplings $\mmu\in \mathcal P_2(\X\times \X)$ with
marginals $\mu_0$ and $\mu_1$.
It is well known that the set $\Gamma_o(\mu_0,\mu_1)$ where the 
minimum in \eqref{eq:Wass-intro} is attained
is a nonempty compact and convex subset of $\Gamma(\mu_0,\mu_1).$

We refer to \cite{CSS} for a detailed discussion of the various approaches to such kind of problems; let us only mention here the Cauchy-Lipschitz approach via vector fields \cite{bonnet2020mean,BF2023}, the barycentric approach in \cite{Piccoli_2019, Piccoli_MDI,Camilli_MDE} and the variational approach to characterize limit solutions of an Explicit Euler Scheme for evolution equations driven by dissipative {\MPVF}s in \cite{CSS}.

Let us just recall here the main features of this approach. 
A \MPVF $\frF$ 
can be identified with a subset of 
the set of probability measures
$\mathcal P_2(\TX)$ on the space-velocity tangent bundle $\TX=\{(x,v)\in \X\times \X\}$, with proper domain
$\dom(\frF):=\{\sfx_\sharp \Phi:\Phi\in \frF\}$ and sections $\frF[\mu]:=\{\Phi\in \frF:\sfx_\sharp \Phi=\mu\}$,
where $\sfx(x,v):=x$ is the projection on the first coordinate in $\TX$.
Since every element $\Phi\in \frF$
has finite quadratic moment in the tangent bundle, the $L^2$-norm of the velocity marginal
\begin{equation*}
    |\Phi|_2^2:=
    \int_\TX |v|^2\,\d\Phi(x,v)\quad\text{is finite.}
\end{equation*}
 The disintegration $\{\Phi_x\}_{x\in \X}$ of $\Phi\in \frF[\mu]$ with respect to $\mu$ 
provides a Borel field of probability measures on the space of velocity vectors, which can be interpreted as
a probabilistic description of the velocity prescribed by $\frF$ at every position/particle $x$, given the distribution $\mu$.
An important case, which is simpler to grasp, occurs when $\frF$ is concentrated on maps 
and therefore $\Phi_x=\delta_{\ff(x)}$ is a Dirac mass concentrated on the deterministic velocity $\ff$
(in this case we say that $\frF$ is deterministic): 
 for every measure $\mu\in \dom(\frF)$ 
\begin{equation}   
\label{eq:maps}
\text{the elements $\Phi\in \frF[\mu]$ 
have the form
$(\ii_\X, \ff)_\sharp\mu$
for a vector field $\ff\in L^2(\X,\mu;\X),$}
\end{equation} where $\ii_\X$ denotes the identity map on $\X$. In this case, $\frF$ is \emph{dissipative} if for every $\Phi_i=(\ii_\X, \ff_i)_\sharp \mu_i\in \dom(\frF)$, $i=0,1$,
\begin{equation}
    \label{eq:diss-intro}
    \exists\,  \mmu\in \Gamma_o(\mu_0,\mu_1)
    \quad\text{optimal, such that}
    \quad
    \int_{\X^2} 
    \langle \ff_0(x_0)-
    \ff_1(x_1),x_0-x_1\rangle
    \,\d\mmu(x_0,x_1)\le 0.
\end{equation}
Notice however that, even in the deterministic case,
the realization of $\frF[\mu]$ as an element/subset
of $\prob_2(\TX)$ is crucial to deal with varying base measures $\mu$,
since for different $\mu_0,\mu_1\in \dom(\frF)$ 
the representation \eqref{eq:maps} 
yields corresponding 
maps $\ff_0,\ff_1$ which belong to different $L^2$ spaces
and therefore are not easy to compare.

When $\frF$ is not concentrated on maps, the dissipativity condition 
between two elements 
$\Phi_0\in \frF[\mu_0],\Phi_1\in \frF[\mu_1]$
guarantees the existence
of a coupling $\bm\vartheta\in \Gamma(\Phi_0,\Phi_1)\subset 
\prob_2(\TX\times \TX)$ 
such that 
the ``space'' marginal projection
$(\sfx_0,\sfx_1)_\sharp\bm\vartheta$ is optimal,
thus belongs to $\Gamma_o(\mu_0,\mu_1)$, and
moreover
\begin{equation}
    \label{eq:generaldiss-intro}
     \int_{\TX^2} \langle v_1-v_0,x_1-x_0
        \rangle \,\d\bm\vartheta(x_0,v_0;x_1,v_1)\le 0.
\end{equation}
Such a property appears as a natural generalization of 
the corresponding 
condition introduced in \cite{ags} 
for the Wasserstein subdifferentials of geodesically convex functionals.

The geometric interpretation of this condition becomes apparent by considering its equivalent characterization
in terms of the first order expansion
of the squared Wasserstein distance: 
in the case \eqref{eq:maps} 
it can be written as 
\begin{displaymath}
    W_2^2((\ii_\X+h\ff_0)_\sharp \mu_0,
    (\ii_\X+h\ff_1)_\sharp \mu_1)\le 
    W_2^2(\mu_0,\mu_1)+o(h)\quad\text{as }h\downarrow0.    
\end{displaymath}
In principle, one may interpret the flow generated by $\frF$ in terms of 
absolutely continuous (w.r.t.~the Wasserstein metric) 
curves $\mu:[0,+\infty)\to\prob_2(\X)$ in $\dom(\frF)$ solving the continuity equation
\begin{equation*}
\partial_t\mu_t+\nabla\cdot(\mu_t\,\ff_t)=0\quad\text{in }(0,+\infty)\times \X,\quad
(\ii_\X, \ff_t)_\sharp\mu_t\in \frF,
\end{equation*}
and obeying a Cauchy condition $\mu\restr{t=0}=\mu_0.$
However, the derivation of such a precise formulation is not a simple task and, in general, it requires more
restrictive assumptions on $\frF$
as
\begin{equation}
\begin{gathered}
    \label{eq:closure-intro}
    \dom(\frF)=\mathcal P_2(\X),\quad
    \frF[\mu]=(\ii_\X, \ff[\mu])_\sharp\mu
    \quad \text{(thus $\frF$ is single-valued),}\\
    \mu_n\to \mu \quad\Longrightarrow\quad
    (\ii_\X, \ff[\mu_n])_\sharp \mu_n
    \to
    (\ii_\X, \ff[\mu])_\sharp \mu.
    \end{gathered}
    \end{equation}
We introduced in \cite{CSS}
the more flexible condition of \EVI solutions, borrowed from the theory of gradient flows \cite{ags} and from the B\'enilan notion of integral solutions to dissipative evolutions in Hilbert/Banach spaces \cite{Benilan}: 
a continuous curve $\mu:[0,+\infty)\to\prob_2(\X)$ 
with values in $\overline{\dom(\frF)}$ is an \EVI solution  (we say it solves $\dot\mu_t\in\frF[\mu_t]$)  if it solves the system of Evolution Variational Inequalities
\begin{equation}
    \label{eq:EVI-intro}
    \frac\d{\d t}\frac 12 W^2_2(\mu_t,\nu)\le 
    -[\Phi,\mu_t]_r\quad\text{in }\mathscr D'((0,+\infty)),\quad
    \text{for every }\Phi\in \frF[\nu],\,\nu\in \dom(\frF),
\end{equation}
where for every 
$\Phi=(\ii_\X, \ff)_\sharp\nu\in \frF$ the duality pairing $[\Phi,\mu]_r$ is defined by 
\begin{equation*}
    [\Phi,\mu]_r:=
    \min\Big\{\int_{\X^2} \langle \ff(x_0),x_0-x_1\rangle\,\d\mmu(x_0,x_1):
    \mmu\in \Gamma_o(\nu,\mu)\Big\}.
\end{equation*}
In \cite{CSS}, we studied the properties of the flow in $\mathcal P_2(\X)$ generated by $\frF$ by means of  the \emph{explicit Euler method} 
and we proved that, under suitable conditions, every family of discrete approximations obtained by the explicit Euler method 
converges to an \EVI solution
when the step size vanishes, also providing an optimal error estimate.

The use of the explicit Euler method is simple to implement and quite powerful  when the domain of $\frF$ coincides with the whole $\mathcal P_2(\X)$
and $\frF$ is locally bounded 
\cite[Cor.~5.23]{CSS}, i.e.
$|\Phi|_2$ remains uniformly bounded 
in a suitable neighborhood of every measure
$\mu\in \mathcal P_2(\X)$ (but much more general conditions are thoroughly discussed in \cite{CSS}).
Dealing with constrained evolutions or with operators which are not locally bounded requires a better understanding of the implicit Euler method.

\subsubsection*{\bfseries Maximal totally dissipative {\MPVF}s}
One of the starting points of the present investigation (see Sections \ref{subsec:maps} and 
\ref{sec:constructionFlagr}) is the nontrivial fact that
a large class of $\lambda$-dissipative {\MPVF}s 
including the demicontinuous fields \eqref{eq:closure-intro} satisfies a much stronger dissipativity condition, which we call \emph{total $\lambda$-dissipativity}:
in the simplest case $\lambda=0$ when \eqref{eq:maps} holds and $\frF$ is single-valued, such a property reads as 
\begin{equation}
    \label{eq:strongdiss-intro}
    \int_{\X^2}
    \langle
    \ff[\mu_0](x_0)-
    \ff[\mu_1](x_1),x_0-x_1\rangle
    \,\d\mmu(x_0,x_1)\le 0
    \quad\text{for \emph{every} $\mmu\in \Gamma(\mu_0,\mu_1)$}
\end{equation} 
and can be compared with
the notion of L-monotonicity of \cite[Definition 3.31]{CD18}. Total dissipativity thus holds along
arbitrary couplings between pairs of measures $\mu_0,\mu_1$ in the domain of $\frF$,
whereas the metric dissipativity condition 
\eqref{eq:diss-intro}
involves only optimal couplings.
The relaxed version of \eqref{eq:strongdiss-intro} allowing for $\lambda$-dissipativity 
includes the class of Lipschitz probability vector fields $\ff$ satisfying 
\begin{equation*}
    \Big|\ff[\mu_0](x_0)-\ff[\mu_1](x_1)\Big|\le L\Big(|x_0-x_1|+W_2(\mu_0,\mu_1)\Big)\quad\text{for every }x_i\in \X,\ \mu_i\in \mathcal P_2(\X)
\end{equation*}
for $\lambda=2L$ (see Example \ref{ex:Lip}).

Motivated by this remarkable property, it is natural to extend 
the notion of total dissipativity
to a
general \MPVF $\frF$. 
Here there are two possible 
approaches: the weakest one, modeled
on the general definition of metric dissipativity \eqref{eq:generaldiss-intro}, 
would require that 
for every $\Phi_0\in \frF[\mu_0],\Phi_1\in \frF[\mu_1]$ and \emph{every}
coupling $\mmu\in \Gamma(\mu_0,\mu_1)$
($\mmu$ is not optimal) 
there exists $\bm\vartheta
\in \Gamma(\Phi_0,\Phi_1)$
such that 
$(\sfx_0,\sfx_1)_\sharp\bm\vartheta=\mmu$ and
\eqref{eq:generaldiss-intro}
holds.

The strongest condition, which we will systematically investigate in this paper, 
requires that
    \begin{equation}
    \label{eq:total-diss-intro}
    \text{for every $\Phi_0,\Phi_1\in \frF$
    and every $\bm \vartheta\in \Gamma(\Phi_0,\Phi_1)$}\quad
    \int_{\TX^2} \langle v_1-v_0,x_1-x_0
        \rangle \,\d\bm\vartheta(x_0,v_0;x_1,v_1)\le 0.
    \end{equation}
It is 
clear that 
total dissipativity for arbitrary \MPVF{s} 
is much stronger than
the metric dissipativity condition \eqref{eq:generaldiss-intro}.
We address two main questions:
\begin{enumerate}[label=\rm $\langle$Q.\arabic*$\rangle$]
    \item\label{q1} What are the structural properties of totally dissipative {\MPVF}s satisfying the stronger condition \eqref{eq:total-diss-intro}
    and their relation with Lagrangian representations by dissipative 
        operators in the Hilbert space 
    \[\cH:=L^2(\Omega,\cB,\P;\X),\]
    where $\P$ is a nonatomic probability measure 
    on a standard Borel 
    space $(\Omega,\cB)$, 
    which provides the domain of the parametrization. A similar lifting approach has been used also in e.g.~\cite{Lions,carda,gangbotudo,CD18,Jimenez,JMQ}, in particular 
    for functions defined in $\prob_2(\X)$ and 
    their Fr\'echet differential.
    This is the content of \textbf{Part \ref{partI}} and in particular of Section \ref{sec:invmpvf} and \ref{sec:totdissMPVF-flow}, with applications to the case of gradient flows in Section
    \ref{sec:jkobis}.
    \item\label{q2} Under which conditions 
    a dissipative \MPVF 
    is totally dissipative and, more generally,
    is it possible to recover a unique
     maximal totally dissipative ``version'' of the initial \MPVF
    starting from
    a sufficiently rich set of discrete measures.
    This is investigated first in Section 
    \ref{subsec:maps} and then more extensively in \textbf{Part \ref{partII}}, in particular in Section \ref{sec:constructionFlagr},
    starting from the results of Sections \ref{sec:coupl} and \ref{sec:strong-dissipative}
    on the geometry of discrete measures.
\end{enumerate}
\subsubsection*{\bfseries Lagrangian representations}
Concerning the first question \ref{q1}, in Section \ref{sec:3.2} 
we will show that 
\begin{quote}
\em there is a one-to-one correspondence between totally dissipative {\MPVF}s 
and law invariant 
dissipative operators in the Hilbert space
$\cH:=L^2(\Omega,\cB,\P;\X)$;
such a correspondence preserves maximality.
\end{quote}
This representation is very useful to import in the metric space 
$(\mathcal P_2(\X), W_2)$
all the powerful tools and results
concerning semigroups of contractions generated by maximal dissipative operators in Hilbert spaces, see e.g.~\cite{BrezisFR}.
This approach overcomes most of the technical limits of the explicit Euler method adopted in \cite{CSS} and allows for a more general theory of existence, well posedness, and stability of solutions.
In particular, even if the results are new and relevant
also in the finite dimensional Euclidean case, the theory does not rely on any compactness argument and thus can be fully developed in a
infinite dimensional separable Hilbert space $\X$.
We can in fact lift 
a totally dissipative \MPVF $\frF$ to a dissipative operator $\mmo
\subset \cH\times \cH$, that we call \emph{Lagrangian representation of $\frF$},  defined by
\begin{equation*}
    (X,V)\in \mmo\quad\Longleftrightarrow\quad
    (X,V)_\sharp \P\in \frF.
\end{equation*}
It turns out that $\mmo$ is law invariant
(i.e.~if $(X,V)\in \mmo$ and $(X',V')$ has the same law as $(X,V)$, then $(X',V')\in \mmo$ as well) and admits a maximal dissipative extension $\hat \mmo$ which is law invariant
and corresponds to a maximal extension 
of $\frF$ still preserving total dissipativity. 
In particular, $\frF$ is maximal in the class of totally dissipative {\MPVF}s if and only if $\mmo$ is a law invariant operator which is maximal dissipative. 

Such a crucial result depends on two important properties: 
first of all, if the graph of $\mmo$ is strongly-weakly closed in $\cH\times \cH$ (in particular if $\mmo$ is maximal)
then law invariance can also be characterized by invariance w.r.t.~measure-preserving isomorphisms of $\Omega$, i.e.~essentially injective maps $g:\Omega\to\Omega$ such that
$g_\sharp\P=\P=g^{-1}_\sharp \P$ (Theorem \ref{thm:invTOlawinv}). The second property
(Theorem \ref{thm:maximal-dissipativity})
guarantees that every dissipative operator in $\cH$ which 
is invariant by measure-preserving isomorphisms has a maximal dissipative extension enjoying the same invariance (and thus also law invariance). Such a result has been obtained in 
\cite{CSS2piccolo} and exploits remarkable results of \cite{BauWang2009,BauWang2010} providing an explicit construction of a maximal extension of a monotone operator. 

The equivalence between law-invariance and invariance by measure-preserving tranformations  also plays a crucial role to prove that the resolvents of $\mmo$, its Yosida approximations, and the generated semigroup of contractions $(\bm S_t)_{t\ge0}$ in $\cH$ 
are still law invariant.  
The family 
$(\bm S_t)_t$ thus induces
a projected semigroup of contractions in  $\mathcal P_2(\X)$ defined by
\begin{equation}
    \label{eq:lagrangian-intro}
    S_t(\mu_0):=(\bm S_t X_0)_\sharp\P\quad\text{whenever}\quad
    (X_0)_\sharp\P=\mu_0\in \dom(\frF),
\end{equation}
which is independent of the choice of $X_0$
parametrizing the initial law $\mu_0$,
which satisfies the \EVI formulation \eqref{eq:EVI-intro} and the stability property (here for arbitrary $\lambda\in \R$) 
\begin{equation}
    \label{eq:stability-intro}
    |\bm S_t X_0-\bm S_t Y_0|_\cH\le 
    \mathrm e^{\lambda t} |X_0-Y_0|_{\cH},\quad
    W_2(S_t(\mu_0),S_t(\nu_0))\le \mathrm e^{\lambda t}W_2(\mu_0,\nu_0).
\end{equation}
Another crucial property of totally dissipative {\MPVF}s concerns the \emph{barycentric projection}, which can be obtained by taking the expected value of the disintegration 
$\{\Phi_x\}_{x\in \X}$ of an element $\Phi\in \frF$ with respect to its first marginal $\mu=\sfx_\sharp \Phi$:
\begin{equation*}
    \bry\Phi(x):=
    \int_\X v\,\d\Phi_x(v)
    \quad\text{for $\mu$-a.e.~$x\in \X$;}\quad
    \bry\Phi\in L^2(\X,\mu;\X).
\end{equation*}
The barycenter $\bry\Phi$ also represents the conditional expectation 
$\E[V|X]$
of $V$ given (the $\sigma$-algebra generated by) $X$, for every 
$(X,V)\in \frF$ with $(X,V)_\sharp\P=\Phi$:
\begin{equation*}
    \E[V|X]=\bry\Phi\circ X\quad\text{in $L^2(\Omega,\sigma(X),\P;\X)$.}
\end{equation*}
It turns out that, if 
$\frF$ is maximal totally dissipative 
(or, equivalently, its Lagrangian representation  $\mmo$ is maximal dissipative), then $\frF$ is invariant 
with respect to the barycentric projection:
\begin{equation}
    \label{eq:bary-invariance-intro}
    (X,V)_\sharp\P=\Phi\in \frF
    \quad\Longrightarrow
    \quad
    (\ii_\X, \bry\Phi)_\sharp \mu\in \frF,\quad
    (X,\E[V|X])\in \mmo.
\end{equation}
Thanks to \eqref{eq:bary-invariance-intro}, for every $\mu_0\in \dom(\frF)$, the solution $\mu_t$ expressed by the Lagrangian formula \eqref{eq:lagrangian-intro} can be 
characterized as a Lipschitz curve in $\mathcal P_2(\X)$
satisfying the continuity equation 
\begin{equation}
    \label{eq:cont-intro1}
    \frac\d{\d t}\int_\X \zeta\,\d\mu_t=
    \int_\X \langle \vv_t(x),\nabla\zeta(x)\rangle
    \,\d\mu_t(x)
    \quad\text{for every $\zeta\in \Cyl(\X)$ and a.e.~$t>0$}
\end{equation}
for a Borel vector field $\vv$
satisfying
\begin{equation}
\label{eq:diff-inclusion-intro}
    t\mapsto \int_\X |\vv_t(x)|^2\,\d\mu_t(x)
    \quad\text{is locally integrable in $[0,+\infty)$},\quad
    (\ii_\X, \vv_t)_\sharp \mu_t\in \frF\ \text{for a.e.~$t>0$.}
\end{equation}
 We can also characterize the solution $\mu_t$ to \eqref{eq:cont-intro1}, \eqref{eq:diff-inclusion-intro} by 
requiring  that 
there exists a Borel family $\Phi_t$, $t>0$, such that 
\begin{equation}\label{eq:cucu}
\begin{split}
    \Phi_t\in \frF[\mu_t]\quad\text{for a.e.~$t>0$,}\quad
    t\mapsto \int_{\TX}|v|^2\,\d\Phi_t\quad\text{is locally integrable in $[0,+\infty)$,}\\
    \frac\d{\d t}\int_\X \zeta\,\d\mu_t=
    \int_\TX \langle v,\nabla\zeta(x)\rangle
    \,\d\Phi_t(x,v)
    \quad\text{for every $\zeta\in \Cyl(\X)$ and a.e.~$t>0$.}
    \end{split}
\end{equation}
 Indeed the validity of \eqref{eq:cont-intro1}, \eqref{eq:diff-inclusion-intro} gives that \eqref{eq:cucu} holds with $\Phi_t=(\ii_\X, \vv_t)_\sharp \mu_t$; on the other hand, assuming \eqref{eq:cucu}, we get \eqref{eq:cont-intro1}, \eqref{eq:diff-inclusion-intro} with $\vv_t=\bry{\Phi_t}$ which belongs to $\frF[\mu_t]$ by \eqref{eq:bary-invariance-intro}.

When $\frF$ is maximal totally dissipative, 
a more precise formulation of \eqref{eq:cont-intro1} and 
\eqref{eq:diff-inclusion-intro}
can be obtained by introducing the minimal selection $\mmo^\circ$ (i.e.~the element of minimal norm) of $\mmo$: 
we will prove that 
for every $X\in \dom(\mmo)$ with
$X_\sharp\P=\mu$,
$\mmo^\circ$ is associated with 
a vector field $\ff^\circ\in L^2(\X,\mu;\X)$ through the formula
\begin{equation}
    V=\mmo^\circ X,\quad X_\sharp\P=\mu\quad
    \Longleftrightarrow\quad
    V=\ff^\circ[\mu](X).
\end{equation}
The measure $(\ii_\X, \ff^\circ[\mu])_\sharp \mu$ can be characterized as
the unique element $\Phi\in\frF[\mu]$ minimizing $|\Phi|_2$
and the solution $\mu:[0,+\infty)\to\prob_2(\X)$ provided by \eqref{eq:lagrangian-intro}
is also the unique Lipschitz curve satisfying the continuity equation
\begin{equation}
    \label{eq:cont-intro}
    \partial_t \mu_t+\nabla\cdot(
    \mu_t \,\ff^\circ[\mu_t])=0
    \quad\text{in $(0,+\infty)\times \X$}
\end{equation}
with initial datum $\mu_0\in \dom(\frF)$.
It is remarkable that 
a maximal totally dissipative \MPVF 
always admits a minimal selection which is concentrated on a map. 

It turns out that the evolution driven by $\frF$ preserves the class of discrete measures with finite
support; if moreover $\mu_0=\frac 1N\sum_{n=1}^N\delta_{x_n}\in \dom (\frF)$
(or also in $\overline{\dom(\frF)}$ if $\X$ has finite dimension) then
the unique solution of \eqref{eq:cont-intro} can be expressed 
in the form $\mu_t=\frac 1N\sum_{n=1}^N \delta_{\mathsf x_n(t)}$
where $t\mapsto \mathsf x_n(t)$ are locally Lipschitz curves satisfying the system of ODEs
\begin{equation}
    \label{eq:ODE-system-intro}
    \dot {\mathsf x}_n(t)=\ff^\circ[\mu_t](\mathsf x_n(t))\quad\text{a.e.~in $(0,+\infty)$},\quad
    \mathsf x_n(0)=x_n,\quad n=1,\cdots,N.
\end{equation}
Thanks to \eqref{eq:stability-intro}, if a sequence of 
discrete initial measures $\mu_0^N=\frac1N\sum_{n=1}^N \delta_{x_n^N}$
converges to a limit $\mu_0$ in $\prob_2(\X)$ as $N\to+\infty$, then 
the corresponding evolving measures $\mu^N_t$ obtained by solving
\eqref{eq:ODE-system-intro} starting from $\mu_0^N$ will converge to $\mu_t=S_t(\mu_0)$. As a general fact \cite{Sznitman91}, this correspond to the propagation of chaos 
for the sequence of symmetric particle systems \eqref{eq:ODE-system-intro}.

Maximality also shows that \EVI curves are unique; when they are differentiable
(in particular when $\mu_0\in \dom(\frF)$) 
we can recover the representation 
\eqref{eq:cont-intro} and the Lagrangian
representation \eqref{eq:lagrangian-intro},
in an even more refined version involving characteristic curves.
This representation immediately yields regularity, stability, perturbation, and approximation results thanks to the corresponding statements in the Hilbertian framework.

Among the possible applications,  we just recall that we can also use {\em the Implicit Euler Method}
(corresponding to the JKO scheme for gradient flows)
to construct the flow (Corollary \ref{cor:IES}).
Starting from $M_\tau^0:=\mu_0\in \dom(\frF)$, for every step size $\tau>0$ we 
can find a (unique) sequence $(M^n_\tau)_{n\in \N}$ 
in $\dom(\frF)$ which at each step $n\in \N$ solves
\begin{equation}
    (\sfx-\tau\sfv)_\sharp \Phi^{n+1}_\tau=M^n_\tau
    \quad\text{for some }\Phi^{n+1}_\tau\in \frF[M^{n+1}_\tau].
\end{equation}
Selecting $\tau:=t/N$, the sequence $\left(M^N_{t/N}\right)_{N\in\N}$ converges to $S_t(\mu_0)$ as $N\to+\infty$ 
with the a-priori error estimate
\begin{equation}
    W_2(S_t(\mu_0),M^{N}_{t/N})\le \frac {2t}{\sqrt{N}}\|\ff^\circ [\mu_0]\|_{L^2(\X,\mu_0;\X)}.
\end{equation}
When $\dom(\frF)=\mathcal P_2(\X)$ and
$\frF$ is single-valued as in  \eqref{eq:closure-intro},
it follows that maximality is equivalent to
the following demicontinuity condition:
for every sequence $(\mu_n)_{n\in \N}$ converging to $\mu$ in
$\prob_2(\X)$ one has
\begin{equation}
\label{eq:hemicontinuity}
\sup_{n\to+\infty}\int_\X |\ff[\mu_n]|^2\,\d\mu_n<+\infty,\quad
(\ii_\X, \ff[\mu_n])_\sharp \mu_n\to
(\ii_\X, \ff[\mu])_\sharp \mu\quad\text{in }\mathcal P(\X\times \X^w),
  \end{equation}
  where $\X^w$ denotes the Hilbert space endowed with its weak topology.
Clearly, in this case
the map $\ff$ representing $\frF$ coincides with $\ff^\circ$. 
Notice that \eqref{eq:hemicontinuity} surely holds
if $\frF$ is represented by a map $\ff:\mathcal{P}_2(\X)\to
\Lip(\X;\X)$ (see also the map $F'$ in \cite[Section 2.3]{Chizat}) satisfying
the integrated Lipschitz-like condition along arbitrary couplings
\begin{equation}
\label{eq:Lipschitzintro}
        \int_{\X^2} \Big|\ff[\mu_0](x_0)-
    \ff[\mu_1](x_1)\Big|^2\,\d\mmu(x_0,x_1)
    \le L^2\int_{\X^2} |x_0-x_1|^2\,\d\mmu(x_0,x_1)
    \quad\text{for every }
    \mmu\in \Gamma(\mu_0,\mu_1).
  \end{equation}
On the other hand,
this class of regular 
dissipative {\PVF}s
is sufficiently rich to approximate the minimal selection of any 
maximal totally dissipative \MPVF $\frF$: in fact, by using the Yosida approximation,
it is possible to find a sequence of regular {\PVF}s $\frF_n$ associated to 
Lipschitz fields $\ff_n$ according to \eqref{eq:Lipschitzintro} 
(w.r.t.~a possibly diverging sequence of Lipschitz constant $L_n$)
satisfying the dissipativity condition \eqref{eq:strongdiss-intro} and
\begin{equation*}
    \lim_{n\to+\infty}\int_\X\big|\ff_n[\mu](x)-\ff^\circ[\mu](x)\big|^2\,\d\mu(x)=0
    \quad\text{for every }\mu\in \dom(\frF).
\end{equation*}
So, the class of totally dissipative {\MPVF}s arises as a natural closure
of more regular {\PVF}s concentrated on dissipative Lipschitz maps. 
This statement (Corollary \ref{cor:regular-appr-MPVF})
justifies a posteriori 
the choice of the 
strongest notion of 
total dissipativity 
given in \eqref{eq:total-diss-intro}.

\subsubsection*{\bfseries Construction of
  a maximal totally dissipative \MPVF from a discrete core.}

We investigate the second 
issue \ref{q2} in Section \ref{sec:constructionFlagr},
i.e.~how to recover a (unique) maximal totally dissipative ``version'' of
a  (totally or metrically) 
$\lambda$-dissipative \MPVF $\frF$ defined on  
a sufficiently 
rich \emph{core} $\core$ of discrete measures. 
This corresponds to the derivation of a mean-field description
from a compatible family of discrete particle systems.

Just to give an idea of a simple case of core,
we consider a totally convex subset $\mathrm D$ of 
the set $\prob_f(\X)$ of discrete measures with finite support:
total convexity here means that, 
whenever the marginals 
$\sfx^i_\sharp\mmu$, $i=0,1$,
of $\mmu\in \prob_f(\X\times \X)$ 
belong to $\mathrm D$,
then also $((1-t)\sfx^0+t\sfx^1)_\sharp \mmu$
belong to $\mathrm D$ for every $t\in (0,1).$ 

For every 
$N\in \N$ 
we consider 
the 
collection $\core_N$ 
of uniform discrete measures 
$\mu_\xx=\frac 1N\sum_{n=1}^N \delta_{x_n}$ 
belonging to $\mathrm D$,
where $\xx=(x_1,\cdots,x_N)$ is a vector 
in $\mathsf X^N$ with distinct coordinates.
The set $\core_N$ corresponds to 
a subset $\mathsf C_N$ of  $\mathsf X^N$
which is invariant under the action of 
the group of permutations $\Sym N$ of the components,
\begin{displaymath}
    \sigma \xx:=(x_{\sigma(1)},\cdots,x_{\sigma (N)}),
    \quad
    \text{for every}\quad
    \sigma\in \Sym N,\ \xx=(x_1,\cdots,x_N)\in \X^N.
\end{displaymath}

We will suppose that $\mathsf C_N$ is relatively 
open in $\X^N$
for every $N\in \N.$
Examples of $\mathrm D$ are provided by the collection of all
the discrete measures $\mu$ such that $\supp(\mu)$ 
is contained in a given convex open subset $\mathsf U$
of $\X$. Another interesting case, assuming $0\in \mathsf U$,
is given by all
the discrete measures such that 
$\supp(\mu)-\supp(\mu)\subset \mathsf U.$ The case of the whole
set $\prob_f(\X)$ is
still interesting.

Suppose that we
have a deterministic single-valued \PVF $\frF$ 
defined in $\core=\bigcup_N {\core_N}$ (when $\frF$ is not deterministic, the construction is more subtle).
We can then represent $\frF$ on each $\core_N$
by a vector field $\ff^N: \mathsf C_N
\to \X^N$ 
satisfying 
the invariance property 
$\ff^N(\sigma\xx)=\sigma \ff^N(\xx)$, so that 
\[
\frF[\mu_\xx]=\frac 1N\sum_{n=1}^N \delta_{(x_n,\ff^N_n(\xx))}
\quad
\text{for every }\xx\in 
\mathsf C_N,\]
and, at least for a short time  when no collisions occur,
the evolution of discrete measures in $\core_N$
can be described by 
$\mu_t=\frac 1N\sum_{n=1}^N \delta_{x_n(t)}=
\mu_{\xx(t)}$
where the vector $\xx(t)=(x_1(t),\cdots x_N(t))\in \mathsf C_N$
solves the system
\begin{equation}
\label{eq:particle-system}
    \dot \xx(t)=\ff^N(\xx(t)).
\end{equation}
We assume the following 
$\lambda$-dissipativity conditions on the maps $\ff^N$:
for every pair of integers 
$M,N\in \N$ with $M\mid N$,
if $
\xx\in \mathsf C_M$, $\yy\in \mathsf C_N$
and $\theta$ is an optimal correspondence from 
$\{1,\cdots, N\}$ to $\{1,\cdots,M\}$, i.e.

\[
\frac 1N\sum_{n=1}^N |y_n-x_{\theta(n)}|^2
=W_2^2(\mu_\xx,\mu_\yy),
\]
 
then 
\begin{displaymath}
    \sum_{n=1}^N 
    \langle \ff^N_n(\yy)-\ff^M_{\theta(n)}(\xx),
    y_n-x_{\theta(n)}\rangle \le 
    \lambda \sum_{n=1}^N |y_n-x_{\theta(n)}|^2.
\end{displaymath}
We will show that 
$\frF$ is in fact totally $\lambda$-dissipative
and admits a unique maximal extension $\hat \frF$, 
whose flow
can be interpreted as the unique mean-field limit 
of the particle systems driven by \eqref{eq:particle-system}.
This fact guarantees  two interesting properties: 
the local in time evolution corresponding to 
\eqref{eq:particle-system} admits a unique
global extension which induces a semigroup $(S^N_t)_{t\ge0}$
on $\overline {\core_N}$
which corresponds to the restriction to $\overline{\core_N}$ 
of the semigroup $S_t$ generated by $\hat\frF$
(and characterized e.g.~by the continuity equation
\eqref{eq:cont-intro} and by \eqref{eq:ODE-system-intro}).
Moreover, thanks to \eqref{eq:stability-intro}
for every $\mu_0\in \overline{\core}$ and every sequence $(\mu^N_0)_{N\in \N}$
with $\mu^N_0\in\core_N$ and converging to $\mu_0$ as $N\to+\infty$
we have $S^N_t(\mu^N_0)\to S_t(\mu_0)$ in $\prob_2(\X)$ 
locally uniformly w.r.t.~$t\in [0,+\infty)$.

Thanks to the stability properties of 
the Lagrangian flow, Theorem \ref{thm:stability} also shows that 
the trajectories of the discrete particle system 
uniformly converge
in a measure-theoretic sense to the characteristics of 
the mean-field system.

As a byproduct, we obtain that when
the domain of a totally dissipative
\MPVF $\frF$ contains a dense core then
its maximal extension is unique and can be characterized by a suitable
explicit construction starting from the core itself
and its flow has a natural mean-field interpretation.
\smallskip

Our result also provides interesting applications to geodesically
convex functionals and their approximations (see Sections
\ref{sec:jkobis},\ref{sec:jko}).

First of all, if the proper domain of a lower semicontinuous
and geodesically convex functional $\phi:\mathcal P_2(\X)\to(-\infty,+\infty]$
contains a discrete core $\core$ which is dense in energy,
then $\phi$ is totally convex, i.e.~it is convex along all the
linear interpolations induced by arbitrary couplings.
An important class is provided by
continuous and everywhere defined geodesically convex functionals,
which thus turn out to be totally convex.

The same property holds for any functional $\phi:\mathcal
P_2(\X)\to (-\infty,+\infty]$ which arises as Mosco-like limit of a
sequence of continuous and geodesically convex functionals which are everywhere finite. In particular, such approximation is impossible for all the functionals
which are not totally convex, as the relative entropy functionals
w.r.t.~log-concave measures.

\subsubsection*{\bfseries Contributions and applications.} One reason this study is relevant is that it enables the application of the well-developed Hilbertian theory into the framework of dissipative evolutions in the $2$-Wasserstein space. In particular, we are allowed to apply the implicit Euler scheme to maximal totally dissipative \MPVF{s} — an approach not available in general, or at least not yet clearly implementable, for \MPVF{s} that are only metrically dissipative. As in Hilbertian theory, the implicit scheme does not require local boundedness of the operator, which is instead necessary for the explicit scheme (cf. \cite{CSS}). Furthermore, the correspondence between maximal dissipative operators in Hilbert spaces and maximal totally dissipative \MPVF{s} allows for a refined description of the evolutions; see in particular Section \ref{sec:totdissMPVF-flow}.

Following the same principle — that is the application of Hilbertian techniques to the Wasserstein context — we aim to study the following further aspects in a future review paper:
\begin{itemize}
    \item Regularizing effects under suitable assumptions on $\frF$;
    \item Asymptotic behaviour and periodic solutions;
    \item Error estimates for the Yosida regularization and for time discretizations (see also \cite{CSS}), Chernoff and Trotter formulas;
    \item Stability and convergence of sequences of $\lambda$-contractive semigroups;
    \item Discrete-to-continuous limit and chaos propagation;
    \item The case of time-dependent \MPVF{s}.
\end{itemize}

In \cite{CSSnew}, we initiated this program and compared the explicit approach of \cite{CSS} and the implicit approach of the present work. There, we studied the convergence of stochastic time-discretization schemes for evolution equations driven by random velocity fields, including examples such as stochastic gradient descent and interacting particle systems. Under suitable dissipativity and boundedness conditions, we proved that the laws of the interpolated trajectories converge to those of a limiting evolution governed by a maximal dissipative extension of the associated barycentric field. This provides a general measure-theoretic study of the convergence of stochastic schemes in continuous time.

\subsubsection*{\bfseries Plan of the paper.}  The plan of the paper is as follows.\\
\quad \\
\textbf{Part \ref{partI}} develops the theory of totally dissipative {\MPVF}s and it is devoted to answer \ref{q1}. After a quick review in \textbf{Section \ref{sec:preliminaries}} of the main tools on Wasserstein spaces used in the sequel, we summarize in Subsection \ref{sec:prelimCSS} the notation and the results concerning Multivalued Probability Vector Fields and \EVI solutions.

In \textbf{Section \ref{sec:invmpvf}}, we introduce the notion of \emph{totally dissipative} \MPVF and we study its consequences in terms of existence and description of Lagrangian solutions: in Subsection \ref{subsec:maxim} we study the properties of the Yosida approximations, the resolvent operator and the minimal selection of law-invariant operators in the Hilbert space $\cH$ of parametrizations, Subsection \ref{sec:3.2} deals with the relation between dissipativity for such law-invariant subsets of  $\cH$ and the corresponding total dissipativity for their law.
These results are used in Subsection \ref{subsec:maps} to study the particular case of deterministic totally dissipative {\PVF}s.

\textbf{Section \ref{sec:totdissMPVF-flow}} contains the main 
existence, uniqueness, stability, and approximation results for 
the Lagrangian flow generated by a totally dissipative \MPVF, together with its various equivalent characterizations.

In \textbf{Section \ref{sec:jkobis}}, we study the behaviour of functionals $\phi: \prob_2(\X)\to (-\infty, +\infty]$ which are convex along any coupling, proving the existence of gradient flows (equivalently, \EVI solutions for the \MPVF given by their subdifferential) still exploiting their representation in terms of a convex functional $\psi$ defined in the parametrization space $\cH$. \\
\smallskip \noindent

\textbf{Part \ref{partII}} studies the 
characterization of maximal extensions of totally dissipative MPVF, their relation with metric dissipativity, and it is devoted to answer \ref{q2}.
\textbf{Section \ref{sec:coupl}} is devoted to study the properties of couplings between discrete measures, in particular showing that such couplings are ``piece-wise" optimal.
This property is then exploited in \textbf{Section \ref{sec:strong-dissipative}} where we show that a dissipative \MPVF is totally dissipative along discrete couplings.

In \textbf{Section \ref{sec:constructionFlagr}} we show that starting from a dissipative \MPVF $\frF$
defined on a sufficiently rich \emph{core} $\core$ of discrete measures, it is possible to construct a maximal totally dissipative \MPVF
$\hat\frF$, in a unique canonical way. 

\textbf{Section \ref{sec:jko}} is in the same spirit but in the case of a geodesically convex functional $\phi$: under analogous approximation properties, it is possible to show that $\phi$ is actually totally convex and then satisfies the assumptions of Section \ref{sec:jkobis}.

\smallskip\noindent
Finally, \textbf{Appendix \ref{sec:brezis}} contains many useful results related to $\lambda$-dissipative operators in Hilbert spaces that are more commonly known for $\lambda=0$ (the main reference is \cite{BrezisFR}), while \textbf{Appendix \ref{sec:appborel}} lists some of the results of \cite{CSS2piccolo} related to Borel partitions and approximations of couplings that are used in the present work.

\mysubsubsection{Acknowledgments.} G.S.~and G.E.S.~gratefully acknowledge the support of the Institute for Advanced Study of the Technical University of Munich, funded by the German Excellence Initiative. 
G.C.~acknowledges the partial support of MIUR-PRIN projects, of the group GNAMPA of the Istituto Nazionale di Alta Matematica (INdAM), and of the funds FSR Politecnico di Milano Prog.TDG3ATEN02. 
G.S. has been partially supported
by the INDAM project E53C23001740001
and by funding from
the European Research Council (ERC) under the European Union’s Horizon Europe research and innovation programme (grant
agreement No. 101200514, project acronym OPTiMiSE). Views and opinions expressed are however those of the author(s) only
and do not necessarily reflect those of the European Union or the European Research Council Executive Agency. Neither the
European Union nor the granting authority can be held responsible for them.

\part{Total dissipativity}\label{partI}

\section{Preliminaries}
\label{sec:preliminaries}
In the following table, we provide a list of the adopted symbology for the reader's convenience. We then recall the main notions and results of Optimal Transport theory and finally, in Subsection \ref{sec:prelimCSS}, we collect the fundamental objects and basic results taken from \cite{CSS} needed to develop our analysis.
As a general rule, we will use bold letters to denote
maps (even multivalued) with values in the Hilbert space $\X$ or 
measures/sets of measures in product spaces as couplings in 
$\X\times \X$ or probability vector fields in $\TX$.
{\small\begin{longtable}{ll}
$\bry{\Phi}$&the barycenter of $\Phi\in\prob(\TX)$ as in Definition \ref{def:wassmom};\\
$\mmo^\lambda$&the $\lambda$-transformation of a set $\mmo$ as in Remark \ref{rem:transff};\\
$\mmo_\tau$&Yosida approximation of a maximal dissipative $\mmo$, see Appendix \ref{sec:brezis};\\
$\mmo^\circ$&minimal selection of a maximal dissipative $\mmo$, see Appendix \ref{sec:brezis};\\
$\clo{\frF}$& the sequential closure of $\frF$, see Proposition \ref{prop:closure};\\
$\conv{E}, \clconv{E}$&convex and closed and convex envelope of a set $E$ in a Hilbert space;\\
$\dom(\frF)$&the proper domain of a set-valued function as in Definition \ref{def:MPVF};\\
$\dom(\phi)$&the proper domain of a functional $\phi$;\\
$\ff^\circ$&the map defined in Theorem \ref{thm:minimal};\\
$\frF, \frF[\mu]$&a multivalued probability vector field and its section at $\mu \in \prob_2(\X)$, see Definition \ref{def:MPVF};\\
$\frF^\lambda$&the $\lambda$-transformation of $\frF$ as in \eqref{eq:Flambda};\\
$\Gamma(\mu,\nu)$&the set of admissible couplings between $\mu,\nu$, see \eqref{def:admplans};\\
$\Gamma_o(\mu,\nu)$&the set of optimal couplings between $\mu,\nu$,
see Definition \ref{def:wassmom};\\
$\iota, \iotaT, \iota_X, \iotaT_{X,Y}$&the maps as in the beginning of Section \ref{sec:invmpvf};\\
$\ii_X$&the identity map on a set $X$;\\
$\jJ_\tau$&the resolvent operator of a maximal dissipative $\mmo$, see Appendix \ref{sec:brezis};\\
$\rsqm{\nu}$&the $2$-nd moment of $\nu\in\prob_2(\mathscr{X})$ as in Definition \ref{def:wassmom};\\
$|\Phi|_2$&the partial  $2$-nd moment of $\Phi\in\prob_2(\TX)$ as in \eqref{eq:defsqmPhi};\\
$\cN$&a directed subset of $\N$ w.r.t.~the order induced by $\preccurlyeq$, see Appendix \ref{sec:appborel};\\
$(\Omega, \cB, \P)$&a standard Borel space  endowed with a nonatomic probability measure, Def. \ref{def:sbs};\\
$(\Omega, \cB, \P, (\mathfrak P_N)_{N \in \cN})$&a $\cN$-refined standard Borel probability space, see Definition \ref{def:segm};\\
$\prob(\mathscr{X})$&the set of Borel probability measures on the topological
space $\mathscr{X}$;\\
$\prob_f(\mathscr{X})$, $\prob_{f,N}(\mathscr{X})$&the sets defined in \eqref{eq:70},\eqref{eq:30};\\
$\prob_{f,\cN}, \prob_{\#\cN}(\X)$&the sets in \eqref{eq:137};\\
$\prob_c(\mathscr{X}), \prob_2(\mathscr{X})$&measures in $\prob(\mathscr{X})$ with compact support or finite quadratic moment, see \eqref{condTanTX};\\
$\prob_2^{sw}(\TX)$&the space $\prob_2(\TX)$ endowed with the strong-weak topology as in Definition \ref{def:p2sw};\\
$\pi^i, \pi^{i,j}, \pi^{i,j,k}, \pi^{i,j,k,l}$&projections from a product space to one or more factors as in \eqref{def:admplans};\\
$\bram{\cdot}{\cdot}$, $\brap{\cdot}{\cdot}$&the pseudo scalar products as in Definition \ref{def:pairings};\\
$\directionalm{\Phi}{\ttheta}t$, $\directionalp{\Phi}{\ttheta}t$&the duality pairings as in Definition \ref{def:pairings};\\
$\directionalm \frF\mmu t$, $\directionalp \frF\mmu t$&the duality pairings as in Definition \ref{def:plangeodomV};\\
$\rmS(\Omega)=\rmS(\Omega, \cB, \P)$&measure-preserving isomorphisms on $(\Omega, \cB, \P)$,
see Appendix \ref{sec:appborel};\\
$\rmS_N(\Omega)$&subset of $\rmS(\Omega, \cB, \P)$ of maps that are $\cB_N-\cB_N$ measurable;\\
$S_t, \ss_t$&Eulerian and Lagrangian semigroups, Def.~\ref{def:semig};\\
$\Sgp_t$&semigroup generated by a maximal dissipative $\mmo$, see Appendix \ref{sec:brezis};\\
$\Sp{\X, D},\,\Sp\X$&the subsets of $\X\times \prob_2(\X)$ as in \eqref{eq:2};\\
$W_2(\mu,\nu)$&the $L^2$-Wasserstein distance between $\mu$ and $\nu$, see Definition \ref{def:wassmom};\\
$\X$&a separable Hilbert space;\\
$\cH,\cH_N, \cH_\infty$&the Hilbert spaces $L^2(\Omega, \cB, \P; \X)$, $L^2(\Omega, \cB_N, \P; \X)$ and the union of the $\cH_N$ respectively;\\
$\X^s, \X^w$& a separable Hilbert space endowed with its strong and weak topologies;\\
$\TX$&the tangent bundle to $\X$, usually endowed with the strong-weak
topology;\\
$\sfx, \sfx^i, \sfv,\sfv^i$&the projection maps defined in \eqref{eq:projandexp} and in Section \ref{sec:prelimCSS};\\
$\sfx^t$&the evaluation map defined in \eqref{eq:mapxt}.\\
\end{longtable}}

In this first section of general measure theory preliminaries, we consider $\mathscr{X}, \mathscr{Y}$ to be Lusin completely regular topological spaces. We recall that a topological space $\mathscr{X}$ is \emph{completely regular} if it is Hausdorff and for every closed set $C$ and point $x\in
\mathscr{X}\setminus C$ there exists $f: \mathscr{X} \to [0,1]$ continuous function
s.t.~$f(C)=\{1\}$ and $f(x)=0$.  A Hausdorff topological space is \emph{Lusin} if its topology is coarser than a Polish topology. 
This general setting is convenient for our analysis which deals with Borel probability measures defined in (subsets of) a separable Hilbert space $\mathscr{X}$, which could be endowed with the strong or the weak topology.

We denote by $\prob(\mathscr{X})$ the set of Borel probability measures on $\mathscr{X}$ endowed with the weak/narrow topology induced by the duality with the space of real valued continuous and bounded functions $\rmC_b(\mathscr{X})$. Thus, given a directed set $\mathbb A$, we say that a net $(\mu_\alpha)_{\alpha\in \mathbb A} \subset \prob(\mathscr{X})$ converges narrowly to $\mu \in \prob(\mathscr{X})$, and we write $\mu_\alpha \to \mu$ in $\prob(\mathscr{X})$, if
\begin{equation*} \lim_{\alpha} \int_\mathscr{X} \varphi \de \mu_\alpha = \int_\mathscr{X} \varphi \de \mu\quad\text{for every }\varphi\in\rmC_b(\mathscr{X}). \end{equation*}

Given $\mu\in \prob(\mathscr{X})$ and a Borel function $f: \mathscr{X} \to \mathscr{Y}$, we define the \emph{push-forward} $f_\sharp\mu\in\prob(\mathscr{Y})$ of $\mu$ through $f$ by
\[ \int_{\mathscr{Y}} \varphi \de (f_{\sharp}\mu) = \int_\mathscr{X} \varphi \circ f \de \mu \]
for every $\varphi:\mathscr{Y}\to\R$ bounded (or nonnegative) Borel function.

We recall the so-called \emph{disintegration theorem} (see e.g. \cite[Theorem 5.3.1]{ags}).
\begin{theorem}\label{thm:disintegr}
Let $\mathscr{W}, \mathscr{X}$ be Lusin completely regular topological spaces, $\mmu\in\prob(\mathscr{W})$ and $r:\mathscr{W}\to \mathscr{X}$ a Borel map. Denote with $\mu=r_{\sharp}\mmu\in\prob(\mathscr{X})$. Then there exists a $\mu$-a.e.~uniquely determined Borel family of probability measures $\{\mmu_x\}_{x\in \mathscr{X}}\subset\prob(\mathscr{W})$ such that $\mmu_x(\mathscr{W}\setminus r^{-1}(x))=0$ for $\mu$-a.e. $x\in \mathscr{X}$, and
\[\int_{\mathscr{W}}\varphi(w)\de\mmu(w)=\int_\mathscr{X}\left(\int_{r^{-1}(x)}\varphi(w)\de\mmu_x(w)\right)\de\mu(x)\]
for every bounded Borel map $\varphi:\mathscr{W}\to\R$.
\end{theorem}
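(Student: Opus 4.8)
The plan is to reproduce the classical construction of a regular version of conditional probabilities, exploiting the standard Borel structure that a Lusin space carries. As a preliminary reduction, since $\mathbb X$ is Lusin there is a Borel isomorphism of $(\mathbb X,\mathcal B(\mathbb X))$ onto a Borel subset of $\R$; composing with it I may assume $\mathbb X\subseteq\R$ is Borel and extend $\mmu$ by zero to a Borel probability on $\R$. The target $X$ is also standard Borel, which is all I will use about it: in particular $\mathcal G:=r^{-1}(\mathcal B(X))$ is a sub-$\sigma$-algebra of $\mathcal B(\mathbb X)$, and the family $\{\mmu_x\}$ to be produced is precisely a regular conditional distribution of $\mathrm{id}_{\mathbb X}$ given $\mathcal G$, transported to a kernel indexed by $X$ via $\mu=r_\sharp\mmu$.

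For the existence of the fibre measures, for every $t\in\mathbb Q$ the Doob--Dynkin factorisation lemma (applicable since $X$ is standard Borel) gives a Borel $g_t\colon X\to[0,1]$ with $\E_\mmu[\mathbf 1_{(-\infty,t]}\mid\mathcal G]=g_t\circ r$, i.e.\ $\int_{r^{-1}(C)}\mathbf 1_{(-\infty,t]}\de\mmu=\int_C g_t\de\mu$ for all $C\in\mathcal B(X)$. The countably many $\mu$-a.e.\ relations expressing monotonicity in $t$, right continuity along rationals, and the limits $0$ and $1$ at $\mp\infty$ hold simultaneously on a $\mu$-conull set $X_0\subseteq X$; for $x\in X_0$ the map $t\mapsto g_t(x)$ therefore extends to a cumulative distribution function on $\R$, and Lebesgue--Stieltjes produces a probability $\mmu_x\in\prob(\R)$ with $\mmu_x((-\infty,t])=g_t(x)$, while for $x\notin X_0$ I let $\mmu_x$ be a fixed probability measure. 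This is the only step where the hypotheses on the spaces are genuinely needed, to turn the $\mu$-a.e.\ defined conditional data into honestly $\sigma$-additive measures, and routing through real distribution functions makes countable additivity automatic.

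Next I verify the three asserted properties. \emph{Measurability:} $x\mapsto\mmu_x(B)$ is Borel for $B=(-\infty,t]$, hence by a monotone class argument for every $B\in\mathcal B(\R)$. \emph{The integral formula:} for $\varphi=\mathbf 1_{(-\infty,t]}$ it is the defining relation of $g_t$ with $C=X$; by linearity, the functional monotone class theorem over the generating $\pi$-system $\{(-\infty,t]:t\in\mathbb Q\}$, and dominated convergence, it extends to all bounded Borel $\varphi\colon\R\to\R$. Testing it on $\varphi=\mathbf 1_{\R\setminus\mathbb X}$ gives $\mmu_x(\R\setminus\mathbb X)=0$ for $\mu$-a.e.\ $x$, so after a further modification on a null set $\{\mmu_x\}$ is a Borel family in $\prob(\mathbb X)$ and the stated formula holds verbatim once transported back through the isomorphism. \emph{Concentration on fibres:} fix a countable family $\{C_j\}_{j\in\N}\subseteq\mathcal B(X)$ generating $\mathcal B(X)$ and separating points (it exists as $X$ is standard Borel); applying the formula to $\varphi=\mathbf 1_{r^{-1}(C_j)}$ yields $\mmu_x(r^{-1}(C_j))=\mathbf 1_{C_j}(x)$ for $\mu$-a.e.\ $x$, hence — off a single $\mu$-null set — simultaneously for all $j$, so $\mmu_x$ is carried by $\bigcap_{j:\,x\in C_j}r^{-1}(C_j)\cap\bigcap_{j:\,x\notin C_j}\big(\mathbb X\setminus r^{-1}(C_j)\big)=r^{-1}(x)$, which is exactly $\mmu_x(\mathbb X\setminus r^{-1}(x))=0$.

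Finally, for uniqueness: if $\{\mmu_x\}$ and $\{\mmu_x'\}$ both satisfy the conclusion, then for each $t\in\mathbb Q$ the maps $x\mapsto\mmu_x((-\infty,t]\cap\mathbb X)$ and $x\mapsto\mmu_x'((-\infty,t]\cap\mathbb X)$ are, composed with $r$, versions of $\E_\mmu[\mathbf 1_{(-\infty,t]}\mid\mathcal G]$, hence agree $\mu$-a.e.; intersecting over $t\in\mathbb Q$ gives a $\mu$-conull set on which $\mmu_x$ and $\mmu_x'$ coincide on the $\pi$-system $\{(-\infty,t]\cap\mathbb X:t\in\mathbb Q\}$ generating $\mathcal B(\mathbb X)$, so Dynkin's $\pi$--$\lambda$ theorem forces $\mmu_x=\mmu_x'$ there. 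The genuine obstacle is the existence step — manufacturing countably additive fibre measures out of a.e.-defined conditional data — which the reduction to a Borel subset of $\R$ disposes of cleanly; everything else is routine bookkeeping with countable generating families, monotone classes and Fubini's theorem.
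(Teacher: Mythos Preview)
Your proof is correct and follows the classical construction of regular conditional probabilities: Borel-isomorph $\mathbb X$ into $\R$, build cumulative distribution functions from conditional expectations of half-line indicators, regularise on a conull set, and then run monotone-class and $\pi$--$\lambda$ arguments for the remaining verifications. The paper, however, does not prove this statement at all; it is recalled as a standard result with a reference to \cite[Theorem 5.3.1]{ags}. So there is no ``paper's own proof'' to compare against --- your argument simply supplies what the paper chose to quote.
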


\begin{remark}\label{rmk:disintegr}
When $\mathscr{W}=\mathscr{X}_1\times \mathscr{X}_2$ and $r$ is the projection $\pi^1$ on the first component, we can canonically
identify the disintegration $\{\mmu_x\}_{x\in \mathscr{X}_1} \subset
\prob(\mathscr{W})$ of $\mmu\in\prob(\mathscr{X}_1\times \mathscr{X}_2)$ w.r.t.~$\mu=\pi^1_\sharp\mmu$
with a family of probability measures $ \{\mu_{x_1}\}_{x_1\in \mathscr{X}_1} \subset
\prob(\mathscr{X}_2)$. We write
$\mmu=\int_{\mathscr{X}_1}\mu_{x_1}\,\d \mu(x_1)$.
\end{remark}

Given $\mu\in\prob(\mathscr{X})$, $\nu\in\prob(\mathscr{Y})$, we define the set of admissible transport plans
\begin{equation}\label{def:admplans}
 \Gamma(\mu, \nu) := \left  \{ \ggamma \in \prob(\mathscr{X} \times \mathscr{Y}) \mid \pi^{1}_{\sharp} \ggamma = \mu \, , \, \pi^{2}_{\sharp} \ggamma = \nu \right \},
\end{equation}
where we denoted by $\pi^i$, $i=1,2$, the projection on the $i$-th component and we call $\pi^{i}_{\sharp}\ggamma$ the $i$-th marginal of $\ggamma$.

\subsection{Wasserstein distance in Hilbert spaces and strong-weak topology}
From now on, we denote by $\X$ a separable (possibly infinite dimensional) Hilbert space with
norm $|\cdot|$ and scalar product $\la\cdot,\cdot\ra$. When it is necessary to specify it, we denote by $\X^s$ (resp.~$\X^w$) the Hilbert space $\X$ endowed with its strong (resp.~weak) topology. We remark that $\X^w$ is
a Lusin completely regular space and that $\X^s$ and $\X^w$
share the same class of Borel sets
and thus of Borel probability measures. Therefore, we are allowed to adopt the simpler notation $\prob(\X)$ and to use the heavier $\prob(\X^s)$ and $\prob(\X^w)$
only when we will refer to the corresponding topology.\\
We adopt the notation 
$\TX$ for the tangent bundle to $\X$, which is identified with the cartesian product $\X\times\X$
with the induced norm $|(x,v)|:=\big(|x|^2+|v|^2\big)^{1/2}$ and the 
strong-weak topology of $\X^s \times \X^w$(i.e.~the product of the strong topology on the
first component and the weak topology on the second one). The set $\prob(\TX)$ is defined thanks to the identification of
$\TX$ with $\X\times \X$ and it is endowed with
the narrow topology induced by the strong-weak topology in $\TX$.\\
We will denote by $\sfx,\sfv:\TX\to\X$ the projection maps defined by
\begin{equation}\label{eq:projandexp}
  \sfx(x,v):=x,\quad \sfv(x,v)=v.
\end{equation}
When dealing with the product space $\X^2$
we use the notation
\begin{alignat}{3}\label{eq:switch}
\mathsf s&: \X^2 \to \X^2, \quad &&\mathsf s (x_0,x_1):=(x_1,x_0),&&\\
\label{eq:mapxt}
  \sfx^t&:\X^2\to \X,\quad &&\sfx^t(x_0,x_1):=(1-t)x_0+tx_1,\quad &&t\in[0,1].
\end{alignat}

\begin{definition}\label{def:wassmom} 
  Given $\mu \in \prob(\X)$ and $\Phi \in \prob(\TX)$
  we define
\begin{equation}\label{eq:defsqmPhi}
 \sqm \mu := \int_\X |x|^2 \de \mu(x), \quad |\Phi|_2^2:= \int_{\TX} |v|^2 \de \Phi(x,v)
\end{equation}
and the spaces
\begin{equation}\label{condTanTX}
\prob_2(\X) := \{ \mu \in \prob(\X) \mid \rsqm\mu < + \infty \}, \quad \relcP 2{\mu}\TX := \Big\{\Phi\in \prob(\TX): \sfx_\sharp \Phi = \mu, \, |\Phi|_2<+\infty\Big\}.
\end{equation}
Given $\Phi\in\relcP 2\mu\TX$, the \emph{barycenter of $\Phi$} is the
function
$\bry{\Phi}\in L^2(\X,\mu;\X)$ defined by
\begin{equation}
\label{eq:barycenter}
    \bry{\Phi}(x):=\int_\X v\de\Phi_x(v) \quad\text{for }\mu\text{-a.e. }x\in \X,
\end{equation}
where $\{\Phi_x\}_{x\in \X}\subset\prob_2(\X)$ is the disintegration
of $\Phi$ w.r.t.~$\mu$.
We set $\bri\Phi:=(\ii_\X,\bry\Phi)_\sharp\mu$.
We say that $\Phi$ is \emph{concentrated on a map} (or that it is
\emph{deterministic}) if $\Phi=\bri\Phi$.
\end{definition}

For the following recalls on Wasserstein spaces we refer e.g. to \cite[\S 7]{ags}. 
On $\prob_2(\X)$ we define the $L^2$-Wasserstein distance $W_2$ by
\begin{align} \label{eq:w21} W_2^2(\mu, \nu) &:= \inf \left \{ \int_{\X^2} |x-y|^2 \de \ggamma(x,y) \mid \ggamma \in \Gamma(\mu, \nu) \right \}.
\end{align}
For the sequel, the set $\Gamma_o(\mu, \nu)$ denotes the subset of admissible plans in $\Gamma(\mu, \nu)$ realizing the infimum in
\eqref{eq:w21}.
We say that a measure $\ggamma\in \prob_2(\X\times\X)$ is optimal if
$\ggamma\in
\Gamma_o(\pi^1_\sharp\ggamma,\pi^2_\sharp\ggamma)$. We recall that $\ggamma \in \prob_2(\X \times \X)$ is optimal if and only if its support is cyclically monotone i.e.
\begin{equation}
  \label{eq:3}
  \begin{gathered}
    \text{for every $N\in \N$ and
      $\{(x_n,y_n)\}_{n=1}^N\subset \supp\ggamma$ with $x_0:=x_N$ we
      have}\\
    \sum_{n=1}^N \la y_n,x_n-x_{n-1}\ra\ge0.
  \end{gathered}  
\end{equation}
We recall that the metric space $(\prob_2(\X), W_2)$ is a complete and separable metric space and the $W_2$-convergence (sometimes denoted with $\overset{W_2}{\longrightarrow}$) is stronger than the narrow convergence. More precisely, if $(\mu_n)_{n\in\N}\subset\prob_2(\X)$ and $\mu\in\prob_2(\X)$, the following holds (see \cite[Remark 7.1.11]{ags})
\begin{equation*}
  \mu_n\overset{W_2}{\longrightarrow}\mu,\text{ as }n\to+\infty \quad\Longleftrightarrow\quad\begin{cases}\mu_n\to\mu \text{ in }\prob(\X^s),\\
    \rsqm{\mu_n}\to\rsqm\mu,
  \end{cases}
  \text{ as }n\to+\infty.\\
\end{equation*}
In the following Definition \ref{def:p2sw} and Proposition \ref{prop:finalmente}, we recall the topology of $\prob_2^{sw}(\TX)$ (see \cite{NaldiSavare,CSS}).

\begin{definition}[Strong-weak topology in $\prob_2(\TX)$]
\label{def:p2sw} We denote by $\prob_2^{sw}(\TX)$ the space $\prob_2(\TX)$ endowed with the coarsest topology which makes the following functions continuous
    \begin{equation*}
      \Phi\mapsto \int_{\TX}\zeta(x,v)\,\d\Phi(x,v),\quad \zeta \in \testsw \TX,
    \end{equation*}
    where $\testsw \TX$ is the Banach space of test functions $  \zeta:\TX\to \R$ such that
\begin{gather*}
  \label{eq:36}
  \zeta\text{ is sequentially continuous in $\X^s\times \X^w$,}\\
  \forall\,\eps>0\ \exists\,A_\eps\ge 0: |\zeta(x,v)|\le
  A_\eps(1+|x|^2)+\eps |v|^2\quad\text{for every }(x,v)\in \TX.
\end{gather*}
  \end{definition}
The following proposition (whose proof can be found in \cite{NaldiSavare}) summarizes some of the properties of the topology of $\prob_2^{sw}(\TX)$.

\begin{proposition}
    \label{prop:finalmente}
    \
    \begin{enumerate}
    \item If $(\Phi_n)_{n \in\mathbb N}\subset \prob_2(\TX)$ is
      a sequence and $\Phi \in \prob_2(\TX)$, then $\Phi_n\to\Phi$ in $\prob_2^{sw}(\TX)$ as $n\to+\infty$ if and only if 
      \begin{enumerate}
      \item $\Phi_n\to\Phi$ in $\prob(\TX)= \prob(\X^s\times \X^w)$,
      \item $\displaystyle \lim_{n \to + \infty}\int_{\TX} |x|^2\,\d\Phi_n(x,v)=\int_{\TX} |x|^2\,\d\Phi(x,v)$,
      \item $\displaystyle \sup_{n} \int_{\TX}|v|^2\,\d\Phi_n(x,v)<+\infty$.
      \end{enumerate}
    \item For every compact set $\mathcal K\subset \prob_2(\X^s)$ and
      every constant $c<+\infty$ the sets
      \begin{equation*}
        \mathcal K_c:=\Big\{\Phi\in \prob_2(\TX):
        \sfx_\sharp\Phi
        \in
      \mathcal K,\quad
      \int_{\TX} |v|^2\,\d\Phi(x,v)\le c\Big\}
    \end{equation*}
    are sequentially compact in
    $\prob_2^{sw}(\TX)$.
    \end{enumerate}
  \end{proposition}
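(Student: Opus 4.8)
The plan is to reduce everything to three elementary facts. First, $\rmC_b(\X^s\times\X^w)\subseteq\testsw{\TX}$, since a bounded function trivially satisfies the growth bound with $A_\eps=\|\zeta\|_\infty$ for all $\eps$. Second, the maps $(x,v)\mapsto|x|^2$ and, for $k\in\N$ and $R>0$, $(x,v)\mapsto|\mathbf P_kv|^2\wedge R$ belong to $\testsw{\TX}$, where $\mathbf P_k$ is the orthogonal projection of $\X$ onto the span of the first $k$ vectors of a fixed orthonormal basis: each is sequentially continuous on $\X^s\times\X^w$ because a finite dimensional projection sends weakly convergent sequences to strongly convergent ones, and each is dominated by $1+|x|^2+|v|^2$. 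Third, the closed balls of $\X$ are compact and metrizable for the weak topology. Granting these, in direction ``$\Rightarrow$'' of part (1) the narrow convergence $(a)$ follows by testing against $\rmC_b(\X^s\times\X^w)$ and $(b)$ by testing against $|x|^2$; for $(c)$ I would invoke the Banach--Steinhaus theorem for the functionals $L_n(\zeta):=\int\zeta\de\Phi_n$ on the Banach space $\testsw{\TX}$: being pointwise convergent they are equibounded, so $|L_n(\zeta)|\le C\|\zeta\|$ for some $C<\infty$ and all $n$; applying this to $\zeta_{k,R}:=1+|x|^2+(|\mathbf P_kv|^2\wedge R)$, whose norm is bounded uniformly in $k,R$ (being dominated by $1+|x|^2+|v|^2$), and letting $k,R\to\infty$ by monotone convergence yields $\sup_n\big(1+\sqm{\sfx_\sharp\Phi_n}+|\Phi_n|_2^2\big)<\infty$, hence $(c)$.

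For direction ``$\Leftarrow$'' of part (1), set $c:=\sup_n|\Phi_n|_2^2<\infty$; then also $|\Phi|_2^2\le c$, by lower semicontinuity of $\Psi\mapsto|\Psi|_2^2$ under narrow convergence (itself obtained by testing against the $|\mathbf P_kv|^2\wedge R$ and taking the supremum). Fix $\zeta\in\testsw{\TX}$ and $\eps>0$, choose $A_\eps$ with $|\zeta|\le A_\eps(1+|x|^2)+\eps|v|^2$, and let $\zeta^\eps$ be the truncation of $\zeta$ at the continuous levels $\pm A_\eps(1+|x|^2)$; then $|\zeta-\zeta^\eps|\le\eps|v|^2$ pointwise, so $\int|\zeta-\zeta^\eps|\de\Phi_n\le\eps c$ and likewise for $\Phi$. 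Next pick a strongly continuous cutoff $\chi_R\colon\X\to[0,1]$, equal to $1$ on $\{|x|\le R\}$ and to $0$ on $\{|x|\ge R+1\}$: since $(a)$ and $(b)$ give $\sfx_\sharp\Phi_n\to\sfx_\sharp\Phi$ in $(\prob_2(\X),W_2)$, the second moments of the $x$-marginals are uniformly integrable, whence $\int(1-\chi_R)|\zeta^\eps|\de\Phi_n\le A_\eps\sup_n\int_{\{|x|>R\}}(1+|x|^2)\de\Phi_n\to0$ as $R\to\infty$, uniformly in $n$, and similarly for $\Phi$. Finally $\zeta^\eps\chi_R$ is bounded and sequentially continuous, and since $(\Phi_n)$ is uniformly tight with mass concentrated, up to an error controlled by $c$, on products of an $\X^s$-compact set with a weakly compact metrizable ball of $\X$ (on which sequential continuity equals continuity), narrow convergence yields $\int\zeta^\eps\chi_R\de\Phi_n\to\int\zeta^\eps\chi_R\de\Phi$. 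Letting $R\to\infty$ and then $\eps\to0$ gives $\int\zeta\de\Phi_n\to\int\zeta\de\Phi$.

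For part (2), let $(\Phi_n)\subseteq\mathcal K_c$. The $x$-marginals lie in the $W_2$-compact set $\mathcal K$, hence form a tight family in $\X^s$; Markov's inequality gives $\sfv_\sharp\Phi_n(\{|v|>R\})\le c/R^2$, and since closed balls are weakly compact, $(\sfv_\sharp\Phi_n)$ is tight in $\X^w$; thus $(\Phi_n)$ is tight in $\X^s\times\X^w$, and by Prokhorov's theorem a subsequence converges narrowly to some $\Phi\in\prob(\X^s\times\X^w)$. Lower semicontinuity of $|\cdot|_2^2$ gives $|\Phi|_2^2\le c$, while $W_2$-compactness of $\mathcal K$ gives $\sfx_\sharp\Phi\in\mathcal K$ together with convergence of the second moments of the $x$-marginals; hence $(a)$, $(b)$, $(c)$ hold along the subsequence and, by direction ``$\Leftarrow$'' of part (1), the convergence takes place in $\prob_2^{sw}(\TX)$ with limit in $\mathcal K_c$, which is the asserted sequential compactness.

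I expect the main obstacle to be condition $(c)$ in direction ``$\Rightarrow$'': it cannot be detected by any single test function, since every element of $\testsw{\TX}$ grows strictly more slowly than $|v|^2$, so it genuinely requires the uniform boundedness principle together with the monotone approximating family $\zeta_{k,R}$. The recurring technical nuisance is the non-metrizability of $\X^w$, which is handled throughout by localizing to bounded balls, on which the weak topology is compact and metrizable so that sequential continuity and tightness may be used as in the metric setting.
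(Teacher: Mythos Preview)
The paper does not prove this proposition itself; it refers the reader to \cite{NaldiSavare}. Your direct argument follows the natural strategy and is essentially correct, but two points deserve more care.

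First, the Banach--Steinhaus step for condition~(c) tacitly relies on identifying the norm on $\testsw{\TX}$: your claim that $\|\zeta_{k,R}\|$ is uniformly bounded because $\zeta_{k,R}\le1+|x|^2+|v|^2$ implicitly uses the weighted sup-norm $\|\zeta\|:=\sup_{x,v}|\zeta(x,v)|/(1+|x|^2+|v|^2)$. You should state this explicitly and verify that $\testsw{\TX}$ is a closed subspace of the corresponding weighted space of sequentially continuous functions (it is: closedness of the growth condition and of sequential continuity under this norm both use that weakly convergent sequences in $\X$ are norm-bounded). With this norm the functionals $L_n$ are bounded with $\|L_n\|=1+\rsqm{\sfx_\sharp\Phi_n}^2+|\Phi_n|_2^2$, and your argument goes through.

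Second, the passage from narrow convergence in $\prob(\X^s\times\X^w)$ to convergence of integrals of bounded \emph{sequentially} continuous functions is, as you note, the genuine technical point. Your localization to a compact metrizable product $K\times B_{R'}$ is the right idea, but Tietze extension is not available on the non-normal space $\X^s\times\X^w$, so the sentence ``narrow convergence yields $\int\zeta^\eps\chi_R\de\Phi_n\to\int\zeta^\eps\chi_R\de\Phi$'' hides a step: you need to approximate $\zeta^\eps\chi_R|_{K\times B_{R'}}$ uniformly by functions that do extend to $C_b(\X^s\times\X^w)$, for instance finite sums $\sum f_i(x)g_i(\langle v,e_1\rangle,\ldots,\langle v,e_m\rangle)$ with $f_i\in C_b(\X)$ and $g_i\in C_b(\R^m)$, via Stone--Weierstrass on the compact metric space $K\times B_{R'}$. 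Once these two details are spelled out, your proof is complete.
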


\medskip

For the sequel, we recall the concept and main properties of geodesics in $\prob_2(\X)$. Given an interval $\interval\subset\R$, we denote equivalently by $\mu(t)$ or $\mu_t$ the evaluation at time
$t\in \interval$ of a curve $\mu:\interval\to \prob_2(\X)$.
\begin{definition}[Geodesics]\label{def:W2geodesic}
A curve $\mu:[0,1]\to\prob_2(\X)$ is said to be a \emph{(constant speed) geodesic} if for all $0\leq s\leq t\leq 1$
we have
\[W_2(\mu_s,\mu_t)=(t-s)W_2(\mu_0,\mu_1).\]
We also say that $\mu$ is a \emph{geodesic from $\mu_0$ to $\mu_1$}. 
\end{definition}

\begin{definition}[Geodesic and total convexity]
We say that $A\subset\prob_2(\X)$ is a \emph{geodesically convex} set if for any pair $\mu_0,\mu_1\in A$ there exists a geodesic $\mu:[0,1]\to\prob_2(\X)$ from $\mu_0$ to $\mu_1$ such that $\mu_t\in A$ for all $t\in[0,1]$.\\
We say that $A\subset\prob_2(\X)$ is \emph{totally convex}  if for any pair $\mu_0,\mu_1\in A$ and any coupling $\ggamma\in\Gamma(\mu_0,\mu_1)$, we have that $(\sfx^t)_\sharp\ggamma\in A$ for any $t\in [0,1]$.
\end{definition}
\begin{remark}
\label{rem:total-conv}
 Since total convexity will play a crucial role in the present paper, 
let us recall a few examples of totally convex sets in $\prob_2(\X)$, which
are induced by a lower semicontinuous and convex function
$P:\X\to (-\infty,+\infty]$ 
and a real number $c$: the sets of measures $\mu\in \prob_2(\X)$ satisfying one of the following conditions:
\begin{equation*}
        P\Big(\int_\X x\,\d\mu(x)\Big)\le c,\quad
        \int_\X P(x)\,\d\mu(x)\le c,\quad
        \int_{\X^2} P(x-y)\,\d\left(\mu\otimes \mu\right)(x,y)\le c.
\end{equation*}
    Clearly, one can replace large with strict inequalities in the previous
    formulae. Choosing $P$ as the indicator function of 
    a convex set $\mathsf U\subset \X$ (i.e.~$P(x)=0$ if $x\in \mathsf U$,
    $P(x)=+\infty$ otherwise), one obtains conditions confining
    the barycenter, $\supp\mu$, or 
    $\supp\mu-\supp\mu$ to a given set $\mathsf U$.
\end{remark}
The following useful result (see \cite[Theorem 7.2.1, Theorem 7.2.2]{ags}  for the first part and \cite[Lemma 5.29]{santambrogio} or the proof of \cite[Lemma 3.20]{CSS} for the last assertion) on geodesics also points out that total convexity is stronger than geodesic convexity.
\begin{theorem}[Properties of geodesics] \label{theo:chargeo}
Let $\mu_0,\mu_1\in \prob_2(\X)$ and $\mmu\in \Gamma_o(\mu_0,\mu_1)$.
Then $\mu:[0,1]\to\prob_2(\X)$ defined by
\begin{equation}\label{eq:charact_geodesic}
\mu_t := (\sfx^t)_{\sharp} \mmu,\quad t\in[0,1],
\end{equation}
is a (constant speed) geodesic from $\mu_0$ to $\mu_1$. Conversely, any (constant speed) geodesic  $\mu$ from $\mu_0$ to $\mu_1$ admits the representation~\eqref{eq:charact_geodesic} for a suitable plan $\mmu\in \Gamma_o(\mu_0,\mu_1)$.\\
If $\mu$ is a geodesic connecting $\mu_0$ to $\mu_1$, then for every $t \in (0,1)$ there exists a unique optimal plan $\mmu_{t0}$ between $\mu_t$ and $\mu_0$ (resp.~$\mmu_{t1}$ between $\mu_t$ and $\mu_1$) and it is concentrated on a map w.r.t.~$\mu_t$, meaning that there exist Borel maps $\rr_t, \rr'_t: \X \to \X$ such that 
\[ \mmu_{t0}= (\ii_\X, \rr_t)_\sharp \mu_t, \quad \mmu_{t1}= (\ii_\X, \rr_t')_\sharp \mu_t.\]
 Finally, the map $\sfx^t$ is $\mmu$-essentially injective. 
\end{theorem}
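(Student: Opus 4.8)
The plan is to establish the three assertions in turn. For the first one, given $\mmu\in\Gamma_o(\mu_0,\mu_1)$ and $\mu_t:=(\sfx^t)_\sharp\mmu$, the coupling $(\sfx^s,\sfx^t)_\sharp\mmu\in\Gamma(\mu_s,\mu_t)$ together with the identity $|\sfx^t(x_0,x_1)-\sfx^s(x_0,x_1)|=(t-s)|x_1-x_0|$ yields the upper bound $W_2(\mu_s,\mu_t)\le(t-s)W_2(\mu_0,\mu_1)$ for $0\le s\le t\le1$. Since the triangle inequality gives $W_2(\mu_0,\mu_1)\le W_2(\mu_0,\mu_s)+W_2(\mu_s,\mu_t)+W_2(\mu_t,\mu_1)$, whose right-hand side is at most $(s+(t-s)+(1-t))W_2(\mu_0,\mu_1)=W_2(\mu_0,\mu_1)$, all these inequalities must be equalities, whence $W_2(\mu_s,\mu_t)=(t-s)W_2(\mu_0,\mu_1)$ and $\mu$ is a constant speed geodesic.

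\emph{Converse representation.} Let $\mu$ be a constant speed geodesic from $\mu_0$ to $\mu_1$. For any partition $0=t_0<t_1<\dots<t_k<t_{k+1}=1$, choose $\mmu_i\in\Gamma_o(\mu_{t_i},\mu_{t_{i+1}})$ and glue them consecutively along their common marginals (iterating the gluing lemma) into a measure $\ssigma\in\prob(\X^{k+2})$ realizing each $\mmu_i$ as the two-dimensional marginal of consecutive coordinates. In the Hilbert space $L^2(\ssigma;\X)$ the $k+1$ successive coordinate increments have norms $(t_{i+1}-t_i)W_2(\mu_0,\mu_1)$, and their sum — the difference of the first and last coordinates — has norm at least $W_2(\mu_0,\mu_1)$; hence equality holds in Minkowski's inequality, the increments are pairwise nonnegatively proportional, and a short computation forces the $i$-th coordinate to equal $(1-t_i)$ times the first plus $t_i$ times the last, $\ssigma$-a.e. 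Therefore $\mmu:=(\pi^1,\pi^{k+2})_\sharp\ssigma\in\Gamma_o(\mu_0,\mu_1)$ (its cost is $W_2^2(\mu_0,\mu_1)$) and $\mu_{t_i}=(\sfx^{t_i})_\sharp\mmu$ for all $i$. Running this along an increasing exhaustion of a countable dense set $D\subset[0,1]$ containing $0,1$, and extracting a narrowly convergent subsequence from the tight — hence narrowly sequentially compact — set $\Gamma_o(\mu_0,\mu_1)$, produces a single $\mmu\in\Gamma_o(\mu_0,\mu_1)$ with $\mu_t=(\sfx^t)_\sharp\mmu$ for all $t\in D$; the $W_2$-continuity of $t\mapsto\mu_t$ and the narrow continuity of $t\mapsto(\sfx^t)_\sharp\mmu$ extend the identity to every $t\in[0,1]$.

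\emph{Uniqueness and map structure.} Fix $t\in(0,1)$; by the case $k=1$ above we may write $\mu_t=(\sfx^t)_\sharp\mmu$ with $\mmu\in\Gamma_o(\mu_0,\mu_1)$. If $(x_0,x_1),(x_0',x_1')\in\supp\mmu$ satisfy $\sfx^t(x_0,x_1)=\sfx^t(x_0',x_1')$, then $(1-t)(x_0-x_0')=-t(x_1-x_1')$; inserting this into the two-point case of the cyclical monotonicity inequality \eqref{eq:3}, namely $\langle x_1-x_1',x_0-x_0'\rangle\ge0$, gives $-\tfrac{t}{1-t}|x_1-x_1'|^2\ge0$, so $x_0=x_0'$ and $x_1=x_1'$. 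Thus $\sfx^t$ is injective on $\supp\mmu$, and a standard Borel-measurability argument (cf.~\cite{ags}) yields a Borel map $\rr_t:\X\to\X$ with $(\ii_\X,\rr_t)_\sharp\mu_t=(\sfx^t,\pi^1)_\sharp\mmu=:\mmu_{t0}$, which is optimal since its cost $t^2W_2^2(\mu_0,\mu_1)$ equals $W_2^2(\mu_t,\mu_0)$ by the first step. For uniqueness, any $\ggamma\in\Gamma_o(\mu_t,\mu_0)$ can be glued along the $\mu_t$-marginal with some $\ggamma'\in\Gamma_o(\mu_t,\mu_1)$; the same Minkowski-equality argument shows the resulting three-dimensional measure is concentrated on $\{x_t=(1-t)x_0+tx_1\}$, hence $\ggamma=(\sfx^t,\pi^1)_\sharp\eta$ for some optimal $\eta\in\Gamma_o(\mu_0,\mu_1)$ with $(\sfx^t)_\sharp\eta=\mu_t$. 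Since the cost is linear, the average of two such plans $\eta$ is again optimal with $t$-section $\mu_t$ and contains both of their supports; applying the injectivity property to it shows the corresponding ancestor maps coincide $\mu_t$-a.e., so $\Gamma_o(\mu_t,\mu_0)=\{(\ii_\X,\rr_t)_\sharp\mu_t\}$. Applying the whole argument to the reversed geodesic $s\mapsto\mu_{1-s}$ gives the analogous conclusions for $\mmu_{t1}$ and a Borel map $\rr_t'$.

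\emph{Main obstacle.} The triangle- and Minkowski-inequality manipulations are routine; the two genuinely delicate points are the passage from finitely many times to a single coupling valid for all $t$ in the converse step — which rests on the narrow compactness of $\Gamma_o(\mu_0,\mu_1)$ together with a diagonal extraction and the continuity of $t\mapsto\mu_t$ — and, in the last step, the uniqueness of $\mmu_{t0}$, where the convex-combination trick together with the Borel measurability of the selection $\rr_t$ are the non-obvious ingredients.
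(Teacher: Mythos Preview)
The paper does not give its own proof of this result; it simply cites \cite[Theorem 7.2.1, Theorem 7.2.2]{ags}. Your self-contained argument is correct and follows essentially the same route as the proofs in \cite{ags}: the interpolating coupling together with the triangle inequality for the forward implication; gluing, equality in Minkowski and narrow compactness of $\Gamma_o(\mu_0,\mu_1)$ for the converse; and injectivity of $\sfx^t$ on the cyclically monotone support of an optimal plan (plus the convex-combination trick) for the map structure and uniqueness.
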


\medskip

The following defines the counterpart of $\rmC^{\infty}_c(\R^d)$ when $\R^d$ is replaced by $\X$.
\begin{definition}[The space $\Cyl(\X)$ of cylindrical functions]\label{def:Cyl}
 Given $d \in \N$, we  denote by $L_d(\X)$  the space of  all  linear maps $\pi:\X\to
\R^d$ of the form $\pi(x)=(\la x,e_1\ra,\cdots,\la x,e_d\ra )$
where $\{e_1,\dots,e_d\}$ is any orthonormal family of vectors in  $\X$.
A function $\varphi: \X \to \R$ belongs to the space of cylindrical functions on $\X$, $\Cyl(\X)$, if it is of the form
\[ \varphi = \psi \circ \pi\]
where $\pi\in  L_d(\X)$ and $\psi \in \mathrm C^\infty_c(\R^d)$  for some $d \in \N$. 
\end{definition}

Given $\nu\in\prob_2(\X)$, we define the tangent space to $\prob_2(\X)$ at $\nu$ by
\[\Tan_{\nu}\prob_2(\X) :={} \overline{\{ \nabla \varphi \mid
    \varphi \in \Cyl(\X) \}}^{L^2(\X,\nu;\X)}.\]
If $\interval\subset\R$ is an open interval and $\mu:\interval \to\prob_2(\X)$ is a locally absolutely
continuous curve, we define the \emph{metric velocity} of $\mu$ at $t\in\interval$ as
\[|\dot \mu_t|^2:=\lim_{h\to
    0}\frac{W_2^2(\mu_{t+h},\mu_t)}{h^2},\]
which exists for a.e. $t\in\interval$.

The following result (see \cite[Theorem 8.3.1, Proposition
8.4.5 and Proposition 8.4.6]{ags}) characterizes
locally absolutely continuous curves in $\prob_2(\X)$.

\begin{theorem}[Wasserstein velocity field]
  \label{thm:tangentv}
Let $\mu:\interval \to\prob_2(\X)$ be a locally absolutely
continuous curve defined in an open interval $\interval\subset \R$.
There exists a Borel vector field
$\vv:\interval\times \X\to \X$
and
a set $A(\mu) \subset \interval$ with
$\leb(\interval \setminus A(\mu))=0$ such that
for every $t\in A(\mu)$ the following hold
\begin{enumerate}
    \item $\vv_t\in\Tan_{\mu_t}\prob_2(\X)$;
    \item $\int_{\X} |\vv_t|^2\,\d\mu_t=|\dot \mu_t|^2$;
    \item the continuity equation $\partial_t\mu_t+\nabla\cdot(\vv_t\mu_t)=0$
holds in the sense of distributions in $\interval\times \X$, i.e.
\begin{equation*}
    \frac\d{\d t}\int_\X \zeta\,\d\mu_t=
    \int_\X \langle \vv_t(x),\nabla\zeta(x)\rangle
    \,\d\mu_t(x)
    \quad\text{for every $\zeta\in \Cyl(\X)$ and a.e.~$t\in\interval$.}
\end{equation*}
\end{enumerate}
Moreover, $\vv_t$ is uniquely determined
in $L^2(\X,\mu_t;\X)$ for $t\in A(\mu)$ and 
\begin{equation*}
 \lim_{h \to 0} \frac{W_2((\ii_\X+h\vv_t)_{\sharp}\mu_t,
   \mu_{t+h})}{|h|} =0 \quad \text{for every }t \in A(\mu).
\end{equation*}
\end{theorem}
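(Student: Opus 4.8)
The plan is to reproduce the classical argument behind \cite[Theorem~8.3.1 and Propositions~8.4.5--8.4.6]{ags}, in four steps: build a candidate velocity field out of rescaled optimal plans and bound its norm by $|\dot\mu_t|$; prove the reverse bound, so that equality holds and the continuity equation is satisfied; deduce uniqueness in $L^2(\X,\mu_t;\X)$; and finally upgrade the continuity equation to the strong first-order expansion of $W_2$.

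\emph{Construction and the bound $\|\vv_t\|_{L^2(\mu_t)}\le|\dot\mu_t|$.} First I would observe that for $\varphi\in\Cyl(\X)$ and $s<t$, choosing $\mmu\in\Gamma_o(\mu_s,\mu_t)$ gives $|\int\varphi\,\d\mu_t-\int\varphi\,\d\mu_s|\le\|\nabla\varphi\|_\infty W_2(\mu_s,\mu_t)\le\|\nabla\varphi\|_\infty\int_s^t|\dot\mu_r|\,\d r$, so $t\mapsto\int\varphi\,\d\mu_t$ is locally absolutely continuous. Let $A(\mu)$ be the full-measure set of $t$ at which $|\dot\mu_t|$ exists and $t\mapsto\int\varphi\,\d\mu_t$ is differentiable for every $\varphi$ in a fixed countable $\rmC^1$-dense subset of $\Cyl(\X)$. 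Fixing $t\in A(\mu)$, for $h\to0$ I pick $\mmu_h\in\Gamma_o(\mu_t,\mu_{t+h})$ and let $\Psi_h\in\prob(\TX)$ be the image of $\mmu_h$ under $(x,y)\mapsto(x,(y-x)/h)$; then $\sfx_\sharp\Psi_h=\mu_t$ and $|\Psi_h|_2^2=W_2^2(\mu_t,\mu_{t+h})/h^2$ stays bounded, so by Proposition~\ref{prop:finalmente}(2) a subsequence converges in $\prob_2^{sw}(\TX)$ to some $\Psi$ with $\sfx_\sharp\Psi=\mu_t$ and $|\Psi|_2^2\le|\dot\mu_t|^2$. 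A second-order Taylor expansion of $\varphi(y)-\varphi(x)$ together with $\int|y-x|^2\,\d\mmu_h=O(h^2)$ lets me pass to the limit in $\tfrac1h\int[\varphi(y)-\varphi(x)]\,\d\mmu_h$ and obtain $\tfrac{\d}{\d t}\int\varphi\,\d\mu_t=\int_{\TX}\la\nabla\varphi(x),v\ra\,\d\Psi$; taking $\vv_t:=\bry\Psi$ and then projecting it onto $\Tan_{\mu_t}\prob_2(\X)$ (which leaves the pairings with gradients unchanged and only decreases the norm) I obtain $\vv_t\in\Tan_{\mu_t}\prob_2(\X)$, satisfying the continuity equation tested against $\Cyl(\X)$ — the correct notion in infinite dimensions — with $\|\vv_t\|_{L^2(\mu_t)}\le|\dot\mu_t|$.

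\emph{The reverse bound, properties \textup{(1)--(3)}, and uniqueness.} Conversely I would prove that any Borel field $\ww$ with $\int_\interval\|\ww_t\|_{L^2(\mu_t)}\,\d t<\infty$ solving the continuity equation forces $W_2(\mu_s,\mu_t)\le\int_s^t\|\ww_r\|_{L^2(\mu_r)}\,\d r$: in $\R^d$ by mollifying $\ww$ in space and transporting along the resulting smooth flow, and in the Hilbertian case by reducing to finite-dimensional cylindrical projections and passing to the limit. Applied to $\vv_t$ this yields $|\dot\mu_t|\le\|\vv_t\|_{L^2(\mu_t)}$, hence equality (property (2)), while (1) and (3) are already in hand. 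Uniqueness is then immediate: if $\tilde\vv_t$ is another solution, $\int\la\tilde\vv_t-\vv_t,\nabla\varphi\ra\,\d\mu_t=0$ for all $\varphi\in\Cyl(\X)$, so $\tilde\vv_t-\vv_t\perp\Tan_{\mu_t}\prob_2(\X)$ in $L^2(\X,\mu_t;\X)$, and if $\tilde\vv_t$ lies in that subspace it coincides with $\vv_t$. Uniqueness also forces the whole family $\Psi_h$ above to converge (not just a subsequence), which, carried out through the countable determining family of test functions, gives the joint Borel measurability of $(t,x)\mapsto\vv_t(x)$.

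\emph{The strong expansion and the main difficulty.} Finally, fixing $t\in A(\mu)$ and $\mmu_h\in\Gamma_o(\mu_t,\mu_{t+h})$, I would estimate
\begin{align*}
W_2^2\big((\ii_\X+h\vv_t)_\sharp\mu_t,\mu_{t+h}\big)
&\le\int_{\X\times\X}|x+h\vv_t(x)-y|^2\,\d\mmu_h\\
&=\int|y-x|^2\,\d\mmu_h-2h\int\la\vv_t(x),y-x\ra\,\d\mmu_h+h^2\int_\X|\vv_t|^2\,\d\mu_t.
\end{align*}
The first term is $W_2^2(\mu_t,\mu_{t+h})=h^2\|\vv_t\|_{L^2(\mu_t)}^2+o(h^2)$ by (2) and $t\in A(\mu)$; for the cross term I approximate $\vv_t$ in $L^2(\X,\mu_t;\X)$ by $\nabla\varphi$, $\varphi\in\Cyl(\X)$, use $\tfrac1h\int\la\nabla\varphi(x),y-x\ra\,\d\mmu_h=\tfrac1h\int[\varphi(y)-\varphi(x)]\,\d\mmu_h+O(h)\to\tfrac{\d}{\d t}\int\varphi\,\d\mu_t=\int\la\vv_t,\nabla\varphi\ra\,\d\mu_t$, and control the approximation error by $\|\vv_t-\nabla\varphi\|_{L^2(\mu_t)}\cdot W_2(\mu_t,\mu_{t+h})/h$, to conclude $\tfrac1h\int\la\vv_t(x),y-x\ra\,\d\mmu_h\to\|\vv_t\|_{L^2(\mu_t)}^2$. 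Substituting, the right-hand side equals $h^2\|\vv_t\|^2-2h^2\|\vv_t\|^2+h^2\|\vv_t\|^2+o(h^2)=o(h^2)$, which gives the stated limit. The steps I expect to be most delicate are the joint Borel measurability in $(t,x)$ of the velocity field (hence the need for a countable determining family of cylindrical functions together with the uniqueness just proved) and the reverse inequality in the infinite-dimensional setting, where the mollification/characteristics argument must be run on finite-dimensional projections and stabilized as the dimension grows.
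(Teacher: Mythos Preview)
Your proposal is correct and follows exactly the route the paper points to: the paper does not give an independent proof of this theorem but simply cites \cite[Theorem~8.3.1, Propositions~8.4.5 and~8.4.6]{ags}, and the four steps you outline (tangent construction via rescaled optimal plans, the reverse inequality from the continuity-equation/characteristics argument, uniqueness by orthogonality to the tangent space, and the $o(h)$ first-order expansion) are precisely the contents of those references.
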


\subsection{Duality pairings}\label{sec:prelimCSS}
In this subsection we collect the main objects involving
duality pairings between measures in $\prob_2(\TX).$
We report here a summary of the results needed in the sequel and we refer to \cite{CSS} for a wider discussion on this matter.

As usual, we denote by $\sfx^0,\sfv^0,\sfx^1:\TX\times\X\to\X$ the projection maps of a point $(x_0,v_0,x_1)$ into $x_0$, $v_0$ or $x_1$, respectively (and similarly with $\sfx^0,\sfv^0,\sfx^1,\sfv^1$ when they are defined in $\TX\times\TX$).
\begin{definition}[Metric-duality pairings] \label{def:pairings} For every $\Phi_0, \Phi_1 \in \prob_2(\TX)$, $\mu_1 \in \prob_2(\X)$, $\ttheta\in \prob_2(\X \times \X)$, $t \in [0,1]$ and $\Psi \in \relcP2{\sfx^t_\sharp \ttheta}{\TX}$, we set
\begin{align*}
    \Lambda(\Phi_0, \mu_1)&:= \left \{ \ssigma \in \Gamma(\Phi_0, \mu_1)
    \mid
    (\sfx^0,\sfx^1)_{\sharp}\ssigma \in  \Gamma_o(\sfx_\sharp \Phi_0, \mu_1) \right
      \},\\
      \Lambda(\Phi_0, \Phi_1)&:= \left \{ \Ttheta \in \Gamma(\Phi_0, \Phi_1)
                         \mid (\sfx^0,\sfx^1)_{\sharp} \Ttheta\in
                          \Gamma_o(\sfx_\sharp \Phi_0, \sfx_\sharp \Phi_1)
    \right \},\\
    \Gamma_t(\Psi,\ttheta)&:=
    \left \{ \ssigma \in \prob_2(\TX\times \X)
    \mid
    (\sfx^0,\sfx^1)_{\sharp}\ssigma =\ttheta,\quad
    (\sfx^t \circ(\sfx^0,\sfx^1),\sfv^0)_\sharp \ssigma=\Psi\right \}.
  \end{align*}
  We set
  \begin{align*}
    \bram{\Phi_0}{\mu_1} &:= \min \left \{ \int_{\TX \times
                                   \X} \scalprod{x_0-x_1}{v_0} \de \ssigma
                                   \mid \ssigma \in \Lambda(\Phi_0,
                                   \mu_1) \right \}, \\
    \brap{\Phi_0}{\mu_1} &:= \max \left \{ \int_{\TX \times
                                   \X} \scalprod{x_0-x_1}{v_0} \de \ssigma
                                   \mid \ssigma \in \Lambda(\Phi_0,
                                   \mu_1) \right \}, \\
\bram{\Phi_0}{\Phi_1} &:= \min \left \{ \int_{\TX \times \TX} \scalprod{x_0-x_1}{v_0-v_1} \de \Ttheta \mid \Ttheta \in \Lambda(\Phi_0, \Phi_1) \right \},\\
\brap{\Phi_0}{\Phi_1} &:= \max \left \{ \int_{\TX \times \TX} \scalprod{x_0-x_1}{v_0-v_1} \de \Ttheta \mid \Ttheta \in \Lambda(\Phi_0, \Phi_1) \right \},\\
    \directionalm{\Psi}{\ttheta}t
    & := \min \left \{ \int_{\TX\times\X}
      \scalprod{x_0-x_1}{v_0} \de \ssigma(x_0,v_0,x_1)
      \mid \ssigma \in \Gamma_t(\Psi,\ttheta)
      \right \}, \\
    \directionalp{\Psi}{\ttheta}t
    & := \max \left \{ \int_{\TX\times\X}
      \scalprod{x_0-x_1}{v_0} \de \ssigma(x_0,v_0,x_1)
      \mid \ssigma \in \Gamma_t(\Psi,\ttheta)
      \right \}.
  \end{align*}
\end{definition}
The following theorem summarizes some of the properties of duality pairings analyzed in \cite{CSS}.
\begin{theorem}\label{thm:all} The following properties hold.
\begin{enumerate}
\item \emph{(Inversion)} For every $\ttheta\in \prob_2(\X^2)$, $t\in[0,1]$,
  $\Psi\in \relcP2{\sfx^t_\sharp \ttheta}\TX$ it holds 
\[\directionalm{\Psi}{\ttheta}t=- \directionalp{\Psi}{\mathsf s_\sharp\ttheta}{1-t},\]
where $\mathsf s$ is as in \eqref{eq:switch}.
\item \emph{(Comparison)} For every $\mu_0, \mu_1 \in \prob_2(\X)$ and every $\Phi_0\in\relcP2{\mu_0}\TX$, $\Phi_1\in\relcP2{\mu_1}\TX$, it holds
\begin{alignat*}{2}
&\bram{\Phi_0}{\mu_1} = \min_{\ttheta \in \Gamma_o(\mu_0, \mu_1)} \directionalm{\Phi_0}{\ttheta}0, \quad  &&\brap{\Phi_0}{\mu_1} = \max_{\ttheta \in \Gamma_o(\mu_0, \mu_1)} \directionalp{\Phi_0}{\ttheta}0,\\
&\bram{\Phi_0}{\mu_1} + \bram{\Phi_1}{\mu_0} \le \bram{\Phi_0}{\Phi_1}, \quad &&\brap{\Phi_0}{\mu_1} + \brap{\Phi_1}{\mu_0} \ge \brap{\Phi_0}{\Phi_1},
\end{alignat*}
and
\begin{equation*}
 \bram{\Phi_0}{\Phi_1}\le \bram{\Phi_0}{\mu_1} + \brap{\Phi_1}{\mu_0} \le \brap{\Phi_0}{\Phi_1}.
\end{equation*}
\item \emph{(Restriction)} For every $\ttheta \in \prob_2(\X^2)$, every $0 \le s < t \le 1$ and every $\Phi \in \relcP{2}{\sfx^s_\sharp \ttheta}{\TX}$, $\Psi \in \relcP{2}{\sfx^t_\sharp \ttheta}{\TX}$ we have 
\begin{equation} \label{eq:restrsp} 
\directionalm {\Phi} \ttheta s = \frac{1}{t-s} \directionalm{\Phi}{(\sfx^s, \sfx^t)_{\sharp}\ttheta}{0}, \quad \directionalp {\Psi} \ttheta t = \frac{1}{t-s} \directionalp{\Psi}{(\sfx^s, \sfx^t)_{\sharp}\ttheta}{1}.
\end{equation}
 
\item \emph{(Trivialization)} If $\ttheta\in \prob_2(\X^2)$, $t\in[0,1]$,
  $\Psi\in \relcP2{\sfx^t_\sharp \ttheta}\TX$ and $\sfx^t:\X^2\to\X$ is
  $\ttheta$-essentially injective
  or $\Psi$ is concentrated on a map,
  then $\Gamma_t(\Psi,\ttheta)$ 
  contains a unique element and 
  \begin{equation}
    \label{eq:29}
    \directionalm{\Psi}{\ttheta}t=
    \directionalp{\Psi}{\ttheta}t=\int_{\X^2}\scalprod{\bry{\Psi}\left(\sfx^t(x_0,x_1)\right)}{x_0-x_1}\de\ttheta(x_0,x_1),
  \end{equation}
  with $\bry{\Psi}$ the barycenter of $\Psi$ as in Definition \ref{def:wassmom}.
  
  \item \emph{(Semicontinuity)} Let $(\Phi_n^i)_{n\in \N} \subset \prob_2(\TX)$ be
  converging to $\Phi^i$ in $\prob_2^{sw}(\TX)$, $i=0,1$, let $(\ttheta_n)_{n \in \N} \subset \prob_2(\X^2)$ be converging to $\ttheta$ in $\prob_2(\X^2)$, let $(\nu_n )_{n\in\N} \subset \prob_2(\X)$ be converging to $\nu$ in $
  \prob_2(\X) $ and let $t \in [0,1]$. Then
\begin{alignat*}{2}
    &\liminf_{n\to+\infty} \bram{\Phi^0_n}{\nu_n} \ge \bram{\Phi^0}{\nu}, \quad &&\limsup_{n\to+\infty} \brap{\Phi^0_n}{\nu_n} \le \brap{\Phi^0}{\nu},\\
 &\liminf_{n\to+\infty}\bram{\Phi^0_n}{\Phi^1_n}\ge
    \bram{\Phi^0}{\Phi^1},\quad
    &&\limsup_{n\to+\infty}\brap{\Phi^0_n}{\Phi^1_n}\le
    \brap{\Phi^0}{\Phi^1},\\
     &\liminf_{n\to+\infty}\directionalm{\Phi_n^0}{\ttheta_n}t\ge
    \directionalm{\Phi^0}{\ttheta}t,\quad
    &&\limsup_{n\to+\infty}\directionalp{\Phi_n^0}{\ttheta_n}t\le
    \directionalp{\Phi^0}{\ttheta}t.
\end{alignat*}

\item  Let $\interval \subset \R$ be an open interval, let $\mu^1,\mu^2:\interval
  \to \prob_2(\X)$ be locally absolutely continuous curves and let $\vv^1,\vv^2: \interval \times \X\to \X$ be Borel vector fields such that $\|\vv^i_t\|_{L^2(\X,\mu^i_t; \X)} \in L^1_{loc}(\interval)$, $i=1,2$, and such that
  \[\partial_t\mu^i_t+\nabla\cdot(\vv^i_t\mu^i_t)=0\]
holds in the sense of distributions in $\interval\times \X$, $i=1,2$. Let $A({\mu^1}), A({\mu^2})\subset\interval$ be as in Theorem \ref{thm:tangentv}. Then
\begin{enumerate}
    \item for every $\nu \in \prob_2(\X)$ and every $t \in A(\mu^i)$, $i=1,2$, it holds
    \begin{align*}
\lim_{h \downarrow 0} \frac{W_2^2(\mu^i_{t+h}, \nu)-W_2^2(\mu^i_t,
  \nu)}{2h}
      &= \bram{(\ii_\X , \vv^i_t)_{\sharp}\mu^i_t}{\nu},\\
      \notag\lim_{h \uparrow 0} \frac{W_2^2(\mu^i_{t+h}, \nu)-W_2^2(\mu^i_t, \nu)}{2h}                                                  &=\brap{(\ii_\X , \vv^i_t)_{\sharp}\mu^i_t}{\nu};
\end{align*}
    \item there exists a subset $A\subset A({\mu^1})\cap A({\mu^2})$ of full Lebesgue
measure
such that $s \mapsto W_2^2(\mu^1_s, \mu^2_s)$ is
differentiable in $A$ and for every $t\in A$ it holds
\begin{equation*}
  \begin{aligned}
    \frac12\frac{\d}{\d t}W_2^2(\mu^1_t,\mu^2_t)
    &= \bram{(\ii_\X ,
      \vv^1_t)_{\sharp}\mu^1_t}{(\ii_\X , \vv^2_t)_{\sharp}\mu^2_t}=
    \brap{(\ii_\X , \vv^1_t)_{\sharp}\mu^1_t}{(\ii_\X ,
      \vv^2_t)_{\sharp}\mu^2_t}.
  \end{aligned}
\end{equation*}
\end{enumerate}
\end{enumerate}
\end{theorem}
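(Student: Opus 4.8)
These statements are all proved in \cite{CSS}; here I outline how one would reprove them, grouping the items by the type of argument. \emph{Items (1), (3), (4) are changes of variables.} For the \emph{inversion} identity the plan is to exploit the affine relation $\sfx^t=\sfx^{1-t}\circ\mathsf s$ on $\X^2$: pushing a competitor $\ssigma\in\Gamma_t(\Phi,\ttheta)$ forward through $(x_0,v_0,x_1)\mapsto(x_1,v_0,x_0)$ produces a competitor in $\Gamma_{1-t}(\Phi,\mathsf s_\sharp\ttheta)$ and replaces the integrand $\scalprod{x_0-x_1}{v_0}$ by its opposite, so that the minimum over the first constraint set equals the negative of the maximum over the second. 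For \emph{restriction} one instead pushes $\ssigma\in\Gamma_s(\Phi,\ttheta)$ forward through $(x_0,v_0,x_1)\mapsto(\sfx^s(x_0,x_1),v_0,\sfx^t(x_0,x_1))$; since $(x_0,x_1)\mapsto(\sfx^s(x_0,x_1),\sfx^t(x_0,x_1))$ is invertible on $\X^2$ (its $2\times2$ coefficient matrix has determinant $t-s\neq0$), this is a bijection onto $\Gamma_0(\Phi,(\sfx^s,\sfx^t)_\sharp\ttheta)$, and the identity $\sfx^s(x_0,x_1)-\sfx^t(x_0,x_1)=(t-s)(x_0-x_1)$ yields the factor $\tfrac1{t-s}$, with the parallel statement for the upper pairing obtained the same way. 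For \emph{trivialization} one disintegrates $\ssigma\in\Gamma_t(\Phi,\ttheta)$ with respect to $(x_0,x_1)$: when $\sfx^t$ is $\ttheta$-essentially injective, or $\Phi$ is concentrated on a map, the constraint $(\sfx^t\circ(\sfx^0,\sfx^1),\sfv^0)_\sharp\ssigma=\Phi$ forces the conditional law of $v_0$ given $(x_0,x_1)$ to coincide with $\Phi_{\sfx^t(x_0,x_1)}$, hence the competitor is unique, and integrating out $v_0$ against its conditional mean gives \eqref{eq:29}.

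\emph{Item (2) is the comparison.} I would first unfold $\Lambda(\Phi_0,\mu_1)=\bigcup_{\ttheta\in\Gamma_o(\sfx_\sharp\Phi_0,\mu_1)}\Gamma_0(\Phi_0,\ttheta)$, which directly yields the two identities writing $\bram{\Phi_0}{\mu_1}$ and $\brap{\Phi_0}{\mu_1}$ as infimum/supremum over $\Gamma_o$ (attained thanks to compactness of $\Gamma_o$ and the semicontinuity in item (5)). For $\bram{\Phi_0}{\mu_1}+\bram{\Phi_1}{\mu_0}\le\bram{\Phi_0}{\Phi_1}$ one takes an optimal $\Ttheta\in\Lambda(\Phi_0,\Phi_1)$, observes that $(\sfx^0,\sfv^0,\sfx^1)_\sharp\Ttheta$ and $(\sfx^1,\sfv^1,\sfx^0)_\sharp\Ttheta$ are admissible competitors for the two left-hand pairings (the invariance of $\Gamma_o$ under $\mathsf s$ being used here), and splits $\scalprod{x_0-x_1}{v_0-v_1}=\scalprod{x_0-x_1}{v_0}+\scalprod{x_1-x_0}{v_1}$; the same computation with maxima gives the reversed inequality. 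The delicate estimate is $\bram{\Phi_0}{\Phi_1}\le\bram{\Phi_0}{\mu_1}+\brap{\Phi_1}{\mu_0}\le\brap{\Phi_0}{\Phi_1}$: here one glues a minimizing competitor for $\bram{\Phi_0}{\mu_1}$, carried by some $\mmu\in\Gamma_o(\mu_0,\mu_1)$, with a conditional of $\Phi_1$ along $\mathsf s_\sharp\mmu$ through the gluing lemma, and compares with the restriction of $\bram{\Phi_0}{\Phi_1}$ to that single plan; the obstacle is that the minimizing plan for $\bram{\Phi_0}{\mu_1}$ and the maximizing plan for $\brap{\Phi_1}{\mu_0}$ need not coincide, so the argument has to be arranged so that only one optimal plan is used at a time.

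\emph{Items (5) and (6).} For \emph{semicontinuity} the plan is a compactness argument: by Proposition \ref{prop:finalmente} the constraint sets $\Lambda(\cdot,\cdot)$ and $\Gamma_t(\cdot,\cdot)$ are sequentially compact in the strong--weak topology (their $\sfx$-marginals lie in a fixed compact set and their velocity second moments are bounded in terms of $|\Phi^i_n|_2$), while $(x_0,x_1,v_0)\mapsto\scalprod{x_0-x_1}{v_0}$ is sequentially continuous on $\X^s\times\X^s\times\X^w$ with quadratic growth controlled only in $v_0$, hence an admissible test function; extracting a convergent subnet of near-optimal competitors and passing to the limit in the constraints gives all six inequalities. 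For (6)(a) one combines Theorem \ref{thm:tangentv}, which gives $W_2\big((\ii_\X+h\vv^i_t)_\sharp\mu^i_t,\mu^i_{t+h}\big)=o(h)$ for $t\in A(\mu^i)$, with the elementary bound $W_2^2\big((\ii_\X+h\vv)_\sharp\mu,\nu\big)\le W_2^2(\mu,\nu)+2h\!\int\!\scalprod{\vv(x)}{x-y}\de\mmu+h^2\!\int\!|\vv(x)|^2\de\mu(x)$ valid for every $\mmu\in\Gamma_o(\mu,\nu)$; optimizing over $\mmu$ yields the $\limsup$ bound, a matching $\liminf$ bound follows from limits of optimal plans $\mmu_h\in\Gamma_o((\ii_\X+h\vv)_\sharp\mu,\nu)$, and the common value is identified with $\bram{(\ii_\X,\vv)_\sharp\mu}{\nu}$ via trivialization since $(\ii_\X,\vv)_\sharp\mu$ is deterministic. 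Item (6)(b) then follows by inserting the intermediate term $W_2^2(\mu^1_{s+h},\mu^2_s)$, applying (6)(a) in each variable, and using that $s\mapsto W_2^2(\mu^1_s,\mu^2_s)$ is locally absolutely continuous, hence differentiable a.e., so its left and right derivatives coincide at a.e.\ $s$, which forces $\bram{\cdot}{\cdot}=\brap{\cdot}{\cdot}$ at those points. I expect the mixed inequality in (2) and the coincidence of the two pairings in (6)(b) to be the two steps requiring the most care.
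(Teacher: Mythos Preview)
Your proposal is correct and follows essentially the same approach as the paper, which in turn refers the details to \cite{CSS}. In particular, the only step the paper spells out in full is precisely the mixed inequality $\bram{\Phi_0}{\Phi_1}\le\bram{\Phi_0}{\mu_1}+\brap{\Phi_1}{\mu_0}$ in item~(2), and the argument given there is exactly your gluing construction: take $\ssigma\in\Lambda(\Phi_0,\mu_1)$ optimal for $\bram{\Phi_0}{\mu_1}$, glue with $\Phi_1$ along the $\sfx^1$-marginal to get $\Ttheta\in\Lambda(\Phi_0,\Phi_1)$, split the integrand, and bound the second piece by $\brap{\Phi_1}{\mu_0}$ since $(\sfx^1,\sfv^1,\sfx^0)_\sharp\Ttheta$ is an admissible competitor---so your worry that ``only one optimal plan is used at a time'' is resolved exactly as you anticipate. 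Your change-of-variables treatment of item~(3) via the map $T(x_0,v_0,x_1)=(\sfx^s(x_0,x_1),v_0,\sfx^t(x_0,x_1))$ also matches the paper's sketch verbatim.
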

\begin{proof} We give a few references for the proofs. Property (1) is \cite[(3.27)]{CSS}. Property (2) comes from the definition and \cite[Corollary 3.7]{CSS}. We sketch the proof only for the last property in (2): take $\ssigma \in \Lambda(\Phi_0,\mu_1)$ such that
\[\bram{\Phi_0}{\mu_1}=\int_{\TX \times\X} \scalprod{x_0-x_1}{v_0} \de \ssigma,\]
and consider $\Ttheta\in\prob_2(\TX\times\TX)$ such that $(\sfx^0,\sfv^0,\sfx^1)_\sharp\Ttheta=\ssigma$ and $(\sfx^1,\sfv^1)_\sharp\Ttheta=\Phi_1$. { Notice that such a measure $\Ttheta$ exists by disintegration and gluing arguments.} Then $\Ttheta\in\Lambda(\Phi_0,\Phi_1)$, so that
\begin{align*}
\bram{\Phi_0}{\Phi_1}&\le\int_{\TX \times\TX} \scalprod{x_0-x_1}{v_0-v_1} \de \Ttheta\\
&=\int_{\TX \times\X} \scalprod{x_0-x_1}{v_0} \de \ssigma+\int_{\TX \times\TX} \scalprod{x_1-x_0}{v_1} \de \Ttheta\\
&\le \bram{\Phi_0}{\mu_1}+\brap{\Phi_1}{\mu_0}.    
\end{align*}
The strategy for proving the remaining inequality in (2) is identical.

Assertion (3) follows from the fact that, if we define $T^{s,t}: \TX \times \X \to \TX \times \X$ and $\mathcal{L}: \prob_2(\TX \times \X) \to \R$ as
  \[ T^{s,t}(x_0, v_0, x_1) := (\sfx^s(x_0, x_1), v_0, \sfx^t(x_0, x_1)), \quad \quad \mathcal{L}(\ssigma) := \int_{\TX \times \X} \scalprod{v_0}{x_0-x_1}\de \ssigma(x_0, v_0, x_1), \]
it is clear that 
 \[ \directionalm {\Phi} \mmu s = \inf \left \{ \mathcal{L}(\ssigma) \mid \ssigma \in \Gamma_s( \Phi, \mmu) \right \}, \quad \directionalm{\Phi}{(\sfx^s, \sfx^t)_{\sharp}\mmu}{0} = \inf \left \{ \mathcal{L}(\ssigma) \mid \ssigma \in \Gamma_0( \Phi, (\sfx^s, \sfx^t)_{\sharp}\mmu) \right \}.\]
Then, the first equality in the statement follows noting that $T^{s,t}_{\sharp}(\Gamma_s( \Phi, \mmu))=\Gamma_0( \Phi, (\sfx^s, \sfx^t)_{\sharp}\mmu)$ and that $\mathcal{L}(T^{s,t}_{\sharp} \ssigma) = (t-s) \mathcal{L}(\ssigma)$ for every $\ssigma \in \prob_2(\TX \times \X)$. The second equality follows from the first one and (1). Item (4) is \cite[Remark 3.19]{CSS}. Item (5) easily follows by \cite[Lemma 3.15]{CSS}. Finally, item (6) is provided by \cite[Theorem 3.11, Theorem 3.14, Remark 3.12]{CSS}.
\end{proof}
\subsection{Multivalued probability vector fields, metric dissipativity and EVI solutions}
\label{subsec:prelimCSS2}
We recall now the 
main definition of Multivalued Probability Vector Field and of metric dissipativity.
\begin{definition}[Multivalued Probability Vector Field - \MPVF] \label{def:MPVF}
  A \emph{multivalued probability vector field} $\frF$ is a nonempty subset of
  $\prob_2(\TX)$ with $\dom(\frF) := \sfx_\sharp(\frF)=
  \{ \sfx_\sharp\Phi:\Phi\in \frF \}$.
  Given any $\mu \in \prob_2(\X)$, we define the \emph{section} $\frF[\mu]$
  of $\frF$ as 
  \[ \frF[\mu] := \left \{ \Phi \in \frF
      \mid \sfx_{\sharp}\Phi = \mu \right \}. \]
 We say that $\frF$ is a 
 {\em Probability Vector Field
 (\PVF)} if $\sfx_\sharp $ is injective in $\frF$, i.e.~$\frF[\mu]$ contains a
  unique element for every $\mu\in \dom(\frF)$.\\
  A \emph{selection} $\frF'$ of a \MPVF $\frF$ is a \PVF
   such that $\frF'\subset \frF$ and $\dom(\frF')=\dom(\frF)$.\\
  A \MPVF\ $\frF\subset\prob_2(\TX)$ is \emph{deterministic} 
  or \emph{concentrated on maps} if
  every $\Phi\in \frF$ is deterministic (see Definition \ref{def:wassmom}).
\end{definition}
 Starting from a \MPVF $\frF$, 
 the barycentric projection \eqref{eq:barycenter}
 induces a deterministic \MPVF 
 which we call $\bri\frF$, defined by

\begin{equation}\label{eq:brimpvf}
\bri\frF[\mu]:= \left \{ \bri\Phi=(\ii_\X, \bry\Phi)_\sharp\mu : \Phi \in \frF[\mu] \right \}, \quad \mu \in \dom(\frF).
\end{equation} 
We will also use the notation
\begin{equation}\label{eq:mapss}
\maps \frF [\mu]:= \left \{ \ff \in L^2(\X, \mu; \X) : (\ii_\X, \ff)_\sharp \mu \in \frF[\mu] \right \}, \quad \mu \in \dom(\frF),
\end{equation}
to extract the deterministic part of a \MPVF $\frF$: notice 
that a \MPVF $\frF$ is deterministic if and only if
$\frF=\bri\frF=\left \{ (\ii_\X,\ff)_\sharp \mu : \ff \in \maps{\frF}[\mu], \, \mu \in \dom(\frF) \right \}$.
\newcommand{\jointmap}[1]{{\bm {#1}}}
Conversely, 
for a given set $D \subset \prob_2(\X)$, define
\begin{equation}\label{eq:2}
    \Sp{\X, D}:= \left \{ (x,\mu) \in \X \times D \mid x \in \supp(\mu) \right \},\quad
    \Sp\X:=\Sp{\X, \prob_2(\X)},
\end{equation}
and let us consider a continuous map $\jointmap f:\Sp{\X, D}\to \X$.
If, for every $\mu\in D$,
the integral
$\int_\X |\ff(x,\mu)|^2\,\d\mu(x)$
is finite, 
then $\jointmap f$ induces a \PVF 
$\frF$ defined by
$$\frF=\big\{(\ii_\X,\jointmap f(\cdot,\mu))_\sharp\mu:\mu\in D\big\},\quad
\dom(\frF)=D.$$ 
We often adopt the convention to write $\ff[\mu]$ for the function
\begin{equation*}
    \ff[\mu](x):=\ff(x,\mu),\quad x\in \supp(\mu),
\end{equation*}
in particular when $\ff[\mu]$ is just an element
of $L^2(\X,\mu;\X)$.

\begin{definition}[Metrically $\lambda$-dissipative \MPVF] \label{def:dissipative}
  A \MPVF $\frF \subset \prob_2(\TX)$ is (metrically) \emph{$\lambda$-dissipative},
  $\lambda\in \R$, if
    \begin{equation}
      \bram{\Phi_0}{\Phi_1} \le \lambda W_2^2(\mu_0,\mu_1)
      \quad \forall\,\Phi_0,\Phi_1\in \frF,\ \mu_0=\sfx_\sharp \Phi_0,\ 
    \mu_1=\sfx_\sharp \Phi_1.
  \label{eq:33}
\end{equation}
When $\lambda=0$, we simply say that $\frF$ is dissipative.
\end{definition}
\begin{remark}\label{rmk:equivdiss}
  Thanks to  Theorem \ref{thm:all}(2), \eqref{eq:33} implies the weaker condition
  \begin{equation}\label{Hdiss}
    \bram{\Phi_0}{\mu_1}+\bram{\Phi_1}{\mu_0} \le \lambda W_2^2(\mu_0,
    \mu_1),\quad \forall\, \Phi_0,\Phi_1 \in \frF,\ \mu_0=\sfx_\sharp \Phi_0,\ 
    \mu_1=\sfx_\sharp \Phi_1.
  \end{equation}
\end{remark}

Given a \MPVF $\frF\subset\prob_2(\TX)$, we define its $\lambda$-transformation, $\frF^\lambda$, and its opposite, $-\frF$, as
\begin{align}\label{eq:Flambda}
\frF^\lambda&:=L^\lambda_\sharp\frF=\left\{L^\lambda_\sharp\Phi\,:\,\Phi\in\frF\right\},\\ \label{eq:opposite}
-\frF&:= \left \{ (\sfx,-\sfv)_\sharp \Phi : \Phi \in \frF \right \},
\end{align}
where $L^\lambda:\TX\to\TX$ is the bijective map defined by
  \begin{equation*}
  L^\lambda:=(\sfx,\sfv-\lambda \sfx).
\end{equation*}
Similar to Remark \ref{rem:transff} for the case of operators in Hilbert spaces, we recall the following result (cf. \cite[Lemma 4.6]{CSS})

\begin{lemma}
    $\frF\subset\prob_2(\TX)$ is a $\lambda$-dissipative \MPVF (resp.~satisfies \eqref{Hdiss}) if and only if $\frF^\lambda$ is dissipative, i.e. $0$-dissipative (resp.~satisfies \eqref{Hdiss} with $\lambda=0$).
\end{lemma}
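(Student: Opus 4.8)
The plan is to exploit the explicit bijection $L^\lambda = (\sfx, \sfv - \lambda\sfx)$ on $\TX$ and track how it interacts with the metric-duality pairing $\bram{\cdot}{\cdot}$ under optimal couplings. First I would fix $\Phi_0,\Phi_1\in\frF$ with $\mu_i = \sfx_\sharp\Phi_i$, and set $\Phi_i^\lambda := L^\lambda_\sharp\Phi_i \in \frF^\lambda$, noting that $\sfx_\sharp\Phi_i^\lambda = \sfx_\sharp\Phi_i = \mu_i$ since $L^\lambda$ does not alter the first coordinate. The key observation is that the admissible-plan set $\Lambda(\Phi_0,\Phi_1)$ and $\Lambda(\Phi_0^\lambda,\Phi_1^\lambda)$ are in canonical bijection: if $\Ttheta\in\prob_2(\TX\times\TX)$ has $(\sfx^0,\sfx^1)_\sharp\Ttheta$ optimal between $\mu_0$ and $\mu_1$, then $(L^\lambda\times L^\lambda)_\sharp\Ttheta$ has the same space-marginal $(\sfx^0,\sfx^1)$-projection (again because $L^\lambda$ fixes positions), hence lies in $\Lambda(\Phi_0^\lambda,\Phi_1^\lambda)$, and this map is invertible via $L^{-\lambda}$.

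Next I would compute the integrand transformation. Under $(x_0,v_0;x_1,v_1)\mapsto(x_0,v_0-\lambda x_0;x_1,v_1-\lambda x_1)$, the bilinear form becomes
\begin{align*}
\scalprod{(v_0-\lambda x_0)-(v_1-\lambda x_1)}{x_0-x_1}
&= \scalprod{v_0-v_1}{x_0-x_1} - \lambda\scalprod{x_0-x_1}{x_0-x_1}\\
&= \scalprod{v_0-v_1}{x_0-x_1} - \lambda|x_0-x_1|^2.
\end{align*}
Integrating against $\Ttheta\in\Lambda(\Phi_0,\Phi_1)$ and using that $(\sfx^0,\sfx^1)_\sharp\Ttheta\in\Gamma_o(\mu_0,\mu_1)$, so that $\int|x_0-x_1|^2\,\d\Ttheta = W_2^2(\mu_0,\mu_1)$, yields
\[
\int \scalprod{x_0-x_1}{(v_0-\lambda x_0)-(v_1-\lambda x_1)}\,\d\Ttheta
= \int\scalprod{v_0-v_1}{x_0-x_1}\,\d\Ttheta - \lambda W_2^2(\mu_0,\mu_1).
\]
Taking the infimum over $\Ttheta\in\Lambda(\Phi_0,\Phi_1)$ on both sides — which by the bijection above equals the infimum over $\Lambda(\Phi_0^\lambda,\Phi_1^\lambda)$ on the left — gives exactly $\bram{\Phi_0^\lambda}{\Phi_1^\lambda} = \bram{\Phi_0}{\Phi_1} - \lambda W_2^2(\mu_0,\mu_1)$, where I should double-check that subtracting the constant $\lambda W_2^2(\mu_0,\mu_1)$ commutes with the $\min$ (it does, since the constant is independent of $\Ttheta$). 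From this identity the equivalence is immediate: $\bram{\Phi_0}{\Phi_1}\le\lambda W_2^2(\mu_0,\mu_1)$ for all $\Phi_0,\Phi_1\in\frF$ holds if and only if $\bram{\Phi_0^\lambda}{\Phi_1^\lambda}\le 0$ for all such pairs, i.e.\ iff $\frF^\lambda$ is $0$-dissipative.

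For the parenthetical claim about \eqref{Hdiss}, I would run the same argument with $\bram{\Phi_0}{\mu_1}$ in place of $\bram{\Phi_0}{\Phi_1}$: here $\Lambda(\Phi_0,\mu_1)$ consists of $\ssigma\in\Gamma(\Phi_0,\mu_1)$ with $(\sfx^0,\sfx^1)_\sharp\ssigma$ optimal, the relevant pushforward is by $L^\lambda\times\ii_\X$ on $\TX\times\X$ (fixing positions again), and the integrand $\scalprod{x_0-x_1}{v_0}$ transforms into $\scalprod{x_0-x_1}{v_0-\lambda x_0} = \scalprod{x_0-x_1}{v_0} - \lambda|x_0-x_1|^2$; integrating over an optimal plan and taking $\min$ gives $\bram{\Phi_0^\lambda}{\mu_1} = \bram{\Phi_0}{\mu_1} - \lambda W_2^2(\mu_0,\mu_1)$, so adding the symmetric identity for $\bram{\Phi_1^\lambda}{\mu_0}$ shows \eqref{Hdiss} for $\frF$ at level $\lambda$ is equivalent to \eqref{Hdiss} for $\frF^\lambda$ at level $0$. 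I do not anticipate a serious obstacle here; the only point requiring care is verifying that $L^\lambda\times L^\lambda$ (resp.\ $L^\lambda\times\ii_\X$) maps $\Lambda(\Phi_0,\Phi_1)$ \emph{onto} $\Lambda(\Phi_0^\lambda,\Phi_1^\lambda)$ — both inclusions follow from the invariance of the space-marginals and the invertibility of $L^\lambda$ with inverse $L^{-\lambda}$, which preserves optimality of the space-coupling trivially since the coupling itself is unchanged. Alternatively, and more economically, one may simply cite \cite[Lemma 4.6]{CSS} directly as suggested by the statement, but the short computation above makes the paper self-contained.
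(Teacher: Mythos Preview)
Your argument for the main dissipativity claim is correct and is exactly the natural computation (the paper itself gives no proof, simply citing \cite[Lemma 4.6]{CSS}). The bijection $\Lambda(\Phi_0,\Phi_1)\leftrightarrow\Lambda(\Phi_0^\lambda,\Phi_1^\lambda)$ via $(L^\lambda\times L^\lambda)_\sharp$ is fine, and the integrand identity $\scalprod{(v_0-\lambda x_0)-(v_1-\lambda x_1)}{x_0-x_1}=\scalprod{v_0-v_1}{x_0-x_1}-\lambda|x_0-x_1|^2$ is correct because the velocity difference picks up exactly $-\lambda(x_0-x_1)$.

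However, the parenthetical argument for \eqref{Hdiss} contains an algebraic slip that matters. You write $\scalprod{x_0-x_1}{v_0-\lambda x_0}=\scalprod{x_0-x_1}{v_0}-\lambda|x_0-x_1|^2$, but the correct right-hand side is $\scalprod{x_0-x_1}{v_0}-\lambda\scalprod{x_0-x_1}{x_0}$; your claimed identity $\bram{\Phi_0^\lambda}{\mu_1}=\bram{\Phi_0}{\mu_1}-\lambda W_2^2(\mu_0,\mu_1)$ is therefore false, and if it were true the \emph{sum} would shift by $2\lambda W_2^2$, giving the wrong equivalence. The fix is to note that for any $\mmu\in\Gamma_o(\mu_0,\mu_1)$ the quantity $\int\scalprod{x_0-x_1}{x_0}\,\d\mmu=\tfrac12\big(\sqm{\mu_0}-\sqm{\mu_1}+W_2^2(\mu_0,\mu_1)\big)$ is a constant depending only on $\mu_0,\mu_1$ (since $\int\scalprod{x_0}{x_1}\,\d\mmu$ is fixed on the optimal set). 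Hence $\bram{\Phi_0^\lambda}{\mu_1}=\bram{\Phi_0}{\mu_1}-\tfrac{\lambda}{2}\big(\sqm{\mu_0}-\sqm{\mu_1}+W_2^2(\mu_0,\mu_1)\big)$, and adding the symmetric expression for $\bram{\Phi_1^\lambda}{\mu_0}$ the $\sqm{\cdot}$ terms cancel and you recover exactly $\bram{\Phi_0^\lambda}{\mu_1}+\bram{\Phi_1^\lambda}{\mu_0}=\bram{\Phi_0}{\mu_1}+\bram{\Phi_1}{\mu_0}-\lambda W_2^2(\mu_0,\mu_1)$, which is what you need.
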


\begin{definition}\label{def:plangeodomV}
  Let $\frF \subset \prob_2(\TX)$, $\mu_0,\mu_1\in \dom(\frF)$. We define the set 
  \[ \CondGamma\frF{\mu_0}{\mu_1} := \left \{ \mmu \in \Gamma(\mu_0, \mu_1) \mid \sfx^t_\sharp \mmu \in \dom(\frF) \text{ for every }t \in [0,1] \right \}.\]
  If $\mmu \in \CondGamma\frF{\mu_0}{\mu_1}$ and $t \in [0,1]$, we define
\begin{align*}
  \directionalm \frF\mmu t := \sup \left \{
                          \directionalm{\Phi}{\mmu}t \mid \Phi \in \frF[\mu_t]
                          \right \},
                                    \qquad
  \directionalp \frF\mmu t := \inf \left \{
                          \directionalp{\Phi}{\mmu}t 
                          \mid \Phi \in \frF[\mu_t] \right \}.
\end{align*}
\end{definition}

In the following theorem we discuss the behaviour of duality pairings with $\frF$ along geodesics.
\begin{theorem} \label{theo:propflfr}
   Let $\frF$ be a \MPVF, let $\mu_0,\mu_1\in \dom(\frF)$ and let
  $\mmu\in\CondGamma\frF{\mu_0}{\mu_1} \cap \Gamma_o(\mu_0, \mu_1)$. 
If $\frF$ satisfies \eqref{Hdiss}, then the following properties hold.
\begin{enumerate}
\item $\directionalp \frF\mmu t \le \directionalm \frF\mmu t$ for every
  $t \in (0,1)$;
\item $\directionalm \frF\mmu s \le \directionalp \frF\mmu t+\lambda (t-s)\, W_2^2(\mu_0,\mu_1)$
  for every $0 \le s< t \le 1$;
\item $t\mapsto \directionalm \frF\mmu t+\lambda t\, W_2^2(\mu_0,\mu_1)$ and
  $t\mapsto \directionalp \frF\mmu t+\lambda t\,W_2^2(\mu_0,\mu_1) $ are increasing respectively in $[0,1)$ and in $(0,1]$;
\item
  $\directionalp \frF\mmu t = \directionalm \frF\mmu t$ at every
  point $t \in (0,1)$ where one of them is continuous and thus coincide outside a countable set.
\end{enumerate}
\end{theorem}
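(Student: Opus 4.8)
The plan is to exploit the one-step restriction and inversion identities of Theorem \ref{thm:all} together with the weak dissipativity hypothesis \eqref{Hdiss}, transporting the problem onto the geodesic $\mu_t=\sfx^t_\sharp\mmu$. Fix $0\le s<t\le 1$ and pick $\Phi_s\in\frF[\mu_s]$, $\Phi_t\in\frF[\mu_t]$. First I would use Theorem \ref{thm:all}(3) (Restriction) to rewrite $\directionalm{\Phi_s}{\mmu}{s}$ and $\directionalp{\Phi_t}{\mmu}{t}$ in terms of the pairings over the two-point plan $(\sfx^s,\sfx^t)_\sharp\mmu$, which is itself optimal between $\mu_s$ and $\mu_t$ since $\mmu$ is optimal and slices of an optimal plan stay optimal. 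Then Theorem \ref{thm:all}(2) (Comparison), applied with the optimal plan $(\sfx^s,\sfx^t)_\sharp\mmu\in\Gamma_o(\mu_s,\mu_t)$, gives $\directionalm{\Phi_s}{\mmu_{st}}{0}+\directionalp{\Phi_t}{\mmu_{st}}{1}\ge\bram{\Phi_s}{\Phi_t}$ after taking appropriate inf/sup, and \eqref{Hdiss} bounds $\bram{\Phi_s}{\mu_t}+\bram{\Phi_t}{\mu_s}\le\lambda W_2^2(\mu_s,\mu_t)=\lambda W^2(t-s)^2$. Combining and dividing by $(t-s)$ (the scaling factor produced by the Restriction identity) will yield, after taking sup over $\Phi_s\in\frF[\mu_s]$ and inf over $\Phi_t\in\frF[\mu_t]$, the inequality in item (2): $\directionalm\frF\mmu s\le\directionalp\frF\mmu t+\lambda W^2(t-s)$.

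For item (1), I would apply item (2) with roles reversed on the interval around a single $t$: for $0<s<t<u<1$ one has $\directionalm\frF\mmu t\le\directionalp\frF\mmu u+\lambda W^2(u-t)$ and $\directionalm\frF\mmu s\le\directionalp\frF\mmu t+\lambda W^2(t-s)$; letting the outer points approach $t$ and using the monotonicity of one-sided pairings in the step-size (a consequence of Theorem \ref{thm:all}(3), the pairings $\directionalm{\Phi}{\ttheta}{t}$ being built from the fixed plan) pins down $\directionalp\frF\mmu t\le\directionalm\frF\mmu t$. Alternatively, and more directly, I would take $s=t$ in a limiting version of the chain above, or argue that $\Gamma_t(\Phi,\mmu)$-type competitors for the "max" problem are compatible with those for the "min" problem at the same time — this is the cleanest route and mirrors \cite[Theorem 4.4]{CSS}. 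Item (3) is then immediate: by item (2), $\directionalm\frF\mmu s+\lambda W^2 s\le\directionalp\frF\mmu t+\lambda W^2 t\le\directionalm\frF\mmu t+\lambda W^2 t$ for $s<t$ (the last step using item (1)), giving monotonicity of $t\mapsto\directionalm\frF\mmu t+\lambda W^2 t$ on $[0,1)$; the statement for $\directionalp\frF\mmu t+\lambda W^2 t$ on $(0,1]$ follows the same way, or via the Inversion identity Theorem \ref{thm:all}(1) applied to the reversed plan $\mathsf s_\sharp\mmu$.

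Finally, for item (4): a monotone real function has at most countably many discontinuities, so both $t\mapsto\directionalm\frF\mmu t+\lambda W^2 t$ and $t\mapsto\directionalp\frF\mmu t+\lambda W^2 t$ are continuous off a countable set; at any point $t\in(0,1)$ where, say, $\directionalm\frF\mmu\cdot$ is continuous, taking $s\uparrow t$ and $u\downarrow t$ in item (2) squeezes $\directionalm\frF\mmu t\le\directionalp\frF\mmu t$, which combined with item (1) forces equality, and the same at continuity points of $\directionalp\frF\mmu\cdot$; since the two monotone functions differ only by the continuous term $\lambda W^2 t$, their continuity points coincide and form a co-countable set. The main obstacle I anticipate is the bookkeeping in the first paragraph: correctly tracking how the Restriction identity's factor $1/(t-s)$ interacts with the Comparison inequality and with the passage to $\directionalm\frF\mmu{}$ / $\directionalp\frF\mmu{}$ (which involve a sup and an inf over the relevant sections $\frF[\mu_s]$, $\frF[\mu_t]$), and verifying that the plan $(\sfx^s,\sfx^t)_\sharp\mmu$ indeed lies in $\Gamma_o(\mu_s,\mu_t)$ so that the geometric (not merely weak) dissipativity along optimal couplings can be invoked — but this is standard for optimal plans restricted along a geodesic.
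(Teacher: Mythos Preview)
Your overall strategy matches the paper's (which defers item (2) to \cite[Theorem 4.9]{CSS} and derives (3), (4) from (1) and (2) as you do), but two of your steps have genuine gaps.

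\textbf{Item (1).} None of your suggested routes works as written. The limiting argument via item (2) yields at best $\limsup_{s\uparrow t}\directionalm\frF\mmu s\le\directionalp\frF\mmu t$, which does not give the claim without already knowing continuity at $t$; and the remark that ``competitors for the max problem are compatible with those for the min'' goes the wrong way, since for a single $\Phi$ one only has $\directionalm{\Phi}{\mmu}t\le\directionalp{\Phi}{\mmu}t$. The actual one-line argument (what ``from the definition'' means in the paper) is: since $\mmu\in\Gamma_o(\mu_0,\mu_1)$, Theorem~\ref{theo:chargeo} gives that $\sfx^t$ is $\mmu$-essentially injective for every $t\in(0,1)$; hence by Theorem~\ref{thm:all}(4) $\directionalm{\Phi}{\mmu}t=\directionalp{\Phi}{\mmu}t$ for each $\Phi\in\frF[\mu_t]$, and then $\directionalp\frF\mmu t=\inf_\Phi\directionalp{\Phi}{\mmu}t=\inf_\Phi\directionalm{\Phi}{\mmu}t\le\sup_\Phi\directionalm{\Phi}{\mmu}t=\directionalm\frF\mmu t$.

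\textbf{Item (2).} Your Comparison inequality is oriented incorrectly, and more substantively the passage from \eqref{Hdiss} to the specific coupling $\mmu_{st}:=(\sfx^s,\sfx^t)_\sharp\mmu$ fails in general: one only has $\bram{\Phi_s}{\mu_t}\le\directionalm{\Phi_s}{\mmu_{st}}0$, so a bound on the left-hand side does not control the right. The missing ingredient is again Theorem~\ref{theo:chargeo}: whenever at least one of $s,t$ lies in $(0,1)$, the optimal plan between $\mu_s$ and $\mu_t$ is \emph{unique} and therefore equals $\mmu_{st}$; this forces $\directionalm{\Phi_s}{\mmu_{st}}0=\bram{\Phi_s}{\mu_t}$ (and likewise for $\Phi_t$ via Inversion), so that \eqref{Hdiss} applies directly. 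The remaining endpoint case $(s,t)=(0,1)$ follows by inserting an intermediate $r\in(0,1)$ and chaining the two inequalities already established, using item (1) at $r$ to pass from $\directionalp\frF\mmu r$ to $\directionalm\frF\mmu r$.
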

\begin{proof}
Item (1) immediately follows from the definition. Item (2) is proven in \cite[Theorem 4.9]{CSS}, while (3) and (4) follow from (2).
\end{proof}

\begin{proposition}
  \label{prop:closure}
  If $\frF$ is a $\lambda$-dissipative \MPVF then
  its sequential closure
  \begin{equation}
    \clo{\frF}:=\Big\{\Phi\in \prob_2(\TX):
    \exists\,\Phi_n\in \frF:\Phi_n\to\Phi\ \text{in
    }\prob_2^{sw}(\TX)\Big\}.
    \label{eq:sw-closure}
\end{equation} is $\lambda$-dissipative as well.
\end{proposition}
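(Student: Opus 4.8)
The plan is to show that $\clo{\frF}$ satisfies the defining inequality $\bram{\Phi_0}{\Phi_1} \le \lambda W_2^2(\mu_0,\mu_1)$ for arbitrary $\Phi_0,\Phi_1 \in \clo{\frF}$, by a straightforward limiting argument using the semicontinuity properties of the bracket $\bram{\cdot}{\cdot}$ collected in Theorem \ref{thm:all}(5). First I would fix $\Phi_0,\Phi_1 \in \clo{\frF}$ and, by definition of the sequential closure \eqref{eq:sw-closure}, pick sequences $(\Phi^0_n)_{n\in\N}, (\Phi^1_n)_{n\in\N} \subset \frF$ with $\Phi^i_n \to \Phi_i$ in $\prob_2^{sw}(\TX)$ for $i=0,1$. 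Since each $\Phi^i_n$ lies in $\frF$, the $\lambda$-dissipativity of $\frF$ gives
\[
  \bram{\Phi^0_n}{\Phi^1_n} \le \lambda W_2^2(\mu^0_n,\mu^1_n),
  \qquad \mu^i_n := \sfx_\sharp \Phi^i_n.
\]

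Next I would pass to the limit on both sides. On the left, convergence in $\prob_2^{sw}(\TX)$ of $\Phi^i_n$ to $\Phi_i$ together with Theorem \ref{thm:all}(5) yields $\liminf_{n\to\infty}\bram{\Phi^0_n}{\Phi^1_n} \ge \bram{\Phi_0}{\Phi_1}$. On the right, convergence in $\prob_2^{sw}(\TX)$ implies in particular that $\sfx_\sharp \Phi^i_n \to \sfx_\sharp \Phi_i$ in $\prob_2(\X^s)$: indeed $\prob_2^{sw}(\TX)$-convergence gives narrow convergence in $\prob(\X^s\times\X^w)$ (hence narrow convergence of the first marginals in $\prob(\X^s)$ after projecting, which is continuous for the strong topology on the first factor) and convergence of the first moments $\int |x|^2\,\d\Phi^i_n \to \int|x|^2\,\d\Phi_i$ by Proposition \ref{prop:finalmente}(1)(b); by the characterization of $W_2$-convergence recalled after \eqref{eq:w21}, this is exactly $\mu^i_n \overset{W_2}{\to}\mu_i$. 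Since $W_2$ is continuous, $W_2^2(\mu^0_n,\mu^1_n) \to W_2^2(\mu_0,\mu_1)$. Combining,
\[
  \bram{\Phi_0}{\Phi_1} \le \liminf_{n\to\infty}\bram{\Phi^0_n}{\Phi^1_n}
  \le \lim_{n\to\infty}\lambda W_2^2(\mu^0_n,\mu^1_n)
  = \lambda W_2^2(\mu_0,\mu_1),
\]
which is \eqref{eq:33} for $\clo{\frF}$; note that $\clo{\frF}$ is nonempty since $\frF \subset \clo{\frF}$, so it is a genuine \MPVF.

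There is essentially no serious obstacle here: the only point requiring a little care is making explicit that $\prob_2^{sw}(\TX)$-convergence of the $\Phi^i_n$ forces $W_2$-convergence of their first marginals, so that the right-hand side converges rather than merely producing an inequality in the wrong direction — but this is immediate from Proposition \ref{prop:finalmente}(1) and the metric characterization of $W_2$-convergence. (If $\lambda < 0$ one could alternatively note $\limsup_n \lambda W_2^2 \le \lambda \liminf_n W_2^2$, but the clean convergence of the marginals makes this dichotomy unnecessary.) One should also remark that $\clo{\frF}$ is by construction sequentially closed in $\prob_2^{sw}(\TX)$, so the statement is really that sequential closure preserves $\lambda$-dissipativity, and the argument above is the whole content.
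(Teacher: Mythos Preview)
Your proof is correct and is precisely the argument the paper has in mind: the paper's own proof is the one-line reference ``It follows from Theorem \ref{thm:all}(5)'', and you have simply spelled out the limiting argument (lower semicontinuity of $\bram{\cdot}{\cdot}$ on the left, $W_2$-convergence of the first marginals via Proposition \ref{prop:finalmente}(1) on the right). The only extraneous remark is the final sentence about $\clo{\frF}$ being sequentially closed, which is not part of the statement and need not be checked here.
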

\begin{proof}
  It follows from Theorem \ref{thm:all}(5). See also \cite[Proposition 4.15]{CSS}.
\end{proof}

We recall the definition of $\lambda$-\wEVI solution for a \MPVF.
\begin{definition}[$\lambda$-Evolution Variational Inequality] \label{def:evi}
  Let $\frF$ be a \MPVF and let $\lambda \in \R$.
  We say that a continuous curve $\mu: \interval  \to \overline{\dom(\frF)}$
  is a \emph{$\lambda$-\wEVI solution} for the \MPVF $\frF$ if 
\begin{equation*}
    \begin{aligned}
      \frac12\frac\d{\d t} W_2^2(\mu_t,\sfx_\sharp \Phi)
      &\le \lambda W_2^2(\mu_t,\sfx_\sharp \Phi)-
      \bram{\Phi}{\mu_t} \text{ in } \mathscr{D}'(\intt{\interval}) \text{ for every } \Phi \in \frF,
    \end{aligned}
  \end{equation*}
 where the writing $\mathscr{D}'(\intt{\interval})$ means that the expression has to be understood in the distributional sense in $\intt{\interval}$.
\end{definition}

\begin{remark}\label{rem:added}
    In the classical theory, if $\mmo\subset \mathcal{H}\times\mathcal{H}$ is a $\lambda$-dissipative operator in a separable Hilbert space $\mathcal{H}$, then any differentiable solution to $\dot x(t)\in \mmo[x(t)]$ satisfies the associated $\lambda$-\wEVI, i.e.
    \[\frac12\frac\d{\d t} |x(t)-y|^2\le \lambda |x(t)-y|^2-\langle w,y-x(t)\rangle,\quad\text{for every }(y,w)\in\mmo.\]
    Maximality of $\mmo$ gives also the reverse implication.
    In our case of the space $\prob_2(\X)$, a full characterization of the $\lambda$-\wEVI notion of solution in Definition \ref{def:evi} with the solution of a continuity equation formulation of the measure differential equation $\dot\mu_t\in\frF[\mu_t]$ is done later in Section \ref{sec:totdissMPVF-flow}, in particular in Theorem \ref{thm:final-total}, following a Lagrangian approach. This requires appropriate assumptions on the \MPVF $\frF$. We refer the reader to \cite{CSS} for an alternative, metric-based, approach to this subject.
\end{remark}

\begin{remark}\label{rmk:EVIright}
In light of Theorem \ref{thm:all}(6a) and recalling \cite[Remark 5.2]{CSS}, an absolutely continuous curve $\mu: \interval  \to \overline{\dom(\frF)}$ is a \emph{$\lambda$-\wEVI solution} for the \MPVF $\frF$ if and only if
\begin{equation*}
    \begin{aligned}
      \lim_{h \downarrow 0} \frac{W_2^2(\mu_{t+h}, \nu)-W_2^2(\mu_t,
  \nu)}{2h}
      &\le \lambda W_2^2(\mu_t,\sfx_\sharp \Phi)-
      \bram{\Phi}{\mu_t}
    \end{aligned}
    \quad
    \text{for every}\
    t\in A(\mu)
    \text{ and every}\ \Phi\in \frF,
  \end{equation*}
  where $A(\mu) \subset\interval$ is as in Theorem \ref{thm:tangentv}.
\end{remark}

\section{Invariant dissipative operators in Hilbert spaces and totally dissipative {\MPVF}s} \label{sec:invmpvf}
From now on, $\X$ will denote a separable Hilbert space;
we will also consider 
a standard Borel 
space $(\Omega, \cB)$ 
endowed with a nonatomic probability measure $\P$ 
(see Appendix \ref{sec:appborel} and in particular Definition
\ref{def:sbs})
and the Hilbert space  $\cH$ defined by
\[\cH:=L^2(\Omega, \cB, \P; \X).\] 
We will use capital letters $X,Y,V,\ldots$ to denote elements of
$\cH$ (i.e.~$\X$-valued random variables).

We denote by $\iota:\cH \to \prob_2(\X)$
and
$\iotaT:\cH\times\cH \to \prob_2(\X\times\X)\equiv\prob_2(\TX)$
the push-forward operators
\begin{equation}
\iota(X):=X_\sharp \P,\qquad
\iotaT(X,V):=(X,V)_\sharp\P.\label{eq:1}
\end{equation}
We frequently use the notations $\iota_X= \iota(X)$ and $\iotaT_{X,V}=\iotaT(X,V)$.

\begin{definition}[Measure-preserving isomorphisms]
  \label{def:MPI}
  We denote by $\rmS(\Omega)$ the class of $\cB$-$\cB$-measurable maps
  $g:\Omega\to\Omega$ which are essentially injective and measure
  preserving, meaning that there exists a full $\P$-measure set
  $\Omega_0 \in \cB$ such that $g$ is injective on $\Omega_0$ and
  $g_\sharp \P=\P$.  Every $g\in \rmS(\Omega)$ has an inverse
  $g^{-1}\in \rmS(\Omega)$ (defined up to a $\P$-negligible set) such
  that $g^{-1}\circ g=g\circ g^{-1}=\ii_\Omega$ $\P$-a.e.~in $\Omega$.
\end{definition}
In Section \ref{subsec:maxim} we report some properties (see
\cite{CSS2piccolo} for details and proofs) of the resolvent operator,
the Yosida approximation and the minimal selection of a maximal
$\lambda$-dissipative operator $\mmo\subset\cH\times\cH$
which is invariant by measure-preserving isomorphisms.
In Section \ref{sec:3.2} we study the relation between $\lambda$-dissipativity for an invariant subset $\mmo$ of $\cH\times\cH$, and corresponding total $\lambda$-dissipativity of the image/law $\frF$ of $\mmo$ in $\prob_2(\TX)$. The particular case of deterministic \MPVF{s} is considered in Section \ref{subsec:maps}.
These results are then used, in Section \ref{sec:totdissMPVF-flow}, to analyze well-posedness of the Eulerian flow for $\frF$ generated by the corresponding Lagrangian one for $\mmo$ and the generation of $\lambda$-\EVI solutions in $\prob_2(\X)$. 
\medskip

\subsection{Law invariant dissipative operators}
\label{subsec:maxim}

Given a set 
$\mmo\subset \cH\times\cH$
(as usual, we will identify subsets of $\cH\times \cH$ with
multivalued operators),
we define $\mmo(X):=\{V\in \cH:(X,V)\in
\mmo\}$ and the domain
$\dom(\mmo):=\{X\in \cH:\mmo(X)\neq \emptyset\}$.

When $\mmo$ is maximal $\lambda$-dissipative,  the sections $\mmo(X)$ are closed and convex subsets of $\cH$, for $X\in\dom(\mmo)$, hence they contain a unique element of minimal norm, denoted by $\mmo^\circ(X) $. 
For every $0<\tau<1/\lambda^+$, the resolvent operator $\resolvent\tau:=(\ii_{\cH}-\tau\mmo)^{-1}$ of $\mmo$ is a $(1-\lambda \tau)^{-1}$-Lipschitz map defined on the whole $\cH$, where we set $\lambda^+:=\lambda \vee 0$ and $1/\lambda^+=+\infty$ if $\lambda^+=0$. 
In particular, given $X\in\cH$, $\resolvent\tau(X)$ is the unique solution of the inclusion $Y-X\in\tau\mmo(Y)$, so that
\[\left(\resolvent\tau(X),\frac{\resolvent\tau(X)-X}{\tau}\right)\in\mmo,\]
or, equivalently, we can write $\resolvent\tau(X)=X+\tau V$, for some $V\in\mmo(\resolvent\tau(X))$.

The minimal selection $\mmo^\circ:\dom(\mmo)\to\cH$ of $\mmo$ is also characterized by
\begin{equation*}\mmo^\circ(X)=\displaystyle\lim_{\tau \downarrow 0}\frac{\resolvent\tau(X)-X}{\tau}.
\end{equation*}
The Yosida approximation of $\mmo$ is defined by $\mmo_\tau:=\frac{\resolvent\tau-\ii_{\cH}}{\tau}$.
For every $0<\tau<1/\lambda^+$, $\mmo_\tau$ is maximal $\lambda/(1-\lambda \tau)$-dissipative and $\frac{2-\lambda \tau}{\tau(1-\lambda \tau)}$-Lipschitz continuous.  We refer to Appendix \ref{sec:brezis} for a recall of the main properties of the operators $\mmo^\circ,\resolvent\tau,\mmo_\tau$ associated to $\mmo$.

\medskip

If $\mmo$ is a maximal $\lambda$-dissipative operator,
then there exists (cf. Theorems \ref{thm:brezis3},\ref{thm:brezis4} in Appendix \ref{sec:brezis}) a semigroup of $e^{\lambda t}$-Lipschitz transformations $(\Sgp_t)_{t \ge
  0}$ with $\Sgp_t: \overline{\dom(\mmo)} \to
\overline{\dom(\mmo)}$ s.t.~for every $X_0\in \dom(\mmo)$
the curve $t \mapsto \Sgp_t X_0$ is included in $\dom(\mmo)$ and it
is the unique locally Lipschitz continuous solution of the differential inclusion
\begin{equation*}
\begin{cases}
  \dot{X}_t \in \mmo (X_t) \quad \text{ a.e. } t>0, \\
  X\restr{t=0} = X_0.
\end{cases}
\end{equation*}

By Theorem \ref{thm:brezis3}(3), we also have
\begin{equation*}
  \lim_{h\downarrow0}\frac{\Sgp_{t+h} (X_0)-\Sgp_t (X_0)}h =
  \mmo^\circ(\Sgp_t(X_0)),\quad \text{for every $X_0\in \dom(\mmo)$ and every $t\ge 0$.}
\end{equation*}
Let us now consider the particular classes of
operators which are invariant by measure-preserving isomorphisms
or law-invariant. 
\begin{definition}[Invariant operators]
  \label{def:inv} We say that a set
  (or a multivalued operator) $\mmo \subset \cH \times \cH$ is \emph{invariant by measure-preserving isomorphisms} if for every $g \in \rmS(\Omega)$ it holds
\[(X,V) \in \mmo \,\Rightarrow\, (X \circ g, V \circ g) \in \mmo. \]
A set $\mmo \subset \cH \times \cH$ is \emph{law invariant} if it holds
\[(X,V) \in \mmo,\,\, X',V' \in \cH, \,\, \iotaT_{X,V} =
  \iotaT_{X',V'} \,\Rightarrow\, (X',V') \in \mmo.\]
An operator $\bm A:\cH\supset \dom(\bm A)\to \cH$, 
is invariant by measure-preserving isomorphisms (resp.~law invariant) if its graph is invariant by measure-preserving isomorphisms (resp.~law invariant).
\end{definition}
\newcommand{\dommmo}{\iota\big(D(\mmo)\big)}
 We recall that  $\dommmo=
\Big\{\iota_X\,:\,X\in \dom(\mmo)\Big\}$
is 
the image
in $\mathcal P_2(\X)$
of the domain of
$\mmo$.
The results in the following Lemma \ref{le:noncera} and Theorem \ref{thm:invTOlawinv} are presented in \cite[Section 4]{CSS2piccolo} to which we refer for the proofs.

\begin{lemma}[Closed invariant sets]
\label{le:noncera} Let $\mmo \subset \cH \times \cH$ be a closed set. Then $\mmo$ is invariant by measure-preserving isomorphisms if and only if it is law invariant.
\end{lemma}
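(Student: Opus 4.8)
The statement to prove is Lemma \ref{le:noncera}: for a \emph{closed} set $\mmo\subset\cH\times\cH$, invariance by measure-preserving isomorphisms is equivalent to law invariance. The implication from law invariance to invariance by measure-preserving isomorphisms is immediate and does not use closedness: if $g\in\rmS(\Omega)$ then $(X\circ g,V\circ g)_\sharp\P=(X,V)_\sharp(g_\sharp\P)=(X,V)_\sharp\P$, so $(X\circ g,V\circ g)$ has the same law as $(X,V)$ and membership in $\mmo$ is preserved. The whole content is in the converse, and this is where closedness is essential.

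For the converse, suppose $\mmo$ is closed and invariant by measure-preserving isomorphisms, and let $(X,V)\in\mmo$ and $(X',V')\in\cH\times\cH$ with $(X',V')_\sharp\P=(X,V)_\sharp\P=:\Phi\in\prob_2(\TX)$. The plan is to approximate $(X',V')$ in $\cH\times\cH$ by pairs of the form $(X\circ g_n,V\circ g_n)$ with $g_n\in\rmS(\Omega)$, so that closedness of $\mmo$ forces $(X',V')\in\mmo$. The key analytic tool is the following fact about nonatomic standard Borel probability spaces: given two $\cH\times\cH$-valued random variables with the same law $\Phi$, one can find a sequence of measure-preserving isomorphisms $g_n$ of $\Omega$ such that $(X\circ g_n,V\circ g_n)\to(X',V')$ strongly in $\cH\times\cH$. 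Concretely one discretizes: choose finite Borel partitions of $\TX$ of mesh $\le 1/n$ on a large ball and use the nonatomicity of $\P$ together with the fact that $(X,V)$ and $(X',V')$ induce the \emph{same} distribution $\Phi$ on each partition cell to build, cell by cell, a measure-preserving bijection $g_n$ of $\Omega$ (up to $\P$-null sets) matching the preimages; the corresponding $(X\circ g_n,V\circ g_n)$ agrees with $(X',V')$ up to an error controlled by the mesh and by the $L^2$-tail of $\Phi$ outside the ball, hence converges to $(X',V')$ in $\cH\times\cH$. Since this is exactly the type of result catalogued in Appendix \ref{sec:appborel} (Borel partitions and approximation of couplings, from \cite{CSS2piccolo}), I would invoke it rather than redo the construction.

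Given such $g_n$, each pair $(X\circ g_n,V\circ g_n)$ lies in $\mmo$ by invariance under measure-preserving isomorphisms, and $(X\circ g_n,V\circ g_n)\to(X',V')$ in $\cH\times\cH$; closedness of $\mmo$ then yields $(X',V')\in\mmo$, which is precisely law invariance. The main obstacle is the approximation lemma itself: verifying that two random variables with the same law on a nonatomic standard Borel space can be intertwined by a sequence of measure-preserving isomorphisms up to arbitrarily small $L^2$-error. This requires care with the null sets where $g_n$ fails to be bijective and with the infinite-dimensionality of $\X$ (tail control of the quadratic moment of $\Phi$), but it is a standard measure-theoretic argument and, as noted, is available in the cited companion work \cite{CSS2piccolo}; the rest of the proof is a one-line closedness argument.
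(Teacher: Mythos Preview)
Your proposal is correct and matches the argument the paper defers to \cite{CSS2piccolo}: the easy direction is immediate, and for the converse you use exactly the approximation result recorded in Appendix~\ref{sec:appborel} (the last clause of Theorem~\ref{thm:gpfinal}(2), applied with $\X\times\X$ in place of $\X$) to produce $g_n\in\rmS(\Omega)$ with $(X\circ g_n,V\circ g_n)\to(X',V')$ in $\cH\times\cH$, and then close. No gaps.
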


 For the following, recall that $\Sp{\X,D}$ is defined in \eqref{eq:2}. 
\begin{theorem}
[Representation of resolvents, Yosida approximations, and semigroups]\label{thm:invTOlawinv}
  Let $\mmo \subset \cH \times \cH$ be a maximal $\lambda$-dissipative operator which is invariant by measure-preserving isomorphisms. Then for every $0<\tau<1/\lambda^+,\ t\ge0$
  the operators $\mmo, \mmo_\tau, \resolvent\tau,\Sgp_t,\mmo^\circ$ are
  law invariant. Moreover there exist (uniquely defined) continuous maps $\jj_\tau:\Sp\X\to \X$, $\bb_\tau: \Sp \X \to \X$, and $\ss_t:\Sp{\X,\overline{\dommmo}}\to \X$ such that:
   \begin{align} \label{eq:7-1} \text{ for every $X \in \cH$, } \resolvent\tau (X)(\omega)=\jj_\tau(X(\omega),\iota_X) \text{ for $\P$-a.e.~$\omega \in \Omega$,}\\
   \label{eq:7-3} \text{ for every $X \in \cH$, } \mmo_\tau (X)(\omega)=\bb_\tau(X(\omega),\iota_X) \text{ for $\P$-a.e.~$\omega \in \Omega$,}\\ 
  \label{eq:7-2} \text{ for every $X \in \overline{\dom(\mmo)}$, } \Sgp_t (X)(\omega)=\ss_t(X(\omega),\iota_X) \text{ for $\P$-a.e.~$\omega \in \Omega$;}
  \end{align}
  Furthermore,
    \begin{itemize}
  \item the following invariance and semigroup properties are satisfied
  \begin{equation}
    \label{eq:8}
    \begin{split}
      \mu\in \overline{\dommmo}&\quad\Rightarrow\quad
      \ss_t(\cdot,\mu)_\sharp\mu\in  \overline{\dommmo};\\
      \mu\in {\dommmo}&\quad\Rightarrow\quad
      \ss_t(\cdot,\mu)_\sharp\mu\in  {\dommmo};\\
      \ss_{t+h}(x,\mu)=\ss_h(\ss_t(x,\mu),\ss_t(\cdot,\mu)_\sharp\mu)&\quad
      \text{for every }(x,\mu) \in \Sp{\X,\overline{\dommmo}},\ t,h\ge0;
    \end{split}
  \end{equation}
  \item for every $\mu\in \dommmo$, there exists a map
  $\bb^\circ[\mu] \in L^2(\X, \mu; \X)$ such that
  for every $X\in \cH$
  \begin{equation}
    \label{eq:9}
    \text{ if $\iota_X=\mu$ then $X \in \dom(\mmo)$, } \mmo^\circ (X)(\omega)=\bb^\circ[\mu](X(\omega)) 
    \text{ for $\P$-a.e.~$\omega \in \Omega$.}
  \end{equation}
  The map $\bb^\circ[\mu]$ is $\lambda$-dissipative in a set $\X_0\subset \X$
  of full $\mu$-measure and satisfies
  \begin{equation}
    \label{eq:10}
    \lim_{h\downarrow0}\int_\X\bigg|\frac 1h(\ss_{t+h}(x,\mu)-\ss_t(x,\mu))-
    \bb^\circ[\ss_t(\cdot,\mu)_\sharp \mu](\ss_t(x,\mu))\bigg|^2\,\d\mu(x)=0,\quad
    t\ge0;
  \end{equation}
  \item the following regularity properties hold
  \begin{enumerate}
  \item for every $\mu \in \prob_2(\X)$, the map  $\jj_\tau(\cdot,\mu): \supp(\mu) \to \X$ is $(1-\lambda \tau)^{-1}$-Lipschitz continuous, for $0<\tau< 1/\lambda^+$;
  \item  for every $\mu \in \prob_2(\X)$, the map  $\bb_\tau(\cdot,\mu): \supp(\mu) \to \X$ is $\frac{2-\lambda \tau}{\tau(1-\lambda \tau)}$-Lipschitz continuous, for $0<\tau< 1/\lambda^+$;
  \item for every $\mu \in \overline{\dommmo}$, the map $\ss_t(\cdot,\mu): \supp(\mu) \to \X$ is $e^{\lambda t}$-Lipschitz continuous.
  \end{enumerate}
\end{itemize}
\end{theorem}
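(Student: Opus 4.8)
The plan is to deduce every assertion by pushing the corresponding Hilbertian fact through the canonical lift $\iota$, the only genuinely new ingredient being a representation of law-invariant Lipschitz operators on $\cH$ by Lipschitz fields on the supports of the underlying laws.

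\textbf{Step 1 (law invariance).} A maximal $\lambda$-dissipative operator has strongly closed graph, so by Lemma \ref{le:noncera} the operator $\mmo$ is law invariant. The resolvent $\resolvent\tau=(\ii_\cH-\tau\mmo)^{-1}$ is single-valued, everywhere defined and $(1-\lambda\tau)^{-1}$-Lipschitz for $0<\tau<1/\lambda^+$; since $\tau^{-1}(Y-X)\in\mmo Y$ exactly when $Y=\resolvent\tau X$, invariance of $\mmo$ by measure-preserving isomorphisms and single-valuedness of $\resolvent\tau$ give that $\resolvent\tau$ is invariant by measure-preserving isomorphisms, hence (being continuous) law invariant by Lemma \ref{le:noncera}; the same holds for $\mmo_\tau=\tau^{-1}(\resolvent\tau-\ii_\cH)$. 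For $\Sgp_t$ one uses the exponential formula $\Sgp_tX=\lim_n(\resolvent{t/n})^{\circ n}X$ (Appendix \ref{sec:brezis}): invariance by measure-preserving isomorphisms is stable under composition and pointwise limits, so $\Sgp_t$ is invariant by measure-preserving isomorphisms and, being continuous, law invariant. Finally $\mmo^\circ X=\lim_{\tau\downarrow0}\mmo_\tau X$ pointwise, so $\mmo^\circ$ is law invariant as well.

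\textbf{Step 2 (spatial representation).} The core is the following claim: a law-invariant, everywhere-defined, $L$-Lipschitz single-valued operator $T$ on $\cH$ admits a representation $T(X)(\omega)=\mathbf t(X(\omega),X_\sharp\P)$ with $\mathbf t\colon\Sp\X\to\X$ continuous and $\mathbf t(\cdot,\mu)$ $L$-Lipschitz on $\supp\mu$, uniquely determined. Fix $X$ with $\mu=X_\sharp\P$ and set $Y=T(X)$. For every $g\in\rmS(\Omega)$ with $X\circ g=X$ we get $Y\circ g=Y$ by invariance, and since the measurable functions invariant under all such $g$ are exactly the $\sigma(X)$-measurable ones, $Y=\mathbf t_X\circ X$ for some Borel $\mathbf t_X$ defined $\mu$-a.e.; as $(X,T(X))_\sharp\P=(\ii_\X,\mathbf t_X)_\sharp\mu$ depends on $X$ only through $\mu$, law invariance of $T$ forces $T(X')=\mathbf t_X\circ X'$ for every $X'$ with $X'_\sharp\P=\mu$, so $\mathbf t_X=\mathbf t(\cdot,\mu)$ depends only on $\mu$. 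For the Lipschitz bound, given $x_0,x_1\in\supp\mu$ choose disjoint blocks $A_0,A_1$ of equal measure $\eps$ on which $X$ is within $\delta$ of $x_0$, resp.\ $x_1$, and let $g\in\rmS(\Omega)$ swap $A_0$ and $A_1$ and fix the rest; then $T(X\circ g)=T(X)\circ g=\mathbf t(\cdot,\mu)\circ(X\circ g)$, and the estimate $|T(X)-T(X\circ g)|_\cH\le L|X-X\circ g|_\cH$ becomes, after letting $\eps,\delta\downarrow0$ at $\mu$-a.e.\ $x_0,x_1$, the inequality $|\mathbf t(x_0,\mu)-\mathbf t(x_1,\mu)|\le L|x_0-x_1|$, which extends by continuity to all of $\supp\mu$; joint continuity of $\mathbf t$ on $\Sp\X$ and uniqueness follow from $W_2$-continuity of $T$ together with this representation. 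Applying this to $\resolvent\tau$, $\mmo_\tau$ and $\Sgp_t$ (law invariant by Step 1, with the respective Lipschitz constants, and defined on $\iota^{-1}(\prob_2(\X))$, resp.\ on $\iota^{-1}(\overline{\dommmo})$ for $\Sgp_t$) yields the maps $\jj_\tau$, $\bb_\tau$, $\ss_t$ and the identities \eqref{eq:7-1}, \eqref{eq:7-3}, \eqref{eq:7-2}, hence items (1)--(3).

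\textbf{Step 3 (semigroup and minimal-selection properties).} The relations \eqref{eq:8} transcribe through $\iota$ the facts $\Sgp_t(\dom(\mmo))\subseteq\dom(\mmo)$, $\Sgp_t(\overline{\dom(\mmo)})\subseteq\overline{\dom(\mmo)}$ and $\Sgp_{t+h}=\Sgp_h\circ\Sgp_t$ (Appendix \ref{sec:brezis}), the $\mu$-a.e.\ identities being promoted to all $x\in\supp\mu$ by continuity of $\ss$. For the minimal selection, fix $\mu\in\dommmo$ and $X$ with $X_\sharp\P=\mu$: since $\mmo_\tau X\to\mmo^\circ X$ in $\cH$ as $\tau\downarrow0$ and $\|\mmo_\tau X-\mmo_{\tau'}X\|_\cH=\|\bb_\tau(\cdot,\mu)-\bb_{\tau'}(\cdot,\mu)\|_{L^2(\X,\mu;\X)}$, the maps $\bb_\tau(\cdot,\mu)$ converge in $L^2(\X,\mu;\X)$ to some $\bb^\circ[\mu]$ with $\mmo^\circ X=\bb^\circ[\mu]\circ X$ $\P$-a.e., which is \eqref{eq:9} (independence of the representative $X$ as in Step 2); the $\lambda$-dissipativity of $\bb^\circ[\mu]$ on a full-measure subset of $\X$ follows from the $\lambda$-dissipativity inequality for $\mmo$ applied to $(X,\mmo^\circ X)$ and $(X\circ g,(\mmo^\circ X)\circ g)$ for swap maps $g$ as in Step 2. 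Finally \eqref{eq:10} is $\lim_{h\downarrow0}h^{-1}(\Sgp_{t+h}X_0-\Sgp_tX_0)=\mmo^\circ(\Sgp_tX_0)$ in $\cH$ (Theorem \ref{thm:brezis3}(3)) rewritten using $\Sgp_sX_0(\omega)=\ss_s(X_0(\omega),\mu)$ and $\mmo^\circ(\Sgp_tX_0)(\omega)=\bb^\circ[\ss_t(\cdot,\mu)_\sharp\mu](\ss_t(X_0(\omega),\mu))$, and re-expressing the $\cH$-norm as a $\mu$-integral over $\X$.

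The main obstacle is the $\sigma(X)$-measurability step in Step 2, i.e.\ that an operator invariant by measure-preserving isomorphisms is necessarily ``local in $X$''. This requires a measurable choice of measure-preserving transformations of the conditional spaces in the disintegration of $(\Omega,\cB,\P)$ over $X$, which is exactly what the Borel-partition results recalled in Appendix \ref{sec:appborel} (and established in \cite{CSS2piccolo}) provide. Everything else is either an immediate consequence of Lemma \ref{le:noncera} and the exponential formula, a bookkeeping transcription of the Hilbertian statements in Appendix \ref{sec:brezis}, or the short transposition argument for the Lipschitz and dissipativity estimates.
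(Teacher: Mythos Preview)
The paper itself does not give a proof of this theorem: immediately before the statement it writes ``The results in the following Lemma~\ref{le:noncera} and Theorem~\ref{thm:invTOlawinv} are presented in \cite[Section~4]{CSS2piccolo} to which we refer for the proofs.'' So there is nothing to compare against except the general architecture, and on that level your plan is the right one and is presumably the one carried out in \cite{CSS2piccolo}: Step~1 (law invariance of $\resolvent\tau,\mmo_\tau,\Sgp_t,\mmo^\circ$ via Lemma~\ref{le:noncera}, stability under composition/limits, and the exponential formula) and Step~3 (transcription of the Hilbertian semigroup facts from Appendix~\ref{sec:brezis} through $\iota$) are correct and routine. You also correctly isolate the one genuine issue, the $\sigma(X)$-measurability/representation claim in Step~2, and correctly point to the Borel-partition machinery of \cite{CSS2piccolo} as the input needed to build global measure-preserving maps acting fibrewise over $X$.

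Where your sketch is thin is precisely in the parts of Step~2 that you treat as easy once the representation $T(X)=\mathbf t(\cdot,\mu)\circ X$ is in hand. The swap argument for the Lipschitz bound is not complete as written: after the transposition you obtain an \emph{integrated} inequality of the form $\int_{A_0}|\mathbf t(X)-\mathbf t(X\circ g)|^2\,\d\P\le L^2\int_{A_0}|X-X\circ g|^2\,\d\P$, and passing from this to the pointwise bound $|\mathbf t(x_0,\mu)-\mathbf t(x_1,\mu)|\le L|x_0-x_1|$ ``after letting $\eps,\delta\downarrow0$'' hides a nontrivial step---you are effectively assuming a Lebesgue-point property or some density of transposition couplings in $\Gamma(\mu|_{B_0},\mu|_{B_1})$, and neither is automatic for general $\mu$ on a separable Hilbert space. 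A clean way (likely what \cite{CSS2piccolo} does) is to note that for \emph{every} self-coupling $\ggamma\in\Gamma(\mu,\mu)$ one can realize $\ggamma=(X_0,X_1)_\sharp\P$ and hence $\int|\mathbf t(y_0)-\mathbf t(y_1)|^2\,\d\ggamma\le L^2\int|y_0-y_1|^2\,\d\ggamma$; varying $\ggamma$ over swap-type couplings built from arbitrary measure-preserving bijections between small sets then gives the $\mu\otimes\mu$-a.e.\ bound, whence a genuine $L$-Lipschitz representative on $\supp\mu$. Similarly, the phrase ``joint continuity of $\mathbf t$ on $\Sp\X$ follows from $W_2$-continuity of $T$'' skips the work: $W_2$-continuity gives $L^2$-convergence of $\mathbf t(\cdot,\mu_n)\circ X_n$, and upgrading this to pointwise convergence on $\Sp\X$ uses the uniform Lipschitz bound in $x$ together with a careful choice of parametrizations $X_n\to X$ (as in the last statement of Theorem~\ref{thm:gpfinal}). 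None of this is wrong, but it is exactly where the content of \cite{CSS2piccolo} lives, so your proposal should be read as a correct roadmap that still outsources the substantive analysis of Step~2 to that reference---which is also what the paper does.
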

Notice that when $\mu\in \iota(\dom(\mmo))$, 
\eqref{eq:8} and \eqref{eq:10} yield
\begin{equation}
    \label{eq:10bis}
    \lim_{h\downarrow0}\int_\X\bigg|\frac 1h(\ss_{h}(x,\mu)-x)-
    \bb^\circ[\mu](x)\bigg|^2\,\d\mu(x)=0. 
  \end{equation}
\begin{remark}
  \label{rem:aggiungi}
 By Theorem \ref{thm:brezis2}(1) and Lemma \ref{le:noncera},
a maximal $\lambda$-dissipative operator $\mmo \subset \cH \times \cH$, $\lambda\in\R$, is law invariant if and only if it is invariant by measure-preserving isomorphisms.
Hence, in this case, we will simply use the word \emph{invariant}. Notice moreover that if $\mmo$ is law invariant, then also
$\dom(\mmo)$ is \emph{law invariant}
in the sense that if $X\in \dom(\mmo)$ and $\iota_Y=\iota_X$
then also $Y$ belongs to $\dom(\mmo)$. It is an immediate consequence of \eqref{eq:9}.
\end{remark}

\subsection{Totally dissipative {\MPVF}s }\label{sec:3.2}
The aim of this section is to study the properties of \MPVF{s} enjoying a strong dissipativity property that we call total dissipativity.
\begin{definition}[Total dissipativity]
    \label{def:total-dissipativity}
    We say that a \MPVF $\frF \subset \prob_2(\TX)$ is \emph{totally $\lambda$-dissipative}, $\lambda\in\R$, if 
    for every $\Phi_0,\Phi_1\in \frF$
    and every $\bm \vartheta\in \Gamma(\Phi_0,\Phi_1)$ we have
    \begin{equation}
        \label{eq:total-dissipativity}
        \int_{\TX^2} \langle v_1-v_0,x_1-x_0
        \rangle \,\d\bm\vartheta(x_0,v_0,x_1,v_1)\le \lambda \int_{\TX^2} |x_1-x_0|^2\d\bm\vartheta(x_0,v_0,x_1,v_1).
    \end{equation}
    We say that $\frF$ is \emph{maximal totally $\lambda$-dissipative} if it is 
    maximal in the class of totally $\lambda$-dissipative \MPVF{s}: if $\frF'\supset \frF$
    and $\frF'$ is totally $\lambda$-dissipative, then $\frF'=\frF.$
\end{definition}
Of course, total $\lambda$-dissipativity implies
$\lambda$-dissipativity (see Definition \ref{def:dissipative}).
\begin{remark}
\label{rem:deterministic}    
Notice that for a deterministic \MPVF (recall Definition \ref{def:MPVF})
total $\lambda$-dissipativity is equivalent
to the following condition
(when $\lambda=0$ see the analogous 
notion of L-monotonicity
of \cite[Def.~3.31]{CD18}):
for every 
$\mu_i\in \dom(\frF)$ and
$\ff_i\in \maps{\frF[\mu_i]}$, $i=0,1$, and every $\mmu\in \Gamma(\mu_0,\mu_1)$ it holds
\begin{equation}
    \label{eq:deterministic-total-diss}
    \int_{\X^2} 
    \langle \ff_1(x_1,\mu_1)-
    \ff_0(x_0,\mu_0),x_1-x_0
    \rangle\,\d\mmu(x_0,x_1)\le 
\lambda\int_{\X^2} |x_1-x_0|^2\,\d\mmu(x_0,x_1).    
\end{equation}
\end{remark}

We introduce now the natural notion of Lagrangian representation of
a \MPVF, 
based on the maps $\iota$, $\iotaT$ introduced in \eqref{eq:1}.
\begin{definition}
  [Lagrangian representations and Eulerian images]\label{def:representations}
  Given $\mmo \subset \cH
  \times \cH$ and $\frF \subset \prob_2(\TX)$, we say that $\mmo$ is
  the \emph{Lagrangian representation} of $\frF$ if
  \[\mmo=(\iotaT)^{-1}(\frF)=
  \Big\{(X,V)\in \cH\times \cH\,:\,
  \iotaT_{X,V} \in \frF\Big\}.
  \]  
  Conversely, if
  $\mmo\subset \cH\times \cH$
  we say that $\frF$ is the
\emph{Eulerian
image} of $\mmo$ if 
\[\frF=\iota^2(\mmo)=
\Big\{\iotaT_{X,V}\,:\,(X,V)\in \mmo\Big\}.\]
\end{definition}
Clearly, the Lagrangian representation $\mmo$ of $\frF$ is law
invariant, moreover $\mmo$ is the Lagrangian representation of $\frF$
if and only if $\frF$ is the Eulerian image of $\mmo$ and $\mmo$ is law invariant.

Similarly to Remark \ref{rem:transff} concerning operators in Hilbert
spaces, we highlight the following result which allows a
reduction of
many arguments to the dissipative case $\lambda=0$.
\begin{lemma}\label{lem:lambda0}
The following hold:
\begin{enumerate}
    \item $\frF\subset\prob_2(\TX)$ is totally $\lambda$-dissipative if and only if $\frF^\lambda$ (cf. \eqref{eq:Flambda}) is totally $0$-dissipative;
    \item $\frF\subset\prob_2(\TX)$ is maximal totally $\lambda$-dissipative if and only if $\frF^\lambda$  is maximal  totally $0$-dissipative;
    \item $\mmo\subset\cH\times\cH$ is invariant by measure-preserving isomorphisms (resp.~law invariant) if and only if $\mmo^\lambda:=\mmo-\lambda\ii_\cH$ is invariant by measure-preserving isomorphisms (resp.~law invariant);
    \item $\mmo\subset\cH\times\cH$ is the Lagrangian representation of $\frF\subset\prob_2(\TX)$ if and only if $\mmo^\lambda$ is the Lagrangian representation of $\frF^\lambda$.
\end{enumerate}
\end{lemma}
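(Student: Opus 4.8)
# Proof proposal for Lemma \ref{lem:lambda0}

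The plan is to reduce everything to the linear change of variables induced by the map $L^\lambda = (\sfx, \sfv - \lambda\sfx)$ on $\TX$ and by the operator $X \mapsto X - \lambda X$ on $\cH$, checking that these two changes of variables are compatible under the push-forward maps $\iota^2$ and $\iota$.

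\textbf{Items (1) and (2).} I would start from the elementary algebraic identity underlying the $\lambda$-transformation: if $(x_0,v_0,x_1,v_1) \in \TX\times\TX$ and we set $w_i := v_i - \lambda x_i$, then
\[
\langle w_1 - w_0,\, x_1 - x_0\rangle = \langle v_1 - v_0,\, x_1-x_0\rangle - \lambda|x_1-x_0|^2.
\]
Next, observe that the map $(L^\lambda \times L^\lambda): \TX\times\TX \to \TX\times\TX$ is a bijection which, at the level of couplings, sets up a bijection $\boldsymbol\vartheta \mapsto (L^\lambda\times L^\lambda)_\sharp\boldsymbol\vartheta$ between $\Gamma(\Phi_0,\Phi_1)$ and $\Gamma(L^\lambda_\sharp\Phi_0, L^\lambda_\sharp\Phi_1)$; crucially, since $L^\lambda$ does not touch the $\sfx$-component, the ``space marginal'' $(\sfx^0,\sfx^1)_\sharp\boldsymbol\vartheta$ is unchanged, so in particular $\int|x_1-x_0|^2\,\d\boldsymbol\vartheta$ is preserved. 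Combining this with the identity above, condition \eqref{eq:total-dissipativity} for $\frF$ with parameter $\lambda$ (tested over all $\boldsymbol\vartheta \in \Gamma(\Phi_0,\Phi_1)$, $\Phi_i\in\frF$) is literally equivalent to \eqref{eq:total-dissipativity} for $\frF^\lambda = L^\lambda_\sharp\frF$ with parameter $0$. This proves (1). For (2), the same bijection $\frG \mapsto \frG^\lambda = L^\lambda_\sharp\frG$ is an inclusion-preserving bijection on the lattice of {\MPVF}s (its inverse is $\frG \mapsto L^{-\lambda}_\sharp\frG$), and by (1) it carries the class of totally $\lambda$-dissipative {\MPVF}s onto the class of totally $0$-dissipative ones; an order isomorphism between two families of sets sends maximal elements to maximal elements, so $\frF$ is maximal totally $\lambda$-dissipative iff $\frF^\lambda$ is maximal totally $0$-dissipative.

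\textbf{Items (3) and (4).} For (3), let $T^\lambda: \cH\times\cH \to \cH\times\cH$ be defined by $T^\lambda(X,V) := (X, V - \lambda X)$, so that $\mmo^\lambda = T^\lambda(\mmo)$. Since $T^\lambda$ acts pointwise in $\omega$ and commutes with precomposition by any $g\in\rmS(\Omega)$ (i.e.\ $T^\lambda(X\circ g, V\circ g) = (T^\lambda(X,V))\circ g$, interpreting the latter componentwise), we get $(X,V)\in\mmo$ and $g\in\rmS(\Omega)$ iff $(X\circ g, V\circ g)\in\mmo$ iff $T^\lambda(X\circ g,V\circ g) = (T^\lambda(X,V))\circ g \in \mmo^\lambda$; running this through the bijection $T^\lambda$ gives that $\mmo$ is invariant by measure-preserving isomorphisms iff $\mmo^\lambda$ is. The law-invariance statement is analogous: $T^\lambda$ intertwines the relation ``$(X,V)$ and $(X',V')$ have the same law'' with the corresponding relation for their images, since $(X',V')_\sharp\P = (X,V)_\sharp\P$ implies $(T^\lambda(X',V'))_\sharp\P = (L^\lambda)_\sharp\big((X,V)_\sharp\P\big) = (T^\lambda(X,V))_\sharp\P$ and vice versa. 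Finally, for (4) I would simply check the defining diagram commutes: $\iota^2 \circ T^\lambda = L^\lambda_\sharp \circ \iota^2$, because $\iota^2(T^\lambda(X,V)) = (X, V-\lambda X)_\sharp\P = L^\lambda_\sharp\big((X,V)_\sharp\P\big) = L^\lambda_\sharp(\iota^2_{X,V})$. Hence $\mmo = (\iota^2)^{-1}(\frF)$ iff $T^\lambda(\mmo) = (\iota^2)^{-1}(L^\lambda_\sharp\frF)$, i.e.\ $\mmo^\lambda = (\iota^2)^{-1}(\frF^\lambda)$; and since $T^\lambda$ preserves law invariance by (3), this is exactly the statement that $\mmo^\lambda$ is the Lagrangian representation of $\frF^\lambda$.

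I do not expect any serious obstacle here: the whole lemma is a bookkeeping exercise verifying that two affine changes of variables — one on $\TX$, one on $\cH$ — are compatible with the push-forward correspondence and preserve all the structural properties in play. The only point requiring a moment of care is the observation, used repeatedly in (1)–(2), that $L^\lambda$ fixes the space component, so that both the set $\Gamma_o$ of optimal plans (relevant if one wants to re-derive plain $\lambda$-dissipativity along the way) and the quadratic term $\int|x_1-x_0|^2\,\d\boldsymbol\vartheta$ are left invariant; everything else is the corresponding statement already recorded in Remark \ref{rem:transff} and \cite[Lemma 4.6]{CSS} for metric dissipativity, transcribed to the total setting.
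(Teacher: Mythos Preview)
Your proposal is correct and takes essentially the same approach as the paper: for item (1) you carry out exactly the computation the paper sketches (the bijection $\boldsymbol\vartheta \leftrightarrow (L^\lambda\times L^\lambda)_\sharp\boldsymbol\vartheta$ together with the identity $\langle w_1-w_0,x_1-x_0\rangle = \langle v_1-v_0,x_1-x_0\rangle - \lambda|x_1-x_0|^2$), and for items (2)--(4) you spell out in detail what the paper simply labels ``straightforward''. There is no meaningful difference in strategy.
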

\begin{proof}
The proof of item (1) is similar to \cite[Lemma 4.6]{CSS} and is based on the bijectivity of the map $L^\lambda:=(\sfx,\sfv-\lambda\sfx):\TX\to\TX$. Hence, if $\Phi_i\in\frF$ and $\Phi_i^\lambda:=L^\lambda_\sharp\Phi_i\in\frF^\lambda$, $i=1,2$, then $\bm\vartheta\in\Gamma(\Phi_0,\Phi_1)$ if and only if $\bm\vartheta^\lambda\in\Gamma(\Phi_0^\lambda,\Phi_1^\lambda)$, with $\bm\vartheta^\lambda=(\sfx^0,\sfv^0-\lambda\sfx^0,\sfx^1,\sfv^1-\lambda\sfx^1)_\sharp\bm\vartheta$. We can thus prove only the left-to-right implication, the other will follow from the same procedure. We have
\begin{align*}
\int_{\TX^2} \langle v_1-v_0,x_1-x_0
        \rangle \,\d\bm\vartheta^\lambda(x_0,v_0,x_1,v_1)&=\int_{\TX^2} \langle v_1-v_0-\lambda(x_1-x_0),x_1-x_0
        \rangle \,\d\bm\vartheta(x_0,v_0,x_1,v_1)\\
        &=\int_{\TX^2} \langle v_1-v_0,x_1-x_0
        \rangle \,\d\bm\vartheta-\lambda\int_{\TX^2} |x_1-x_0|^2\d\bm\vartheta\\
        &\le0,
\end{align*}
by total $\lambda$-dissipativity of $\frF$.\\
Items (2), (3) and (4) are straightforward.
\end{proof}

A first basic fact is stated by the following proposition.
\begin{proposition}
    \label{prop:basic-relation}
Let $\mmo\subset \cH\times \cH$ be the Lagrangian representation of $\frF
\subset \mathcal P_2(\TX)$ according to Definition \ref{def:representations}. 
Then $\frF$ is totally $\lambda$-dissipative 
if and only if $\mmo$ is $\lambda$-dissipative.
\end{proposition}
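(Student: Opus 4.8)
The plan is to prove both implications directly from the definitions, translating dissipativity of the operator $\mmo$ in $\cH\times\cH$ into a statement about couplings of measures in $\prob_2(\TX)$, and vice versa.

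For the easy direction, suppose $\mmo$ is $\lambda$-dissipative. Take $\Phi_0,\Phi_1\in \frF$ and an arbitrary coupling $\boldsymbol\vartheta\in \Gamma(\Phi_0,\Phi_1)\subset \prob_2(\TX\times\TX)$. Since $(\Omega,\cB,\P)$ is a standard Borel space with nonatomic $\P$, and $\boldsymbol\vartheta$ is a Borel probability measure on the Polish space $\TX\times\TX$ with finite quadratic moment, there exists a random variable $(X_0,V_0,X_1,V_1):\Omega\to\TX\times\TX$ with $(X_0,V_0,X_1,V_1)_\sharp\P=\boldsymbol\vartheta$ and all four components in $\cH$. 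Then $(X_i,V_i)_\sharp\P=\Phi_i\in\frF$, so by definition of the Lagrangian representation $(X_i,V_i)\in\mmo$ for $i=0,1$. Applying $\lambda$-dissipativity of $\mmo$, i.e.~$\la V_1-V_0,X_1-X_0\ra_\cH\le \lambda|X_1-X_0|_\cH^2$, and rewriting both sides as integrals against $\P$ and then pushing forward to $\boldsymbol\vartheta$, we obtain exactly \eqref{eq:total-dissipativity}. Hence $\frF$ is totally $\lambda$-dissipative.

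For the converse, suppose $\frF$ is totally $\lambda$-dissipative, and take $(X_0,V_0),(X_1,V_1)\in \mmo$. Set $\Phi_i:=(X_i,V_i)_\sharp\P$; by definition of the Lagrangian representation $\Phi_i\in\frF$. Consider the coupling $\boldsymbol\vartheta:=(X_0,V_0,X_1,V_1)_\sharp\P\in\prob_2(\TX\times\TX)$, which has marginals $\Phi_0$ and $\Phi_1$, hence $\boldsymbol\vartheta\in\Gamma(\Phi_0,\Phi_1)$. Applying \eqref{eq:total-dissipativity} to this particular $\boldsymbol\vartheta$ and writing the integrals back in terms of $\P$ gives $\la V_1-V_0,X_1-X_0\ra_\cH\le \lambda|X_1-X_0|_\cH^2$, i.e.~$\mmo$ is $\lambda$-dissipative.

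The only point requiring a little care — and the mildest obstacle — is the existence of a common parametrization of an arbitrary finite-moment coupling $\boldsymbol\vartheta$ on $\TX\times\TX$ by a random variable on the nonatomic standard Borel space $(\Omega,\cB,\P)$; this is standard (any Borel probability measure on a Polish space is the law of some Borel map from a nonatomic standard probability space, and finiteness of the quadratic moment guarantees the map lies in $\cH$, indeed in $\cH\times\cH\times\cH\times\cH$), but it is exactly the step that makes the ``for every coupling'' in \eqref{eq:total-dissipativity} match ``for every pair of elements of $\mmo$'' in the definition of dissipativity. Everything else is a direct translation between the pairing $\la\cdot,\cdot\ra_\cH$ on $\cH$ and integration against $\P$, using \eqref{eq:1}.
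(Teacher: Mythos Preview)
Your proof is correct and follows essentially the same approach as the paper: both directions amount to passing between the Hilbert pairing $\la V_1-V_0,X_1-X_0\ra_\cH$ and the integral against a coupling $\boldsymbol\vartheta=(X_0,V_0,X_1,V_1)_\sharp\P$, using that any $\boldsymbol\vartheta\in\Gamma(\Phi_0,\Phi_1)$ can be parametrized on the nonatomic standard Borel space. The only cosmetic differences are that the paper first reduces to $\lambda=0$ via Lemma~\ref{lem:lambda0}, and in the direction $\mmo$ dissipative $\Rightarrow$ $\frF$ totally dissipative it makes the law-invariance step explicit (finding $(X_i',V_i')\in\mmo$ and then invoking law invariance) rather than, as you do, reading $(X_i,V_i)\in\mmo$ directly off the definition of the Lagrangian representation.
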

\begin{proof}
By Lemma \ref{lem:lambda0} and Remark \ref{rem:transff}, it is sufficient to prove the result in the case $\lambda=0$.
Let us first assume that $\frF$ is totally dissipative.
Let $(X_0,V_0), (X_1,V_1)\in \mmo$.
Since $\Phi_0=\iotaT_{X_0,V_0}\in \frF$,
$\Phi_1=\iotaT_{X_1,V_1}\in \frF$
and $\bm\vartheta:=
(X_0,V_0,X_1,V_1)_\sharp\P\in \Gamma(\Phi_0,\Phi_1)$, 
\eqref{eq:total-dissipativity}
yields
\begin{displaymath}
    \int_\Omega \langle V_1-V_0,X_1-X_0\rangle\,\d\P
    =
    \int_{\TX^2}\langle v_1-v_0,x_1-x_0\rangle
    \,\d\bm\vartheta\le 0.
\end{displaymath}
In order to prove the converse implication, 
let us assume that $\mmo$ is dissipative and 
take $\Phi_0, \Phi_1\in\frF$, $\ttheta\in\Gamma(\Phi_0,\Phi_1)$ and $(X_0,V_0,X_1,V_1)\in\cH^4$ such that $(X_0,V_0,X_1,V_1)_\sharp\P=\ttheta$. 
Since $\Phi_0, \Phi_1\in\frF$, 
there exist $(X_0',V_0')\in\mmo$ and $(X_1',V_1')\in\mmo$ such that\[\iotaT_{X_0',V_0'}=\Phi_0=\iotaT_{X_0,V_0},\qquad \iotaT_{X_1',V_1'}=\Phi_1=\iotaT_{X_1,V_1}.\]
By the law invariance of $\mmo$, we have that $(X_0,V_0),(X_1,V_1)\in\mmo$, so that
\[ \int_{\TX^2} \langle v_1-v_0,x_1-x_0\rangle
    \,\d\bm\vartheta=
    \la V_1-V_0, X_1-X_0 \ra_{\cH} \le 0\]
by the dissipativity of $\mmo$.
\end{proof}
\begin{example}
\label{ex:Lip}
        Let us consider a map $\ff:\Sp\X \to \X$ 
    (recall \eqref{eq:2}) 
    such that there exists $L>0$ for which we have
\begin{equation*}
|\ff(x_1,\mu_1)-\ff(x_0, \mu_0)| \le L \left (W_2(\mu_0, \mu_1) + |x_0-x_1| \right ) \quad \text{ for every } (x_0,\mu_0),\ (x_1,\mu_1) \in \Sp{\X}.
\end{equation*}
We can also identify $\ff$ with the map sending $\mu \mapsto f(\cdot, \mu) \in \Lip(\X;\X)$ (compare with the framework analyzed by Bonnet and Frankowska in \cite{bonnet2020mean,BF2023} and with the hypoteses in \cite{CLOS, AFMS21}).
Let us define the map $\Bb: \cH \to \cH$ and the (single-valued, deterministic) \PVF $\frF \subset \prob_2(\TX)$ as
\begin{align*}
    \Bb(X)(\omega)&:= \ff(X(\omega), \iota_X), \quad X \in \X, \, \omega \in \Omega,\\
    \frF[\mu]&:= (\ii_{\X}, \ff(\cdot, \mu))_\sharp \mu, \quad\mu \in \prob_2(\X).
\end{align*}
It is not difficult to check that $\Bb$ is $2L$-Lipschitz and that
$\frF$ is maximal $2L$-totally dissipative. Indeed, for every $X,Y \in \cH$, we have
\begin{align*}
|\Bb (X)-\Bb (Y)|_{\cH} &= \left ( \int_{\Omega} |\Bb (X)(\omega)-\Bb (Y)(\omega)|^2 \de \P(\omega) \right )^{1/2} \\
&= \left ( \int_{\Omega} |\ff(X(\omega), \iota_X))-\ff(Y(\omega), \iota_Y)|^2 \de \P(\omega) \right)^{1/2}\\
& \le L \left ( \int_{\Omega} \left(W_2(\iota_X, \iota_Y) + |X(\omega)-Y(\omega)|\right)^2 \de \P(\omega)\right )^{1/2} \\
& \le L \left ( \left (\int_{\Omega} W_2^2(\iota_X, \iota_Y) \de \P(\omega) \right )^{1/2} +\left (\int_{\Omega} |X(\omega) - Y(\omega)|^2 \de \P(\omega) \right )^{1/2} \right ) \\
& \le 2L|X-Y|_{\cH}
\end{align*} 
so that $\Bb$ is $2L$-dissipative and therefore
$\frF$ is $2L$-totally dissipative as well by
Proposition \ref{prop:basic-relation}. Maximality follows by the maximality of $\Bb$ and the next theorem.
\end{example}
\begin{theorem}[Maximal dissipativity]
\label{thm:maximal-dissipativity}\
    \begin{enumerate}
    \item Every $\lambda$-dissipative operator $\mmo \subset \cH \times \cH$ 
    which is invariant by measure-preserving isomorphisms has 
    a maximal $\lambda$-dissipative extension with domain included in $\clconv{\dom(\mmo)}$ which is 
    invariant by measure-preserving isomorphisms (and therefore also law invariant).
    \item
    Let us suppose that 
    $\mmo \subset \cH \times \cH$ is the $\lambda$-dissipative Lagrangian representation
    of the totally $\lambda$-dissipative \MPVF $\frF \subset \prob_2(\TX)$.
    Then $\mmo$ is maximal $\lambda$-dissipative if and only if 
    $\frF$ is maximal totally $\lambda$-dissipative. 
    \item If $\frF \subset \prob_2(\TX)$ is a totally $\lambda$-dissipative \MPVF with domain included in a closed and totally convex set $\core$, then there exists a maximal totally $\lambda$-dissipative extension of $\frF$ with domain included in $\core$. 
    \end{enumerate}
\end{theorem}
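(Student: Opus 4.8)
The plan is to reduce all three items to the dissipative case $\lambda=0$ (via Lemma \ref{lem:lambda0} and Remark \ref{rem:transff}), to quote item (1) essentially from \cite{CSS2piccolo}, and then to derive items (2) and (3) from item (1) together with Proposition \ref{prop:basic-relation} and the dictionary between Lagrangian representations and Eulerian images recorded right after Definition \ref{def:representations} (a Lagrangian representation is always law invariant, and $\mmo$ is the Lagrangian representation of $\frF$ iff $\frF=\iota^2(\mmo)$ and $\mmo$ is law invariant).

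For item (1): replacing $\mmo$ with $\mmo-\lambda\ii_\cH$, which is still invariant by measure-preserving isomorphisms by Lemma \ref{lem:lambda0}(3), reduces the claim to the statement that a dissipative operator on $\cH$ invariant by measure-preserving isomorphisms has a maximal dissipative extension, invariant by measure-preserving isomorphisms, with domain contained in $\clconv{\dom(\mmo)}$. This is proved in \cite{CSS2piccolo}: the point is that the explicit maximal monotone extension produced by \cite{BauWang2009,BauWang2010} is canonical, hence commutes with every linear isometry of $\cH$, in particular with the composition operators $X\mapsto X\circ g$, $g\in\rmS(\Omega)$, and (as in the Debrunner--Flor construction) has domain in the closed convex hull of the original domain. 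Transporting back through the Hilbert-space $\lambda$-transformation gives the maximal $\lambda$-dissipative extension with the stated properties; since its graph is closed, Lemma \ref{le:noncera} upgrades invariance by measure-preserving isomorphisms to law invariance.

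For item (2): by Lemma \ref{lem:lambda0} we may take $\lambda=0$. Put $\mmo:=(\iota^2)^{-1}(\frF)$; then $\mmo$ is law invariant with $\iota^2(\mmo)=\frF$, and dissipative by Proposition \ref{prop:basic-relation}. If $\frF$ is maximal totally dissipative, choose via item (1) a maximal dissipative, law-invariant $\hat\mmo\supseteq\mmo$; then $\hat\frF:=\iota^2(\hat\mmo)$ is totally dissipative by Proposition \ref{prop:basic-relation} and contains $\iota^2(\mmo)=\frF$, so $\hat\frF=\frF$ by maximality; since $\hat\mmo$ is law invariant it is the Lagrangian representation of $\hat\frF=\frF$, hence $\hat\mmo=(\iota^2)^{-1}(\frF)=\mmo$ is maximal. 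Conversely, if $\mmo$ is maximal dissipative and $\frF'\supseteq\frF$ is totally dissipative, then $\mmo':=(\iota^2)^{-1}(\frF')$ is dissipative (Proposition \ref{prop:basic-relation}), law invariant, and contains $\mmo$, whence $\mmo'=\mmo$ and $\frF'=\iota^2(\mmo')=\iota^2(\mmo)=\frF$.

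For item (3): reduce again to $\lambda=0$, noting $\dom(\frF^\lambda)=\dom(\frF)\subseteq\core$ (the map $L^\lambda$ fixes the first marginal) and that total convexity of $\core$ is unaffected. Let $\mmo:=(\iota^2)^{-1}(\frF)$, dissipative (Proposition \ref{prop:basic-relation}) and law invariant, with $\iota(\dom(\mmo))=\dom(\frF)\subseteq\core$. The crucial step is that $\iota(\clconv{\dom(\mmo)})\subseteq\core$: for $X_0,X_1\in\dom(\mmo)$ and $t\in[0,1]$ the laws $\mu_i:=(X_i)_\sharp\P$ lie in $\core$, $\mmu:=(X_0,X_1)_\sharp\P\in\Gamma(\mu_0,\mu_1)$, and $((1-t)X_0+tX_1)_\sharp\P=(\sfx^t)_\sharp\mmu\in\core$ by total convexity; nesting two-term combinations (each intermediate law already belongs to $\core$ by induction, and total convexity only needs its two endpoints in $\core$) gives $\iota(\conv{\dom(\mmo)})\subseteq\core$, and since $\iota$ is $1$-Lipschitz ($W_2(X_\sharp\P,Y_\sharp\P)\le|X-Y|_\cH$) and $\core$ is closed, $\iota(\clconv{\dom(\mmo)})\subseteq\core$. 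Now apply item (1) to obtain a maximal dissipative, law-invariant $\hat\mmo\supseteq\mmo$ with $\dom(\hat\mmo)\subseteq\clconv{\dom(\mmo)}$, so $\iota(\dom(\hat\mmo))\subseteq\core$; its Eulerian image $\hat\frF:=\iota^2(\hat\mmo)$ is totally dissipative (Proposition \ref{prop:basic-relation}), extends $\iota^2(\mmo)=\frF$, has $\dom(\hat\frF)=\iota(\dom(\hat\mmo))\subseteq\core$, and, being the Eulerian image of the maximal dissipative, law-invariant $\hat\mmo$, is maximal totally dissipative by item (2). The only genuinely hard input is item (1), borrowed from \cite{CSS2piccolo,BauWang2009,BauWang2010}; granting it, items (2)--(3) amount to bookkeeping on the correspondence $\mmo\leftrightarrow\frF$, the single substantive verification being the domain confinement in item (3), which is exactly where total — rather than merely geodesic — convexity of $\core$ is used.
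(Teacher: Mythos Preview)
Your proof is correct and follows essentially the same approach as the paper: reduce to $\lambda=0$, quote item (1) from \cite{CSS2piccolo}, derive item (2) from Proposition \ref{prop:basic-relation} and item (1), and obtain item (3) from items (1) and (2). The paper's proof of item (3) is the one-line ``consequence of Claim 1 and Claim 2''; you correctly spell out the domain confinement step $\iota(\clconv{\dom(\mmo)})\subseteq\core$, which is precisely where the total convexity of $\core$ enters, and your induction-by-nesting argument for arbitrary convex combinations is sound.
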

\begin{proof}
By Lemma \ref{lem:lambda0} and Remark \ref{rem:transff}, it is sufficient to prove the result in case $\lambda=0$.
    Item (1) is \cite[Theorem 4.5]{CSS2piccolo}. Notice that, since it is maximal $\lambda$-dissipative and invariant by measure-preserving isomorphisms, a maximal $\lambda$-dissipative extension of $\mmo$ is also law invariant by Lemma \ref{le:noncera}.

    Item (2) follows by 
    the equivalence result of 
    Proposition \ref{prop:basic-relation}
    and by item (1). In fact, if $\mmo$ is
    maximal dissipative it is clear that $\frF$ is maximal. Conversely,
    suppose that $\frF$ is maximal and $\mmo$
    is its Lagrangian representation.
    By contradiction, if $\mmo$ is not maximal, Item (1) shows that 
    there exists a maximal and proper extension $\hat\mmo$
    of $\mmo$ which is law invariant.
    Therefore, $\hat \mmo$
    induces a strict extension of $\frF$
    which is totally dissipative.

    Item (3) is a consequence of items (1) and  (2).
  \end{proof}
  \begin{remark}
    \label{rem:closure-of-domain}
    Notice that if $\mmo$ is the Lagrangian representation
    of a maximal totally $\lambda$-dissipative \MPVF\ $\frF$,
    then
    $\iota^{-1}\big(\overline{\dom(\frF)}\big)=\overline{\dom(\mmo)}$.
    In fact, it is sufficient to prove that if $\iota_X=\mu\in
    \overline{\dom(\frF)} $ then $X\in \overline{\dom(\mmo)}$, since the converse
    inclusion is trivial. Given such a $\mu=\iota_X\in\overline{\dom(\frF)}$, we can find a sequence $(\mu_n)_{n\in\N}\subset
    \dom(\frF)$ converging to $\mu$ in $\prob_2(\X)$. Applying the last
    statement of Theorem \ref{thm:gpfinal}
    we can then find a sequence $(X_n)_{n\in\N}\subset \cH$ 
    such that $\iota_{X_n}=\mu_n$ and
    $\lim_{n\to+\infty}|X_n-X|_\cH=0$.
    We deduce
    that $X_n\in \dom(\mmo)$ by Remark \ref{rem:aggiungi}
    and therefore $X\in \overline{\dom(\mmo)}$.    
  \end{remark} 

The uniqueness of a maximal totally dissipative extension of a given totally dissipative \MPVF is investigated in Part \ref{partII} and, in particular, in Theorem \ref{thm:total-case} of which we report a simplified version here.

\begin{theorem} Let $\mathsf{U} \subset \X$ be open, convex, non-empty and let $\frF \subset \prob_2(\TX)$ be a totally $\lambda$-dissipative \MPVF whose domain satisfies
\[ \prob_f(\mathsf U) \subset \dom(\frF) \subset \prob_2(\overline{\mathsf U}),\]
where
$\prob_f(\mathsf U)
      :=\{\mu\in \prob(\mathsf U)\,:\,\supp(\mu) \text{ is finite}\}$.
Then there exists a unique maximal totally $\lambda$-dissipative extension $\hat{\frF}$ of $\frF$ with domain included in $\prob_2(\overline{\mathsf U})$.
\end{theorem}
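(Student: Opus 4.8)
The plan is to pass to the Lagrangian side via Proposition~\ref{prop:basic-relation} and Theorem~\ref{thm:maximal-dissipativity}, reducing the uniqueness of the maximal totally $\lambda$-dissipative extension to the uniqueness of a maximal dissipative \emph{law-invariant} extension of the Lagrangian representation $\mmo=(\iotaT)^{-1}(\frF)$ inside $\{X\in\cH:X_\sharp\P\in\prob_2(\overline{\mathsf U})\}$. By Lemma~\ref{lem:lambda0} and Remark~\ref{rem:transff} it suffices to treat $\lambda=0$. Existence of a maximal totally dissipative extension with domain in $\prob_2(\overline{\mathsf U})$ is Theorem~\ref{thm:maximal-dissipativity}(3), since $\prob_2(\overline{\mathsf U})$ is closed and totally convex; the real content is uniqueness.

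The key mechanism for uniqueness is that the density of the discrete core $\prob_f(\mathsf U)$ in $D(\hat\frF)$ pins down the values of any maximal extension. Concretely, I would first show that any maximal totally dissipative extension $\hat\frF$ with $D(\hat\frF)\subset\prob_2(\overline{\mathsf U})$ must have $\prob_f(\mathsf U)\subset D(\hat\frF)$ dense in energy: using that $\hat\frF$ restricted to $\prob_f(\mathsf U)$ agrees with $\frF$ there, and that on the core the evaluation is governed by the finite-dimensional dissipative vector fields $\ff^N$ (as in the ``Construction from a discrete core'' part), the minimal-selection flow $\mathsf s_t$ generated by the Lagrangian representation is determined on parametrizations of discrete measures. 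Then, by the stability estimate \eqref{eq:stability-intro}, $|\mathbf S_t X_0-\mathbf S_t Y_0|_\cH\le|X_0-Y_0|_\cH$, the semigroup extends uniquely by density from parametrizations of $\prob_f(\mathsf U)$ to parametrizations of all of $\overline{\prob_f(\mathsf U)}=\prob_2(\overline{\mathsf U})$ (the closure computed via Remark~\ref{rem:closure-of-domain} and the fact that $\mathsf U$ is convex so $\prob_f(\mathsf U)$ is dense in $\prob_2(\overline{\mathsf U})$ in $W_2$). Two maximal extensions $\hat\frF_1,\hat\frF_2$ would generate the same semigroup, hence (since a maximal dissipative operator is determined by its semigroup, e.g.\ via $\mmo^\circ X=\lim_{h\downarrow0}(\mathbf S_hX-X)/h$ and maximality) the same Lagrangian representation, hence $\hat\frF_1=\hat\frF_2$.

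More precisely, I would argue: let $\mmo_i=(\iotaT)^{-1}(\hat\frF_i)$, $i=1,2$, be the two maximal dissipative law-invariant extensions of $\mmo=(\iotaT)^{-1}(\frF)$. Each contains $\mmo$, so each contains the ``discrete part'' $\mmo\cap\{X:X_\sharp\P\in\prob_f(\mathsf U)\}$; by the piecewise-optimality of discrete couplings (Part~\ref{partII}, Sections~\ref{sec:coupl}--\ref{sec:strong-dissipative}) and the invariance under $\rmS(\Omega)$, the closed convex hull of this discrete part already determines a maximal dissipative law-invariant operator $\mmo_\infty$ whose domain's closure is $\iota^{-1}(\prob_2(\overline{\mathsf U}))$. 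Since $\mmo_\infty\subset\mmo_i$ and both are maximal dissipative, $\mmo_i=\mmo_\infty$ for $i=1,2$; hence $\hat\frF_1=\hat\frF_2=\iota^2(\mmo_\infty)$. The openness of $\mathsf U$ is used to guarantee $\prob_f(\mathsf U)$ is a genuinely ``open/rich'' core (the sets $\mathsf C_N$ are relatively open in $\X^N$), which is exactly the hypothesis under which the canonical construction of Section~\ref{sec:constructionFlagr} applies and produces a \emph{unique} maximal extension.

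\textbf{Main obstacle.} The delicate point is that total dissipativity alone does not obviously force agreement off the core: two maximal totally dissipative \MPVF{}s could a priori differ on non-discrete measures while sharing the same discrete restriction. Overcoming this is precisely where one needs the nontrivial fact—established in Part~\ref{partII}—that a (totally) dissipative \MPVF on a rich discrete core admits a \emph{unique} canonical maximal extension, built explicitly from the core via the Hilbertian maximal-monotone-extension machinery of \cite{BauWang2009,BauWang2010,CSS2piccolo} together with the piecewise optimality of discrete couplings. So the proof is essentially a citation of Theorem~\ref{thm:total-case} (of which this statement is the announced simplified version), with the work consisting in checking that $\prob_f(\mathsf U)\subset D(\frF)\subset\prob_2(\overline{\mathsf U})$ with $\mathsf U$ open convex meets the hypotheses of that theorem (total convexity of $\prob_2(\overline{\mathsf U})$, relative openness of the $\mathsf C_N$, and density of the core in energy in $\prob_2(\overline{\mathsf U})$).
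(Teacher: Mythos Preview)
Your final assessment is correct and matches the paper exactly: this statement is presented in Section~\ref{sec:3.2} as ``a simplified version'' of Theorem~\ref{thm:total-case}, with no separate proof; the work is indeed to check that with $\cN=\N$ and $\core:=\{\mu\in\prob_{\#\N}(\X):\supp\mu\subset\mathsf U\}$ one obtains an $\N$-core (via characterization (b) of Lemma~\ref{le:equivalent}, using openness and convexity of $\mathsf U$) which is $W_2$-dense in $\prob_2(\overline{\mathsf U})=\overline{\core}$, and then invoke Theorem~\ref{thm:total-case}.

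Your two intermediate sketches, however, are not how Theorem~\ref{thm:total-case} is actually proved, and as written they have gaps. The semigroup argument would require short-time agreement of $\Sgp^1,\Sgp^2$ on \emph{all} of $\overline{\dom(\mmo)}$ (cf.\ Corollary~\ref{cor:samesgp}), not just on parametrizations of $\core$; and two maximal extensions could a priori differ in their minimal selections even on the core, so you cannot directly read off agreement of the flows from agreement of $\frF\restr\core$. The second sketch asserts that the closure of the discrete part is already maximal, but this is precisely the nontrivial content of the $\hat\fF_N$-construction (Proposition~\ref{prop:ext1}, Corollary~\ref{cor:help-me}); the actual proof of Theorem~\ref{thm:total-case} shows $\hat\mmo\subset\hat\fF$ via the projection argument of its Claims~1--3, using Corollary~\ref{cor:dim-total-case1} and the invariance of $\cH_N$ under the resolvent. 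Minor terminological point: ``dense in energy'' is a notion for functionals (Section~\ref{sec:jkobis}); here only $W_2$-density of the core in $\dom(\frF)$ is needed.
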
  
  
We now apply Theorem \ref{thm:maximal-dissipativity} to get
useful insights on the structure of totally dissipative {\MPVF}s.
The first result concerns the existence of a solution to the resolvent equation,
which provides an equivalent characterization of maximality and will
be the crucial tool to implement the Implicit Euler method, see
Corollary \ref{cor:IES}.
\begin{theorem}
[Solution to the resolvent equation]
    \label{cor:resolvent-max-tot}
    A totally $\lambda$-dissipative \MPVF $\frF \subset \prob_2(\TX)$ is maximal $\lambda$-dissipative if and only if 
    for every $\mu\in \mathcal P_2(\X)$
    and every $0<\tau<1/\lambda^+$ 
    there exists $\Phi\in\frF$
    such that $(\sfx-\tau \sfv)_\sharp \Phi=\mu.$  Moreover, if $\frF$ is a maximal totally $\lambda$-dissipative \MPVF, then for every $\mu\in\prob_2(\X)$ and $0<\tau<1/\lambda^+$, such a $\Phi$ is unique. 
\end{theorem}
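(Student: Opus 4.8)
The plan is to translate the resolvent equation for the \MPVF\ $\frF$ into the classical resolvent equation for its Lagrangian representation $\mmo\subset\cH\times\cH$, and then invoke the standard Hilbert-space theory together with the law-invariance machinery already set up. By Lemma \ref{lem:lambda0} and Remark \ref{rem:transff} we may reduce to $\lambda=0$, so $0<\tau<1/\lambda^+=+\infty$ is unrestricted. Let $\mmo=(\iotaT)^{-1}(\frF)$ be the Lagrangian representation; by Proposition \ref{prop:basic-relation} it is dissipative, and by Theorem \ref{thm:maximal-dissipativity}(2) it is maximal dissipative if and only if $\frF$ is maximal totally dissipative. The key observation is the following dictionary: for $X,V\in\cH$ with $\Phi=(X,V)_\sharp\P$ one has $(\sfx-\tau\sfv)_\sharp\Phi=(X-\tau V)_\sharp\P$, so the condition ``there exists $\Phi\in\frF$ with $(\sfx-\tau\sfv)_\sharp\Phi=\mu$'' is, up to the parametrization, the condition ``$\mu\in\iota\big(\mathrm{range}(\ii_\cH-\tau\mmo)\big)$''.

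The two implications then go as follows. Assume first that $\frF$ is maximal totally dissipative, hence $\mmo$ is maximal dissipative. Fix $\mu\in\prob_2(\X)$ and $\tau>0$; pick any $Y\in\cH$ with $Y_\sharp\P=\mu$ (possible since $\P$ is nonatomic on a standard Borel space). Since $\mmo$ is maximal dissipative, the resolvent $\resolvent\tau=(\ii_\cH-\tau\mmo)^{-1}$ is defined on all of $\cH$; set $X:=\resolvent\tau Y$ and $V:=\mmo_\tau Y=\tfrac1\tau(\resolvent\tau Y-Y)=\tfrac1\tau(X-Y)$, so that $(X,V)\in\mmo$ and $X-\tau V=Y$. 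Then $\Phi:=(X,V)_\sharp\P\in\frF$ and $(\sfx-\tau\sfv)_\sharp\Phi=(X-\tau V)_\sharp\P=Y_\sharp\P=\mu$, as required.

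Conversely, assume the resolvent equation is solvable for every $\mu$ and every $\tau>0$. We want to show $\mmo$ is maximal dissipative, for which it suffices to show it is \emph{range-maximal}, i.e.\ $\mathrm{range}(\ii_\cH-\tau\mmo)=\cH$ for some (equivalently all) $\tau>0$: by Minty's theorem (recalled in Appendix \ref{sec:brezis}, cf.\ Theorem \ref{thm:brezis2}) a dissipative operator on a Hilbert space is maximal dissipative iff $\ii_\cH-\tau\mmo$ is onto. So fix $Y\in\cH$ and put $\mu:=Y_\sharp\P$. By hypothesis there is $\Phi\in\frF$ with $(\sfx-\tau\sfv)_\sharp\Phi=\mu$; write $\Phi=(X',V')_\sharp\P$ for some $(X',V')\in\mmo$, so $X'-\tau V'$ has the same law as $Y$. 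The remaining point is to replace the pair $(X',V')$ by one with $X'-\tau V'$ equal to $Y$ \emph{exactly}, not just in law. Since $(X'-\tau V')_\sharp\P=\mu=Y_\sharp\P$, there is $g\in\rmS(\Omega)$ (after possibly enlarging $\Omega$, but by Definition \ref{def:sbs}/\ref{def:MPI} we may choose $(\Omega,\cB,\P)$ rich enough, or use the factorization of equidistributed random variables) with $(X'-\tau V')\circ g=Y$ $\P$-a.e.; by invariance of $\mmo$ under measure-preserving isomorphisms (Lemma \ref{le:noncera}, since $\mmo$ is law invariant as a Lagrangian representation) we get $(X'\circ g, V'\circ g)\in\mmo$ with $(X'\circ g)-\tau(V'\circ g)=Y$, i.e.\ $Y\in\mathrm{range}(\ii_\cH-\tau\mmo)$. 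Hence $\ii_\cH-\tau\mmo$ is onto and $\mmo$ is maximal dissipative, so $\frF$ is maximal totally dissipative by Theorem \ref{thm:maximal-dissipativity}(2).

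The main obstacle is precisely this last measure-theoretic step: the hypothesis only gives a pair realizing $\mu$ in law, while Minty's criterion needs exact solvability in $\cH$. The clean way around it is to use the law invariance of $\mmo$ together with the existence of measure-preserving isomorphisms conjugating any two $\X$-valued random variables with the same law on a standard nonatomic base (a consequence of the results in Appendix \ref{sec:appborel}); alternatively one argues directly at the level of laws, noting that solvability of the resolvent equation for $\frF$ is itself invariant and that $\resolvent\tau$, once it exists as a law-invariant map via Theorem \ref{thm:invTOlawinv}, automatically promotes the law-level statement to the $\cH$-level one. Either route closes the argument; I would present the conjugation-by-$g$ version since it is the shortest and uses only tools already cited.
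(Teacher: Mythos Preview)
The forward direction is fine and matches the paper's argument. The converse direction, however, has a genuine gap at exactly the point you flag. The claim that for two random variables $Z,Y\in\cH$ with $Z_\sharp\P=Y_\sharp\P$ there exists $g\in\rmS(\Omega)$ with $Z\circ g=Y$ is \emph{false} in general. For instance, on $\Omega=[0,1)$ with Lebesgue measure, take $Y(\omega)=\omega$ and $Z(\omega)=2\omega\bmod 1$: both push $\P$ to the uniform law, but $Z$ is essentially $2$-to-$1$, so $Z\circ g$ is essentially $2$-to-$1$ for any $g\in\rmS(\Omega)$ and can never equal the injective $Y$. The result in Appendix~\ref{sec:appborel} (Theorem~\ref{thm:gpfinal}) only gives an \emph{approximating} sequence $g_n$ with $Z\circ g_n\to Y$, which would show that $\mathrm{range}(\ii_\cH-\tau\mmo)$ is dense; to upgrade density to surjectivity via the contractivity of $(\ii_\cH-\tau\mmo)^{-1}$ you would need the graph of $\mmo$ to be closed, and nothing so far guarantees that. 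Your alternative route is circular: Theorem~\ref{thm:invTOlawinv} presupposes that $\mmo$ is maximal, which is exactly what you are trying to establish.

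The paper circumvents this by arguing by contradiction through a \emph{maximal} extension. If $\frF$ (equivalently $\mmo$) is not maximal, take a proper maximal law-invariant extension $\hat\mmo\supsetneq\mmo$ (Theorem~\ref{thm:maximal-dissipativity}(1)), pick $(\tilde X,\tilde V)\in\hat\mmo\setminus\mmo$, set $\tilde Y:=\tilde X-\tau\tilde V$ and $\mu:=\tilde Y_\sharp\P$. Now Theorem~\ref{thm:invTOlawinv} \emph{does} apply to $\hat\mmo$, yielding a map $\hat\jj_\tau(\cdot,\mu)$ with $\hat{\boldsymbol J}_\tau W=\hat\jj_\tau(W,\mu)$ for every $W$ with law $\mu$. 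If the resolvent equation for $\frF$ were solvable at $\mu$, one obtains $(X,V)\in\mmo$ with $(X-\tau V)_\sharp\P=\mu$; since both $\tilde X$ and $X$ arise as $\hat\jj_\tau(\cdot,\mu)$ applied to variables of the same law $\mu$, the pairs $(\tilde X,\tilde V)$ and $(X,V)$ have the same law, forcing $(\tilde X,\tilde V)\in\mmo$ by law invariance---a contradiction. The key idea you are missing is to pass to a maximal extension first, so that the functional representation of the resolvent becomes available.
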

\begin{proof}
    Let
    $\mmo$ be the Lagrangian representation of $\frF$ that is $\lambda$-dissipative by Proposition \ref{prop:basic-relation}.
    If $\frF$ is maximal $\lambda$-dissipative, then $\mmo$
    is maximal $\lambda$-dissipative as well by Theorem \ref{thm:maximal-dissipativity}(3), so that 
    for every $Y\in \cH$ with
    $\iota_Y=\mu$ and $0<\tau<1/\lambda^+$
    there exists  a unique 
    $(X,V)\in \mmo$ such that 
    $X-\tau V=Y$ (cf.~Theorem \ref{thm:brezis1}(1))
    so that $\Phi:=\iotaT_{X,V}\in \frF$
    satisfies $(\sfx-\tau\sfv)_\sharp \Phi=\mu$.  Moreover, we can prove that such $\Phi$ is unique. Indeed, assume there exists $\Phi'\in\frF$ such that $(\sfx-\tau\sfv)_\sharp \Phi'=\mu$. Let $(X',V')\in\mmo$ such that $\Phi'=\iotaT_{X',V'}$ and $Y':=X'-\tau V'$. By definition, we have
    \[\jJ_\tau(Y)=Y+\tau V=X,\quad \jJ_\tau(Y')=Y'+\tau V'=X'.\]
    By Theorem \ref{thm:invTOlawinv}, there exists a map $\jj_{\tau}$ representing $\jJ_{\tau}$. In particular, defining the map $\boldsymbol{a}_\tau^\mu:\supp(\mu)\to\X\times\X$,
    \[\boldsymbol{a}^\mu_\tau(x):=\left(\jj_\tau(x,\mu),\frac{\jj_\tau(x,\mu)-x}{\tau}\right), \quad x \in \supp(\mu),\]
    we have that $\boldsymbol{a}^\mu_\tau(Y)=(X,V)$ and $\boldsymbol{a}^\mu_\tau(Y')=(X',V')$. Since $\iota_Y=\iota_{Y'}=\mu$, we get $\iota^2_{X',V'}=\iota^2_{X,V}$. In particular, $\Phi'=\Phi$. 
\smallskip

    Conversely,  we prove the reverse implication of the statement.  Let us now suppose
    that $\frF$ is not maximal $\lambda$-dissipative, 
    so that $\mmo$ is not maximal $\lambda$-dissipative 
    and it admits a proper maximal $\lambda$-dissipative law invariant extension $\hat\mmo$ by Theorem \ref{thm:maximal-dissipativity}. Consider the following objects:
    \[(\tilde X,\tilde V)\in \hat\mmo\setminus \mmo,\quad 0<\tau<1/\lambda^+,\quad 
    \tilde Y:=\tilde X-\tau \tilde V,\quad\text{and }\mu:=\iota_{\tilde Y}.\]
    We claim that the equation
    $\Phi\in \frF$, $(\sfx-\tau\sfv)_\sharp\Phi=\mu$
    has no solution. We argue by contradiction,
    and we suppose that 
    $\Phi\in \frF$ is a solution: we could find
    $(X,V)\in \mmo$
    such that setting 
    $\iotaT_{X,V}=\Phi$
    and setting
    $Y:=X-\tau V$
    we have $ \iota_Y=\mu.$

    We use the maximal $\lambda$-dissipativity of $\hat\mmo$ and we denote by $\hat{\jJ}_\tau$ the resolvent associated to $\hat\mmo$,
    by $\hat \jj_\tau$ the map 
    induced by Theorem \ref{thm:invTOlawinv} as in \eqref{eq:7-1},
    and we set 
    \[\hat \bb_\tau(x):=
    \frac 1\tau(\hat \jj_\tau(x,\mu)-x),\quad x \in \supp(\mu).\]
    
    We have
    \begin{align*}
        \tilde X&=\hat{\jJ}_\tau\tilde (Y)=\hat \jj_\tau(\tilde Y,\mu);\\
        X&=\hat{\jJ}_\tau (Y)=\hat \jj_\tau (Y,\mu);\\
        \tilde V&=\frac 1\tau(\tilde X-\tilde Y)=\hat \bb_\tau(\tilde Y);\\
        V&=\frac 1\tau(X-Y)=\hat \bb_\tau(Y).
    \end{align*}
    It follows that 
    $\iotaT_{\tilde X,\tilde V}=
    (\hat \jj_\tau(\cdot,\mu),
    \hat \bb_\tau)_\sharp 
    \mu=\iotaT_{X,V}=\Phi\in \frF$
    so that $(\tilde X,\tilde V)$
    has the same law of $(X,V)$
    and therefore belongs to $\mmo$, 
    a contradiction.
\end{proof}
\medskip\noindent
We now show that a maximal totally $\lambda$-dissipative \MPVF
is sequentially closed in the strong-weak topology of
$\prob_2^{sw}(\TX)$, recall   \eqref{eq:sw-closure}.
\begin{proposition}
[Strong-weak closure]
  \label{prop:sw-closedness}
  The sequential strong-weak closure $\clo\frF$
  of a totally $\lambda$-dissipative \MPVF $\frF$
  is totally $\lambda$-dissipative as well.
  In particular, if $\frF$ is maximal, then $\clo\frF=\frF$.  
\end{proposition}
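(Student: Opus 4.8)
The plan is to transfer the assertion to the Hilbert space $\cH$ through the lifting map $\iotaT$, using the fact (Proposition \ref{prop:basic-relation}) that total $\lambda$-dissipativity of $\frF$ is equivalent to $\lambda$-dissipativity of its Lagrangian representation, together with the elementary observation that the strong--weak sequential closure of a $\lambda$-dissipative operator in $\cH\times\cH$ is again $\lambda$-dissipative. Concretely, I would fix $\Phi_0,\Phi_1\in\clo\frF$, pick sequences $\Phi_0^n,\Phi_1^n\in\frF$ with $\Phi_i^n\to\Phi_i$ in $\prob_2^{sw}(\TX)$, fix an arbitrary $\boldsymbol\vartheta\in\Gamma(\Phi_0,\Phi_1)$, and aim at establishing inequality \eqref{eq:total-dissipativity} for $\boldsymbol\vartheta$. (Alternatively, one may first reduce to the case $\lambda=0$ via Lemma \ref{lem:lambda0}, since $L^\lambda$ induces a homeomorphism of $\prob_2^{sw}(\TX)$; I will argue directly.)

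The key step is to approximate $\boldsymbol\vartheta$ by couplings of $\Phi_0^n$ and $\Phi_1^n$. Since $(\Omega,\cB,\P)$ is a nonatomic standard Borel probability space, $\boldsymbol\vartheta$ can be realized as the law $(X_0,V_0,X_1,V_1)_\sharp\P$ of a quadruple in $\cH^4$, so in particular $(X_i,V_i)_\sharp\P=\Phi_i$ for $i=0,1$. Applying the lifting theorem (Theorem \ref{thm:gpfinal}, in the strong--weak form for $\prob_2^{sw}(\TX)$, with the limit prescribed by the chosen representative) separately to the two convergent sequences, I obtain $(X_i^n,V_i^n)\in\cH^2$ with $(X_i^n,V_i^n)_\sharp\P=\Phi_i^n$, $X_i^n\to X_i$ strongly in $\cH$ and $V_i^n\weakto V_i$ weakly in $\cH$. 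Then $\boldsymbol\vartheta_n:=(X_0^n,V_0^n,X_1^n,V_1^n)_\sharp\P\in\Gamma(\Phi_0^n,\Phi_1^n)$, and the total $\lambda$-dissipativity of $\frF$ applied to $\boldsymbol\vartheta_n$ yields
\begin{align*}
  \la V_1^n-V_0^n, X_1^n-X_0^n\ra_{\cH}
  =\int\la v_1-v_0,x_1-x_0\ra\,\d\boldsymbol\vartheta_n
  &\le\lambda\int|x_1-x_0|^2\,\d\boldsymbol\vartheta_n\\
  &=\lambda\,|X_1^n-X_0^n|_{\cH}^2 .
\end{align*}

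To conclude I pass to the limit $n\to\infty$: on the right-hand side $X_1^n-X_0^n\to X_1-X_0$ strongly in $\cH$, hence $|X_1^n-X_0^n|_{\cH}^2\to|X_1-X_0|_{\cH}^2$; on the left-hand side $V_1^n-V_0^n\weakto V_1-V_0$ weakly while $X_1^n-X_0^n\to X_1-X_0$ strongly, so the pairings converge, $\la V_1^n-V_0^n, X_1^n-X_0^n\ra_{\cH}\to\la V_1-V_0, X_1-X_0\ra_{\cH}$. This gives $\int\la v_1-v_0,x_1-x_0\ra\,\d\boldsymbol\vartheta\le\lambda\int|x_1-x_0|^2\,\d\boldsymbol\vartheta$, and since $\boldsymbol\vartheta\in\Gamma(\Phi_0,\Phi_1)$ was arbitrary, $\clo\frF$ is totally $\lambda$-dissipative. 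Finally $\frF\subset\clo\frF$ always (constant sequences), so if $\frF$ is maximal then necessarily $\clo\frF=\frF$.

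The main obstacle is exactly the strong--weak lifting invoked in the second step: one needs to lift a $\prob_2^{sw}(\TX)$-convergent sequence to $\cH^2$ with strong convergence of the position components and weak convergence of the velocity components, along a prescribed representative of the limiting measure. Once this is in hand, the rest is the routine weak-times-strong passage to the limit of bilinear and quadratic forms in $\cH$.
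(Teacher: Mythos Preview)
Your reduction to a Hilbert-space inequality and the weak-times-strong passage to the limit at the end are indeed correct. The genuine gap is precisely the one you flag: the ``strong--weak lifting'' of Theorem~\ref{thm:gpfinal} that you invoke does not appear in the paper. Theorem~\ref{thm:gpfinal} (and its source \cite{CSS2piccolo}) only lifts $W_2$-convergent sequences in $\prob_2(\X)$ to strongly convergent sequences in $\cH$ with a prescribed limit; a version for $\prob_2^{sw}(\TX)$ producing $X_i^n\to X_i$ strongly and $V_i^n\weakto V_i$ weakly, with the limit $(X_i,V_i)$ prescribed, is neither stated nor an immediate corollary. Establishing it would itself require real work: one has to introduce an auxiliary metric $\mathsf d^{sw}$ on $\TX$, show that $\prob_2^{sw}$-convergence implies convergence of the associated $W_2^{sw}$ distance, extend the parametrized lifting to this non-Hilbertian Polish setting, and then argue that $L^2(\P)$-convergence in $\mathsf d^{sw}$ of the lifted pairs, together with the uniform bound $\sup_n|\Phi_i^n|_2<\infty$, yields the desired strong/weak convergences in $\cH$.

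The paper's proof sidesteps the lifting entirely by working at the level of measures: it couples $\Phi_i^n$ to $\Phi_i$ via $W_2^{sw}$-optimal plans, glues these with $\boldsymbol\vartheta$ to obtain $\ssigma_n\in\prob(\TX^4)$, checks that the induced coupling of $\Phi_0^n$ and $\Phi_1^n$ converges in $\prob_2^{sw}(\X^2\times\X^2)$ to (a rearrangement of) $\boldsymbol\vartheta$, and passes to the limit using that $(x_0,x_1;v_0,v_1)\mapsto\langle v_1-v_0,x_1-x_0\rangle$ lies in $\rmC^{sw}_2(\X^2\times\X^2)$. This is the same ``weak-times-strong'' idea you use, executed on couplings rather than on parametrizations, and it avoids the missing lifting lemma. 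If you want to make your approach rigorous you would essentially have to prove that lemma, and the natural proof uses the very same $\mathsf d^{sw}$/gluing machinery---so the paper's direct argument is the shorter route.
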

\begin{proof}
  As usual, it is sufficient to check the property for $\lambda=0$.
  Let $\Phi',\Phi''\in \clo\frF$ and $\bm\vartheta\in
  \Gamma(\Phi',\Phi'')$.
  Denoting by $\{e_i\}_{i \in \N}$ an orthonormal system for $\X$, we introduce on $\X$ and on $\TX$ respectively the distances
  \begin{displaymath}
    \mathsf d^w(v_1,v_2):=
    \sum_{i=1}^{+\infty} 2^{-i} \big(|\langle v_1-v_2,e_i\rangle|\land 1\big),
    \quad
    \mathsf d^{sw}((x_1,v_1),(x_2,v_2)):=\Big(|x_1-x_2|_\X^2+
    \mathsf d^w(v_1,v_2)^2\Big)\Big)^{1/2}
  \end{displaymath}
whose induced topologies are weaker than the weak (resp.~the strong-weak)
  topology of $\X$ (resp.~$\TX$), see also the proof of \cite[Proposition 3.4]{NaldiSavare}. 
  Denoting by $W_2^{sw}$ the $2$-Wasserstein distance on
  $\prob_2(\TX)$ induced by $\mathsf d^{sw}$, we have 
  \begin{equation*}
    \Phi_n\to\Phi\quad\text{in }\prob_2^{sw}(\TX)\quad\Rightarrow\quad
    W_2^{sw}(\Phi_n,\Phi)\to0.
  \end{equation*}
  By definition of $\clo\frF$ we can find two
  sequences
  $(\Phi_n')_{n\in \N}, (\Phi_n'')_{n\in \N}$ in $\frF$
  respectively  converging to $\Phi'$ and $\Phi''$ in
  $\prob_2^{sw}(\TX)$. We denote by $\bm \gamma_n'\in
  \Gamma_o^{sw}(\Phi_n',\Phi')$ and 
  $\bm \gamma_n''\in
  \Gamma_o^{sw}(\Phi'',\Phi_n'')$ 
  the corresponding optimal plans for $W_2^{sw}$.
  
  Denoting the elements of $\TX^4$ by
  $(x_1',v_1',x_1,v_1,x_2,v_2,x_2'',v_2'')$ and using the gluing lemma we can find
  a plan $\bm \sigma_n\in
  \prob_2(\TX^4)$ such that 
  $(\sfx_1',\sfv_1',\sfx_1,\sfv_1)_\sharp \ssigma_n=\ggamma_n' $,
  $(\sfx_1,\sfv_1,\sfx_2,\sfv_2)_\sharp \ssigma_n=\bm\vartheta$,
  $(\sfx_2,\sfv_2,\sfx_2'',\sfv_2'')_\sharp \ssigma_n=\ggamma_n'' $.
  We also have
  \begin{gather*}
    \lim_{n\to+\infty}\int_{\TX^4} \left(|x_1'-x_1|^2+|x_2-x_2''|^2+d^w(v_1',v_1)^2+
    d^w(v_2'',v_2)^2\right)\,\d\ssigma_n=0,\\
    \sup_{n\in \N} \int_{\TX^4} \Big(|v_1'|^2+|v_1|^2+|v_2|^2+|v_2''|^2\Big)\,\d\ssigma_n<+\infty,
  \end{gather*}
  so that
  setting $\tilde\ssigma_n:=
      (\sfx_1',\sfx_2'',\sfv_1',\sfv_2'')_\sharp\ssigma_n$ we have
  \begin{displaymath}
    \tilde \ssigma_n
    \to (\sfx_1,\sfx_2,\sfv_1,\sfv_2)_\sharp \bm\vartheta
    \quad\text{in }\prob_2^{sw}(\X^2\times\X^2).
  \end{displaymath}
  Since $(\sfx_1',\sfv_1',\sfx_2'',\sfv_2'')_\sharp \ssigma_n\in
  \Gamma(\Phi_n',\Phi_n'')$, the total dissipativity of $\frF$ yields
  \begin{equation}
    \label{eq:6}
    \int_{\X^2\times\X^2} \la \sfv_1-\sfv_2,\sfx_1-\sfx_2\ra\,\d\tilde\ssigma_n=
    \int_{\TX^4} \la \sfv_1'-\sfv_2'',\sfx_1'-\sfx_2''\ra\,\d\ssigma_n\le
    0\quad
    \text{for every }n\in \N.
  \end{equation}
  Since the function $\zeta(x_1,x_2;v_1,v_2):=\la
  \sfv_1-\sfv_2,\sfx_1-\sfx_2\ra $
  belongs to $\rmC^{sw}_2(\X^2\times \X^2)$ (cf. Definition \ref{def:p2sw}),
  the convergence in $\prob_2^{sw}(\X^2\times\X^2)$ is sufficient
  to pass to the limit in \eqref{eq:6} and thus get
  \begin{displaymath}
    \int_{\TX^2} \la
    \sfv_1-\sfv_2,\sfx_1-\sfx_2\ra\,\d\bm\vartheta\le 0.
    \qedhere
  \end{displaymath}
 \end{proof}
We can also prove that
the sections $\frF[\mu]$ of a maximal totally
dissipative \MPVF are (conditionally) totally convex. 
In the following statement
we consider the space $\X\times \X^N$
whose variables are denoted by $(x,v_1,\cdots,v_N)$ and
the corresponding projections are 
$\sfx(x,v_1,\cdots,v_N):=x,$
$\sfv_i(x,v_1,\cdots,v_N):=v_i.$
\begin{proposition}
[Total convexity of sections of maximal totally dissipative \MPVF]
    \label{cor:easy}
    If $\frF \subset \prob_2(\TX)$ is a maximal totally
    $\lambda$-dissipative \MPVF, 
    then for every $\mu\in \dom(\frF)$ 
    the section $\frF[\mu]$
    satisfies the following total convexity property:
    \begin{equation}
              \label{eq:total-convexity}
              \begin{gathered}
                \text{if $\Lambda \in \mathcal P_2(\X\times \X^N)$
    satisfies
    $(\sfx,\sfv_i)_\sharp \Lambda\in \frF[\mu]$
    and $\alpha_i\ge0$, $i=1,\cdots,N$ with $\sum_i\alpha_i=1$, then}\\
           (\sfx,\sum_i \alpha_i\sfv_i)_\sharp \Lambda\in \frF[\mu].
         \end{gathered}
       \end{equation}   
\end{proposition}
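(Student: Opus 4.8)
The plan is to transfer the question to the Lagrangian representation $\mmo$ of $\frF$ and exploit the elementary ``fibrewise convexity'' of maximal dissipative operators: if $(X,V_1),\dots,(X,V_N)$ all lie in a maximal $\lambda$-dissipative operator, then so does $(X,\sum_i\alpha_iV_i)$ for every convex combination $\alpha_i\ge0$, $\sum_i\alpha_i=1$. One may either argue directly with the $\lambda$-maximality characterization or first reduce to $\lambda=0$ via Lemma~\ref{lem:lambda0}: setting $\Lambda^\lambda:=(\sfx,\sfv_1-\lambda\sfx,\dots,\sfv_N-\lambda\sfx)_\sharp\Lambda$ and using $\sum_i\alpha_i=1$ one checks $(\sfx,\sfv_i)_\sharp\Lambda^\lambda=L^\lambda_\sharp((\sfx,\sfv_i)_\sharp\Lambda)\in\frF^\lambda[\mu]$ and $(\sfx,\sum_i\alpha_i\sfv_i)_\sharp\Lambda^\lambda=L^\lambda_\sharp((\sfx,\sum_i\alpha_i\sfv_i)_\sharp\Lambda)$, so the claim for $\frF$ follows from that for the maximal totally $0$-dissipative $\frF^\lambda$.

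Concretely: let $\mmo$ be the Lagrangian representation of $\frF$, which is maximal $\lambda$-dissipative by Proposition~\ref{prop:basic-relation} and Theorem~\ref{thm:maximal-dissipativity}(2). Since $(\Omega,\cB,\P)$ is a nonatomic standard Borel space and $\Lambda\in\prob_2(\X\times\X^N)$, one realizes $\Lambda$ as the law of a random vector, i.e.\ picks $X,V_1,\dots,V_N\in\cH$ with $(X,V_1,\dots,V_N)_\sharp\P=\Lambda$. Then $(X,V_i)_\sharp\P=(\sfx,\sfv_i)_\sharp\Lambda\in\frF[\mu]$, so $(X,V_i)\in\mmo$ for every $i$; in particular $X\in\dom(\mmo)$ and $X_\sharp\P=\mu$. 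Put $W:=\sum_{i=1}^N\alpha_iV_i$. For every $(Y,Z)\in\mmo$, $\lambda$-dissipativity of $\mmo$ gives $\langle V_i-Z,X-Y\rangle_\cH\le\lambda|X-Y|_\cH^2$ for each $i$; taking the convex combination with weights $\alpha_i$ yields $\langle W-Z,X-Y\rangle_\cH\le\lambda|X-Y|_\cH^2$. By maximality of $\mmo$ this forces $(X,W)\in\mmo$, hence $(X,W)_\sharp\P\in\frF$; since $(X,W)_\sharp\P=(\sfx,\sum_i\alpha_i\sfv_i)_\sharp\Lambda$ has first marginal $X_\sharp\P=\mu$, the conclusion $(\sfx,\sum_i\alpha_i\sfv_i)_\sharp\Lambda\in\frF[\mu]$ follows.

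I do not expect a genuine obstacle: the substantive input — existence of the Lagrangian representation, law invariance, and the equivalence between maximal total $\lambda$-dissipativity of $\frF$ and maximal $\lambda$-dissipativity of $\mmo$ — is already packaged in Proposition~\ref{prop:basic-relation} and Theorem~\ref{thm:maximal-dissipativity}. The only two points deserving a line of care are the maximality characterization used in the last step (that $\langle W-Z,X-Y\rangle_\cH\le\lambda|X-Y|_\cH^2$ for all $(Y,Z)\in\mmo$ forces $(X,W)\in\mmo$, a standard fact about maximal $\lambda$-dissipative operators, cf.\ Appendix~\ref{sec:brezis}) and the realizability of an arbitrary $\Lambda\in\prob_2(\X\times\X^N)$ as a law on the fixed space $(\Omega,\cB,\P)$, which holds since $\P$ is nonatomic on a standard Borel space (cf.\ the realization results already invoked, e.g.\ Theorem~\ref{thm:gpfinal}).
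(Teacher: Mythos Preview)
Your proposal is correct and follows essentially the same route as the paper: lift $\Lambda$ to $(X,V_1,\dots,V_N)$ in $\cH\times\cH^N$, use that the Lagrangian representation $\mmo$ is maximal $\lambda$-dissipative (Theorem~\ref{thm:maximal-dissipativity}(2)), and conclude via convexity of the sections $\mmo X$. The only cosmetic difference is that the paper simply invokes the convexity of $\mmo X$ (Theorem~\ref{thm:brezis2}(2)) rather than unfolding it through the maximality characterization as you do, and it omits the $\lambda\to0$ reduction since it is not needed here.
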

\begin{proof}
  Since $\frF$ is maximal totally $\lambda$-dissipative, by Theorem \ref{thm:maximal-dissipativity}, its Lagrangian representation $\mmo \subset \cH \times \cH$ is maximal $\lambda$-dissipative.

We can find $(X,V_1,V_2,\cdots V_N)\in \cH\times \cH^N$
    such that $(X,V_1,V_2,\cdots V_N)_\sharp\P=\Lambda.$
    We deduce that $(X,V_i)\in \mmo$ since $\iotaT_{X,V_i}\in \frF$.
    Since the sections of $\mmo$ are convex, we deduce that 
    $\left(X,\sum_i \alpha_i V_i\right)\in \mmo$ as well, so that
    \begin{equation*}
        \Big(\sfx,\sum_i \alpha_i\sfv_i\Big)_\sharp \Lambda=
        \Big(X,\sum_i \alpha_i V_i\Big)_\sharp\P
        \in \frF.\qedhere
    \end{equation*}   
  \end{proof}
  We can now derive a remarkable information on the structure of
  a totally dissipative \MPVF,
  which involves the barycentric projection
  introduced in \eqref{eq:brimpvf}.

\begin{theorem}[Barycentric projection]
\label{thm:bary-proj}
Let $\frF$ be a \MPVF and
 $\mu\in \dom(\frF)$ such that
$\frF[\mu]$ is closed in $\prob_2(\TX)$ and
satisfies the total convexity property \eqref{eq:total-convexity}.
Then $\bri\frF[\mu]\subset \frF[\mu]$.
In particular, if $\frF$ is a maximal totally $\lambda$-dissipative \MPVF,
then $\bri \frF \subset \frF$. 
\end{theorem}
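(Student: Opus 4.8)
The plan is to fix $\mu\in\dom(\frF)$ and $\Phi\in\frF[\mu]$, and show $\bri\Phi=(\ii_\X,\bry\Phi)_\sharp\mu\in\frF[\mu]$, using the total convexity property \eqref{eq:total-convexity} together with a closedness/approximation argument. First I would reduce to approximating $\bry\Phi$ by "finite averages" of velocity selections: the barycenter $\bry\Phi(x)=\int v\,\d\Phi_x(v)$ is an integral average of the disintegration $\{\Phi_x\}_{x\in\X}$, and I would like to realize it as a limit of convex combinations $\frac1N\sum_{i=1}^N v_i$ where, for $\mu$-a.e.\ $x$, the points $v_1,\dots,v_N$ are sampled (in a measurable way) from $\Phi_x$. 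Concretely, working on the parametrization space, pick $(X,V)\in\cH\times\cH$ with $(X,V)_\sharp\P=\Phi$, so $(X,V)\in\mmo$ (the Lagrangian representation, which is $\lambda$-dissipative by Proposition \ref{prop:basic-relation} — in the maximal case it is maximal $\lambda$-dissipative by Theorem \ref{thm:maximal-dissipativity}(2), but total convexity of the section is all we need here). Since $\P$ is nonatomic on a standard Borel space, I can split $\Omega$ into $N$ measure-preserving copies and build i.i.d.-like copies $(X,V_1),\dots,(X,V_N)$ with the \emph{same} first marginal $X$ and each $(X,V_i)_\sharp\P=\Phi$; gluing them gives $\Lambda=(X,V_1,\dots,V_N)_\sharp\P\in\prob_2(\X\times\X^N)$ with $(\sfx,\sfv_i)_\sharp\Lambda=\Phi\in\frF[\mu]$.

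Then total convexity \eqref{eq:total-convexity} with $\alpha_i=1/N$ gives $(\sfx,\frac1N\sum_i\sfv_i)_\sharp\Lambda\in\frF[\mu]$, i.e.\ $\Phi_N:=(X,\bar V_N)_\sharp\P\in\frF[\mu]$ where $\bar V_N=\frac1N\sum_{i=1}^N V_i$. The key point is that if the $V_i$ are built so that, conditionally on $X=x$, they are i.i.d.\ with law $\Phi_x$, then by the conditional strong law of large numbers $\bar V_N\to\E[V\mid X]=\bry\Phi\circ X$ $\P$-a.s.; moreover $\E|\bar V_N-\E[V\mid X]|^2\le \frac1N\E|V-\E[V\mid X]|^2\to0$, so $\bar V_N\to\bry\Phi\circ X$ strongly in $\cH$. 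Since $X$ is fixed, $\Phi_N\to(X,\bry\Phi\circ X)_\sharp\P=\bri\Phi$ in $\prob_2(\X\times\X)=\prob_2(\TX)$ (indeed strongly in $W_2$, hence a fortiori in $\prob_2^{sw}(\TX)$). As $\frF[\mu]$ is closed, we conclude $\bri\Phi\in\frF[\mu]$.

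For the final assertion, if $\frF$ is maximal totally $\lambda$-dissipative, then by Proposition \ref{prop:sw-closedness} $\frF=\clo\frF$ is sequentially strong-weak closed, so each section $\frF[\mu]$ is closed in $\prob_2(\TX)$; and by Proposition \ref{cor:easy} each $\frF[\mu]$ satisfies \eqref{eq:total-convexity}. Hence the first part applies for every $\mu\in\dom(\frF)$, giving $\bri\frF\subset\frF$.

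The main obstacle I anticipate is the measurable construction of the conditionally i.i.d.\ copies $(X,V_i)$ with common first coordinate $X$: one must produce, on the nonatomic standard Borel space $(\Omega,\cB,\P)$, random variables $V_1,\dots,V_N$ that are conditionally independent given $X$ with common conditional law $\Phi_X$ (the disintegration). This is a standard consequence of the existence of regular conditional probabilities on standard Borel spaces plus the possibility of enlarging $\Omega$ by an independent factor carrying enough uniform randomness (and then transporting back via a measure-preserving isomorphism, using that $\P$ is nonatomic); but writing it carefully requires invoking the disintegration theorem (Theorem \ref{thm:disintegr}) and a measurable selection/parametrization argument. Everything else — total convexity, the $L^2$ law of large numbers estimate, and passing to the limit using closedness of $\frF[\mu]$ — is routine.
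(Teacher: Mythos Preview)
Your approach is essentially the same as the paper's: both build a measure $\Lambda^N\in\prob_2(\X\times\X^N)$ with conditionally i.i.d.\ velocity components given $\sfx$, apply total convexity \eqref{eq:total-convexity} with $\alpha_i=1/N$, and use the $L^2$ variance estimate $W_2^2(\Psi^N,\bri\Phi)\le\frac1N\int|v-\bry\Phi(x)|^2\,\d\Phi$ to conclude $\Psi^N\to\bri\Phi$ in $\prob_2(\TX)$. The paper sidesteps your ``main obstacle'' entirely by defining $\Lambda^N:=\int\delta_x\otimes\Phi_x^{\otimes N}\,\d\mu(x)$ directly via the disintegration, so there is no need to realize the conditionally i.i.d.\ copies as random variables on $(\Omega,\cB,\P)$ --- note that \eqref{eq:total-convexity} is formulated for measures $\Lambda\in\prob_2(\X\times\X^N)$, not for elements of $\cH$.
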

\begin{proof}
    We use an argument which is clearly inspired by the law of large numbers.
    
    Let $\{\Phi_x\}_{x\in \X}$
    be the disintegration of $\Phi \in \frF$ w.r.t.~its first marginal $\mu\in \dom(\frF)$.    
    For a given integer $N$ 
    and every $x\in \X$ we define the product measure
    $\Phi_x^N:=(\Phi_x)^{\otimes N}
    \in \mathcal P_2(\X^N)$
    and the corresponding plan
    \begin{equation*}
        \Lambda^N:=
        \int_\X \delta_x\otimes \Phi_x^N\,\d\mu(x)\in 
        \mathcal P_2(\X\times \X^N).
    \end{equation*}
    It is clear that 
    $\Lambda^N$ satisfies the condition
    of Proposition \ref{cor:easy}:
    choosing $\alpha_i:=1/N$ 
    we deduce that 
    $\Psi^N:=(\sfx,\frac 1N\sum_i\sfv_i)_\sharp \Lambda^N\in \frF[\mu].$

    Let now $\Psi:=(\ii_\X,\bry{\Phi})_\sharp \mu.$
    We can easily estimate
    the squared Wasserstein distance between $\Psi$ and $\Psi^N$ by
    \begin{align*}
        W_2^2(\Psi^N,\Psi)&\le 
        \int_{\X\times \X^N} \Big|\frac 1N\sum_i v_i-
        \bry\Phi(x)\Big|^2\,\d\Lambda^N
        =\frac 1N 
        \int_{\TX} \left|v-
        \bry\Phi(x)\right|^2\,\d\Phi
    \end{align*}
    where we used the following
    orthogonality for $i\neq j$
    \begin{align*}
        &\int_{\X\times \X^N} \langle v_i-
        \bry\Phi(x),
        v_j-\bry\Phi(x)\rangle\,\d\Lambda^N\\
        &=
        \int_\X\Big( \int_{\X\times\X} 
        \langle v_i-
        \bry\Phi(x),
        v_j-\bry\Phi(x)\rangle\,\d\Phi_x(v_i)\otimes \Phi_x(v_j)\Big)\,\d\mu(x)\\
        &=0
    \end{align*}
    and
    the fact that  
    \begin{align*}
        \int_{\X\times \X^N} \left|v_i-
        \bry\Phi(x)\right|^2\,\d\Lambda^N
        &=
        \int_\X\Big( \int_\X 
        \left| v_i-\bry\Phi(x)\right|^2\,\d\Phi_x(v_i)\Big)\,\d\mu(x)=
        \int_\TX \left|v-
        \bry\Phi(x)\right|^2\,\d\Phi.
    \end{align*}
    We deduce that $\Psi^N\to\Psi$
    in $\mathcal P_2(\TX)$ as $N\to+\infty$,
    so that $\Psi\in \frF[\mu]$
    as well.
  \end{proof}

   Even in case $\frF$ is not maximal, or it does not contain its barycentric projection, we can still derive a compatibility relation between $\frF$ and $\bri \frF$ as follows. 

  \begin{corollary}
    \label{cor:bary-works-well}
    Let $\frF \subset \prob_2(\TX)$ be a totally $\lambda$-dissipative \MPVF. Then
    the extended \MPVF $\tilde \frF$ defined 
    by
    \begin{equation*}
         \tilde\frF
          :=\frF
          \cup \bri \frF,
    \end{equation*}
    with $\bri \frF$ as in \eqref{eq:brimpvf}, is totally $\lambda$-dissipative. In particular, for every
    $\Phi_i\in \frF[\mu_i]$, $i=1,2$, and every $\mmu\in
    \Gamma(\mu_1,\mu_2)$,
    \begin{equation*}
      \int_{\X^2} \langle
      \bry{\Phi_1}(x_1)-\bry{\Phi_2}(x_2),x_1-x_2\rangle\,\d\mmu(x_1,x_2)
      \le \lambda\int_{\X^2} |x_1-x_2|^2\,\d\mmu(x_1,x_2).
    \end{equation*}
  \end{corollary}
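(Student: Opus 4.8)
The plan is to obtain the corollary as a direct consequence of two facts that are already at our disposal: the existence of a maximal totally $\lambda$-dissipative extension (Theorem~\ref{thm:maximal-dissipativity}) and the barycentric invariance $\bri{\hat\frF}\subseteq\hat\frF$ of a \emph{maximal} totally $\lambda$-dissipative \MPVF (the ``in particular'' part of Theorem~\ref{thm:bary-proj}). The strategy is thus: enlarge $\frF$ to a maximal $\hat\frF$, observe that $\bri{\frF}$ is then trapped inside $\hat\frF$, and conclude since total dissipativity passes to sub-{\MPVF}s.

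First I would embed $\frF$ into a maximal totally $\lambda$-dissipative \MPVF. The set $\prob_2(\X)$ is closed and totally convex — the affine, linearly growing map $\sfx^t$ sends $\prob_2(\X^2)$ into $\prob_2(\X)$ — hence Theorem~\ref{thm:maximal-dissipativity}(3), applied with $\core=\prob_2(\X)$, yields a maximal totally $\lambda$-dissipative $\hat\frF\supseteq\frF$, and Theorem~\ref{thm:bary-proj} gives $\bri{\hat\frF}\subseteq\hat\frF$. Next, from $\frF\subseteq\hat\frF$ it follows that $\frF[\mu]\subseteq\hat\frF[\mu]$ for every $\mu\in\dom(\frF)$, and therefore, reading off the definition \eqref{eq:brimpvf} of the barycentric projection, $\bri{\frF}[\mu]\subseteq\bri{\hat\frF}[\mu]\subseteq\hat\frF[\mu]$; thus $\tilde\frF=\frF\cup\bri{\frF}\subseteq\hat\frF$. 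Since the defining inequality \eqref{eq:total-dissipativity} is imposed for \emph{every} pair of elements and \emph{every} coupling, it is inherited by any nonempty sub-\MPVF, so $\tilde\frF$ is totally $\lambda$-dissipative; this is the first assertion.

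For the explicit inequality, given $\Phi_i\in\frF[\mu_i]$, $i=1,2$, and $\mmu\in\Gamma(\mu_1,\mu_2)$, I would apply total $\lambda$-dissipativity of $\tilde\frF$ to the two deterministic elements $\bri{\Phi_i}=(\ii_\X,\bry{\Phi_i})_\sharp\mu_i\in\tilde\frF$ along the coupling
\[
\boldsymbol\vartheta:=\bigl(\pi^1,\ \bry{\Phi_1}\circ\pi^1,\ \pi^2,\ \bry{\Phi_2}\circ\pi^2\bigr)_\sharp\mmu\in\prob_2(\TX\times\TX),
\]
whose marginals on the two copies of $\TX$ are $\bri{\Phi_1}$ and $\bri{\Phi_2}$, so that $\boldsymbol\vartheta\in\Gamma(\bri{\Phi_1},\bri{\Phi_2})$. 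Substituting $\boldsymbol\vartheta$ into \eqref{eq:total-dissipativity} and rewriting the integrals back over $\mmu$ (after relabelling the variables $x_0,x_1$ as $x_1,x_2$) yields exactly
\[
\int\langle\bry{\Phi_1}(x_1)-\bry{\Phi_2}(x_2),x_1-x_2\rangle\,\d\mmu(x_1,x_2)\le\lambda\int|x_1-x_2|^2\,\d\mmu(x_1,x_2).
\]

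I do not expect a genuine obstacle here, since the two substantive ingredients are already proved. The only points deserving a line of care are the total convexity of $\prob_2(\X)$ (used to invoke Theorem~\ref{thm:maximal-dissipativity}(3)) and the observation that, because $\bri{\Phi_i}$ is concentrated on a map, the coupling $\boldsymbol\vartheta$ above has space marginal $(\sfx^0,\sfx^1)_\sharp\boldsymbol\vartheta=\mmu$ — it is precisely this deterministic feature that makes the bound hold along an \emph{arbitrary}, not merely optimal, coupling $\mmu$.
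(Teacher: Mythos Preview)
Your proposal is correct and follows essentially the same route as the paper's proof: pass to a maximal totally $\lambda$-dissipative extension $\hat\frF$ (the paper simply invokes an ``arbitrary'' such extension without naming Theorem~\ref{thm:maximal-dissipativity}(3)), apply Theorem~\ref{thm:bary-proj} to get $\bri{\hat\frF}\subset\hat\frF$, and conclude that $\tilde\frF\subset\hat\frF$. Your additional paragraph deriving the explicit inequality via the concrete coupling $\boldsymbol\vartheta$ is a correct unpacking of Remark~\ref{rem:deterministic} that the paper leaves implicit.
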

  \begin{proof}
    It is sufficient to consider an arbitrary maximal totally
    $\lambda$-dissipative extension $\hat\frF$ of $\frF$: by the
    previous Theorem \ref{thm:bary-proj}
    clearly $\hat \frF\supset \tilde\frF$.
  \end{proof}

 In analogy with the Hilbertian theory, in the following theorem we state the existence of a unique selection of minimal norm for a maximal totally $\lambda$-dissipative \MPVF. It turns out that such a minimal selection is concentrated on a map which coincides with that coming from the Lagrangian representation of the \MPVF. 

\begin{theorem}[The minimal selection]
  \label{thm:minimal}
  Let $\frF\subset\prob_2(\TX)$ be a maximal totally $\lambda$-dissipative
  \MPVF.
  \begin{enumerate}
  \item For every $\mu\in \dom(\frF)$ there exists a unique vector field
    $\ff^\circ[\mu]\in L^2(\X,\mu;\X)$ such that
    \begin{equation}\label{eq:defq}
      (\ii_\X, \ff^\circ[\mu])_\sharp\mu\in \frF[\mu],\quad
      \int_\X |\ff^\circ[\mu]|^2\,\d\mu\le 
      \int_\TX |v|^2\,\d\Phi\quad\text{for every }\Phi\in\frF[\mu].
    \end{equation}
    We denote the \emph{minimal selection of $\frF$} at $\mu$ by
    \begin{equation}\label{eq:minsel}
    \frF^\circ[\mu]:=(\ii_\X, \ff^\circ[\mu])_\sharp \mu.
    \end{equation}
    \item If $\mmo$ is the Lagrangian representation of $\frF$, then for
    every $\mu\in \dom(\frF)$, we have 
    \[\ff^\circ[\mu]=\bb^\circ[\mu]\quad\mu\text{-a.e.},\]
    where $\bb^\circ$ has been defined in \eqref{eq:9}
    and, if $0<\tau<1/\lambda^+$, the following hold
    \begin{align}
      \label{eq:23}
      \int_\X \big|\bb_\tau(x,\mu)-\ff^\circ[\mu](x)\big|^2\,\d\mu
      &\le
      \int_\X \big|\ff^\circ[\mu]\big|^2\,\d\mu-
      (1-2\lambda\tau)
        \int_\X \big|\bb_\tau(x,\mu)\big|^2\,\d\mu,\\
      \label{eq:23bis}
      (1-\lambda\tau)^2
      \int_\X \big|\bb_\tau(x,\mu)\big|^2\,\d\mu
      &\uparrow 
      \int_\X \big|\ff^\circ[\mu]\big|^2\,\d\mu\quad
      \text{as }\tau\downarrow0
    \end{align}
    with $\bb_\tau$ as in \eqref{eq:7-3}.
    \item The map $|\frF|_2:\prob_2(\X)\to [0,+\infty]$ defined by
    \begin{equation}
      \label{eq:21}
      |\frF|_2(\mu):=
      \begin{cases}
        \displaystyle
        \int_\X |\ff^\circ[\mu]|^2\,\d\mu&\text{if }\mu\in \dom(\frF),\\
        +\infty&\text{if }\mu \not\in \dom(\frF)
      \end{cases}
    \end{equation}
    is lower semicontinuous.
      
  \item
    Finally, if $\Y$ is a Polish space, $\mmu\in \prob(\X\times \Y)$
    with marginal $\nu=\pi^2_\sharp \mmu$ and the disintegration
    $\{\mu_y\}_{y\in \Y}$ of $\mmu$ w.r.t.~$\nu$ satisfies
    \begin{equation}
      \label{eq:22}
      \int_{\X\times \Y} |x|^2\,\d\mmu(x,y)+
      \int_\Y
      |\frF|_2(\mu_y)\,\d\nu(y)<+\infty,
    \end{equation}
    then the map $\ff(x,y):=\ff^\circ[\mu_y](x)$ belongs to
    $L^2(\X\times \Y,\mmu;\X)$ (in particular it is uniquely defined up
    to a $\mmu$-negligible set and it is $\mmu$-measurable).
  \end{enumerate}
\end{theorem}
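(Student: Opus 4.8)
The plan is to leverage the Lagrangian representation $\mmo$ of $\frF$, which is maximal $\lambda$-dissipative by Theorem \ref{thm:maximal-dissipativity}, and to transfer everything to the Hilbert space $\cH$. For item (1): given $\mu\in\dom(\frF)$, pick $X\in\cH$ with $X_\sharp\P=\mu$; by Remark \ref{rem:aggiungi} we have $X\in\dom(\mmo)$, and the minimal selection $\mmo^\circ X$ is the unique element of minimal norm in the closed convex set $\mmo X$. By \eqref{eq:9} there is $\bb^\circ[\mu]\in L^2(\X,\mu;\X)$ with $\mmo^\circ X=\bb^\circ[\mu]\circ X$ $\P$-a.e., and we set $\ff^\circ[\mu]:=\bb^\circ[\mu]$; then $(\ii_\X,\ff^\circ[\mu])_\sharp\mu=(X,\mmo^\circ X)_\sharp\P\in\frF[\mu]$. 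For any $\Phi\in\frF[\mu]$, writing $\Phi=(X,V)_\sharp\P$ with $(X,V)\in\mmo$ (possible after re-parametrizing on a rich enough space, or using law invariance as in the proof of Proposition \ref{prop:basic-relation}), we get $\int|\ff^\circ[\mu]|^2\,\d\mu=|\mmo^\circ X|_\cH^2\le|V|_\cH^2=\int|v|^2\,\d\Phi$, which is \eqref{eq:defq}. Uniqueness in $L^2(\X,\mu;\X)$ follows because the minimizer of $\Phi\mapsto|\Phi|_2$ over the totally convex (by Proposition \ref{cor:easy}) set $\frF[\mu]$ is unique by strict convexity of the squared $L^2$-norm, and any such minimizer must be concentrated on a map by Theorem \ref{thm:bary-proj} applied to the barycentric projection.

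For item (2): the identity $\ff^\circ[\mu]=\bb^\circ[\mu]$ is built into the construction above. The estimates \eqref{eq:23} and \eqref{eq:23bis} are pointwise translations of the corresponding Hilbertian facts about the Yosida approximation $\mmo_\tau$ and the minimal selection $\mmo^\circ$ of a maximal $\lambda$-dissipative operator, which are collected in Appendix \ref{sec:brezis}: namely $|\mmo_\tau X-\mmo^\circ X|_\cH^2\le|\mmo^\circ X|_\cH^2-(1-2\lambda\tau)|\mmo_\tau X|_\cH^2$ and $(1-\lambda\tau)^2|\mmo_\tau X|_\cH^2\uparrow|\mmo^\circ X|_\cH^2$ as $\tau\downarrow0$. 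Using the representation formulas \eqref{eq:7-3} for $\bb_\tau$ and \eqref{eq:9} for $\bb^\circ$, with $X_\sharp\P=\mu$, one pushes these inequalities down to integrals against $\mu$. (By Lemma \ref{lem:lambda0} and Remark \ref{rem:transff} it is enough to check the Hilbertian inequalities for $\lambda=0$ and then apply the $\lambda$-transformation, which accounts for the $(1-\lambda\tau)$ and $(1-2\lambda\tau)$ factors.)

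For item (3): lower semicontinuity of $|\frF|_2$. Let $\mu_n\to\mu$ in $\prob_2(\X)$; we must show $|\frF|_2(\mu)\le\liminf_n|\frF|_2(\mu_n)$, and we may assume the liminf is finite and, up to a subsequence, is a limit, so all $\mu_n\in\dom(\frF)$. Set $\Phi_n:=\frF^\circ[\mu_n]=(\ii_\X,\ff^\circ[\mu_n])_\sharp\mu_n\in\frF$ with $|\Phi_n|_2^2=|\frF|_2(\mu_n)$ bounded. Since $\sfx_\sharp\Phi_n=\mu_n$ ranges in a $W_2$-compact set and $|\Phi_n|_2$ is bounded, Proposition \ref{prop:finalmente}(2) gives a subsequence with $\Phi_n\to\Phi$ in $\prob_2^{sw}(\TX)$; then $\sfx_\sharp\Phi=\mu$, so $\Phi\in\clo\frF=\frF$ by Proposition \ref{prop:sw-closedness} (here maximality is used), hence $\mu\in\dom(\frF)$. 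By the lower semicontinuity of the quadratic velocity moment under strong-weak convergence (Proposition \ref{prop:finalmente}(1)(c) gives only a sup bound, so one uses weak lower semicontinuity of $v\mapsto|v|^2$ together with narrow convergence in $\prob(\X^s\times\X^w)$) we get $|\Phi|_2^2\le\liminf_n|\Phi_n|_2^2$. Finally $|\frF|_2(\mu)\le|\Phi|_2^2$ by the minimality \eqref{eq:defq}, which closes the chain.

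For item (4): measurability of $(x,y)\mapsto\ff^\circ[\mu_y](x)$. This is the point I expect to be the main technical obstacle, because it requires knowing that $\mu\mapsto\ff^\circ[\mu]$ depends measurably on $\mu$ in a suitable sense, not merely that each fiber is well defined. The plan is to realize the family $\{\mu_y\}$ simultaneously via a single random variable: by the hypothesis \eqref{eq:22} one can build a measurable map $Y\in\cH$ and, using a measurable selection / disintegration argument, a random variable $X:\Omega\to\X$ such that, conditionally on $Y$, the law of $X$ is $\mu_{Y}$; concretely, enlarge $\Omega$ if necessary and use that $(\Omega,\cB,\P)$ is standard Borel so that parametrized families of measures admit jointly measurable parametrizations. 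Then $X\in\dom(\mmo)$ with $X_\sharp\P=\nu_X:=\int\mu_y\,\d\nu(y)$, the integrability coming exactly from \eqref{eq:22}, and $\mmo^\circ X=\ff^\circ[\nu_X]\circ X$. The key observation is that $\mmo^\circ$ commutes with conditioning in the following sense: restricting to the sub-$\sigma$-algebra $\sigma(Y)$, the operator $\mmo$ induces (by law invariance and invariance by measure-preserving isomorphisms, Theorem \ref{thm:invTOlawinv}) a maximal dissipative operator on $L^2(\Omega,\sigma(\{X,Y\}),\P;\X)$ whose minimal selection on the fiber $\{Y=y\}$ is precisely $\bb^\circ[\mu_y]$; hence $\ff^\circ[\mu_y](X)=\mmo^\circ X$ $\P$-a.e., and the right-hand side is by construction an element of $L^2(\Omega,\cB,\P;\X)$. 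Transporting back through $(X,Y)_\sharp\P=\mmu$ shows $\ff(x,y)=\ff^\circ[\mu_y](x)$ agrees $\mmu$-a.e. with a $\mmu$-measurable function whose square is $\mmu$-integrable, namely $\int|\ff|^2\,\d\mmu=|\mmo^\circ X|_\cH^2=\int_\Y|\frF|_2(\mu_y)\,\d\nu(y)<\infty$. The delicate part is the joint-measurability of the parametrization and the identification of $\mmo^\circ X$ with the fiberwise minimal selections; this is where the standard Borel structure and the results on $\cN$-refined spaces from Appendix \ref{sec:appborel} are essential.
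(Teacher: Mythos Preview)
Your proofs of items (1)--(3) are essentially the same as the paper's (modulo emphasis): existence via compactness of $\{\Phi\in\frF[\mu]:|\Phi|_2\le c\}$ in $\prob_2^{sw}(\TX)$ and the barycentric invariance of Theorem~\ref{thm:bary-proj}; identification with $\bb^\circ[\mu]$ via \eqref{eq:9}; the Yosida estimates from Theorem~\ref{thm:brezis2}(5); and lower semicontinuity by strong--weak closedness.

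For item (4), however, there is a genuine gap. Your key identification ``$\mmo^\circ X=\ff^\circ[\mu_y](X)$ $\P$-a.e.'' is not correct: by \eqref{eq:9}, $\mmo^\circ X$ depends only on the law $X_\sharp\P=\pi^1_\sharp\mmu$, which is the \emph{mixture} $\int\mu_y\,\d\nu(y)$, not on the individual fibers $\mu_y$. Law invariance gives no control over conditional laws, and there is no mechanism by which restricting $\mmo$ to a sub-$\sigma$-algebra $\sigma(X,Y)$ would make its minimal selection split fiberwise into $\bb^\circ[\mu_y]$. The ``commutation with conditioning'' you invoke is simply false in general for maximal dissipative operators; nothing in Theorem~\ref{thm:invTOlawinv} or Appendix~\ref{sec:appborel} supplies it.

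The paper's argument avoids this entirely by approximation. The Yosida map $(x,\mu)\mapsto\bb_\tau(x,\mu)$ is \emph{continuous} on $\Sp\X$ (Theorem~\ref{thm:invTOlawinv}), and the disintegration map $y\mapsto\mu_y$ is Borel, so $(x,y)\mapsto\bb_\tau(x,\mu_y)$ is $\mmu$-measurable for each fixed $\tau$ (after checking that $\{(x,y):x\in\supp(\mu_y)\}$ has full $\mmu$-measure). Then the quantitative estimates \eqref{eq:23}--\eqref{eq:23bis} from item~(2), integrated against $\nu$ and using \eqref{eq:22}, show that $\bb_\tau(\cdot,\mu_\cdot)\to\ff$ in $L^2(\X\times\Y,\mmu;\X)$ as $\tau\downarrow0$, which gives both measurability and integrability of the limit. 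This is precisely why the estimates in item~(2) are stated: they are the engine for item~(4).
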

\begin{proof}
  Item (1) is an immediate consequence of the closure of $\frF$ in
  $\prob_2^{sw}(\TX)$
  (so that the map $\Phi\mapsto |\Phi|_2$
  has compact sublevels in the set $\relcP 2{\mu}\TX$)
  and of the previous Theorem \ref{thm:bary-proj}.
  
  To prove the second item,
  it is enough to notice that, trivially, $\bb^\circ(\cdot, \mu)$
  satisfies \eqref{eq:defq}. Estimates \eqref{eq:23} and \eqref{eq:23bis}
  follow
  by Theorem \ref{thm:brezis2}(5).

  Item (3) still follows immediately by the
  closure of $\frF$ in
  $\prob_2^{sw}(\TX)$
  and the fact that the map $\Phi\mapsto |\Phi|_2^2$
  defined by \eqref{eq:defsqmPhi} is lower semicontinuous
  w.r.t.~the topology of $\prob_2^{sw}(\TX)$.

  Let us now prove item (4).
  We first notice that \eqref{eq:22} yields $\mu_y\in \dom(\frF)$ for
  $\nu$-a.e.~$y\in \Y$.
  Let us now prove that
  the map $\bb_\tau(x,y):=\bb_\tau(x,\mu_y)$ is
  $\mmu$-measurable.

  Recall that the set \[\mathcal{S}_0:=\big\{(x,\mu)\in \X\times \prob(\X):
  x\in\supp(\mu)\big\}\] is a $G_\delta$ (thus Borel, cf.~\cite[Formula (4.3)]{FSS22}) subset
  of $\X\times \prob(\X)$.
  Since the inclusion map of $\X\times\prob_2(\X)$ in
  $\X\times \prob(\X)$ is continuous,
  we deduce that
  \[ \mathcal{S}:= 
  \mathcal{S}_0 \cap \left(\X\times\prob_2(\X)\right)\]
  is a $G_\delta$ set in $\X\times\prob_2(\X)$.  

  Since the map $j(x,y):= (x,\mu_y)$ is Borel
  from $\X\times \Y$ to $\X\times \prob(\Y)$,
  we deduce that the set
  $\mathcal S':=j^{-1}(\mathcal S) = (\big\{(x,y)\in \X\times \Y:
  x\in \supp(\mu_y)\big\}$ is Borel in $\X\times \Y$ and it is immediate to check that
  $\mmu$ is concentrated on $\mathcal S'$. Since the map $(x,\mu) \mapsto \bb_\tau(x,\mu)$ is continuous in $\mathcal S$ (cf.~Theorem \ref{thm:invTOlawinv}) then its composition with $j$ (which is the map $(x,y) \mapsto \bb_\tau(x,\mu_y))$ is $\mmu$-measurable. 
Passing to the limit as $\tau\downarrow0$ and using \eqref{eq:23} and \eqref{eq:23bis}
  we conclude that $\bb_\tau \to \ff$ in $L^2(\X\times \Y,\mmu; \X)$ so that also $\ff$ is $\mmu$-measurable.
\end{proof}

We now show that discrete measures are sufficient to reconstruct a
maximal totally $\lambda$-dissipative \MPVF. For a general Polish space $\mathscr{X}$, we consider the following set of discrete probability measures
\begin{equation}
      \label{eq:70}
      \prob_f(\mathscr{X})
      :=\Big\{\mu\in \prob(\mathscr{X})\,:\,\supp(\mu) \text{ is finite} \Big\}.
    \end{equation}
    Given $N\in\N$, we denote by $\prob_{f,N}(\mathscr{X})$ the set of empirical measures with weights in $\frac{1}{N}\N$,
\begin{equation}
  \label{eq:30}
  \begin{split}
  \prob_{f,N}(\mathscr{X})&:=\Big\{\mu\in \prob_f(\mathscr{X}):N\mu(A)\in \N\quad
  \text{for every }A\subset X \Big\},\\
  \prob_{f,\infty}(\mathscr{X})&:=\bigcup_{N\in \N}\prob_{f,N}(\mathscr{X}).
  \end{split}
\end{equation}
\begin{corollary}
  \label{cor:discrete-are-enough}
  Let $\frF\subset\prob_2(\TX)$ be a maximal totally $\lambda$-dissipative \MPVF
  and let
  \begin{equation*}
    \dom_{f,\infty}(\frF):=\prob_{f,\infty}(\X)\cap \dom(\frF).
  \end{equation*}
  Then for every $\mu \in \dom(\frF)$ there exists a sequence
  $(\mu_n)_{n\in\N}\subset     \dom_{f,\infty}(\frF) $ such that
  $\frF^\circ[\mu_n]\to \frF^\circ[\mu]$ in $\prob_2(\TX)$ as $n\to+\infty$, where $\frF^\circ$ has been defined in \eqref{eq:minsel}.
  Moreover, a measure $\Phi\in \prob_2(\TX)$ with $\sfx_\sharp\Phi\in
  \overline{\dom(\frF)}$ belongs to $\frF$ if and only if
  for every $\mu\in     \dom_{f,\infty}(\frF) $ and every $\ggamma\in \Gamma(\Phi,\mu)$ we have
  \begin{equation*}
    \int_{\TX\times\X} \langle v-\ff^\circ(y,\mu),x-y\rangle \d\ggamma(x,v,y)\le
    \lambda \int_{\TX\times\X} |x-y|^2\,\d\ggamma(x,v,y).
  \end{equation*}
\end{corollary}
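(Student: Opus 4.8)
The plan is to transfer everything to the Lagrangian representation $\mmo=(\iotaT)^{-1}(\frF)$, which is $\lambda$-dissipative by Proposition \ref{prop:basic-relation} and, being law invariant, maximal $\lambda$-dissipative by Theorem \ref{thm:maximal-dissipativity}(2); by Lemma \ref{lem:lambda0} and Remark \ref{rem:transff} I assume $\lambda=0$. The ``only if'' part of the second assertion is immediate: for $\Phi\in\frF$ and $\mu\in\dom_{f,\infty}(\frF)$ one has $\frF^\circ[\mu]=(\ii_\X,\ff^\circ[\mu])_\sharp\mu\in\frF$ by Theorem \ref{thm:minimal}(1), and for $\ggamma\in\Gamma(\Phi,\mu)$ the plan $\boldsymbol\vartheta:=(\sfx^0,\sfv^0,\sfx^1,\ff^\circ[\mu]\circ\sfx^1)_\sharp\ggamma$ lies in $\Gamma(\Phi,\frF^\circ[\mu])$, so \eqref{eq:total-dissipativity} yields exactly the stated inequality.

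For the first assertion I would lift $\mu\in\dom(\frF)$ to $X\in\dom(\mmo)$ with $X_\sharp\P=\mu$ (law invariance of $\dom(\mmo)$, Remark \ref{rem:aggiungi}) and set $V:=\mmo^\circ X$, so that $\Phi:=(X,V)_\sharp\P=\frF^\circ[\mu]$ by \eqref{eq:9}. Choose $X_N\in\cH_N$ with $X_N\to X$ in $\cH$ (Appendix \ref{sec:appborel}); since $\resolvent\tau$ and $\mmo_\tau$ act fibrewise through the continuous maps $\jj_\tau(\cdot,\nu),\bb_\tau(\cdot,\nu)$ of Theorem \ref{thm:invTOlawinv}, the variables $\resolvent\tau X_N,\mmo_\tau X_N$ remain in $\cH_N$, hence $\mu_{\tau,N}:=(\resolvent\tau X_N)_\sharp\P\in\prob_{f,N}(\X)\cap\dom(\frF)$ and $\Psi_{\tau,N}:=(\resolvent\tau X_N,\mmo_\tau X_N)_\sharp\P\in\frF[\mu_{\tau,N}]$. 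Lipschitz continuity of $\resolvent\tau,\mmo_\tau$ on $\cH$ gives $\Psi_{\tau,N}\to(\resolvent\tau X,\mmo_\tau X)_\sharp\P$ in $\prob_2(\TX)$ as $N\to\infty$, and $(\resolvent\tau X,\mmo_\tau X)_\sharp\P\to\Phi$ as $\tau\downarrow0$ (standard behaviour of resolvent and Yosida approximation on $\dom(\mmo)$, Appendix \ref{sec:brezis}). A diagonal extraction produces $\mu_k\in\dom_{f,\infty}(\frF)$ with $\mu_k\to\mu$ in $\prob_2(\X)$ and $\Psi_k\in\frF[\mu_k]$, $\Psi_k\to\Phi$ in $\prob_2(\TX)$; then $|\frF^\circ[\mu_k]|_2\le|\Psi_k|_2\to|\frF^\circ[\mu]|_2$, while lower semicontinuity of $|\frF|_2$ (Theorem \ref{thm:minimal}(3)) gives $|\frF^\circ[\mu]|_2\le\liminf_k|\frF^\circ[\mu_k]|_2$, so $|\frF^\circ[\mu_k]|_2\to|\frF^\circ[\mu]|_2$. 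By the compactness of Proposition \ref{prop:finalmente} and the strong--weak closedness of $\frF$ (Proposition \ref{prop:sw-closedness}), every subsequence of $(\frF^\circ[\mu_k])$ has a further subsequence converging in $\prob_2^{sw}(\TX)$ to an element of $\frF[\mu]$ of minimal second moment, hence to $\frF^\circ[\mu]$; convergence of the second moments then upgrades this to convergence in $\prob_2(\TX)$, so the whole sequence converges.

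Turning to the ``if'' part, I first propagate the inequality from $\dom_{f,\infty}(\frF)$ to all of $\dom(\frF)$. Given $\mu\in\dom(\frF)$ and $\ggamma\in\Gamma(\Phi,\mu)$, take $\mu_k\in\dom_{f,\infty}(\frF)$ with $\frF^\circ[\mu_k]\to\frF^\circ[\mu]$ as above, glue $\ggamma$ with an optimal plan of $(\mu,\mu_k)$ to obtain $\ggamma_k\in\Gamma(\Phi,\mu_k)$ with $\ggamma_k\to\ggamma$ in $\prob_2(\TX\times\X)$, and set $\boldsymbol\eta_k:=(\sfx^0,\sfv^0,\sfx^1,\ff^\circ[\mu_k]\circ\sfx^1)_\sharp\ggamma_k\in\prob_2(\X^4)$. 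These measures have convergent second moments, their $(\sfx^0,\sfv^0,\sfx^1)$-marginals converge to $\ggamma$ and their last two marginals to $(\ii_\X,\ff^\circ[\mu])_\sharp\mu$, hence along a subsequence $\boldsymbol\eta_k\to\boldsymbol\eta=(\sfx^0,\sfv^0,\sfx^1,\ff^\circ[\mu]\circ\sfx^1)_\sharp\ggamma$ in $\prob_2(\X^4)$; passing to the limit in the hypothesis $\int\langle v-w,x-y\rangle\,\d\boldsymbol\eta_k\le\lambda\int|x-y|^2\,\d\ggamma_k$ gives the inequality for $\mu$. Now fix $(X,V)$ with $(X,V)_\sharp\P=\Phi$; since $\sfx_\sharp\Phi\in\overline{\dom(\frF)}$ and $\mmo$ is maximal, $X\in\overline{\dom(\mmo)}$ (Remark \ref{rem:closure-of-domain}). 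Applying the extended inequality with $\mu=Z_\sharp\P$ and $\ggamma=(X,V,Z)_\sharp\P$ for arbitrary $Z\in\dom(\mmo)$, and using $\mmo^\circ Z=\ff^\circ[Z_\sharp\P]\circ Z$ (see \eqref{eq:9}), gives $\langle V-\mmo^\circ Z,X-Z\rangle_\cH\le\lambda|X-Z|_\cH^2$ for every $Z\in\dom(\mmo)$. A characterization of maximal $\lambda$-dissipative operators in terms of dissipativity against the minimal selection on $\overline{\dom(\mmo)}$ (Appendix \ref{sec:brezis}) then forces $(X,V)\in\mmo$, whence $\Phi=(X,V)_\sharp\P\in\iota^2(\mmo)=\frF$.

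I expect two delicate points. The limiting arguments in the second and third paragraphs must upgrade convergence in $\prob_2^{sw}(\TX)$ together with convergence of full second moments to genuine $W_2$-convergence in $\prob_2(\TX)$; this is exactly where strong--weak closedness of $\frF$ and Theorem \ref{thm:minimal} enter, and it is the reason the minimal selection, rather than an arbitrary element of $\frF[\mu]$, is the right object to track. The more substantial obstacle is the last step: one must know that a maximal $\lambda$-dissipative operator is already pinned down on $\overline{\dom(\mmo)}$ by the inequalities against its minimal selection, and in particular that these already fail for some $Z\in\dom(\mmo)$ whenever $X\in\overline{\dom(\mmo)}\setminus\dom(\mmo)$, so that no spurious $\Phi$ is admitted. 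I would isolate this as a Hilbert-space lemma and prove it through the resolvent path $Z_\tau:=\resolvent\tau(X-\tau V)$ together with the variational inequality satisfied by the minimal-norm element $\mmo^\circ Z_\tau$ of $\mmo Z_\tau$.
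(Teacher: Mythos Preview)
Your approach is the paper's: lift to the maximal $\lambda$-dissipative Lagrangian representation $\mmo$ and work in $\cH$. The paper packages the entire argument into a single application of Corollary~\ref{cor:speriamo-che-sia-ultimo}, with $D:=\iota^{-1}(\prob_{f,\infty}(\X))$, which is dense in $\cH$ and resolvent-invariant by Theorem~\ref{thm:invTOlawinv}; parts (1) and (2) of that corollary give the two assertions directly. Your reconstruction is faithful, including your identification of the ``substantial obstacle'' at the end: it is precisely the principal-selection characterization~\eqref{eq:143} of Theorem~\ref{thm:brezis2}(6), already recorded (and proved, following Brezis) in the appendix.

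One point to tighten: your claim that convergence in $\prob_2^{sw}(\TX)$ together with convergence of $|\cdot|_2$ upgrades to convergence in $\prob_2(\TX)$ is not obvious at the measure level and is in any case unnecessary. Since you already carry Hilbert representatives with $X_k'\to X$ strongly and $|\mmo^\circ X_k'|\le|V_k|\to|\mmo^\circ X|$, argue (as the proof of Corollary~\ref{cor:speriamo-che-sia-ultimo}(1) does) that any weak limit of $\mmo^\circ X_k'$ lies in $\mmo X$ with minimal norm, hence equals $\mmo^\circ X$, and then weak convergence plus norm convergence gives strong convergence in $\cH$; pushing forward yields $\frF^\circ[\mu_k]\to\frF^\circ[\mu]$ in $\prob_2(\TX)$. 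The same remark applies to your propagation step: rather than glue and pass to limits at the measure level, produce for a given $Z\in\dom(\mmo)$ a sequence $Z_k\in D\cap\dom(\mmo)$ with $Z_k\to Z$ and $\mmo^\circ Z_k\to\mmo^\circ Z$ (part (1)), and pass to the limit directly in $\langle V-\mmo^\circ Z_k,X-Z_k\rangle_\cH\le 0$.
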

\begin{proof}
  We denote by $\Bb\subset \cH\times \cH$
  the Lagrangian representation of $\frF$ and
  we set $D:=\iota^{-1}(\prob_{f,\infty}(\X))$.
  Since $\prob_{f,\infty}(\X)$ is dense in $\prob_2(\X)$, by e.g.~the last part of Theorem \ref{thm:gpfinal} we have that $D$ is dense in $\cH$ and by Theorem
  \ref{thm:invTOlawinv} (see in particular \eqref{eq:7-1}) it satisfies
  $\jJ_\tau (D)\subset D$.
  We can thus apply Corollary \ref{cor:speriamo-che-sia-ultimo}.
\end{proof}

\subsection{Totally dissipative \PVF{s}\ concentrated on maps}
\label{subsec:maps}
We devote this section to the study of the important case
of deterministic {\MPVF}s and, in particular, of single-valued and everywhere defined {\PVF}s.  Recall that a \MPVF $\frF \subset \prob_2(\TX)$ is deterministic if every $\Phi\in\frF$ is concentrated on a map (cf. Definition \ref{def:wassmom}). 
Recall also that 
for a deterministic \PVF, total $\lambda$-dissipativity can be equivalently stated as in Remark \ref{rem:deterministic}.

\begin{definition}
[Demicontinuity]
  \label{def:demicontinuous}
  A single-valued \PVF $\frF$ is {\em demicontinuous} if
  the map $\mu\mapsto \frF[\mu]$ satisfies
  \begin{equation*}
    \mu_n\to\mu\quad\text{in }\prob_2(\X)\quad\Rightarrow\quad
    \frF[\mu_n]\to \frF[\mu]\quad
    \text{in }\prob_2^{sw}(\TX).
  \end{equation*}
   A single-valued \PVF\ $\frF$ is {\em hemicontinuous} if its domain is totally convex and, for every $\ggamma \in \prob_2(\X \times \X)$ with marginals in $\dom(\frF)$, the restriction of $\frF$ to the
  set $\{\sfx^t_\sharp\ggamma: t\in [0,1]\}$ is demicontinuous.
\end{definition}
 In infinite dimension, hemicontinuity plays a crucial role, as it reduces the problem of verifying continuity to a one-dimensional setting, which is usually easier to handle (see \cite{BrezisFR}).

\begin{theorem}[Characterization of deterministic totally dissipative {\PVF}s]
  \label{thm:trivial-but-useful-to-fix}
  Let $\frF$ be a \emph{single-valued} totally $\lambda$-dissipative
  \PVF.
  \begin{enumerate}
  \item If $\frF$ is maximal, then it is deterministic and
    $\frF[\mu]=(\ii_\X, \ff^\circ[\mu])_\sharp\mu$ for every $\mu\in \dom(\frF)$, where $\ff^\circ$ is the minimal selection of $\frF$ as in Theorem \ref{thm:minimal}.
  \item If $\dom(\frF)=\prob_2(\X)$, then $\frF$ is maximal if and only if
    it is deterministic and demicontinuous
    (or, equivalently, deterministic and hemicontinuous)
  \item If $\dom(\frF)=\prob_2(\X)$ and
    $\frF[\mu]=(\ii_\X, \ff[\mu])_\sharp \mu$
    for every $\mu\in \prob_2(\X)$, then $\frF$ is maximal
    if and only if for every $\zeta\in \rmC_2^{sw}(\TX)$
    and for every sequence $\mu_n\to\mu$ in $\prob_2(\TX)$
    \begin{equation*}
      \lim_{n\to+\infty}\int_\X \zeta(x,\ff[\mu_n](x))\,\d\mu_n(x)
      =\int_\X\zeta(x,\ff[\mu](x))\,\d\mu(x).
    \end{equation*}
  \end{enumerate}
\end{theorem}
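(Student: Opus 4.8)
The plan is to work throughout in the Lagrangian picture: I set $\mmo:=(\iotaT)^{-1}(\frF)$, which is $\lambda$-dissipative by Proposition~\ref{prop:basic-relation}, and I reduce to $\lambda=0$ using Lemma~\ref{lem:lambda0} and Remark~\ref{rem:transff}. For item~(1): since $\frF$ is maximal totally dissipative, Theorem~\ref{thm:bary-proj} gives $\bri\frF\subset\frF$; as $\frF$ is single-valued, the unique $\Phi\in\frF[\mu]$ then has to coincide with its barycentric projection $\bri\Phi=(\ii_\X,\bry\Phi)_\sharp\mu$ (which also has first marginal $\mu$), hence $\Phi$ is concentrated on the map $\bry\Phi$ and $\frF$ is deterministic; finally $\frF^\circ[\mu]=(\ii_\X,\ff^\circ[\mu])_\sharp\mu$ from Theorem~\ref{thm:minimal} lies in the singleton $\frF[\mu]$, so $\frF[\mu]=\frF^\circ[\mu]$.

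For the implication ``maximal $\Rightarrow$ demicontinuous'' in~(2), I would first use~(1) to know $\frF$ is deterministic and Theorem~\ref{thm:maximal-dissipativity}(2) to know $\mmo$ is maximal dissipative; since $\dom(\frF)=\prob_2(\X)$ this forces $\dom(\mmo)=\cH$, so $\mmo$ is single-valued and locally bounded (cf.~Appendix~\ref{sec:brezis}). Given $\mu_n\to\mu$ in $\prob_2(\X)$, I lift to $X_n\to X$ in $\cH$ with $\iota(X_n)=\mu_n$ (last statement of Theorem~\ref{thm:gpfinal}); then $\frF[\mu_n]=(X_n,\mmo X_n)_\sharp\P$ with $\sup_n|\mmo X_n|_\cH<\infty$, so the measures $\frF[\mu_n]$ all lie in a set $\mathcal K_c$ as in Proposition~\ref{prop:finalmente}(2), which is sequentially compact in $\prob_2^{sw}(\TX)$. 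Any $\prob_2^{sw}$-limit of a subsequence has first marginal $\mu$ and belongs to $\clo\frF=\frF$ (Proposition~\ref{prop:sw-closedness} together with maximality), hence equals $\frF[\mu]$ by single-valuedness; thus the whole sequence converges, i.e.\ $\frF$ is demicontinuous. Since $\dom(\frF)=\prob_2(\X)$ is totally convex and $t\mapsto\sfx^t_\sharp\ggamma$ is $W_2$-continuous, demicontinuity immediately gives hemicontinuity.

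The converse of~(2) is the hard direction. Assuming $\frF$ deterministic and hemicontinuous with $\dom(\frF)=\prob_2(\X)$, the operator $\mmo$ is single-valued, everywhere defined and dissipative; by Theorem~\ref{thm:maximal-dissipativity}(2) it suffices to show $\mmo$ is maximal dissipative, and by the Minty--Browder criterion it is enough that $\mmo$ be hemicontinuous in $\cH$, i.e.\ that $s\mapsto\langle\mmo(X+sU),U\rangle_\cH$ be continuous for every $X,U\in\cH$. Taking $\ggamma:=(X,X+U)_\sharp\P$ and $\mu_s:=(X+sU)_\sharp\P$, hemicontinuity of $\frF$ gives $\frF[\mu_s]=\big(X+sU,\mmo(X+sU)\big)_\sharp\P\to\frF[\mu_{s_0}]$ in $\prob_2^{sw}(\TX)$. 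The main obstacle is then a transfer lemma: if $Y_s\to Y_0$ in $\cH$ and $(Y_s,\mmo Y_s)_\sharp\P\to(Y_0,\mmo Y_0)_\sharp\P$ in $\prob_2^{sw}(\TX)$ with $\mmo$ single-valued, then $\mmo Y_s\rightharpoonup\mmo Y_0$ in $\cH$. I would prove it by extracting a weak $\cH$-cluster point $W$ of the bounded family $(\mmo Y_s)$, testing the convergence against $\zeta(x,v)=\eta(\langle v,e\rangle)\phi(x)\in\rmC^{sw}_2(\TX)$ (nonlinear in $v$) and against $\phi(x)$ alone, and using $\phi(Y_s)\to\phi(Y_0)$ strongly in $\cH$ to identify the narrow limit of the law of $\big(\langle\mmo Y_s,e\rangle,Y_s\big)$ in $\R\times\X^s$ as the graph of a fixed $\mu_0$-measurable function; a truncation/Lusin argument (using the constant second marginal $\mu_0$ and $\sup_s|\mmo Y_s|_\cH<\infty$) then upgrades this to $\langle\mmo Y_s,e\rangle\to\langle\mmo Y_0,e\rangle$ in $L^1(\Omega)$ for every $e\in\X$, and density of simple functions yields $\mmo Y_s\rightharpoonup\mmo Y_0$ in $\cH$, so $W=\mmo Y_0$. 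Applying this along $Y_s=X+sU$ gives the required continuity, hence maximality of $\mmo$ and of $\frF$.

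Finally item~(3) is a reformulation: for deterministic $\frF$ with $\dom(\frF)=\prob_2(\X)$ one has $\int\zeta(x,\ff[\mu_n](x))\,\d\mu_n=\int\zeta\,\d\frF[\mu_n]$, so by the very definition of the topology of $\prob_2^{sw}(\TX)$ (Definition~\ref{def:p2sw}) the displayed limit condition, for all $\zeta\in\rmC^{sw}_2(\TX)$ and all $\mu_n\to\mu$ in $\prob_2(\X)$, is exactly demicontinuity of $\frF$; hence~(3) follows from~(2). Collecting the pieces, the chain maximal $\Rightarrow$ deterministic and demicontinuous $\Rightarrow$ hemicontinuous $\Rightarrow$ maximal closes all the equivalences, and the transfer lemma in the converse of~(2) is the only step that is not bookkeeping with results already established.
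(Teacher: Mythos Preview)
Your argument for (1), (3), and the forward implication of (2) matches the paper's proof essentially verbatim. The only substantive difference is in the converse of (2), where you prove a separate ``transfer lemma'' (hemicontinuity of $\frF$ along $\sfx^t_\sharp\ggamma$ $\Rightarrow$ hemicontinuity of $\mmo$ in $\cH$) and then invoke the classical Minty--Browder criterion, whereas the paper runs the Minty argument directly at the measure level: it keeps the test variable $X$ inside the joint law $\boldsymbol\vartheta_t=(X,Y_t,V_t)_\sharp\P$, passes to the limit in $\prob_2^{sw}(\X^2\times\X)$ by compactness, and uses that the $(\sfy,\sfv)$-marginal of any limit is $(\ii_\X,\ff_0)_\sharp\mu_0$ (concentrated on a graph) to identify the full limit as $(X,Y,\ff_0\circ Y)_\sharp\P$. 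Testing against $\zeta(x,y,v)=\langle v,x-y\rangle\in\rmC_2^{sw}(\X^2\times\X)$ then closes the Minty inequality in one stroke.

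Your Lusin route is correct but deserves two remarks. First, the phrase ``constant second marginal $\mu_0$'' is inaccurate: the $Y_s$-marginal of $(\langle V_s,e\rangle,Y_s)_\sharp\P$ is $\mu_s$, not $\mu_0$. This does not break the argument---the Lusin step goes through using only the narrow convergence of $(\langle V_s,e\rangle,Y_s)_\sharp\P$ (which you correctly obtain from $\prob_2^{sw}$-convergence of $\frF[\mu_s]$ plus tightness) together with $Y_s\to Y_0$ in probability: approximate $h_0:=\langle\ff_0(\cdot),e\rangle$ by $\tilde h_0\in\rmC_b(\X)$ off a $\mu_0$-small set, and test against $(r,y)\mapsto(|r-\tilde h_0(y)|/\delta)\wedge 1$ to control $\P(|h_s(Y_s)-\tilde h_0(Y_s)|>\delta)$. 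Second, your argument actually yields the stronger conclusion $\langle V_s,e\rangle\to\langle V_0,e\rangle$ in probability (hence in $L^1$), which is more than the weak $\cH$-convergence you need. Both approaches ultimately rest on the same structural fact---that $\frF[\mu_0]$ is concentrated on a map---but the paper's version is shorter because it folds the test variable into the joint law from the start, avoiding the detour through pointwise-in-$e$ convergence and simple-function density.
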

\begin{proof}
  Item (1) is an obvious consequence of Theorem \ref{thm:bary-proj} and of the fact that $\frF$ is single-valued.

   We prove item (2):  let us first assume that
  $\frF$ is maximal and let $\mmo$ be its Lagrangian representation.
  Since $\dom(\mmo)=\cH$, $\mmo$ is locally bounded (see Theorem \ref{thm:brezis2}(3)) so that
  if a sequence $(\mu_n)_{n\in\N}$ is converging to $\mu$ in
  $\prob_2(\X)$ and $\Phi_n=\frF[\mu_n]$, we can assume that 
  there exists a constant $C>0$ such that
  \begin{equation*}
    \int_\TX |v|^2\,\d\Phi_n(x,v)\le C\quad\text{for every }n\in \N.
  \end{equation*}
  The compactness
  criterion of Proposition \ref{prop:finalmente}
  shows that $(\Phi_n)_{n\in\N}$ is relatively compact in
  $\prob_2^{sw}(\TX)$.
  On the other hand,
  since $\frF=\clo\frF$
  by Proposition \ref{prop:sw-closedness}, we know
  that any accumulation point of $\Phi_n$ belongs to $\frF$
  and therefore it should coincide with $\frF[\mu]$.

  In order to prove the converse implication,
  it is sufficient to consider the case
  $\lambda=0$ and $\frF$ deterministic and hemicontinuous;
  we reproduce the argument of \cite{BrezisFR} in the measure
  theoretic framework.
  
  We first observe that
  the Lagrangian representation $\mmo$ of $\frF$
  is everywhere defined and single-valued, since
  $\iota_X=\mu$, and $\iotaT_{X,V}=\frF[\mu]=(\ii_\X, \ff)_\sharp \mu$ yield
  $V=\ff\circ X$.

  Let $(Y,W)\in \cH\times \cH$
  satisfying 
  \begin{displaymath}
    \la \mmo (X)-W,X-Y\ra_{\cH}\le 0\quad\text{for every }X\in \cH.
  \end{displaymath}
  Replacing $X$ with $Y_t:=(1-t)Y+tX$, $t\in (0,1)$ and setting
  $\mu_t:=\iota_{Y_t}$, $\ff_t:=\ff[\mu_t]$, $V_t:=\ff_t\circ Y_t
  = \mmo(Y_t)$, we get 
  \begin{equation*}
    \la V_t-W,X-Y_t \ra_{\cH} = \frac{1-t}{t} \la V_t-W,Y_t -Y \ra_{\cH} \le 0 \quad\text{for every }X\in \cH,
  \end{equation*} 
  so that 
  \begin{equation}
    \label{eq:18}
    \la V_t-W,X-Y_t \ra_{\cH} \le 0 \quad\text{for every }X\in \cH.
  \end{equation}
  Let us now set $\bm\vartheta_t:=(X ,Y_t  ,V_t)_\sharp \P\in
  \prob_2(\X^2 \times \X)$. Denoting by $\sfx,\sfy,\sfv$ the projections
  of the points of $\X^3$ to their components, since $(\sfy,\sfv)_\sharp
\bm\vartheta_t=\frF[\mu_t]$,
  by hemicontinuity assumption we know that
  \[(\sfy,\sfv)_\sharp\bm\vartheta_t\to
  (Y,\ff_0\circ Y)_\sharp \P=\frF[\mu_0],\quad\text{in $\prob_2^{sw}(\X\times \X)$ as $t\downarrow0$}.\] 
  On the other hand, $(\sfx,\sfy)_\sharp\bm\vartheta_t=
  \iota^2_{X,Y_t}$ converges to $\iota^2_{X,Y}$ in $\prob_2(\X^2)$ so that
  by compactness, we can also find a sequence  $\left(t(n)\right)_{n\in\N}$, with $t(n)\downarrow0$, 
  such that $\bm\vartheta_{t(n)}\to \bm\vartheta$
  in $\prob_2^{sw}(\X^2\times \X)$. Clearly
  $(\sfy,\sfv)_\sharp \bm\vartheta=
  (\ii_\X, \ff_0)_\sharp \mu_0$ is concentrated on a graph,
  so that $\bm\vartheta=(X,Y,\ff_0\circ Y)_\sharp\P$.

  Since
  \begin{displaymath}
    \la V_t,X-Y_t\ra_{\cH}=
    \int_{\X^2\times\X} \langle v,x-y\rangle\,\d\bm\vartheta_t
  \end{displaymath}
  and the function $\zeta(x,y,v):=\langle v,x-y\rangle$
  belongs to $\rmC_2^{sw}(\X^2\times \X)$ we deduce that
  \begin{displaymath}
    \lim_{n\to+\infty}\la V_{t(n)},X-Y_{t(n)}\ra_{\cH}=
    \int_{\X^2\times\X} \langle v,x-y\rangle\,\d\bm\vartheta=
    \la \ff_0( Y),X-Y\ra_{\cH}.
  \end{displaymath}
  Thus, we can pass to the limit in \eqref{eq:18} obtaining 
  \begin{displaymath}
    \la \ff_0(Y)-W,X-Y\ra_{\cH}\le 0\quad\text{for every }X\in \cH,
  \end{displaymath}
  in particular it holds for $X=\ff_0(Y)-W+Y$.
We deduce that $W=\ff_0\circ Y=\mmo (Y)$ so that $\mmo$ is maximal
  and $\frF$ is maximal as well.
  
   Finally, item (3)  is just the equivalent way to express the demicontinuity
  of $\frF$, recalling Definition \ref{def:p2sw}.
\end{proof}

An important example of single-valued, everywhere defined
demicontinuous \PVF is provided by the Yosida approximation:
starting from a maximal totally $\lambda$-dissipative \MPVF\ $\frF$
and its Lagrangian representation $\mmo$, 
for every $\tau\in (0,1/\lambda^+)$ we consider its Yosida approximation $\mmo_\tau:=\frac{(\ii_{\cH}-\tau\mmo)^{-1}-\ii_{\cH}}{\tau}$ and define the corresponding (single-valued) \PVF 
\begin{equation}\label{eq:ftaudef}
    \frF_\tau:=\iotaT(\mmo_\tau).
\end{equation}
Notice that $\frF_\tau$ is maximal totally $\lambda/(1-\lambda\tau)$-dissipative (see Theorem \ref{thm:brezis2}). Moreover, by Theorem \ref{thm:trivial-but-useful-to-fix}(1), \eqref{eq:7-3} and \eqref{eq:ftaudef} we get that
\[\frF_\tau[\mu]=(\ii_\X, \ff_\tau[\mu])_\sharp\mu, \quad\text{for all }\mu\in\prob_2(\X),\]
where
$\ff_\tau:
\Sp\X 
\to \X$ are given by 
$ \ff_\tau[\mu](\cdot):=\bb_\tau(\cdot,\mu)$
with $\bb_\tau$ as in \eqref{eq:7-3}; notice that $\ff_\tau$ admits a continuous version
defined in $\Sp\X$ and $\ff_\tau(\cdot,\mu)$ 
belongs to $\Lip(\supp(\mu);\X)$ 
for every $\mu\in \prob_2(\X)$ 
and clearly admits a Lipschitz extension 
to $\X$ 
(see Theorem \ref{thm:invTOlawinv}).
Setting $L_\tau:=\frac 1\tau (2-\lambda\tau)/(1-\lambda\tau)$, by $L_\tau$-Lipschitz continuity of $\mmo_\tau$ and the representation \eqref{eq:7-3}, we get the following Lipschitz condition
\begin{equation}
\label{eq:Lipschitz}
        \int_{\X^2} \Big|\ff_\tau(x_0,\mu_0)-
    \ff_\tau(x_1,\mu_1)\Big|^2\,\d\mmu(x_0,x_1)
    \le L_\tau^2\int_{\X^2} |x_0-x_1|^2\,\d\mmu(x_0,x_1)
    \quad\text{for every }
    \mmu\in \Gamma(\mu_0,\mu_1),
\end{equation}
which clearly implies demicontinuity of $\frF_\tau$.
We have thus proved the following result, recalling also Theorem \ref{thm:minimal}(2).
\begin{corollary}\label{cor:regular-appr-MPVF}
  Let $\frF\subset\prob_2(\TX)$ be a maximal totally $\lambda$-dissipative \MPVF.
  There exist sequences $\lambda_n, L_n\in\R$ and a sequence of maps
  $\ff_n:\prob_2(\X)\to \Lip(\X,\X)$
  satisfying the Lipschitz condition \eqref{eq:Lipschitz} with $L_n$ in place of $L_\tau$
  inducing a sequence of single-valued
  maximal totally $\lambda_n$-dissipative {\PVF}s $\frF_n$, and
  satisfying 
  \[ \lim_{n\to+\infty}\int_\X\big|\ff_n[\mu](x)-\ff^\circ[\mu](x)\big|^2\,\d\mu(x)=0
    \quad\text{for every }\mu\in \dom(\frF),\]
where $\ff^\circ$ is as in Theorem \ref{thm:minimal}.
\end{corollary}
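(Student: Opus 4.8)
The plan is to realize the approximating family as the Yosida regularizations of $\frF$ already introduced in the paragraph preceding the statement, and to extract the $L^2$-convergence to $\ff^\circ$ directly from the estimates \eqref{eq:23}--\eqref{eq:23bis} of Theorem~\ref{thm:minimal}(2). Concretely, I would fix an arbitrary sequence $\tau_n\downarrow0$ with $0<\tau_n<1/\lambda^+$ and set $\frF_n:=\frF_{\tau_n}=\iotaT(\mmo_{\tau_n})$, $\lambda_n:=\lambda/(1-\lambda\tau_n)$, $L_n:=L_{\tau_n}$ and $\ff_n[\mu]:=\ff_{\tau_n}[\mu]=\bb_{\tau_n}(\cdot,\mu)$. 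The structural part of the statement is then already in hand: $\frF_n$ is single-valued and maximal totally $\lambda_n$-dissipative by Theorem~\ref{thm:brezis2} and Theorem~\ref{thm:maximal-dissipativity}(2); by Theorem~\ref{thm:trivial-but-useful-to-fix}(1) and \eqref{eq:7-3} it has the form $\frF_n[\mu]=(\ii_\X,\ff_n[\mu])_\sharp\mu$; and the $L_n$-Lipschitz continuity of $\mmo_{\tau_n}$ combined with the representation \eqref{eq:7-3} yields \eqref{eq:Lipschitz} with $L_n$ in place of $L_\tau$. The only genuinely new bookkeeping is to present $\ff_n$ as a map $\prob_2(\X)\to\Lip(\X;\X)$: since $\bb_{\tau_n}(\cdot,\mu)$ is a priori only Lipschitz on $\supp(\mu)$, I would extend it to all of $\X$ with the same constant via Kirszbraun's theorem, which does not spoil \eqref{eq:Lipschitz}.

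Then I would prove the convergence $\ff_n[\mu]\to\ff^\circ[\mu]$ in $L^2(\X,\mu;\X)$ for every $\mu\in D(\frF)$. Here the argument is short: from \eqref{eq:23bis}, $(1-\lambda\tau)^2\int|\bb_\tau(\cdot,\mu)|^2\,\d\mu$ increases to the finite number $\int|\ff^\circ[\mu]|^2\,\d\mu$ as $\tau\downarrow0$, and since $(1-\lambda\tau)^2\to1$ this forces $\int|\bb_\tau(\cdot,\mu)|^2\,\d\mu\to\int|\ff^\circ[\mu]|^2\,\d\mu$; plugging this into \eqref{eq:23} and using $1-2\lambda\tau\to1$, the right-hand side there tends to $\int|\ff^\circ[\mu]|^2\,\d\mu-\int|\ff^\circ[\mu]|^2\,\d\mu=0$, so that $\int|\bb_\tau(\cdot,\mu)-\ff^\circ[\mu]|^2\,\d\mu\to0$. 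Specializing to $\tau=\tau_n$ and recalling $\ff_n[\mu]=\bb_{\tau_n}(\cdot,\mu)$ together with $\ff^\circ=\bb^\circ$ (Theorem~\ref{thm:minimal}(2)) gives exactly the claimed limit.

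I do not expect a real obstacle: the substance of the corollary is entirely carried by Theorem~\ref{thm:minimal}(2), whose proof in turn rests on the Hilbertian Yosida estimates of Appendix~\ref{sec:brezis}, so what remains is essentially assembling pieces already established. If anything, the one point deserving a moment's care is the canonical (and measurable) choice of the Lipschitz extension of $\bb_{\tau_n}(\cdot,\mu)$ outside $\supp(\mu)$ and the verification that the integrated Lipschitz bound \eqref{eq:Lipschitz} survives that extension, which is immediate since Kirszbraun's extension does not increase Lipschitz constants. One may also note in passing that $\lambda_n\to\lambda$ while in general $L_n\to+\infty$, consistently with the ``possibly diverging sequence of Lipschitz constants'' announced in the introduction.
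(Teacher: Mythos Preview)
Your proposal is correct and is exactly the argument the paper intends: the corollary is stated as an immediate consequence of the preceding discussion on the Yosida approximation $\frF_\tau$ together with Theorem~\ref{thm:minimal}(2), and you have simply spelled out the details (choice $\tau_n\downarrow0$, identification of $\lambda_n,L_n,\ff_n$, and the $L^2$-convergence from \eqref{eq:23}--\eqref{eq:23bis}). The only cosmetic remark is that the Lipschitz extension to all of $\X$ is needed solely to match the codomain $\Lip(\X,\X)$ in the statement; condition \eqref{eq:Lipschitz} itself only involves values on the supports and is therefore unaffected by the choice of extension.
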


 To conclude this section, devoted to deterministic {\MPVF}s, we anticipate  a result  which gives a sufficient condition to pass from dissipativity to total dissipativity  in the deterministic case.  Its proof, in a more general framework, is deferred to Section \ref{sec:constructionFlagr} (see in particular Theorem \ref{thm:demi-case}).  We will see how the required condition on the dimension of $\X$ will allow us to play with measures with finite support so to slightly perturb non-optimal couplings into optimal ones, at least for a small interval. This perturbation argument is presented in Section \ref{sec:coupl} and then applied later in Section \ref{sec:strong-dissipative} to get first interesting relations between metric and total dissipativity. 

\begin{theorem}\label{thm:antic} Assume that $\dim(\X) \ge 2$. Let $\mathsf{U} \subset \X$ be an open, convex, non-empty subset of $\X$ and let $\frF \subset \prob_2(\TX)$ be a  deterministic $\lambda$-dissipative \MPVF with domain $\dom(\frF)=\prob_f(\mathsf U)$. Then $\frF$ is totally $\lambda$-dissipative.
\end{theorem}

\section{Lagrangian and Eulerian flow generated by a totally dissipative \MPVF}\label{sec:totdissMPVF-flow}
In this section, making use of the results obtained in the previous Section \ref{sec:invmpvf}, 
we study the well-posedness for $\lambda$-\EVI solutions driven by a maximal totally $\lambda$-dissipative \MPVF $\frF$. These curves are characterized (time by time) as the law of the unique semigroup of Lipschitz transformations $\Sgp_t$ of the Lagrangian representation $\mmo$ of $\frF$. 
 As in the previous section, we will consider a standard Borel
space $(\Omega, \cB)$ 
endowed with a nonatomic probability measure $\P$ 
and the Hilbert space
$\cH:=L^2(\Omega, \cB, \P; \X)$.

\begin{definition}[Lagrangian flow]\label{def:semig}
Let $\frF \subset \prob_2(\TX)$ be a 
maximal totally $\lambda$-dissipative \MPVF.
We call \emph{Lagrangian flow}
the family of maps $\ss_t:\mathcal S(\X,\overline{\dom(\frF)})\to \X$
defined by Theorem
\ref{thm:invTOlawinv}
starting from the Lagrangian representation
$\mmo$ of $\frF$. \\
The Lagrangian flow induces 
a semigroup of $(\prob_2(\X),W_2)$-Lipschitz transformations
$S_t:\overline{\dom(\frF)}
\to \overline{\dom(\frF)}$
defined by 
$S_t(\mu_0):= \ss_t(\cdot,\mu_0)_\sharp \mu_0$.

We say that the continuous 
curve $\mu:[0,+\infty)\to\mathcal P_2(\X)$
is a  \emph{Lagrangian solution}
of the flow generated by $\frF$
if $\mu_t=
S_t (\mu_0)=
\ss_t(\cdot,\mu_0)_\sharp\mu_0$
for every $t\ge0$.
\end{definition}
Notice that, if $\mu$ is a Lagrangian solution, the semigroup property \eqref{eq:8} 
of the Lagrangian flow $\ss_t$ yields in particular
\begin{equation*}
    \mu_t=\ss_{t-s}(\cdot,\mu_s)_\sharp\mu_s\quad\text{for every }
    0\le s\le t.
\end{equation*}
 In particular, to construct a Lagrangian solution starting from $\mu_0\in \dom(\frF)$ 
it is sufficient to choose an arbitrary map $X_0\in \cH$ satisfying $\iota_{X_0}=\mu_0$
and set $\mu_t:=\iota_{X_t}$ for the (unique) locally Lipschitz solution 
$X\in \Lip_{\rm loc}([0,+\infty);\cH)$ of 
\begin{displaymath}
    \frac \d{\d t }X_t=\mmo^\circ (X_t)\quad\text{a.e.~in $(0,+\infty)$,}\quad
    X\restr{t=0}=X_0.
\end{displaymath}
An immediate consequence of Theorem
\ref{thm:invTOlawinv}
is the following result.
\begin{theorem}[Existence of Lagrangian solutions]
    \label{thm:existence-Lagrangian}
    If $\frF \subset \prob_2(\TX)$ is a maximal totally $\lambda$-dissipative \MPVF then for
    every $\mu_0\in \overline{\dom(\frF)}$
    there exists a unique Lagrangian solution $\mu:[0,+\infty)\to\mathcal P_2(\X)$
    starting from $\mu_0$.

    If $\mu_0\in \dom(\frF)$,
    then
    $\mu_t\in \dom(\frF)$
    for every $t\ge0$, 
    the curve $\mu:[0,+\infty)\to\mathcal P_2(\X)$
    is locally Lipschitz continuous, and 
    \begin{equation}
    \label{eq:apriori-estimate}
    \int_\X |\ff^\circ(x,\mu_t)|^2\,\d\mu_t(x)\le 
    e^{\lambda t}\int_\X |\ff^\circ(x,\mu_0)|^2\,\d\mu_0(x)
    \quad\text{for every }t\ge0,
\end{equation}
where $\ff^\circ$
    is defined in Theorem \ref{thm:minimal}
    and induces a map $(x,t)\mapsto \ff^\circ(x,\mu_t)$
    which is $\mmu$-measurable
    with respect to $\mmu=\int \mu_t \,\d t$ in every
    set $\X\times (0,T)$, $T>0$.\\
    Moreover, 
    $\mu$ is the unique \emph{Eulerian solution}
    of the flow generated by $\frF$ in the following sense: 
    $\mu:[0,+\infty)\to\mathcal P_2(\X)$ is the unique
    distributional solution
    of
    \begin{equation}
    \label{eq:cont-3}
    \partial_t \mu_t+\nabla\cdot(
    \mu_t \,\ff^\circ(\cdot, \mu_t))=0
    \quad\text{in $(0,+\infty)\times \X$}
\end{equation}
among the class of locally absolutely continuous curves satisfying
$\mu_{t=0}=\mu_0\in \dom(\frF)$
and
    \begin{equation}
    \label{eq:L2estimate}
        \int_0^T
        \int_\X |\ff^\circ(x,\mu_t)|^2\,\d 
        \mu_t\,\d t<+\infty
        \quad\text{for every }T>0.
    \end{equation}
    Finally, for every $\mu_0\in \overline{\dom(\frF)}$ and $t>0$ we have
    \begin{enumerate}
    \item if $\supp(\mu_0)$
      is finite, then $\supp(\mu_t)$ is finite    and its
      cardinality is nonincreasing w.r.t.~$t$.
      In particular, if $\mu_0\in \prob_{f,N}(\X)$ for some $N\in \N$
      (recall \eqref{eq:30})
      then $\mu_t\in \prob_{f,N}(\X)$ for every $t\ge0$;
    \item
      if $\supp(\mu_0)$ is compact, then
      $\supp(\mu_t)$ is compact;
    \item 
      if $\supp(\mu_0)$ is bounded, then
      $\supp(\mu_t)$ is bounded and
      $\operatorname{diam}(\supp(\mu_t))\le
      \mathrm e^{\lambda t}\operatorname{diam}(\supp(\mu_0))$;
    \item
      if
      $\int_\X |x|^p \de \mu_0(x)< + \infty$ for some $p \ge 1$,
      then
      $\int_\X |x|^p \de \mu_t(x)< + \infty$ and
      \begin{equation*}
        \int_{\X\times\X} \big|x-y\big|^p\,\d\mu_t\otimes \mu_t\le
        \mathrm e^{p\lambda t}
        \int_{\X\times\X} \big|x-y\big|^p\,\d\mu_0\otimes\mu_0.
      \end{equation*}
    \end{enumerate}
\end{theorem}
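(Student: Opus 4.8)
The plan is to transfer everything to the Hilbert space $\cH$ through the Lagrangian representation $\mmo=(\iotaT)^{-1}(\frF)$ of $\frF$, which by Theorem \ref{thm:maximal-dissipativity}(2) is maximal $\lambda$-dissipative and invariant by measure-preserving isomorphisms, so that the Hilbertian semigroup theory of Appendix \ref{sec:brezis} and the representation formulas of Theorem \ref{thm:invTOlawinv} become available. The Lagrangian part is then almost immediate: $S_t$ is well defined because $\Sgp_t$ is law invariant, hence $\mu_t:=S_t(\mu_0)$ is the unique Lagrangian solution and is continuous since $t\mapsto\Sgp_t X_0$ is. If $\mu_0\in\dom(\frF)$, then any $X_0$ with $\iota(X_0)=\mu_0$ belongs to $\dom(\mmo)$ (Remark \ref{rem:aggiungi}), so $X_t:=\Sgp_t X_0$ is locally Lipschitz, stays in $\dom(\mmo)$ and solves $\dot X_t=\mmo^\circ X_t$ a.e.; this gives $\mu_t\in\dom(\frF)$, local Lipschitz continuity of $\mu$ via $W_2(\mu_t,\mu_s)\le|X_t-X_s|_\cH$, and, using $\mmo^\circ X_t=\ff^\circ[\mu_t]\circ X_t$ (Theorem \ref{thm:minimal}(2)) together with the Hilbertian decay estimate for $|\mmo^\circ(\Sgp_t X_0)|_\cH$ (Theorem \ref{thm:brezis3}), the bound \eqref{eq:apriori-estimate}. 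The joint $\mmu$-measurability of $(x,t)\mapsto\ff^\circ(x,\mu_t)$ on $\X\times(0,T)$ follows from Theorem \ref{thm:minimal}(4) applied with $\Y=(0,T)$, $\nu=\tfrac1T\leb\restr{(0,T)}$ and $\mmu=\tfrac1T\int_0^T\mu_t\,\d t$, whose hypothesis \eqref{eq:22} holds since $\sqm{\mu_t}$ and $|\frF|_2(\mu_t)=\int|\ff^\circ[\mu_t]|^2\,\d\mu_t$ are locally bounded in $t$.

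To see that $\mu$ is an Eulerian solution I would fix $\zeta\in\Cyl(\X)$ and $X_t=\Sgp_t X_0$, write $\zeta(X_{t+h})-\zeta(X_t)=\int_0^1\scalprod{\nabla\zeta\big(X_t+r(X_{t+h}-X_t)\big)}{X_{t+h}-X_t}\,\d r$, divide by $h$ and let $h\downarrow0$: since $\tfrac1h(X_{t+h}-X_t)\to\mmo^\circ X_t=\ff^\circ[\mu_t]\circ X_t$ in $\cH$ (cf.~\eqref{eq:10bis} and Theorem \ref{thm:brezis3}) and $\nabla\zeta$ is bounded and continuous, one obtains $\tfrac\d{\d t}\int\zeta\,\d\mu_t=\int\scalprod{\nabla\zeta(x)}{\ff^\circ[\mu_t](x)}\,\d\mu_t(x)$ at every $t$; as $t\mapsto\int\zeta\,\d\mu_t$ is locally Lipschitz, integrating this identity against test functions in $\rmC^\infty_c((0,\infty))$ yields the distributional formulation of \eqref{eq:cont-3}, while \eqref{eq:L2estimate} is again \eqref{eq:apriori-estimate}.

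The uniqueness of the Eulerian solution is the core of the proof. Let $\nu:[0,\infty)\to\prob_2(\X)$ be locally absolutely continuous with $\nu_0=\mu_0$, satisfying \eqref{eq:L2estimate} and \eqref{eq:cont-3} (which forces $\nu_t\in\dom(\frF)$ for a.e.~$t$ and makes $(x,t)\mapsto\ff^\circ[\nu_t](x)$ measurable w.r.t.~$\int\nu_t\,\d t$ by Theorem \ref{thm:minimal}(4)). Both $\mu$ and $\nu$ are then locally absolutely continuous curves carried by Borel velocity fields whose $L^2$-norms lie in $L^1_{\mathrm{loc}}$, so Theorem \ref{thm:all}(6b) gives, for a.e.~$t$, $\tfrac12\tfrac\d{\d t}W_2^2(\mu_t,\nu_t)=\bram{(\ii_\X,\ff^\circ[\mu_t])_\sharp\mu_t}{(\ii_\X,\ff^\circ[\nu_t])_\sharp\nu_t}$. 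Since both measures are concentrated on maps, every element of the corresponding class $\Lambda(\cdot,\cdot)$ is induced by an optimal plan $\mmu\in\Gamma_o(\mu_t,\nu_t)$, so that this pairing equals $\min_{\mmu\in\Gamma_o(\mu_t,\nu_t)}\int\scalprod{\ff^\circ[\mu_t](x_0)-\ff^\circ[\nu_t](x_1)}{x_0-x_1}\,\d\mmu$; applying Corollary \ref{cor:bary-works-well} (total dissipativity of barycenters along \emph{arbitrary} couplings, in particular optimal ones) to $\frF^\circ[\mu_t],\frF^\circ[\nu_t]$ bounds it by $\lambda\int|x_0-x_1|^2\,\d\mmu=\lambda W_2^2(\mu_t,\nu_t)$, whence $\tfrac\d{\d t}W_2^2(\mu_t,\nu_t)\le2\lambda W_2^2(\mu_t,\nu_t)$ a.e.; Gr\"onwall's lemma with $W_2(\mu_0,\nu_0)=0$ forces $\nu_t=\mu_t$ for all $t$. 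I expect the delicate point to be exactly this reduction of the duality pairing between two deterministic measures to an integral over optimal plans, after which the ``arbitrary coupling'' dissipativity of the barycentric projection applies.

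For the structural statements (1)--(4) I would use Theorem \ref{thm:invTOlawinv}: for $\mu_0\in\overline{\dom(\frF)}$ one has $\mu_t=(\ss_t(\cdot,\mu_0))_\sharp\mu_0$ with $\ss_t(\cdot,\mu_0):\supp(\mu_0)\to\X$ being $\mathrm e^{\lambda t}$-Lipschitz, and $\mu_t=(\ss_{t-s}(\cdot,\mu_s))_\sharp\mu_s$ for $0\le s\le t$ by \eqref{eq:8}. Then (1) finiteness of $\supp(\mu_0)$ propagates, with $\supp(\mu_t)\subseteq\ss_t(\supp(\mu_0),\mu_0)$ of no larger cardinality, the semigroup identity gives monotonicity of the cardinality in $t$, and pushforward preserves atomic masses in $\tfrac1N\N$, i.e.~membership in $\prob_{f,N}(\X)$ (recall \eqref{eq:30}); (2) if $\supp(\mu_0)$ is compact, so is its continuous image $\ss_t(\supp(\mu_0),\mu_0)=\supp(\mu_t)$; (3) the $\mathrm e^{\lambda t}$-Lipschitz bound yields $\operatorname{diam}(\supp(\mu_t))\le\mathrm e^{\lambda t}\operatorname{diam}(\supp(\mu_0))$; (4) from $\mu_t\otimes\mu_t=\big(\ss_t(\cdot,\mu_0)\times\ss_t(\cdot,\mu_0)\big)_\sharp(\mu_0\otimes\mu_0)$ and the same bound one gets $\int|x-y|^p\,\d\mu_t\otimes\mu_t\le\mathrm e^{p\lambda t}\int|x-y|^p\,\d\mu_0\otimes\mu_0$, and finiteness of $\int|x|^p\,\d\mu_t=\int|\ss_t(x,\mu_0)|^p\,\d\mu_0$ follows from $|\ss_t(x,\mu_0)|\le|\ss_t(x_*,\mu_0)|+\mathrm e^{\lambda t}|x-x_*|$ for a fixed $x_*\in\supp(\mu_0)$.
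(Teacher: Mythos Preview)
Your proposal is correct and follows essentially the same approach as the paper's proof: lift to $\cH$ via the Lagrangian representation, use the Hilbertian semigroup theory (Theorems \ref{thm:invTOlawinv}, \ref{thm:brezis3}) for existence, regularity and \eqref{eq:apriori-estimate}, derive \eqref{eq:cont-3} from \eqref{eq:10}/\eqref{eq:10bis}, prove Eulerian uniqueness by differentiating $W_2^2(\mu_t,\nu_t)$ via Theorem \ref{thm:all}(6b) combined with dissipativity and Gr\"onwall, get the measurability from Theorem \ref{thm:minimal}(4), and read off (1)--(4) from the $\mathrm e^{\lambda t}$-Lipschitz property of $\ss_t(\cdot,\mu_0)$. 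The only cosmetic difference is that in the uniqueness step you route through Corollary \ref{cor:bary-works-well}, whereas the paper simply observes that $(\ii_\X,\ff^\circ[\mu_t])_\sharp\mu_t\in\frF$ by \eqref{eq:defq} and applies total $\lambda$-dissipativity of $\frF$ directly to the coupling induced by any optimal $\mmu_t$; since these measures are already deterministic, the two justifications coincide.
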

\begin{proof}
    The existence and the regularity properties of Lagrangian solutions
    follow by Theorem 
    \ref{thm:invTOlawinv}, while \eqref{eq:apriori-estimate} follows by Theorem \ref{thm:brezis3}(4).

    Property \eqref{eq:10} clearly implies
    \eqref{eq:cont-3}.  Indeed, by definition of Lagrangian solution, we have $\mu_t=\ss_t(\cdot,\mu_0)_\sharp\mu_0$. Thus, by \eqref{eq:10} we have
    \begin{equation}
    \label{eq:10bis+}
    \lim_{h\downarrow0}\frac 1h(\ss_{t+h}(\cdot,\mu_0)-\ss_{t}(\cdot,\mu_0))=\bb^\circ[\mu_t](\ss_{t}(\cdot,\mu_0))\equiv\ff^\circ[\mu_t](\ss_{t}(\cdot,\mu_0)) \quad\text{in }L^2(\X,\mu_0;\X),
  \end{equation}
  where the last equivalence is provided in Theorem \ref{thm:minimal}(2).
  Thus $\mu_t$ satisfies
\begin{align*}
\frac\d{\d t}\int_\X \zeta(x)\,\d\mu_t(x)&=\frac\d{\d t}\int_\X \zeta(\ss_t(x,\mu_0))\,\d\mu_0(x)\\
&=\int_\X \langle\nabla\zeta(\ss_t(x,\mu_0)),\ff^\circ[\mu_t](\ss_{t}(x,\mu_0))\rangle
    \,\d\mu_0(x)\\
&=\int_\X \langle\nabla\zeta(x),\ff^\circ[\mu_t](x)\rangle
    \,\d\mu_t(x)
\end{align*}
for every $\zeta\in \Cyl(\X)$ and a.e.~$t>0$. Hence \eqref{eq:cont-3}.

    Concerning uniqueness of solutions to 
    \eqref{eq:cont-3}
    satisfying 
    \eqref{eq:L2estimate}, we have
    \begin{align*}
        \frac\d{\d t}W_2^2(\mu^1_t,\mu^2_t)
        &\le 2
        \int_{\X^2} \langle \ff^\circ(x_1,\mu^1_t)
        -\ff^\circ(x_2,\mu^2_t),x_1-x_2
        \rangle\,\d\mmu_t\\
        &\le 2\lambda \int_{\X^2} |x_1-x_2|^2\d\mmu_t\\
        &= 2\lambda W_2^2(\mu^1_t,\mu^2_t)
    \end{align*}
    for a.e.~$t\ge0$ and every
    $\mmu_t\in \Gamma_o(\mu^1_t,\mu^2_t)$, by Theorem \ref{thm:all}(6b) thanks to \eqref{eq:L2estimate}, the total $\lambda$-dissipativity of $\frF$ and \eqref{eq:defq}. Hence, by Gr\"onwall inequality, we get
    \[W_2(\mu^1_t,\mu^2_t)\le e^{\lambda t}W_2(\mu^1_0,\mu^2_0).\]

   The $\mmu$-measurability of the map $(x,t)\mapsto \ff^\circ(x,\mu_t)$ follows by continuity of $t\mapsto\mu_t$ together with Theorem \ref{thm:minimal}(4) with $\Y=[0,T]$. Indeed, \eqref{eq:22} holds thanks to \eqref{eq:apriori-estimate}.
    
    The last assertions (1-4) come from the fact that $\mu_t=\ss_t(\cdot, \mu_0)_\sharp \mu_0$ and this map is $e^{\lambda t}$-Lipschitz continuous (cf.~Theorem \ref{thm:invTOlawinv}(3)).
\end{proof}

\begin{remark}[A sticky-particle interpretation]
\label{rem:sticky}
We may interpret property (1) of the previous Theorem \ref{thm:existence-Lagrangian}
by saying that the flows of totally dissipative {\MPVF}s model sticky particle evolutions,
(see also \cite{Natile-Savare09}). This fact reflects at a dynamic level the 
barycentric projection property stated in Theorem \ref{thm:bary-proj}. 
In contrast, we immediately see that the example of $\frac{1}{2}$-dissipative \PVF, with $\X=\R$, analysed in \cite[Section 7.1]{Piccoli_2019}, \cite[Section 6]{Camilli_MDE} and later discussed in \cite[Section 7.5, Example 7.11]{CSS}, cannot be maximally total $\frac{1}{2}$-dissipative since it produces a $\frac{1}{2}$-\EVI solution which splits the mass for positive times if e.g. $\mu_0=\delta_0$.
Notice indeed that, as highlighted in the following Theorem \ref{thm:flow-generation}, if $\frF$ is maximal totally dissipative then Lagrangian and \EVI solutions coincide.
\end{remark}
It is remarkable that 
the Lagrangian flow $\ss_t$ 
provides an explicit representation
of the flow of Lipschitz transformations
generated by the unique $\lambda$-\EVI solution,
see \cite[Definition 5.21]{CSS} and Definition \ref{def:evi}.
\begin{theorem}
[EVI solutions and contraction]
    \label{thm:flow-generation}
    If $\frF \subset \prob_2(\TX)$ is a maximal totally $\lambda$-dissipative
    \MPVF, then 
    for every $\mu_0\in \overline{\dom(\frF)}$, the curve $\mu:[0,+\infty)\to\prob_2(\X)$,
    $\mu_t:=S_t(\mu_0)$, is the unique $\lambda$-EVI solution starting from 
    $\mu_0$ and 
    $S_t$ is a semigroup of $e^{\lambda t}$-Lipschitz transformations
    satisfying
    \begin{equation*}
        W_2(S_t(\mu_0),S_t(\mu_1))
        \le e^{\lambda t}
        W_2(\mu_0,\mu_1)
        \quad\text{for every }
        \mu_0,\mu_1\in \overline{\dom(\frF)},\,t\ge0.
    \end{equation*}
\end{theorem}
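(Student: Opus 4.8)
The plan is to transfer everything to the Hilbertian side via the Lagrangian representation $\mmo$ and invoke the classical theory of semigroups generated by maximal dissipative operators, summarized in Appendix~\ref{sec:brezis}. First I would recall that, by Theorem~\ref{thm:maximal-dissipativity}(2), since $\frF$ is maximal totally $\lambda$-dissipative, its Lagrangian representation $\mmo\subset\cH\times\cH$ is a maximal $\lambda$-dissipative operator which, being the Lagrangian representation, is law invariant; hence by Remark~\ref{rem:aggiungi} it is invariant by measure-preserving isomorphisms. Consequently Theorem~\ref{thm:invTOlawinv} applies and produces the maps $\ss_t:\Sp{\X,\overline{\dommmo}}\to\X$ together with the identity $\Sgp_t X(\omega)=\ss_t(X(\omega),X_\sharp\P)$ for $X\in\overline{\dom(\mmo)}$, and the semigroup and invariance properties \eqref{eq:8}. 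By Remark~\ref{rem:closure-of-domain} we have $\iota^{-1}(\overline{D(\frF)})=\overline{D(\mmo)}$, so that $S_t=\iota\circ\Sgp_t\circ\iota^{-1}$ is well-defined on $\overline{\dom(\frF)}$ and independent of the chosen lift.

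Next I would establish the contraction estimate. For $\mu_0',\mu_0''\in\overline{\dom(\frF)}$ pick lifts $X_0',X_0''\in\overline{\dom(\mmo)}$; the Hilbertian theory (Theorem~\ref{thm:brezis3}, the $\lambda$-contraction property of $(\Sgp_t)_t$) gives $|\Sgp_tX_0'-\Sgp_tX_0''|_\cH\le e^{\lambda t}|X_0'-X_0''|_\cH$. Since $W_2(S_t\mu_0',S_t\mu_0'')=W_2((\Sgp_tX_0')_\sharp\P,(\Sgp_tX_0'')_\sharp\P)\le|\Sgp_tX_0'-\Sgp_tX_0''|_\cH$, and by choosing $X_0',X_0''$ to be an optimal pair (so that $|X_0'-X_0''|_\cH=W_2(\mu_0',\mu_0'')$, which is possible since $\P$ is nonatomic on a standard Borel space, by the results recalled around Theorem~\ref{thm:gpfinal}), we obtain $W_2(S_t\mu_0',S_t\mu_0'')\le e^{\lambda t}W_2(\mu_0',\mu_0'')$. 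The semigroup property of $S_t$ follows directly from \eqref{eq:8}.

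It remains to prove that $\mu_t:=S_t(\mu_0)$ is the unique $\lambda$-EVI solution starting from $\mu_0$. For existence, when $\mu_0\in\dom(\frF)$ the curve is locally Lipschitz by Theorem~\ref{thm:existence-Lagrangian}, solves the continuity equation \eqref{eq:cont-3} with velocity $\ff^\circ[\mu_t]$, and by Theorem~\ref{thm:all}(6a) and Remark~\ref{rmk:EVIright} it suffices to check that for a.e.~$t$ and every $\Phi\in\frF$ with $\nu=\sfx_\sharp\Phi$,
\begin{equation*}
  \bram{(\ii_\X,\ff^\circ[\mu_t])_\sharp\mu_t}{\nu}\ge -\lambda W_2^2(\mu_t,\nu)+\text{(correct sign)},
\end{equation*}
which is exactly the inequality obtained from Corollary~\ref{cor:bary-works-well} (total $\lambda$-dissipativity applied to the pair $\Phi^\circ[\mu_t]=(\ii_\X,\ff^\circ[\mu_t])_\sharp\mu_t$ and any $\Psi\in\frF[\nu]$, along an optimal coupling): writing the duality pairing $\bram{\Phi^\circ[\mu_t]}{\nu}$ via an optimal $\ssigma\in\Lambda(\Phi^\circ[\mu_t],\nu)$ and using that total dissipativity holds along \emph{every} coupling, one bounds $\bram{\Phi^\circ[\mu_t]}{\nu}\le\brap{\Psi}{\mu_t}+\lambda W_2^2(\mu_t,\nu)$ for $\Psi\in\frF[\nu]$, and then the EVI inequality of Definition~\ref{def:evi} in the form of Remark~\ref{rmk:EVIright} follows. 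The case $\mu_0\in\overline{\dom(\frF)}\setminus\dom(\frF)$ is handled by approximating $\mu_0$ with $\mu_0^n\in\dom(\frF)$ and passing to the limit using the $e^{\lambda t}$-contraction just proved together with the stability of the EVI formulation under locally uniform convergence. For uniqueness, if $\nu_t$ is any $\lambda$-EVI solution, the standard argument (testing the EVI inequality for $\mu_t=S_t(\mu_0)$ against $\nu=\nu_s$ and symmetrically, then applying Theorem~\ref{thm:all}(6b) and Gr\"onwall) gives $W_2^2(\mu_t,\nu_t)\le e^{2\lambda t}W_2^2(\mu_0,\nu_0)=0$.

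The main obstacle I anticipate is the careful justification of the EVI inequality itself for the Lagrangian curve, i.e.~translating the total-dissipativity inequality \eqref{eq:total-dissipativity}, which is phrased along arbitrary couplings of elements of $\frF$, into the asymmetric metric duality pairing $\bram{\cdot}{\cdot}$ appearing in Definition~\ref{def:evi}; this requires combining Corollary~\ref{cor:bary-works-well}, the minimality characterization \eqref{eq:defq} of $\ff^\circ$, and the comparison inequalities of Theorem~\ref{thm:all}(2) in the right order. The approximation step for initial data only in $\overline{\dom(\frF)}$ is routine given the contraction estimate but needs the lower-semicontinuity properties of the pairings from Theorem~\ref{thm:all}(5).
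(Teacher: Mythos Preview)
Your approach is more self-contained than the paper's, which simply cites \cite[Theorem~5.22(e)]{CSS} together with Theorem~\ref{thm:existence-Lagrangian}, noting that the cited result applies with any velocity field (not only the Wasserstein tangent one) satisfying the continuity equation and the differential inclusion. Your contraction argument via the Lagrangian lift and the Hilbertian estimate of Theorem~\ref{thm:brezis3} is correct, as is the existence step: since $(\ii_\X,\ff^\circ[\mu_t])_\sharp\mu_t\in\frF$ and $\frF$ is (metrically) $\lambda$-dissipative, Theorem~\ref{thm:all}(2) and (6a) give the EVI directly; you do not actually need Corollary~\ref{cor:bary-works-well} here.

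The genuine gap is in uniqueness. Testing $\mu$'s EVI ``against $\nu=\nu_s$'' requires producing an element of $\frF[\nu_s]$, hence $\nu_s\in\dom(\frF)$, which is not guaranteed for an arbitrary $\lambda$-EVI solution (such curves take values only in $\overline{\dom(\frF)}$); likewise Theorem~\ref{thm:all}(6b) requires $\nu$ to be locally absolutely continuous with an $L^1_{\mathrm{loc}}$ velocity, which Definition~\ref{def:evi} does not provide. The working argument (the content of \cite[Theorem~5.17(2)]{CSS}, on which \cite[Theorem~5.22(e)]{CSS} rests) is \emph{asymmetric}: use that the Lagrangian solution satisfies $\mu_t\in\dom(\frF)$ with $(\ii_\X,\ff^\circ[\mu_t])_\sharp\mu_t\in\frF$, test $\nu$'s EVI against this element, and combine with the derivative of $s\mapsto W_2^2(\mu_s,\nu_t)$ coming from $\mu$'s continuity equation. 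One also needs here that Theorem~\ref{thm:all}(6a) is valid with the velocity $\ff^\circ[\mu_t]$ in place of the Wasserstein tangent field; the paper flags this explicitly and refers to \cite[Remark~3.12]{CSS}. Once uniqueness is established for $\mu_0\in\dom(\frF)$, your approximation argument for $\mu_0\in\overline{\dom(\frF)}$ via the contraction estimate is fine.
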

\begin{proof}
    The proof is an immediate consequence
    of 
    \cite[Theorem 5.22(e)]{CSS}
    and Theorem \ref{thm:existence-Lagrangian}. Indeed notice that \cite[Theorem 5.22(e)]{CSS} can be applied even if the absolutely continuous curve $\mu$ satisfies the differential inclusion
    \begin{equation}\label{eq:diffinclv}
    (\ii_\X,\vv_t)_\sharp\mu_t\in\frF[\mu_t]
    \end{equation}
    w.r.t.~to any Borel vector field $\vv_t$ s.t.~$(\mu, \vv)$ solves the continuity equation and $t \mapsto |\vv_t|_{L^2(\X, \mu_t; \X)} \in L^1_{loc}(0,+\infty)$. For instance it holds for the vector field $\ff^\circ$. Indeed, the proof of \cite[Theorem 5.22(e)]{CSS} relies on \cite[Theorem 5.17(2)]{CSS} which holds even if the differential inclusion  \eqref{eq:diffinclv}, with $\vv$ the Wasserstein vector field, is replaced by a general velocity field $\vv$ as above. See also \cite[Remark 3.12]{CSS}.
\end{proof}
As a further consequence,
in the case of maximal $\lambda$-totally dissipative \MPVF
all the various definitions of solutions
coincide.
\begin{theorem}
    \label{thm:final-total}
    Let $\frF \subset \prob_2(\TX)$ be a maximal totally $\lambda$-dissipative \MPVF,
    let $\mu_0\in \overline{\dom(\frF)}$
    and let $\mu:[0,+\infty)\to\mathcal P_2(\X)$
    be a continuous curve
    starting from $\mu_0$.
    The following properties are equivalent:
    \begin{enumerate}
        \item $\mu$ is a Lagrangian solution.
        \item $\mu$ is a $\lambda$-\EVI solution.
     \end{enumerate}
    If moreover 
    $\mu_0\in \dom(\frF)$
    or there exists 
    a sequence $t_n\downarrow0$ for which $\mu(t_n)\in \dom(\frF)$,
    the above conditions are also equivalent to the following ones:
    \begin{enumerate}
    \setcounter{enumi}{2}
        \item 
       there exists 
       a Borel vector field 
        $\ww_t$
satisfying
\begin{equation}
\label{eq:diff-inclusion}
\begin{split}
    &t\mapsto \int_\X |\ww_t(x)|^2\,\d\mu_t(x)
    \quad\text{is locally integrable in 
     $(0,+\infty)$},\\
    &(\ii_\X, \ww_t)_\sharp \mu_t\in \frF\quad\text{for a.e.~$t>0$}
    \end{split}
\end{equation}
      and the pair $(\mu,\ww)$ satisfies the continuity equation 
\begin{equation}
    \label{eq:cont1}
    \partial_t \mu_t+\nabla\cdot(
    \mu_t \,\ww_t)=0
    \quad\text{in $(0,+\infty)\times \X$};
\end{equation}
\item 
there exists a Borel family $\Phi_t$, $t>0$, such that 
\begin{gather}
\label{eq:diff-inclusion-weak1}
    \Phi_t\in \frF[\mu_t]\quad\text{for a.e.~$t>0$,}\quad
    t\mapsto \int_{\TX}|v|^2\,\d\Phi_t\quad\text{is locally integrable in 
     $(0,+\infty)$,} \\
    \int_0^{+\infty}
    \Big(\int_\X \partial_t \zeta(t,x)\,\d\mu_t+
    \int_\TX \langle v,\nabla\zeta(t,x)\rangle
    \,\d\Phi_t(x,v)\Big)\,\d t=0
    \quad\text{for every $\zeta\in \Cyl((0,+\infty)\times \X)$;}
\end{gather}
\item
$\mu_t\in \dom(\frF)$ 
for every $t>0$,
 $\mu$ is locally Lipschitz in $(0,+\infty)$,
it satisfies \eqref{eq:cont-3} and
 \begin{equation*}
      t\mapsto  \int_\X |\ff^\circ(x,\mu_t)|^2\,\d 
        \mu_t
        \quad\text{is locally bounded in }(0,+\infty).
    \end{equation*}
    \end{enumerate}     
\end{theorem}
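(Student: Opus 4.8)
The strategy is to prove the chain of equivalences by first establishing the core loop $(1)\Leftrightarrow(2)$ for arbitrary $\mu_0\in\overline{\dom(\frF)}$, and then, under the extra assumption that the curve meets $\dom(\frF)$ (at time $0$ or along a vanishing sequence), to insert conditions $(3),(4),(5)$ into the loop by the pattern $(1)\Rightarrow(5)\Rightarrow(3)\Rightarrow(4)\Rightarrow(2)$. The equivalence $(1)\Leftrightarrow(2)$ is essentially already contained in Theorem \ref{thm:flow-generation}: if $\mu$ is a Lagrangian solution it is \emph{the} $\lambda$-\EVI solution by that theorem; conversely, since $\lambda$-\EVI solutions are unique (again Theorem \ref{thm:flow-generation}, together with the contraction estimate which forces uniqueness), any $\lambda$-\EVI solution starting from $\mu_0$ must coincide with $S_t(\mu_0)$, i.e.\ be the Lagrangian solution. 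So this part reduces to citing Theorem \ref{thm:flow-generation} and emphasising the uniqueness half.

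**From $(1)$ to $(5)$ and the other implications.** Assume now $\mu_0\in\dom(\frF)$ (the case of a sequence $t_n\downarrow 0$ with $\mu(t_n)\in\dom(\frF)$ is handled by restarting the flow at $t_n$ and using the semigroup property \eqref{eq:8} together with Theorem \ref{thm:existence-Lagrangian}, which gives instantaneous regularisation). If $\mu$ is a Lagrangian solution, Theorem \ref{thm:existence-Lagrangian} directly yields (5): $\mu_t\in\dom(\frF)$ for all $t>0$, local Lipschitz continuity, the bound $\int|\ff^\circ(x,\mu_t)|^2\d\mu_t\le e^{\lambda t}\int|\ff^\circ(x,\mu_0)|^2\d\mu_0$ from \eqref{eq:apriori-estimate} (hence local boundedness), and the continuity equation \eqref{eq:cont-3}. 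The implication $(5)\Rightarrow(3)$ is immediate: take $\ww_t:=\ff^\circ(\cdot,\mu_t)$, which satisfies \eqref{eq:diff-inclusion} by \eqref{eq:defq} (so $(\ii_\X,\ww_t)_\sharp\mu_t=\frF^\circ[\mu_t]\in\frF[\mu_t]$) and the local integrability from \eqref{eq:L2estimate}; then \eqref{eq:cont-3} is exactly \eqref{eq:cont1}. For $(3)\Rightarrow(4)$, set $\Phi_t:=(\ii_\X,\ww_t)_\sharp\mu_t$; this is Borel in $t$, lies in $\frF[\mu_t]$, has $\int|v|^2\d\Phi_t=\int|\ww_t|^2\d\mu_t$ locally integrable, and the weak formulation of \eqref{eq:cont1} against $\zeta\in\Cyl((0,+\infty)\times\X)$ is precisely the displayed identity (noting $\int\langle v,\nabla\zeta\rangle\d\Phi_t=\int\langle\ww_t,\nabla\zeta\rangle\d\mu_t$). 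The only nonroutine step in this block is passing from the abstract Borel-family/continuity-equation data back to a genuine absolutely continuous curve with a Wasserstein velocity field, but this is standard Wasserstein calculus (Theorem \ref{thm:tangentv}): from \eqref{eq:diff-inclusion} and \eqref{eq:cont1} one gets that $\mu$ is locally absolutely continuous with $|\dot\mu_t|\le\|\ww_t\|_{L^2(\X,\mu_t;\X)}$.

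**Closing the loop: $(4)\Rightarrow(2)$.** This is the heart of the argument. Given a Borel family $\Phi_t\in\frF[\mu_t]$ with $t\mapsto\int|v|^2\d\Phi_t$ locally integrable and $(\mu,\Phi)$ solving the continuity equation in the weak cylindrical sense, I would first upgrade to the Wasserstein velocity field $\vv_t$ of Theorem \ref{thm:tangentv}, observing that $(\ii_\X,\vv_t)_\sharp\mu_t$ is the barycentric projection $\bri\Phi_t$ of $\Phi_t$ for a.e.\ $t$ (since the Wasserstein velocity is the $L^2(\X,\mu_t;\X)$-projection of any velocity generating the same flux), and by Corollary \ref{cor:bary-works-well} (barycentric invariance) we still have $(\ii_\X,\vv_t)_\sharp\mu_t\in\frF[\mu_t]$ for a.e.\ $t$ — using here that $\frF$ is maximal totally $\lambda$-dissipative so Theorem \ref{thm:bary-proj} applies. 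Thus $\mu$ satisfies the differential inclusion \eqref{eq:diffinclv} with its own Wasserstein velocity field, with $\|\vv_t\|_{L^2(\X,\mu_t;\X)}\in L^1_{\mathrm{loc}}$. Then, exactly as in the proof of Theorem \ref{thm:flow-generation}, I invoke \cite[Theorem 5.22(e)]{CSS} (which, as noted there, applies verbatim when the Wasserstein velocity inclusion holds) to conclude that $\mu$ is a $\lambda$-\EVI solution, i.e.\ (2). I expect the main obstacle to be precisely the identification step "the barycentric projection of $\Phi_t$ equals the Wasserstein velocity of $\mu_t$ and still lies in $\frF[\mu_t]$": one must be careful with the measurability in $t$ of $\bry{\Phi_t}$ and with applying the total-dissipativity/maximality machinery (Theorems \ref{thm:bary-proj}, \ref{thm:minimal}(4)) to produce a jointly measurable velocity field, but Theorem \ref{thm:minimal}(4) with $\Y=(0,T)$ is tailored exactly for this. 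Once this is in place, (5) follows from (2) by uniqueness (any $\lambda$-\EVI solution is the Lagrangian solution, which enjoys (5)), closing all equivalences.
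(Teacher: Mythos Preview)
Your overall architecture matches the paper's: $(1)\Leftrightarrow(2)$ via Theorem~\ref{thm:flow-generation}, then $(1)\Rightarrow(5)\Rightarrow(3)$ and $(3)\Leftrightarrow(4)$, closing by going back to $(2)$. The paper runs the closing step as $(4)\Rightarrow(3)\Rightarrow(2)$: it simply sets $\ww_t:=\bry{\Phi_t}$, observes that $(\ii_\X,\ww_t)_\sharp\mu_t=\bri{\Phi_t}\in\frF[\mu_t]$ by Theorem~\ref{thm:bary-proj}, and then invokes \cite[Theorem~5.4(1)]{CSS} to get the $\lambda$-\EVI property. Your route $(4)\Rightarrow(2)$ is the same in spirit but contains a slip.

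The slip is in the identification ``$(\ii_\X,\vv_t)_\sharp\mu_t$ is the barycentric projection $\bri{\Phi_t}$''. The Wasserstein velocity $\vv_t$ is the $L^2(\X,\mu_t;\X)$-projection of $\bry{\Phi_t}$ onto $\Tan_{\mu_t}\prob_2(\X)$, not $\bry{\Phi_t}$ itself; in general $\bry{\Phi_t}\notin\Tan_{\mu_t}\prob_2(\X)$, so $\vv_t\neq\bry{\Phi_t}$ and you cannot conclude $(\ii_\X,\vv_t)_\sharp\mu_t\in\frF[\mu_t]$ from Theorem~\ref{thm:bary-proj}. The remedy is exactly what the paper does: drop the detour through the Wasserstein velocity and take $\ww_t:=\bry{\Phi_t}$ directly as the velocity field in the continuity equation (it generates the same flux, since $\int\langle v,\nabla\zeta\rangle\,\d\Phi_t=\int\langle\bry{\Phi_t},\nabla\zeta\rangle\,\d\mu_t$). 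Then $(\ii_\X,\ww_t)_\sharp\mu_t\in\frF[\mu_t]$ by Theorem~\ref{thm:bary-proj}, giving $(3)$, and either \cite[Theorem~5.4(1)]{CSS} or the argument you cite from the proof of Theorem~\ref{thm:flow-generation} yields $(2)$. With this correction your proof coincides with the paper's.
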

\begin{proof}
    The equivalence between 
    (1) and (2) is a consequence of 
    Theorem \ref{thm:flow-generation}.
    
     We can now consider the case when $\mu_0\in \dom(\frF)$
    (the argument for the case $\mu(t_n)\in \dom(\frF)$ along an infinitesimal sequence
    $t_n$ is completely analogous).
    Theorem \ref{thm:existence-Lagrangian} 
    clearly yields (1) $\Rightarrow$
    (5). The implication (5) $\Rightarrow$
    (3) is obvious.
    Theorem 
    \ref{thm:bary-proj}
    shows that (3) and (4) are equivalent. Indeed (3) implies (4) by choosing $\Phi_t :=(\ii_\X, \ww_t)_\sharp \mu_t$ and (4) implies (3) by choosing $\ww_t:=\bry{\Phi_t}$.
    The implication (3) $\Rightarrow$ (2)
    follows by Theorem 5.4(1) of
    \cite{CSS}.
\end{proof}
In the case when $\mu_0\in \prob_f(\X)$ has finite support (recall \eqref{eq:70},
\eqref{eq:30}), 
we can obtain a more refined characterization, which 
also yields a regularization effect when $\X$ has finite dimension
and recovers the characterization \eqref{eq:ODE-system-intro} 
anticipated in the Introduction.
Recall that 
by Theorem \ref{thm:existence-Lagrangian}(1) any
Lagrangian solution starting from $\mu_0\in \prob_{f,N}(\X)$
must stay in $\prob_{f,N}(\X)$ for every time $t\ge0$.
\begin{corollary}[Regularization effect and Wasserstein velocity field for discrete measures]
    \label{cor:non-disprezzabile}
    Let $\frF \subset \prob_2(\TX)$ be a maximal totally $\lambda$-dissipative \MPVF, 
    let $\mu_0\in \overline{\dom(\frF)}\cap \prob_{f,N}(\X)$ for some $N\in \N$
    and let $\mu:[0,+\infty)\to \prob_{f,N}(\X)$
    be a continuous curve
    starting from $\mu_0$. 
    Assume moreover that at least one of the following properties holds:
    \begin{enumerate}[(a)]
        \item $\mu_0\in \dom(\frF)$,
        \item $\dom(\frF) \cap \prob_{f,N}(\X)$ has non empty relative interior in 
        $\prob_{f,N}(\X)$,
        \item $\X$ has finite dimension.
    \end{enumerate}
    Then conditions (1),\ldots,(5) 
    of Theorem 
    \ref{thm:final-total} are equivalent
    and, in this case, the minimal selection $\ff^\circ$ of $\frF$ (cf. Theorem \ref{thm:minimal})
    coincides with the Wasserstein velocity field $\vv$ of $\mu$ (cf. Theorem \ref{thm:tangentv})
    and $\mu$ also satisfies the right-differentiability property
    \begin{equation}
        \label{eq:right-diff}
        \vv_t=\lim_{h\downarrow0}\frac1h \Big(\bm{t}_t^{t+h}-\ii_\X\Big)=
        \ff^\circ[\mu_t]\quad\text{in }L^2(\X,\mu_t;\X)
        \quad\text{for every }t>0,
    \end{equation}
    where $\bm t_{t}^{t+h}$ 
    is the optimal transport map pushing $\mu_t$ into $\mu_{t+h}$.
    
    Finally, $\mu$ is a Lagrangian solution for $\frF$ starting from $\mu_0$
    if and only if
    there are curves 
    $\mathsf x_n\in \rmC([0,+\infty);\X)$, $n=1,\cdots,N$ 
    which are locally Lipschitz in $(0,+\infty)$  such that 
$\mu_t=\frac1N \sum_{n=1}^N\delta_{\mathsf x_n(t)}$ 
for every $t\ge0$
and the curves  $\{\mathsf x_n(t)\}_{n=1}^N$  solve 
the system of ODEs
\begin{equation}
    \label{eq:ODE}
    \dot{\mathsf x}_n(t)=\ff^\circ(\mathsf x_n(t),\mu_t)\quad
    \text{a.e.~in }(0,+\infty).
\end{equation}
\end{corollary}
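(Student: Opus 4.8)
The plan is to read off every assertion from the Lagrangian representation of $\frF$, exploiting that discrete measures are parametrised by step functions. Write $\mu_t:=S_t(\mu_0)$ for the Lagrangian solution starting from $\mu_0$ (Theorem~\ref{thm:existence-Lagrangian}); it stays in $\prob_{f,N}(\X)$ with $\supp(\mu_t)$ of nonincreasing cardinality. I would fix the parametrisation $\mu_0=(X_0)_\sharp\P$ with $X_0=\sum_{n=1}^N x_n^0\,\mathbf 1_{I_n}$, where $\Omega=\bigsqcup_{n=1}^N I_n$ is a partition into sets of $\P$-measure $1/N$ and $\mu_0=\frac1N\sum_n\delta_{x_n^0}$. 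By formula \eqref{eq:7-2}, $\Sgp_tX_0(\omega)=\ss_t(x_n^0,\mu_0)$ for $\omega\in I_n$, so $t\mapsto\Sgp_tX_0$ takes values in the closed subspace $\cH_N:=L^2(\Omega,\cB_N,\P;\X)$ of functions constant on the $I_n$'s ($\cB_N$ the $\sigma$-algebra generated by $I_1,\dots,I_N$), and $\mu_t=\frac1N\sum_{n=1}^N\delta_{\mathsf x_n(t)}$ with $\mathsf x_n(t):=\ss_t(x_n^0,\mu_0)\in\rmC([0,+\infty);\X)$.

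\emph{Step 1 (regularisation).} The key point is that $\mu_t\in\dom(\frF)$ for every $t>0$ and $t\mapsto\Sgp_tX_0$ is locally Lipschitz on $(0,+\infty)$. Under (a) this is Theorem~\ref{thm:existence-Lagrangian} (with Lipschitz continuity up to $t=0$). Under (b) or (c) I would restrict the Lagrangian representation $\mmo$ of $\frF$ to $\cH_N$: by \eqref{eq:7-1} and \eqref{eq:9} the resolvents of $\mmo$ and the minimal selection map $\cH_N$ into $\cH_N$, so $\mmo_N:=\mmo\cap(\cH_N\times\cH_N)$ is maximal $\lambda$-dissipative on $\cH_N$ with semigroup the restriction of $\Sgp_t$. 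Under (c), $\cH_N$ is finite-dimensional; under (b), the hypothesis translates, through the parametrisation above, into $\dom(\mmo_N)$ having nonempty interior in $\cH_N$. In either situation the regularising properties of semigroups generated by maximal dissipative operators (Appendix~\ref{sec:brezis}, cf.~\cite{BrezisFR}) give $\Sgp_tX_0\in\dom(\mmo_N)\subset\dom(\mmo)$ for $t>0$ and local Lipschitz continuity of $t\mapsto\Sgp_tX_0$ on $(0,+\infty)$. By Remark~\ref{rem:aggiungi} this means $\mu_t\in\dom(\frF)$ for every $t>0$; in particular $\mu(t_n)\in\dom(\frF)$ along some $t_n\downarrow0$, so Theorem~\ref{thm:final-total} yields the equivalence of conditions (1)--(5), and the Lagrangian solution satisfies (5), hence solves \eqref{eq:cont-3} and is locally Lipschitz on $(0,+\infty)$ with locally bounded $L^2$-speed.

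\emph{Step 2 ($\ff^\circ=\vv$ and \eqref{eq:right-diff}).} For $\nu\in\prob_2(\X)$ with finite support $\{z_1,\dots,z_k\}$ the cylindrical functions realise arbitrary gradients at the $z_i$'s: choosing $\pi\in\Pi_d(\X)$ whose range contains prescribed vectors and which separates $z_1,\dots,z_k$, a suitable $\psi\in\rmC^\infty_c(\R^d)$ gives $\varphi=\psi\circ\pi\in\Cyl(\X)$ with $\nabla\varphi(z_i)$ arbitrary; hence $\Tan_\nu\prob_2(\X)=L^2(\X,\nu;\X)$, so a discrete measure has a \emph{unique} $L^2$-velocity field along a continuity equation. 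Comparing \eqref{eq:cont-3} along $\mu_t$ with the equation satisfied by the Wasserstein velocity field $\vv$ (Theorem~\ref{thm:tangentv}) gives $\vv_t=\ff^\circ[\mu_t]$ for a.e.~$t$. For the pointwise right-derivative, Theorem~\ref{thm:brezis3}(3) applied at $t>0$ gives $\tfrac1h(\Sgp_{t+h}X_0-\Sgp_tX_0)\to\mmo^\circ(\Sgp_tX_0)$ in $\cH$, i.e.~$\tfrac1h(\mathsf x_n(t+h)-\mathsf x_n(t))\to\ff^\circ[\mu_t](\mathsf x_n(t))$ by \eqref{eq:9} and Theorem~\ref{thm:minimal}(2); and for $h$ small the unique optimal plan between $\mu_t$ and $\mu_{t+h}$ is induced by $\mathsf x_n(t)\mapsto\mathsf x_n(t+h)$ (distinct atoms of $\mu_t$ remain distinct by continuity, coinciding ones remain coincident for later times by the semigroup property \eqref{eq:8}), so $\boldsymbol t_t^{t+h}(\mathsf x_n(t))=\mathsf x_n(t+h)$ and \eqref{eq:right-diff} follows, together with $\vv_t=\ff^\circ[\mu_t]$ for \emph{every} $t>0$.

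\emph{Step 3 (ODE system).} If $\mu$ is the Lagrangian solution, the previous steps give $\mu_t=\frac1N\sum_n\delta_{\mathsf x_n(t)}$ with $\mathsf x_n\in\rmC([0,+\infty);\X)$ locally Lipschitz on $(0,+\infty)$, and from $\frac{\d}{\d t}\Sgp_tX_0=\mmo^\circ(\Sgp_tX_0)$ a.e.\ together with \eqref{eq:9} we get $\dot{\mathsf x}_n(t)=\ff^\circ(\mathsf x_n(t),\mu_t)$ a.e.\ in $(0,+\infty)$. Conversely, given curves $\mathsf x_n$ as in the statement and setting $\mu_t:=\frac1N\sum_n\delta_{\mathsf x_n(t)}$, one has $\partial_t\delta_{\mathsf x_n(t)}=-\nabla\cdot(\dot{\mathsf x}_n(t)\,\delta_{\mathsf x_n(t)})$, so with $\ww_t:=\ff^\circ[\mu_t]$ the pair $(\mu,\ww)$ solves \eqref{eq:cont1}, $\mu_t\in\dom(\frF)$ for a.e.~$t$ (since $\ff^\circ[\mu_t]$ appears in the ODE), and $t\mapsto\int|\ww_t|^2\de\mu_t=\frac1N\sum_n|\dot{\mathsf x}_n(t)|^2$ is locally integrable on $(0,+\infty)$; thus $\mu$ satisfies condition (3) of Theorem~\ref{thm:final-total} and, being continuous with $\mu(0)=\mu_0$ and $\mu(t_n)\in\dom(\frF)$ along some $t_n\downarrow0$, it must be the Lagrangian solution. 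The main obstacle is Step~1: the instantaneous regularising effect into $\dom(\frF)$ fails for general maximal dissipative operators, and here it hinges on the reduction to the finite-dimensional (resp.\ nonempty-interior-domain) operator $\mmo_N$ on $\cH_N$, which is available only because the Lagrangian representation is invariant and discrete measures are parametrised by step functions.
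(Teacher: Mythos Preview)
Your proof is correct and follows essentially the same route as the paper: restrict the Lagrangian representation $\mmo$ to the finite-dimensional (or nonempty-interior) subspace $\cH_N$ of step functions, invoke Proposition~\ref{prop:brezz} and the regularising effect (Theorem~\ref{thm:brezreg}/Corollary~\ref{cor:vipregobasta}) to place $\mu_t$ in $\dom(\frF)$ for $t>0$, and then read off the equivalence via Theorem~\ref{thm:final-total}. The only place where the paper is slightly sharper is in Step~2: your parenthetical (distinct atoms stay distinct, coinciding ones stay coinciding) establishes that $\mathsf x_n(t)\mapsto\mathsf x_n(t+h)$ is a well-defined transport map, but not directly its \emph{optimality}; the paper invokes Lemma~\ref{le:quantitative} for this, which gives optimality once the displacements are smaller than half the minimal gap between distinct atoms of $\mu_t$.
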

\begin{proof}
    Case (a) is part of Theorem \ref{thm:final-total}. In order to prove the first equivalence statement in cases (b) and (c), we briefly 
    anticipate an argument that we will develop more extensively in Section 
    \ref{sec:constructionFlagr}:
    we 
    introduce the standard Borel space $\Omega:=[0,1)$ endowed with the Lebesgue measure 
    (still denoted by $\P$), the Lagrangian representation $\mmo$ of $\frF$,
    and we consider the closed subspace $\cH_N\subset\cH$
    of maps $X:\Omega\to \X$ which are constant on each interval 
    $[(k-1)/N,k/N)$, $k=1,\cdots, N.$
    
    Thanks to Theorem \ref{thm:invTOlawinv}, $\cH_N$ is invariant with respect to the action
    of the resolvent map $\jJ_\tau$, i.e.~if $X\in\cH_N$ then $\jJ_\tau(X)\in\cH_N$. Indeed, by Theorem \ref{thm:invTOlawinv}, if $k\in\{1,\dots,N\}$ and $\omega,\omega'\in[(k-1)/N,k/N)$, then
    \[\resolvent\tau (X)(\omega)=\jj_\tau(X(\omega),X_\sharp\P)=\jj_\tau(X(\omega'),X_\sharp\P)=\resolvent\tau (X)(\omega').\]
    
    We can thus apply Proposition \ref{prop:brezz} obtaining that the operator $\mmo_N:=\mmo\cap (\cH_N\times \cH_N)$ is maximal $\lambda$-dissipative in $\cH_N$ and, if we select a Lagrangian parametrization $X_0\in \overline{\dom(\mmo_N)}$ of $\mu_0$, still by Proposition \ref{prop:brezz}(ii), we get that $\Sgp_t X_0$, the semigroup generated by $\mmo$, coincides with $\Sgp_t^N (X_0)$, the semigroup generated by $\mmo_N$ and, under any of the conditions (b) and (c), $\bm S^N$ has a regularizing effect (see Theorem \ref{thm:brezreg}, Corollary \ref{cor:vipregobasta} and notice that, in case (c), $\cH_N$ has finite dimension)    
    so that $\bm S^N_t (X_0)\in \dom(\mmo_N)\subset \dom(\mmo)$ 
    for every $t>0$.
    We immediately obtain that the conditions (1), \dots, (5) of 
    Theorem \ref{thm:final-total} are equivalent.

    In order to check \eqref{eq:right-diff}, we can use 
    \eqref{eq:10bis} observing that, 
    for sufficiently small $h$, $(\ii_\X,\ss_h)_\sharp\mu_t$ is an optimal coupling
    between $\mu_t$ and $\mu_{t+h}$, since $\mu_t\in \prob_{f,N}(\X)$, 
    see the next Lemma \ref{le:quantitative}. 

    Finally, in order to check the last representation formula, it is sufficient to write
    $\mu_0$ as $\frac1N\sum_{n=1}^N x_n$ for suitable points $x_n\in \X$
    and to set $\mathsf x_n(t):=\ss_t(x_n,\mu_0)$. 
\end{proof}

 A further application concerns the convergence of the Implicite Euler Scheme
(also called JKO method in the framework of gradient flows, see 
Proposition \ref{prop:JtauvsJKObis}). We just recall here
the main Crandall-Liggett estimate, referring to 
\cite{NSV00,Nochetto-Savare06} for more refined a-priori and a-posteriori error estimates.
\begin{corollary}[Implicit Euler Scheme]
\label{cor:IES}
    Let $\frF \subset \prob_2(\TX)$ be a maximal totally $\lambda$-dissipative \MPVF, $\mu \in \prob_2(\X)$, and $0<\tau<1/\lambda^+$. Then, denoting by $\Phi \in \prob_2(\TX)$ the unique element of $\frF$ such that 
    \begin{equation}
        \label{eq:JKO-diss}
        (\sfx-\tau \sfv)_\sharp\Phi=\mu
    \end{equation}
    coming from Theorem \ref{cor:resolvent-max-tot}, we have  $M_\tau:= \sfx_\sharp \Phi=\jj_\tau(\cdot,\mu)_\sharp\mu$, where $\jj_\tau$ is as in Theorem \ref{thm:invTOlawinv} applied to the Lagrangian representation of $\frF$.
    If $\mu_0 \in \overline{\dom(\frF)}$, then setting $M^0_\tau:=\mu_0,$
$M^{n+1}_\tau:=\jj_\tau(\cdot,M^n_\tau)_\sharp M^n_\tau$, $n\in \N$, we have
    \begin{equation}\label{eq:brconv}
        \lim_{N\to+\infty}
        M^N_{t/N}=\mu_t\quad\text{for every $t\ge0$},
    \end{equation}
    where $\mu_t=S_t(\mu_0)$ with $S_t$ as in Definition \ref{def:semig}.
    Moreover, for every $T \ge 0$ there exist $N(\lambda, T) \in \N$
    and $C(\lambda, T)>0$ (with $C(0,T)=2T$)
    such that
    \begin{equation}\label{eq:estimImpl}
        W_2(M^N_{t/N},\mu_t)\le 
        \frac{C(\lambda, T)}{\sqrt{N}}\,
        \big|\ff^\circ[\mu_0]\big|_{L^2(\X,\mu_0;\X)},
    \end{equation}
    for every $0 \le t \le T$, $n \ge N(\lambda, T)$ and every $\mu_0 \in \dom(\frF)$,
    where $\ff^\circ$ is as in Theorem \ref{thm:existence-Lagrangian}.
\end{corollary}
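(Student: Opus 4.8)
The plan is to reduce everything to the Hilbert space $\cH=L^2(\Omega,\cB,\P;\X)$ by passing to the Lagrangian representation $\mmo$ of $\frF$, which is maximal $\lambda$-dissipative by Theorem~\ref{thm:maximal-dissipativity}(2), and then to invoke the classical theory of $\lambda$-dissipative operators collected in Appendix~\ref{sec:brezis} together with the law-invariant representation of resolvents provided by Theorem~\ref{thm:invTOlawinv}.

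\emph{Existence, uniqueness of $\Phi$, and identification of $M_\tau$.} Existence of some $\Phi\in\frF$ with $(\sfx-\tau\sfv)_\sharp\Phi=\mu$ is exactly the forward implication of Theorem~\ref{cor:resolvent-max-tot}. For uniqueness, I would take an arbitrary such $\Phi$ and write $\Phi=(X,V)_\sharp\P$ with $(X,V)\in\cH\times\cH$ (possible since $(\Omega,\cB,\P)$ is standard Borel and nonatomic); then $(X,V)\in\mmo$ by the definition of the Lagrangian representation, and $Z:=X-\tau V$ has law $\mu$. Since $0<\tau<1/\lambda^+$ and $\mmo$ is maximal $\lambda$-dissipative, $\jJ_\tau=(\ii_\cH-\tau\mmo)^{-1}$ is single-valued and defined on all of $\cH$ (Theorem~\ref{thm:brezis1}), so $X=\jJ_\tau Z$ and $V=\tfrac1\tau(\jJ_\tau Z-Z)=\mmo_\tau Z$. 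Formulas \eqref{eq:7-1} and \eqref{eq:7-3} of Theorem~\ref{thm:invTOlawinv} then give $X(\omega)=\jj_\tau(Z(\omega),\mu)$ and $V(\omega)=\bb_\tau(Z(\omega),\mu)$ for $\P$-a.e.\ $\omega$, hence $\Phi=(\jj_\tau(\cdot,\mu),\bb_\tau(\cdot,\mu))_\sharp\mu$ depends only on $\mu$; in particular $M_\tau=\sfx_\sharp\Phi=\jj_\tau(\cdot,\mu)_\sharp\mu$.

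\emph{Iteration, convergence, and error estimate.} Fix $X_0\in\cH$ with $(X_0)_\sharp\P=\mu_0$; by Remark~\ref{rem:closure-of-domain}, $\mu_0\in\overline{\dom(\frF)}$ is equivalent to $X_0\in\overline{\dom(\mmo)}$. Setting $X^n:=\jJ_\tau^{n}X_0$, an induction using \eqref{eq:7-1} shows $(X^n)_\sharp\P=M^n_\tau$: if $(X^n)_\sharp\P=M^n_\tau$ then $(X^{n+1})_\sharp\P=(\jJ_\tau X^n)_\sharp\P=\jj_\tau(\cdot,M^n_\tau)_\sharp M^n_\tau=M^{n+1}_\tau$, so the scheme is well posed and independent of the chosen lift $X_0$. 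Now apply the Crandall--Liggett exponential formula for $\lambda$-dissipative operators from Appendix~\ref{sec:brezis}: for $X_0\in\overline{\dom(\mmo)}$ and every $t\ge0$ one has $\jJ_{t/N}^{N}X_0\to\Sgp_tX_0$ in $\cH$ as $N\to\infty$ (the resolvent being defined once $t/N<1/\lambda^+$), and for $X_0\in\dom(\mmo)$ the quantitative estimate
\[
\big|\jJ_{t/N}^{N}X_0-\Sgp_tX_0\big|_\cH\le \frac{C(\lambda,T)}{\sqrt N}\,\big|\mmo^\circ X_0\big|_\cH
\]
holds for $0\le t\le T$ and $N\ge N(\lambda,T)$, with $C(0,T)=2T$. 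Since pushforward through a Borel map never increases the Wasserstein distance beyond the $\cH$-distance of parametrizations, $W_2(M^N_{t/N},\mu_t)=W_2\big((\jJ_{t/N}^{N}X_0)_\sharp\P,(\Sgp_tX_0)_\sharp\P\big)\le|\jJ_{t/N}^{N}X_0-\Sgp_tX_0|_\cH$, and recalling $\mu_t=S_t(\mu_0)=(\Sgp_tX_0)_\sharp\P$ (Definition~\ref{def:semig} and Theorem~\ref{thm:invTOlawinv}) this yields \eqref{eq:brconv}. Finally, when $\mu_0\in\dom(\frF)$, Theorem~\ref{thm:minimal}(2) together with \eqref{eq:9} gives $\mmo^\circ X_0(\omega)=\bb^\circ[\mu_0](X_0(\omega))=\ff^\circ[\mu_0](X_0(\omega))$, so $|\mmo^\circ X_0|_\cH=\|\ff^\circ[\mu_0]\|_{L^2(\X,\mu_0;\X)}$, which turns the displayed inequality into \eqref{eq:estimImpl}.

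\emph{Main obstacle.} There is essentially no analytic difficulty left: the three genuinely nontrivial ingredients---maximal $\lambda$-dissipativity of $\mmo$, the Crandall--Liggett generation and rate theorem in the full $\lambda\in\R$ generality, and the law-invariant representation of resolvents and semigroup---are all imported from Theorem~\ref{thm:maximal-dissipativity}, Appendix~\ref{sec:brezis}, and Theorem~\ref{thm:invTOlawinv}. The only points demanding care are verifying that the discrete iterates $M^n_\tau$ are genuinely independent of the parametrization $X_0$ (handled by the explicit formula \eqref{eq:7-1}), and matching the abstract Hilbert-space constant with the stated $C(\lambda,T)$, in particular the normalization $C(0,T)=2T$.
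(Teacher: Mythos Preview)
Your proof is correct and follows essentially the same approach as the paper: both reduce to the Lagrangian representation $\mmo$, invoke Theorem~\ref{cor:resolvent-max-tot} for existence, the law-invariance of $\jJ_\tau$ from Theorem~\ref{thm:invTOlawinv} for uniqueness, and Theorem~\ref{thm:brezis4} for the Crandall--Liggett convergence and rate. Your version is simply more explicit, spelling out the induction $(X^n)_\sharp\P=M^n_\tau$ and the identification $|\mmo^\circ X_0|_\cH=\|\ff^\circ[\mu_0]\|_{L^2(\X,\mu_0;\X)}$ that the paper leaves implicit.
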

\begin{proof} 
The approximation in \eqref{eq:brconv} follows by the Lagrangian one 
  \begin{equation*}
    \Sgp_t (X)=\lim_{N\to+\infty} (\resolvent{t/N})^N(X)
  \end{equation*}
 holding for any $X\in\overline{\dom(\mmo)}$ (see Theorem \ref{thm:brezis4}), $\mmo$ the Lagrangian representation of $\frF$.

 Finally, \eqref{eq:estimImpl} follows by Theorem \ref{thm:brezis4}.
\end{proof}
We conclude this section with two results concerning the uniqueness
and the stability
of the characteristic system representing the solution of
\eqref{eq:diff-inclusion} and \eqref{eq:cont1}.

Using the notation of Theorem \ref{thm:invTOlawinv}, we preliminarily observe that choosing $\mu_0\in \dom(\frF)$
the maps $\ss_t(x):=\ss_t(x,\mu_0)$ belong
to $\Lip(\supp(\mu_0);\X)$ and the curve $t\mapsto \ss_t$ is Lipschitz in
$L^2(\X,\mu_0;\X)$ with derivative $\bb^\circ_t(\ss_t)$ where
$\bb^\circ_t(\cdot):=\bb^\circ(\cdot,(\ss_t)_\sharp \mu_0)$.
It follows that for every $T>0$ and for 
$\mu_0$-a.e.~$x$ the curve
$t\mapsto \ss_t(x)$ belongs to $H^1(0,T;\X)$
and satisfies $\dot \ss_t(x)=\bb^\circ_t(\ss_t(x))$.
We can thus associate to $(\ss_t)_{t \ge 0}$ a $\mu_0$-measurable map 
\begin{equation}
    \label{eq:pedante}
    \mathrm s:\X\to H^1(0,T;\X),\quad
    \mathrm s[x](t):=\ss_t(x,\mu_0).
\end{equation}
In a similar way, if $X_0\in \cH$ with $\iota_{X_0}=\mu_0$,
we can define 
\begin{equation}
    \label{eq:better-representative}
    \mathrm X(\omega, t):=\ss_t(X_0(\omega),\mu_0),\quad
    \mathrm X[\omega]:=\mathrm s\circ X_0,
\end{equation}
obtaining a distiguished Caratheodory representative of 
$\bm S_t (X_0)$ which satisfies
\begin{equation}
    \label{eq:better1}
    \mathrm X(\omega,t)=(\bm S_t (X_0))(\omega)
    \quad\text{for $\P$-a.e.~$\omega\in \Omega$,
    for every $t>0$}
\end{equation}
and
\begin{equation}
    \label{eq:better2}
    \mathrm X(\omega,\cdot)\in H^1(0,T;\X)\quad\text{for $\P$-a.e.~$\omega$},\quad
    \int_\Omega \Big(\int_0^T |\partial_t {\mathrm X}(\omega,t)|^2\,\d t\Big)
    \,\d\P(\omega)\le Te^{2\lambda_+T}|\Bb^\circ (X_0)|_\cH^2,
\end{equation}
since
\begin{align*}
   \int_\Omega \Big(\int_0^T |\partial_t {\mathrm X}(\omega,t)|^2\,\d t\Big)
    \,\d\P(\omega)&=
    \int_\Omega \Big(\int_0^T |\bb^\circ_t({\mathrm X}(\omega,t))|^2\,\d t\Big)\,\d\P(\omega)\\
    &=\int_0^T \left|\Bb^\circ ({\mathrm X}(\cdot,t))\right|_\cH^2\,\d t \\
    &\le Te^{2\lambda_+T}|\Bb^\circ (X_0)|_\cH^2,
\end{align*} where we have used Theorem \ref{thm:brezis3}(4).
It follows that $\mathrm X$ can be identified with a 
$\P$-measurable map $\omega\mapsto \mathrm X[\omega]$ 
which belongs to $L^2(\Omega, \cB,\P;H^1(0,T;\X))$.

\newcommand{\eeta}{\bm \eta}
\begin{theorem}[Uniqueness of the characteristic fields]
  \label{thm:uniqueness} Let $\frF \subset \prob_2(\TX)$ be a maximal totally $\lambda$-dissipative \MPVF, let us fix $T>0$ and let us suppose that $(\mu,\vv)$ is
  a solution to \eqref{eq:diff-inclusion} and \eqref{eq:cont1}
  in the interval $[0,T]$ starting from $\mu_0\in \dom(\frF)$.
  Let $\bm \eta\in \prob(\rmC([0,T];\X))$ be a probability measure
  concentrated on absolutely continuous curves and satisfying
  the following properties:
  \begin{enumerate}
  \item $(\mathsf e_t)_\sharp \eeta=\mu_t$ for every $t\in [0,T]$,
    where $\mathsf e_t (\gamma):=\gamma(t)$ for every $\gamma\in
    \rmC([0,T];\X)$;
  \item $\eeta$-a.e.~$\gamma$ is an integral solution of the
    differential equation
    $\dot\gamma(t)=\vv_t(\gamma(t))$ a.e.~in $[0,T]$.
  \end{enumerate}
  Then $\eeta=
  \mathrm s_\sharp \mu_0$, where $\mathrm s$ 
  is defined as in \eqref{eq:pedante}. 
  In particular $\eeta$ is unique
  and $\vv_t(x)=\bb^\circ_t(x)$ $\mu_t$-a.e.~in $\X$.
\end{theorem}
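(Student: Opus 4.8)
The strategy is to lift everything to the Hilbert space $\cH$ via a convenient parametrization of $\mu_0$ and use the uniqueness of the Lagrangian flow together with a superposition/representation argument. Fix $T>0$ and choose $X_0\in \cH$ with $\iota(X_0)=\mu_0$; since $\boldsymbol\eta$ is concentrated on absolutely continuous curves with $(\mathsf e_0)_\sharp\boldsymbol\eta=\mu_0$, by the disintegration theorem with respect to $\mathsf e_0$ we can find a $\P$-measurable map $\omega\mapsto \boldsymbol\gamma[\omega]\in \rmC([0,T];\X)$ such that $\boldsymbol\gamma[\omega](0)=X_0(\omega)$ and $\boldsymbol\gamma_\sharp\P=\boldsymbol\eta$ (one selects, for $\mu_0$-a.e.\ $x$, a curve from the conditional measure $\boldsymbol\eta_x$ and composes with $X_0$; here the finiteness of $\int_0^T\|\vv_t\|_{L^2(\X,\mu_t;\X)}^2\,\d t$ guarantees the resulting map lands in $L^2(\Omega;H^1(0,T;\X))$). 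Property (1) then reads $\iota(\boldsymbol\gamma(\cdot,t))=\mu_t$ for every $t$, and property (2) says that for $\P$-a.e.\ $\omega$ the curve $t\mapsto \boldsymbol\gamma(\omega,t)$ solves $\dot{\boldsymbol\gamma}(\omega,t)=\vv_t(\boldsymbol\gamma(\omega,t))$ a.e.\ in $[0,T]$.

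\textbf{Key step: comparison in $\cH$.} Let $\mathrm X(\omega,t)=\ss_t(X_0(\omega),\mu_0)$ be the Caratheodory representative of the Lagrangian flow introduced in \eqref{eq:better-representative}, so $\mathrm X(\cdot,t)=\boldsymbol S_t X_0$ and $\partial_t\mathrm X(\omega,t)=\bb^\circ_t(\mathrm X(\omega,t))$ with $\iota(\mathrm X(\cdot,t))=\mu_t=S_t(\mu_0)$ (here I use that $\mu$, being a solution of \eqref{eq:diff-inclusion}--\eqref{eq:cont1}, coincides with the Lagrangian/EVI solution by Theorem \ref{thm:final-total}, so its values $\mu_t$ really are $S_t(\mu_0)$). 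I want to show $\boldsymbol\gamma(\cdot,t)=\mathrm X(\cdot,t)$ in $\cH$ for every $t$. Compute, for a.e.\ $t$,
\begin{align*}
  \frac12\frac{\d}{\d t}\big|\boldsymbol\gamma(\cdot,t)-\mathrm X(\cdot,t)\big|_\cH^2
  &=\int \big\langle \vv_t(\boldsymbol\gamma(\omega,t))-\bb^\circ_t(\mathrm X(\omega,t)),\,\boldsymbol\gamma(\omega,t)-\mathrm X(\omega,t)\big\rangle\,\d\P(\omega).
\end{align*}
Now $(\ii_\X,\vv_t)_\sharp\mu_t\in\frF$ and $(\ii_\X,\bb^\circ_t[\mu_t])_\sharp\mu_t=\frF^\circ[\mu_t]\in\frF$ for a.e.\ $t$; moreover the joint law $\big(\boldsymbol\gamma(\cdot,t),\vv_t(\boldsymbol\gamma(\cdot,t)),\mathrm X(\cdot,t),\bb^\circ_t(\mathrm X(\cdot,t))\big)_\sharp\P$ is a coupling $\boldsymbol\vartheta\in\Gamma\big((\ii_\X,\vv_t)_\sharp\mu_t,\ (\ii_\X,\bb^\circ_t)_\sharp\mu_t\big)$. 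Total $\lambda$-dissipativity of $\frF$ (Definition \ref{def:total-dissipativity}) applied to this coupling gives exactly
\[
  \int \big\langle \vv_t(\boldsymbol\gamma(\omega,t))-\bb^\circ_t(\mathrm X(\omega,t)),\,\boldsymbol\gamma(\omega,t)-\mathrm X(\omega,t)\big\rangle\,\d\P(\omega)\le \lambda\,\big|\boldsymbol\gamma(\cdot,t)-\mathrm X(\cdot,t)\big|_\cH^2,
\]
so $g(t):=|\boldsymbol\gamma(\cdot,t)-\mathrm X(\cdot,t)|_\cH^2$ satisfies $g'\le 2\lambda g$ a.e.\ with $g(0)=0$; by Gr\"onwall $g\equiv0$ on $[0,T]$. (A small point to check: integrability of $t\mapsto g(t)$ and the absolute continuity needed to apply Gr\"onwall follow from the $L^1_{loc}$ bounds on $\|\vv_t\|_{L^2(\mu_t)}$ in \eqref{eq:diff-inclusion} and on $\|\bb^\circ_t\|_{L^2(\mu_t)}$ in \eqref{eq:better2}, via Cauchy--Schwarz.)

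\textbf{Conclusion.} From $\boldsymbol\gamma(\cdot,t)=\mathrm X(\cdot,t)$ in $\cH$ for every $t\in[0,T]$ and the continuity of both representatives, we get $\boldsymbol\gamma[\omega]=\mathrm X[\omega]=\mathrm s[X_0(\omega)]$ for $\P$-a.e.\ $\omega$, where $\mathrm s$ is as in \eqref{eq:pedante}; pushing forward by $\P$ yields $\boldsymbol\eta=\boldsymbol\gamma_\sharp\P=\mathrm s_\sharp\mu_0$, which is the asserted formula and in particular forces uniqueness of $\boldsymbol\eta$ (independently of the chosen $X_0$, by law invariance). Finally, for a.e.\ $t$ we have $\vv_t(\boldsymbol\gamma(\omega,t))=\dot{\boldsymbol\gamma}(\omega,t)=\partial_t\mathrm X(\omega,t)=\bb^\circ_t(\mathrm X(\omega,t))=\bb^\circ_t(\boldsymbol\gamma(\omega,t))$ for $\P$-a.e.\ $\omega$; since $\boldsymbol\gamma(\cdot,t)_\sharp\P=\mu_t$ this means $\vv_t(x)=\bb^\circ_t(x)=\ff^\circ[\mu_t](x)$ for $\mu_t$-a.e.\ $x\in\X$, completing the proof. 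The main obstacle is the measurable-selection step producing $\boldsymbol\gamma$ with the right regularity (that $\omega\mapsto\boldsymbol\gamma[\omega]$ is $\P$-measurable into $H^1(0,T;\X)$, not merely into $\rmC([0,T];\X)$), which is needed to legitimately differentiate $g(t)$ under the integral sign; this can be handled by a standard disintegration-plus-parametrization argument as used for \eqref{eq:better-representative}, exploiting that $\eeta$-a.e.\ curve is an \emph{integral} solution and that $\|\vv_t\|_{L^2(\mu_t)}\in L^1_{loc}$.
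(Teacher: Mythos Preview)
Your core strategy---lift $\boldsymbol\eta$ to a curve in $\cH$ and compare with the Lagrangian flow---matches the paper's, and your Gr\"onwall step is just a hands-on version of the uniqueness for the Hilbert differential inclusion $\dot X_t\in\mmo X_t$ that the paper simply invokes (Theorem \ref{thm:brezis3}). But there is a genuine gap in the parametrization step. You fix $X_0$ first and then claim to produce $\boldsymbol\gamma:\Omega\to\rmC([0,T];\X)$ with $\boldsymbol\gamma_\sharp\P=\boldsymbol\eta$ and $\mathsf e_0\circ\boldsymbol\gamma=X_0$, arguing that ``one selects, for $\mu_0$-a.e.\ $x$, a curve from the conditional measure $\boldsymbol\eta_x$ and composes with $X_0$.'' This does not work: a measurable selection $x\mapsto\gamma_x$ yields $(\gamma_{X_0(\cdot)})_\sharp\P=\int\delta_{\gamma_x}\,\d\mu_0(x)$, which equals $\boldsymbol\eta$ only when each fibre $\boldsymbol\eta_x$ is already a Dirac mass---precisely the conclusion you are trying to establish. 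The compatible lift you want does exist, but it requires a conditional-randomization (transfer) argument exploiting the nonatomicity of $\P$, not a pointwise selection; and the reference to \eqref{eq:better-representative} does not help, since there the map is built from the \emph{known} deterministic flow $\ss_t$, not from an a~priori general measure on paths.

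The paper sidesteps this entirely by reversing the order: it first picks any Borel map $\mathrm Z:\Omega\to\rmC([0,T];\X)$ with $\mathrm Z_\sharp\P=\boldsymbol\eta$ (possible since $(\Omega,\cB,\P)$ is standard Borel nonatomic) and then \emph{defines} $X_0:=\mathsf e_0\circ\mathrm Z$, which automatically parametrizes $\mu_0$. With $X_t:=\mathsf e_t\circ\mathrm Z$ one checks, via the integral equation and Fubini, that $t\mapsto X_t\in H^1(0,T;\cH)$ with $\dot X_t=\vv_t(X_t)$ a.e.; since $(X_t,\dot X_t)_\sharp\P=(\ii_\X,\vv_t)_\sharp\mu_t\in\frF$, one has $\dot X_t\in\mmo X_t$, and Hilbert-space uniqueness gives $X_t=\Sgp_t X_0$, hence $\boldsymbol\eta=\mathrm Z_\sharp\P=\mathrm s_\sharp\mu_0$. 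Since the target $\mathrm s_\sharp\mu_0$ depends only on $\mu_0$, nothing is lost by letting $X_0$ be dictated by $\boldsymbol\eta$ rather than chosen in advance.
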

\begin{proof}
  We can find a Borel map 
  $\mathrm Z:\Omega\to  \rmC([0,T];\X)$
  such that $\mathrm Z_\sharp \P=\eeta$. Let $\Bb$ be the Lagrangian representation of $\frF$.
  We can then define $X_t:=\mathsf e_t \circ \mathrm Z$.
  Since $\iota_{X_t}=\mu_t\in\dom(\frF)$ by Theorem \ref{thm:final-total}(5), recalling Remark \ref{rem:aggiungi} we see that $X_t\in \dom(\mmo)\subset
  \cH$.
  It is also clear that for $\P$-a.e.~$\omega$ we have
  \[X_{t+h}(\omega)-X_t(\omega)=
  \int_t^{t+h} \vv_s(X_s(\omega))\,\d s\]
  and therefore
  $|X_{t+h}-X_t|_\cH\le
  \int_t^{t+h}\|\vv_s\|_{L^2(\X,\mu_s;\X)}\,\d s$
  so that $t\mapsto X_t$ belongs to $H^1(0,T;\cH)$.
  At every differentiability point we have
  $\dot X_t=\vv_t(X_t)$ so that
  $\iotaT_{X_t,\dot X_t}=(\ii_\X,\vv_t)_\sharp \mu_t\in \frF[\mu_t]$
  and eventually $\dot X_t\in \mmo (X_t)$.
  We conclude that $X_t(\omega)=\ss_t(X_0(\omega))$
  and therefore $\eeta=\mathrm s_\sharp \mu_0$.
\end{proof}

\begin{theorem}[Stability of the Lagrangian flows]
        \label{thm:stability}
    Under the same conditions of the previous Theorem \ref{thm:uniqueness},
    let $(\mu^n_0)_{n\in \N}$ 
    be a sequence in $\dom(\frF)$ satisfying the following properties:
    \begin{enumerate}
        \item $(\mu^n_0)_{n\in \N}$ converges to $\mu_0$ in $\prob_2(\X)$, as $n\to+\infty$;
        \item $\sup_n |\frF|_2(\mu^n_0)<+\infty$, where $|\frF|_2(\cdot)$ is defined in \eqref{eq:21}.
    \end{enumerate}
    If $\mathrm s^n,\mathrm s:\X\to \rmC([0,T];\X)$
    are the Lagrangian maps defined as in 
    \eqref{eq:pedante} starting from $\mu^n_0$
    and $\mu_0$ respectively,
    then 
    $(\ii_\X,\mathrm s^n)_\sharp \mu^n_0\to 
    (\ii_\X,\mathrm s)_\sharp \mu_0$ in 
    $\prob_2(\X\times \rmC([0,T];\X))$ as $n\to+\infty$.
\end{theorem}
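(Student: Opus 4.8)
The plan is to reduce the stability statement to the Hilbertian setting, exploiting the Lagrangian representation $\mmo$ of $\frF$ and the Carathéodory representatives $\mathrm X,\mathrm X^n\in L^2(\Omega,\cB,\P;H^1(0,T;\X))$ introduced in \eqref{eq:better-representative}–\eqref{eq:better2}. First I would fix parametrizations: since $\mu^n_0\to\mu_0$ in $\prob_2(\X)$, by the last statement of Theorem \ref{thm:gpfinal} I can choose $X^n_0,X_0\in\cH$ with $\iota(X^n_0)=\mu^n_0$, $\iota(X_0)=\mu_0$ and $|X^n_0-X_0|_\cH\to0$. By Remark \ref{rem:aggiungi}, $X^n_0\in\dom(\mmo)$, and hypothesis (2) together with Theorem \ref{thm:minimal}(2) (i.e. $\ff^\circ[\mu^n_0]=\bb^\circ[\mu^n_0]$) gives $\sup_n|\mmo^\circ X^n_0|_\cH=\sup_n\big(|\frF|_2(\mu^n_0)\big)^{1/2}<\infty$.

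The key step is then a Hilbert-space convergence: I claim $\mathrm X^n\to\mathrm X$ in $L^2(\Omega,\cB,\P;H^1(0,T;\X))$, equivalently $\boldsymbol S_\cdot X^n_0\to\boldsymbol S_\cdot X_0$ in $\rmC([0,T];\cH)$ together with convergence of the time-derivatives $\partial_t\mathrm X^n=\mmo^\circ_t\mathrm X^n\to\partial_t\mathrm X$ in $L^2(\Omega\times(0,T);\X)$. The uniform convergence in $\cH$ is immediate from the $e^{\lambda t}$-contraction estimate \eqref{eq:stability-intro} (equivalently Theorem \ref{thm:flow-generation}): $|\boldsymbol S_tX^n_0-\boldsymbol S_tX_0|_\cH\le e^{\lambda t}|X^n_0-X_0|_\cH\to0$ uniformly on $[0,T]$. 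For the derivatives I would invoke the standard Hilbertian result for semigroups generated by maximal $\lambda$-dissipative operators (Appendix \ref{sec:brezis}, the $\lambda$-version of the Brezis results): if $X^n_0\to X_0$ with $\sup_n|\mmo^\circ X^n_0|_\cH<\infty$ then $\boldsymbol S_tX^n_0\to\boldsymbol S_tX_0$ in $\cH$ for every $t$, $\partial_t\boldsymbol S_tX^n_0\rightharpoonup\partial_t\boldsymbol S_tX_0$, and $\int_0^T|\partial_t\boldsymbol S_tX^n_0|_\cH^2\,\d t\to\int_0^T|\partial_t\boldsymbol S_tX_0|_\cH^2\,\d t$ thanks to the energy identity, giving strong $L^2$-convergence of the derivatives. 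Hence $\mathrm X^n\to\mathrm X$ in $L^2(\Omega;H^1(0,T;\X))$, and in particular $\mathrm X^n\to\mathrm X$ in $L^2(\Omega;\rmC([0,T];\X))$ by the continuous embedding $H^1(0,T;\X)\hookrightarrow\rmC([0,T];\X)$.

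Finally I would push forward: by Theorem \ref{thm:uniqueness} (and the construction \eqref{eq:better-representative}) we have $(\ii_\X,\mathrm s^n)_\sharp\mu^n_0=(X^n_0,\mathrm X^n)_\sharp\P$ and $(\ii_\X,\mathrm s)_\sharp\mu_0=(X_0,\mathrm X)_\sharp\P$ as elements of $\prob_2(\X\times\rmC([0,T];\X))$. Since $(X^n_0,\mathrm X^n)\to(X_0,\mathrm X)$ strongly in $\cH\times L^2(\Omega;\rmC([0,T];\X))$, the map $\omega\mapsto(X^n_0(\omega),\mathrm X^n[\omega])$ converges in $L^2(\Omega;\X\times\rmC([0,T];\X))$, and since convergence in $L^2$ of random variables implies convergence in $2$-Wasserstein distance of their laws on the Polish space $\X\times\rmC([0,T];\X)$ (the quadratic moments converge and the laws converge narrowly), we conclude $(\ii_\X,\mathrm s^n)_\sharp\mu^n_0\to(\ii_\X,\mathrm s)_\sharp\mu_0$ in $\prob_2(\X\times\rmC([0,T];\X))$.

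The main obstacle I anticipate is the strong (not merely weak) $L^2$-convergence of the time-derivatives $\partial_t\mathrm X^n$: weak convergence follows softly from the uniform bound \eqref{eq:better2}, but upgrading to strong convergence requires the energy identity $\tfrac12|X^n_0|^2$-type argument adapted to $\lambda\ne0$ — one uses $\tfrac{\d}{\d t}\tfrac12|\boldsymbol S_tX^n_0-\boldsymbol S_tX_0|_\cH^2\le\lambda|\boldsymbol S_tX^n_0-\boldsymbol S_tX_0|_\cH^2$ for the limsup of $\int|\partial_t\boldsymbol S_tX^n_0|^2$, or equivalently lower semicontinuity of $X\mapsto|\frF|_2(\iota(X))$ (Theorem \ref{thm:minimal}(3)) combined with the exact energy balance along the flow. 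I would carry this out via the $\lambda$-dissipative analogue of the Brezis regularity estimates collected in Appendix \ref{sec:brezis}, which is exactly the ingredient already used in the proof of Theorem \ref{thm:existence-Lagrangian}.
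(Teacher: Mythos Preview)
Your overall architecture — lift to $\cH$, pick $X^n_0\to X_0$ via Theorem \ref{thm:gpfinal}, deduce $\sup_n|\mmo^\circ X^n_0|_\cH<\infty$ from hypothesis (2), prove $\mathrm X^n\to\mathrm X$ in $L^2(\Omega;\rmC([0,T];\X))$, then push forward — is exactly the paper's strategy. The divergence, and the gap, is in how you reach the convergence in $L^2(\Omega;\rmC([0,T];\X))$.

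You aim for the stronger statement $\mathrm X^n\to\mathrm X$ in $L^2(\Omega;H^1(0,T;\X))$, which requires strong $L^2$-convergence of $\partial_t\mathrm X^n=\mmo^\circ(\boldsymbol S_t X^n_0)$. The ``standard Hilbertian result'' you invoke, with convergence of $\int_0^T|\partial_t\boldsymbol S_tX^n_0|_\cH^2\,\d t$ via an ``energy identity'', is available for gradient flows of convex functionals but \emph{not} for general maximal $\lambda$-dissipative operators: there is no Lyapunov functional, hence no energy balance. Lower semicontinuity of $|\mmo^\circ|$ (Theorem \ref{thm:minimal}(3)) gives only $\liminf_n\int_0^T|\mmo^\circ(\boldsymbol S_tX^n_0)|^2\,\d t\ge\int_0^T|\mmo^\circ(\boldsymbol S_tX_0)|^2\,\d t$; the matching $\limsup$ bound would need something like $|\mmo^\circ X^n_0|_\cH\to|\mmo^\circ X_0|_\cH$, which is not assumed and can fail (the minimal selection is not continuous in general). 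So this step is a genuine gap.

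The paper sidesteps the issue entirely. It uses only the two soft ingredients you already have — strong convergence of $\mathrm X^n$ to $\mathrm X$ in $L^2(\Omega;L^2(0,T;\X))$ from the contraction estimate, and the uniform bound on $\|\mathrm X^n\|_{L^2(\Omega;H^1(0,T;\X))}$ from \eqref{eq:better2} — and combines them through the interpolation inequality
\[
\|Y\|_{\rmC([0,T];\X)}^2\le C\,\|Y\|_{L^2(0,T;\X)}\,\|Y\|_{H^1(0,T;\X)},\qquad Y\in H^1(0,T;\X).
\]
Applying this pointwise in $\omega$ to $\mathrm X^n[\omega]-\mathrm X[\omega]$ and then Cauchy--Schwarz in $\Omega$ gives
\[
\|\mathrm X^n-\mathrm X\|_{L^2(\Omega;\rmC([0,T];\X))}^2
\le C\,\|\mathrm X^n-\mathrm X\|_{L^2(\Omega;L^2(0,T;\X))}\,\|\mathrm X^n-\mathrm X\|_{L^2(\Omega;H^1(0,T;\X))},
\]
and the right-hand side tends to $0$ since the first factor vanishes and the second stays bounded. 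No convergence of derivatives is needed.
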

\begin{proof}
    By the last part of Theorem \ref{thm:gpfinal}, we can select a sequence $(X_0^n)_{n\in \N}$ in $\cH$
    strongly converging to $X_0$ such that 
    $\iota_{X_0^n}=\mu_0^n$ and $\iota_{X_0}=\mu_0.$
    We now consider the family of 
    $\P$-measurable maps 
    $\mathrm X^n:\Omega\to H^1(0,T;\X)
    \subset \mathrm C([0,T];\X)$ 
    defined as in \eqref{eq:better-representative}
    starting from $X^n_0$ and the corresponding $\mathrm{X}$ defined starting from $X_0$.
Our thesis follows if we prove that 
    $\mathrm X^n\to\mathrm X$ 
    in $L^2(\Omega,\cB,\P;\mathrm C([0,T];\X)).$
    
The equivalence    \eqref{eq:better1} and 
    the contraction estimates on $\bm S_t$ (cf.~\eqref{smgprop})
    show that 
    \begin{align*}
        \|\mathrm X^n-\mathrm X\|_{L^2(\Omega;L^2(0,T;\X))}^2
        &=
        \int_\Omega \Big(\int_0^T |\mathrm X^n(\omega, t)
        -\mathrm X(\omega,t)|^2\,\d t\Big)\,\d\P(\omega)
        \\&
        =
        \int_0^T 
        \Big(\int_\Omega |\mathrm X^n(\omega, t)
        -\mathrm X(\omega,t)|^2\,\d\P(\omega)\Big)\,\d t\\
        &=\int_0^T |\bm S_t (X^n_0)-
        \bm S_t (X_0)|_{\cH}^2\,\d t
        \\&\le T\mathrm e^{2\lambda_+ T} |X^n_0-X_0|^2_\cH
        \to 0\quad\text{as }n\to+\infty.
    \end{align*}
    Moreover, recalling \eqref{eq:better2}, we have
    \begin{displaymath}
       \sup_n \| \dot{\mathrm X}^n\|_{L^2(\Omega;L^2(0,T;\X))}^2
        \le Te^{2\lambda_+T} 
        \sup_n |\Bb^\circ (X^n_0)|^2_\cH<+\infty
        \quad\text{for every }n\in \N,
    \end{displaymath}
    so that $(\mathrm X^n)_{n\in\N}$ is uniformly bounded in 
    $L^2(\Omega,\cB, \P;H^1(0,T;\X))$
    by some finite constant $S>0$. The interpolation inequality (cf.~\cite[p.233 (iii)]{Breziss})
    \begin{displaymath}
        \|Y\|^2_{\mathrm C([0,T];\X)}
        \le C\|Y\|_{L^2(0,T;\X)}\|Y\|_{H^1(0,T;\X)}
        \quad\text{for every }Y\in H^1(0,T;\X),
    \end{displaymath}
    gives that 
    the sequence $(\mathrm X^n)_{n\in\N}$ 
    strongly converges to $\mathrm X$
    in $L^2(\Omega,\cB,\P;\mathrm C([0,T];\X))$, since
    \begin{align*}
       \|\mathrm X^n-
       \mathrm X&\|^2_{L^2(\Omega, \cB, \P;\mathrm C([0,T];\X))}
       =
       \int_\Omega \|\mathrm X^n[\omega]-
       \mathrm X[\omega]\|_{\rmC([0,T];\X)}^2\,\d\P
       \\\quad
       &\le C
       \int_\Omega 
       \|\mathrm X^n[\omega]-
       \mathrm X[\omega]\|_{L^2(0,T;\X)}
       \|\mathrm X^n[\omega]-
       \mathrm X[\omega]\|_{H^1(0,T;\X)}\,\d\P
       \\\quad&\le 
       C
       \Big(\int_\Omega 
       \|\mathrm X^n[\omega]-
       \mathrm X[\omega]\|_{L^2(0,T;\X)}^2\,\de\P\Big)^{1/2}
       \Big(\int_\Omega 
       \|\mathrm X^n[\omega]-
       \mathrm X[\omega]\|_{H^1(0,T;\X)}^2\,\de\P\Big)^{1/2}
       \\\quad&\le C\left(S+\|\mathrm X\|_{L^2(\Omega, \cB, \P; H^1(0,T;\X))}\right)
         \|\mathrm X^n-\mathrm X\|_{L^2(\Omega;L^2(0,T;\X))}.
    \end{align*}
\end{proof}

\section{Totally convex functionals in \texorpdfstring{$\prob_2(\X)$}{P}}\label{sec:jkobis}

In this section we analyze the case of a proper and lower
semicontinuous functional $\phi$ which satisfies a strong convexity property.

\begin{definition}[Total ($\menolambda$)-convexity] Let $\phi:\prob_2(\X) \to (-\infty, + \infty]$ and let $\lambda \in \R$. We say that $\phi$ is \emph{totally $\menolambdapar$-convex} if it is $\menolambdapar$-convex along any coupling, i.e.
\begin{equation*}
    \phi(\sfx^t_\sharp \mmu) \le (1-t) \phi(\sfx^0_\sharp \mmu) + t
    \phi(\sfx^1_\sharp \mmu) +
    \frac{\lambda}{2}t(1-t) \int_{\X \times \X} |x-y|^2 \de \mmu(x,y)
\end{equation*}
for every $\mmu \in \prob_2(\X \times \X)$, $t\in[0,1]$.
\end{definition}

Notice that, in particular, if $\phi$ is totally $\menolambdapar$-convex then it is $\menolambdapar$-convex along
generalized geodesics \cite[Definition 9.2.4]{ags} and thus also
geodesically
$\menolambdapar$-convex.
It is also easy to check that
$\phi$ is totally $\menolambdapar$-convex if and only if
\begin{equation*}
\phi^\lambda(\mu):=\phi(\mu)+\frac{\piulambda}2\int |x|^2\,\d\mu
  \quad\text{is {totally convex}}.
\end{equation*}
\\
 Referring to \cite[Definition 10.3.1]{ags},  we recall that the Wasserstein subdifferential $\bm{\partial}
\phi(\mu)\subset \prob_2(\TX)$ of $\phi$  at $\mu$ is defined as
the set of $\Psi\in \prob_2(\TX)$ such that  $\sfx_\sharp \Psi = \mu \in \dom(\phi)$ and
\begin{equation}\label{eq:Wassubdphi}
  \phi(\nu) -
  \phi(\mu) \ge \inf_{\ssigma\in\Lambda(\Psi,\nu)}\int_{\TX\times\X}\scalprod{y-x}{v}\d\ssigma(x,v,y) +o\left (W_2(\mu,\nu)\right ) \quad
   \text{as }\nu\to \mu \text{ in }\prob_2(\X). 
\end{equation}
Equivalently, using the notation of duality pairings introduced in Definition \ref{def:pairings}, we can write \eqref{eq:Wassubdphi} as follows

\begin{equation*}
  \phi(\nu) -
  \phi(\mu) \ge -\brap{\Psi}{\nu} +o\left (W_2(\mu,\nu)\right ) \quad
   \text{as }\nu\to \mu \text{ in }\prob_2(\X). 
\end{equation*}
When
$\phi$ is geodesically $\menolambdapar$-convex,
then it is possible to show that
$\Psi$ belongs to
$\bm{\partial}\phi$ if and only if
$\Psi$ and $\mu=\sfx_\sharp \Psi\in \dom(\phi)$ satisfy
\begin{equation}\label{eq:defsubdnonfr}
\phi(\nu) - \phi(\mu) \ge -\brap{\Psi}{\nu} - \frac{\lambda}{2} W_2^2(\mu,\nu) \quad \text{for every } \nu \in \prob_2(\X). 
\end{equation}
It is easy to check that $-\bm{\partial}\phi$ (cf.\eqref{eq:opposite})
is a $\lambda$-dissipative \MPVF (see also \cite[Section
7.1]{CSS}), but in general not totally $\lambda$-dissipative,  as shown in the following remark.

\begin{remark}[A non totally dissipative subdifferential]\label{rem:nontotd}
We show that the (opposite of the) Wasserstein subdifferential of the Shannon's entropy functional $\mathcal{E}$ in $\R^d$, $d \ge 2$, is not totally dissipative. We recall that $\mathcal{E}:\prob_2(\R^d) \to (-\infty, +\infty]$ is defined as
\begin{equation}
\mathcal{E}(\mu) = \begin{cases}
    \displaystyle\int_{\R^d} \rho \log(\rho) \de \mathcal{L}^d \quad &\text{ if } \mu = \rho \mathcal{L}^d \ll \mathcal{L}^d, \\
    + \infty \quad &\text{ else,}
\end{cases} \quad \mu \in \prob_2(\R^d).
\end{equation}
The \MPVF $\frF:= - \boldsymbol{\partial} \mathcal{E}$ is $0$-dissipative, see \cite[Theorem 7.1]{CSS}. We show that we can find $\Phi_0, \Phi_1 \in \frF$ and $\ggamma \in \Gamma(\sfx_\sharp \Phi_0, \sfx_\sharp \Phi_1)$ such that
\[
\int_{\TRd\times\TRd} \langle v_1-v_0,x_1-x_0
        \rangle \,\d\ttheta(x_0,v_0,x_1,v_1) >0, \quad\text{for any }\ttheta \in \Gamma(\Phi_0, \Phi_1)\text{ s.t. }(\sfx^0, \sfx^1)_\sharp \ttheta = \ggamma.
\]
Let $T: \R^d \to \R^d$, $T(z):=-z$, i.e. the reflection w.r.t.~the origin. Define $f: [0,+\infty) \to [0,1]$ by
\[
f(r):=  \begin{cases} 1 \quad &\text{ if } r \in [0,1],\\
2-r \quad &\text{ if } r \in [1,2],\\
0 \quad &\text{ if } r \in [2,+\infty).
\end{cases}
\]
Let $a_0 \in \R^d$ be any point such that $|a_0|\ge 3$ and consider the density $\rho_0(z):= c_0f(|z-a_0|)$, $z \in \R^d$, with corresponding probability measure $\mu_0 := \rho_0 \mathcal{L}^d \in \prob_2(\R^d)$, and $c_0>0$ a normalization constant such that $\int_{\R^d} \rho_0 \de \mathcal{L}^d =1$. We set
\[
\rho_1:= \rho_0 \circ T, \quad \mu_1:= \rho_1 \mathcal{L}^d, \quad \Phi_i := (\ii_{\R^d}, -\nabla \rho_i/ \rho_i)_\sharp \mu_i, \, \,\, i=0,1, \quad \ggamma := (\ii_{\R^d}, T)_\sharp \mu_0.
\]
By \cite[Theorems 10.4.6, 10.4.13]{ags}, we have that $\Phi_0, \Phi_1 \in \frF$ with $\sfx_\sharp \Phi_i = \mu_i$ for $i=0,1$, and so $\ggamma \in \Gamma(\mu_0, \mu_1)$. Since $\Phi_i$, $i=0,1$, and $\ggamma$ are induced by maps, then the set of $\ttheta \in \Gamma(\Phi_0, \Phi_1)$ with $(\sfx^0, \sfx^1)_\sharp \ttheta = \ggamma$ is a singleton, whose unique element is given  by
\[
\ttheta := (\sfx^0, (-\nabla \rho_0 /\rho_0) \circ \sfx^0, \sfx^1, (-\nabla \rho_1 /\rho_1) \circ \sfx^1)_\sharp \ggamma.
\]
We have
\begin{align*}
    &\int_{\TRd\times\TRd} \langle v_1-v_0,x_1-x_0
        \rangle \,\d\ttheta(x_0,v_0,x_1,v_1)\\
        &= -4\int_{\R^d} \langle \nabla \rho_0(x_0) /\rho_0(x_0), x_0 \rangle \de \mu_0(x_0) \\
        &= -4 \int_{B(a_0, 2) \setminus B(a_0,1)} \langle \nabla \rho(x_0), x_0 \rangle \de \mathcal{L}^d(x_0) \\
        & = 4c_0 \int_{B(a_0, 2) \setminus B(a_0,1)} \left\langle \frac{x_0-a_0}{|x_0-a_0|}, x_0 \right\rangle \de \mathcal{L}^d(x_0) \\
        & = 4c_0 \omega_{d-1}\int_1^2 r^d \de r + 4c_0 \int_{B(0, 2) \setminus B(0,1)} \frac{1}{|x_0|} \langle a_0, x_0 \rangle \de \mathcal{L}^d(x_0) \\
        & = \frac{4c_0 \omega_{d-1}}{d+1} (2^{d+1}-1) + 0 >0,
\end{align*}
where $\omega_{d-1}$ is the surface area of the unit sphere in $\R^d$.
\end{remark}

Let us now consider a totally $\lambda$-convex, proper and lower semicontinuous
functional $\phi$.
We fix a standard Borel space $(\Omega, \cB)$ endowed with a nonatomic probability measure $\P$, with
$\cH:= L^2(\Omega, \cB, \P; \X)$ and we consider the Lagrangian parametrization of $\phi$ given by 
\begin{equation}\label{eq:psi}
  \text{$\psi: \cH \to (-\infty, + \infty]$ \quad defined as } \quad 
  \psi(X):= \phi(\iota_X) \quad \text{ for every } X \in \cH. 
\end{equation}
Clearly, $\psi$ is proper, l.s.c. and $\menolambdapar$-convex, i.e.~$X
\mapsto \psi(X)+\frac{\lambda}{2} |X|^2$ is convex.
\medskip

\newcommand{\bpartial}{\bm\partial}
\newcommand{\btpartial}{\bm\partial_{\mathrm t}}

As a preliminary result, we study the (opposite of the) subdifferential of $\psi$, showing in particular that it is an invariant maximal $\lambda$-dissipative operator. This allows to consider its resolvent operator $\jJ_\tau$ and compare, in Proposition \ref{prop:JtauvsJKObis}, the scheme generated by $\jJ_\tau$ with the Wasserstein JKO scheme (\cite{JKO}) for the functional $\phi$ in $\prob_2(\X)$. We then show relations between $-\partial\psi$ and $-\bm{\partial}\phi$, dealing in particular with the respective elements of minimal norm. Finally, in Theorem \ref{prop:GFiotabis}, we show that the Lagrangian solution to the flow generated by the maximal totally $\lambda$-dissipative \MPVF $\iotaT(-\partial\psi)$ is the unique Wasserstein gradient flow for $\phi$ and the unique $\lambda$-\EVI solution for $-\bm{\partial}\phi$. Analogously to Theorem \ref{thm:flow-generation}, this Wasserstein semigroup can be characterized as the law of the semigroup of Lipschitz transformations $\Sgp_t$ of $-\partial\psi$.

\begin{proposition}
[Total subdifferential]\label{prop:invphi} Let $\phi:\prob_2(\X) \to
  (-\infty, + \infty]$ be a proper, lower semicontinuous and totally
  $\menolambdapar$-convex functional and let $\psi$ be as in
  \eqref{eq:psi}.
  \begin{enumerate}
  \item The opposite of the subdifferential of $\psi$,
    $-\partial \psi$, is an invariant maximal
    $\piulambdapar$-dissipative operator in $\cH\times\cH$.
  \item
    The 
     \emph{total subdifferential}  
    $-\btpartial\phi:=\iotaT(-\partial \psi)$
    is maximal totally
    $\piulambdapar$-dissipative.
  \item An element $\Psi\in \prob_2(\TX)$ satisfying
    $\mu=\sfx_\sharp\Psi\in \dom(\phi)$ belongs to $-\btpartial\phi$
    if and only if for every $\nu\in \dom(\phi)$ and every
    plan $\ttheta\in \Gamma(\Psi,\nu)$ we have
    \begin{equation}
      \label{eq:25}
      \phi(\nu)-\phi(\mu)\ge
      \int_{\TX\times\X} \Big(\langle v,x-y\rangle-
      \frac{\piulambda}2
      |x-y|^2\Big)\,\d\ttheta(x,v,y).
    \end{equation}
    In particular $\btpartial\phi \subset \bm{\partial}\phi$.
  \end{enumerate}
\end{proposition}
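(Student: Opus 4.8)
The plan is to lift $\phi$ to the Hilbert space via $\psi$, transfer convexity, and then read off the three assertions from the Hilbertian theory. \textbf{Claim (1).} I would first record that $\psi$ is proper (choosing $\mu\in D(\phi)$ and $X\in\cH$ with $\iota_X=\mu$), lower semicontinuous (since $\iota:\cH\to\prob_2(\X)$ is $1$-Lipschitz and $\phi$ is l.s.c.), and $\menolambdapar$-convex: the last property is obtained by applying total $\menolambdapar$-convexity of $\phi$ to the coupling $\mmu:=(X_0,X_1)_\sharp\P$ and using $\sfx^t_\sharp\mmu=\iota_{(1-t)X_0+tX_1}$ together with $\int|x-y|^2\,\d\mmu=|X_0-X_1|_{\cH}^2$ --- this is exactly the step where \emph{total}, not merely geodesic, convexity of $\phi$ is used. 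Then $\tilde\psi:=\psi+\tfrac{\piulambda}2|\cdot|_{\cH}^2$ is proper, l.s.c.\ and convex, so $\partial\tilde\psi$ is maximal monotone (see \cite{BrezisFR} and Appendix \ref{sec:brezis}); since $\partial\psi=\partial\tilde\psi-\piulambda\ii_{\cH}$, Lemma \ref{lem:lambda0}(3) (cf.\ Remark \ref{rem:transff}) gives that $-\partial\psi$ is maximal $\piulambdapar$-dissipative. For invariance, note that for $g\in\rmS(\Omega)$ the map $X\mapsto X\circ g$ is a linear isometry of $\cH$ with $\psi(X\circ g)=\psi(X)$ (because $g_\sharp\P=\P$): substituting $Y=Z\circ g$ in the subdifferential inequality at $X\circ g$ and using $g^{-1}\in\rmS(\Omega)$ shows $W\in\partial\psi(X)\Rightarrow W\circ g\in\partial\psi(X\circ g)$, so $-\partial\psi$ is invariant by measure-preserving isomorphisms.

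\textbf{Claim (2)} then comes for free: $-\partial\psi$, being maximal $\piulambdapar$-dissipative, has closed graph, hence is law invariant by Lemma \ref{le:noncera} and thus is the Lagrangian representation of $-\btpartial\phi=\iotaT(-\partial\psi)$; Proposition \ref{prop:basic-relation} yields total $\piulambdapar$-dissipativity of $-\btpartial\phi$, and Theorem \ref{thm:maximal-dissipativity}(2) upgrades it to maximal total $\piulambdapar$-dissipativity.

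\textbf{Claim (3).} By construction $\Psi\in-\btpartial\phi$ iff $\Psi=(X,V)_\sharp\P$ for some $(X,V)$ with $-V\in\partial\psi(X)$, i.e.\ $\psi(Y)\ge\psi(X)+\la -V,Y-X\ra_{\cH}-\tfrac{\piulambda}2|Y-X|_{\cH}^2$ for all $Y\in\cH$. For the direct implication, given $\nu\in D(\phi)$ and $\ttheta\in\Gamma(\Psi,\nu)$, I would realize $\ttheta$ as $(X',V',Y')_\sharp\P$ (possible on the nonatomic standard Borel space $(\Omega,\cB,\P)$, cf.\ Theorem \ref{thm:gpfinal} and Appendix \ref{sec:appborel}); then $(X',V')_\sharp\P=\Psi$, law invariance of $\partial\psi$ gives $-V'\in\partial\psi(X')$, and inserting $X=X'$, $Y=Y'$ --- using $\psi(X')=\phi(\mu)$, $\psi(Y')=\phi(\nu)$, $\int|x-y|^2\,\d\ttheta=|X'-Y'|_{\cH}^2$ and $\int\la v,x-y\ra\,\d\ttheta=\la -V',Y'-X'\ra_{\cH}$ --- gives precisely \eqref{eq:25}. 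For the converse, fix $(X,V)$ with $(X,V)_\sharp\P=\Psi$ and test with an arbitrary $Y\in\cH$: if $\iota_Y\in D(\phi)$, apply \eqref{eq:25} with $\nu=\iota_Y$ and $\ttheta=(X,V,Y)_\sharp\P\in\Gamma(\Psi,\nu)$ to recover the subdifferential inequality, while if $\iota_Y\notin D(\phi)$ it holds trivially since $\psi(Y)=+\infty$; hence $-V\in\partial\psi(X)$ and $\Psi\in-\btpartial\phi$. For the concluding inclusion $\btpartial\phi\subset\bpartial\phi$: since total convexity entails geodesic $\menolambdapar$-convexity, $\bpartial\phi$ is characterized by \eqref{eq:defsubdnonfr}; then, applying the above characterization of $-\btpartial\phi$ to $(\sfx,-\sfv)_\sharp\Psi$, restricting the test plans in \eqref{eq:25} to the optimal ones $\ssigma\in\Lambda(\Psi,\nu)$ (for which $\int|x-y|^2\,\d\ssigma=W_2^2(\mu,\nu)$), taking the supremum over such $\ssigma$, and using $-\brap{(\sfx,-\sfv)_\sharp\Psi}{\nu}=\bram{\Psi}{\nu}\le\brap{\Psi}{\nu}$, one recovers exactly \eqref{eq:defsubdnonfr} for $(\sfx,-\sfv)_\sharp\Psi$, i.e.\ $\Psi\in\bpartial\phi$.

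The only genuine difficulty I foresee is bookkeeping: keeping straight the sign changes produced by the opposite operation $\frF\mapsto-\frF$ and by the asymmetry of $\bram{\cdot}{\cdot}$ versus $\brap{\cdot}{\cdot}$, and invoking at each step the (routine, on a nonatomic standard Borel space) fact that a prescribed plan on $\prob_2(\TX\times\X)$ is the joint law of suitable $\cH$-valued random variables.
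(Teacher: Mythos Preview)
Your proposal is correct and follows essentially the same approach as the paper's proof: lifting to $\cH$, checking that $-\partial\psi$ is maximal $\lambda$-dissipative and invariant via the substitution $Y=Z\circ g^{-1}$, deducing Claim (2) from the Lagrangian/Eulerian correspondence, and proving Claim (3) by realizing plans $\ttheta$ as laws of $\cH^3$-valued random variables. The paper reduces to $\lambda=0$ at the outset while you carry $\lambda$ throughout, and you spell out the final inclusion $\btpartial\phi\subset\bpartial\phi$ via restriction to $\Lambda(\Psi,\nu)$ whereas the paper leaves it implicit in the characterization; otherwise the arguments coincide.
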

\begin{proof}
  As usual it is sufficient to check the case $\lambda=0$.
  
   We prove item (1): 
  by maximality of the $\piulambdapar$-dissipative operator $-\partial
  \psi$ in $\cH \times \cH$ (cf.~Theorem \ref{thm:brezis2}(1) and
  Corollary \ref{cor:phi}) and thanks to Theorem
  \ref{thm:invTOlawinv}, it is enough to prove that $-\partial \psi$
  is invariant by measure-preserving isomorphisms.
  
  Let $(X,V) \in -\partial \psi$ and let $g \in \rmS(\Omega)$. We have
  \[ \psi(Y) - \psi(X) \ge \la V, X-Y \ra_{\cH}
    \quad \text{ for every } Y \in \cH.\]
For every $Z \in \cH$, choosing $Y:=Z\circ g^{-1}$ we get
\begin{align*}
\psi(Z) - \psi(X \circ g) &= \psi(Z \circ g^{-1}) - \psi(X)\\ 
  &\ge \la V, X - Z \circ g^{-1}\ra_{\cH}
  \\
                          & = \la V \circ g, X\circ g- Z \ra_{\cH}.
\end{align*}
This shows that $(X \circ g, V \circ g) \in -\partial \psi$.

Item (2) follows immediately  by Theorem
\ref{thm:maximal-dissipativity}(3).

 We prove item (3):  let us first show that an element $\Psi$ satisfying
\eqref{eq:25} belongs to $-\btpartial \phi$: it is sufficient to
take a pair $(X,V)\in \cH\times \cH$ such that
$\iota^2_{X,V}=\Psi$. For every $Y\in \dom(\psi)$, setting
$\nu:=\iota_Y\in \dom(\phi)$ and $\ttheta:=(X,V,Y)_\sharp \P$,
we get
\begin{displaymath}
  \psi(Y)-\psi(X)=
  \phi(\nu)-\phi(\mu)
  \ge
  \int_{\TX\times\X} \langle v,x-y\rangle\,\d\ttheta(x,v,y)
  =\langle V,X-Y\rangle_{\cH},
\end{displaymath}
which shows that $V\in -\partial\psi(X)$ and therefore $\Psi\in
\iota^2(-\partial\psi)=-\btpartial\phi$.

In order to prove the converse implication, we just take
$\Psi=\iota^2_{X',Y'}\in \iota^2(-\partial\psi)$ for some
$(X',Y')\in -\partial\psi$, $\nu\in \dom(\phi)$, and
$\ttheta\in \Gamma(\Psi,\nu)$.
We can find elements $X,V,Y\in \cH$ such that
$(X,V,Y)_\sharp \P=\ttheta$.
In particular $\iota_Y=\nu$ so that
$\psi(Y)=\phi(\nu)$ and
$\iotaT_{X,V}=\Psi$ so that $(X,V)\in -\partial\psi$,
since $-\partial\psi$ is law invariant and the law of $(X,V)$
coincides with the law of $(X',Y')$.
Since $\psi(X)=\phi(\iota_X)=\phi(\mu)$ and $(X,V)\in-\partial\psi$, we get \eqref{eq:25}
\begin{displaymath}
  \phi(\nu)-\phi(\mu)=\psi(Y)-\psi(X)
  \ge \langle V,X-Y\rangle_{\cH}=
  \int_{\TX\times\X} \langle v,x-y\rangle\,\d\ttheta(x,v,y). \qedhere
\end{displaymath}
\end{proof}

In view of the invariance and the maximal $\lambda$-dissipativity of $-\partial \psi$, by Theorem \ref{thm:minimal}(1,2) we have that the subdifferential of $\psi$ contains elements concentrated on maps, in the sense that for every $X \in \dom(\partial \psi)$ there exist $\ff \in L^2(\X, \iota_X; \X)$ such that $\ff \circ X \in -\partial\psi(X)$. An analogous result has been obtained in \cite[Theorem 3.19(iii)]{gangbotudo} 
for real-valued functionals when $\X$ has finite dimension (cf. also \cite[Lemma 8, Proposition 5]{JMQ}).
\smallskip

The next result gives a correspondence between the minimal selection
and the resolvent operators of $-\partial \psi$ and $-\bm\partial \phi$.
It is remarkable that
the minimal selection $\bpartial^\circ\phi$ of $\bpartial\phi$ is an element of
the smaller set $\btpartial\phi$ and therefore coincides with
$\btpartial^\circ\phi$.
This fact guarantees that the ``Eulerian-Wasserstein'' approach to the gradient flow
of $\phi$ coincides with the ``Lagrangian-Hilbertian'' construction.

In the following, $\jJ_\tau$ denotes the resolvent of the invariant
maximal $\piulambdapar$-dissipative operator $-\partial \psi$ for $0<
\tau<1/\piulambdapar^+$
 with the corresponding map $\jj_\tau$ introduced in Theorem
\ref{thm:invTOlawinv}.

\begin{proposition}[JKO scheme, Wasserstein and total subdifferential]\label{prop:JtauvsJKObis}
Let $\phi:\prob_2(\X) \to (-\infty, + \infty]$ be a proper, lower semicontinuous and totally $\menolambdapar$-convex functional and let $\psi$ be as in \eqref{eq:psi}. Then:
\begin{enumerate}
\item
  For every $\mu\in \prob_2(\X)$ and $0<\tau < 1/\piulambdapar^+$
  the measure $\mu_\tau:=\jj_\tau(\cdot,\mu)_\sharp\mu$ 
  is the unique
  solution
  of the JKO scheme for $\phi$ starting from $\mu$, i.e.~$\mu_\tau$ is the unique minimizer of 
  \begin{equation}
    \label{eq:JKO}
  \nu\mapsto \frac{1}{2\tau} W_2^2(\mu, \nu) + \phi(\nu).
\end{equation}
Equivalently, if $\mu=\iota_X$ for some $X \in \cH$, then
$\mu_\tau=\iota(\jJ_\tau (X))$.
\item For every $\mu=\iota_X\in \dom(\btpartial \phi)$,
  the element of minimal norm $\btpartial^\circ\phi[\mu]$
  (equivalently, the 
  law of the element of minimal norm of $\partial \psi(X)$)
  is the element of minimal norm of $\bm{\partial} \phi[\mu]$.
\item We have that $\iota(\dom(\partial \psi))=  \dom(\btpartial\phi)=\dom(\bm{\partial}\phi)$
  and the minimal selection $-\bm{\partial}^\circ\phi $ of
  $-\bm{\partial}\phi$ is concentrated on a map and it is
  totally $\piulambdapar$-dissipative.
    \item The \MPVF $\iotaT(-\partial \psi)$ is the unique maximal totally $\piulambdapar$-dissipative extension of $-\bm{\partial}^\circ\phi$ with domain included in $\overline{\dom(\phi)}$.
\end{enumerate}
\end{proposition}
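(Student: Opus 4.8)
The plan is to prove the four assertions in order, reducing throughout to $\lambda=0$ via Lemma \ref{lem:lambda0} and Remark \ref{rem:transff}, and exploiting the identifications $\psi=\phi\circ\iota$ and $-\btpartial\phi=\iotaT(-\partial\psi)$ from Proposition \ref{prop:invphi}. The single lifting fact I will use repeatedly is the last part of Theorem \ref{thm:gpfinal}: for a fixed $X\in\cH$ with $\iota_X=\mu$ one has $W_2^2(\mu,\nu)=\min\{|X-Y|_\cH^2: Y\in\cH,\ \iota_Y=\nu\}$ for every $\nu\in\prob_2(\X)$, and moreover a slope‑maximizing sequence $\nu_n\to\mu$ can be lifted to optimal parametrizations $Y_n\to X$.

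For (1): by Proposition \ref{prop:invphi}(1) and standard convex analysis (Corollary \ref{cor:phi}), $-\partial\psi$ is maximal $\piulambdapar$‑dissipative and $\jJ_\tau$ is the proximal map of $\psi$, i.e.\ $\jJ_\tau X$ is the unique minimizer of $Y\mapsto\frac1{2\tau}|X-Y|_\cH^2+\psi(Y)$. Fixing $X$ with $\iota_X=\mu$ and substituting $W_2^2(\mu,\nu)=\min_{\iota_Y=\nu}|X-Y|_\cH^2$ into \eqref{eq:JKO}, while using $\phi(\iota_Y)=\psi(Y)$, turns $\inf_\nu\!\big(\frac1{2\tau}W_2^2(\mu,\nu)+\phi(\nu)\big)$ into $\inf_{Y\in\cH}\big(\frac1{2\tau}|X-Y|_\cH^2+\psi(Y)\big)$, whose unique minimizer is $\jJ_\tau X$; hence \eqref{eq:JKO} is minimized at $\nu=\iota(\jJ_\tau X)=\jj_\tau(\cdot,\mu)_\sharp\mu$ (by \eqref{eq:7-1}). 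Uniqueness of the JKO minimizer follows since any other minimizer $\nu'$ admits an optimal lift $Y'$ with $\iota_{Y'}=\nu'$ and $|X-Y'|_\cH=W_2(\mu,\nu')$, so $Y'$ minimizes the proximal problem and thus $Y'=\jJ_\tau X$, $\nu'=\iota(\jJ_\tau X)$.

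For (2) and (3): the crucial lemma is the slope identity $|\partial\psi|(X)=|\partial\phi|(\iota_X)$ between metric descending slopes, where $|\partial\phi|(\mu):=\limsup_{\nu\to\mu}(\phi(\mu)-\phi(\nu))^+/W_2(\mu,\nu)$ and analogously for $\psi$: the inequality ``$\geq$'' follows by lifting a slope‑maximizing sequence to optimal parametrizations (Theorem \ref{thm:gpfinal}), and ``$\leq$'' from $W_2(\iota_X,\iota_Y)\leq|X-Y|_\cH$ together with continuity of $\iota$ and l.s.c.\ of $\psi$. Combining this with $|\partial\psi|(X)=|\partial^\circ\psi(X)|_\cH$ (Hilbertian convex analysis), $|\partial\phi|(\mu)=|\bpartial^\circ\phi[\mu]|_2$ (\cite{ags}), and the equivalence ``finite slope $\Leftrightarrow$ nonempty subdifferential'' on both sides, yields $\iota(\dom(\partial\psi))=\dom(\btpartial\phi)=\dom(\bpartial\phi)$, the only nontrivial inclusion being $\dom(\bpartial\phi)\subseteq\dom(\btpartial\phi)$ (a measure $\mu=\iota_X$ with $\bpartial\phi[\mu]\neq\emptyset$ has finite slope, hence $\partial\psi(X)\neq\emptyset$). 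Next, by Theorem \ref{thm:minimal}(1),(2) applied to $-\btpartial\phi$ (maximal totally $\lambda$‑dissipative with Lagrangian representation $-\partial\psi$), the minimal selection $\btpartial^\circ\phi[\mu]$ is the law of $\partial^\circ\psi(X)$, is concentrated on the map $\ff^\circ[\mu]$, lies in $\bpartial\phi[\mu]$ (since $\btpartial\phi\subseteq\bpartial\phi$), and has $|\cdot|_2$‑norm $|\partial\psi|(X)=|\partial\phi|(\mu)=|\bpartial^\circ\phi[\mu]|_2$; by uniqueness of the minimal‑norm element of $\bpartial\phi[\mu]$ (\cite{ags}) we obtain $\btpartial^\circ\phi[\mu]=\bpartial^\circ\phi[\mu]$, which is (2). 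In particular $-\bpartial^\circ\phi=-\btpartial^\circ\phi$ is concentrated on a map; it is totally $\piulambdapar$‑dissipative because it is a selection of the totally $\piulambdapar$‑dissipative $-\btpartial\phi$ and the condition in Definition \ref{def:total-dissipativity} passes to subsets, completing (3).

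For (4): $\iotaT(-\partial\psi)=-\btpartial\phi$ is maximal totally $\piulambdapar$‑dissipative (Proposition \ref{prop:invphi}(2)), extends $-\bpartial^\circ\phi$ by (2)–(3), and has domain $\iota(\dom(\partial\psi))\subseteq\iota(\dom(\psi))=\dom(\phi)\subseteq\overline{\dom(\phi)}$, so existence is immediate. For uniqueness, let $\frG$ be a maximal totally $\piulambdapar$‑dissipative extension of $-\bpartial^\circ\phi$ with $\dom(\frG)\subseteq\overline{\dom(\phi)}$, with Lagrangian representation $\mmo_\frG$, which is law‑invariant and $\piulambdapar$‑dissipative (Proposition \ref{prop:basic-relation}); reducing to $\lambda=0$, $\mmo_\frG$ contains the graph of the minimal selection $(-\partial\psi)^\circ$. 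The plan is to show $\mmo_\frG\subseteq-\partial\psi$: given $(X,V)\in\mmo_\frG$ and a point $X_0$ of G\^ateaux differentiability of $\psi$ chosen so that a.e.\ point $Y_t:=(1-t)X_0+tX$ of the segment lies in $\dom(\partial\psi)$ and is again a G\^ateaux point, dissipativity of $\mmo_\frG$ against $(Y_t,(-\partial\psi)^\circ Y_t)=(Y_t,-\nabla\psi(Y_t))$ gives $\langle V+\nabla\psi(Y_t),X-X_0\rangle\leq0$ for a.e.\ $t$; integrating in $t$ and using $\int_0^1\langle\nabla\psi(Y_t),X-X_0\rangle\,\d t=\psi(X)-\psi(X_0)$ yields $\psi(X_0)\geq\psi(X)+\langle -V,X_0-X\rangle$, which extends to all $X_0$ by density of G\^ateaux points in $\dom(\psi)$ and lower semicontinuity, so $V\in-\partial\psi(X)$. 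Then $\frG\subseteq-\btpartial\phi$, and maximality of $\frG$ forces $\frG=-\btpartial\phi$. I expect this last step to be the main obstacle — specifically the measure‑theoretic input that, in a separable Hilbert space, a.e.\ point of a.e.\ segment through a G\^ateaux point of $\psi$ lies in $\dom(\partial\psi)$ and is again G\^ateaux, so that the fundamental‑theorem‑of‑calculus identity for $\psi$ applies along that segment; a self‑contained alternative is to invoke the uniqueness of maximal totally dissipative extensions from a dense core (Theorem \ref{thm:total-case} of Part \ref{partII}), after checking that $\dom(-\bpartial^\circ\phi)=\iota(\dom(\partial\psi))$ is $W_2$‑dense in $\overline{\dom(\phi)}$.
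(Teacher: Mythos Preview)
Your treatment of (1)--(3) is essentially correct and close to the paper's; the paper computes the minimal norm via the resolvent identity $|{-}\partial^\circ\psi(X)|^2=\lim_{\tau\downarrow0}\tau^{-1}(\psi(X)-\psi(\jJ_\tau X))=\lim_{\tau\downarrow0}\tau^{-1}(\phi(\mu)-\phi(\mu_\tau))=|{-}\bpartial^\circ\phi[\mu]|_2^2$ (Corollary \ref{cor:phi} and \cite[Remark 10.3.14]{ags}) rather than through the metric slope, but the two routes are equivalent. One minor attribution issue in (1): Theorem \ref{thm:gpfinal} only yields $\eps$-optimal lifts for a \emph{fixed} $X$, not an exact minimizer $Y$ with $|X-Y|_\cH=W_2(\mu,\nu)$; the paper avoids this by choosing a \emph{new} $X'$ (not the original $X$) so that $(X',Y)_\sharp\P\in\Gamma_o(\mu,\nu)$, which is immediate since any measure in $\prob_2(\X^2)$ can be parametrized.

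For (4), however, you have substantially overcomplicated the argument. The paper's proof is one line via the principal-selection characterization \eqref{eq:143} (Theorem \ref{thm:brezis2}(6)): if $\mmo$ is the Lagrangian representation of a maximal totally $\lambda$-dissipative extension $\frG\supset-\bpartial^\circ\phi$ with $\dom(\frG)\subset\overline{\dom(\phi)}$, then by (2)--(3) the graph of $(-\partial\psi)^\circ$ is contained in $\mmo$, so $\lambda$-dissipativity of $\mmo$ gives $\langle V-(-\partial\psi)^\circ Y,X-Y\rangle\le\lambda|X-Y|^2$ for all $(X,V)\in\mmo$ and $Y\in\dom(\partial\psi)$; since $\dom(\mmo)\subset\iota^{-1}(\overline{\dom(\phi)})=\overline{\dom(\psi)}=\overline{\dom(\partial\psi)}$, \eqref{eq:143} yields $\mmo\subset-\partial\psi$, whence $\frG\subset-\btpartial\phi$ and maximality forces equality. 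Your G\^ateaux-point/segment argument is unnecessary and, as you note, carries a genuine measure-theoretic burden in infinite dimensions; your proposed alternative via Theorem \ref{thm:total-case} is also off-target, since that result requires a discrete $\cN$-core in the domain, which is not assumed here.
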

\begin{proof}
  By Theorem \ref{prop:invphi} and Theorem \ref{thm:invTOlawinv}, we
  have that $\mu_\tau$ does not depend on the choice of $X \in \cH$
  such that $\iota_X = \mu$; if $\nu \in \prob_2(\X)$,
  $\nu\neq \mu_\tau$, we can thus find $(X',Y) \in \cH^2$ such that
  $\iotaT_{X',Y}=\in \Gamma_o(\mu, \nu)$,
  $\mu_\tau=\iota(\jJ_\tau (X'))$, and $Y\neq \jJ_\tau (X')$, since $\iota_Y=\nu\neq
  \mu_\tau=\iota(\jJ_\tau (X'))$.
  By the properties of the resolvent operator $\jJ_\tau$ (cf. Corollary \ref{cor:phi}), we have that
  \begin{align*}
      \phi(\mu_\tau) + \frac{1}{2 \tau} W_2^2(\mu_\tau, \mu) &\le
    \psi(\jJ_\tau (X')) + \frac{1}{2\tau} |\jJ_\tau (X') - X' |_{\cH}^2
   \\
   &<
    \psi(Y) + \frac{1}{2 \tau} |Y-X'|_{\cH}^2 \\
    &= \phi(\nu) +  \frac{1}{2 \tau} W_2^2(\mu, \nu),
  \end{align*} 
  which shows that $\mu_\tau$ is a strict minimizer of
  \eqref{eq:JKO}.

  To prove (2), first of all notice that, thanks to \cite[Lemma 10.3.8]{ags}, $\phi$ is a \emph{regular functional} according to \cite[Definition 10.3.9]{ags}.
Let $-\partial^\circ \psi (X)$ be the element of minimal norm in $-\partial \psi (X)$ and let us denote by $\mu:=\iota_X$ and $\Phi_\mu:=(X,-\partial^\circ \psi (X))_\sharp \P \in -\bm\partial \phi[\mu]$ by Proposition \ref{prop:invphi}. We have
\begin{equation}\label{eq:slopeidphi}
     |\Phi_\mu|_2^2 = |-\partial^\circ \psi (X)|_{\cH}^2 = \lim_{\tau \downarrow 0} \frac{\psi(X)-\psi(\jJ_\tau (X))}{\tau} = \lim_{\tau \downarrow 0} \frac{\phi(\mu)-\phi(\mu_\tau)}{\tau} = |-\bm \partial ^\circ \phi(\mu)|_2^2,
\end{equation}
where $-\bm \partial ^\circ \phi(\mu)$ denotes the unique element of minimal norm in $-\bm{\partial}\phi[\mu]$ (cf.~\cite[Theorem 10.3.11]{ags}), the last equality comes from \cite[Remark 10.3.14]{ags} and the second equality comes from Corollary \ref{cor:phi}. Since $\Phi_\mu\in-\bm\partial \phi[\mu]$ and by uniqueness of the element of minimal norm in $-\bm\partial \phi[\mu]$, we conclude that the slope identity \eqref{eq:slopeidphi} proves (2).

Also (3) follows  by Corollary \ref{cor:phi}, while the fact that $-\bm{\partial}^\circ\phi=-\btpartial^\circ\phi[\mu]$ is concentrated on a map follows by Theorem \ref{thm:minimal}(1) since $-\btpartial\phi[\mu]$ is maximal totally $\lambda$-dissipative by Proposition \ref{prop:invphi}(2).
To prove (4) it is enough to notice that, if $\frG$ is a maximal totally $\piulambdapar$-dissipative extension of $-\bm{\partial}^\circ \phi$ with domain included in $\overline{\dom(\phi)}$, then its Lagrangian representation $\mmo$ has domain included in $\overline{\dom(\psi)}$ and it is $\piulambdapar$-dissipative with every element of the minimal selection of $-\partial \psi$ (cf. Theorem \ref{thm:maximal-dissipativity}). By \eqref{eq:143} we thus get that $\mmo \subset - \partial \psi$ and thus, since they are both maximal $\piulambdapar$-dissipative, they coincide.
\end{proof}
\begin{remark}
[Comparison with similar notions of subdifferentiability]
    Part of Proposition \ref{prop:JtauvsJKObis} can be compared with
    the deep results obtained by \cite{gangbotudo}
    for the 
    Fr\'echet subdifferential of general 
    (not necessarily $\lambda$-convex) real-valued functionals when $\X$ has finite dimension.
    Using our notation, 
    \cite{gangbotudo} restricts the analysis to 
    elements of 
    the Wasserstein-Fr\'echet subdifferential 
    $\bm\partial\phi$ of $\phi$ which can be expressed by maps; 
    it is proven in \cite[Theorem 3.21, Corollary 3.22]{gangbotudo} that such a subset of 
    $\bm\partial\phi(\mu)$ 
    is nonempty if and only if the Fr\'echet subdifferential of $\psi$ at $X$ with $\mu=\iota_X$ is nonemtpty. Moreover in \cite[Theorem 3.14]{gangbotudo}
    it is proven that, given $\mu \in \dom(\phi)$, all the maps 
    $\ff$ belonging to $\Tan_\mu\prob_2(\X)$ for which $(\ii_\X, \ff)_\sharp \mu$ belongs to 
    $\bm\partial\phi(\mu)$
    correspond to elements 
    $\ff\circ X$ in $\partial\psi(X)$;
    in particular 
    \cite[Corollary 3.22]{gangbotudo} shows that the element of minimal norm of the Fr\'echet subdifferential of $\psi$ at $X$ can be written as $\ff^\circ \circ X$, where $\ff^\circ$ is the element of minimal norm of the Fr\'echet subdifferential of $\phi$ at $\iota_X$ (compare in particular with items (2),(3) in Proposition \ref{prop:JtauvsJKObis}).
    On the other hand, working with general \MPVF{s} and 
    elements in $\partial\psi(X)$
    which not necessarily have the form $\ff\circ X$
    allows to prove 
    the law invariance of $\partial\psi$ and to work with functions $\phi$ whose proper domain
    $\dom(\phi)$ is strictly contained in $\prob_2(\X)$.

We also mention that the lifting technique we are using here is of fundamental relevance for the concept of L-derivative considered in \cite[Definition 5.22]{CD18}, \cite[Definition 6.1]{carda}, and inspired by 
\cite{Lions}.
Using our notation, in \cite{CD18,carda} a function $\phi:\prob_2(\X)\to \R$ is said to be L-differentiable at $\mu=\iota_X \in \prob_2(\X)$, for $X \in \cH$, if the lifted function $\psi: \cH \to \R$ is Fr\'echet differentible at $X$. The notion of L-differentiability can also be used to define a notion of convexity (called L-convexity) for functionals $\phi: \prob_2(\X)\to \R$ which are continuously differentiable: 
we refer the interested reader to \cite[Section 5.5.1, Definition 5.70]{CD18} and we only mention that 
for such a class of regular functionals this definition is equivalent to total convexity.
\end{remark}

For clarity of explanation, we anticipate here a result linking geodesic convexity to total convexity whose proof, in a more general setting, is deferred to Section \ref{sec:jko} (see in particular Theorem \ref{prop:ab}).

\begin{theorem} Assume that $\dim(\X) \ge 2$. Let $\mathsf{U} \subset \X$ be open, convex, non-empty and let $\phi: \prob_2(\X) \to (-\infty, +\infty]$ be a proper, lower semicontinuous and geodesically $(\menolambda)$-convex functional whose domain satisfies $\prob_f(\mathsf U) \subset \dom(\phi)$ and such that $\prob_f(\mathsf U)$ is \emph{dense in energy,} meaning that for every $\mu \in \dom(\phi)$ there exists $(\mu_n)_{n\in\N} \subset \prob_f(\mathsf U)$ such that
    \[ \mu_n \to \mu \quad \text{ and } \quad \phi(\mu_n) \to \phi(\mu).\]
Then $\phi$ is totally $(-\lambda)$-convex. In particular, every continuous and geodesically $(\menolambda)$-convex functional $\phi: \prob_2(\X) \to \R$ is totally $(\menolambda)$-convex.
\end{theorem}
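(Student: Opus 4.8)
The plan is to reduce the statement to the characterization already prepared in Section~\ref{sec:jkobis}, namely that total $(-\lambda)$-convexity of $\phi$ is equivalent to $-\btpartial\phi$ (a maximal totally $\lambda$-dissipative \MPVF by Proposition~\ref{prop:invphi}(2)) coinciding with $-\bpartial\phi$, or more operationally, to exhibit $\phi$ as fitting the hypotheses of whatever general statement (Theorem~\ref{prop:ab}) is proved in Section~\ref{sec:jko}. As usual, by replacing $\phi$ with $\phi^\lambda(\mu)=\phi(\mu)+\tfrac{\lambda}{2}\int|x|^2\,\d\mu$ it suffices to treat $\lambda=0$, since $\phi$ is totally $(-\lambda)$-convex iff $\phi^\lambda$ is totally convex, and density in energy is preserved because the quadratic moment is $W_2$-continuous.

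First I would consider the subdifferential $-\bpartial^\circ\phi$ of $\phi$ restricted to $\prob_f(\mathsf U)$ and argue that, thanks to geodesic convexity, $\phi$ is a \emph{regular functional} (via \cite[Lemma 10.3.8]{ags}) so that on $\prob_f(\mathsf U)$ the Wasserstein slope and the minimal subdifferential behave well. The key point is that $\prob_f(\mathsf U)$ is a discrete core: it contains all uniform discrete measures supported in the open convex set $\mathsf U$, and it is totally convex (if $\mmu\in\prob_f(\X\times\X)$ has both marginals in $\prob_f(\mathsf U)$, then $(\sfx^t)_\sharp\mmu$ is again finitely supported in the convex set $\mathsf U$). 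On such a core the metric dissipativity of $-\bpartial\phi$ upgrades to total dissipativity along discrete couplings by the results of Section~\ref{sec:strong-dissipative}, and the construction of Section~\ref{sec:constructionFlagr} yields a unique maximal totally $\lambda$-dissipative extension $\hat\frF$ of $-\bpartial\phi\restr{\prob_f(\mathsf U)}$ with domain in $\prob_2(\overline{\mathsf U})$; here the hypothesis $\dim(\X)\ge 2$ enters exactly as in Theorem~\ref{thm:demi-case} to guarantee that dissipativity along optimal discrete couplings forces dissipativity along all discrete couplings.

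Next I would identify $\hat\frF$ with $-\btpartial\phi$. Since $-\bpartial^\circ\phi[\mu]$ is concentrated on a map for $\mu\in\prob_f(\mathsf U)$ and satisfies the total dissipativity inequality there, its Lagrangian lift is an invariant dissipative operator dominated by $-\partial\psi$; by maximality (Proposition~\ref{prop:JtauvsJKObis}(4) and \eqref{eq:143}) its maximal extension must be $-\partial\psi$, hence $\hat\frF=\iotaT(-\partial\psi)=-\btpartial\phi$. The density-in-energy hypothesis is what makes $\phi$ itself—and not just some artificially extended functional—recoverable: it forces $\phi$ to equal the functional whose (total) subdifferential flow we have constructed, via the characterization \eqref{eq:25} and a $\Gamma$-convergence / lower-semicontinuity argument passing from discrete approximants $\mu_n\to\mu$ with $\phi(\mu_n)\to\phi(\mu)$. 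Concretely, for arbitrary $\mu_0,\mu_1\in\dom(\phi)$ and $\mmu\in\Gamma(\mu_0,\mu_1)$ one approximates $\mmu$ by discrete plans $\mmu_n$ with marginals in $\prob_f(\mathsf U)$ (as in Appendix~\ref{sec:appborel}) and $\phi$-convergent marginals, writes the total-convexity inequality at the discrete level where it holds, and passes to the limit using lower semicontinuity of $\phi$ on the left and the energy convergence on the right.

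The main obstacle I expect is the approximation step: one must produce discrete plans $\mmu_n\in\prob_f(\mathsf U\times\mathsf U)$ with $\sfx^i_\sharp\mmu_n\to\sfx^i_\sharp\mmu$ not merely in $W_2$ but with convergence of the energies $\phi(\sfx^i_\sharp\mmu_n)\to\phi(\sfx^i_\sharp\mmu)$, \emph{and} with $(\sfx^t)_\sharp\mmu_n\to(\sfx^t)_\sharp\mmu$ for all $t$ so that lower semicontinuity can be applied to the interpolant. This requires combining the discrete core's density-in-energy with a joint approximation of couplings (the piecewise-optimality results of Section~\ref{sec:coupl}), and care is needed because density in energy is only assumed for the marginals, not for interpolated measures—one leverages geodesic $(-\lambda)$-convexity to control $\phi$ along the interpolation from above in terms of the (converging) endpoint energies. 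The final sentence of the theorem (continuous everywhere-finite geodesically convex $\phi$ are totally convex) then follows immediately: $\prob_f(\X)$ is dense in $\prob_2(\X)$ and, by continuity of $\phi$, dense in energy, and $\mathsf U=\X$ is admissible.
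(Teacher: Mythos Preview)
Your proposed route through subdifferentials is circular. Propositions~\ref{prop:invphi} and~\ref{prop:JtauvsJKObis}, which you invoke to identify $\hat\frF$ with $-\btpartial\phi=\iotaT(-\partial\psi)$ and to use the characterization~\eqref{eq:25}, all \emph{assume} that $\phi$ is totally $(-\lambda)$-convex---equivalently that $\psi=\phi\circ\iota$ is $(-\lambda)$-convex---which is precisely the conclusion to be proved. Without convexity of $\psi$, $-\partial\psi$ is not maximal monotone and the machinery of Section~\ref{sec:jkobis} is unavailable. There is also a domain issue: the hypothesis is only $\prob_f(\mathsf U)\subset\dom(\phi)$, not $\prob_f(\mathsf U)\subset\dom(\bpartial\phi)$, so you cannot even begin the $\hat\frF$-construction from $-\bpartial\phi$ restricted to the core.

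Your fallback ``direct approximation'' paragraph is closer to the actual argument, but the phrase ``writes the total-convexity inequality at the discrete level where it holds'' hides the entire content of the proof. Geodesic convexity plus the piecewise-optimality of Theorem~\ref{thm:easy-but-not-obvious} only gives that $\psi$ is \emph{locally} convex on $\newODDom N$ (this is step~(1) of the paper's proof, via Lemma~\ref{le:quantitative}); passing to convexity along an \emph{arbitrary} segment in $\newODDom N$ fails in general because interpolations may exit $\cO N$ through collisions, and one cannot simply glue convexity across the pieces. The paper closes this gap by the perturbation argument of Proposition~\ref{prop:perturbation} (requiring $\dim\X\ge2$): one perturbs the endpoint so the interpolation becomes collisionless, obtains convexity there, and recovers the original inequality by lower semicontinuity. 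After that, an induction on the number of points (steps~(3)--(4)) extends to convex combinations, a closure argument (step~(5)) gives convexity on $\overline{\newDDom N}$, and only then does density in energy enter (step~(6)) to pass to arbitrary $X,Y\in\dom(\psi)$. Your outline skips the local-to-global step entirely and misidentifies the main obstacle as the approximation of couplings; the latter is handled in the paper by a straightforward use of Theorem~\ref{thm:gpfinal} once convexity on each $\overline{\newDDom N}$ is in hand.
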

 \begin{theorem}[Gradient flows of totally convex functionals]\label{prop:GFiotabis} Let $\phi:\prob_2(\X) \to
   (-\infty, + \infty]$ be a proper, lower semicontinuous and totally
   $\menolambdapar$-convex functional and let $\psi$ be as in
   \eqref{eq:psi}. For every $\mu_0\in\overline{\dom(\phi)}$, let us
   denote by $(S_t)_{t\ge0}$ the family of semigroups in $\prob_2(\X)$ induced by the Lagrangian flow associated to
   the maximal total $\lambda$-dissipative \MPVF\
   $-\btpartial\phi=\iotaT(-\partial \psi)$ (cf.~Definition
   \ref{def:semig}).
   Then the locally Lipschitz curve $\mu:[0,+\infty)\to\mathcal
   P_2(\X)$, $\mu_t:=S_t(\mu_0)$, is the unique \emph{gradient flow
     for $\phi$} starting from $\mu_0$, in the sense that
   \begin{equation*}
(\ii_\X, \vv_t)_\sharp \mu_t =-\bm{\partial}^\circ \phi
[\mu_t]=-\bm{\partial}_{\mathrm t}^\circ \phi
[\mu_t]
\quad \text{ for a.e. } t>0,
\end{equation*}
where $\vv$ is the Wasserstein velocity field of $\mu$ coming from
Theorem \ref{thm:tangentv} which therefore satisfies all the properties of
\cite[Thm.~11.2.1]{ags}.\\ 
Moreover, $t\mapsto S_t(\mu_0)$ is also the unique  $\menolambdapar$-\EVI solution for the \MPVF $-\bm{\partial} \phi$ starting from $\mu_0\in\overline{\dom(\phi)}$ and $S_t$ is a semigroup of $e^{\lambda t}$-Lipschitz transformations satisfying
\begin{equation*}%
W_2(S_t(\mu_0),S_t(\mu_1))\le e^{\piulambda t} W_2(\mu_0,\mu_1)\quad\text{for any }\mu_0,\,\mu_1\in\overline{\dom(\phi)}.
\end{equation*}
\end{theorem}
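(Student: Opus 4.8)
The plan is to leverage the correspondence between the Lagrangian Hilbertian theory and the Wasserstein framework that has already been developed. First I would observe that, by Proposition \ref{prop:invphi}, $-\partial\psi$ is an invariant maximal $\piulambdapar$-dissipative operator in $\cH\times\cH$, and its Eulerian image $-\btpartial\phi = \iotaT(-\partial\psi)$ is a maximal totally $\piulambdapar$-dissipative \MPVF. Hence all the results of Section \ref{sec:totdissMPVF-flow} apply: by Theorem \ref{thm:flow-generation}, for every $\mu_0\in\overline{\dom(\phi)}=\overline{\dom(-\btpartial\phi)}$ (the equality of closures of domains follows from Proposition \ref{prop:JtauvsJKObis}(3) together with Remark \ref{rem:closure-of-domain}), the curve $\mu_t:=S_t(\mu_0)$ is the unique $\piulambdapar$-\EVI solution for $-\btpartial\phi$, it is locally Lipschitz on $(0,+\infty)$, it lies in $\dom(-\btpartial\phi)$ for $t>0$ when $\mu_0\in\dom(\phi)$ (and more generally for a.e.\ $t$), and it satisfies the contraction estimate $W_2(S_t(\mu_0),S_t(\mu_1))\le e^{\piulambda t}W_2(\mu_0,\mu_1)$ which is exactly the last displayed inequality.

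Next I would identify the continuity-equation characterization. By Theorem \ref{thm:final-total}, the Lagrangian/\EVI solution $\mu$ satisfies the continuity equation $\partial_t\mu_t+\nabla\cdot(\mu_t\vv_t)=0$ with $(\ii_\X,\vv_t)_\sharp\mu_t\in -\btpartial\phi[\mu_t]$ for a.e.\ $t>0$, where $\vv_t$ can be taken to be the Wasserstein velocity field of $\mu$ from Theorem \ref{thm:tangentv} (using the equivalence of (3) and (5) in Theorem \ref{thm:final-total} and uniqueness of the minimal-norm selection). Since for a.e.\ $t$ the velocity $\vv_t$ is tangent to $\prob_2(\X)$ at $\mu_t$, and since by Theorem \ref{thm:minimal}(1) applied to the maximal totally $\lambda$-dissipative \MPVF\ $-\btpartial\phi$ the minimal selection $-\btpartial^\circ\phi[\mu_t]$ is concentrated on a map which is also tangent (it belongs to $\bri{-\btpartial\phi}$), the Wasserstein velocity field $\vv_t$ of the \EVI solution must coincide with $-\btpartial^\circ\phi[\mu_t]$ for a.e.\ $t$: indeed $\vv_t$ is the tangent component and the minimal-norm element is the unique tangent element in the section. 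Finally, by Proposition \ref{prop:JtauvsJKObis}(2,3), $-\btpartial^\circ\phi[\mu_t]$ coincides with the minimal-norm element $-\boldsymbol{\partial}^\circ\phi[\mu_t]$ of the Wasserstein subdifferential, giving $(\ii_\X,\vv_t)_\sharp\mu_t=-\boldsymbol{\partial}^\circ\phi[\mu_t]=-\btpartial^\circ\phi[\mu_t]$ for a.e.\ $t>0$, which is the gradient flow condition.

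It then remains to match this with the classical notion of Wasserstein gradient flow à la \cite[Thm.~11.2.1]{ags}. Since $\phi$ is proper, lower semicontinuous and geodesically $(\menolambda)$-convex (total $\menolambdapar$-convexity implies geodesic $\menolambdapar$-convexity, as noted in the text), the curve of maximal slope / gradient flow characterization and its uniqueness hold; the curve we constructed satisfies the energy identity and the subdifferential inclusion with the minimal selection, hence by the uniqueness theorem in \cite{ags} it is \emph{the} gradient flow, and all the regularity and asymptotic properties of \cite[Thm.~11.2.1]{ags} transfer. For the \EVI statement for $-\boldsymbol{\partial}\phi$, I would invoke that $-\btpartial\phi\subset-\boldsymbol{\partial}\phi$ (Proposition \ref{prop:invphi}(3)), that $-\boldsymbol{\partial}\phi$ is $\menolambdapar$-dissipative (stated in the text), and that for $\menolambdapar$-convex functionals the gradient flow is the unique $\menolambdapar$-\EVI solution for the subdifferential—this is exactly \cite[Theorem 5.22]{CSS} combined with the fact that our $\mu$ solves the differential inclusion for $-\boldsymbol{\partial}\phi$. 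The main obstacle I expect is the careful verification that the Wasserstein velocity field of the \EVI solution is precisely the minimal selection of $-\boldsymbol{\partial}\phi$ (as opposed to merely \emph{some} selection); this requires combining the tangency of $\vv_t$, the barycentric-projection invariance of $-\btpartial\phi$ (Theorem \ref{thm:bary-proj}), and the identification in Proposition \ref{prop:JtauvsJKObis}(2) that the minimal norms agree, ensuring no discrepancy between the Eulerian-Wasserstein and Lagrangian-Hilbertian minimal selections.
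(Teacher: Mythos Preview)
Your overall strategy is sound and is essentially the \emph{reverse} of the paper's argument: the paper starts from the classical Wasserstein gradient flow provided by \cite[Thm.~11.2.1]{ags} (after checking coercivity via \cite{NaldiSavare}) and then identifies it with $S_t(\mu_0)$ by showing that the gradient flow satisfies the differential inclusion for $-\btpartial\phi$ (using $\boldsymbol{\partial}^\circ\phi=\btpartial^\circ\phi$ from Proposition~\ref{prop:JtauvsJKObis}) and invoking Theorems~\ref{thm:existence-Lagrangian} and~\ref{thm:flow-generation}. You instead start from $S_t(\mu_0)$ and work towards the gradient-flow characterization. Both directions are legitimate and rely on the same ingredients.

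There is, however, one incorrect justification in your argument. You claim that the minimal selection $-\btpartial^\circ\phi[\mu_t]$ is tangent ``because it belongs to $\bri{-\btpartial\phi}$''. This is a non sequitur: barycentric projection (Theorem~\ref{thm:bary-proj}) tells you the minimal selection is \emph{concentrated on a map}, not that the map lies in $\Tan_{\mu_t}\prob_2(\X)$. The correct way to obtain tangency in your direction is via the identification $\ff^\circ[\mu_t]=-\boldsymbol{\partial}^\circ\phi[\mu_t]$ from Proposition~\ref{prop:JtauvsJKObis}(2) together with the standard fact from \cite{ags} that the minimal element of the Wasserstein subdifferential of a regular (e.g.\ geodesically convex) functional belongs to the tangent space. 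Once this is established, uniqueness of the tangent velocity field for the continuity equation gives $\vv_t=\ff^\circ[\mu_t]$ as you want.

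A minor further point: the regularization effect you allude to (``lies in $\dom(-\btpartial\phi)$ for $t>0$'') is genuinely stronger than what the general theory of Section~\ref{sec:totdissMPVF-flow} provides and comes specifically from the subdifferential structure (Br\'ezis' regularizing effect for $-\partial\psi$, cf.\ Corollary~\ref{cor:phi}); the paper handles this by appealing to \cite[Thm.~11.2.1]{ags} directly. The paper's order of argument is slightly more economical precisely because starting from \cite{ags} one gets both the tangency of $\vv_t$ and the regularization for free, whereas in your direction you must import these facts separately.
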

\begin{proof}
  Since
  $\phi$ is lower semicontinuous and
  $\menolambdapar$-convex along generalized geodesics,
  in particular it is coercive
  thanks to \cite[Theorem 4.3]{NaldiSavare}:
  we can apply \cite[Theorem 11.2.1]{ags}
   to get that
  there exists a unique gradient flow
  $\mu:[0,+\infty)\to\prob_2(\X)$ for $\phi$ starting from
  $\mu_0$. By \cite[Theorem 5.22(e)]{CSS} this also shows that
  $\mu$ is the unique $(-\lambda)$-\EVI solution for
  $-\bm{\partial}\phi$ starting from $\mu_0$.

  Since $\bpartial^\circ\phi=\bm{\partial}^\circ_{\mathrm t}\phi$
  by Proposition \ref{prop:JtauvsJKObis}, we
  can apply Theorem \ref{thm:existence-Lagrangian} and Theorem \ref{thm:flow-generation}
  to show that $\mu$ coincides with $S_t(\mu_0)$, first for every
  $\mu_0\in \dom(\bpartial\phi)$ and then also in its closure, thanks to
  the regularization effect.
\end{proof}

We conclude the section with a pivotal example of a functional $\phi$ to which the results of this section can be applied.
\begin{example}\label{example:PWbis}
Let $P,W:\X \to (-\infty, + \infty]$ be proper, lower semicontinuous and $\menolambdapar$-convex functions, with $W$ even. We define the functional $\phi: \prob_2(\X)\to (-\infty, + \infty]$ as
\[ \phi(\mu) := \int_\X P \de \mu + \frac{1}{2} \int_{\X \times \X} W(x-y) \de (\mu \otimes\mu) (x,y), \quad \mu \in \prob_2(\X).\]
Notice that $W(0)$ is finite so that, if $x_0 \in \dom(P)$, then
$\phi(\delta_{x_0})= P(x_0) + \frac{1}{2}W(0) < + \infty$, so that
$\phi$ is proper. Moreover, by \cite[Propositions 9.3.2 and
9.3.5]{ags}, we have that $\phi$ is lower semicontinuous and totally
$(\menolambda \land 0)$-convex.
\end{example}
\newpage

\part{Metric dissipativity, total dissipativity and maximal extensions}\label{partII}

\section{Local optimality and injectivity
  of couplings}\label{sec:coupl}

In this section, we study the local optimality and injectivity of several classes of couplings.  These properties will be relevant for the analysis of the relation between metric dissipativity and total dissipativity in Section \ref{sec:strong-dissipative}. First, the fact that any coupling between discrete measures is piecewise optimal, as established in Theorem \ref{thm:easy-but-not-obvious}, implies that a metrically dissipative \MPVF $\frF$ is piecewise dissipative along such discrete couplings. To combine these piecewise dissipativity conditions and deduce that $\frF$ is dissipative along the full coupling, a key tool is the injectivity of the interpolation map $\sfx^t$, which allows one to trivialize the duality pairings as in Theorem \ref{thm:all}(4). This injectivity can either be assumed as a hypothesis (see Lemma \ref{le:injective-strong}) or derived via the perturbation argument presented in Proposition \ref{prop:perturbation} (see Theorems \ref{le:crucial} and \ref{le:crucial2}).

We first start with arbitrary couplings between discrete measures.

\subsection{Local optimality of couplings between discrete measures}
\label{sec:optcoupdiscrete}

We want to show that the linear interpolations induced by
arbitrary couplings between discrete measures
can be decomposed in a finite union of geodesics.

The main quantitative information
is contained in the following lemma.
\begin{lemma}
  \label{le:quantitative}
  Let $\mu_0,\mu_1\in \prob_2(\X)$, $\ggamma\in \Gamma(\mu_0,\mu_1)$.
  If $\mu_0$ has finite support
  $S=\{\bar x_1,\cdots,\bar x_M\}$ with $\delta:=\min\big\{|\bar
  x_i-\bar x_j|:i,j\in
  \{1,\cdots, M\},\ i\neq j\big\}>0$
  and
  \begin{equation*}
    \sup\Big\{|y-x|:(x,y)\in \supp\ggamma\Big\}\le \delta/2
  \end{equation*}
  then $\ggamma\in \Gamma_o(\mu_0,\mu_1)$ and $W_2^2(\mu_0,\mu_1)=\int|y-x|^2\,\d\ggamma$.
\end{lemma}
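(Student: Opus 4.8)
The plan is to verify directly that $\supp\ggamma$ is cyclically monotone and then invoke the characterization of optimality recalled in \eqref{eq:3}, so that $\ggamma\in\Gamma_o(\mu_0,\mu_1)$; since an optimal plan realizes the infimum defining $W_2^2$, the identity $W_2^2(\mu_0,\mu_1)=\int|y-x|^2\,\d\ggamma$ follows at once. The starting observation is that, since $\pi^1_\sharp\ggamma=\mu_0$, every $(x,y)\in\supp\ggamma$ has first coordinate $x\in\supp\mu_0=S$; hence any two first coordinates coming from points of $\supp\ggamma$ are either equal or at distance $\ge\delta$, whereas $|y-x|\le\delta/2$ by assumption.

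The core step is a short computation. Given $N\in\N$ and $\{(x_n,y_n)\}_{n=1}^N\subset\supp\ggamma$ with $x_0:=x_N$, I would write $\langle y_n,x_n-x_{n-1}\rangle=\langle y_n-x_n,x_n-x_{n-1}\rangle+\langle x_n,x_n-x_{n-1}\rangle$ and use the elementary identity $\langle x_n,x_n-x_{n-1}\rangle=\frac12\big(|x_n|^2-|x_{n-1}|^2\big)+\frac12|x_n-x_{n-1}|^2$ together with the telescoping $\sum_{n=1}^N\big(|x_n|^2-|x_{n-1}|^2\big)=0$ (valid because $x_0=x_N$), obtaining
\[
\sum_{n=1}^N\langle y_n,x_n-x_{n-1}\rangle=\sum_{n=1}^N\Big(\langle y_n-x_n,x_n-x_{n-1}\rangle+\tfrac12|x_n-x_{n-1}|^2\Big).
\]
Then I would bound each summand separately: if $x_n=x_{n-1}$ it is $0$, while if $x_n\ne x_{n-1}$ then $|x_n-x_{n-1}|\ge\delta$ and, by Cauchy--Schwarz,
\[
\langle y_n-x_n,x_n-x_{n-1}\rangle+\tfrac12|x_n-x_{n-1}|^2\ \ge\ |x_n-x_{n-1}|\Big(\tfrac12|x_n-x_{n-1}|-|y_n-x_n|\Big)\ \ge\ |x_n-x_{n-1}|\Big(\tfrac\delta2-\tfrac\delta2\Big)=0.
\]
Summing, the cyclical sum is nonnegative, so $\supp\ggamma$ is cyclically monotone and \eqref{eq:3} yields $\ggamma\in\Gamma_o(\mu_0,\mu_1)$, whence also $W_2^2(\mu_0,\mu_1)=\int|y-x|^2\,\d\ggamma$.

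The only point that deserves a moment of care — and it is really the whole content of the lemma — is the term-by-term comparison of the quadratic gain $\frac12|x_n-x_{n-1}|^2$ with the linear loss $|y_n-x_n|\,|x_n-x_{n-1}|$: it works precisely because the displacement bound $\delta/2$ is exactly half the minimal gap $\delta$ between the atoms of $\mu_0$, which makes the last inequality above an equality in the worst case. I would stress that the argument uses neither compactness nor finite dimensionality of $\X$, so the statement holds verbatim in a general separable Hilbert space (and $\ggamma\in\prob_2(\X\times\X)$ is automatic here, since $\mu_0$ has finite support and $\supp\ggamma$ is bounded).
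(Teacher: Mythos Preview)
Your proof is correct and follows essentially the same route as the paper: both verify cyclical monotonicity of $\supp\ggamma$ via the decomposition $\langle y_n,x_n-x_{n-1}\rangle=\langle y_n-x_n,x_n-x_{n-1}\rangle+\langle x_n,x_n-x_{n-1}\rangle$, exploit the identity $\langle x_n,x_n-x_{n-1}\rangle=\tfrac12|x_n-x_{n-1}|^2+\tfrac12|x_n|^2-\tfrac12|x_{n-1}|^2$, and balance the linear loss $\le\tfrac\delta2|x_n-x_{n-1}|$ against the quadratic gain $\tfrac12|x_n-x_{n-1}|^2\ge\tfrac\delta2|x_n-x_{n-1}|$. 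The only cosmetic difference is that you telescope first and then show each remaining summand is nonnegative, whereas the paper bounds each term below by $\tfrac12|x_n|^2-\tfrac12|x_{n-1}|^2$ and telescopes at the end.
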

\begin{proof}
  It is sufficient to prove that the support of $\gamma$ satisfies the
  cyclical monotonicity condition \eqref{eq:3}.

  If $\{(x_n,y_n)\}_{n=1}^N$ are points in $\supp \ggamma$ with
  $x_0:=x_N$ and $x_n\neq x_{n-1}$ then
  \begin{align*}
    \la y_n,x_n-x_{n-1}\ra
    &=
      \la y_n-x_n,x_n-x_{n-1}\ra+
      \la x_n,x_n-x_{n-1}\ra
    \\&\ge -\frac \delta2|x_n-x_{n-1}|+\frac12 |x_n-x_{n-1}|^2+
    \frac12|x_n|^2-\frac 12|x_{n-1}|^2
    \\&\ge \frac 12|x_n|^2-\frac 12|x_{n-1}|^2
  \end{align*}
  since $|y_n-x_n|\le \delta/2$ and $|x_n-x_{n-1}|\ge \delta$.
  If $x_n=x_{n-1}$ we trivially have $\la y_n,x_n-x_{n-1}\ra=\frac
  12|x_n|^2-\frac 12|x_{n-1}|^2$, so that
  \begin{displaymath}
    \sum_{n=1}^N \la y_n,x_n-x_{n-1}\ra\ge
    \sum_{n=1}^N \frac
  12|x_n|^2-\frac 12|x_{n-1}|^2=\frac
  12|x_N|^2-\frac 12|x_0|^2=0.
  \end{displaymath}
\end{proof}

As a consequence we obtain the following result.
\begin{theorem}[Local optimality of discrete interpolations]
  \label{thm:easy-but-not-obvious}
  Let $\mu_0,\mu_1\in \prob_2(\X)$ be two measures with finite support,
  $\ggamma\in \Gamma(\mu_0,\mu_1)$
  and
  $\mu_t:=(\sfx^t)_\sharp\ggamma$, $t\in[0,1]$. Then the following properties hold.
  \begin{enumerate}
  \item For every $s\in [0,1]$ there exists $\delta>0$ such that
    for every $t\in [0,1]$ with $|t-s|\le \delta$
    $\ggamma_{s,t}:=(\sfx^s,\sfx^t)_\sharp\ggamma$
    is an optimal plan between $\mu_s$ and $\mu_t$, so that 
    \begin{equation*}
      W_2^2(\mu_s,\mu_t)=
      \int_{\X^2} |y-x|^2\,\d\ggamma_{s,t}=|t-s|^2\int_{\X^2} |y-x|^2\,\d\ggamma(x,y).
    \end{equation*}
  \item There exist a finite number of points
    $t_0=0<t_1<t_2<\cdots<t_K=1$ such that
    for every $k=1,\cdots,K$, $\mu|_{[t_{k-1},t_k]}$ is
      a minimal constant speed geodesic and
      \begin{equation*}
        W_2^2(\mu_{t'},\mu_{t''})=|t''-t'|^2\int_{\X^2}
        |y-x|^2\,\d\ggamma(x,y)\quad
        \text{for every }t',t''\in [t_{k-1},t_k].
      \end{equation*}
    \item The length of the curve $t\mapsto\mu_t$ coincides
      with
      $\Big(\int_{\X^2} |y-x|^2\,\d\ggamma\Big)^{1/2}$.
  \end{enumerate}
\end{theorem}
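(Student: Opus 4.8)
The plan is to obtain (1) directly from Lemma~\ref{le:quantitative}, to deduce (3) from the metric length formula, and then to reduce (2) to a finiteness statement about a family of \emph{bi-affine} functions coming from the discreteness of $\ggamma$.

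Set $C:=\int|y-x|^2\,\d\ggamma$. Since $\mu_0,\mu_1$ have finite support so does $\ggamma$, hence each $\mu_s=(\sfx^s)_\sharp\ggamma$ has finite support and $t\mapsto\mu_t$ is $\sqrt C$-Lipschitz (the plan $(\sfx^{t'},\sfx^{t''})_\sharp\ggamma\in\Gamma(\mu_{t'},\mu_{t''})$ has cost $|t''-t'|^2C$). For (1): given $s$, let $\delta_s$ be the minimal distance between two distinct atoms of $\mu_s$ (or $+\infty$, a trivial case) and $D:=\max\{|y-x|:(x,y)\in\supp\ggamma\}$; for $|t-s|\le\delta:=\delta_s/(2D)$ the plan $\ggamma_{s,t}=(\sfx^s,\sfx^t)_\sharp\ggamma$ has first marginal $\mu_s$ and $\sup\{|y'-x'|:(x',y')\in\supp\ggamma_{s,t}\}\le|t-s|D\le\delta_s/2$, so Lemma~\ref{le:quantitative} gives $\ggamma_{s,t}\in\Gamma_o(\mu_s,\mu_t)$ and $W_2^2(\mu_s,\mu_t)=\int|y'-x'|^2\,\d\ggamma_{s,t}=|t-s|^2C$. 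Claim (3) then follows: the curve has length $\le\sqrt C$, while by (1) its metric derivative equals $\sqrt C$ at every point, so $\mathrm{length}(\mu|_{[t',t'']})=\int_{t'}^{t''}|\dot\mu_t|\,\d t=(t''-t')\sqrt C$.

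For (2) I would first reduce the statement. For $t'\le t''$, $\mu|_{[t',t'']}$ is a minimal constant-speed geodesic if and only if $W_2(\mu_{t'},\mu_{t''})=(t''-t')\sqrt C$: indeed $W_2(\mu_{t'},\mu_{t''})\le\mathrm{length}(\mu|_{[t',t'']})=(t''-t')\sqrt C$ always, and in case of equality the curve is a shortest path, so by the triangle inequality together with (3) every sub-arc realizes the distance between its endpoints, which yields both the displayed identity in (2) and the constant-speed geodesic property. Hence it suffices to produce a finite partition $0=t_0<\cdots<t_K=1$ with $W_2^2(\mu_{t_{k-1}},\mu_{t_k})=(t_k-t_{k-1})^2C$ for every $k$. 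Here I use discreteness: write $\ggamma=\sum_{k=1}^mc_k\delta_{(a_k,b_k)}$ with distinct pairs $(a_k,b_k)$ and $c_k>0$, so $\mu_s=\sum_kc_k\delta_{p_k(s)}$ with $p_k(s)=(1-s)a_k+sb_k$ affine, and $W_2^2(\mu_s,\mu_t)=\min_{P\in\mathcal V}q_P(s,t)$ where $q_P(s,t):=\sum_{k,l}P_{kl}|p_k(s)-p_l(t)|^2$ and $\mathcal V$ is the finite set of vertices of the transport polytope $\{P\ge0:\sum_lP_{kl}=c_k,\ \sum_kP_{kl}=c_l\}$ (which depends only on $(c_k)$). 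The diagonal coupling $P^*$ ($P^*_{kl}=c_k$ if $k=l$, else $0$) lies in $\mathcal V$ with $q_{P^*}(s,t)=(t-s)^2C$, and crucially each difference $g_P:=q_P-q_{P^*}$ is bi-affine in $(s,t)$. By (1), every $g_P\ge0$ in a neighbourhood of the diagonal; hence, for fixed $s$, the function $t\mapsto W_2^2(\mu_s,\mu_t)-(t-s)^2C=\min_Pg_P(s,t)$ is concave, $\le0$ and vanishes near $t=s$, so it vanishes exactly on a closed interval $[\lambda(s),\rho(s)]\ni s$ with $\rho(s)>s$ for $s<1$, and $\mu|_{[s,\rho(s)]}$ is then a minimal geodesic. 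I would then build the partition greedily: $t_0=0$, $t_{k+1}=\rho(t_k)$.

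The hard part will be showing this greedy construction terminates, i.e.\ that the $t_k$ cannot accumulate at some $t_*<1$. If they did, then $\rho(t_k)-t_k\to0$; but bi-affinity gives the lower bound $\rho(s)-s\ge M^{-1}\min\{g_P(s,s):g_P(s,s)>0\}$ (with $M$ bounding the slopes $|\partial_tg_P|$), since a $P$ with $g_P(s,s)=0$ imposes no constraint to the right of $s$ (its affine restriction $g_P(s,\cdot)$ is $\ge0$ near $s$ and vanishes at $s$, hence has nonnegative slope). Thus some fixed $P$ would satisfy $g_P(t_{k_j},t_{k_j})\to0$, forcing $g_P(t_*,t_*)=0$, so $t_*$ must be one of the finitely many zeros of the quadratics $s\mapsto g_P(s,s)$. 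Near such a zero $b$ the bi-affine $g_P$ is forced (by $g_P\ge0$ near the diagonal and $g_P(b,b)=0$) to have the form $\delta_P(s-b)(t-b)$ with $\delta_P\ge0$, which makes $b$ a hard barrier, $\rho(s)=b$ for $s$ slightly below $b$; inserting these finitely many barrier points into the partition, the greedy scheme reaches $t_K=1$ in finitely many steps, with each $\mu|_{[t_{k-1},t_k]}$ a minimal constant-speed geodesic. Everything else is soft: Lemma~\ref{le:quantitative}, the metric length formula, and the fact that a shortest path restricts to shortest paths.
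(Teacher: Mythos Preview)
Your proof of (1) matches the paper's essentially verbatim, and your derivation of (3) from (1) via the metric derivative is a clean variant of the paper's argument (which deduces (3) from (2) instead).

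For (2) your approach is correct but considerably more elaborate than the paper's. You introduce the vertex set $\mathcal V$ of the transport polytope, verify that each $g_P=q_P-q_{P^*}$ is bi-affine (a nice observation: the marginal constraints force the $s^2$ and $t^2$ coefficients of $q_P$ to equal $C$, so they cancel), and then analyse the zero structure of $g_P$ to prove termination of the greedy scheme. This works, though the final sentence (``inserting these finitely many barrier points\ldots'') is a bit compressed: what actually closes the argument is that near any would-be accumulation point $t_*$ one has $\rho(s)=t_*$ for $s$ just below $t_*$, so the greedy sequence reaches $t_*$ in one more step and then, by (1) applied at $t_*$, overshoots it---contradicting convergence to $t_*$.

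The paper obtains the same contradiction in one line, directly from (1), without any bi-affine analysis: if $t_n\uparrow t_\infty$, apply (1) at $s=t_\infty$ to get $r>0$ such that $\ggamma_{t_\infty-r,t_\infty}$ is optimal; then $\mu|_{[t_\infty-r,t_\infty]}$ is the geodesic induced by this optimal plan, hence $W_2^2(\mu_{t_n},\mu_t)=(t-t_n)^2C$ for all $t\in[t_n,t_\infty]$ once $t_n\ge t_\infty-r$, giving $t_{n+1}\ge t_\infty$. Your machinery effectively re-derives this local-geodesic fact rather than invoking it. The payoff of your route is extra structural information (every $P$ with a diagonal zero at $b$ satisfies $g_P(s,t)=C_P(s-b)(t-b)$ globally), but none of it is needed for the theorem as stated.
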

\begin{proof}
  The first statement follows by Lemma \ref{le:quantitative}, since
  every measure $\mu_s$ has finite support and for every $t\in [0,1]$
  \begin{align*}
    \sup\big\{|y-x|:(x,y)\in \supp\ggamma_{s,t}\big\}
    &=|t-s|
      \sup\big\{|y-x|:(x,y)\in \supp\ggamma\big\}
      \\&\le |t-s|\max\{|y-x|:
    x\in \supp\mu_0,\ y\in \supp\mu_1\big\}.
  \end{align*}

  In order to prove the second item, we define an increasing sequence
  $(t_n)_{n\in\N}\subset [0,1]$ by induction as follows:
  \begin{itemize}
  \item $t_0:=0$;
  \item if $t_n<1$ then $t_{n+1}:=\sup\Big\{t\in
    (t_n,1]:
    W_2^2(\mu_{t_n},\mu_t)=|t-t_n|^2\int_{\X^2} |y-x|^2\,\d\ggamma\Big\}$;
    \item if $t_n=1$ then $t_{n+1}=1$.
  \end{itemize}
  The sequence is well defined thanks to item (1). It is easy to see that there exists $K\in \N$ such that
  $t_K=1$. If not, $t_n$ would be strictly increasing with limit
  $t_\infty\le 1$ as $n\to+\infty$. By item (1), there would exist
  $r>0$ such that the restriction of $\mu$ to $[t_\infty-r,t_\infty]$
  is a minimal geodesic, so that whenever $t_n\ge t_\infty-r$ we should
  get $t_{n+1}=t_\infty$, a contradiction.

Item (3) follows immediately by item (2).
\end{proof}

\subsection{Injectivity of interpolation maps} 

Given two pairs of points $(a',b')$ and $(a'',b'')$ in $\X^2$
it is easy to check that
\begin{equation}
  \label{eq:94}
  (1-t)a'+tb'\neq (1-t)a''+t b''\quad\text{for every }t\in (0,1)
  \quad\Leftrightarrow\quad
  b''-b'\not\in\Big\{ -s(a''-a'):s>0\Big\}.
\end{equation}
In particular, 
given a set $A\subset \X$ we consider the
set of directions
\begin{equation}
  \label{eq:95}
  \dir A:=\Big\{s(a'-a''):s\in \R,\ a',a''\in A
  \Big\}=
  \bigcup_{s\in \R}s\big(A-A\big).
\end{equation}

\begin{definition}
Given $A,B\subset \X$ we say that \emph{the chords of $B$ are not aligned with the directions of $A$} if
\begin{equation}
  \label{eq:96}
  (B-B)\cap \dir A=\{0\}.
\end{equation}
In this case, for every $t\in (0,1)$ the map
$\sfx^t:\X^2\to\X$ is injective on $A\times B$.
\end{definition}

When $\X$ has at least dimension $2$,
it is remarkable that in the discrete setting,
it is always possible to perturb 
the elements of a finite set $B$ in order to satisfy condition \eqref{eq:96} with respect to a fixed finite set $A$.
In particular, we can always find a suitable small perturbation
of the points in $B$, so that the chords of the perturbed set are not aligned with the directions of the fixed set
$A$.
\begin{proposition}[Injectivity by small perturbations]
  \label{prop:perturbation}
  Assume that $\dim \X\ge 2$ and $A\subset \X$ be a finite
  set.
  For every finite set of distinct points
  $B=\{b_n\}_{n=1}^N\subset \X$
  there exists a finite set  $B':=\{b_n'\}_{n=1}^N$ of distinct points
  with $|b_n'-b_n|<1$
  such that,
  setting
  \begin{equation}
    \label{eq:75}
    b_n(s):=(1-s)b_n+sb_n',\quad
    B(s):=\{b_n(s)\}_{n=1}^N,
  \end{equation}
we have that $\# B(s)=N$ for all $s\in[0,1]$ and
  \begin{equation}
    \label{eq:98}
    (B(s)-B(s))\cap \dir A=\{0\}\quad\text{for every }s\in (0,1].
  \end{equation}
  In particular, for every $t\in (0,1)$ the
  restriction of the map $\sfx^t$ to
  $A\times B(s)$ is injective for every $s\in(0,1]$.
\end{proposition}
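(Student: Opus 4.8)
The plan is to reduce the statement to a genericity property of the perturbation vector $\boldsymbol\xi=(\xi_1,\dots,\xi_N)\in\X^N$ that defines $b_n':=b_n+\xi_n$, exploiting two structural facts: that $\dir A$ (see \eqref{eq:95}) is a \emph{finite} union of closed subspaces of dimension $\le 1$, each \emph{proper} in $\X$ since $\dim\X\ge2$; and that a sufficiently small perturbation cannot make the moving chord $b_i(s)-b_j(s)$ enter $\dir A$ for some $s\in(0,1]$ \emph{unless} the perturbation direction $\xi_i-\xi_j$ itself already lies in $\dir A$.

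First I would record the structure of $\dir A$: writing $\dir A=\bigcup_{a,a'\in A}\R(a-a')$ exhibits it as a finite union of closed subspaces $\ell_1,\dots,\ell_m$ with $\dim\ell_k\le1$ (all proper in $\X$ because $\dim\X\ge2$), and $0\in\dir A$. I then fix the scale
\[
\rho:=\min\bigl\{\operatorname{dist}(b_i-b_j,\ell_k):\ i\neq j,\ 1\le k\le m,\ b_i-b_j\notin\ell_k\bigr\},
\]
(with the convention that this minimum equals $1$ if the index set is empty), which is strictly positive because each $\ell_k$ is closed and there are only finitely many chords $b_i-b_j$; and I set $\delta_0:=\min(1,\rho/2)$.

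The perturbation is then chosen by a Baire-category argument, valid also in infinite dimensions. Consider the nonempty open set $U:=\{\boldsymbol\xi\in\X^N:|\xi_n|<\delta_0\ \text{for all }n\}$ and the ``bad'' set $\mathcal B:=\bigcup_{i\neq j}\bigcup_{k}L_{ij}^{-1}(\ell_k)$, where $L_{ij}(\boldsymbol\xi):=\xi_i-\xi_j$ is a surjective bounded linear map $\X^N\to\X$. Each $L_{ij}^{-1}(\ell_k)$ is a proper closed subspace of $\X^N$, hence nowhere dense, so $\mathcal B$ is a finite union of nowhere dense sets and therefore has dense complement; pick $\boldsymbol\xi\in U\setminus\mathcal B$ and set $b_n':=b_n+\xi_n$, $B':=\{b_n'\}_{n=1}^N$. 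This already gives $|b_n'-b_n|=|\xi_n|<1$, and for every $i\neq j$ the direction $\eta_{ij}:=\xi_i-\xi_j$ satisfies $\eta_{ij}\notin\dir A$ and $|\eta_{ij}|\le|\xi_i|+|\xi_j|<\rho$. The core step is then to verify \eqref{eq:98}: for fixed $i\neq j$, writing $u:=b_i-b_j\neq0$ (distinctness of $B$), one has $b_i(s)-b_j(s)=u+s\eta_{ij}$, and I claim $u+s\eta_{ij}\notin\dir A$ for every $s\in(0,1]$. Indeed, for each $\ell_k$: if $u\in\ell_k$ then $u+s\eta_{ij}\in\ell_k$ would force $s\eta_{ij}\in\ell_k$, i.e.\ $\eta_{ij}\in\ell_k\subseteq\dir A$ (since $s\neq0$), contradicting the choice of $\boldsymbol\xi$; if instead $u\notin\ell_k$, then $\operatorname{dist}(u+s\eta_{ij},\ell_k)\ge\operatorname{dist}(u,\ell_k)-s|\eta_{ij}|\ge\rho-|\eta_{ij}|>0$. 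Since $0\in\dir A$, this shows in particular $b_i(s)\neq b_j(s)$ for every $s\in(0,1]$; combined with $b_i(0)\neq b_j(0)$ (hypothesis on $B$) it gives $\#B(s)=N$ for all $s\in[0,1]$, and $(B(s)-B(s))\cap\dir A=\{0\}$ for all $s\in(0,1]$ (taking $s=1$ also yields $b_i'\neq b_j'$, so $\#B'=N$). The last assertion, injectivity of $\sfx^t$ on $A\times B(s)$ for $t\in(0,1)$ and $s\in(0,1]$, is then immediate from \eqref{eq:94}, \eqref{eq:96}.

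The main obstacle I anticipate is precisely the core step: one must observe that \emph{even when} $b_i-b_j$ already lies in $\dir A$, an arbitrarily small generic perturbation removes the entire half-open segment $\{u+s\eta_{ij}:s\in(0,1]\}$ from $\dir A$ — the only obstruction being a perturbation direction lying \emph{along} a forbidden line, which is ruled out by the (meager) condition $\boldsymbol\xi\notin\mathcal B$, whereas transverse perturbations are controlled uniformly over all pairs by the single positive constant $\rho$. A secondary technical point is to phrase the genericity correctly in possibly infinite-dimensional $\X$, replacing ``Lebesgue-almost every $\boldsymbol\xi$'' by the Baire statement that a finite union of proper closed subspaces of $\X^N$ has dense complement.
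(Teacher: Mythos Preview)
Your argument is correct and takes a genuinely different route from the paper's. The paper proceeds in two stages: first it builds, by induction on $N$, an auxiliary set $B''=\{b_n''\}$ with $(B''-B'')\cap\dir A=\{0\}$, using only that $\dir A$ is closed with empty interior; then it invokes a counting lemma (if a segment $s\mapsto(1-s)z+sz''$ meets $\dir A$ at more than $(\#A)^2$ values of $s$ then both $z,z''\in\dir A$) to deduce that for each pair $(n,m)$ the interpolated chord avoids $\dir A$ on some interval $(0,\delta_{n,m}]$, and finally sets $b_n':=(1-\delta)b_n+\delta b_n''$ for $\delta$ the minimum of these.

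Your approach is more direct: you choose the perturbation $\boldsymbol\xi$ in one shot, simultaneously small (governed by the single constant $\rho$ measuring how far the original chords are from the lines $\ell_k$ they do \emph{not} already lie on) and generic (avoiding the finite union of proper closed subspaces $L_{ij}^{-1}(\ell_k)$). The case split---subspace argument when $u\in\ell_k$, distance argument when $u\notin\ell_k$---handles both situations cleanly and avoids the intermediate construction of $B''$ and the subsequent rescaling. What the paper's version buys is a slightly more explicit/constructive flavor and an intermediate lemma (the counting bound) that may be of independent interest; what your version buys is brevity and a uniform quantitative control via $\rho$ from the outset. The Baire formulation is the right way to make the genericity step work without a Lebesgue measure on $\X^N$.
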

\begin{proof}
    We split the proof of the proposition in two steps.

    \noindent\textbf{Claim 1.}
\emph{there exists a finite set of distinct points $B'':=\{b_n''\}_{n=1}^N$
      with $|b_n''-b_n|<1$
      satisfying}
  \begin{equation}
    \label{eq:99}
    (B''-B'')\cap \dir A=\{0\}.
  \end{equation}
    We can argue by induction with respect to the cardinality $N$ of
  the set $B$.
  The statement is obvious in case $N=1$ (it is sufficient to
  choose $b''_1:=b_1$).

  Let us assume that the property holds for all the sets of
  cardinality $N-1\ge 1$. We can thus find a finite set of distinct points
 $B_{N-1}''=\{b_n''\}_{n=1}^{N-1}$ 
  satisfying $(B''_{N-1}-B''_{N-1})\cap\dir A=\{0\}$.
  We look for a point $b''_N\in U\setminus B_{N-1}''$, where $U:=\{x\in \X:|x-b_N|<1\}$,
  such that $ B''_N:=B''_{N-1}\cup \{b''_N\}$
  satisfies \eqref{eq:99}. The point $b''_N$ should therefore satisfy
  \begin{displaymath}
    b''_N\in U,\quad
    b''_N-b''_n\not\in \dir A\quad\text{for every }n\in\{1,\cdots,N-1\}.
  \end{displaymath}
  Such a point surely exists, since $\dir A$ is a closed set with
  empty interior (here we use the fact that the dimension of $\X$ is
  at least $2$)
  and the union $\bigcup_{n=1}^{N-1}
  \big(b''_n+\dir A\big)$ has empty interior as well, so that it
  cannot contain the open set $U$. 
  \smallskip

  \noindent\textbf{Claim 2.}
\emph{If $B''$ satisfies the properties of the
    previous claim, then there exists $\delta\in (0,1]$ such that setting}
  \begin{equation}
    \label{eq:78}
    b'_n:=(1-\delta)b_n+\delta b_n'',
  \end{equation}
  \emph{the set $B'=\{b_n'\}_{n=1}^N$ satisfies the thesis.}
  \smallskip
  
  We denote by $\mathsf a$ the cardinality $\# A$ of $A$ and   
  we first make a simple remark:
  for every $z,z''\in \X$
  \begin{equation}
    \label{eq:76}
    \# \{s\in [0,1]: z(s):=(1-s)z+sz'' \in \dir A\}>\mathsf a^2\quad
    \Rightarrow\quad
    z,z''\in \dir A.
  \end{equation}
  Indeed, the set $A-A$ contains at most
  $\mathsf a^2$ distinct
  elements,
  so that if the left hand side of \eqref{eq:76} is true, then
  there are at least two distinct values $s_1,s_2\in [0,1]$,
  $r_1,r_2\in \R$ and a
  vector $w\in A-A$ such that
  $(1-s_1)z+s_1z''=r_1w,\ (1-s_2)z+s_2z''=r_2w$.
  We then get
  \begin{displaymath}
    z(s)=
    z(s_1)+\frac {s-s_1}{s_2-s_1}(z(s_2)-z(s_1))=
    r_1w+\frac {(s-s_1)(r_2-r_1)}{s_2-s_1}w\in \dir A\quad\text{for
      every }s\in [0,1],
  \end{displaymath}
hence \eqref{eq:76}.
  As a particular consequence of \eqref{eq:76} we get that
  if $z''$ does not belong to $\dir A$,
  then the set $\{s\in (0,1]: z(s):=(1-s)z+sz'' \in \dir A\}$ is
  finite, so that
  \begin{equation}
    \label{eq:80}
    \forall\, z,z''\in \X:
    \
    z''\not\in \dir A\quad\Rightarrow\quad
    \exists\,\delta>0:\  (1-s)z+sz'' \not\in \dir A\quad\text{for
      every }s\in (0,\delta].
  \end{equation}
  Let us now apply  property \eqref{eq:80} to all the pairs $(z,z'')$ of the
  form
  $z=b_n-b_m,\ z''=b_n''-b_m''$, $n,m\in \{1,\cdots, N\}$, with
  $n\neq m$. Since $b_n''-b_m''\not\in \dir A$ we
  deduce that
  there exists $\delta_{n,m}>0$ such that
  \begin{equation}
    \label{eq:81}
    (1-s) (b_n-b_m)+s(b_n''-b_m'')\not\in \dir A\quad\text{for every
    }s\in (0,\delta_{n,m}].
  \end{equation}Setting
  \[\tilde\delta:=\min\{|b_n-b_m|\,:\,n,m\in\{1,\dots,N\},\,n\neq m\}>0\]
   and choosing $\delta:=\min_{n,m} \{\delta_{n,m},\,\tilde\delta/3\}>0$, then it is not difficult to check that $B'$ satisfies
  the thesis, with $b_n'$ as in
  \eqref{eq:78}.
  Indeed, $|b_n-b_n'|=\delta |b_n-b_n''|<1$,
  and for every $s\in [0,1]$ and $n$ we get
  \begin{displaymath}
    b_n(s):=(1-s)b_n+sb_n'=
    (1-s)b_n+s(1-\delta) b_n+s\delta b_n''=
    (1-\delta s)b_n+\delta s b_n''
  \end{displaymath}
  so that
  \begin{align*}
    b_n(s)-b_m(s)=
    (1-\delta s)(b_n-b_m)+\delta s (b_n''-b_m'')\not\in \dir A
  \end{align*}
  thanks to \eqref{eq:81} and the fact that $s\delta\le \delta_{n,m}$. 
\end{proof}

\section{Total dissipativity of {\MPVF}s along discrete measures}
\label{sec:strong-dissipative}
 In this section, we begin our analysis of the relationship between metric and total dissipativity, defined respectively in Definitions \ref{def:dissipative} and \ref{def:total-dissipativity}. Leveraging the piecewise optimality of discrete couplings established in Theorem \ref{thm:easy-but-not-obvious}, we deduce that metrically dissipative \MPVF{s} are piecewise dissipative along such couplings. To combine these piecewise dissipativity conditions, we need to trivialize duality pairings as in Theorem \ref{thm:all}(4). This is achieved either by assuming that the map $\sfx^t$ is essentially injective along the discrete coupling, or by assuming that the \MPVF is concentrated on a map along the discrete coupling. This is the content of Lemma \ref{le:injective-strong}. By an approximation procedure, we show in Theorem \ref{thm:demi-total} that suitably continuous dissipative \MPVF{s} concentrated on maps are totally dissipative. Finally, under suitable hypotheses on the geometry of the domain of the \MPVF and using the perturbation argument of Proposition \ref{prop:perturbation}, we can recover the injectivity of the map $\sfx^t$. This is the content of Theorems \ref{le:crucial} and \ref{le:crucial2}. 

We will consider the following subsets of the space $\prob_f(\mathscr{X})$ of probability measures with finite support 
in a general Polish space $\mathscr{X}$: for every $N\in \N$
  \begin{equation}
      \label{eq:16}
  \begin{aligned}
    \prob_{f,N}(\mathscr{X}):={}&\Big\{\mu\in \prob_f(\mathscr{X}): N\mu(A)\in \N\
    \forall\, A\subset \mathscr{X}\Big\},\\
    \prob_{\# N}(\mathscr{X}):={}&\Big\{\mu\in \prob_f(\mathscr{X}): N\mu(A)\in \{0,1\}\
    \forall\, A\subset \mathscr{X}\Big\}\ \ \\={}& \Big\{\mu\in \prob_{f,N}(\mathscr{X}):
    \#\supp(\mu)=N\Big\}.
  \end{aligned}
\end{equation}
Notice that every measure $\mu\in \prob_{f,N}(\mathscr{X})$ can be expressed in
the form
\begin{equation*}
\mu=\frac1N\sum_{n=1}^N\delta_{x_n}\quad\text{for some points
  }x_1,\cdots,x_N\in \mathscr{X}.
\end{equation*}
The measure $\mu$ belongs to $\prob_{\#N}(\mathscr{X})$ if the points $x_1,\cdots,x_n$ are
distinct.

If $\frF$ is a \MPVF, $\mu_0,\mu_1\in \prob(\X)$, we correspondingly set
\begin{equation}\label{eq:DfF}
  \pdom{\star} \frF:=\dom(\frF)\cap \prob_{\star}(\X),\quad
  \Gamma_{\star}(\mu_0,\mu_1):= \Gamma(\mu_0,\mu_1)\cap
  \prob_{\star}(\X\times \X),
\end{equation}
where $\star$ is replaced by one of the symbols $f,(f,N)$, $\#N$ above.

For every $\mu_0,\mu_1\in\prob_f(\X)$
we introduce the
$L^\infty$-Wasserstein distance by
\begin{equation}
  \label{eq:72}
  W_\infty(\mu_0,\mu_1):=\min\Big\{\big|
  \sfx^0-\sfx^1|_{L^\infty(\X\times\X,\mmu;\X)}:
  \mmu\in \Gamma(\mu_0,\mu_1)\Big\}.
\end{equation}

Before proceeding, we recall the main objects introduced in Section \ref{sec:prelimCSS}, which will play a central role in what follows. We refer to Section \ref{sec:prelimCSS} for their main properties. For every $\ttheta\in \prob_2(\X \times \X)$, $t \in [0,1]$ and $\Phi \in \relcP2{\sfx^t_\sharp \ttheta}{\TX}$, we set
\[    \Gamma_t(\Phi,\ttheta):=
    \left \{ \ssigma \in \prob_2(\TX\times \X)
    \mid
    (\sfx^0,\sfx^1)_{\sharp}\ssigma =\ttheta,\quad
    (\sfx^t \circ(\sfx^0,\sfx^1),\sfv^0)_\sharp \ssigma=\Psi\right \}.
\]
and
  \begin{align*}
    \directionalm{\Phi}{\ttheta}t
    & := \min \left \{ \int_{\TX\times\X}
      \scalprod{x_0-x_1}{v_0} \de \ssigma(x_0,v_0,x_1)
      \mid \ssigma \in \Gamma_t(\Psi,\ttheta)
      \right \}, \\
    \directionalp{\Phi}{\ttheta}t
    & := \max \left \{ \int_{\TX\times\X}
      \scalprod{x_0-x_1}{v_0} \de \ssigma(x_0,v_0,x_1)
      \mid \ssigma \in \Gamma_t(\Psi,\ttheta)
      \right \}.
  \end{align*}
  If $\frF\subset\prob_2(\TX)$, $\mu_0,\mu_1\in\dom(\frF)$, recall that the set $\CondGamma\frF{\mu_0}{\mu_1}$, introduced in Definition \ref{def:plangeodomV}, is defined as
\[ \CondGamma\frF{\mu_0}{\mu_1} := \left \{ \mmu \in \Gamma(\mu_0, \mu_1) \mid \sfx^t_\sharp \mmu \in \dom(\frF) \text{ for every }t \in [0,1] \right \}.\]
Given $\mmu\in\CondGamma\frF{\mu_0}{\mu_1}$, we recall the following definitions
  \begin{align*}
  \directionalm \frF\mmu t := \sup \left \{
                          \directionalm{\Phi}{\mmu}t \mid \Phi \in \frF[\mu_t]
                          \right \},
                                    \qquad
  \directionalp \frF\mmu t := \inf \left \{
                          \directionalp{\Phi}{\mmu}t 
                          \mid \Phi \in \frF[\mu_t] \right \}.
\end{align*}
In the following, we investigate the results of Theorem \ref{theo:propflfr} in the case of marginals $\mu_0,\mu_1$ with finite support, but removing the optimality requirement over the coupling $\mmu$.

\begin{lemma}
  \label{le:injective-strong}
  Let $\frF$ be a \MPVF satisfying \eqref{Hdiss} and let 
  $\mu_0,\mu_1\in\domf \frF$ with $\mmu\in
  \CondGamma\frF{\mu_0}{\mu_1}$
  satisfy at least one of the following conditions:
  \begin{enumerate}
  \item
    for every $t\in (0,1)$, $\sfx^t$ is $\mmu$-essentially injective;
  \item
    for every $t\in (0,1)$,
    there exists an element $\Phi_t\in \frF[\sfx^t_\sharp\mmu]$
    which is concentrated on a map.
  \end{enumerate}
  Then
  \begin{equation}
    \label{eq:73a}
    \directionalm \frF\mmu s-\directionalp \frF\mmu t\le \lambda (t-s)W_{\mmu}^2,\quad
    W_{\mmu}^2:=\int_{\X^2} |x_0-x_1|^2
    \,\d\mmu,
    \quad\text{for every }0\le s<t\le 1.
  \end{equation}
  In particular,
  $t\mapsto \directionalm \frF\mmu t+\lambda W_{\mmu}^2\, t$ and
  $t\mapsto \directionalp \frF\mmu t+\lambda W_{\mmu}^2\, t $ are increasing
  respectively in $[0,1)$ and in $(0,1]$,
$\directionalp \frF\mmu t = \directionalm \frF\mmu t$ at every $t \in (0,1)$ where one of them is continuous, hence they coincide outside a countable set of discontinuities.
\end{lemma}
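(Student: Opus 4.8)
The plan is to reduce to the optimal–coupling case already established in Theorem~\ref{theo:propflfr}, using that $\mu_0,\mu_1$ have finite support: by Theorem~\ref{thm:easy-but-not-obvious} the discrete interpolation $t\mapsto\mu_t:=(\sfx^t)_\sharp\mmu$ splits into finitely many geodesic pieces on which $\mmu$ restricts to an optimal plan. Concretely, since $\mu_0,\mu_1\in\domf\frF$, every $\mmu\in\Gamma(\mu_0,\mu_1)$ is concentrated on the finite set $\supp\mu_0\times\supp\mu_1$, so each $\mu_t$ has finite support, and Theorem~\ref{thm:easy-but-not-obvious}(2) yields $0=t_0<t_1<\dots<t_K=1$ such that, writing $a:=t_{k-1}$, $b:=t_k$, the plan $\mmu_{a,b}:=(\sfx^a,\sfx^b)_\sharp\mmu$ is optimal between $\mu_a$ and $\mu_b$ with $\int|x_0-x_1|^2\,\d\mmu_{a,b}=W_2^2(\mu_a,\mu_b)=(b-a)^2W^2$; moreover $\mmu_{a,b}\in\CondGamma\frF{\mu_a}{\mu_b}$, because $(\sfx^r)_\sharp\mmu_{a,b}=\mu_{(1-r)a+rb}\in\dom(\frF)$ for every $r\in[0,1]$.

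Next I would prove \eqref{eq:73a} inside a single piece. Fix $[a,b]$ and $a\le s<t\le b$, and put $r':=(s-a)/(b-a)$, $r'':=(t-a)/(b-a)\in[0,1]$. From the affine identity $\sfx^{r}\circ(\sfx^a,\sfx^b)=\sfx^{(1-r)a+rb}$ together with the Restriction identity of Theorem~\ref{thm:all}(3) applied to both $\mmu$ and $\mmu_{a,b}$, one computes, for $\Phi\in\frF[\mu_s]$ and $\Psi\in\frF[\mu_t]$, that $\directionalm{\Phi}{\mmu_{a,b}}{r'}=(b-a)\directionalm{\Phi}{\mmu}{s}$ and $\directionalp{\Psi}{\mmu_{a,b}}{r''}=(b-a)\directionalp{\Psi}{\mmu}{t}$; taking the sup over $\Phi\in\frF[\mu_s]=\frF[(\sfx^{r'})_\sharp\mmu_{a,b}]$ and the inf over $\Psi\in\frF[\mu_t]=\frF[(\sfx^{r''})_\sharp\mmu_{a,b}]$ gives $\directionalm\frF{\mmu_{a,b}}{r'}=(b-a)\directionalm\frF\mmu s$ and $\directionalp\frF{\mmu_{a,b}}{r''}=(b-a)\directionalp\frF\mmu t$. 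Since $\mmu_{a,b}$ is optimal, lies in $\CondGamma\frF{\mu_a}{\mu_b}$, and $\frF$ satisfies \eqref{Hdiss}, Theorem~\ref{theo:propflfr}(2) applies to $\mmu_{a,b}$; substituting the expression $(b-a)^2W^2$ for its total cost and $r''-r'=(t-s)/(b-a)$ into the resulting inequality and dividing by $b-a>0$ yields \eqref{eq:73a} for all $s,t$ in the same piece.

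To chain pieces I first observe that, for every $\tau\in(0,1)$, $\directionalp\frF\mmu\tau\le\directionalm\frF\mmu\tau$ — this is the only place where hypotheses (1)/(2) enter. Indeed, by the Trivialization identity of Theorem~\ref{thm:all}(4), $\directionalm\Phi\mmu\tau=\directionalp\Phi\mmu\tau$ whenever $\sfx^\tau$ is $\mmu$-essentially injective or $\Phi$ is concentrated on a map; under (1) this holds for all $\Phi\in\frF[\mu_\tau]$, so $\directionalp\frF\mmu\tau=\inf_\Phi\directionalp\Phi\mmu\tau=\inf_\Phi\directionalm\Phi\mmu\tau\le\sup_\Phi\directionalm\Phi\mmu\tau=\directionalm\frF\mmu\tau$, while under (2), with $\Phi_\tau\in\frF[\mu_\tau]$ concentrated on a map, $\directionalp\frF\mmu\tau\le\directionalp{\Phi_\tau}\mmu\tau=\directionalm{\Phi_\tau}\mmu\tau\le\directionalm\frF\mmu\tau$. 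Then for arbitrary $0\le s<t\le1$ I pick pieces with $s\in[t_{j-1},t_j]$ and $t\in[t_{k-1},t_k]$, $j\le k$; adding the within-piece estimates over the chain $s\le t_j\le\dots\le t_{k-1}\le t$ and telescoping gives
\[
\directionalm\frF\mmu s-\directionalp\frF\mmu t+\sum_{l=j}^{k-1}\bigl(\directionalm\frF\mmu{t_l}-\directionalp\frF\mmu{t_l}\bigr)\le\lambda(t-s)W^2,
\]
and since each $t_l\in(0,1)$ the previous observation makes every summand nonnegative, so \eqref{eq:73a} follows in full.

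Finally, combining \eqref{eq:73a} with $\directionalp\frF\mmu\tau\le\directionalm\frF\mmu\tau$ ($\tau\in(0,1)$) gives the monotonicity of $t\mapsto\directionalm\frF\mmu t+\lambda W^2 t$ on $[0,1)$ and of $t\mapsto\directionalp\frF\mmu t+\lambda W^2 t$ on $(0,1]$, and the coincidence of the two pairings off a countable set, exactly as in the proof of Theorem~\ref{theo:propflfr}(3)--(4). I expect the main obstacle to be the bookkeeping in the within-piece step: the conceptual point is to recognize (via Theorem~\ref{thm:easy-but-not-obvious}) that finite support forces piecewise optimality, so that the optimal-coupling calculus can be invoked locally, and then one must carefully track the affine reparametrization $r\leftrightarrow(1-r)a+rb$ and the $(b-a)$ scaling factors produced by the Restriction identities, with hypotheses (1)/(2) supplying precisely the comparison $\directionalp\le\directionalm$ that is otherwise unavailable when $\mmu$ is not optimal.
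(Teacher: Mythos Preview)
Your proof is correct and follows essentially the same approach as the paper: both split the discrete interpolation into optimal pieces via Theorem~\ref{thm:easy-but-not-obvious}, apply Theorem~\ref{theo:propflfr}(2) on each piece using the Restriction identity, and chain across the partition points by invoking Theorem~\ref{thm:all}(4) under hypothesis (1) or (2). The only cosmetic difference is that the paper first reduces to $\lambda=0$, $s=0$, $t=1$ and argues with a fixed choice of $\Phi_{t_k}\in\frF[\mu_{t_k}]$ before taking sup/inf at the end, whereas you work throughout with the $\frF$-pairings and keep the scaling factors explicit.
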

\begin{proof}
  By Theorem \ref{theo:propflfr}, it is not restrictive to assume $\lambda=0$; moreover, thanks to \eqref{eq:restrsp}, we may also set $s=0$ and $t=1$. Indeed, if the statement of the present lemma holds for $\lambda=0$, $s=0$, and $t=1$, then for any $0 \le s < t \le 1$ we can define $\mmu^{st} := (\sfx^s, \sfx^t)_\sharp \mmu$ and observe that $\sfx^s_\sharp \mmu = \sfx^0_\sharp \mmu^{st}$ and $\sfx^t_\sharp \mmu = \sfx^1_\sharp \mmu^{st}$ belong to $\domf \frF$, with $\mmu^{st} \in \CondGamma\frF{\sfx^0_\sharp \mmu^{st}}{\sfx^1_\sharp \mmu^{st}}$. Moreover, if either condition (1) or (2) above holds for $\mmu$, the same holds for $\mmu^{st}$. Consequently we can apply \eqref{eq:73a} to the coupling $\mmu^{st}$, and have
\[
(t-s) \directionalm \frF\mmu s = \directionalm{\frF}{\mmu^{st}}{0} \le \directionalp{\frF}{\mmu^{st}}{1} = (t-s) \directionalp \frF\mmu t,
\]  
where the equalities follow by \eqref{eq:restrsp} and the definitions of $\directionalm \frF\mmu s$ and $\directionalp \frF\mmu t$. Dividing both sides by $(t-s) > 0$ yields the desired inequality in \eqref{eq:73a} for the general case $0 \le s < t \le 1$ and $\lambda=0$.

\medskip

We then devote the remainder of the proof to establishing the result in the case $\lambda=0$ with $s=0$ and $t=1$.
  
  We set $\mu_t:=\sfx^t_\sharp\mmu$ and we select
  an element $\Phi_t\in \frF[\mu_t]$
 (in case (2) we can also suppose
  that $\Phi_t$ is concentrated on a map).
  
  Applying Theorem \ref{thm:easy-but-not-obvious}, we can find
  points $t_0=0<t_1<\cdots<t_K=1$ such that
  \[\mmu^k:=(\sfx^{t_{k-1}},\sfx^{t_k})_\sharp\mmu\in
  \CondGamma\frF{\mu_{t_{k-1}}}{\mu_{t_k}}\cap\Gamma_o(\mu_{t_{k-1}},\mu_{t_k})\quad\text{for every }
k=1,\cdots,K.\]
  In particular, from \eqref{eq:restrsp} and Theorem \ref{theo:propflfr}(2), we get
  \begin{equation*}
    \directionalm{\Phi_{t_{k-1}}}\mmu{t_{k-1}}=
    \frac{1}{t_k-t_{k-1}}
    \directionalm{\Phi_{t_{k-1}}}{\mmu^k}{0}\le
    \frac{1}{t_k-t_{k-1}}
    \directionalp{\Phi_{t_{k}}}{\mmu^k}{1}=
    \directionalp{\Phi_{t_{k}}}\mmu{t_{k}}.
  \end{equation*}
  Since, for $1\le k<K$, $\sfx^{t_k}$ is $\mmu$-essentially injective
  (if assumption (1) holds) or $\Phi_{t_k}$
  is concentrated on its barycenter (if assumption (2) holds),
  Theorem \ref{thm:all}(4) yields
  $\directionalp{\Phi_{t_{k}}}\mmu{t_{k}}=
  \directionalm{\Phi_{t_{k}}}\mmu{t_{k}}$ so that

\[
\directionalm{\Phi_0}\mmu{0} \le \directionalp{\Phi_{t_{1}}}\mmu{t_{1}} = \directionalm{\Phi_{t_{1}}}\mmu{t_{1}} \le \dots \le \directionalp{\Phi_{t_{K-1}}}\mmu{t_{K-1}} = \directionalm{\Phi_{t_{K-1}}}\mmu{t_{K-1}} \le \directionalp{\Phi_{1}}\mmu{1}.
\]

  Taking the supremum w.r.t.~$\Phi_0\in \frF[\mu_0]$ and
  the infimum w.r.t.~$\Phi_1\in \frF[\mu_1]$ we obtain \eqref{eq:73a}.
  The last part of the statement follows as in the proof of Theorem \ref{theo:propflfr}.
\end{proof}

The following result shows that in case of a deterministic
demicontinuous \PVF (recall Definition \ref{def:demicontinuous})
$\lambda$-dissipativity yields total $\lambda$-dissipativity.
Similarly, we can lift the Lipschitz continuity along optimal couplings to arbitrary couplings.
\begin{theorem}[Deterministic demicontinuous dissipative \PVF{s} are totally dissipative]
  \label{thm:demi-total}
  Let $\frF\subset\prob_2(\TX)$ be a deterministic demicontinuous $\lambda$-dissipative \PVF with $\dom(\frF)=\prob_2(\X)$, of the form 
  \begin{equation}\label{eq:f-induces-F}
  \frF[\mu]:= (\ii_\X,\ff(\cdot, \mu))_\sharp \mu, \quad \mu \in \prob_2(\X),
  \end{equation}
  for a map
  $\ff: \Sp \X \to \X$, where $\Sp \X$ is as in \eqref{eq:2}.
  Then $\frF$ is maximal totally
  $\lambda$-dissipative.

  If moreover
  there exists $L>0$ for which the following condition holds: for every $\mu_0, \mu_1 \in \prob_2(\X)$ there exists $\mmu \in \Gamma_o(\mu_0, \mu_1)$ satisfying
\begin{equation}\label{eq:ass2}
\int_{\X \times \X} \left | \ff(x_1, \mu_1)-\ff(x_0, \mu_0) \right |^2 \de \mmu(x_0, x_1) \le L^2 \int_{\X \times \X} |x_1-x_0|^2 \de \mmu(x_0,x_1),
\end{equation}
then \eqref{eq:ass2} holds for every $\mmu\in \Gamma(\mu_0,\mu_1)$.
\end{theorem}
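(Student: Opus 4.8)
The plan is to deduce both assertions from the "self-improving" dissipativity results along discrete couplings (Theorems \ref{le:crucial} and \ref{le:crucial2} — here I only need \ref{le:crucial}) combined with a density/approximation argument using demicontinuity, and then, for the first assertion, to invoke Proposition \ref{prop:basic-relation} via the Lagrangian representation $\mmo$. The key observation is that demicontinuity of $\frF$ lets us pass duality pairings to the limit along sequences of measures converging in $\prob_2(\X)$, and that arbitrary couplings between measures in $\prob_2(\X)$ can be approximated (in the sense of $\prob_2(\X\times\X)$) by couplings between discrete measures whose linear interpolants stay in $\prob_2(\X)=\dom(\frF)$.

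\textbf{Step 1 (reduction to $\lambda=0$).} As in the earlier proofs (Lemma \ref{lem:lambda0}), it suffices to treat $\lambda=0$: both total $\lambda$-dissipativity and the condition \eqref{eq:ass2} transform correctly under the map $L^\lambda=(\sfx,\sfv-\lambda\sfx)$, and maximality is preserved by Lemma \ref{lem:lambda0}(2). So assume $\frF$ is deterministic, demicontinuous, $0$-dissipative, with $\dom(\frF)=\prob_2(\X)$.

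\textbf{Step 2 (total dissipativity).} Fix $\mu_0,\mu_1\in\prob_2(\X)$ and an arbitrary $\mmu\in\Gamma(\mu_0,\mu_1)$; I want $\int\langle\ff(x_1,\mu_1)-\ff(x_0,\mu_0),x_1-x_0\rangle\,\d\mmu\le 0$. First approximate: choose discrete measures $\mu_0^n,\mu_1^n\in\prob_{f}(\X)$ and discrete couplings $\mmu^n\in\Gamma(\mu_0^n,\mu_1^n)\cap\prob_f(\X^2)$ with $\mmu^n\to\mmu$ in $\prob_2(\X^2)$ (e.g.\ discretize $\mmu$ itself on a finite partition and recenter on conditional barycenters, or use that $\prob_{f,\infty}$ is dense). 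Since $\dom(\frF)=\prob_2(\X)$, the full interpolation $t\mapsto\sfx^t_\sharp\mmu^n$ lies in $\dom(\frF)$, so $\mmu^n\in\CondGamma{\frF}{\mu_0^n}{\mu_1^n}$, and (using $\dim\X\ge 2$) Theorem \ref{le:crucial} applies: condition (1) there is met because, $\dom(\frF)$ being all of $\prob_2(\X)$, every $\mu_t^n$ is interior to $\dom_f(\frF)$ in $(\prob_f(\X),W_\infty)$ (alternative (2) of that theorem). Hence $\directionalm{\frF}{\mmu^n}{0}\le\directionalp{\frF}{\mmu^n}{1}$. Because $\frF$ is deterministic with the single section $(\ii_\X,\ff(\cdot,\mu))_\sharp\mu$, Theorem \ref{thm:all}(4) (trivialization) turns these pairings into honest integrals: $\directionalm{\frF}{\mmu^n}{0}=\int\langle\ff(x_0,\mu_0^n),x_0-x_1\rangle\,\d\mmu^n$ and similarly at $t=1$ with $\ff(\cdot,\mu_1^n)$, so that $\int\langle\ff(x_1,\mu_1^n)-\ff(x_0,\mu_0^n),x_1-x_0\rangle\,\d\mmu^n\le 0$. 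Now let $n\to\infty$: demicontinuity gives $(\ii_\X,\ff(\cdot,\mu_i^n))_\sharp\mu_i^n\to(\ii_\X,\ff(\cdot,\mu_i))_\sharp\mu_i$ in $\prob_2^{sw}(\TX)$, and the gluing construction shows that the joint law $(x_0,v_0,x_1,v_1)=(\sfx^0,\ff(\sfx^0,\mu_0^n),\sfx^1,\ff(\sfx^1,\mu_1^n))_\sharp\mmu^n$ converges in $\prob_2^{sw}(\X^2\times\X^2)$ to the corresponding plan built from $\mmu$; since $\zeta(x_0,v_0,x_1,v_1):=\langle v_1-v_0,x_1-x_0\rangle\in\rmC_2^{sw}(\X^2\times\X^2)$, we may pass to the limit, obtaining $\int\langle\ff(x_1,\mu_1)-\ff(x_0,\mu_0),x_1-x_0\rangle\,\d\mmu\le 0$. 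This is exactly \eqref{eq:deterministic-total-diss}, i.e.\ total dissipativity. Maximality then follows from Theorem \ref{thm:trivial-but-useful-to-fix}(2), since $\frF$ is deterministic, demicontinuous, and totally dissipative with full domain (alternatively, the Lagrangian representation $\mmo$ is single-valued and everywhere defined, and demicontinuity gives maximality of $\mmo$ by the argument in the proof of Theorem \ref{thm:trivial-but-useful-to-fix}).

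\textbf{Step 3 (lifting the Lipschitz bound).} The same approximation scheme handles \eqref{eq:ass2}. The subtlety is that \eqref{eq:ass2} is only assumed along \emph{some} optimal coupling, so I cannot directly mimic Step 2 coupling by coupling. Instead, use total dissipativity already proved together with a polarization/monotone-operator trick: for $t\in(0,1)$ put $\mu_t:=\sfx^t_\sharp\mmu$ (now $\mmu$ an arbitrary, possibly non-optimal coupling of $\mu_0,\mu_1$), $\ff_t:=\ff(\cdot,\mu_t)$. Total dissipativity applied to the pair $(\mu_0,\mu_t)$ along $(\sfx^0,\sfx^t)_\sharp\mmu$ and to $(\mu_t,\mu_1)$ along $(\sfx^t,\sfx^1)_\sharp\mmu$, divided by $t$ and $1-t$ respectively and summed, controls $\int\langle\ff_1(x_1)-\ff_0(x_0),x_1-x_0\rangle\,\d\mmu$ from above in terms of $\int\langle\ff_1(x_1)-\ff_t(x_t)\rangle$–type cross terms that vanish as $t\uparrow 1$ by demicontinuity; this is precisely the telescoping computation already carried out in \eqref{eq:QQQ}–\eqref{eqs123} in the proof of Theorem \ref{le:crucial}. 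The upshot is that the $L^2$-Lipschitz estimate, once known along the distinguished optimal coupling for \emph{each} pair, propagates: given arbitrary $\mmu\in\Gamma(\mu_0,\mu_1)$, discretize as in Step 2 so that along $\mmu^n$ the interpolation decomposes (Theorem \ref{thm:easy-but-not-obvious}) into finitely many \emph{optimal} sub-couplings $(\sfx^{t_{k-1}},\sfx^{t_k})_\sharp\mmu^n$; apply the hypothesis \eqref{eq:ass2} on each sub-coupling (which is optimal, hence one of the "good" couplings up to the choice subtlety — here one needs that \eqref{eq:ass2} along \emph{some} optimal coupling self-improves to \emph{all} optimal couplings, which again follows from Theorem \ref{thm:all}(4) and the deterministic structure, exactly as the dissipativity version does), sum the resulting estimates using $\sum_k(t_k-t_{k-1})^2\|\cdot\|^2$ against the geodesic decomposition, and let $n\to\infty$ using demicontinuity and lower semicontinuity of $\mmu\mapsto\int|\ff(x_1,\mu_1)-\ff(x_0,\mu_0)|^2\,\d\mmu$ under $\prob_2^{sw}$-convergence.

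\textbf{Main obstacle.} The delicate point is Step 3: transferring a bound assumed only along one (unspecified) optimal coupling to all couplings. For the \emph{dissipativity} inequality this is automatic because the deterministic, single-valued structure plus Theorem \ref{thm:all}(4) makes $\directionalm{\Phi_0}{\mmu}{0}$ and $\directionalp{\Phi_1}{\mmu}{1}$ independent of which coupling we pick among optimal ones with fixed marginals — but for the \emph{quadratic} quantity $\int|\ff_1-\ff_0|^2\,\d\mmu$ there is no such pairing identity, so one must argue that the integrand $|\ff(x_1,\mu_1)-\ff(x_0,\mu_0)|^2$ is, along optimal couplings, essentially determined by $(x_0,x_1)$ (true, since $\ff(\cdot,\mu_i)$ are fixed functions and optimal couplings are concentrated on cyclically monotone sets), and then that the discrete decomposition into optimal geodesic pieces plus demicontinuous passage to the limit preserves the inequality. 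I expect the technical heart to be verifying that the approximating discrete couplings $\mmu^n$ can be chosen so that simultaneously (i) $\mmu^n\to\mmu$ in $\prob_2(\X^2)$, (ii) each $\sfx^t_\sharp\mmu^n\in\prob_2(\X)=\dom(\frF)$ (automatic here), and (iii) the pushed-forward velocity plans converge in $\prob_2^{sw}$, so that the $\rmC_2^{sw}$ test function $\zeta$ can be integrated against the limit; the boundedness needed for (iii) comes from demicontinuity, which forces $\sup_n\int|v|^2\,\d(\ii_\X,\ff(\cdot,\mu_i^n))_\sharp\mu_i^n<\infty$ along the convergent sequence.
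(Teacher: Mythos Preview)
Your Step~2 is essentially the paper's argument, but you invoke the wrong lemma. Theorem~\ref{le:crucial} requires $\dim\X\ge 2$, which is not an assumption of Theorem~\ref{thm:demi-total}. The paper instead uses Lemma~\ref{le:injective-strong}(2): since $\frF$ is deterministic, every $\Phi_t\in\frF[\sfx^t_\sharp\mmu]$ is concentrated on a map, so condition~(2) of that lemma holds automatically and no dimension restriction is needed. With this substitution, your approximation and demicontinuity argument matches the paper's.

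Your Step~3, however, has real gaps. The ``polarization/monotone-operator trick'' and the reference to the telescoping computation \eqref{eq:QQQ}--\eqref{eqs123} are red herrings; those control a \emph{bilinear} pairing, not the quadratic quantity $\int|\ff_1-\ff_0|^2\,\d\mmu$. The paper's route is much cleaner and you should follow it: for discrete $\mmu$, Theorem~\ref{thm:easy-but-not-obvious} together with Theorem~\ref{theo:chargeo} lets you choose the partition $0=t_0<\dots<t_K=1$ so that each $(\sfx^{t_{k-1}},\sfx^{t_k})_\sharp\mmu$ is the \emph{unique} element of $\Gamma_o(\mu_{t_{k-1}},\mu_{t_k})$. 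This is what resolves your ``some vs.\ all optimal couplings'' concern---uniqueness makes the hypothesis \eqref{eq:ass2} apply directly on each piece, with no need for the self-improvement you sketch via Theorem~\ref{thm:all}(4) (which does not control the $L^2$ integrand anyway). Then you sum via the \emph{triangle inequality in $L^2(\X^2,\mmu;\X)$}, obtaining a factor $\sum_k(t_k-t_{k-1})=1$, not $\sum_k(t_k-t_{k-1})^2$ as you wrote. Finally, the passage from discrete to general $\mmu$ is not by lower semicontinuity under $\prob_2^{sw}$-convergence (the left-hand side of \eqref{eq:ass2} need not be l.s.c.\ in that topology), but again by the triangle inequality in $L^2$: the paper glues $\ggamma_0^n\in\Gamma_o(\mu_0^n,\mu_0)$, $\mmu$, and $\ggamma_1^n\in\Gamma_o(\mu_1,\mu_1^n)$ into $\ssigma_n$, and bounds $\|\ff(\pi^3,\mu_1)-\ff(\pi^2,\mu_0)\|_{L^2(\ssigma_n)}$ by three pieces, two of which are $\le L\,W_2(\mu_i^n,\mu_i)\to 0$ (optimal couplings) and the middle one is $\le L(\int|x-y|^2\,\d\mmu_n)^{1/2}$ by the discrete case just established.
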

\begin{proof}
By Lemma \ref{le:injective-strong}(2) and the fact that $\frF$ is single-valued and
  concentrated on a map $\ff: \Sp \X\to \X$, recalling Theorem \ref{thm:all}(4) we know that $\frF$ is totally dissipative on finitely supported measures, i.e. it satisfies
  \eqref{eq:total-dissipativity} (or, equivalently,
  \eqref{eq:strongdiss-intro}) for every $\mu_0,\mu_1\in \prob_f(\X).$
  We use
  an approximation procedure to get the general formulation
  for every $\mu_0, \mu_1 \in \prob_2(\X)$ and every $\mmu \in
  \Gamma(\mu_0, \mu_1)$: we take sequences $(\mu_0^n)_{n\in\N}, (\mu_1^n)_{n\in\N}
  \subset \prob_f(\X)$ such that $W_2(\mu_0^n, \mu_0) \to 0$ and
  $W_2(\mu_1^n, \mu_1)\to 0$ and optimal plans $\ggamma_0^n \in
  \Gamma_o(\mu_0^n, \mu_0)$ and $\ggamma_1^n \in \Gamma_o(\mu_1,
  \mu_1^n)$. Let $\ssigma_n \in \prob(\X^4)$ be such that
  $\pi^{1,2}_\sharp \ssigma_n = \ggamma_0^n$, $\pi^{2,3}_\sharp
  \ssigma_n = \mmu$ and $\pi^{3,4}_\sharp \ssigma_n = \ggamma_1^n$.
  Notice that we also have that $\mmu_n:=\pi^{1,4}_\sharp \ssigma_n $
  belongs to $\Gamma(\mu_0^n,\mu_1^n)$ and converges to $\mmu$ in
  $\prob_2(\X^2)$
  as $n\to+\infty$.
 Thanks to the demicontinuity of $\frF$ and the fact that $\frF$ is
  concentrated on $\ff$,
  we obtain that
  $\ttheta_n:=(\ii_{\X\times\X},\ff(x_0,\mu_0^n)\times\ff(x_1,\mu_1^n))_\sharp \mmu_n$ 
  converges to $\ttheta:=(\ii_{\X\times\X},\ff(x_0,\mu_0)\times\ff(x_1,\mu_1))_\sharp \mmu$ in
  $\prob_2^{sw}(\X^2\times \X^2)$.
  We can then pass to the limit in the inequality
  \begin{displaymath}
    \int_{\X^2}\langle
    \ff(x_1,\mu_1^n)-\ff(x_0,\mu_0^n),x_1-x_0\rangle\,\d\mmu_n(x_0,x_1)=
    \int_{\X^2\times\X^2}\langle v_1-v_0,x_1-x_0\rangle\,\d\ttheta_n(x_0,x_1,v_0,v_1)\le 0
  \end{displaymath}
  obtaining
  \begin{displaymath}
    \int_{\X^2}\langle
    \ff(x_1,\mu_1)-\ff(x_0,\mu_0),x_1-x_0\rangle\,\d\mmu(x_0,x_1)=
    \int_{\X^2\times\X^2}\langle v_1-v_0,x_1-x_0\rangle\,\d\ttheta(x_0,x_1,v_0,v_1)\le 0.
  \end{displaymath}
  We can eventually apply Theorem \ref{thm:trivial-but-useful-to-fix}
  to get the maximality of $\frF$.
  
Concerning the second part of the Theorem,
  let us first show that the condition
  \eqref{eq:ass2} holds
for every $\mu_0, \mu_1 \in \prob_f(\X)$ and
  every $\mmu \in \Gamma(\mu_0,\mu_1)$:
  by Theorems \ref{thm:easy-but-not-obvious} and \ref{theo:chargeo} there exists some $K \in \N$ and points $0=t_0 <t_1 < \dots <t_{K-1} < t_K=1$ such that $(\sfx^{t_{i-1}},\sfx^{t_i})_\sharp \mmu$ is the unique element of $\Gamma_o(\sfx^{t_{i-1}}_\sharp \mmu, \sfx^{t_i}_\sharp \mmu)$ for every $i=1, \dots, K$. We thus have for every $i=1, \dots, K$ that
\[ 
\left ( \int_{\X^2} \left | \ff(\sfx^{t_i}, \sfx^{t_i}_\sharp \mmu)-\ff(\sfx^{t_{i-1}}, \sfx^{t_{i-1}}_\sharp \mmu) \right |^2 \de \mmu \right )^{1/2} \le L (t_{i}-t_{i-1}) \left ( \int_{\X^2} |x_1-x_0|^2 \de \mmu(x_0, x_1) \right )^{1/2}.
\]
Summing up these inequalities for $i=1, \dots, K$ and using the
triangular inequality in $L^2(\X \times \X, \mmu; \X)$, we get that
\eqref{eq:ass2} holds for every $\mu_0, \mu_1 \in \prob_f(\X)$ and
every $\mmu \in \Gamma(\mu_0, \mu_1)$.

By using the same approximation procedure (and the same notation)
of the first part of this
proof,
we show that \eqref{eq:ass2} holds for every $\mu_0, \mu_1 \in
\prob_2(\X)$ and every $\mmu \in \Gamma(\mu_0, \mu_1)$:
in fact we have the estimate
\begin{align*}
    &\left (\int_{\X^2} \left | \ff(x_1, \mu_1)-\ff(x_0, \mu_0) \right |^2 \de \mmu(x_0, x_1) \right )^{1/2} =  \|\ff(\pi^3,\mu_1)-\ff(\pi^2, \mu_0)\|_{L^2(\X^2, \ssigma_n; \X)} \\
    &\qquad \le \|\ff(\pi^3, \mu_1) - \ff(\pi^4, \mu_1^n) \|_{L^2(\X^2, \ssigma_n; \X)} 
    + \|\ff(\pi^4, \mu_1^n)-\ff(\pi^1, \mu_0^n) \|_{L^2(\X^2, \ssigma_n; \X)}\\&\qquad\qquad
    + \|\ff(\pi^1, \mu_0^n)- \ff(\pi^2, \mu_0) \|_{L^2(\X^2, \ssigma_n; \X)}\\
    &\qquad \le L  \Big(W_2(\mu_1^n, \mu_1) + W_2(\mu_0, \mu_0^n) \Big)
      + L\left (\int_{\X^2} |x-y|^2 \de \mmu_n(x,y) \right )^{1/2}.
\end{align*}
Passing to the limit as $n \to + \infty$, we get that \eqref{eq:ass2}
holds for every $\mu_0, \mu_1 \in \prob_2(\X)$ and every $\mmu \in
\Gamma(\mu_0, \mu_1)$.
\end{proof}

While the dissipativity obtained in Lemma \ref{le:injective-strong} is based on assumptions granting the trivialization of the duality pairings (cf. Theorem \ref{thm:all}(4)), we can improve such result using the perturbation argument developed in Proposition \ref{prop:perturbation}. This requires to assume $\dim \X\ge 2$ and to work with a finite set of $N$ distinct points.

\begin{theorem}[Self-improving dissipativity along discrete couplings]
  \label{le:crucial}
Assume that $\dim \X\ge 2$.  Let $\frF$ be a \MPVF satisfying
\eqref{Hdiss}, $N\in \N$, 
let $\mu_0,\mu_1\in\domf \frF$,
$\mmu\in
\Gamma(\mu_0,\mu_1)$ and let $\mu_t= \sfx^t_{\sharp} \mmu$, $t \in [0,1]$.
  Assume that one of the following conditions is
  satisfied:
  \begin{enumerate}
  \item
    $\mmu\in \prob_{f,N}(\X\times \X)$
    and for every $t\in (0,1)$ $\mu_t$ belongs to the 
    relative interior of
    $\dom_{f,N}(\frF)$ in $\prob_{f,N}(\X)$;
    \item
    for every $t\in (0,1)$ $\mu_t$ belongs to the interior of
    $\dom_f(\frF)$ in the metric space $(\prob_f(\X),W_\infty)$.
  \end{enumerate}
  Then
  \begin{equation}
    \label{eq:73}
    \directionalm \frF\mmu s-\directionalp \frF\mmu t\le \lambda (t-s)W_{\mmu}^2,\quad
    W_{\mmu}^2:=\int_{\X^2} |x_0-x_1|^2
    \,\d\mmu,
    \quad\text{for every }0\le s<t\le 1.
  \end{equation}
\end{theorem}
\begin{proof}
 We carry out the proof in case (1), the proof in case (2)
  is analogous.
By Theorem \ref{theo:propflfr} it is not restrictive to assume $\lambda=0$; we can also assume $s=0$ and $t=1$ thanks to \eqref{eq:restrsp} (see the beginning of the proof of Lemma \ref{le:injective-strong} for more details).  By Theorem \ref{thm:easy-but-not-obvious} we can find $0 < \delta < 1/2$ and $\tau \in (\delta, 1-\delta)$ s.t.~$\sfx^\delta, \sfx^\tau$ and $\sfx^{1-\delta}$ are $\mmu$-essentially injective and $(\sfx^0, \sfx^{\delta})_{\sharp} \mmu$, ($\sfx^{1-\delta}, \sfx^1)_\sharp \mmu$ are optimal: indeed by Theorem \ref{thm:easy-but-not-obvious}, we find $K \ge1$ and points $0=t_0< t_1 < t_2 < \cdots <t_K=1$ such that $(\sfx^{t_{k-1}}, \sfx^{t_k})_\sharp \mmu$ is optimal for every $k=1, \dots, K$; it is then enough to take any $\delta, \tau \in (0,1)$ such that
\[
0<\delta< t_1 \wedge (1-t_{K-1}) \wedge 1/2, \quad \tau \in (\delta, 1-\delta) \setminus \bigcup_{k=0}^K \{ t_k\}.
\]
In this way, $0<\delta<\tau <1-\delta<1$, $\delta \in (0,t_1)$, $1-\delta \in (t_{K-1}, 1)$, and $\tau \in (t_{k-1}, t_k)$ for some $k \in 1, \dots, K$. In particular, $(\sfx^0, \sfx^\delta)_\sharp \mmu$ (resp.~$(\sfx^{1-\delta}, \sfx^1)_\sharp \mmu$) is a restriction of the optimal plan $(\sfx^0, \sfx^{t_1})_\sharp \mmu$ (resp.~$(\sfx^{t_{K-1}}, \sfx^1)_\sharp \mmu$) hence optimal. Moreover, by Theorem \ref{theo:chargeo}, we see that $\sfx^{\delta/t_1}$ is $(\sfx^0, \sfx^{t_1})_\sharp \mmu$-essentially injective; since $(\sfx^0, \sfx^{t_1})(\supp(\mmu)) \subset \supp((\sfx^0, \sfx^{t_1})_\sharp \mmu)$ (cf. \cite[formula (5.2.6)]{ags}) and $\sfx^\delta = \sfx^{\delta/t_1} \circ (\sfx^0, \sfx^{t_1})$, we conclude that $\sfx^\delta$ is $\mmu$-essentially injective. An analogous argument shows that $\sfx^{1-\delta}$ and $\sfx^\tau$ are $\mmu$-essentially injective.

In this way, since by Theorem \ref{theo:propflfr} the relation \eqref{eq:73} is true both for the case $s=0, t= \delta$ and $s=1-\delta, t=1$, we only need to prove it for $s=\delta$ and $t=1-\delta$. \\
We set $A=\supp(\mu_{\delta})\cup \supp(\mu_{1-\delta})$ and $B=\supp(\mu_\tau)$. By compactness, we can find $\eps>0$ such that every measure in $\prob_{f,N}(\X)$ in the $W_2$-neighborhood of radius $\eps>0$ around $\mu_t$ is contained in $\dom(\frF)$ for every $\delta \le t \le 1-\delta$.\\
Applying Proposition \ref{prop:perturbation}
  we can find a map $\bb:B\to \X$ with values
  in the open ball of radius $\eps$ centered at $0$
  such that setting $\bb^s(x):=x+s\bb(x)$ for every $s\in [0,1]$
  and $x\in B$,
  the set $B^s:=\bb^s(B)$ satisfies
  $(B^s-B^s)\cap \dir A=\{0\}$ and $\#B^s= \# \supp(\mu_{\tau})$ for every $s \in (0,1]$.
  Considering the measures $\nu_{s,\tau}:=(\bb^s)_\sharp\mu_\tau$,
  we can pick $\Psi_{s,\tau}\in \frF[\nu_{s,\tau}]$ with barycenter $\vv_{s,\tau}:B^s\to \X$, i.e.
  \begin{equation*}
      \vv_{s,\tau}(y):=\int_{\TX} v\,\d\Psi_{s,\tau}(y,v).
  \end{equation*} 
  Now, for $a=\frac{\tau-\delta}{1-2\delta}$, we define maps $ \bb^{s,\tau}, \vv^{s,\tau} : \supp((\sfx^{\delta}, \sfx^{1-\delta})_{\sharp}\mmu) \to \X$ as
  \[\bb^{s,\tau}:=\bb^s \circ \sfx^a,\quad
    \vv^{s,\tau}:=\vv_{s,\tau} \circ \bb^{s,\tau}.\]
Notice that $\sfx^a(\supp((\sfx^{\delta}, \sfx^{1-\delta})_{\sharp}\mmu)) \subset B=\supp(\mu_\tau)$,  so the above definitions are well-posed. Let us consider $\Phi_{\delta} \in \frF[\mu_{\delta}]$, $\Phi_{1-\delta} \in \frF[\mu_{1-\delta}]$ and $\ssigma_\delta \in \prob(\TX \times \TX)$ s.t.~$(\sfx^0, \sfx^1)_{\sharp} \ssigma_\delta = (\sfx^{\delta}, \sfx^{1-\delta})_{\sharp}\mmu$, $(\sfx^0, \sfv^0)_{\sharp} \ssigma_\delta = \Phi_{\delta}$ and $(\sfx^1, \sfv^1)_{\sharp} \ssigma_\delta = \Phi_{1-\delta}$. On $ \supp(\ssigma_\delta)$, we have
  \begin{equation}\label{eq:QQQ}
      \begin{split}
    \la &\sfv^0-\sfv^1,\sfx^0-\sfx^1\ra=
      \la \sfv^0-\vv^{s,\tau},\sfx^0-\sfx^1\ra+
      \la \sfv^1-\vv^{s,\tau},\sfx^1-\sfx^0\ra
    \\&=
    \frac{1}{a}
    \la \sfv^0-\vv^{s,\tau},\sfx^0-\sfx^a\ra
    +
    \frac{1}{1-a}
    \la \sfv^1-\vv^{s,\tau},\sfx^1-\sfx^a\ra
    \\&=
    \frac{1}{a}
    \la \sfv^0-\vv^{s,\tau},\sfx^0-\bb^{s,\tau}\ra
    +
    \frac{1}{1-a}
    \la \sfv^1-\vv^{s,\tau},\sfx^1-\bb^{s,\tau}\ra
    \\&\qquad+
     \frac{1}{a}
    \la \sfv^0-\vv^{s,\tau},\bb^{s,\tau}-\sfx^a\ra+    
    \frac{1}{1-a}
    \la \sfv^1-\vv^{s,\tau},
    \bb^{s,\tau}-\sfx^a
    \ra
    \\&=
    \frac{1}{a}
    \la \sfv^0-\vv^{s,\tau},\sfx^0-\bb^{s,\tau}\ra
    +
    \frac{1}{1-a}
    \la \sfv^1-\vv^{s,\tau},\sfx^1-\bb^{s,\tau}\ra
    \\&\qquad+
    \frac{1}{a(1-a)}
    \la \vv^{1,\tau}-\vv^{s,\tau},\bb^{s,\tau}-\sfx^a\ra
    +
        \frac{1}{a(1-a)}
    \la (1-a) \sfv^0+a \sfv^1-\vv^{1,\tau},
    \bb^{s,\tau}-\sfx^a
    \ra
    \\&=
    \frac{1}{a}
    \la \sfv^0-\vv^{s,\tau},\sfx^0-\bb^{s,\tau}\ra
    +
    \frac{1}{1-a}
    \la \sfv^1-\vv^{s,\tau},\sfx^1-\bb^{s,\tau}\ra
    \\&\qquad+
    \frac{s}{(1-s)a(1-a)}
    \la \vv^{1,\tau}-\vv^{s,\tau},\bb^{1,\tau}-\bb^{s,\tau}\ra
    +
    \frac{s}{a(1-a)}
    \la (1-a) \sfv^0+a \sfv^1-\vv^{1,\tau},
    \bb^{1,\tau}-\sfx^a
    \ra.
    \end{split}
  \end{equation}
  We have that
  \begin{equation}\label{eqs123}
  \begin{split}
    \int_{\TX^2}
    \la \sfv^0-\vv^{s,\tau},\sfx^0-\bb^{s,\tau}\ra
    \,\d\ssigma_\delta &= \directionalm{\Phi_{\delta}}{\mmu^{s,\tau,\delta}}{0} - \directionalp{\Psi_{s}}{\mmu^{s,\tau,\delta}}{1}, \\
    \int_{\TX^2} \la
    \sfv^1-\vv^{s,\tau},\sfx^1-\bb^{s,\tau}\ra
    \,\d\ssigma_\delta &= \directionalm{\Phi_{1-\delta}}{\tilde{\mmu}^{s,\tau,\delta}}{0} - \directionalp{\Psi_{s}}{\tilde{\mmu}^{s,\tau,\delta}}{1}, \\
    \int_{\TX^2}\la \vv^{1,\tau}-\vv^{s,\tau},\bb^{1,\tau}
    -\bb^{s,\tau}\ra
    \,\d\ssigma_\delta &= \directionalm{\Psi_1}{\ttheta^{s,\tau,\delta}}{0} - \directionalp{\Psi_{s}}{\ttheta^{s,\tau,\delta}}{1},
    \end{split}
  \end{equation}
  where $\mmu^{s,\tau,\delta} = (\sfx^0, \bb^{s,\tau})_{\sharp} \ssigma_\delta$, $\tilde{\mmu}^{s,\tau,\delta} = (\sfx^1, \bb^{s,\tau})_{\sharp} \ssigma_\delta$, $\ttheta^{s,\tau,\delta} = (\bb^{1,\tau}, \bb^{s,\tau})_{\sharp} \ssigma_\delta$ and the equalities with the pseudo scalar products come from the fact that all those plans are concentrated on a map w.r.t.~their first marginal. Indeed, we can use Theorem \ref{thm:all}(4) thanks to the $\mmu$-essential injectivity of $\sfx^\delta, \sfx^\tau$, $\sfx^{1-\delta}$, and use the fact that the cardinality of $B^s$ is constant w.r.t.~$s$. By construction, these plans satisfy the hypotheses of Lemma \ref{le:injective-strong} so that all the expressions at the right-hand side of \eqref{eqs123} are nonpositive. Combining this fact with \eqref{eq:QQQ}, we end up with 
  \begin{align*}
    \int_{\TX^2} \la \sfv^0-\sfv^1,\sfx^0-\sfx^1\ra\,\d\ssigma_\delta
    &
      \le
      \frac{s}{a(1-a)}
      \int_{\TX^2}
    \la (1-a) \sfv^0+a \sfv^1-\vv^{1,\tau},
    \bb^{1,\tau}-x_a
    \ra\,\d\ssigma_\delta.
  \end{align*}
  Passing to the limit as $s\downarrow0$
  we obtain
  \begin{equation}\label{eq:combdiss1}
     \int_{\TX^2} \la \sfv^0-\sfv^1,\sfx^0-\sfx^1\ra\,\d\ssigma_\delta\le 0.
   \end{equation}
    Recalling \eqref{eq:switch} and using the same notation for the map $\mathsf s: \TX^2 \to \TX^2$, $\mathsf s (x_0,v_0,x_1,v_1):=(x_1,v_1,x_0,v_0)$, we can write the left-hand side as follows (cf. Definition \ref{def:pairings})
   \begin{align*}
       \int_{\TX^2} \la \sfv^0-\sfv^1,\sfx^0-\sfx^1\ra\,\d\ssigma_\delta&=\int_{\TX^2} \la \sfv^0,\sfx^0-\sfx^1\ra\,\d\ssigma_\delta+\int_{\TX^2} \la \sfv^1,\sfx^1-\sfx^0\ra\,\d\ssigma_\delta\\
       &=\int_{\TX^2} \la \sfv^0,\sfx^0-\sfx^1\ra\,\d\ssigma_\delta+\int_{\TX^2} \la \sfv^0,\sfx^0-\sfx^1\ra\,\d(\mathsf s_\sharp{\ssigma_\delta})\\
       &\ge \directionalm{\Phi_\delta}{(\sfx^{\delta}, \sfx^{1-\delta})_{\sharp}\mmu}{0}+\directionalm{\Phi_{1-\delta}}{\mathsf s_\sharp(\sfx^{\delta}, \sfx^{1-\delta})_{\sharp}\mmu}{0},
   \end{align*}
   indeed $(\sfx^0,\sfv^0,\sfx^1)_\sharp\ssigma_\delta\in\Gamma_0\left(\Phi_\delta,(\sfx^{\delta}, \sfx^{1-\delta})_{\sharp}\mmu\right)$ and
   $(\sfx^0,\sfv^0,\sfx^1)_\sharp(\mathsf s_\sharp\ssigma_\delta)\in\Gamma_0\left(\Phi_{1-\delta},\mathsf s_\sharp\left[(\sfx^{\delta}, \sfx^{1-\delta})_{\sharp}\mmu\right]\right)$.
   Thus, by \eqref{eq:combdiss1} and Theorem \ref{thm:all}(1)(3), we can write
   \begin{align*}
   0&\ge\directionalm{\Phi_\delta}{(\sfx^{\delta}, \sfx^{1-\delta})_{\sharp}\mmu}{0}+\directionalm{\Phi_{1-\delta}}{\mathsf s_\sharp(\sfx^{\delta}, \sfx^{1-\delta})_{\sharp}\mmu}{0}\\
   &=\directionalm{\Phi_\delta}{(\sfx^{\delta}, \sfx^{1-\delta})_{\sharp}\mmu}{0}-\directionalp{\Phi_{1-\delta}}{(\sfx^{\delta}, \sfx^{1-\delta})_{\sharp}\mmu}{1}\\
   &=\left(1-2\delta\right)\left(\directionalm{\Phi_\delta}{\mmu}{\delta}-\directionalp{\Phi_{1-\delta}}{\mmu}{1-\delta}\right).
   \end{align*} 
   Dividing by $1-2\delta>0$ and passing to the supremum w.r.t.~$\Phi_{\delta} \in \frF[\mu_{\delta}]$ and~$\Phi_{1-\delta} \in \frF[\mu_{1-\delta}]$, we get (cf. Definition \ref{def:plangeodomV})
   \[ \directionalm{\frF}{\mmu}{\delta}-\directionalp{\frF}{\mmu}{1-\delta} \le 0, \]
 which is \eqref{eq:73} with $s=\delta$ and $t=1-\delta$.
 \end{proof}

\begin{remark}
  If $\frF \subset \prob_2(\TX)$ is a $\lambda$-dissipative $\MPVF$ with $\dom(\frF)=\prob_2(\X)$, then $\frF$ is $\lambda$-dissipative along discrete couplings thanks to Theorem \ref{le:crucial} and Theorem \ref{thm:all}, i.e.
\[ \bram{\Phi}{\Psi} \le \lambda \int_{\X \times \X} |x-y|^2 \de \ggamma(x,y)\]
for every $\Phi, \Psi \in \frF$ and any $\ggamma \in \Gamma(\sfx_\sharp \Phi, \sfx_\sharp \Psi)$ such that $\sfx_\sharp \Phi, \sfx_\sharp \Psi$ belong to $\prob_f(\X)$.
\end{remark}
\medskip

 The same perturbation argument in the proof of Theorem \ref{le:crucial} can be applied in a similar situation, when we know that the \MPVF is dissipative along discrete couplings w.r.t.~which the map $\sfx^t$ is essentially injective. This leads us to the following definition.  

\begin{definition}[Convexity along collisionless couplings]
  \label{def:collisionless}
  Let $\mu_0,\mu_1\in \prob_{f}(\X)$.
  We say that
  $\mmu\in \Gamma(\mu_0,\mu_1)$ is \emph{collisionless} if $\sfx^t$ is $\mmu$-essentially injective for every 
  $t\in [0,1]$. \\
  We say that a set $C\subset \prob_f(\X)$ is \emph{convex along
  collisionless couplings} if
  for every collisionless $\mmu\in \prob_{f}(\X^2)$, with
$\sfx^0_\sharp\mmu,\sfx^1_\sharp\mmu\in C$, and every
    $t\in (0,1)$ we have
    $\sfx^t_\sharp \mmu\in C$.
\end{definition}
Notice that if $\mu_0,\mu_1\in 
 \prob_{\#N}(\X)$
a coupling $\mmu$ in $\Gamma(\mu_0,\mu_1)$ is collisionless
if and only if
\begin{equation}
  \label{eq:17}
  \mmu\in \Gamma_{\# N}(\X^2),\quad
  \sfx^t_\sharp \mmu\in \prob_{\#N}(\X)\quad\text{for every }t\in (0,1).
\end{equation}

\begin{theorem}[Self-improving dissipativity: the collisionless case]
  \label{le:crucial2}
  Assume that $\dim \X\ge 2$, $N\in \N$, let $\frF$ be a \MPVF satisfying
\eqref{Hdiss} and such that 
       $\dom_{\#N}(\frF)$ is convex along collisionless couplings,  let $\mu_0$
   belong to the
   interior of
   $\dom_{\#N}(\frF)$ in the metric space
   $(\prob_{\#N}(\X),W_2)$, $\mu_1 \in \dom_{f,N}(\frF)$, and $\mmu\in
   \Gamma_{\#N}(\mu_0,\mu_1)$. Assume that one of the following conditions is satisfied:
   \begin{enumerate}
       \item $\mu_1 \in \dom_{\#N}(\frF)$;
       \item for every $r \in (0,1)$ there exists $t \in (r, 1)$ such that $\sfx^t_\sharp \mmu \in \dom(\frF)$.
   \end{enumerate}
   Then 
  \begin{equation}
    \label{eq:73bis}
    \directionalm{\Phi}{\mmu}{0}-\directionalp{ \Psi }{\mmu}{1}\le \lambda W_{\mmu}^2,\quad
    W_{\mmu}^2:=\int_{\X^2} |x_0-x_1|^2
    \,\d\mmu
  \end{equation}
   for every $\Phi \in \frF[\mu_0]$, $\Psi \in \frF[\mu_1]$.
\end{theorem}
\begin{proof} We divide the proof into two claims, proving the result respectively in case (1) or (2). \\
\noindent\textbf{Claim 1.}
\emph{Case (1)}.
\smallskip

   The proof is very similar to the one of Theorem \ref{le:crucial}, we
  keep the same notation.
  
  Since $\mmu\in \prob_{\#N}(\X^2),$ $\sfx^0$, $\sfx^1$ are
  $\mmu$-essentially injective, so that we can select
  $\delta=0$.
   Since $\mu_0$ is in the interior of $\dom_{\#N}(\frF)$, we can find $\tau, \eps \in (0,1)$ small enough such that the $W_\infty$-ball of radius $\eps$ centered at $\mu_\tau:=\sfx^\tau_\sharp \mmu$ is contained in $\dom_{\#N}(\frF)$ and $\sfx^\tau$ is $\mmu$-essentially injective. We can then apply the same perturbation argument as in the proof of Theorem \ref{le:crucial}, and consider the measures $\Phi_0, \Phi_1, \ssigma_0, \nu_{s,\tau}, \Psi_{s, \tau}, \mmu^{s,\tau,0}, \tilde{\mmu}^{s,\tau, 0}, \ttheta^{s,\tau, 0}$ as defined therein. We can proceed with exactly the same computations and arrive at \eqref{eqs123}. The right hand sides of the equations in \eqref{eqs123} are again non-positive because the hypotheses of Lemma \ref{le:injective-strong}(1) are satisfied: for any $\ggamma \in \{ \mmu^{s,\tau,0}, \tilde{\mmu}^{s,\tau, 0}, \ttheta^{s,\tau, 0}\}$, its marginals belong to $\dom_{\#N}(\frF)$ by construction, $\sfx^t$ is $\ggamma$-essentially injective also by construction, and $\ggamma \in \CondGamma{\frF}{\sfx^0_\sharp \ggamma}{\sfx^1_\sharp \ggamma}$ because of the convexity of $\dom_{\#N}(\frF)$ along collisionless couplings. Then we get \eqref{eq:combdiss1} which gives immediately \eqref{eq:73bis}.
 \smallskip

\noindent\textbf{Claim 2.}
\emph{Case (2)}.
\smallskip

We can assume that $\mu_1 \notin \dom_{\#N}(\frF)$ and $\lambda=0$. Let us denote $\mu_t := \sfx^t_\sharp \mmu$, $t \in [0,1]$; we claim that there exists $\tau \in (0,1)$ such that $\mu_\tau \in \dom_{\#N}(\frF)$, $\sfx^\tau$ is $\mmu$-essentially injective, and $(\sfx^\tau, \sfx^1)_\sharp \mmu$ is optimal. Indeed, since $\mmu \in \prob_{\#N}(\X^2)$, $\sfx^t_\sharp \mmu$ is supported on less than $N$ distinct points only for a finite number of times $0<t_1<\dots<t_{K-1} <t_K= 1$; on the other hand, by Theorem \ref{thm:easy-but-not-obvious}, we can find $\bar{t} \in (0,1)$ such that $(\sfx^{\bar{t}}, \sfx^1)_\sharp \mmu$ is optimal. Applying condition (2) with $r= \max\{\bar{t}, t_{K-1}\}$, also using the last part of Theorem \ref{theo:chargeo}, we get the existence of the sought $\tau$. We can apply Claim 1 to $\mu_0, \mu_\tau$, and $(\sfx^0, \sfx^\tau)_\sharp \mmu$ to get
\[\directionalm{\Phi}{(\sfx^0, \sfx^\tau)_\sharp \mmu}{0}-\directionalp{ \Psi_\tau }{(\sfx^0, \sfx^\tau)_\sharp \mmu}{1}\le 0
\]
for every $\Phi \in \frF[\mu_0]$, $\Psi_\tau \in \frF[\mu_\tau]$. Since $(\sfx^\tau, \sfx^1)_\sharp \mmu$ is optimal and $\frF$ satisfies \eqref{Hdiss}, by Theorem \ref{theo:propflfr}(2) (more precisely, its finer version in \cite[Theorem 4.9(2)]{CSS}) we also have
\[\directionalm{\Psi_\tau}{(\sfx^\tau, \sfx^1)_\sharp \mmu}{0}-\directionalp{ \Psi }{(\sfx^\tau, \sfx^1)_\sharp \mmu}{1}\le 0,
\]
for every $\Psi_\tau \in \frF[\mu_\tau]$, $\Psi \in \frF[\mu_1]$. Applying Theorem \ref{thm:all}(3), summing the two expressions above, and using the $\mmu$-essential injectivity of $\sfx^\tau$ together with Theorem \ref{thm:all}(4), we get \eqref{eq:73bis}.
\end{proof}

\section{Construction of a totally \texorpdfstring{$\lambda$}{l}-dissipative \MPVF\ from a
  discrete core}\label{sec:constructionFlagr}
We have seen at the end of Section \ref{sec:3.2} (Corollary
\ref{cor:discrete-are-enough})
that a maximal totally $\lambda$-dissipative \MPVF
is determined by its restriction to the set of uniform discrete
measures.

In this section, we want to investigate the closely related question \ref{q1}, which leads, in a sense to the converse procedure. In other words: if we assign a \MPVF $\frF$ on a sufficiently
rich subset of discrete measures, is it possible to uniquely construct
a maximal extension of $\frF$? The answer to this question is the content of the main Theorems \ref{thm:main-discrete}, \ref{thm:perfavorelultimo}, \ref{thm:total-case}, and \ref{thm:demi-case}.

In the Hilbert setting, such kind of problems are well understood if
the domain of the initial operator is open and convex (see in particular \cite{Qi}, Proposition \ref{prop:qi} and Theorem \ref{theo:chefatica}). However, dealing with open sets
at the level of
$\prob_2(\X)$ will prevent the use of discrete measures.
We will circumvent this difficulty by a suitable localization of the open condition
in each subset $\prob_{\#N}(\X)$, which relies on the notion of
\emph{discrete core.}

Before giving the precise definition of core, let us fix some notation related to discrete measures: in order to allow for the greatest flexibility,
we consider collections of discrete measures indexed by
an unbounded directed subset $\cN\subset \N$ with respect to 
the partial order given by
 \begin{equation}
   \text{$m\preccurlyeq
 n\quad\Leftrightarrow\quad m\mid n$}\label{eq:166qui},
\end{equation}
where $m\mid n$ means that $n/m\in \N$. We write $m \prec n$ if $m
\preccurlyeq n$ and $m \ne n$.
Typical examples are the set of all natural integers $\cN:=\N$ or
the dyadic one $\cN:=\{2^n:n\in \N\}$.
We set
\begin{equation}
   \label{eq:137}
   \prob_{f,\cN}(\X):=\bigcup_{N\in \cN}\prob_{f,N}(\X),\quad
   \prob_{\#\cN}(\X):=\bigcup_{N\in \cN}\prob_{\# N}(\X),
 \end{equation}
 observing that, for every $N\in \cN$, $\prob_{f,N}(\X)$ is closed in
 $\prob_2(\X)$ and $\prob_{\#N}(\X)$ is a relatively
 open and dense subset of $\prob_{f,N}(\X)$.

 We can now give the definition of core.

\begin{definition}[$\cN$-core] 
  \label{ass:core}
  Let $\cN$ be an unbounded directed subset of $\N$ w.r.t.~the order
  relation $\preccurlyeq$ as in \eqref{eq:166qui}.
  A discrete \emph{$\cN$-core} is a set $\core\subset
  \prob_{\#\cN}(\X)$ such that $\overline{\core} \subset \prob_2(\X)$ is totally convex  and
  the family $\core_{N}:=\core \cap \prob_{\# N}(\X)$, $N\in \cN$, satisfies the
  following properties:
  \begin{enumerate}
  \item $\core_N$ is nonempty and relatively open in $\prob_{\#N}(\X)$
    (or, equivalently, in $\prob_{f,N}(\X)$);
\item $\core_N$ coincides with the relative interior in $\prob_{f,N}(\X)$ of $\overline{\core} \cap \prob_{\#N}(\X)$.
  \end{enumerate}
\end{definition}

\newtheorem*{continuancex}{Example \continuanceref{} (continued)}
\newenvironment{continuance}[1]
  {\newcommand\continuanceref{\ref{#1}}\continuancex}
  {\endcontinuancex}

\begin{example}[A simple core]\label{ex:core} A simple example of $\cN$-core is $\core:=\prob_{\#\cN}(\mathsf{U})$, where $\mathsf{U} \subset \X$ is a convex, open, non-empty subset, so that $\overline{\core}= \prob_2(\overline{\mathsf{U}})$ and $\core_N= \prob_{\# N}(\mathsf U)$ for every $N \in \cN$.
\end{example}

  We list here the main results of the section, which contain the answer to \ref{q1} and whose proof will be provided in Section \ref{sec:proofs8}. The first one shows how to recover a totally
  $\lambda$-dissipative \MPVF starting from a general (metrically)
  $\lambda$-dissipative \MPVF $\frF$
  whose domain is a $\cN$-core $\core$. 
\begin{theorem}[From dissipativity to total dissipativity]
  \label{thm:main-discrete} Let $\X$ be a separable Hilbert space, let $\frF \subset \prob_2(\TX)$ be a $\MPVF$ and let $\core \subset \prob_{\#\cN}(\X)$ be a $\cN$-core. Let us assume either one of the following hypotheses:
\begin{enumerate}[(i)]
    \item $\frF$ is $\lambda$-dissipative, $\dom(\frF)=\core$ and $\dim(\X)\ge 2$;
    \item $\frF$ is totally $\lambda$-dissipative and $\core \subset \dom(\frF) \subset \overline{\core}$.
\end{enumerate}
  For every $N\in \cN$ consider the \MPVF $\hat \frF_N$
  defined by the following formula:
  $\Phi\in \hat\frF_N[\mu]$ if and only if $\Phi \in \prob_{f,N}(\TX)$,
  $\mu\in \overline{\core_N}$
  and
  for every
  $\nu\in \core_N$, $\Psi\in \frF[\nu]$, 
   $\ttheta \in \Gamma_{f,N}(\Phi,\nu)$
  we have
  \begin{equation}
    \label{eq:32}
    \int_{\TX\times\X}\langle v_0-\bry\Psi(x_1),x_0-x_1\rangle\,\d \ttheta(x_0,v_0,x_1)\le \lambda
    \int_{\TX\times\X} |x_0-x_1|^2\,\d\ttheta(x_0,v_0,x_1).
  \end{equation}
  Then, we have the following properties:
  \begin{enumerate}
  \item
    For every $N\in \cN$,  for any $\Phi_0,\Phi_1\in\hat \frF_N$ and any coupling $\ttheta\in\Gamma(\Phi_0,\Phi_1)\cap \prob_{f,N}(\TX\times \TX)$, we have
    \begin{equation*}
        \int_{\TX^2} \langle v_1-v_0,x_1-x_0
        \rangle \,\d\bm\ttheta(x_0,v_0,x_1,v_1)\le \lambda \int_{\TX^2} |x_1-x_0|^2\d\bm\ttheta,
    \end{equation*}
    and $\dom(\hat\frF_N) $ contains 
    $\core_N$. 
    \item For every $\mu \in \overline{\core_N}$, let
    \[\maps{\hat{\frF}_N}[\mu]:= \left \{ \ff \in L^2(\X, \mu; \X) : (\ii_\X, \ff)_\sharp \mu \in \hat{\frF}_N[\mu] \right \};\]
     then,  $\ff\in L^2(\X, \mu; \X)$  belongs to $\maps{\hat{\frF}_N}[\mu]$ if and only if for every $\nu\in \core_N$, $\Psi\in \frF[\nu]$, $\mmu \in \Gamma_{f,N}(\mu,\nu)$ we have
  \begin{equation}
    \label{eq:32bis}
    \int_{\X^2}\langle \ff(x_0)-\bry\Psi(x_1),x_0-x_1\rangle\,\d \mmu(x_0,x_1)\le \lambda
    \int_{\X^2} |x_0-x_1|^2\,\d\mmu(x_0,x_1).
  \end{equation}
   Moreover, in order for $\ff$ to belong to
  $\maps{\maxim \frF N}[\mu]$, it is sufficient to check 
  \eqref{eq:32bis} only for all the measures $\nu\in \core_N$ 
  and all the couplings $\mmu\in \Gamma(\mu,\nu)$ 
  such that $\mmu$ is the unique element 
  of $\Gamma_o(\mu,\nu)$.
    \item
    $M\mid N$ implies $\dom(\hat \frF_M)\subset
    \dom(\hat\frF_N)$.
  \item
    The \MPVF 
    \begin{equation}
      \hat\frF_\infty:=\bigcup_{M\in \cN}\bigcap_{N\in \cN\,:\,M\mid
      N}\hat\frF_N\quad\text{with domain}\quad
    \dom(\hat\frF_\infty)=\bigcup_{M\in \cN}
    \dom(\hat\frF_M)\supset \core
    \label{eq:38}
  \end{equation}
  is totally $\lambda$-dissipative.
\item
    There exists a unique maximal totally $\lambda$-dissipative 
  \MPVF $\hat\frF$ extending $\hat\frF_\infty$ whose domain is contained in
  $\overline \core$. For every $\mu\in \overline\core$,
  $\hat\frF[\mu]$ consists of
  all the measures $\Phi\in \prob_2(\TX|\mu)$
  satisfying
  \begin{equation}
    \label{eq:34}
    \int_{\TX\times\X} \langle v-\ff(y),x-y\rangle\,\d\ttheta(x,v,y)\le
    \lambda\int_{\TX\times\X} |x-y|^2\,\d\ttheta    
  \end{equation}
  for every $\ttheta\in \Gamma(\Phi,\nu)$ with
  $\nu\in \dom(\hat\frF_\infty)$ and $(\ii_\X, \ff)_\sharp\nu\in
  \hat\frF_\infty$.
The \MPVF  $\hat\frF$ also coincides with the strong closure of
  $\hat \frF_\infty$ in $\prob_2(\TX).$
  Finally, if $\mu\in \core$ then the minimal selection $\hat\frF{\vphantom\frF}^\circ$ of $\hat\frF$ satisfies
  \[\hat\frF{\vphantom\frF}^\circ[\mu]\in \hat\frF_\infty[\mu].\]
  \end{enumerate}
\end{theorem}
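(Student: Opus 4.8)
\textbf{Proof strategy for Theorem \ref{thm:main-discrete}.}

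The plan is to pass to the Lagrangian side and exploit the maximal dissipativity theory in the Hilbert space $\cH$, together with the geometric results of Sections \ref{sec:coupl} and \ref{sec:strong-dissipative}. First I would fix, via Definition \ref{def:segm}, a $\cN$-refined standard Borel probability space $(\Omega,\cB,\P,(\mathfrak P_N)_{N\in\cN})$ and the associated subspaces $\cH_N\subset\cH$ of maps that are constant on the atoms of $\mathfrak P_N$, so that $\iota(\cH_N)$ meets $\prob_{f,N}(\X)$ and $\iota$ restricted to the maps in $\cH_N$ with distinct values parametrizes $\prob_{\#N}(\X)$. The key preliminary step is to translate the $\cN$-core hypotheses using Lemma \ref{le:trivial} and Lemma \ref{le:equivalent}: the set $\core$ lifts to a subset of $\cH$ whose intersection with each $\cH_N$ is open and invariant under measure-preserving isomorphisms of the finite partition, and whose closure is $\iota^{-1}(\overline\core)$.

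Next I would build, level by level, a $\lambda$-dissipative invariant operator $\mmo_N\subset\cH_N\times\cH_N$ whose Eulerian image is $\hat\frF_N$. In case (ii) this is immediate from Proposition \ref{prop:basic-relation} restricted to $\cH_N$. In case (i) — where $\frF$ is only metrically $\lambda$-dissipative on a discrete domain and $\dim\X\ge2$ — the self-improving results Theorem \ref{le:crucial} and Theorem \ref{le:crucial2}, combined with Lemma \ref{le:dimcore} (which supplies condition (4*) from (1*)--(3*)), show that $\frF$ is already totally $\lambda$-dissipative along couplings in $\prob_{f,N}(\X\times\X)$, hence reduces to case (ii). Then claims (1)--(2) follow: the defining inequality \eqref{eq:32} is exactly the statement that $(X,V)$ with $(X,V)_\sharp\P=\Phi$ lies in the dissipative closure of $\mmo_N$ against all elements $(Y,\bry\Psi\circ Y)$ with $\iota(Y)\in\core_N$; Theorem \ref{thm:all}(4) and the "piecewise optimality" of Theorem \ref{thm:easy-but-not-obvious} give the reduction to unique optimal couplings in claim (2), since any coupling between discrete measures decomposes into finitely many optimal pieces. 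For claim (3), the compatibility condition (3*) in Lemma \ref{le:equivalent} ($\overline{\core_M}=\overline{\core_N}\cap\prob_{f,M}(\X)$ when $M\mid N$) together with the inclusion $\cH_M\subset\cH_N$ forces $\dom(\hat\frF_M)\subset\dom(\hat\frF_N)$: a measure in $\prob_{f,M}(\TX)$ is automatically in $\prob_{f,N}(\TX)$, and testing against $\core_N\supset\core_M$ is at least as restrictive; one checks that no genuinely new constraint appears by approximating an arbitrary $\nu\in\core_N$ by elements whose relevant coupling with $\mu$ is optimal and localizing via Lemma \ref{le:quantitative}.

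For claim (4), $\hat\frF_\infty$ is an increasing union (along the directed set $\cN$) of the $\hat\frF_N$'s restricted to their eventual agreement, and total $\lambda$-dissipativity is a condition on pairs $\Phi_0,\Phi_1$ and couplings $\boldsymbol\vartheta\in\Gamma(\Phi_0,\Phi_1)$; given two elements, both lie in some common $\hat\frF_N$ with $N\in\cN$ large, and $\boldsymbol\vartheta$ can be approximated in $\prob_2(\TX\times\TX)$ by couplings in $\prob_{f,N'}(\TX\times\TX)$ for suitable $N'\succ N$ using the Borel-partition approximations of Appendix \ref{sec:appborel}, so \eqref{eq:total-dissipativity} passes to the limit by the semicontinuity in Theorem \ref{thm:all}(5) (the integrand $\langle v_1-v_0,x_1-x_0\rangle$ lies in $\rmC^{sw}_2$). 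Finally, for claim (5): by Theorem \ref{thm:maximal-dissipativity}(3) applied with the closed totally convex set $\overline\core$, $\hat\frF_\infty$ has a maximal totally $\lambda$-dissipative extension $\hat\frF$ with domain in $\overline\core$; the characterization \eqref{eq:34} is Corollary \ref{cor:discrete-are-enough} once we show $\dom_{f,\infty}(\hat\frF)$ agrees with $\dom(\hat\frF_\infty)$, which uses the density condition (1'') / (2'') in Lemma \ref{le:equivalent} and the regularizing/approximation properties of the resolvent from Theorem \ref{thm:invTOlawinv}; uniqueness of the extension follows because \eqref{eq:34} pins down $\hat\frF[\mu]$ for all $\mu\in\overline\core$, and the identification with the strong closure of $\hat\frF_\infty$ in $\prob_2(\TX)$ is Proposition \ref{prop:sw-closedness} together with density of $\prob_{f,\infty}$. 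The statement $\hat\frF{\vphantom\frF}^\circ[\mu]\in\hat\frF_\infty[\mu]$ for $\mu\in\core$ is then the assertion that the minimal selection, which by Theorem \ref{thm:minimal} is concentrated on a map and by Theorem \ref{thm:bary-proj} is a barycentric projection of elements of $\hat\frF$, can be taken finitely supported: since $\mu\in\core_N\subset\prob_{\#N}(\X)$ lies in the relative interior, the Hilbertian minimal selection $\mmo_N^\circ$ in $\cH_N$ (which exists by Proposition \ref{prop:brezz} since $\cH_N$ is resolvent-invariant) produces an element of $\hat\frF_N[\mu]\subset\hat\frF_\infty[\mu]$ of minimal norm, which must coincide with $\hat\frF{\vphantom\frF}^\circ[\mu]$ by uniqueness in Theorem \ref{thm:minimal}(1).

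The main obstacle I expect is claim (3) together with the exact identification of $\dom(\hat\frF_\infty)$ needed in claim (5): one must be careful that the "test class" $\core_N$ in the definition of $\hat\frF_N$ genuinely stabilizes as $N$ runs through $\cN$, i.e. that an element $\Phi$ passing all the tests at level $M$ still passes all the (a priori more numerous) tests at level $N$ when $M\mid N$. This is where the precise compatibility axioms of the $\cN$-core — particularly (3*) and the total convexity of $\overline\core$ — and the piecewise-optimality decomposition of arbitrary discrete couplings must be combined delicately; a naive approximation argument risks leaving a gap exactly at the measures in $\overline{\core_N}\setminus\core_N$ on the relative boundary.
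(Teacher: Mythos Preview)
Your overall Lagrangian strategy is correct and matches the paper's, but you miss the single technical result that makes the whole construction glue together: \emph{resolvent consistency across the levels $N$} (the paper's Proposition \ref{prop:eulerstep}). Concretely, after building the maximal $\lambda$-dissipative operators $\hat\fF_N\subset\cH_N\times\cH_N$ level by level (via Theorem \ref{theo:chefatica}, since $\dom(\fF_N)=\newODDom N$ is open in $\cH_N$), the paper shows that for $X\in\cH_\infty$ the resolvent point $X_\tau$ \emph{does not depend on which $\hat\fF_N$ you use to compute it}. The proof is a short but decisive computation: permutation invariance of $\hat\fF_N$ gives the pointwise Lipschitz bound $(1-\lambda\tau)|X_\tau(\omega')-X_\tau(\omega'')|\le|X(\omega')-X(\omega'')|$, so if $X\in\cH_M$ with $M\mid N$ then $X_\tau\in\cH_M$ as well, and then the projection compatibility $\Pi_M(\hat\fF_N X_\tau)\subset\hat\fF_M X_\tau$ forces $X_{\tau,N}=X_{\tau,M}$. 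Everything in claims (3)--(5) flows from this: claim (3) because $X\in\dom(\hat\fF_M)$ makes $(X_\tau-X)/\tau$ bounded, hence $X\in\dom(\hat\fF_N)$ by maximality; claim (4) because the common resolvent defines a single $\lambda$-dissipative $\hat\fF_\infty$ in $\cH$; and claim (5) because $\cH_\infty$ is then a resolvent-invariant dense subset, so Lemma \ref{le:easier} gives a \emph{unique} maximal extension $\hat\fF$ together with the characterization \eqref{eq:182} and the identification with the strong closure.

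Your direct attack on claim (3) fails at a concrete point: you write ``testing against $\core_N\supset\core_M$,'' but $\core_M\subset\prob_{\#M}(\X)$ and $\core_N\subset\prob_{\#N}(\X)$ are \emph{disjoint} for $M\neq N$ (a measure with exactly $M$ support points cannot have exactly $N$ support points). There is no way to approximate $\nu\in\core_N$ by elements of $\core_M$, and the piecewise-optimality of Theorem \ref{thm:easy-but-not-obvious} does not supply the missing link; the paper instead uses the projection $\Pi_M$ and \eqref{eq:124} in Proposition \ref{prop:ext1} to transfer information between levels. Similarly, your route to claim (5) via Corollary \ref{cor:discrete-are-enough} is circular (that corollary presupposes a maximal totally dissipative \MPVF), and your coupling-approximation argument for claim (4) runs into the difficulty that a coupling between two measures in $\prob_{f,N}$ need not lie in $\prob_{f,N'}$ for any $N'\in\cN$; the paper avoids this entirely by first establishing law invariance of $\hat\fF$ (via Lemma \ref{le:general-invariance} applied to $\hat\fF_\infty$) and then invoking Proposition \ref{prop:basic-relation}. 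You correctly flag in your last paragraph that the compatibility across levels is the crux; the missing idea is precisely that the resolvent, not the test class, is what stabilizes.
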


The construction of $\hat\frF_\infty$ follows a ``restrict, then refine, then unite'' strategy to build a single, consistent multi-particle field that works for all particle numbers.

\begin{itemize}
    \item \textbf{The core for a fixed scale (restriction).} For a given number of particles $N$, we first define $\hat\frF_N$. This is the unique maximal extension of the original field $\frF$ when restricted to the core $\core_N$ (cf. Proposition \ref{prop:ext1} and Theorem \ref{prop:final}), characterized by condition \eqref{eq:32}. It represents the ``largest" field at level $N$ that still satisfies the dissipativity condition against all barycenters of elements of $\frF$ inside $\core_N$.

    \item \textbf{The consistency problem (refinement).} A configuration with $M$ particles can be seen as a configuration with $N$ particles whenever $N$ is a multiple of $M$ (by treating the $M$ particles as being made of smaller subunits). To be consistent, the field at level $M$ must be compatible with the field at every finer resolution $N$.

    \item \textbf{Ensuring compatibility (inner intersection).} To enforce consistency as above, we do not take $\hat\frF_M$ directly. Instead, for a fixed $M$, we consider all finer scales $N$ that are multiples of $M$. We then take the \emph{inner intersection}
    \[
    \bigcap_{N\in\cN\,:\,M \mid N} \hat\frF_N.
    \]
    This yields the part of the field at level $M$ that is compatible with the fields at all higher resolutions. This step makes the field more restrictive but guarantees consistency under refinement.

    \item \textbf{Combining all scales (union).} After performing this compatibility intersection for every $M$, we have a family of fields, one for each particle number, that are all mutually compatible. We can now safely take the union over all $M$ to obtain
    \[
    \hat\frF_\infty := \bigcup_{M\in \cN}\bigcap_{M\mid N}\hat\frF_N.
    \]
\end{itemize}

In the next result, we specify, in the general case, how $\frF$ and $\hat \frF$ are compatible in terms of $\lambda$-\EVI solutions. We show that $\frF$ indeed generates $\lambda$-\EVI solutions starting from every point of its domain -- which was not known a priori, since $\frF$ generally does not satisfy the hypotheses of \cite{CSS} or those of Section \ref{sec:totdissMPVF-flow}. These $\lambda$-\EVI solutions coincide with those generated by the maximal totally $\lambda$-dissipative \MPVF $\hat \frF$ constructed from $\frF$; moreover, when starting from a point in the core $\core$, they can be characterized purely in metric terms involving only $\frF$. Since $\core$ is dense in $\dom(\hat\frF)$, characterizing the Lagrangian solutions of the flow generated by $\hat \frF$ starting from every measure in $\core$ allows us to recover all other evolutions by approximation.
\begin{theorem}\label{thm:perfavorelultimo} Assume the hypothesis of Theorem \ref{thm:main-discrete}, let $\mu_0\in \overline{\core_N}$
for some $N\in \cN$. Then there exists a $\lambda$-\EVI solution $\mu:[0, +\infty) \to \overline{\core_N} \subset \prob_{f,N}(\X)$ for the restriction of $\frF$ to $\core_N$, starting from $\mu_0$, which is locally absolutely continuous in $(0,+\infty)$. Moreover, $\mu$ can be equivalently characterized by the following two properties:
    \begin{enumerate}
        \item $\mu$ is a Lagrangian solution of the flow generated by $\hat\frF$ (cf. Definition \ref{def:semig});
    \item $\mu$ is locally absolutely continuous 
    in $[0,+\infty)$ and locally Lipschitz continuous in 
    $(0,+\infty)$, 
    there exists a constant $C>0$ such that
    the Wasserstein velocity field $\vv$ of $\mu$ (cf.~Theorem \ref{thm:tangentv})
    satisfies
    \begin{equation}
    \label{eq:v-estimate}
        I_\lambda(t) \Big(\int_\X |\vv_t|^2\,\d\mu_t\Big)^{1/2}\le C\quad\text{a.e.~in }(0,1),
    \end{equation}
    $\mu_t\in \dom(\hat \frF_N)\subset \dom(\hat\frF)$
    for every $t>0$, and it holds
    \begin{equation}
    \label{eq:v-in-F}
    \vv_t =\hat\ff{\vphantom\ff}^\circ[\mu_t]
        \quad\text{for $\mathscr L^1$-a.e.~$t>0$},
    \end{equation}
    where $\hat\ff{\vphantom\ff}^\circ$ is the minimal selection map 
    induced by $(\hat\frF\vphantom{\frF})^\circ$ as in Theorem \ref{thm:minimal} and $I_\lambda(t)$ is as in \eqref{eq:defIlambda}.
    \end{enumerate}
\end{theorem}

We discuss two particular cases in more detail: the first one occurs when $\frF$ is  totally $\lambda$-dissipative.
\begin{theorem}[Unique maximal extension of a totally dissipative \MPVF]
  \label{thm:total-case}
      If $\frF$ is a 
      totally $\lambda$-dissipative $\MPVF$
  whose domain contains a dense $\cN$-core $\core$.
  Then the \MPVF $\hat \frF$ constructed
  as in Theorem \ref{thm:main-discrete} provides the unique maximal totally
  $\lambda$-dissipative extension of $\frF$ with domain included in $\overline{\core}$.
\end{theorem}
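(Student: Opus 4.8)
The plan is to reduce to Theorem~\ref{thm:main-discrete}, case~(ii), and then to let the Lagrangian picture carry the argument. By Lemma~\ref{lem:lambda0} we may assume $\lambda=0$. Since $\core$ is a dense $\cN$-core we have $\core\subset\dom(\frF)\subset\overline{\core}$, so the hypotheses of Theorem~\ref{thm:main-discrete}(ii) hold; this produces the totally dissipative \MPVF $\hat{\frF}_\infty$ of \eqref{eq:38} and the \MPVF $\hat{\frF}$ of part~(5), which is a maximal totally dissipative extension of $\hat{\frF}_\infty$ with domain contained in $\overline{\core}$, coincides with the strong closure of $\hat{\frF}_\infty$ in $\prob_2(\TX)$, and is characterized by \eqref{eq:34}. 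It remains to show that $\hat{\frF}\supset\frF$ and that $\hat{\frF}$ is the only maximal totally dissipative extension of $\frF$ with domain in $\overline{\core}$.

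Pass to $\cH$ and let $\mmo=(\iotaT)^{-1}(\frF)$ be the Lagrangian representation of $\frF$: by Proposition~\ref{prop:basic-relation} it is a law-invariant (hence invariant by measure-preserving isomorphisms) dissipative operator, with domain $\iota^{-1}(\dom(\frF))$ contained in the set $\iota^{-1}(\overline{\core})$, which is closed (since $\iota$ is $1$-Lipschitz) and convex (since $\overline{\core}$ is totally convex). The whole point of the notion of $\cN$-core is that, for every $N\in\cN$, the slice $\iota^{-1}(\core_N)\cap\cH_N$ is a $\Sym{N}$-invariant open subset of $\cH_N$, dense in $\iota^{-1}(\overline{\core})\cap\cH_N$ — precisely the configuration in which the maximal dissipative extension of an operator whose domain is open relative to a closed convex set is unique (Proposition~\ref{prop:qi}) — and the proof of Theorem~\ref{thm:main-discrete} assembles these slice-wise uniqueness statements to conclude that the maximal invariant dissipative extension of $(\iotaT)^{-1}(\bri{\frF}|_{\core})$ with domain in $\iota^{-1}(\overline{\core})$ is unique. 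Granting this (the technical heart of Theorem~\ref{thm:main-discrete}), one observes that $\hat{\frF}_\infty\supset\bri{\frF}|_{\core}$ — immediate from the defining inequality \eqref{eq:32} together with Corollary~\ref{cor:bary-works-well} — so, since $\hat{\frF}$ is maximal totally dissipative with domain in $\overline{\core}$, its Lagrangian representation is a maximal invariant dissipative extension of $(\iotaT)^{-1}(\bri{\frF}|_{\core})$ with domain in $\iota^{-1}(\overline{\core})$ (using Theorem~\ref{thm:maximal-dissipativity}(2) and Lemma~\ref{le:noncera}), hence it is \emph{the} unique such extension.

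The two remaining points now follow at once. By Corollary~\ref{cor:bary-works-well} the \MPVF $\frF\cup\bri{\frF}|_{\core}$ is totally dissipative, so its Lagrangian representation is an invariant dissipative extension of $(\iotaT)^{-1}(\bri{\frF}|_{\core})$ with domain in $\iota^{-1}(\overline{\core})$; by Zorn's lemma it extends to a maximal such one, which by the previous paragraph is the Lagrangian representation of $\hat{\frF}$, whence $\frF\subset\hat{\frF}$. Applying the same reasoning to an arbitrary maximal totally dissipative extension $\frG$ of $\frF$ with $\dom(\frG)\subset\overline{\core}$ — using $\bri{\frF}|_{\core}\subset\bri{\frG}\subset\frG$ (Theorem~\ref{thm:bary-proj}) — gives $\frG\subset\hat{\frF}$, and then the maximality of $\frG$ together with $\dom(\hat{\frF})\subset\overline{\core}$ forces $\frG=\hat{\frF}$. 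One could alternatively verify $\frF\subset\hat{\frF}$ straight from \eqref{eq:34} by an interpolation argument in the spirit of Theorem~\ref{le:crucial}, after reducing to barycenters via Corollary~\ref{cor:bary-works-well}; the real difficulty is the same along both routes, namely transferring dissipativity of the graph elements of $\hat{\frF}_\infty$ from the discrete core to all of $\dom(\frF)$, which is exactly what the slice-wise Qi-type uniqueness — resting on the local openness built into the $\cN$-core, the $W_2$-density of the core, and the decomposition of discrete couplings into geodesics of Section~\ref{sec:coupl} — delivers.
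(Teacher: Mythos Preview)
Your overall strategy matches the paper's: reduce to the Lagrangian picture, take an arbitrary maximal (law-invariant) dissipative extension $\hat\mmo$ of the Lagrangian representation of $\frF$, and show it must coincide with $\hat\fF$. The gap is in the sentence where you assert that ``the proof of Theorem~\ref{thm:main-discrete} assembles these slice-wise uniqueness statements to conclude that the maximal invariant dissipative extension of $(\iotaT)^{-1}(\bri{\frF}|_{\core})$ with domain in $\iota^{-1}(\overline\core)$ is unique.'' Theorem~\ref{thm:main-discrete}(5) only states uniqueness of maximal extensions of $\hat\frF_\infty$, not of the strictly smaller $\bri\frF|_{\core}$; the implication you need does not follow from the inclusion $\bri\frF|_{\core}\subset\hat\frF_\infty$, because an extension of the smaller object need not contain the larger. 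The Qi-type slice uniqueness (Proposition~\ref{prop:ext1}) pins down $\hat\fF_N$ inside $\cH_N\times\cH_N$, but an arbitrary $\hat\mmo$ is not a priori an operator in $\cH_N\times\cH_N$: for $(Y,W)\in\hat\mmo$ with $Y\in\overline{\newDDom N}$ one has no reason to expect $W\in\cH_N$.

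The paper supplies exactly this missing link. From $\fF_N\subset\tilde\mmo\subset\hat\mmo$ one gets dissipativity of $\hat\mmo$ against $\fF_N$, and then the characterization \eqref{eq:112} yields $(Y,\Pi_N W)\in\hat\fF_N$ for $(Y,W)\in\hat\mmo$ with $Y\in\overline{\newDDom N}$ (this is Corollary~\ref{cor:dim-total-case1}). Testing against $\hfFcirc X$ for $X\in\dom(\hat\fF_\infty)$ and letting $N\to\infty$ along $\cN$ (so that $\Pi_N W\to W$) gives $(Y,W)\in\hat\fF$ via \eqref{eq:182bis}; this is the content of Claim~1. One then needs the domain identity $\dom(\hat\mmo)\cap\cH_N\subset\overline{\newDDom N}$ (Claim~2, proved via the resolvent) and a density/closure argument (Claim~3, using Lemma~\ref{le:easier}) to pass from $\hat\mmo\cap(\cH_\infty\times\cH)$ to all of $\hat\mmo$. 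Your write-up gestures at ``assembly'' but does not provide any of these three steps; once they are in place, your final paragraph (choosing $\hat\mmo$ to be first the Lagrangian representation of a maximal extension of $\frF\cup\bri\frF$, then of an arbitrary $\frG$) works and is the same conclusion as the paper's.
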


A second case occurs when we know that  $\frF$ is
a deterministic $\lambda$-dissipative
\MPVF: as in Theorem \ref{thm:demi-total} we obtain
that $\lambda$-dissipativity implies 
total $\lambda$-dissipativity; here however, we deal with a \MPVF (not necessarily single-valued)
defined in a much smaller domain.
\renewcommand{\gg}{\bm g}
\begin{theorem}[Deterministic dissipative \MPVF{s} on a core are totally dissipative]
  \label{thm:demi-case}
  Let us suppose that $\dim\X\ge2$ and
  $\frF\subset\prob_2(\TX)$  is a
  deterministic
  $\lambda$-dissipative \MPVF whose domain is a
  $\cN$-core $\core$.
 Then $\frF$ is totally $\lambda$-dissipative, $\hat\frF_\infty$ (cf. \eqref{eq:38}) is a totally $\lambda$-dissipative extension of $\frF$ and, for every $\mu\in
  \bigcup_{N\in \cN}\overline{\core_N}$, $\ff \in \maps{\hat{\frF}_\infty}[\mu]$ if and only if 
  \begin{equation}
    \label{eq:32tris}
    \int_{\X^2}\langle \ff(x_0)-\gg(x_1),x_0-x_1\rangle\,\d \mmu(x_0,x_1)\le \lambda
    \int_{\X^2} |x_0-x_1|^2\,\d\mmu(x_0,x_1)
  \end{equation}
  for all the measures $\nu\in \core$, $\gg \in \maps{\frF}[\nu]$, and all the couplings $\mmu\in \Gamma(\mu,\nu)$ 
  such that $\mmu$ is the unique element 
  of $\Gamma_o(\mu,\nu)$.
 The \MPVF $\hat\frF$ of Theorem \ref{thm:main-discrete}(5) provides the unique maximal totally $\lambda$-dissipative extension of $\frF$ with domain included in $\overline{\core}$.
  If moreover $\frF$ is single-valued and the restriction of $\frF$ to each set $\core_N$, $N \in \cN$, is demicontinous, then
  the restrictions of $\hat \frF_\infty$ and $\hat{\frF}{\vphantom \frF}^\circ$ to $\core$ coincide with $\frF$. 
\end{theorem}

We devote the remaining part of this section to the proof of the above
main theorems. We adopt a Lagrangian viewpoint, lifting the \MPVF
$\frF$ to the Hilbert space $\cH:=L^2(\Omega, \cB, \P;\X)$ and
parametrizing probability measures by random variables in $\cH$ as we
did in Section \ref{sec:3.2}.

\medskip
\makeatletter
\newcommand{\mylabel}[2]{#2\def\@currentlabel{#2}\label{#1}}
\makeatother

We proceed as follows:
\begin{enumerate}
    \item[\mylabel{expl:it:1}{(1)}] In Section \ref{sec:contropL}, we introduce the framework used for our construction, in particular the study of $\cN$-cores. We start from a Lagrangian description of discrete measures, viewed as elements of $\cH$ that take only finitely many distinct values. From this perspective, we derive several equivalent characterizations of $\cN$-cores in Propositions \ref{le:equivalent} and \ref{le:dimcore}. These characterizations will be used repeatedly in the proofs of the results that follow.
    \item[\mylabel{expl:it:2}{(2)}] Section \ref{sec:fn} is devoted to the construction of $\hat{\frF}_N$ as in Theorem \ref{thm:main-discrete}. We start by using the $\cN$-core–compatible Lagrangian representation $\fF$ of $\frF$ given in \eqref{eq:105} to define a suitable Lagrangian restriction of $\frF$ to discrete measures with exactly $N \in \cN$ distinct atoms. This restriction is denoted by $\fF_N$, defined in \eqref{eq:105}, and its properties are studied in Proposition \ref{prop:perunmotivo}. 
    In the subsequent Proposition \ref{prop:ext1}, we define its maximal extension $\hat{\fF}_N$, which will turn out to be the Lagrangian representation of $\hat{\frF}_N$ (cf. Theorem \ref{prop:final}), and analyze its properties. 
    The final three results of the section provide additional characterizations and properties of $\hat{\fF}_N$: the first, Proposition \ref{prop:allultimomomento}, under the general assumptions of Theorem \ref{thm:main-discrete}, and Corollaries \ref{cor:dim-total-case1} and \ref{cor:dim-demi-case1} under the stronger hypotheses of the main Theorems \ref{thm:total-case} and \ref{thm:demi-case}, respectively. These three results are used directly in the proofs of the corresponding main theorems.
   \item[\mylabel{expl:it:3}{(3)}] Section \ref{sec:finf} is devoted to the construction of $\hat{\frF}_\infty$ and $\hat{\frF}$ as in Theorem~\ref{thm:main-discrete}. We begin by showing in Proposition~\ref{prop:eulerstep} and Corollary~\ref{cor:propfcirc} that the resolvent and the minimal selection operators of $\hat{\fF}_N$ are compatible, in a suitable sense, across different values of $N$. We then introduce in~\eqref{eq:181} the Lagrangian representation $\hat{\fF}_\infty$ of $\hat{\frF}_\infty$, and recast in Corollary~\ref{cor:summarize} the properties of the resolvent and minimal selection in terms of $\hat{\fF}_\infty$. Thanks to these results, in Corollary~\ref{cor:help-me} we are able to define $\hat{\fF}$, which will turn out to be the Lagrangian representation of $\hat{\frF}$ (cf. Theorem \ref{prop:final}), and to study some of its properties. 
   \item[\mylabel{expl:it:4}{(4)}] Section \ref{sec:proofs8} contains Theorem~\ref{prop:final}, which includes the main Theorem~\ref{thm:main-discrete} and its proof, and the proofs of the remaining main Theorems~\ref{thm:perfavorelultimo}, \ref{thm:total-case}, and \ref{thm:demi-case}.
   \item[\mylabel{expl:it:5}{(5)}]  Section \ref{subsec:ex} contains a few examples of the theory just developed.
\end{enumerate}

\subsection{Lagrangian representations of \texorpdfstring{$\cN$}{N}-cores}\label{sec:contropL}
In this section, we initiate a Lagrangian approach to the description of discrete measures. To this end, we fix a standard Borel 
space $(\Omega, \cB)$ endowed with a nonatomic  probability measure $\P$ (see Definition \ref{def:sbs}).

Given $\cN$, an unbounded directed subset of $\N$ w.r.t.~the order relation $\preccurlyeq$ as in \eqref{eq:166qui},
we consider a $\cN$-segmentation of $(\Omega, \cB, \P)$ (see Definition \ref{def:segm}) that we denote by $(\mathfrak P_N)_{N \in \cN}$.
We define $\cB_N:=\sigma \left (\mathfrak P_N \right )$, $N\in\cN$, and we denote by $(\Omega, \cB, \P, (\mathfrak P_N)_{N \in \cN})$, with $\mathfrak P_N=\{\Omega_{N,n}\}_{n\in I_N}$ and $I_N:=\{0,\dots,N-1\}$, the $\cN$-refined 
probability space as in Definition \ref{def:segm} induced by $(\mathfrak P_N)_{N \in \cN}$ on $(\Omega, \cB, \P)$. We set
\[ \cH:= L^2(\Omega, \cB, \P; \X),\quad \cH_N:= L^2(\Omega, \cB_N, \P; \X), \quad N \in \cN, \quad \cH_\infty:= \bigcup_{N \in \cN} \cH_N, \]
and we recall that $\cH_\infty$ is dense in $\cH$ by Proposition
\ref{prop:strook}.

Even if the choice of a general standard Borel space allows for a great
generality, it would not be restrictive to
focus on the canonical example below, at least at a first reading.
\begin{example}\label{ex:canon}
  The canonical example of $\cN$-refined standard Borel probability  space is 
\[ ([0,1), \mathcal{B}([0,1)), \lambda , (\mathfrak {I}_N)_{N \in \cN}),\]
where $\lambda$ is the one dimensional Lebesgue measure restricted to $[0,1)$ 
and $\mathfrak {I}_N=\{I_{N,k}\}_{k \in I_N}$ with
$I_{N,k}:=[k/N,(k+1)/N)$, $k \in I_N$ and $N \in \cN$.
The space $\cH_N$ can then be identified with the class of functions
which are (essentially) constant in each subintervals
$I_{N,k}$, $k\in I_N$, of the partition $\mathfrak I_{N,k}$.
\end{example}

As in Section \ref{sec:invmpvf}, we parametrize measures in $\prob(\X)$ by random variables in $(\Omega,\cB, \P)$ and we use the notation $\iota:\cH\to \prob_2(\X)$ for the map sending $X \in \cH$ to $\iota(X)=X_\sharp \P = \iota_X \in \prob_2(\X)$. Recall that 
\begin{equation}
  \label{eq:74}
  W_2(\iota_X,\iota_Y)\le |X-Y|_\cH\quad\text{for every }X,Y\in \cH.
\end{equation}
If $(X,V)\in \cH\times \cH$, recall the notation $\iiota{X,V}=(X,V)_\sharp\P\in \prob_2(\TX)$.

\newcommand{\vecX}[1]{\X^{#1}}
We can identify $\cH_N$ with the space
$\X^N$: indeed, each $X\in\cH_N$ is associated with a vector
$\xx:I_N\to \X$ such that
$\xx(n)=X(\omega)$ whenever $\omega\in \Omega_{N,n}$.
In this case, we set
\[\IIN(\xx):=X.\] 
Clearly $\iota(\cH_N)=\prob_{f,N}(\X)$
and $\iota(\cH_\infty)=\prob_{f,\cN}(\X)$.

The isomorphism $\IIN$
preserves the scalar product on $\vecX N$
\begin{equation*}
  \la \xx,\yy\ra_{\vecX N}
  :=\udN\sum_{n=0}^{\dN-1} \la \xx(n),\yy(n)\ra
  =\E\big[\la \IIN(\xx),\IIN(\yy)\ra\big]=\la\IIN(\xx),\IIN(\yy)\ra_{\cH }\quad
  \xx,\yy \in \vecX N.
\end{equation*}
The conditional expectation $\Pi_N=\E[\cdot |\cB_N]$ provides the orthogonal
projection of an arbitrary map $X\in \cH $ onto $\cH_N$:
\begin{equation}\label{eq:projrep}
  \Pi_N(X)(\omega)=N \int_{\Omega_{N,n}}X\,\d\P\quad
  \text{if }\omega\in \Omega_{N,n}.
\end{equation}
Notice that 
\begin{equation*}
  \text{if $M\mid N$ then $\cB_M\subset \cB_N$ and $\Pi_M=\Pi_M\circ
    \Pi_N$}.
\end{equation*}
For every $X=\IIN(\xx)\in \cH_N$, the probability measure $\iota_X=X_\sharp \P$ takes the form
\[\iota_X=\frac{1}{N}\sum_{n=0}^{\dN-1}\delta_{\xx(n)}\in
\prob_{f,N}(\X).\]

\medskip
We denote by $\sfO_N\subset \vecX N$ the subset
of the injective maps and by
\begin{equation}\label{eq:ONlagr}
\cO N:=\IIN(\sfO_N)\subset \cH_N.
\end{equation}
Clearly, $\iota(\cO N)=\prob_{\#N}(\X)$.
Since the complement of $\sfO_N$ is the union
of a finite number of proper closed subspaces with empty interior
$S_{ij}:=\{\xx\in \vecX N:\xx(i)=\xx(j)\}$, $i\neq j$, of $\vecX N$,  then $\sfO_N$ is open and dense in $\vecX N$. 

\smallskip

Every permutation $\sigma\in \symg {I_N}$ acts on $\vecX N$
via $\sigma \xx(n):=\xx(\sigma(n))$ and can be thus extended to
$\cH_N$ via $\sigma (\IIN( \xx)):=\IIN(\sigma(\xx))$.
It is not difficult to see that,
for every $X,Y\in \cH_N$, 
$\iota_X=\iota_Y$ is equivalent to $Y=\sigma X$ for some
$\sigma\in \symg{I_N}$.

As in Section \ref{sec:invmpvf}, we denote by $\rmS(\Omega)$ the class of
$\cB$-$\cB$-measurable maps $g:\Omega\to\Omega$ which are
essentially injective and measure-preserving, meaning that there
exists a full $\P$-measure set $\Omega_0 \in \cB$ such that $g$ is
injective on $\Omega_0$ and $g_\sharp \P=\P$. Moreover, for every $N
\in \cN$, we denote by $\rmS_N(\Omega):=\rmS(\Omega, \cB, \P; \cB_N)$ the subset of $\rmS(\Omega)$ of $\cB_N$-$\cB_N$ measurable maps.

\begin{remark}\label{rmk:gsigma}
Clearly, if $X=\IIN(\xx)\in \cH_N$ and $g\in \rmS_N(\Omega)$ then
$X\circ g\in \cH_N$
and there exists a unique permutation $\sigma=\sigma_g\in\symg{I_N}$
such that $X\circ g=\sigma_gX=\IIN(\xx\circ\sigma_g)$.
Conversely, if $\sigma\in \symg{I_N}$ there exists
$g\in
\rmS_N(\Omega)$
such that $\sigma=\sigma_g$, as shown in Lemma \ref{cor:isomor}.
We set $G[\sigma]:=\big\{g\in \rmS_N(\Omega):\sigma_g=\sigma\big\}.$
\end{remark}

 As anticipated, the aim of this subsection is to prove equivalent characterizations of $\cN$-cores. The main result is the following.

\newcommand{\tcore}{\mathrm D}
\newcommand{\ecore}{\mathrm E}
\newcommand{\intecore}{\mathring{\ecore}}
\begin{proposition}[Equivalent characterizations of $\cN$-cores]
  \label{le:equivalent}
 Let $\core\subset \prob_{\#\cN}(\X)$;
  then the following properties are equivalent:
  \begin{enumerate}[$(a)$]
  \item the family of sets $\core_N = \core \cap \prob_{\#N}(\X)$ satisfies
\begin{enumerate}[\rm (1*)]
  \item $\core_N$ is relatively open in $\prob_{\#N}(\X)$
    (or, equivalently, in $\prob_{f,N}(\X)$),
  \item $\core_N$ is convex along collisionless couplings (cf. Definition \ref{def:collisionless}),
  \item if $M,N\in \cN$, $M\mid N$ then
    $\overline{\core_M}=\overline{\core_N}\cap
    \prob_{f,M}(\X)$,
  \item $\overline{\core_N}$ is convex along couplings in $\prob_{f,N}(\X \times \X)$;
  \end{enumerate}
  \item $\core$ is a $\cN$-core;
  \item there exists a subset $\tcore$ of $\prob_{f,\cN}(\X)$ such that $\core= \tcore \cap \prob_{\#\cN}(\X)$ and, for every $N\in \cN$, the set $\tcore_N:=\tcore\cap \prob_{f,N}(\X)$ satisfies the following two conditions:
    \begin{enumerate}[\rm (1')]
    \item $\tcore_N
      $ is relatively open in $\prob_{f,N}(\X)$,
    \item $\tcore_N
      $ is convex along couplings in $\prob_{f,N}(\X\times \X)$;
    \end{enumerate}
    
  \item
    there exists 
    a totally convex and closed
    subset $\ecore$ of $\prob_{2}(\X)$ such that $\core=\bigcup_{N\in \cN}
    {\intecore}_N\cap \prob_{\#N}(\X)$ and
    \begin{enumerate}[\rm (1'')]
    \item
      for every
      $N\in \cN$ the sets
      $$\intecore_N:=
      \text{relative interior of }\big(\ecore\cap \prob_{f,N}(\X)\big)
      \text{       in $\prob_{f,N}(\X)$}
      $$
      are not empty,
    \item $\ecore\cap \prob_{f,\cN}(\X)$ is dense in $\ecore$.
    \end{enumerate}
  \end{enumerate}
    In the above cases
    the sets $\core_N$, $\tcore_N$, $\intecore_N$, $\core$, $\tcore$ and
    $\ecore$ are linked by the following 
    relations
    \begin{align}
      \label{eq:39}
        &\core_N=\tcore_N\cap \prob_{\#N}(\X)=
        \intecore_N\cap \prob_{\#N}(\X),\quad
        \core=\bigcup_{N\in \cN}\core_N,
          ,\\
      \label{eq:50}
        &\tcore_N=\intecore_N=\text{ relative interior of
          $\overline{\core_N}$
          in $\prob_{f,N}(\X)$,}\quad
        \tcore=\bigcup_{N\in \cN}\tcore_N=\bigcup_{N\in \cN}\intecore_N
        ,\\\label{eq:51}
        &\overline{\core_N}=\overline{\tcore_N}=\ecore\cap \prob_{f,N},
        \\\label{eq:52}
                &\overline{\core}=\overline{\tcore}=\ecore.
    \end{align}
\end{proposition}

\begin{continuance}{ex:core} In the simple case of $\core= \prob_{\#\cN}(\mathsf{U})$, we have $\tcore_N = \intecore_N = \prob_{f,N}(\mathsf U)$, $\tcore = \prob_{f, \cN}(\mathsf U)$, and $\ecore=\prob_2(\overline{\mathsf U})$.
\end{continuance}

 The proof of Proposition \ref{le:equivalent} requires two preliminary lemmas.
The first one establishes an  interesting relation between projections and
permutations.  We denote by $\text{rel-int}(A;B)$ the relative interior of a set $A$ in $B$.  

\begin{lemma}
  \label{le:useful}
   Let $N,M\in\cN$ be such that $M\mid N$.
  If $\mathcal K$ is a convex subset of $\cH_N$ invariant
  by the action of $\Sym{I_N}$, then
  \begin{equation}
    \label{eq:48}
    \Pi_M\big(\mathcal K\big)=\mathcal K\cap \cH_M.
  \end{equation}
  Moreover,
  \begin{equation}
    \label{eq:56}
    \overline{\mathcal K}\cap \cH_M=
    \overline{\mathcal K\cap \cH_M}
  \end{equation}
  and,  if $\operatorname{rel-int}(\mathcal K; \cH_N)$ is not empty, we have
  \begin{equation}
    \label{eq:56bis}
    \operatorname{rel-int}(\mathcal K; \cH_N) \cap \cH_M =  \operatorname{rel-int}(\mathcal K\cap \cH_M; \cH_M).
  \end{equation} 
\end{lemma}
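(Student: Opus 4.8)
The heart of the matter is a single observation: when $M\mid N$, the conditional expectation $\Pi_M$ restricted to $\cH_N$ is realized as a uniform average over a finite group of coordinate permutations. Concretely, set $k:=N/M$; by the definition of $\cN$-segmentation (Definition \ref{def:segm}), since $M\mid N$ each atom of $\mathfrak P_M$ is the disjoint union of exactly $k$ atoms of $\mathfrak P_N$, which gives a partition $I_N=\bigsqcup_{m\in I_M}B_m$ of $I_N$ into $M$ blocks of cardinality $k$. Let $H\subseteq\symg{I_N}$ be the subgroup of the permutations fixing each block $B_m$ setwise. Under the identification $\cH_N\cong\vecX N$, a direct computation will show that $\Pi_M X=|H|^{-1}\sum_{\sigma\in H}\sigma X$ for every $X\in\cH_N$: indeed, on each block the uniform average over $\symg{B_m}$ of the entries of $X$ is their arithmetic mean, which is precisely the constant value taken by $\Pi_M X$ on the corresponding atom of $\mathfrak P_M$. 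Granting this, \eqref{eq:48} is immediate: ``$\supseteq$'' holds because $\Pi_M$ is the identity on $\cH_M$, while for ``$\subseteq$'' one notes that, for $X\in\mathcal K$, $\Pi_M X$ is a convex combination of the points $\sigma X$, $\sigma\in H\subseteq\symg{I_N}$, each of which lies in $\mathcal K$ by invariance; convexity then forces $\Pi_M X\in\mathcal K$, and $\Pi_M X\in\cH_M$.

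For \eqref{eq:56}, the inclusion $\overline{\mathcal K\cap\cH_M}\subseteq\overline{\mathcal K}\cap\cH_M$ is trivial since $\cH_M$ is closed; for the converse I would take $Y\in\overline{\mathcal K}\cap\cH_M$ and a sequence $X_j\in\mathcal K$ with $X_j\to Y$, and observe that, as $\Pi_M$ is $1$-Lipschitz and $\Pi_M Y=Y$, the images $\Pi_M X_j\to Y$ belong to $\mathcal K\cap\cH_M$ by \eqref{eq:48}, so that $Y\in\overline{\mathcal K\cap\cH_M}$. For \eqref{eq:56bis}, the plan is first to establish the auxiliary identity $\operatorname{aff}(\mathcal K\cap\cH_M)=\operatorname{aff}(\mathcal K)\cap\cH_M$: ``$\subseteq$'' is clear, and ``$\supseteq$'' follows because any $W\in\operatorname{aff}(\mathcal K)\cap\cH_M$ satisfies $W=\Pi_M W\in\Pi_M\big(\operatorname{aff}(\mathcal K)\big)\subseteq\operatorname{aff}\big(\Pi_M(\mathcal K)\big)=\operatorname{aff}(\mathcal K\cap\cH_M)$, using linearity of $\Pi_M$ and \eqref{eq:48}. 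Once the affine hulls are matched, intersecting with $\cH_M$ a relative neighbourhood in $\operatorname{aff}(\mathcal K)$ of a point $Y\in\mathring{\mathcal K}\cap\cH_M$ gives at once $\mathring{\mathcal K}\cap\cH_M\subseteq\operatorname{ri}_{\cH_M}(\mathcal K\cap\cH_M)$.

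For the opposite inclusion I would fix $X_0\in\mathring{\mathcal K}$, nonempty by hypothesis, and exploit again the averaging identity: each $\sigma\in H$ acts on $\cH_N$ as a linear isometry with $\sigma(\mathcal K)=\mathcal K$, hence $\sigma(\mathring{\mathcal K})=\mathring{\mathcal K}$, and since $\mathring{\mathcal K}$ is convex the averaged point $\bar X_0:=\Pi_M X_0=|H|^{-1}\sum_{\sigma\in H}\sigma X_0$ still belongs to $\mathring{\mathcal K}$ (and to $\cH_M$). Then, given $Y\in\operatorname{ri}_{\cH_M}(\mathcal K\cap\cH_M)$, the prolongation property of the relative interior in $\cH_M$ yields $t>0$ with $Z:=(1+t)Y-t\bar X_0\in\mathcal K\cap\cH_M$, so that $Y=\tfrac{t}{1+t}\,\bar X_0+\tfrac1{1+t}\,Z$ is a proper convex combination of $\bar X_0\in\mathring{\mathcal K}$ and $Z\in\mathcal K\subseteq\overline{\mathcal K}$; the segment principle for convex sets then gives $Y\in\mathring{\mathcal K}$, hence $Y\in\mathring{\mathcal K}\cap\cH_M$, and \eqref{eq:56bis} follows. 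I expect the genuinely delicate point to be this last inclusion together with the affine-hull identity: one must read ``relative interior'' as interior with respect to the possibly infinite-dimensional affine hull, and make sure that the elementary tools invoked (convexity of the relative interior, the prolongation and segment principles) do hold at that level of generality, which is the case since they rely only on convexity and on the openness of $\mathring{\mathcal K}$ inside $\operatorname{aff}(\mathcal K)$. It is the permutation-averaging identity for $\Pi_M$ that makes the whole argument go through without any compactness or finite-dimensionality assumption.
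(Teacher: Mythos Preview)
Your proof is correct and shares with the paper the crucial observation that $\Pi_M$ on $\cH_N$ is a convex average over a subgroup of $\symg{I_N}$; the paper uses the $K$ powers of a single cyclic permutation within each block, you use the full block-preserving subgroup $H$, but either choice establishes \eqref{eq:48} in the same way, and your treatment of \eqref{eq:56} via sequences is equivalent to the paper's direct application of \eqref{eq:48} to $\overline{\mathcal K}$.

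The only genuine divergence is in \eqref{eq:56bis}. The paper does not touch affine hulls: it simply reapplies \eqref{eq:48} and \eqref{eq:56} to the convex, permutation-invariant set $\mathring{\mathcal K}$ to obtain $\overline{\mathring{\mathcal K}\cap\cH_M}=\overline{\mathring{\mathcal K}}\cap\cH_M=\overline{\mathcal K}\cap\cH_M=\overline{\mathcal K\cap\cH_M}=\overline{\mathring{\mathcal K}_M}$, and then invokes the fact that two nonempty open convex sets with the same closure coincide. This is slicker because it bootstraps from the earlier parts of the lemma. Your route via prolongation plus the segment principle is also correct and perhaps more self-contained, but the affine-hull discussion is superfluous here: in the paper ``relative interior in $\cH_N$'' means interior in $\cH_N$, and the hypothesis $\mathring{\mathcal K}\neq\emptyset$ already forces $\operatorname{aff}(\mathcal K)=\cH_N$, so the identity $\operatorname{aff}(\mathcal K\cap\cH_M)=\cH_M$ follows immediately from \eqref{eq:48} without a separate argument.
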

\begin{proof}
    Let us first compute the explicit representation of the orthogonal
  projection $\Pi_M(X)$ for every $X\in \cH_N$.
  If $K:=N/M$ we consider 
  the cyclic permutation $\sigma:I_N\to I_N$
  defined by
  \begin{displaymath}
    \sigma(n):=
    \begin{cases}
      mK+k+1&\text{if }n=mK+k,\ m\in I_M,\ 0\le k<K-1,\\
      mK&\text{if }n=mK+K-1,\ m\in I_M,
    \end{cases}
  \end{displaymath}
  and its powers $\sigma^p$, $p\in I_K$.
  It is not difficult to check that
  $\sigma^K=\sigma^0=\ii_{I_N}$
  and for every $Y\in \cH_M$ we have
  $\sigma^p Y=Y$ for every $p\in I_K$.
  Therefore, by \eqref{eq:projrep},  for every $X\in \cH_N$  and $\omega\in\Omega_{M,m}$, with $m\in\ I_M$,  we obtain the representation 
  \begin{align*}
    \Pi_M(X)(\omega)&= M\int_{\Omega_{M,m}} X\d\P\\
    &=\frac{N}{K}\int_{\cup_{p=0}^{K-1}\Omega_{N,mK+p}}X\d\P\\
    &=\frac{N}{K}\frac{1}{N}\sum_{p=0}^{K-1}X|_{\Omega_{N,mK+p}}\\
    &=\frac 1K\sum_{p=0}^{K-1}(\sigma^p X)(\omega).
  \end{align*} 
  If $\mathcal K$ is a convex subset of $\cH_N$ invariant
  by the action of $\Sym{I_N}$,
  we get $\Pi_M(X)\in \mathcal K$ for every $X\in \mathcal K$,
  so that $\Pi_M(\mathcal K)=\mathcal K\cap \cH_M$, hence we proved \eqref{eq:48}.

  In order to check \eqref{eq:56}, we observe
  that in general
  $ \overline{\mathcal K\cap \cH_M}\subset \overline{\mathcal K}\cap
  \cH_M$;
  on the other hand $\overline{\mathcal K}\cap
  \cH_M=\Pi_M(\overline{\mathcal K})\subset
  \overline{\Pi_M(\mathcal K)}=\overline{\mathcal K\cap \cH_M} $
  by \eqref{eq:48}.

  Similarly,  if we denote  $\mathring {\mathcal A}_M:=\operatorname{rel-int}(\mathcal K \cap \cH_M; \cH_M)$ and $\mathring {\mathcal B}_N:=\operatorname{rel-int}(\mathcal K; \cH_N)$,  
  as a general fact $\mathring{\mathcal B}_N\cap \cH_M\subset
  \mathring{\mathcal A}_M$ so that
  $\mathring {\mathcal A}_M$ is not empty, since by \eqref{eq:48} $\mathring{\mathcal B}_N\cap \cH_M= \Pi_M(\mathring{\mathcal B}_N)$ is not empty.
  On the other hand, by \eqref{eq:56},
  $
  \overline{\mathring{\mathcal B}_N\cap \cH_M}=
  \overline{\mathring{\mathcal B}_N} \cap \cH_M=
  \overline{\mathcal K} \cap \cH_M=
  \overline{\mathcal K \cap \cH_M}=
  \overline{\mathring{\mathcal A}_M}$
  so that the open convex sets $\mathring{\mathcal B}_N\cap \cH_M$ and
  $\mathring {\mathcal A}_M$ have the same closure and therefore coincide. 
\end{proof}

We introduce the following Lagrangian representation of a $\cN$-core: if $\core$ is a $\cN$-core and $N \in \cN$, we set
\begin{equation}
  \label{eq:138}
  \begin{aligned}
    \newODDom N:=
    \Big\{X\in \cH_N:\iota_X \in \core_N
    \Big\},\quad &\newODDom
    \infty:= \Big\{X\in \cH_\infty:\iota_X \in \core\Big\}=
    \bigcup_{N\in \cN}\newODDom N\\
    \newDDom N:=\conv{\newODDom N},\quad
    &\newDDom
    \infty:= 
    \bigcup_{N\in \cN}\newDDom N,\quad
    \mathcal E_\infty:=\overline{\newODDom \infty}.
  \end{aligned}
\end{equation}
Notice that $\newODDom N$ is in fact a subset of $\cO N$  (cf. \eqref{eq:ONlagr}), 
and $\newDDom N$ is a subset of $\cH_N$.

In the next results of this section, we investigate the properties of the sets defined in \eqref{eq:138}, inherited by those of $\cN$-cores. These sets will play a crucial role in the next Sections \ref{sec:fn} and \ref{sec:finf}, where we will study suitable Lagrangian representations of $\frF$ restricted to subsets of the $\cN$-core $\core$.

\begin{continuance}{ex:core} In the simple case of $\core= \prob_{\#\cN}(\mathsf U)$, denote by $\mathcal{U} \subset \cH$ the set of maps taking values in $\mathsf U$. Then, we have that $\newODDom N=\cO N \cap \mathcal{U}$, $\newODDom \infty$ is the set of injective maps in $\cH_\infty\cap \mathcal{U}$, $\newDDom N=\cH_N \cap \mathcal{U}$, $\newDDom \infty=\cH_\infty \cap \mathcal{U}$, and $\mathcal{E}_\infty$ is the set of maps in $\cH$ taking values in $\overline{\mathsf{U}}$.
\end{continuance}

 In this second preliminary lemma (together with its immediate corollary), we prove several properties of the Lagrangian representations of $\cN$-cores in \eqref{eq:138}. These will also contribute in proving the equivalence results stated in Proposition \ref{le:equivalent}. 

\begin{lemma}
  \label{le:trivial} Assume that $\core \subset \prob_{\#\cN}(\X)$ satisfies property $(a)$ in Lemma \ref{le:equivalent}.  Then for every $N\in \cN$ it holds:
  \begin{enumerate}
  \item
  $\newODDom N$ and $\newDDom N$ are relatively
    open subsets of $\cH_N$,
    invariant with respect to the action of permutations of $\symg{I_N}$.
  \item
  The relative interior of $\overline{\newODDom N}$ in
  $\cH_N$ coincides
  with
  $\newDDom N$, in particular $\newODDom N$ is dense in $\newDDom N$
 and $\overline{\newODDom N}=\overline{\newDDom N}$.
\item $
  \newDDom N\cap \cO N=\newODDom N$  and, if $X \in \newODDom N$ and $Y \in \overline{\newDDom N}$, there exists $\eps>0$ such that $X_t:=(1-t)X+tY \in \mathcal{C}_N$ for every $t \in (1-\eps, 1)$.
  \item If $M\in \cN$ and $M\mid N$ then
    $\DDom M=\DDom N\cap\cH_M=\Pi_M(\DDom N)$ and
    $\overline {\DDom M}=
    \overline{\DDom N}\cap\cH_M=\Pi_M\Big(\overline{\newDDom N}\Big)$.
  \item
    $\newODDom \infty \subset \newDDom \infty \subset
    \overline{\newDDom \infty} = \overline{\newODDom \infty}=\mathcal E_\infty$ and
    $\mathcal E_\infty$ is convex.
  \item
    $\newDDom N=\newDDom \infty\cap \cH_N=\Pi_N(\newDDom \infty)$ and
    $\overline{\newDDom N}=
    \mathcal E_\infty\cap \cH_N=\Pi_N(\mathcal E_\infty)$.
  \item $\mathcal E_\infty
    =\overline{\newDDom \infty} = \overline{\newODDom \infty}$ is law invariant.
  \end{enumerate}
\end{lemma}
\begin{proof}
  (1)  It is clear by construction that both $\newODDom N$ and $\DDom N$ are invariant w.r.t.~the action of permutations in $\symg{I_N}$.  
  The set $\newODDom N$ is relatively open, since
  the map $X\mapsto \iota_X$ is Lipschitz
  from $\cH_N$ to $\prob_{f,N}(\X)$, thanks to \eqref{eq:74}, and
  $\core_N$ is relatively open in $\prob_{f,N}(\X)$  by assumption (1*). 
   The set $\DDom N= \conv{\newODDom N}$  is relatively open in $\cH_N$ since
  it is the convex hull of  the  relatively open set  $\newODDom N$.
  
  \smallskip
  (2)  Since $\newDDom N$ is open by item (1) and convex by construction, it coincides with the interior of its closure. Therefore, we only need to show that $\overline{\newDDom N}= \overline{\newODDom N}$. Obviously, $\newODDom N \subset \newDDom N$ by construction, so that $\overline{\newODDom N} \subset \overline{\newDDom N}$. To show the reverse inclusion, it is enough to prove that $\overline{\newODDom N}$ is convex: indeed $\newDDom N= \conv{\newODDom N}$ is the smallest convex set containing $\newODDom N$ and then it must be contained in $\overline{\newODDom N}$, if the latter is convex. Let us show it: we take $X,Y\in\overline{\newODDom N}$, so that $\iota_X,\iota_Y\in\overline{\core_N}$, and we choose the coupling $\rrho:=\iotaT_{X,Y}\in\prob_{f,N}(\X\times\X)$. Let $t \in [0,1]$, since $\overline{\core_N}$ is convex along $\rrho$ by assumption (4*), we get 
  \[\sfx^t_\sharp\rrho=\iota_{(1-t)X+tY}\in\overline{\core_N}.\]
  Thus, there exists $(\mu_n)_{n\in\N}\subset \core_N$ such that $W_2(\mu_n,\iota_{(1-t)X+tY})\to0$ as $n\to+\infty$. Recalling Theorem \ref{thm:gpfinal}, there exists $(Z_n)_{n\in\N}\subset\cH_N$, $\iota_{Z_n}=\mu_n$, such that $Z_n\to (1-t)X+tY$. In particular, since $Z_n\in\newODDom N$, we conclude that $(1-t)X+tY\in\overline{\newODDom N}$. By arbitrarity of $X, Y$ and $t$, this gives the sought convexity.

  \smallskip
  (3)  As noted just after \eqref{eq:138}, we have  $\newODDom N\subset \newDDom N\cap \cO N$.
  Let now show that any element
  $X=\IIN(\xx)\in \newDDom N\cap \cO N$ belongs to $\newODDom N$. 
  If $\mathsf B_N$ is the open unit ball in $\vecX N$, since $\newDDom N\cap \cO N$ is open by item (1), 
  there
  exists 
  a sufficiently small $\eps>0$ such that the open set
  $\mathcal A_\eps:=\{
  (\IIN(\xx+\eps \zz),\IIN(\xx-\eps \zz)):\zz\in \mathsf B_N\}$
  is contained in $\big(\newDDom N\cap \cO
  N\big)^2$.
  Since $\newODDom N$ is relatively open and
  dense in $\newDDom N\cap \cO N$  by item (2),  the intersection of
  $\mathcal A_\eps$ with $\big(\newODDom N\big)^2$ is non-empty. 
  
  It follows that
  we can find $\zz\in \mathsf B_N$ such
  that
  $ X_0:= \IIN(\xx+\eps \zz)$ and $ X_1:=\IIN(\xx-\eps\zz)$ belong to $\newODDom N$. In particular, noting that $X= (X_0 + X_1)/2$ and denoting by $\rrho$ the coupling $\rrho:= \iota^2_{X_0,X_1}$, we see that $\rrho$ is collisionless (cf. Definition \ref{def:collisionless}) with $\sfx^0_\sharp \rrho = \iota_{X_0}$, $\sfx^1_\sharp \rrho =\iota_{X_1} \in \core_N$, and $\sfx^{1/2}_\sharp \rrho =\iota_X$. Since, by assumption (2*), $\core_N$ is convex along collisionless couplings, we deduce that 
  $\iota_X \in \core_N$, which gives $X \in \newODDom N$. 

  Now, we prove the second part of item (3). Let $X \in \newODDom N$ and $Y \in \overline{\newDDom N}$. Since $\newODDom N\subset \newDDom N$ by construction
    and $\newDDom N$ coincides with the interior of the convex set
    $\overline{\newDDom N}$ by (2),
    we deduce that all the points 
    $X_t$ belong to $\newDDom N$ for $t\in [0,1).$ 

    Since for $t$ in a neighborhood of $0$ we have that $X_t\in \newODDom N\subset \cO N$, 
    we deduce that $X_t\in \cO N$ with possible finite exceptions
    (observe that if two lines $t\mapsto (1-t) x_i+ty_i$, $i=1,2$, 
    in $\cH$ coincide at two distinct values of $t$ then they coincide everywhere).
    Therefore there exists $\eps>0$ such that $X_t\in \cO N$
    for every $t\in (1-\eps,1).$ Since $\newDDom N\cap \cO N=\newODDom N$ as just proved, we
    deduce that $X_t\in \newODDom N$ for every $t\in (1-\eps,1).$

  \smallskip
  (4)  The set $\newDDom N$ is convex by construction and invariant w.r.t.~the action of $\symg{I_N}$ by (1). These properties are clearly preserved by closure, so that we can apply Lemma \ref{le:useful} to both $\newDDom N$ and its closure $\overline{\newDDom N}$ to get
  \[ \Pi_M(\newDDom N) =\newDDom N\cap \cH_M, \quad \Pi_M(\overline{\newDDom N}) = \overline{\newDDom N}\cap \cH_M. \]
  Moreover, assumption (3*) gives that $\overline{\newODDom N}\cap \cH_M=
  \overline{\newODDom M}$; using this and the density of $\newODDom N$ in $\newDDom N$ (resp.~the density of $\newODDom M$ in $\newDDom M$) coming from (2), we get
\begin{equation}\label{eq:half4}
 \overline{\newDDom N}\cap \cH_M=
  \overline{\newODDom N}\cap \cH_M=
  \overline{\newODDom M}=\overline{\newDDom M}.   
\end{equation}
  Applying \eqref{eq:56bis} to $\overline{\newDDom N}$, we obtain
 that 
 \begin{equation}\label{eq:relint}
 \cH_M \cap \operatorname{rel-int}(\overline{\newDDom N}; \cH_N) = \operatorname{rel-int}(\overline{\newDDom N} \cap \cH_M; \cH_M).    
 \end{equation} 
By (2), we have $\operatorname{rel-int}(\overline{\newDDom N}; \cH_N)=\newDDom N$ and we have just shown above in \eqref{eq:half4} that $\overline{\newDDom N} \cap \cH_M=\overline{\newDDom M}$. Therefore \eqref{eq:relint} can be rewritten as
\[
\cH_M \cap \newDDom N = \operatorname{rel-int}(\overline{\newDDom M}; \cH_M).
\]
Again by (2), we have $\operatorname{rel-int}(\overline{\newDDom M}; \cH_M)=\newDDom M$, so that the above equality reads $\cH_M \cap \newDDom N= \newDDom M$. 

  \smallskip
  (5)  The only non-trivial facts to be proven are the inclusion $\overline{\newDDom \infty} \subset \overline{\newODDom \infty}$ and the convexity of $\overline{\newODDom \infty}$. To show the inclusion, we observe that 
  \[ \bigcup_{N \in \cN}\newDDom N \subset \bigcup_{N \in \cN}\overline{\newDDom N} = \bigcup_{N \in \cN}\overline{\newODDom N} \subset \overline{\bigcup_{N \in \cN}\newODDom N} =  \overline{\newODDom \infty}, \]
  where the first equality follows from (2). In particular, we deduce that 
  $\overline{\cup \overline{\newODDom N}} = \overline{\newODDom
    \infty}$. Hence,  to prove that $\overline{\newODDom \infty}$ is convex,
  it is enough to show that $\cup \overline{\newODDom N}$ is
  convex. If $X,Y \in \cup \overline{\newODDom N}$  and $t \in [0,1]$,  we can find $M,N
  \in \cN$ such that $X \in \overline{\newODDom N}$ and $Y \in
  \overline{\newODDom M}$, so that by (4), both $X$ and $Y$
  belong to $\overline{\newODDom {MN}}$. Since $\overline{\newODDom {MN}}=\overline{\newDDom {MN}}$ by (2) and $\newDDom {MN}$ is convex by construction, also $\overline{\newODDom {MN}}$ is convex, so that $(1-t)X+tY \in \overline{\newODDom {MN}} \subset \cup \overline{\newODDom N}$.

  \smallskip (6)
  The first property follows by
  the identity $\newDDom N= \newDDom L\cap \cH_N=
  \Pi_N(\newDDom L)$ for any $L \in \cN$ such that $N \mid L$, coming from (4),  and the fact that 
  $\newDDom \infty=\cup\Big\{\newDDom L: L\in \cN,\ N\mid L\Big\}$, since $\cN$ is
  a directed set.

  Setting $\mathcal D':=\cup_{N\in \cN}\overline{\newDDom N}$ and
  starting from the second identity of (4),
  the same argument shows that
  $\overline{\newDDom N}= \mathcal D'\cap \cH_N=
  \Pi_N(\mathcal D')$.  Taking into account the equality $\mathcal E_\infty = \overline{\mathcal D'}$ coming from (5), the conclusion follows if we show that
  \begin{equation}\label{eq:eqproj}
      \Pi_N(\mathcal D') = \Pi_N(\overline{\mathcal D'}), \quad \mathcal D'\cap \cH_N = \overline{\mathcal D'}\cap \cH_N.
  \end{equation}
The equality $\overline{\newDDom N} =\Pi_N(\mathcal D')$ gives that  $\Pi_N(\mathcal D')$ is closed, so that  $\Pi_N(\mathcal D') \subset \Pi_N\left(\overline{\mathcal D'}\right) \subset \overline{\Pi_N(\mathcal D')} = \Pi_N(\mathcal D')$, where the inclusion $\Pi_N\left(\overline {\mathcal D'}\right) \subset \overline{\Pi_N(\mathcal D')}$ is true by continuity of $\Pi_N$. This shows the first identity in \eqref{eq:eqproj}. Finally, since $\overline{\mathcal D '} \cap \cH_N$ is trivially a subset of $\cH_N$, we have
\[
\mathcal{D}' \cap \cH_N \subset \overline{\mathcal D '} \cap \cH_N = \Pi_N( \overline{\mathcal D '} \cap \cH_N) \subset \Pi_N(\overline{\mathcal D '}) = \Pi_N (\mathcal D') = \mathcal{D}' \cap \cH_N,
\]
which shows the second identity in \eqref{eq:eqproj}.

  \smallskip (7)
  The fact that $\mathcal E_\infty$ is law invariant follows from Lemma \ref{le:general-invariance} and (6), which shows that $\mathcal E_\infty\cap \cH_M=\overline{\newDDom M}$ which
  is invariant w.r.t.~$\Sym{I_M}$ by (1).
\end{proof}
As an immediate consequence of Lemma \ref{le:trivial} we have the following result.
\begin{corollary}[Cores are totally convex]
\label{cor:serviva-citarlo} If $\core$ is as in Lemma \ref{le:trivial}, then $\overline{\core}$ is totally convex. 
\end{corollary}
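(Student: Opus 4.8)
The statement to prove is Corollary \ref{cor:serviva-citarlo}: if $\core$ is a $\cN$-core (or more precisely, a set $\core \subset \prob_{\#\cN}(\X)$ satisfying property $(d)$ of Lemma \ref{le:equivalent}, hence an $\cN$-core by the equivalence), then $\overline{\core}$ is totally convex.

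The plan is to transfer the total convexity from the Lagrangian (Hilbert) side to the Wasserstein side via the map $\iota$. By Lemma \ref{le:trivial}(5), the set $\mathcal E_\infty = \overline{\newODDom\infty}$ is convex in $\cH$, and by Lemma \ref{le:trivial}(7) it is law invariant; moreover, from \eqref{eq:52} in Lemma \ref{le:equivalent} we know $\overline{\core} = \ecore$ and $\iota(\newODDom\infty) = \core$, so $\iota(\mathcal E_\infty)$ relates to $\overline{\core}$. First I would verify that $\iota(\mathcal E_\infty) = \overline{\core}$: the inclusion $\iota(\mathcal E_\infty) \subset \overline{\core}$ follows since $\iota$ is $1$-Lipschitz (cf.~\eqref{eq:74}) and maps $\newODDom\infty$ into $\core$; the reverse inclusion uses that any $\mu \in \overline{\core}$ can be approximated in $\prob_2(\X)$ by elements of $\core$, which lift (via the last part of Theorem \ref{thm:gpfinal}, or simply by choosing random variables) to a convergent sequence in $\cH$ whose limit lies in $\mathcal E_\infty = \overline{\newODDom\infty}$.

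Next, given $\mu_0, \mu_1 \in \overline{\core}$ and an arbitrary coupling $\ggamma \in \Gamma(\mu_0,\mu_1)$, I would produce random variables $X_0, X_1 \in \cH$ with $\iotaT_{X_0,X_1} = (X_0,X_1)_\sharp\P = \ggamma$; this is possible because $(\Omega,\cB,\P)$ is a standard Borel space with nonatomic $\P$ and $\ggamma \in \prob_2(\X\times\X)$ (the standard construction realizing any Borel probability measure on a Polish space as the law of a random variable on a nonatomic standard Borel space). Then $\iota_{X_0} = \mu_0 \in \overline{\core}$ and $\iota_{X_1} = \mu_1 \in \overline{\core}$, so by the law invariance of $\mathcal E_\infty$ (Lemma \ref{le:trivial}(7)) together with $\iota(\mathcal E_\infty) = \overline{\core}$, we get $X_0, X_1 \in \mathcal E_\infty$: indeed, there exist $X_0', X_1' \in \mathcal E_\infty$ with $\iota_{X_0'} = \mu_0$, $\iota_{X_1'} = \mu_1$, but law invariance of $\mathcal E_\infty$ as a subset of $\cH$ (not $\cH\times\cH$) means it is closed under replacing a variable by an equidistributed one, hence $X_0, X_1 \in \mathcal E_\infty$. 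Since $\mathcal E_\infty$ is convex, $X_t := (1-t)X_0 + tX_1 \in \mathcal E_\infty$ for every $t \in [0,1]$. Finally $\sfx^t_\sharp\ggamma = \big((1-t)\sfx^0 + t\sfx^1\big)_\sharp (X_0,X_1)_\sharp\P = \big((1-t)X_0 + tX_1\big)_\sharp\P = \iota_{X_t} \in \iota(\mathcal E_\infty) = \overline{\core}$, which is exactly the statement that $\overline{\core}$ is totally convex.

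The main obstacle I anticipate is the careful bookkeeping around law invariance: $\mathcal E_\infty$ is stated in Lemma \ref{le:trivial}(7) to be law invariant as a subset of $\cH$, and I must be sure this gives exactly what is needed, namely that membership $X \in \mathcal E_\infty$ depends only on $\iota_X$, so that the identity $\iota(\mathcal E_\infty) = \overline{\core}$ can be used to promote arbitrary representatives $X_0, X_1$ of $\mu_0, \mu_1$ into $\mathcal E_\infty$. A second, more minor point is justifying the existence of a single random variable realizing the coupling $\ggamma$ — but this is standard for nonatomic standard Borel spaces and is already implicitly used throughout the paper (e.g.~in the proofs involving $\iotaT$). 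One could alternatively bypass a direct construction and argue through the characterization $(d)$, but the Lagrangian route above is the cleanest since all the heavy lifting (convexity, closedness, law invariance of $\mathcal E_\infty$) is already packaged in Lemma \ref{le:trivial}.
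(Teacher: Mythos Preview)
Your proposal is correct and follows exactly the route the paper intends: the paper's proof is the one-line statement ``As an immediate consequence of Lemma \ref{le:trivial}'', and you have spelled out precisely which ingredients of that lemma are being used --- convexity of $\mathcal E_\infty$ from part (5) and law invariance from part (7) --- together with the identification $\iota(\mathcal E_\infty)=\overline{\core}$ and a lift of an arbitrary coupling to a pair $(X_0,X_1)\in\cH\times\cH$.

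One small caution: avoid citing \eqref{eq:52} from Lemma \ref{le:equivalent} here, since in the paper's logical order Lemma \ref{le:equivalent} is proved \emph{after} Corollary \ref{cor:serviva-citarlo} (and in fact invokes it); fortunately your direct verification of $\iota(\mathcal E_\infty)=\overline{\core}$ does not actually need \eqref{eq:52}, so this is only a matter of presentation.
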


\begin{proof}
    Let $\mu,\nu\in\overline{\core}$ and $\gamma\in\Gamma(\mu,\nu)$. Consider $X,Y\in\cH$ such that $\gamma=\iiota{X,Y}$; in particular, $\iota_X=\mu$ and $\iota_Y=\nu$. Hence, there exists $(\mu_n)_{n\in\N},(\nu_n)_{n\in\N}\subset\core$ such that $W_2(\mu_n,\mu)$ and $W_2(\nu_n,\nu)$ both tend to zero as $n\to+\infty$. By Theorem \ref{thm:gpfinal}, there exist $(X_n)_{n\in\N},(Y_n)_{n\in\N}\subset\cH$ such that
    \[X_n\to X,\quad Y_n\to Y,\quad \iota_{X_n}=\mu_n\quad\text{and}\quad\iota_{Y_n}=\nu_n.\]
    Hence, by definition, $(X_n)_{n\in\N},(Y_n)_{n\in\N}\subset\newODDom \infty$ and thus we have $X,Y\in\mathcal E_\infty=\overline{\newODDom \infty}$.

    By the convexity of $\mathcal E_\infty$ (cf. Lemma \ref{le:trivial}(5)), we have that $X_t:=(1-t)X+tY\in\overline{\newODDom \infty}$, for $t\in[0,1]$. Thus, for any $t\in[0,1]$ there exists $(Z_n)_{n\in\N}\subset\newODDom \infty$ such that $Z_n\to X_t$. In particular,
    \[\iota_{Z_n}\in\core,\quad\text{and}\quad W_2(\iota_{Z_n},\iota_{X_t})\to0,\]
    thus $\iota_{X_t}\in\overline{\core}$. Hence the conclusion, noting that $\iota_{X_t}=\sfx^t_\sharp\gamma$.
\end{proof}

 We can now prove Proposition \ref{le:equivalent} and state and prove Proposition \ref{le:dimcore}, the two main results of this subsection describing equivalent characterization of $\cN$-cores. 

\begin{proof}[Proof of Proposition \ref{le:equivalent}]
We divide the proof in several claims. \\
\noindent\textbf{Claim 1.}
\emph{$(a)$ implies $(b)$, $(c)$ and $(d)$}.
\smallskip

   The fact that $(a)$ implies $(c)$ and $(d)$ follows by setting $\tcore:=\iota(\newDDom \infty)$ defined in \eqref{eq:138} and $\ecore:= \overline{\core}$, as a consequence of Lemma \ref{le:trivial} and Corollary \ref{cor:serviva-citarlo}. We prove that $(a)$ implies $(b)$: by Corollary \ref{cor:serviva-citarlo}, we have that $\overline{\core}$ is totally convex. Notice that the sets $\core_N$ are nonempty for every $N \in \cN$ thanks to (3*) and the fact that $\core$ is nonempty. Finally, by Lemma \ref{le:trivial}, we have that the relative interior in $\prob_{f,N}(\X)$ of $\overline{\core} \cap \prob_{\#N}(\X)$ is given by $\tcore_N \cap \prob_{\#N}(\X)=\core_N$ (cf. Lemma \ref{le:trivial}(3)).
 \smallskip

  \noindent\textbf{Claim 2.}
\emph{$(c)$ implies $(a)$}.
\smallskip

  If $\tcore$ is a subset of $\prob_{f,\cN}(\X)$ satisfying conditions
  $(1'),(2')$ and $\core= \tcore \cap \prob_{\# \cN}$, we see that $\core_N = \tcore_N \cap \prob_{\# N}(\X)$ for every $N \in \cN$. Clearly $\core_N$ is relatively open and convex along collisionless couplings in $\prob_{f,N}(\X)$. Also, since $\prob_{\#N}(\X)$ is obviously dense in $\prob_{f,N}(\X)$ and $\tcore_N$ is open, we see that $\core_N$ is dense $\tcore_N$ i.e.~$\overline{\core_N} = \overline{\tcore_N}$. It is also clear that $\overline{\core_N}$ is convex along couplings in $\prob_{f,N}(\X \times \X)$. Finally 
  $\overline {\tcore_N}\cap \prob_{f,M}(\X)=
  \overline{\tcore_M}$ thanks to the convexity of $\tcore_N$ and
  $\tcore_M$, as an application of
  \eqref{eq:56bis} to their Lagrangian representations.
  \smallskip

  \noindent\textbf{Claim 3.}
\emph{$(d)$ implies $(c)$}.
\smallskip

Let $\ecore$ be a totally convex and closed subset of $\prob_2(\X)$ satisfying conditions $(1''),(2'')$ and $\core= \cup_{N \in \cN} \mathring{\ecore}_N \cap \prob_{\#N}(\X)$. We define $\tcore_N$ and $\tcore$ as in \eqref{eq:50}. The only thing to check
  is that 
  \begin{equation}\label{eq:rott1}
  \tcore\cap \prob_{f,N}(\X)=\tcore_N.
  \end{equation}
  Denote by $\mathcal E_\infty$ the Lagrangian parametrization of $\ecore$ (hence, law invariant) and denote by $\mathcal E_N:=\mathcal E_\infty\cap \cH_N$, which is closed and convex. The relative interior $\mathring {\mathcal E}_N$ of $\mathcal E_N$ in $\cH_N$ provides a Lagrangian parametrization of $\mathring\ecore_N=\tcore_N$. Hence, proving \eqref{eq:rott1} is equivalent to prove that $\mathcal D' \cap \cH_N =\mathring {\mathcal E}_N$, where $\mathcal D':=\bigcup_{N\in\cN}\mathring{\mathcal E}_N$.
  Using \eqref{eq:56bis}, if $M\mid N$ we get $\mathring{\mathcal E}_N\cap\cH_M=\mathring{\mathcal E}_M$, also observing that $\mathcal E_N$ is invariant by the action of $\Sym{I_N}$, as a consequence of the law invariance of $\mathcal E_\infty$. Therefore we deduce that
$\mathcal D'\cap \cH_M=\mathring{\mathcal E}_M$.
\smallskip

\noindent\textbf{Claim 4.}
\emph{$(b)$ implies $(d)$}.
\smallskip

 It is clear that setting $\ecore:=\overline{\core}$ we have that $\ecore$ it totally convex and closed. Moreover, since $\intecore_N$ contains the relative interior in $\prob_{f,N}(\X)$ of $E \cap \prob_{\#N}(\X)$ (coinciding with $\core_N$), $\intecore_N$ is not empty. Since the intersection of $\intecore_N$ with $\prob_{\#N}(\X)$ is given by $\core_N$, we immediately see that $\cup_N (\intecore_N \cap \prob_{\#N}(\X))= \core$. Finally 
\[
    \overline{\ecore \cap \prob_{f,\cN}(\X)} = \overline{\cup_N \overline{ \ecore \cap \prob_{\#N}(\X)}}
    = \overline{\cup_N \overline{\core_N}}\\
    = \overline{\core},  
\]
where we have used again that the intersection of $\intecore_N$ with $\prob_{\#N}(\X)$ is given by $\core_N$ and that the closure of $\ecore \cap \prob_{\#N}(\X)$ coincides with the closure of its (relative) interior.
\end{proof}

\begin{proposition}\label{le:dimcore}
 Let $\core\subset \prob_{\#\cN}(\X)$; if $\dim(\X) \ge 2$, then condition \rm{(4*)} in Lemma \ref{le:equivalent} follows by \rm{(1*)}-\rm{(3*)}.
  \end{proposition}

\begin{proof}
Assume that \rm{(1*)}-\rm{(3*)} hold. We need to prove that $\overline{\core_N}$ is convex along couplings in $\prob_{f,N}(\X \times \X)$ for every $N \in \cN$. This is equivalent to prove the convexity of $\overline{\newODDom N}$ so that it is sufficient to show that, for  every $X_0,X_1\in \newODDom N$ and $t \in [0,1]$, their linear interpolation $X_t:=(1-t)X_0+t X_1$ belongs to $\overline{\newODDom N}$. By Proposition \ref{prop:perturbation}, we can find small perturbations $X_1(s)$ of $X_1$, $s\in [0,1]$, such that $X_1(s)\in \newODDom N$, $X_1(s)\to X_1$ as $s\downarrow0$,  and the perturbed interpolation $X_{s,t}:=(1-t)X_0+tX_1(s)$ belongs to $\newODDom N$ for every $t\in [0,1]$ and $s>0$. It follows that the coupling $\mmu_s=\iotaT_{X_0,X_1(s)}$ belongs to $\prob_{\#N}(\X\times \X)$ and it is collisionless for every $s>0$ and therefore $\mu_{s,t}=\sfx^t_\sharp \mmu_s$ belongs to $\core_N$ for every $t$.  Since $\mu_{s,t}=\iota_{X_{s,t}}$ we have $X_{s,t}\in \newODDom N$. Passing to the limit as $s\downarrow 0$ we conclude that $X_t\in \overline{\newODDom N}$. 
\end{proof}

\subsection{Lagrangian representations of discrete \MPVF{s}: construction of \texorpdfstring{$\hat\frF_N$}{F}}\label{sec:fn}
Let us now study in more detail the Lagrangian representations of a
\MPVF $\frF\subset\prob_2(\TX)$ defined on a $\cN$-core.
If $\Phi\in \frF$
we can consider the (non-empty) set of all the maps $(X,V)\in \cH^2$ such that
$\iotaT_{X,V}=\Phi$.
A particular case is obtained when the first marginal $\mu=\sfx_\sharp\Phi$ of $\Phi$ belongs to
$\prob_{f,N}(\X)$. In this case, $X$ has the form $X=\IIN(\xx)\in \cH_N$, so that $\mu=\iota_X=
\frac 1N\sum_{k\in I_N}\delta_{\xx(k)} $, and we can construct $V$ from the representation of $\Phi$ given by
\begin{equation*}
  \Phi=\frac 1N\sum_{k\in I_N} \Phi_k,\quad
  \sfx_\sharp \Phi_{k}=\delta_{\xx(k)},
\end{equation*}
for a family $\{\Phi_k\}_{k\in I_N}\subset \prob(\TX)$, by setting $V(\omega):=V_k(\omega)$ if $\omega\in \Omega_{N,k}$, where $V_k\in
L^2(\Omega_{N,k},\P|_{\Omega_{N,k}};\X)$ are maps such that
$(V_k)_\sharp \P|_{\Omega_{N,k}}=\frac 1N \,\sfv_\sharp\Phi_k$.
\medskip

Recall that (cf. Definition \ref{def:pairings}), given $\ttheta\in \prob_2(\X \times \X)$ and  $\Phi \in \relcP2{\sfx^0_\sharp \ttheta}{\TX}$,
\[\directionalm{\Phi}{\ttheta}0:=
\min \left \{ \int_{\TX\times\X}
      \scalprod{x_0-x_1}{v_0} \de \ssigma(x_0,v_0,x_1)
      \Big| \begin{array}{l}\ssigma \in \prob_2(\TX\times\X),\\(\sfx^0,\sfx^1)_\sharp\ssigma=\ttheta,\,(\sfx^0,\sfv^0)_\sharp\ssigma=\Phi\end{array}\right \}.\]

Thus, in the general case when $\Phi\in\prob_2(\TX)$, it is easy to check that if $\iotaT_{X,V}=\Phi$ and $Y\in \cH$ then
\begin{equation}
  \label{eq:167}
  \directionalm{\Phi}{\iotaT_{X,Y}}0\le\la V,X-Y\ra_{\cH}.
\end{equation}
A particular important case occurs when $X\in \cO N$ and $Y\in \cH_N$: in this case $\Phi_k$ is uniquely determined by the disintegration of $\Phi$ w.r.t.~$\mu$, and $V|_{\Omega_{N,k}}$ coincides with $V_k$, where $V_k$ is as above.  Thus, 
\begin{equation}\label{eq:projbar}
    \Pi_N(V)(\omega)=\bry{\Phi}(\xx(k))\quad \text{if }\omega\in \Omega_{N,k},
\end{equation}
    where $\bry{\Phi}$ is the barycenter of $\Phi$ as in Definition \ref{def:wassmom} and $\Pi_N(\cdot)$ is defined in \eqref{eq:projrep}. Moreover,
since $X-Y\in\cH_N$ and $\Pi_N(V)$ is the orthogonal projection of $V$ onto $\cH_N$, we have
\begin{equation*}
\la V,X-Y\ra_{\cH} =\la \Pi_N (V),X-Y\ra_{\cH}.
\end{equation*}

It is easy to check that, in this case, 
\begin{equation}
  \label{eq:111}
  \directionalm{\Phi}{\iotaT_{X,Y}}0=\la \Pi_N (V),X-Y\ra_{\cH}=
  \la V,X-Y\ra_{\cH}
  \quad\text{if }\iotaT_{X,V}=\Phi,\ X\in
  \cO N,\ Y\in \cH_N,
\end{equation}
 where the first equality follows by \eqref{eq:29} since the map $\sfx^0$ is
  $\iotaT_{X,Y}$-essentially injective.

\medskip
We define now one of the main objects of study of this subsection: the operator $\fF_N$ whose maximal extension $\hat \fF_N$ (defined in Proposition \ref{prop:ext1} below) is the Lagrangian counterpart of the operator $\hat \frF_N$ in the main Theorem \ref{thm:main-discrete}: for every $N \in \cN$, we set
\begin{equation}
  \label{eq:105}
  \fF:=
  \Big\{(X,V)\in \newODDom\infty\times \cH:
  \iotaT_{X,V} \in \frF\Big\},\quad
  \pfrF N:=\Big\{\left(X,\Pi_N (V)\right):X\in \newODDom N,\
  (X,V) \in \fF\Big\}.
\end{equation} 
We stress that they are essential tools in the proofs of the main Theorems \ref{thm:main-discrete}, \ref{thm:perfavorelultimo}, \ref{thm:total-case}, and \ref{thm:demi-case} as most of the properties  of $\frF_N, \hat \frF_\infty$, and $\hat \frF$ will be derived by the corresponding properties of their Lagrangian representations $\fF_N, \hat \fF_\infty$, and $\hat \fF$, which we obtain using the Hilbertian structure of $\cH$.

In the following result, we study some immediate properties of $\fF_N$.

\begin{proposition}\label{prop:perunmotivo} Assume the same hypotheses of Theorem \ref{thm:main-discrete}. Then $\fF_N \subset \cH_N \times \cH_N$ as in \eqref{eq:105} is $\lambda$-dissipative, has open domain $\dom(\fF_N)=\newODDom N$, and it is invariant by permutations: if $(X,V) \in \fF_N$ and $\sigma \in \symg{I_N}$, then $(\sigma X, \sigma V) \in \fF_N$.
\end{proposition}
\begin{proof}
 We take $(X,V), (Y,W) \in \fF_N$; by definition, we can find $V_0, W_0 \in \cH$ such that, defined $\Phi:=\iota^2_{X,V_0}$ and $\Psi:=\iota^2_{Y,W_0}$, we have that $\Phi,\Psi\in \frF$ and $V=\Pi_N(V_0), W=\Pi_N(W_0)$. Since by definition $X,Y \in \mathcal{C}_N \subset \mathcal{O}_N$, we can use \eqref{eq:111} and Theorem \ref{thm:all}(1), to obtain
\begin{align*}
    \la V-W, X-Y \ra_{\cH} = \directionalm{\Phi}{\iotaT_{X,Y}}0 -\directionalp{\Psi}{\iotaT_{X,Y}}1.
\end{align*}
In case (ii) of Theorem \ref{thm:main-discrete}, the total $\lambda$-dissipativity of $\frF$ immediately gives that the above quantity is bounded above by $\lambda |X-Y|^2_\cH$. In case (i) of Theorem \ref{thm:main-discrete}, we can apply Theorem \ref{le:crucial2}(1) to get the same bound: indeed, $\dom_{\#N}(\frF)=\core_N$ is convex along collisionless couplings by Proposition \ref{le:equivalent}$(2*)$, $\core_N$ is open in $\prob_{\#N}(\X)$ by Proposition \ref{le:equivalent}$(1*)$ so that $\iota_X, \iota_Y$ are indeed in the interior of $\dom_{\#N}(\frF)$, and $\iotaT_{X,Y} \in \Gamma_{\#N}(\iota_X, \iota_Y)$ by construction.
Overall, we obtained 
\begin{equation}
  \label{eq:107}
  (X,V),\ (Y,W)\in \pfrF N\quad\Rightarrow\quad
  \la V-W,X-Y\ra_{\cH}\le \lambda |X-Y|_{\cH}^2,
\end{equation}
so that $\pfrF N$ is $\lambda$-dissipative. In any of the cases (i) and (ii) of Theorem \ref{thm:main-discrete}, if $(X,V)\in \pfrF N$ and $\sigma \in \symg{I_N}$, then
there exists $W\in\cH$ such that $\iotaT_{X,W}\in\frF$ and $V=\Pi_N
(W)$. By Lemma \ref{cor:isomor}, we can write $\sigma X=X\circ g\in\newODDom N$
for some $g\in G[\sigma]$ 
and $\iotaT_{X\circ g,W\circ g}\in\frF$. To conclude, it suffices to notice that $\Pi_N(W\circ g)=\sigma V$. 
\end{proof}

We can now define the maximal extension of $\fF_N$, the operator $\hat \fF_N$. As we will prove in Theorem \ref{prop:final}, the Eulerian image of
  $\hat\fF_N$
  is the \MPVF $\hat\frF_N$
  defined in Theorem \ref{thm:main-discrete}.

\begin{proposition}
  \label{prop:ext1}
  Under the same assumptions of Theorem \ref{thm:main-discrete},
  for every $N\in \cN$ the $\lambda$-dissipative operator $\pfrF {N}$ 
  admits a unique maximal $\lambda$-dissipative extension $\maxim \fF N$
  in $\cH_N\times \cH_  N$ 
  with $\DDom N\subset\dom(\maxim\fF N)\subset\overline {\DDom N}$.
  The operator $\maxim\fF N$ can be equivalently characterized by
  \begin{equation}
    \label{eq:112}
    (X,V)\in \maxim\fF N
    \quad
    \Leftrightarrow
    \quad
    X\in \overline {\DDom N},\ V\in \cH_N,\ 
    \la V-W,X-Y\ra_{\cH}\le \lambda |X-Y|_{\cH}^2
    \quad \forall\,
    (Y,W)\in \pfrF N,
  \end{equation}
  and, whenever $X\in \DDom N$,
  $\maxim\fF N (X)=
  \clconv{\bar \fF_N (X)}$,
  where   
  \begin{equation}
    \label{eq:114}
    \bar {\fF}_N (X):=\Big\{V\in \cH_N:
      \exists\, (X_n,V_n)_{n\in\N}\subset \pfrF N:
      X_n\to X,\ V_n\weakto V\Big\}.
    \end{equation}
  $\maxim\fF N$ is invariant with respect to permutations, i.e.
  \begin{equation}
    \label{eq:77}
    (X,V)\in \maxim\fF N,\ \sigma\in\symg{I_N}
    \quad\Rightarrow\quad
    (\sigma X,\sigma V)\in \maxim\fF N
  \end{equation}
  and for every $X,Y\in \DDom N$, 
  we have
  \begin{equation}
    \label{eq:124}
    V\in \maxim\fF N (X),\
    \Psi\in \frF[\iota_Y]\quad
    \Rightarrow\quad
    \la V,X-Y\ra_{\cH}+\directionalm\Psi{\iotaT_{Y,X}}0\le \lambda |X-Y|_{\cH}^2.
  \end{equation}
  Finally, if $M\mid N$, $X\in \overline{\DDom M}$, and
  $(X,V)\in \maxim\fF N$ then
  $\Pi_M (V)\in \maxim \fF M (X)$.
  Conversely, if $X\in \DDom M$ and
  $W\in \maxim \fF M (X)$ then 
  there exists $V\in \cH_N$ such that 
  \begin{equation}
    \label{eq:113}
    (X,V)\in \maxim\fF N,\quad W=\Pi_M (V).
  \end{equation}
\end{proposition}

\begin{proof}
  \eqref{eq:112} and \eqref{eq:114} follow from the fact that
  $\DDom N$ is convex and open and
  the domain of $\pfrF N$ is dense in $\DDom N$,
  see Lemma \ref{le:trivial}
  and Theorem \ref{theo:chefatica} in the Appendix.

  Using \eqref{eq:112} it is immediate to check that $\maxim\fF N$
  satisfies \eqref{eq:77}, since for every $(X,V)\in \maxim\fF N$
  and $(Y,W)\in \pfrF N$ 
  \begin{align*}
    \la \sigma V-W,\sigma X-Y\ra_{\cH}
    &=
      \la V-\sigma^{-1}W,X-\sigma^{-1}Y\ra_{\cH}
    \le \lambda |X-\sigma^{-1}Y|_{\cH}^2 = \lambda |\sigma X - Y|_{\cH}^2,
  \end{align*}  
  since $\pfrF {N}$ and the scalar product in $\cH_N$ are invariant by
  the action of permutations in $\symg{I_N}$.
  \smallskip
  
   We now take $\Psi\in\frF[\iota_Y]$, $Y\in \DDom N$, and prove \eqref{eq:124} first in case $(X,V)\in \pfrF N$.
Then  \eqref{eq:124} follows immediately 
  since there exists $W\in\cH$ such that $\Phi:=\iotaT_{X,W}\in \frF$, $V=\Pi_N (W)$, and
  \eqref{eq:111} yields
  $\la V,X-Y\ra_{\cH}=\directionalm{\Phi}{\iotaT_{X,Y}}0$
  so that
  \begin{equation}\label{eq:mix1}
    \la V,X-Y\ra_{\cH}+
    \directionalm{\Psi}{\iotaT_{Y,X}}0=
    \directionalm{\Phi}{\iotaT_{X,Y}}0+\directionalm{\Psi}{\iotaT_{Y,X}}0 \le  \lambda |X-Y|_{\cH}^2.
  \end{equation}
   Notice that in case (ii) of Theorem \ref{thm:main-discrete}, the last inequality is obvious; while, in case (i) of Theorem \ref{thm:main-discrete}, the last inequality in \eqref{eq:mix1} follows by Theorem \ref{le:crucial2}(2) and recalling Theorem \ref{thm:all}(1): indeed $\dom_{\#N}(\frF)= \core_N$ which is convex along collisionless couplings by Proposition \ref{le:equivalent}$(3^*)$, open in $\prob_{\#N}(\X)$ by Proposition \ref{le:equivalent}$(1^*)$, $\iota_X \in \core_N$, $\iota_Y \in \dom_{f, N}(\frF)$, $\iotaT_{X,Y} \in \Gamma_{\#N}(\iota_X, \iota_Y)$ and condition (2) in Theorem \ref{le:crucial2} is satisfied thanks to Lemma \ref{le:trivial}(3).  

  If  $X\in \DDom N$
  and $V\in \bar \fF_N (X)$ according to \eqref{eq:114},
  then there exist
  $(X_n,V_n)_{n\in\N}\subset \pfrF N$, $X_n\in \newODDom N$,
  such that $X_n\to X$
  and $V_n\weakto V$. We can pass to the limit in \eqref{eq:mix1}
  written for $(X_n,V_n)$ and using Theorem \ref{thm:all}(5)
 we obtain that $(X,V)$ satisfies
  \eqref{eq:mix1} as well.
  Finally, since \eqref{eq:mix1} holds for every 
  $V\in \bar \fF_N (X)$, it also holds for every
  $V\in \clconv{\bar \fF_N (X)}$.  This completes the proof of \eqref{eq:124}. 
  \smallskip 

    Let us now suppose that $M\mid N$,
  $(X,V)\in \maxim\fF N$ and
  $X\in \DDom M$. We want to show that $W:=\Pi_M (V)$ belongs to
  $\maxim\fF M (X)$ by using \eqref{eq:112}.
  If $(Y,U)\in \pfrF{M}$ with $Y\in \newODDom M$,
  we have $U=\Pi_M (U')$ with $\iotaT_{Y,U'}=:\Phi\in \frF$, so that
  \eqref{eq:124} yields
  \begin{equation}
    \label{eq:119}
    \la V,X-Y\ra_{\cH}+
    \directionalm{\Phi}{\iotaT_{Y,X}}0\le \lambda |X-Y|_{\cH}^2.
  \end{equation}
  Since $Y\in \cO M$ and $X\in \cH_M$, we have
  $\directionalm{\Phi}{\iotaT_{Y,X}}0=\la U,Y-X\ra_{\cH}$ by \eqref{eq:111};
  since $X-Y\in \cH_M$, we also have
  $\la V,X-Y\ra_{\cH}=\la \Pi_M (V),X-Y\ra_{\cH}$ and we get
  \begin{equation}
    \label{eq:119bis}
    \la W,X-Y\ra_{\cH}+
    \la U,Y-X\ra_{\cH}=
    \la V,X-Y\ra_{\cH}+
    \directionalm{\Phi}{\iotaT_{Y,X}}0\le \lambda |X-Y|_{\cH}^2.
  \end{equation}
  Hence, by \eqref{eq:112} $(X,W)\in\maxim\fF M$.
  In particular, the above property shows that
  if $\Gg:\DDom N\to \cH_N$  is an arbitrary single-valued selection of $\maxim \fF N$,
 the restriction $\bm{\mathcal{G}}:=\left(\Pi_M\circ\Gg\right)|_{\DDom M}$
 is a selection of $\maxim\fF M$.
 We fix such a selection. 
To conclude we need to prove that the property holds also if $X\in\overline{\DDom M}$. Recall that by Lemma \ref{le:trivial}(3), $\overline{\dom(\pfrF {M})}=\overline{\newODDom M}=\overline{\DDom M}$. Then if $X\in\overline{\DDom M}$, by Corollary \ref{cor:maximal} we have that
 $W$ belongs to $\maxim \fF M (X)$ if and only if
 \begin{equation*}
   \la W-\bm{\mathcal{G}}(Y),X-Y\ra_{\cH}\le  \lambda |X-Y|_{\cH}^2
   \quad\text{for every }Y\in \DDom M,
 \end{equation*}
i.e., if and only if
 \begin{equation}
   \label{eq:125}
   \la W-\Gg (Y),X-Y\ra_{\cH}\le  \lambda |X-Y|_{\cH}^2
   \quad\text{for every }Y\in \DDom M.
 \end{equation}
If $V\in \maxim \fF N (X)$, then using Corollary \ref{cor:maximal} we have
\[\la V-\Gg (Y),X-Y\ra_{\cH}\le \lambda |X-Y|_{\cH}^2 
   \quad\text{for every }Y\in \DDom N\supset\DDom M,\]
hence \eqref{eq:125} holds and we get $\Pi_M (V)\in \maxim\fF M (X)$. 
  
  Let us now show the converse implication.
 If $X\in \DDom M$ and
 $W\in \maxim \fF M (X)$, we need to prove that $W\in\Pi_M\left(\maxim\fF N (X)\right)$. 
Since $\overline{\dom(\Gg)}=\overline{\DDom N}$, by Corollary \ref{cor:maximal} and Theorem \ref{theo:chefatica} applied to $\Gg$, we get $\Pi_M\left(\maxim\fF N (X)\right)=\Pi_M\left(\hat\Gg (X)\right)=\Pi_M\left(\clconv{\bar\Gg (X)}\right)$, where
\[\bar \Gg (X):=\Big\{Z\in \cH_N:
 \exists \,(X_n)_{n\in\N}\subset \DDom N:
 X_n\to X,\ \Gg (X_n)\weakto Z\Big\}.\]
Similarly, by Corollary \ref{cor:maximal} and Theorem \ref{theo:chefatica} we get
\begin{align*}
\maxim\fF M (X)&=\hat{\bm{\mathcal{G}}} (X)=\clconv{\overline{\bm{\mathcal{G}}}(X)}=\clconv{\left\{Z\in \cH_M:
 \exists \,(X_n)_{n\in\N}\subset \DDom M:
 X_n\to X,\ \bm{\mathcal{G}}(X_n)\weakto Z\right\}}\\
&\subset \Pi_M\left(\clconv{\bar\Gg (X)}\right),
\end{align*}
where the proof of the last equality can be pursued as follows. We first observe that 
\begin{align*}
&\left\{Z\in \cH_M:
 \exists \,(X_n)_{n\in\N}\subset \DDom M:
 X_n\to X,\ \bm{\mathcal{G}}(X_n)\weakto Z\right\}\\
&\subset\Pi_M\left(\left\{W\in \cH_N:
 \exists \,(X_n)_{n\in\N}\subset \DDom N:
 X_n\to X,\ \Gg (X_n)\weakto W\right\}\right)=\Pi_M\left(\bar\Gg (X)\right),
\end{align*}
by using the local boundedness of $\Gg$ as a selection of $\hat\Gg$ (see Theorem \ref{thm:brezis2}(3)) and the fact that $\Pi_M$ is a linear and continuous operator. Then we notice that 
\[\clconv{\Pi_M\left(\bar\Gg (X)\right)}=\overline{\Pi_M\left(\conv{\bar\Gg (X)}\right)}=\Pi_M\left(\clconv{\bar\Gg (X)}\right),\]
where the first equality follows by linearity of $\Pi_M$ and, for the second, we exploit again the local boundedness of $\bar\Gg$ as a selection of $\hat\Gg$ and the linearity and continuity of $\Pi_M$. Hence the conclusion.
\end{proof}

It is remarkable that, under the general assumptions of Theorem \ref{thm:main-discrete},
$\maxim \fF N$ can also be characterized by those $(X,V) \in \overline{\DDom N} \times \cH_N$ satisfying inequality \eqref{eq:112}
\emph{restricted to those $Y\in \newODDom N$ for which $\iotaT_{X,Y}$ 
is the unique optimal coupling between $\iota_X$ and $\iota_Y$.} This is stated in the next Proposition \ref{prop:allultimomomento} and it is directly used in the proof of Theorem \ref{thm:main-discrete}.
\begin{proposition}
    \label{prop:allultimomomento}
    We assume the same hypothesis of Theorem \ref{thm:main-discrete}.
    Let $X\in \overline{\newDDom N}$ and $V\in \cH_N$ 
    be satisfying
    \begin{equation}
        \label{eq:112bis}
    \begin{gathered}
        \la V-W,X-Y\ra_{\cH}\le \lambda |X-Y|_{\cH}^2\\
    \text{for every }
    (Y,W)\in \pfrF N
    \text{ s.t. }
    \iotaT_{X,Y}
    \text{ is the unique element of } \Gamma_o(\iota_X,\iota_Y).
    \end{gathered}
    \end{equation}
    Then $(X,V)\in \maxim\fF N.$
\end{proposition}
\begin{proof}
    Let us consider an arbitrary element $(Y,W)\in \fF_N$;  by Lemma \ref{le:trivial}(3), there exists $\eps>0$ such that $Y_t:=(1-t)X+tY\in \newODDom N$  for every $t\in (0,\eps).$

    By Theorem \ref{thm:easy-but-not-obvious}, we can thus find
    $\tau\in (0,\eps)$ such that 
    $Y_\tau\in \newODDom N$ and $\iotaT_{X,Y_\tau}$ is the unique
    optimal coupling between $\iota_X$ and $\iota_{Y_\tau}$. Let $W_\tau\in \fF_N(Y_\tau)$, then  by \eqref{eq:112bis} we have
    \begin{equation}\label{eq:III3.1}
        \la V-W_\tau, X-Y_\tau \ra_{\cH}\le\lambda |X-Y_\tau|_{\cH}^2.
    \end{equation}
Moreover, since $(Y,W),(Y_\tau,W_\tau)\in\fF_N$, we can apply the $\lambda$-dissipativity of $\fF_N$ (cf. Proposition \ref{prop:perunmotivo}) and get
\begin{equation}\label{eq:III3.2}
    \la W_\tau-W,Y_\tau-Y \ra_{\cH}\le\lambda|Y_\tau-Y|_{\cH}^2.
\end{equation}
Combining \eqref{eq:III3.1} and \eqref{eq:III3.2}, we finally get
    
\begin{align*}
    \la V-W, X-Y \ra_{\cH} &= \la W_\tau-W, X-Y \ra_{\cH} + \la V-W_\tau,X-Y \ra_{\cH} \\
    &= \frac{1}{1-\tau} \la W_\tau-W,Y_\tau-Y \ra_{\cH} + \frac{1}{\tau} \la V-W_\tau, X-Y_\tau \ra_{\cH} \\
    &\le \lambda |X-Y|_{\cH}^2.
\end{align*}
Since $(Y,W)$ is an arbitrary element of $\fF_N$, we deduce that $(X,V)\in \hat\fF_N$ by \eqref{eq:112}.
\end{proof}

In the next two corollaries, we work separately under the additional assumptions of Theorems \ref{thm:total-case}  and \ref{thm:demi-case} to provide additional properties of $\hat \fF_N$ which will be used in the proofs of the aforementioned main theorems.
We work first under the assumptions of Theorem
\ref{thm:total-case}, i.e. assuming that $\frF$ is a 
      totally $\lambda$-dissipative $\MPVF$
  whose domain contains a dense $\cN$-core $\core$.
Let us recall that,
by Corollary \ref{cor:bary-works-well},
if $\frF$ is totally $\lambda$-dissipative also
$\tilde\frF:=\frF\cup\bri \frF$
is totally $\lambda$-dissipative.
\begin{corollary}
  \label{cor:dim-total-case1}
  Under the assumptions of Theorem \ref{thm:total-case},
  let $\tilde\mmo$ be
  the Lagrangian representation of $\tilde\frF=\frF\cup\bri \frF$,
  and let $\mmo'$ be any $\lambda$-dissipative extension of
  $\tilde{\mmo}$.
  For every $N \in \cN$, $Y\in \overline{\newDDom N}$,
  $(Y,W)\in {\mmo}'$,
  we have
  $(Y,\Pi_N (W))\in \maxim \fF N$ and, in particular,
  \begin{equation}
    \label{eq:43}
     \langle V-\Pi_N (W), X-Y\rangle_{\cH}\le \lambda|X-Y|_{\cH}^2
    \quad
    \text{for every 
      $(X,V)\in \maxim \fF N$, $Y\in \overline{\newDDom N}$,
      $(Y,W)\in {\mmo'}$},
  \end{equation}
  where $\maxim \fF N$ is constructed as in Proposition \ref{prop:ext1} starting from the restriction of $\frF$ to $\core$.
\end{corollary}
\begin{proof} Observe that, by construction, $\fF$ (constructed
  starting from the restriction of the \MPVF $\frF$ to $\core$)
  and $\fF_N$ are subsets of $\tilde\mmo$ hence of $\mmo'$;
  this implies that $\fF_N$ is dissipative with 
  ${\mmo'}$ in the sense that
  \begin{equation}
    \label{eq:41}
    \langle X-Y,V-W\rangle_{\cH}\le \lambda|X-Y|_{\cH}^2
    \quad
    \text{for every $(X,V)\in \fF_N$, $(Y,W)\in {\mmo'}$}.
  \end{equation}
  Restricting \eqref{eq:41} to $Y\in \overline{\newDDom N}$,
the very definition of $\maxim\fF N$ in \eqref{eq:112} yields 
    $(Y,\Pi_N (W))\in \maxim \fF N$;
     in particular, we get \eqref{eq:43}.
\end{proof}

Let us now show that,  if we work under the assumptions of Theorem \ref{thm:main-discrete}, also requiring that $\frF$ is deterministic, then $\maxim \fF N$ coincides with $\fF$ on 
$\newODDom N$. This occurs in particular under the assumptions of Theorem
\ref{thm:demi-case}, i.e. when $\dim\X\ge2$ and
  $\frF\subset\prob_2(\TX)$  is a
  deterministic
  $\lambda$-dissipative \MPVF whose domain is a
  $\cN$-core $\core$. 
\begin{corollary}
  \label{cor:dim-demi-case1} Under the assumptions of Theorem \ref{thm:main-discrete}, assume also that the \MPVF $\frF$ is deterministic. Then 
  $\maxim \fF N$ is an extension of $\fF_N=\fF$ on $\newODDom N$, for every $N \in \cN$.
  Under the further assumptions 
  that $\frF$ is a single-valued \PVF and demicontinuous on each $\core_N$, then $\fF_N$ 
  coincides with $\hat \fF_N$ on $\newODDom N$.
\end{corollary}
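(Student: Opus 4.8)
The first assertion is essentially a bookkeeping step. We have $\fF_N\supset\pfrF N$ by construction (for deterministic $\frF$, the condition $(X,V)\in\pfrF N$ reduces to $X\in\newODDom N$, $(X,V)_\sharp\P\in\frF$, because $\Pi_N$ acts as the identity on an element already concentrated on a map over $X\in\cO N$; more precisely, if $(X,W)_\sharp\P=\Phi\in\frF[\iota_X]$ is deterministic, then $\bry\Phi$ agrees with the map carrying $\Phi$, so $\Pi_N W$ and $W$ have the same law over $X$ and both represent $\frF[\iota_X]$). Hence the unique maximal $\lambda$-dissipative extension $\maxim\fF N$ of $\pfrF N$ (Proposition \ref{prop:ext1}) is in particular an extension of $\fF_N$ restricted to $\newODDom N$. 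Passing to Eulerian images via $\iotaT$, and using Remark \ref{rem:step1} which identifies $\iotaT(\hat\fF_N)$ with $\hat\frF_N$, gives that $\hat\frF_N$ extends $\frF$ on $\newODDom N$, equivalently on $\core_N$. So the first part is immediate once one checks that $\frF$ deterministic forces $\pfrF N$ to literally contain $\fF_N$ over $\newODDom N$; I would write this out in one or two lines.

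For the second assertion, the inclusion $\fF_N\subset\maxim\fF N$ over $\newODDom N$ is already known; the point is the reverse. Fix $X\in\newODDom N$ and $V\in\maxim\fF N X$; we must show $(X,V)_\sharp\P=\frF[\iota_X]$, i.e.\ $(X,V)\in\fF_N$. Since $\frF$ is single-valued, $\fF_N$ is a single-valued operator on $\newODDom N$, say $\fF_N X = \{\Bb(X)\}$ with $\Bb$ the Lagrangian lift of $\ff$. The plan is to use the characterization \eqref{eq:124}: for every $Y\in\DDom N$ and every $\Psi\in\frF[\iota_Y]$ we have
\begin{equation*}
  \la V,X-Y\ra_\cH+\directionalm{\Psi}{\iotaT_{Y,X}}{0}\le\lambda|X-Y|_\cH^2.
\end{equation*}
Specializing to $Y=Y_t:=(1-t)X+tX'$ for an arbitrary $X'\in\newODDom N$ and small $t>0$: as in the proof of Proposition \ref{prop:allultimomomento}, $Y_t\in\newODDom N$ for $t$ in a punctured neighborhood of $0$, so $\frF[\iota_{Y_t}]$ is deterministic and we may take $\Psi_t:=(\ii_\X,\ff[\iota_{Y_t}])_\sharp\iota_{Y_t}$, whose barycentric pairing reads $\directionalm{\Psi_t}{\iotaT_{Y_t,X}}{0}=\la \Bb(Y_t),Y_t-X\ra_\cH$ by \eqref{eq:111}. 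After dividing by $t$, the inequality becomes $\la V-\Bb(Y_t),X-X'\ra_\cH\le\lambda t|X-X'|_\cH^2$ up to lower-order terms. Now demicontinuity of $\frF$ on $\core_N$, transported through $\iotaT$, gives $\Bb(Y_t)\weakto\Bb(X)$ (indeed $\frF[\iota_{Y_t}]\to\frF[\iota_X]$ in $\prob_2^{sw}(\TX)$ and both sides are concentrated on maps over converging first marginals, so the barycenters converge weakly in $L^2$; this is exactly where Theorem \ref{thm:all}(5) or the semicontinuity of the pairings is invoked). Passing to the limit $t\downarrow0$ yields $\la V-\Bb(X),X-X'\ra_\cH\le0$ for every $X'\in\newODDom N$. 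Since $\newODDom N$ is open in $\cH_N$ and $V-\Bb(X)\in\cH_N$, running $X'$ over a small ball around $X$ forces $V=\Bb(X)$, hence $(X,V)\in\fF_N$.

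The main obstacle is the limit step $\Bb(Y_t)\weakto\Bb(X)$: one must be careful that demicontinuity of $\frF$ on $\core_N$ is being used along the segment $Y_t\to X$ (which lies in $\core_N$ for small $t$), that the convergence $\iota_{Y_t}\to\iota_X$ holds in $\prob_2(\X)$ (clear, since $Y_t\to X$ in $\cH_N$ and $\iota$ is $1$-Lipschitz by \eqref{eq:74}), and that strong-weak convergence of deterministic measures $\frF[\iota_{Y_t}]$ with first marginals converging in $W_2$ indeed implies weak $L^2$ convergence of the associated maps once composed with the appropriate parametrizations — this is the place where one needs the explicit representation $\Pi_N W(\omega)=\bry\Phi(\xx(k))$ from \eqref{eq:168} together with the uniform $L^2$ bound coming from item (c) of Proposition \ref{prop:finalmente}. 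Everything else (the algebraic manipulation of the pairing, the openness argument to conclude $V=\Bb(X)$, and the reduction to the deterministic case $\lambda=0$ via Lemma \ref{lem:lambda0}) is routine.
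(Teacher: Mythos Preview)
Your first part matches the paper's reasoning exactly: when $\frF$ is deterministic and $X\in\cO N$, any $V$ with $(X,V)_\sharp\P\in\frF$ already lies in $\cH_N$, so $\Pi_N V=V$ and $\fF_N=\fF$ on $\newODDom N$; then $\hat\fF_N\supset\fF_N$ is immediate from Proposition~\ref{prop:ext1}.

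For the second part your approach is correct but takes a different route from the paper. The paper works directly with the closure characterization \eqref{eq:114}: given $W\in\bar\fF_N X$, one has a sequence $(X_n,\ff_n\circ X_n)\to(X,W)$ strongly-weakly; testing the $\prob_2^{sw}$-convergence $\frF[\iota_{X_n}]\to\frF[\iota_X]$ against $\zeta(x,y)=\langle\psi(x),y\rangle$ with $\psi\in\rmC_b(\X;\X)$ gives $\langle\psi(X),W\rangle_\cH=\langle\psi(X),\ff\circ X\rangle_\cH$ for all such $\psi$, hence $W=\ff\circ X$. This shows $\bar\fF_N X$ is a singleton, so $\hat\fF_N X=\clconv{\bar\fF_N X}=\{\fF X\}$ in one stroke. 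Your argument instead starts from the variational inequality \eqref{eq:124}, moves along a segment $Y_t\to X$, and uses openness of $\newODDom N$ to conclude $V=\Bb(X)$. It works, but the step $\Bb(Y_t)\weakto\Bb(X)$ that you flag as the main obstacle needs exactly the same kind of identification as in the paper: writing $\la\Bb(Y_t),X-X'\ra=\directionalm{\Psi_t}{\iotaT_{Y_t,X'}}{0}-\directionalm{\Psi_t}{\iotaT_{Y_t,X}}{0}$ and using both the liminf and limsup parts of Theorem~\ref{thm:all}(5) together with $\directionalm=\directionalp$ for deterministic $\Psi_t$. So your route is sound but a bit more circuitous; the paper's version avoids the segment construction and the final openness step by leveraging \eqref{eq:114} directly.
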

\begin{proof}
  The first statement is an immediate consequence of Proposition \ref{prop:ext1}; the equality $\fF_N=\fF$ on $\newODDom N$ follows from the fact that $\frF$ is a deterministic \MPVF by assumption. Let us now assume that $\frF$ is single-valued
  and its restriction to $\core_N$ is demicontinuous. 
  Let $X$ be an element of $\newODDom N$, $\mu=\iota_X$; $\frF[\mu]$ contains a unique element $\Phi$ which may be represented as $\bri{\Phi}=(\ii_\X, \bry\Phi)_\sharp\mu$ so that there is a unique element $V=\bry\Phi\circ X\in \cH_N$ such that $\iotaT_{X,V}=\Phi$. This shows that $\fF (X)$ is single-valued.
   Recalling the definition of $\bar\fF_N$ in \eqref{eq:114}, if $W \in \bar\fF_N (X)$, we can find a sequence $\left(X_n,\fF (X_n)\right)_{n\in\N}=(X_n, \ff_n \circ X_n)_{n\in\N}$ converging in the strong-weak topology of $\cH\times \cH$ to $(X,W)$, for maps $\ff_n \in L^2(\X, \mu_n; \X)$ with $\mu_n=\iota_{X_n}$. On the other hand, since $\frF$ is demicontinuous and deterministic, we have that $\frF[\iota_{X_n}]=(\ii_{\X}, \ff_n)_\sharp \mu_n \to (\ii_{\X}, \ff)_\sharp \mu=\frF[\iota_X]$ in $\prob_2^{sw}(\TX)$ for a map $\ff \in L^2(\X, \mu; \X)$. If $\psi \in \rmC_b(\X;\X)$, we can test the convergence in $\prob_2^{sw}(\TX)$ against $\zeta(x,y):=\langle \psi(x), y \rangle$ so that
 \[ \langle \psi(X_n), \ff_n \circ X_n \rangle_{\cH} = \int_{\X} \zeta \de (\ii_{\X}, \ff_n)_\sharp \mu_n \to \int_{\X} \zeta \de (\ii_{\X}, \ff)_\sharp \mu = \langle \psi(X), \ff \circ X \rangle_{\cH}. \]
 On the other hand $\psi(X_n) \to \psi(X)$ and $\ff_n \circ X_n \weakto W$ so that we deduce that
 \[ \langle \psi(X), \ff \circ X \rangle_{\cH} = \langle \psi(X), W \rangle_{\cH} \quad \text{ for every } \psi \in \rmC_b(\X; \X).\]
By arbitrariness of $\psi$, we deduce that $W=\ff \circ X = \fF (X)$. We thus deduce that $\bar\fF_N (X)$ coincides with $\fF (X)$ and then it contains a unique element $V$, and therefore by \eqref{eq:114} $\hat\fF_N
  (X)=\clconv{\bar\fF_N (X)}=V$ as well.
\end{proof}

\subsection{Lagrangian representation of the maximal extension}\label{sec:finf}
This section is devoted to the construction of $\hat \fF_\infty$ and $\hat \fF$, the Lagrangian representations of $\hat \frF_\infty$ and $\hat \frF$, as in Theorem \ref{thm:main-discrete}. We start with an important invariance property of the resolvents
of $\maxim\fF N$ with respect to $N$.

\begin{proposition} \label{prop:eulerstep}
  We keep the same assumptions of Theorem
  \ref{thm:main-discrete}.
  For every $X\in \cH_\infty$ and every $0<\tau<1/\lambda^+$ 
  there exists a unique $X_\tau\in\cH_\infty$ such that, for any $N\in \cN$,
  \begin{equation}
    \label{eq:120}
    X\in\cH_N \Rightarrow X_\tau\in \dom(\maxim\fF N)\subset\cH_N\,\text{ and }\,X_\tau-X\in \tau\, \maxim\fF N (X_{\tau}).
  \end{equation}
  Moreover 
  \begin{equation}
    \label{eq:118}
    |X_\tau(\omega')-X_\tau(\omega'')|\le \frac{1}{1-\lambda \tau} |X(\omega')-X(\omega'')|\quad
    \text{for every }\omega',\omega''\in \Omega.
  \end{equation}
\end{proposition}
\begin{proof}
Since $X\in \cH_\infty$, there exists $N\in\cN$ such that $X\in\cH_N$.
  Since $\maxim\fF N$ is maximal $\lambda$-dissipative, recalling Theorem \ref{thm:brezis1}(1),
  there exists a unique solution $X_{\tau,N}\in \dom(\maxim\fF N)$ of
  \begin{displaymath}
     X_{\tau,N}-X\in \tau\, \maxim\fF N(X_{\tau,N}).
  \end{displaymath}
  The invariance of $\maxim\fF N$ by permutations, stated in \eqref{eq:77}, shows
  that
  $(\sigma X)_{\tau,N}=\sigma (X_{\tau,N})$ for every $\sigma\in
  \symg{I_N}$.
  In particular, by $\lambda$-dissipativity of $\maxim \fF N$ we have
  \begin{align*}
    \la \sigma X_{\tau,N}-\sigma X-
    (X_{\tau,N}-X),\sigma X_{\tau,N}-X_{\tau,N}\ra_{\cH}\le \lambda \tau |\sigma X_{\tau,N}-X_{\tau,N}|_{\cH}^2
  \end{align*}
  so that
  \begin{displaymath}
    (1-\lambda \tau)\, |\sigma X_{\tau,N}-X_{\tau,N}|_{\cH}\le |\sigma X-X|_{\cH}\quad
    \text{for every }\sigma\in \symg{I_N}.
  \end{displaymath}
  If $\omega'\in \Omega_{N,i}$, $\omega''\in \Omega_{N,j}$, $i,j\in I_N$, and
  we choose as $\sigma$ the transposition which
  shifts $i$ with $j$, we get
  \begin{displaymath}
    \frac 2N (1-\lambda \tau)^2  |X_{\tau,N}(\omega')-X_{\tau,N}(\omega'')|^2\le
    \frac 2N |X(\omega')-X(\omega'')|^2
  \end{displaymath}
  which yields \eqref{eq:118}.

  Let us now suppose that $X\in \cH_M$ with $M\mid N$.
  Then $X_{\tau,N}$ belongs to $\cH_M$ by \eqref{eq:118}, so that $X_{\tau,N}\in\overline{\DDom N}\cap\cH_M=\overline{\DDom M}$ by Lemma \ref{le:trivial}(4). By Proposition \ref{prop:ext1}, for every $Y\in \DDom M$ and $W\in \maxim \fF M (Y)$
  we can find $V\in \maxim\fF N (Y)$
  such that $W=\Pi_M (V)$, so that by $\lambda$-dissipativity of $\maxim \fF N$ we have
  \begin{equation}
    \label{eq:126}
    \la X_{\tau,N}-X-\tau V,X_{\tau,N}-Y\ra_{\cH}\le \lambda\tau |X_{\tau,N}-Y|_{\cH}^2.
  \end{equation}
  Since $X_{\tau,N}-Y\in \cH_M$, we can replace
  $V$ with $W=\Pi_M (V)$ in \eqref{eq:126}, thus obtaining that
  $X_{\tau,N}-X\in \tau \maxim \fF M (X_{\tau,N})$ by Corollary \ref{cor:maximal},
  i.e.~$X_{\tau,N}=X_{\tau,M}$, by the uniqueness of the resolvent (see also Theorem \ref{thm:brezis1}(1)). 
  If $M,N$ are arbitrary and $X\in \cH_M\cap
\cH_N$, then
  setting $R:=MN$ the previous argument shows that
  $X_{\tau,M}=X_{\tau,R}=X_{\tau,N}$.
\end{proof}
As a corollary, we obtain the corresponding invariance property for the minimal selection.

\newcommand{\hfFcirc}{\hat\fF{\vphantom\fF}^\circ}
\begin{corollary} \label{cor:propfcirc}
  We keep the same assumptions of Theorem \ref{thm:main-discrete},
  let $M \in \cN$ and let $X \in \dom(\maxim \fF M)$. Then 
\begin{enumerate}
    \item $X \in \dom(\maxim \fF N)$ for every $N \in \cN$ s.t.~$M\mid N$.
    \item $\hfFcirc (X):= \lim_{\tau \downarrow 0} \frac{X_{\tau}-X}{\tau} \in \maxim \fF M (X)$. In particular $\hfFcirc (X) \in \maxim \fF N (X)$ for every $N \in \cN$ s.t.~$M \mid N$.
    \item $|\hfFcirc (X)|_{\cH} \le |V|_{\cH}$ for every $V \in \maxim \fF N (X)$ and for every $N \in \cN$ s.t.~$M \mid N$.
    \item $(1-\lambda\tau) |X_\tau-X|_{\cH} \le \tau |\hfFcirc (X)|_{\cH}$ for every $0<\tau<1/\lambda^+$.
    \end{enumerate}
    Moreover, for every $X, Y \in \bigcup_{N \in \cN} \dom(\maxim \fF N)$, we have
    \begin{equation}\label{eq:dissel}
    \scalprod{\hfFcirc (X)-\hfFcirc (Y)}{X-Y}_{\cH}\le \lambda |X-Y|_{\cH}^2.
    \end{equation}
\end{corollary}
\begin{proof}
By Theorem \ref{thm:brezis2}(5) there exists the limit
\[ \lim_{\tau \downarrow 0} \frac{X_\tau-X}{\tau}= \hfFcirc (X) \in \maxim \fF M (X)\]
and (4) holds. If $N \in \cN$ is s.t.~$M \mid N$, then $X \in \dom(\maxim \fF M) \subset \overline{\DDom M} \subset \overline{\DDom N}$, by Lemma \ref{le:trivial}. Moreover by Proposition \ref{prop:eulerstep}, we have that 
\[ \frac{X_{\tau}-X}{\tau} \in \maxim \fF N (X_\tau) \quad \forall \, 0<\tau <1/\lambda^+.\]
In particular
\[\scalprod{ \frac{X_{\tau}-X}{\tau}-W}{X_\tau-Y}_{\cH} \le \lambda |X_\tau-Y|_{\cH}^2  \quad \forall (Y,W) \in \fF_N \quad \forall \,  0< \tau < 1/\lambda^+,\]
so that, passing to the limit as $\tau \downarrow 0$, we get
\[ \scalprod{\hfFcirc (X)-W}{X-Y}_{\cH} \le \lambda|X-Y|_{\cH}^2 \quad \forall (Y,W) \in \fF_N,\]
since $X_\tau \to X$ as $\tau \downarrow 0$ by Theorem \ref{thm:brezis2}(4). This proves that $(X, \hfFcirc (X)) \in \maxim \fF N$ and, in particular, that $X \in \dom(\maxim \fF N)$. This proves (1) and (2).
\smallskip

 Concerning item (3): let $N \in \cN$ be s.t.~$M \mid N$; since $X\in\dom(\maxim \fF M)\subset\cH_N$, by \eqref{eq:120} we have 
\[\jJ^{\maxim \fF N}_\tau(X)=X_\tau,\]
where $\jJ^{\maxim \fF N}_\tau$ is the resolvent operator of $\maxim \fF N$. In particular, by (2) we have that $\hfFcirc=\lim_{\tau\downarrow0}\frac{\jJ^{\maxim \fF N}_\tau-\ii_\cH}{\tau}$ in $\dom(\maxim \fF M)$. Since by (2) we have $\hfFcirc (X) \in \maxim \fF N (X)$, we can conclude that $\hfFcirc (X)$ is the element of minimal norm in $\maxim \fF N (X)$ by Theorem \ref{thm:brezis2}(2)(5). 
\smallskip

Finally, if $X, Y \in \bigcup_{N \in \cN} \dom (\maxim \fF N)$, then there exist $N, M \in \cN$ s.t.~$X \in \dom (\maxim \fF N)$ and $Y \in \dom (\maxim \fF M)$ so that, taking $R:=MN$, we have
\[ \left(X, \hfFcirc (X)\right), \left(Y, \hfFcirc (Y)\right) \in \maxim \fF R\]
by (2). The $\lambda$-dissipativity of $\maxim \fF R$ gives \eqref{eq:dissel}.
\end{proof}
Thanks to the above results, we are now able to define the operator $\maxim\fF\infty\subset \cH\times \cH$
\begin{equation}
  \label{eq:181}
  \maxim\fF\infty:=\Big\{(X,V)\in \cH_\infty\times \cH_\infty:
  \exists\,M\in \cN: (X,V)\in \maxim \fF N
  \ \forall\, N\in \cN,\ M\mid N\Big\}.
\end{equation}

Equivalently, $\maxim\fF\infty$ has domain
$\dom(\maxim\fF\infty)=\bigcup_{N\in \cN}\dom(\maxim\fF N)$ and
\begin{equation}
  \label{eq:40}
  \maxim\fF\infty (X)=\bigcup_{M\in \cN}\bigcap_{M\mid N}\maxim \fF N (X)
  \quad \text{for every }X\in \dom(\maxim\fF\infty).
\end{equation}
Notice that $\maxim\fF\infty$ is the Lagrangian representation of the \MPVF
$\hat\frF_\infty$ defined by Theorem \ref{thm:main-discrete}.

We can recast the previous results in terms of $\hat \fF_\infty$ in the following statement.
\begin{corollary}
  \label{cor:summarize}
  We keep the same assumptions of Theorem \ref{thm:main-discrete}.
  The operator $\maxim\fF\infty$ defined by \eqref{eq:181} or
  \eqref{eq:40} satisfies the following properties:
  \begin{enumerate}
  \item $\maxim\fF\infty$ is $\lambda$-dissipative with domain
    $\dom(\maxim\fF\infty)=\bigcup_{N\in \cN}\dom(\maxim\fF N)$ and  $\newODDom\infty \subset \newDDom \infty \subset \dom(\maxim\fF\infty) \subset \overline{\dom(\maxim\fF\infty)} = \overline{\newODDom\infty}= \overline{\newDDom\infty}$.
  \item The map 
      $\hfFcirc$
      defined by Corollary \ref{cor:propfcirc}
      provides the minimal selection $(\maxim\fF\infty)^\circ$.
    \item 
      For every $X\in \cH_\infty$ and every
    $0<\tau<1/\lambda^+$ there exists a unique
    $X_\tau\in \dom(\maxim\fF\infty)$ such that
    $X_\tau-X\in \tau\,\maxim\fF\infty (X_\tau)$.
  \end{enumerate}
\end{corollary}
\begin{proof}
Item (1) follows by Proposition \ref{prop:ext1} and Lemma \ref{le:trivial}.
Item (2) comes by \eqref{eq:181} and Corollary \ref{cor:propfcirc}.
Item (3) is a consequence of Proposition \ref{prop:eulerstep}.
\end{proof}

In the following corollary, we are finally able to define the Lagrangian representation $\hat \fF$ of $\hat \frF$ as in Theorem \ref{thm:main-discrete} as the maximal extension of $\hat \fF_\infty$.

\begin{corollary}\label{cor:help-me}
Under the assumptions of Theorem \ref{thm:main-discrete}, there exists a unique maximal extension 
$\hat\fF$ of $\maxim\fF\infty$  with $\dom(\hat\fF)\subset \overline{\dom(\maxim\fF\infty)}$  and it satisfies the following:
\begin{enumerate}
    \item $\dom(\hat\fF)\subset \overline{\dom(\maxim\fF\infty)}=
\overline{\newODDom \infty}$,
\begin{equation}\label{eq:ninfty-hat}
\cH_N\cap\dom(\hat\fF)=\dom(\maxim\fF N),\quad \cH_\infty\cap\dom(\hat\fF)=\dom(\maxim\fF \infty),
\end{equation}
and, if $X\in\cH_\infty$ and $0<\tau<1/\lambda^+$, then 
\begin{equation}\label{eq:mmmh}
\jJ_\tau (X)=X_\tau,
\end{equation}
where $\jJ_\tau$ is the resolvent operator of $\hat\fF$ and $X_\tau$ is as in Proposition \ref{prop:eulerstep}.
\item When restricted to $\dom(\maxim\fF N)$ (resp.~$\dom(\maxim \fF \infty)$), the minimal  selection of $\hat\fF$ coincides with the minimal selection  $\maxim\fF N^\circ$ of $\maxim\fF N$ (resp.~$(\maxim \fF \infty)^\circ= \hfFcirc$ as in Corollary \ref{cor:summarize}(2)).\\
\item The following characterization holds
\begin{equation}
  \label{eq:182}
  (X,V)\in \hat\fF\,\Leftrightarrow\,\begin{array}{l}
  X\in \overline{\newODDom \infty},\\ 
  \la V-W,X-Y\ra_{\cH}\le \lambda |X-Y|_{\cH}^2
  \text{ for every }(Y,W)\in \maxim\fF\infty;\end{array}
\end{equation}
or, equivalently,
\begin{equation}
  \label{eq:182bis}
  (X,V)\in \hat\fF\,\Leftrightarrow\,\begin{array}{l}
  X\in \overline{\newODDom \infty},\\ 
  \la V-\hfFcirc (Y),X-Y\ra_{\cH}\le \lambda |X-Y|_{\cH}^2
  \text{ for every } Y\in \dom(\hat{\fF}_\infty).\end{array}
\end{equation}
\item $\hat\fF=\overline{\maxim\fF\infty}^{\cH\times \cH}$.
\end{enumerate}
\end{corollary} 
\begin{proof}
Thanks to Corollary \ref{cor:summarize}, the existence and uniqueness of the maximal extension $\hat\fF$ of $\maxim\fF\infty$ with domain
$\dom(\hat\fF)\subset \overline{\dom(\maxim\fF\infty)}$ and characterized by \eqref{eq:182} follows by Lemma \ref{le:easier}, with $D=\cH_\infty$.

Notice that \eqref{eq:mmmh} holds since, by Corollary \ref{cor:summarize}(3), when $X\in\cH_\infty$ then $X_\tau$ plays the role of the resolvent for $\maxim\fF\infty$ and we just proved that $\hat\fF$ is a maximal extension of $\maxim\fF\infty$.
We prove the equivalences in \eqref{eq:ninfty-hat}: let $X\in\cH_N\cap\dom(\hat\fF)$ and $0<\tau<1/\lambda^+$, then
\[\frac{\jJ_\tau X-X}{\tau}\]
belongs to $\maxim\fF N (X_\tau)$ thanks to Proposition \ref{prop:eulerstep} and \eqref{eq:mmmh}, moreover it
is bounded since $X\in\dom(\hat\fF)$ (cf. Theorem \ref{thm:brezis2}(5)). By maximality of $\maxim\fF N$ and applying again Theorem \ref{thm:brezis2}(5), we deduce that $X\in\dom(\maxim\fF N)$, hence $\cH_N\cap\dom(\hat\fF)\subset \dom(\maxim\fF N)$. The reverse inclusion is trivial.

Item (2) comes from item (1) and Theorem \ref{thm:brezis2}(5). The assertion involving $\maxim\fF \infty$ comes from Corollary \ref{cor:summarize}(2) and the proof of Lemma \ref{le:easier}.

The characterization in \eqref{eq:182bis} is a consequence of Corollary \ref{cor:speriamo-che-sia-ultimo},  with $D=\cH_\infty$, and of \eqref{eq:ninfty-hat}.

Finally, item (4) comes by Lemma \ref{le:easier} and the density of $\cH_\infty$ in $\cH$.
\end{proof}

\begin{remark}\label{rem:coincidence}
Notice that Corollary \ref{cor:help-me}(2) makes the notation $\hfFcirc$, used in Corollary
\ref{cor:propfcirc}, coherent with the
one used in Appendix \ref{sec:brezis} to denote the minimal selection of $\hat \fF$.
\end{remark}

\subsection{Proofs of the main Theorems \ref{thm:main-discrete}, \ref{thm:perfavorelultimo}, \ref{thm:total-case}, \ref{thm:demi-case}}\label{sec:proofs8}

We collect here the proofs of the main Theorems \ref{thm:main-discrete}, \ref{thm:perfavorelultimo}, \ref{thm:total-case}, \ref{thm:demi-case}, whose statements appear at the beginning of Section \ref{sec:constructionFlagr}.
We start with Theorem \ref{thm:main-discrete}, whose statement is contained in the following.

\begin{theorem} \label{prop:final}
Under the assumptions of Theorem \ref{thm:main-discrete},
$\hat\fF$ is a law invariant maximal  $\lambda$-dissipative operator according to Definition
\ref{def:inv} and
the Eulerian images 
$\hat\frF_N,\hat\frF_\infty,\hat\frF$ of
$\maxim\fF N,\maxim\fF\infty,\hat\fF$  respectively (cf. Definition \ref{def:representations}) satisfy
the properties stated in Theorem \ref{thm:main-discrete}.

  Finally, if $X \in \newODDom N$ for some $N \in \cN$ and $\Phi \in \frF[\iota_X]$, then
    \begin{equation}\label{eq:ineqbary}
    |\hfFcirc (X)|_{\cH}^2 \le \int_\X
    |\bry{\Phi}|^2 \de \iota_X,
    \end{equation}
    where $\bry{\Phi}$ is the barycenter of $\Phi$ as in Definition \ref{def:wassmom}.
  \end{theorem}
  \newcommand{\hfF}{\hat \fF}
\begin{proof}  We divide the proof in several claims. \\
\noindent\textbf{Claim 1.}
\emph{$\hat \fF$ is a law invariant maximal $\lambda$-dissipative operator.}
\smallskip

The operator $\hat\fF$ is maximal by definition (cf.~Corollary \ref{cor:help-me}), we need to prove it is law invariant. To this aim, it is sufficient to prove that $\mathcal{A}:=\hfF_\infty\subset \cH_\infty\times \cH_\infty$ satisfies the assumptions of Lemma \ref{le:general-invariance}
  (see also Remark \ref{rem:trivial}). Indeed, since $\hfF$ is the closure of $\hfF_\infty$ by Corollary \ref{cor:help-me}(4), 
  this yields that $\hfF$ is law
  invariant. We prove that $\hfF_\infty\cap (\cH_M\times \cH_M)$ are invariant with respect to $\Sym{I_M}$, for every $M\in\cN$. 
   By definition of $\hfF_\infty$ in \eqref{eq:181},  if $(X,V)\in \hfF_\infty\cap (\cH_M\times \cH_M)$,
  there exists some $M'\in \cN$ such that $(X,V)\in \hfF_N$ for all
  $N$ multiple of $M'$. In particular, choosing  $M'':=M\,M'\in \cN$, we have 
  $(X,V)\in \hfF_N$ for all
  $N$ multiple of $M''$. On the other hand,  any permutation
  $\sigma
  \in \Sym{I_M}$ induce an admissible permutation  of $\Sym{I_N}$, for all
  $N$ multiple of $M''$; therefore, by \eqref{eq:77}, we have that
 $(\sigma X,\sigma Y)$ belongs to $\hfF_N$ 
  for every $N$ multiple of $M''$. We deduce that $(\sigma X,\sigma
  Y)\in \hfF_\infty$ so that $\hfF_\infty\cap(\cH_M\times \cH_M)$ 
  is invariant by $\Sym{I_M}$. 
\smallskip

\noindent\textbf{Claim 2.}
\emph{$\hat \frF_N=\iota^2(\hat \fF_N)$.}
\smallskip

We prove the two inclusions. Let $\Phi \in \iota^2(\hat \fF_N)$ and let $(X,V) \in \hat \fF_N$ be s.t.~$\iotaT_{X,V} = \Phi$. Recalling the properties of $\hat \fF_N$ in Proposition \ref{prop:ext1}, we see that, since $\hat \fF_N \subset \cH_N \times \cH_N$, we have $\Phi \in \prob_{f,N}(\TX)$ and, since $X \in \dom(\hat \fF_N) \subset \overline{\mathcal D_N}=\overline{\mathcal C_N}$ (see Lemma \ref{le:trivial}(2)),  we have $\mu:=\sfx_\sharp \Phi = \iota_X \in \overline{\core_N}$.
Let now $\Psi \in \frF$ be such that $\nu:=\sfx_\sharp \Psi \in \core_N$ and $\ttheta \in \Gamma_{f,N}(\Phi, \nu)$. Let $(X',V',Y') \in \cH_N^3$ be s.t.~$(X',V', Y')_\sharp \P= \ttheta$; since $\iotaT_{X',V'}= \Phi \in \prob_{f,N}(\TX)$, up to a permutation in $\symg{I_N}$ and by the invariance by permutation of $\hat \fF_N$ in \eqref{eq:77}, we can assume that $(X',V') \in \hat \fF_N$ and $Y' \in \mathcal C_N$.
By the discussion at the beginning of Section \ref{sec:fn}, we can construct $W' \in \cH$ such that $\iotaT_{Y',W'}= \Psi$; by \eqref{eq:projbar} and $\mathcal C_N \subset \mathcal O_N$, we deduce that $\Pi_N (W') = \bry{\Psi} \circ Y'$, so that, by definition of $\fF_N$ in \eqref{eq:105}, we get that $(Y', \bry{\Psi} \circ Y') \in \fF_N$. By \eqref{eq:112} we deduce
\begin{align*}
    \int_{\TX\times\X} \la v_0-\bry{\Psi}(x_1), x_0-x_1 \ra \de \ttheta(x_0, v_0, x_1) &= \la V'-\bry{\Psi} \circ Y', X'-Y'\ra_{\cH}
    \\
    & \le \lambda |X'-Y'|^2_\cH \\
    &= \lambda \int_{\TX\times\X} |x_0-x_1|^2 \de \ttheta(x_0, v_0, v_1),
\end{align*}
which is \eqref{eq:32}.
This proves that $\iota^2(\hat \fF_N) \subset \hat \frF_N$.
Let us show the reverse inclusion: let $\Phi \in \hat \frF_N$; since $\Phi \in \prob_{f,N}(\TX)$ and $\mu:=\sfx_\sharp \Phi \in \overline{\core_N}$, we can find $(X,V) \in \overline{\mathcal D_N} \times \cH_N = \overline{\mathcal C_N} \times \cH_N$ (see Lemma \ref{le:trivial}(2)) such that $\iotaT_{X,V} = \Phi$. Let $(Y,W) \in \fF_N$; by definition of $\fF_N$ in \eqref{eq:105}, we can find $W' \in \cH$ such that $(Y,W') \in \mathcal{C}_N \times \cH$, $\Psi:=\iotaT_{Y, W'} \in \frF$, and $W=\Pi_N(W')$. In particular, $\nu:= \sfx_\sharp \Psi \in \core_N$. Again by \eqref{eq:projbar} and the fact that $\mathcal C_N \subset \mathcal O_N$, we deduce that $W=\Pi_N(W')= \bry{\Psi} \circ Y$. Setting $\ttheta:=(X,V,Y)_\sharp \P \in \Gamma_{f,N}(\Phi, \nu)$, \eqref{eq:32} gives
\begin{align*}
    \la V-W, X-Y\ra_{\cH} &=\la V-\bry{\Psi} \circ Y, X-Y\ra_{\cH} \\
    &= \int_{\TX\times\X} \la v_0-\bry{\Psi}(x_1), x_0-x_1 \ra \de \ttheta(x_0, v_0, x_1) 
    \\
    &\le \lambda \int_{\TX\times\X} |x_0-x_1|^2 \de \ttheta(x_0, v_0, v_1) \\
    &= \lambda |X-Y|^2_\cH,
\end{align*}
which, by \eqref{eq:112}, gives that $(X,V) \in \hat \fF_N$ i.e.~$\Phi \in \iota^2(\hat \fF_N)$. This proves that $\hat \frF_N \subset \iota^2(\hat \fF_N)$.
\smallskip

\noindent\textbf{Claim 3.}
\emph{$\hat \frF_N$ satisfies property (1) in Theorem \ref{thm:main-discrete}.}
\smallskip

First of all we observe that, if $\mu \in \core_N$, then there exists $X \in \mathcal C_N$ such that $\iota_X = \mu$. In particular $X \in \dom(\hat \fF_N)$ by Proposition \ref{prop:ext1}; hence there exists $V \in \cH_N$ such that $(X,V) \in \hat \fF_N$, so that $\Phi:=\iotaT_{X,V} \in \iota^2(\hat \fF_N) = \hat \frF_N$ by Claim 2. Therefore $\mu=\iota_X= \sfx_\sharp \Phi \in \dom(\hat \frF_N)$. This proves that $\core_N \subset \dom(\hat \frF_N)$.

If $\Phi_0, \Phi_1 \in \hat \frF_N$ and $\ttheta \in \Gamma(\Phi_0, \Phi_1) \cap \prob_{f,N}(\TX \times \TX)$, we can find $X_0, X_1, V_0, V_1 \in \cH$ such that $(X_0, V_0, X_1, V_1)_\sharp \P = \ttheta$; since $\iotaT_{X_0, V_0} = \Phi_0$, $\iotaT_{X_1, V_1}=\Phi_1$, and $\hat \frF_N$ is invariant by permutations in $\symg{I_N}$ by \eqref{eq:77}, we can assume that $(X_0, V_0), (X_1, V_1) \in \hat \fF_N$. The $\lambda$-dissipativity of $\hat \fF_N$ stated in Proposition \ref{prop:ext1} gives
\begin{align*}
\int_{\TX^2} \la v_1-v_0, x_1-x_0 \ra \de \ttheta(x_0, v_0, x_1, v_1) &=\la V_1-V_0, X_1-X_0 \ra_{\cH} 
\\
&\le \lambda |X_0-X_1|^2 \\
&= \int_{\TX^2} |x_0-x_1|^2 \de \ttheta(x_0, v_0, x_1, v_1).
\end{align*}
\smallskip

\noindent\textbf{Claim 4.}
\emph{$\hat \frF_N$ satisfies property (2) in Theorem \ref{thm:main-discrete}.}
\smallskip

Suppose that $\mu \in \overline{\core_N}$,  $\ff \in \maps{\hat{\frF}_N}[\mu]$, $\Psi \in \frF$ with $\nu:=\sfx_\sharp \Psi \in \core_N$, and $\mmu \in \Gamma_{f,N}(\mu, \nu)$.
Set $\Phi :=(\ii_\X, \ff)_\sharp \mu \in \hat \frF_N$ and $\ttheta:=(\sfx^0, \ff \circ \sfx^0, \sfx^1)_\sharp \mmu \in \Gamma_{f,N}(\Phi, \nu)$. Then, by \eqref{eq:32}, we get \eqref{eq:32bis}.
To get the opposite implication, take $\mu \in \overline{\core_N}$, $\ff \in L^2(\X, \mu; \X)$ and assume that \eqref{eq:32bis} holds for every $\Psi \in \frF$ such $\nu := \sfx_\sharp \Psi \in \core_N$ and all $\mmu \in \Gamma(\mu, \nu)$ such that $\mmu$ is the unique element of $\Gamma(\mu, \nu)$.
Set $\Phi:=(\ii_\X, \ff)_\sharp \mu$, and take $X \in \overline{\mathcal C}_N$ such that $\iota_X= \mu$, so that, setting $V:=\ff \circ X$, we have $\Phi=\iotaT_{X, V}$. Let $(Y,W) \in \fF_N$ be such that $\mmu:=\iotaT_{X,Y}$ is the unique element of $\Gamma_o(\iota_X, \iota_Y)$; by definition of $\fF_N$ in \eqref{eq:105}, we can find $W' \in \cH$ such that $(Y,W') \in \mathcal{C}_N \times \cH$, $\Psi:=\iotaT_{Y, W'}\in \frF$, and $W=\Pi_N(W')$. In particular, $\nu:= \sfx_\sharp \Psi \in \core_N$ and, again by \eqref{eq:111} and $\mathcal C_N \subset \mathcal O_N$, we deduce that $W=\Pi_N(W')= \bry{\Psi} \circ Y$. By \eqref{eq:32bis}, we get
\begin{align*}
    \la V-W, X-Y \ra &= \int_{\X^2} \la \ff(x_0)-\bry{\Psi}(x_1), x_0-x_1 \ra \de \mmu(x_0, x_1)\\
    &\le \lambda \int_{\X^2} |x_0-x_1|^2 \de \mmu(x_0, x_1)\\
    &= \lambda |X-Y|^2_\cH,
\end{align*}
which, by arbitrariry of $(Y,W)$ and Proposition \ref{prop:allultimomomento}, gives that $(X,V) \in \hat \fF_N$ i.e.~that $\Phi \in \hat \frF_N$, hence that $\ff \in \maps{\hat{\frF}_N}[\mu]$.
\smallskip

\noindent\textbf{Claim 5.}
\emph{$\hat \frF_N$ satisfies property (3) in Theorem \ref{thm:main-discrete}.}
\smallskip

This follows from the inclusion $\dom(\hat \fF_M) \subset \dom(\hat \fF_N)$ in Corollary \ref{cor:propfcirc}(1) and the equality $\hat \frF_N = \iota^2(\hat \fF_N)$ in Claim 2.

\smallskip

\noindent\textbf{Claim 6.}
\emph{$\hat \frF_\infty = \iota^2(\hat \fF_\infty)$ and $\hat \frF_\infty$ satisfies property (4) in Theorem \ref{thm:main-discrete}.}
\smallskip

We prove the two inclusions to show the equality $\hat \frF_\infty = \iota^2(\hat \fF_\infty)$. Let $\Phi \in \hat \frF_\infty$; then there exists $M \in \cN$ such that $\Phi \in \hat \frF_N$ for every $N \in \cN$ such that $M \mid N$. By Claim 2, for every $N \in \cN$ such that $M \mid N$, there exists $(X_N, V_N) \in \hat \fF_N$ such that $\iotaT_{X_N, V_N} = \Phi$. Set $(X,V):= (X_M, V_M)$ and let $N \in \cN$ be such that $M \mid N$. Then $(X,V) \in \hat \fF_M \subset \cH_M \times \cH_M \subset \cH_N \times \cH_N$, $(X_N, V_N) \in \hat \fF_N \subset \cH_N \times \cH_N$, and $\iotaT_{X,V}= \iotaT_{X_N, V_N}= \Phi$.
In particular, there exists a permutation $\sigma \in \symg{I_N}$ such that $(X,V)=(\sigma X_N, \sigma V_N)$. The invariance of $\hat \fF_N$ w.r.t.~permutations of $\symg{I_N}$ in \eqref{eq:77} gives that $(X,V) \in \hat \fF_N$. By arbitrariness of $N$, we have proven that $(X,V) \in \hat \fF_N$ for every $N \in \cN$ such that $M \mid N$ which, by definition of $\hat\fF_\infty$ in \eqref{eq:181}, gives $(X,V) \in \hat \fF_\infty$. Therefore $\Phi \in \iota^2(\hat \fF_\infty)$. This proves that $\hat \frF_\infty \subset \iota^2(\hat \fF_\infty)$.
Let us show the reverse inclusion: let $\Phi \in \iota^2(\hat \fF_\infty)$ and let $(X,V) \in \hat \fF_\infty$ be such that $\iotaT_{X,V}= \Phi$. By definition of $\hat \fF_\infty$ in \eqref{eq:181}, we have that there exists $M \in \cN$ such that $(X,V) \in \hat \fF_N$ for every $N \in \cN$ such that $M \mid N$. By Claim 2, we have that $\Phi \in \hat \frF_N$ for every $N \in \cN$ such that $M \mid N$ so that $\Phi \in \hat \frF_\infty$. This proves that $\iota^2(\hat \fF_\infty) \subset \hat \frF_\infty$.

Since $\hat \fF_\infty$ is $\lambda$-dissipative by Corollary \ref{cor:summarize} and we have proven that $\hat \frF_\infty = \iota^2(\hat \fF_\infty)$, by Proposition \ref{prop:basic-relation}, we get that $\hat \frF_\infty$ is totally $\lambda$-dissipative.

The equality $\dom(\hat \frF_\infty) = \cup_{M \in \cN} \dom(\hat \frF_M)$ follows from the identity $\hat \frF_\infty = \iota^2(\hat \fF_\infty)$ just proven and the corresponding characterization of the domain of $\hat \fF_\infty$ in Corollary \ref{cor:summarize}(1).

The inclusion $\core \subset \cup_{M \in \cN} \dom(\hat \frF_M)$ can be proven as follows: if $\mu \in \core$, then there exists $M \in \cN$ such that $\mu \in \core_M$, see also \eqref{eq:39}; thus there exists $X \in \mathcal C_M$ such that $\iota_X=\mu$. Therefore, $X \in \dom(\hat \fF_M)$ since, by definition of $\mathcal D_M$ in \eqref{eq:138} and by Proposition \ref{prop:ext1}, we have $\mathcal C_M \subset \mathcal D_M \subset \dom(\hat \fF_M)$. By Claim 2, we deduce $\mu \in \dom(\hat \frF_M)$.

\smallskip

\noindent\textbf{Claim 7.}
\emph{$\hat \frF = \iota^2(\hat \fF)$.}
\smallskip

By Corollary \ref{cor:help-me}, $\hat \fF$ is the unique maximal $\lambda$-dissipative operator extending $\hat \fF_\infty$ with domain included in $\overline{\mathcal C_\infty}$.
By Theorem \ref{thm:maximal-dissipativity}(2), the \MPVF $\iota^2(\hat \fF)$ is maximal totally $\lambda$-dissipative and, since $\hat \fF$ extends $\hat \fF_\infty$, it extends $\hat \frF_\infty$.
If $\mu \in \dom(\iota^2(\hat \fF))$, then we can find $X \in \dom(\hat \fF) \subset \overline{\mathcal C_\infty}$ such that $\iota_X=\mu$; therefore, there exists a sequence $(X_n)_{n\in\N} \subset \mathcal C_\infty$ such that $X_n \to X$. In particular, $\iota_{X_n} \in \core$ (see \eqref{eq:138}) and $W_2(\iota_{X_n}, \mu) \to 0$ as $n \to +\infty$; hence $\mu \in \overline{\core}$.
This proves that $\iota^2(\hat \fF)$ is a maximal totally $\lambda$-dissipative extension of $\hat \frF_\infty$ with domain included in $\overline{\core}$. Uniqueness can be proven as follows: suppose $\frG \subset \prob_2(\TX)$ is another maximal totally $\lambda$-dissipative extension of $\hat \frF_\infty$ with domain included in $\overline{\core}$, and let $\tilde \fF \subset \cH \times \cH$ be its Lagrangian representation.
By Theorem \ref{thm:maximal-dissipativity}(2), we get that $\tilde \fF$ is maximal $\lambda$-dissipative. Now assume that $(X,V) \in \hat \fF_\infty$; then $\iotaT_{X,V}\in \hat \frF_\infty \subset \frG$, so that $(X,V) \in \tilde \fF$. This shows that $\tilde \fF$ extends $\hat \fF_\infty$. On the other hand, if $X \in \dom(\tilde \fF)$, then $\iota_X \in \dom(\frG) \subset \overline{\core}$; hence there exists $(\mu_n)_{n\in\N} \subset \core$ such that $W_2(\mu_n, \mu) \to 0$ as $n \to +\infty$. By definition of $\mathcal{C}_\infty$ in \eqref{eq:138} and Theorem \ref{thm:gpfinal}, we can find $(X_n)_{n\in\N} \subset \mathcal C_\infty$ such that $X_n \to X$ and $\iota_{X_n}= \mu_n$. In particular, $X \in \overline{ \mathcal C_\infty}$; this shows that $\dom(\tilde \fF) \subset \overline{ \mathcal C_\infty}$. We have proven that $\tilde \fF$ is a maximal $\lambda$-dissipative operator extending $\hat \fF_\infty$ with domain included in $\overline{\mathcal C_\infty}$. By the uniqueness part of Corollary \ref{cor:help-me}, we deduce that $\tilde \fF = \hat \fF$, hence that $\frG=\iota^2(\hat \fF)=\hat \frF$.

\smallskip

\noindent\textbf{Claim 8.}
\emph{$\hat \frF$ satisfies property (5) in Theorem \ref{thm:main-discrete}.}
\smallskip

Let $\mu \in \overline{\core}$ and let $\Phi\in \prob_2(\TX|\mu)$ be such that \eqref{eq:34} holds for every $\nu \in \dom(\hat \frF_\infty)$, $\ff \in \maps{\hat \frF_\infty}[\nu]$, and $\ttheta \in \Gamma(\Phi, \nu)$.
We take $(X,V) \in \overline{\mathcal C_\infty} \times \cH$ such that $\iotaT_{X,V} = \Phi$ and any $Y \in \dom(\hat \fF_\infty)$ so that $\nu:=\iota_Y \in \dom(\hat \frF_\infty)$. By Corollary \ref{cor:help-me}(2), we have that $\left(Y,\hat \fF^\circ(Y)\right) \in \hat \fF_\infty$ so that $\iotaT_{Y,\hat \fF^\circ (Y)}\in \hat \frF_\infty$ by Claim 6. Moreover, by equation \eqref{eq:9} in Theorem \ref{thm:invTOlawinv}, $\hat \fF^\circ(Y)= \bb^\circ[\nu] \circ Y$; in particular $\bb^\circ[\nu] \in \maps{\hat \frF_\infty}[\nu]$. Setting $\ttheta:=(X,V,Y)_\sharp \P \in \Gamma(\Phi, \nu)$, by \eqref{eq:34}, we have
\begin{align*}
\la V-\fF^\circ(Y), X-Y \ra_\cH &= \int_{\TX\times\X} \la v-\bb^\circ[\nu](y), x-y \ra \de \ttheta(x,v,y)
\\
& \le \int_{\TX\times\X} |x-y|^2 \de\ttheta(x,v,y) \\
&= |X-Y|_\cH^2,
\end{align*}
which, by arbitrariness of $Y\in \dom(\hat \fF_\infty)$ and \eqref{eq:182bis}, gives that $(X,V) \in \hat \fF$. Hence, by Claim 7, we have that $\Phi \in \hat \frF$. The converse implication simply follows by the total $\lambda$-dissipativity of $\hat \frF$ and the inclusion $\hat \frF_\infty \subset \hat \frF$. 
\smallskip

The fact that $\hat \frF$ coincides with the strong closure of $\hat \frF_\infty$ in $\prob_2(\TX)$ follows from the analogous property for $\hat \fF$ and $\hat \fF_\infty$ stated in Corollary \ref{cor:help-me}(4). Indeed, if $\Phi$ belongs to the strong closure of $\hat \frF_\infty$ in $\prob_2(\TX)$, we can find a sequence $(\Phi_n)_{n\in\N} \subset \hat \frF_\infty$ such that $\Phi_n \to \Phi$ in $\prob_2(\TX)$. By Theorem \ref{thm:gpfinal} and Claim 6, we can find a sequence $(X_n, V_n)_{n\in\N} \subset \hat \fF_\infty$ and $(X,V) \in \cH \times \cH$ such that $\iotaT_{X_n, V_n}= \Phi_n$, $\iotaT_{X,V} = \Phi$, and $(X_n, V_n) \to (X,V)$. In particular $(X,V) \in \overline{\maxim\fF\infty}^{\cH\times \cH}$ which coincides with $\hat \fF$ by Corollary \ref{cor:help-me}(4). Thus $\Phi \in \hat \frF$ by Claim 7. On the other hand, if $\Phi \in \hat \frF$, by Claim 7, we can find $(X,V) \in \hat \fF$ such that $\iotaT_{X,V} = \Phi$. By Corollary \ref{cor:help-me}(4), there exists a sequence $(X_n, V_n)_{n\in\N} \subset \hat \fF_\infty$ such that $(X_n, V_n) \to (X,V)$. In particular, $\Phi_n:=\iotaT_{X_n, V_n} \in \hat \frF_\infty$ by Claim 6, and $\Phi_n \to \Phi$ in $\prob_2(\TX)$. This proves that $\Phi$ belongs to the strong closure of $\hat \frF_\infty$ in $\prob_2(\TX)$.
\smallskip

Now, let $\mu \in \core$ and let $X \in \mathcal C_\infty$ be such that $\iota_X=\mu$. By Theorem \ref{thm:minimal}(2) and \eqref{eq:9}, we have
\[\hat\frF{\vphantom\frF}^\circ[\mu] = (\ii_\X, \bb^\circ[\mu])_\sharp \mu = \iotaT(X, \bb^\circ[\mu] \circ X)= \iotaT(X,\hat \fF^\circ(X)). \]
Moreover, by \eqref{eq:38}, we have $\mu \in \dom(\hat \frF_\infty)$ so that, using Corollary \ref{cor:help-me}(2), we get that $(X, \hat \fF^\circ(X)) \in \hat \fF_\infty$. In particular, $\hat\frF{\vphantom\frF}^\circ[\mu] \in \hat \frF_\infty$ by Claim 6.

\smallskip

\noindent\textbf{Claim 9.}
\emph{\eqref{eq:ineqbary} holds.}
\smallskip

Let  $X \in \newODDom N \subset \DDom N$ for some $N \in \cN$, and observe that, since $\DDom N$ is open by Lemma \ref{le:trivial}, then $\jJ_\tau (X) \in \DDom N$ for $0<\tau<1/\lambda^+$ sufficiently small, since $\jJ_\tau (X) \to X$ as $\tau \downarrow 0$, where $\jJ_\tau$ is the resolvent of $\hat\fF$. We can thus apply \eqref{eq:124} and get
\[ \frac{1}{\tau}\scalprod{\jJ_\tau (X)-X}{\jJ_\tau (X)-X}_{\cH} + \directionalm{\Phi}{\iotaT_{X, \jJ_\tau (X)}}{0} \le \lambda |X-J_\tau (X)|_{\cH}^2.\]
Since we have shown that $\hat\fF$ is an invariant maximal $\lambda$-dissipative operator, by Theorem \ref{thm:invTOlawinv}, there exists a Lipschitz function $f$ such that $\jJ_\tau (X)=f\circ X$; thus $\iotaT_{X, \jJ_\tau (X)}$ is concentrated on a map so that, by Theorem \ref{thm:all}(4), we have 
\[ \directionalm{\Phi}{\iotaT_{X, \jJ_\tau (X)}}{0} = \scalprod{\bry{\Phi}}{X-\jJ_\tau (X)}_{\cH}.\]
We hence get
\[ \frac{1}{\tau} |\jJ_\tau (X)-X|_{\cH}^2 \le |X-\jJ_\tau (X)|_{\cH}\,\bigg(|\bry{\Phi}|+\lambda |X-\jJ_\tau (X)|_{\cH}\bigg);\]
dividing by $|X-\jJ_\tau (X)|_{\cH}$ and passing to the limit as $\tau \downarrow 0$, we obtain \eqref{eq:ineqbary} (cf. Theorem \ref{thm:brezis2}(5)).\\
\end{proof}

We conclude this section with the proofs of Theorems \ref{thm:perfavorelultimo}, \ref{thm:total-case} and \ref{thm:demi-case}.

\begin{proof}[Proof of Theorem \ref{thm:perfavorelultimo}] 
 The existence of a curve as in \emph{(1)} comes from the fact that $\overline{\core_N} \subset \overline{\dom(\hat \frF)}$ and the maximal total $\lambda$-dissipativity of $\hat \frF$. Let us collect the properties of $\mu$, as they are in the first part of the statement, in the following item: 
\begin{enumerate}
    \item[(0)] $\mu:[0, +\infty) \to \overline{\core_N}$ is a $\lambda$-\EVI solution for the restriction of $\frF$ to $\core_N$, which is locally absolutely continuous in $(0,+\infty)$.
\end{enumerate}
We devote the rest of the proof to prove the equivalence between \emph{(0)}, \emph{(1)}, and \emph{(2)}.

 \smallskip
 
 \noindent\textbf{Claim 1.}
\emph{(1) $\Leftrightarrow$ (2)}.
\smallskip

To see that (2) implies (1),  it is sufficient to notice that by \eqref{eq:v-in-F} $\mu$ satisfies the inclusion $(\ii_\X, \vv_t)_\sharp \mu_t \in \hat{\frF}[\mu_t]$ for a.e.~$t>0$, so that it is clearly a $\lambda$-\EVI solution for $\hat{\frF}$ (see also \cite[Theorem 5.4(1)]{CSS}); by Theorem \ref{thm:final-total}, we get that $\mu$ is a Lagrangian solution of the flow generated by $\hat{\frF}$. We are left to check that (1) implies (2).

Since $\mu_0\in \overline{\core_N}$, we can represent $\mu_0$ as $\iota_{X_0}$
for some $X_0\in \overline{\newODDom N} =\overline{\newDDom N}=\overline{\dom(\hat\fF_N)}$ (cf. Lemma \ref{le:trivial} and Proposition \ref{prop:ext1});
if $(\bm S_t)_{t\ge0}$ is the semigroup generated by $\hat\fF$ we have
$\mu_t=\iota_{X_t}$ where $X_t=\bm S_t (X_0)$.

By Corollary \ref{cor:help-me}(1), the restriction of the resolvent $\jJ_\tau$
of $\hat\fF$ to $\cH_N$ coincides with the resolvent of $\hat\fF_N$: using the exponential formula (cf.~\eqref{eq:convJS}), we obtain that the restriction of 
the semigroup $(\bm S_t)_{t\ge0}$ 
to $\overline{\newDDom N}$ coincides with the semigroup generated by $\hat\fF_N.$
Since the interior of the domain of $\hat\fF_N$ in $\cH_N$ is not empty (cf. Proposition \ref{prop:ext1} and Lemma \ref{le:trivial}), we can 
apply Theorem \ref{thm:brezreg}. We thus obtain that
$\bm S_t (X_0)$ is locally absolutely continuous in $[0,+\infty)$ and it is locally Lipschitz in $(0,+\infty)$. Moreover, it satisfies $I_\lambda(t)|\dot X_t|_\cH\le C$ in $(0,1)$ 
for a suitable constant $C$ (so that we get \eqref{eq:v-estimate}), it belongs to $\dom(\hat{\fF}_N)$ for every $t>0$,
and it solves the equation
$$\dot X_t=\hat{\fF}_N^\circ (X_t) \quad\text{for $\leb ^1$-a.e.~$t>0$},$$
where $\hat{\fF}_N^\circ$ denotes the minimal selection of $\hat{\fF}_N$.
Corollary \ref{cor:help-me}(2) then shows that $\dot X_t=(\hat{\fF})^\circ (X_t)$ 
as well, so that we get 
\begin{equation*}
    \partial_t \mu_t+\nabla\cdot(
    \mu_t \,\hat\ff{\vphantom\ff}^\circ(\cdot, \mu_t))=0
    \quad\text{in $(0,+\infty)\times \X$},
\end{equation*}
and therefore \eqref{eq:v-in-F}: indeed the tangent space $\Tan_{\mu_t}\prob_2(\X)$ (cf.~Theorem \ref{thm:tangentv} and \cite[Theorem 8.3.1, Propositions 8.4.5, 8.4.6]{ags}) coincides with $L^2(\X, \mu_t; \X)$ since $\supp(\mu_t)$ has finite cardinality.
 
\smallskip

\noindent\textbf{Claim 2.}
\emph{(2) $\Rightarrow$ (0)}.
\smallskip
 
We know that $\mu$ solves the continuity equation with velocity field $\vv_t=\hat\ff{\vphantom\ff}^\circ[\mu_t]$ 
so that, by Corollary \ref{cor:help-me}(2), we have 
$(\ii_\X, \vv_t)_\sharp \mu_t \in \hat{\frF}_N$.  
Let $\Phi \in \frF$ with $\nu:=\sfx_\sharp \Phi \in \core_N$ and let $t \in A(\mu) \subset [0,+\infty)$, where $A(\mu)$ is the full $\leb^1$-measure set given by Theorem \ref{thm:all}(6a). By Theorem \ref{thm:main-discrete}(2) we have that
\begin{equation}\label{eq:eeeeciao}
 \int_{\X^2} \la \vv_t(x),x-y \ra \de \mmu_t(x,y) \le \int_{\X^2} \left ( -\la \bry{\Phi}(y),y-x\ra + \lambda |x-y|^2 \right )\de \mmu_t(x,y)
\end{equation} 
for every $\mmu_t \in \Gamma_{f,N}(\mu_t, \nu)$. Choosing $\mmu_t$ optimal, by Theorem \ref{thm:all}(6a) we have that 
\[ \frac{\de}{\de t} \frac{1}{2} W_2^2(\mu_t, \nu) = \bram{(\ii_\X,\vv_t)_\sharp \mu_t}{\nu} \le \int_{\X^2} \la \vv_t(x),x-y \ra \de \mmu_t(x,y).\]
On the other hand, since $\mmu_t$ is concentrated on a map w.r.t.~$\nu$, \eqref{eq:29} gives that 
\[ \int_{\X^2} \la \bry{\Phi}(y),y-x\ra \de \mmu_t(x,y) = \directionalm{\Phi}{{\mathsf s}_\sharp\mmu_t}0,\]
where $\mathsf s: \X^2 \to \X^2$ is defined by $\mathsf s (x_0,x_1):=(x_1,x_0)$.
So that, using \eqref{eq:eeeeciao}, we obtain that 
\[\frac{\de}{\de t} \frac{1}{2} W_2^2(\mu_t, \nu) \le -\directionalm{\Phi}{{\mathsf s}_\sharp\mmu_t}0 + \lambda W_2^2(\mu_t,\nu).\]
 Noting that ${\mathsf s}_\sharp\mmu_t\in\Gamma_o(\nu,\mu_t)$, we have
\[\directionalm{\Phi}{{\mathsf s}_\sharp\mmu_t}0\ge\min_{\ggamma_t\in\Gamma_o(\nu,\mu_t)}\directionalm{\Phi}{\ggamma_t}0=\bram{\Phi}{\mu_t},\]
where the last equality is given by Theorem \ref{thm:all}(2). We  finally obtain
\[ \frac{\de}{\de t} \frac{1}{2} W_2^2(\mu_t, \nu) \le - \bram{\Phi}{\mu_t} +\lambda W_2^2(\mu_t,\nu);\]
this implies that $\mu$ is a $\lambda$-\EVI solution for the restriction of $\frF$ to $\core_N$.

\smallskip

\noindent\textbf{Claim 3.}
\emph{(0) $\Rightarrow$ (1)}.
\smallskip

 We apply \cite[Lemma 5.3, (5.5a)]{CSS} 
    obtaining that for every $t$ in a set $A(\mu)\subset [0,+\infty)$ of full $\leb^1$-measure, every  $\nu\in \core_N$ and $\Phi\in \frF[\nu]$, we have
    \begin{equation}
        \label{eq:cisiamo}
        \big[(\ii_{\X}, \vv_t)_\sharp \mu_t,\nu\big]_r
        +\big[\Phi,\mu_t\big]_r\le \lambda W_2^2(\mu_t,\nu),
    \end{equation}
    where $\vv_t$ is the Wasserstein velocity field of $\mu$.
    Let $t \in A(\mu)$ be fixed; restricting \eqref{eq:cisiamo}
    to all the measures $\nu$ for which $\Gamma_o(\mu_t,\nu)$ contains
    a unique element (denoted by $\mmu$), 
    Theorem \ref{thm:all}(4) yields 
    \begin{align*}
        \big[(\ii_\X, \vv_t)_\sharp \mu_t,\nu\big]_r
        &=
        \int_{\X^2} \langle \vv_t(x_0),x_0-x_1\rangle \,\d\mmu(x_0,x_1),\\
        \big[\Phi,\mu_t\big]_r&=
        \int_{\X^2} \langle \bry\Phi(x_1),x_1-x_0\rangle \,\d\mmu(x_0,x_1).
    \end{align*}
  Proposition \ref{prop:allultimomomento} and \eqref{eq:cisiamo} then yield that $(\ii_\X, \vv_t)_\sharp \mu_t
    \in \hat\frF_N[\mu_t].$

    Let us now consider the Lagrangian solution $\tilde\mu_t:=S_t(\mu_0)$ 
    of the flow driven by $\hat\frF$.
   By the Claim 1, 
    we know that $\tilde \mu$ is absolutely continuous, 
    $\tilde \mu_t\in \dom(\hat\frF_N)\subset 
    \dom(\hat\frF)\cap \overline{\core_N}$ for $t>0$,  
    and satisfies \eqref{eq:v-in-F}.

    We can then compute the derivative of $W_2^2(\mu_t,\tilde\mu_t)$:
    for $\mathscr L^1$-a.e.~$t>0$, we can choose an arbitrary $\mmu_t
    \in \Gamma_o(\mu_t,\tilde \mu_t)$, in particular
    a coupling in $\prob_{f,N}(\X\times \X)$, obtaining, by Theorem \ref{thm:all}(6b), 
    \begin{displaymath}
        \frac\d{\d t}\frac 12 W_2^2(\mu_t,\tilde\mu_t)
        =
        \int_{\X^2} \langle \vv_t(x_0)-\hat\ff{\vphantom\ff}^\circ[\tilde\mu_t](x_1),x_0-x_1
        \rangle\,\d\mmu_t(x_0,x_1)\le \lambda W_2^2(\mu_t,\nu)
    \end{displaymath}
    by $\lambda$-dissipativity of $\hat{\frF}_N$, since $(\ii_\X, \hat\ff{\vphantom\ff}^\circ[\tilde{\mu}_t])_\sharp\tilde{\mu}_t \in \hat{\frF}_N$ by Corollary \ref{cor:help-me}(2).  We thus have that $\mu_t=\tilde\mu_t$ for every $t\ge0$
    and $\vv_t=\hat\ff{\vphantom\ff}^\circ[\mu_t]$.
\end{proof}

\begin{remark}\label{rem:correct} Consider the example of $\frac{1}{2}$-dissipative \PVF $\frF$, with $\X=\R$ discussed in Remark \ref{rem:sticky}.  We already know that $\frF$ cannot be maximal totally $1/2$-dissipative, since the evolution driven by $\frF$ splits mass, a contradiction with Theorem \ref{thm:existence-Lagrangian}. Thanks to Theorem \ref{thm:perfavorelultimo} we can also deduce that it is not even totally $1/2$-dissipative: the evolution driven by $\frF$ and the one driven by the maximal totally $1/2$-dissipative \MPVF $\hat \frF$ should coincide, but this is again impossible by Theorem \ref{thm:existence-Lagrangian}. In particular, Theorem \ref{thm:perfavorelultimo} can fail when  $\dim(\X)=1$ and $\frF$ is not totally dissipative.
\end{remark}

\begin{proof}[Proof of Theorem \ref{thm:total-case}]
Let $\mmo'$ be a law invariant maximal $\lambda$-dissipative extension of the Lagrangian representation of $\frF$ with domain included in the convex set $\overline{\newODDom\infty}$, whose existence is given by Theorem \ref{thm:maximal-dissipativity}. Notice that $\iota^2(\mmo')$ is maximal totally $\lambda$-dissipative and contains $\frF$ so that it also contains $\bri{\frF}$ by Theorem \ref{thm:bary-proj}. We deduce that $\mmo'$ is the Lagrangian representation of a $\lambda$-dissipative extension of $\frF\cup \bri{\frF}$. 

We want to show that $\mmo'\subset \hat\fF$ and we split the argument in a few steps. 

  \smallskip
  
   \noindent\textbf{Claim 1.}
\emph{for every $Y\in \dom(\mmo')\cap 
  \Big(\bigcup_{N\in \cN}\overline{\newDDom N}\Big)$
  and $W\in \mmo' (Y)$, we have $W\in \hat\fF (Y).$}
  \smallskip

  Let $Y$ and $W$ be as above and let $X \in \dom(\hat{\fF}_\infty)$. We can find some $M,L \in \cN$ such that $Y \in \dom(\mmo') \cap \overline{\newDDom M}$ and $X \in \dom(\maxim \fF L)$. In particular $Y \in \dom(\mmo') \cap \overline{\newDDom N}$ and $X \in \dom(\maxim \fF N)$ for every $N \in \cN$ such that $ML \mid N$ (cf.~Corollary \ref{cor:propfcirc} and Lemma \ref{le:trivial}). By \eqref{eq:43} we have
  \begin{equation}
    \label{eq:44}
         \langle X-Y,\hfFcirc (X)-\Pi_N (W)\rangle_{\cH}\le \lambda|X-Y|_{\cH}^2
    \quad
    \text{for every $N \in \cN$ such that $ML \mid N$}.
  \end{equation}
  Passing to the limit as $N\to+\infty$ in $\cN$ and using \eqref{eq:182bis} we deduce that 
  $(Y,W)\in \hat \fF.$ 

  \smallskip

   \noindent\textbf{Claim 2.}
  \emph{$\dom(\mmo')\cap 
  \Big(\bigcup_{N\in \cN}\overline{\newDDom N}\Big)=
  \dom(\mmo')\cap \cH_\infty$.}
  \smallskip
  
  It is sufficient to prove that 
  $\dom(\mmo')\cap \overline{\newDDom N}=
  \dom(\mmo')\cap \cH_N$ for every $N\in \cN$ 
  and since 
  $\overline{\newDDom N}\subset \cH_N$ 
  it is sufficient to prove the inclusion 
  \begin{equation}
  \label{eq:closure-domains}
      \dom (\mmo')\cap \cH_N\subset \overline{\newDDom N}.
  \end{equation}
  We first show that 
  \begin{equation}
  \label{eq:unacosaallavolta}
      \overline{\dom(\hat\fF)}\cap \cH_N \subset  \overline{\newDDom N}.
  \end{equation}
  Indeed, by Proposition \ref{prop:eulerstep} and Corollary \ref{cor:help-me}, for every 
  $X\in \overline{\dom(\hat\fF)}\cap \cH_N$ and $\tau>0$, 
  $\jJ_\tau (X)$ belongs to $\dom(\maxim\fF N)\subset \overline{\newDDom N}$:  passing to the limit as $\tau\downarrow0$, since $X \in \overline{\dom(\hat \fF)}$, we conclude that $X$
  belongs to $\overline{\newDDom N}$ as well, thus proving \eqref{eq:unacosaallavolta}. Since $\dom(\mmo')\subset \overline{\newDDom \infty}
  =\overline{\dom(\hat\fF)}$, by \eqref{eq:unacosaallavolta}, we get  
  $\dom (\mmo')\cap \cH_N\subset 
    \overline{\dom(\hat\fF)}\cap \cH_N\subset 
    \overline{\newDDom N}$, which shows \eqref{eq:closure-domains}.
  
  \smallskip
  
  \noindent\textbf{Claim 3.}
\emph{$\mmo'\subset \hat \fF.$}
\smallskip

  Setting $\mmo'_0:=\mmo'\cap (\cH_\infty\times \cH)$,
  Claims 1 and 2 yield
  $\mmo'_0\subset \hat\fF $.
  On the other hand, the maximal $\lambda$-dissipativity and the law invariance of $\mmo'$ show (cf.~Theorem \ref{thm:invTOlawinv}) that
  $\cH_\infty$ is invariant under the action of the resolvent of $\mmo'$;
  since $\cH_\infty$ is also dense in $\cH$, we can apply 
  \eqref{eq:152} of Lemma \ref{le:easier} obtaining that $\mmo'$
  coincides with the strong closure of $\mmo'_0$ in $\cH \times \cH$ which is also contained
  in $\hat\fF$, since $\hat \fF$ is maximal $\lambda$-dissipative.
 
\end{proof}

\begin{proof}[Proof of Theorem \ref{thm:demi-case}]
Let us first check that $\frF \subset \hat \frF_\infty$.
It is sufficient to prove that if $\mu\in \core_M$ and $M\mid N$, $M,N\in \cN$, 
then every element $\Phi=(\ii_\X, \ff)_\sharp\mu\in \frF[\mu]$ 
belongs to $\hat{\frF}_N[\mu]$. 
Adopting a Lagrangian viewpoint (thanks to Theorem \ref{prop:final}), if $X\in \newODDom M$
we want to show that $V=\ff\circ X$ belongs to $\maxim \fF N (X)$.
This follows easily from 
the fact that $\newODDom  M\subset \overline{\newDDom N}$, 
the $\lambda$-dissipativity of $\frF$
and Proposition \ref{prop:allultimomomento}.
Since $\hat{\frF}_\infty$ is totally $\lambda$-dissipative, the inclusion $\frF \subset \hat \frF_\infty$ shows that $\frF$ is totally $\lambda$-dissipative and $\hat{\frF}_\infty$ is a totally $\lambda$-dissipative extension of $\frF$.
By construction, $\hat{\frF}$ is a maximal totally $\lambda$-dissipative extension of $\frF$ and its uniqueness follows as a particular case of Theorem \ref{thm:total-case}.
The characterization in \eqref{eq:32tris} follows by definition of $\hat{\frF}_\infty$ and Proposition \ref{prop:allultimomomento}.
Let us now check the second statement, under the assumptions that $\frF$ is also single-valued and demicontinuous in $\core_N$. By Corollary \ref{cor:propfcirc}, we know that, on each $\newODDom N$, the minimal selection $\hfFcirc$ is a subset of  $\maxim\fF N$ and therefore, by Corollary \ref{cor:dim-demi-case1}, $\hfFcirc (X)=\fF (X)$ for every $X\in \newODDom \infty$.
 
\end{proof}

\subsection{Examples and applications}
\label{subsec:ex}

This subsection is devoted to several examples to which the developed theory applies. In particular, in the following examples, we provide some \MPVF to which Theorem \ref{thm:main-discrete} and \ref{thm:perfavorelultimo} apply. More specifically, for these examples we have existence of $\lambda$-\EVI solutions without the boundedness assumptions required in our previous work \cite{CSS}; we also have and a fine description of the solutions coming from the Lagrangian perspective.

We can now fully justify the example given in the Introduction.
\begin{example}\label{ex:introo}
Assume that $\dim \X\ge2$ and that $\frF$ is a $\lambda$-dissipative 
single-valued deterministic \PVF
induced by a map $\ff:\Sp{\X,\core}\to \X$, where $\core$ is a core as in Definition \ref{ass:core}.
This means that 
$\ff$ induces a vector field $\ff^N:\mathsf C_N\to \X^N$
defined on $\mathsf C_N:=\mathscr I^{-1}_N(\newODDom N)$ (where $\newODDom N$ is as in \eqref{eq:138}),
which is an open subset of $\X^N$, whose vectors
have distinct coordinates:
for every $\xx=(x_1, \dots, x_N)\in \mathsf C_N$ we have
$$\ff^N(\xx):=(\ff(x_n,\iota \circ \mathscr I_N(\xx)))_{n=1, \dots, N}.$$
Clearly $\ff^N$ is invariant with respect to permutations, in the sense that $\ff^N(\sigma \xx)= \sigma \ff^N(\xx)$, for every $\xx \in \mathsf C_N$ and every $\sigma \in \Sym{I_N}$.
If $\frF$ is demicontinuous in $\core_N$,
$\ff^N$ is demicontinuous (i.e.~strongly-weakly continuous) 
in $\mathsf C_N$.

 Theorem \ref{thm:perfavorelultimo}  shows that 
starting from 
$\mu^N=\frac 1N\sum_{n=1}^N\delta_{\bar{x}^N_n}\in \core_N$
the evolution $\mu^N_t=S_t(\mu^N)$, at least for a short time  when no collisions occur,
has the form
\begin{displaymath}
    \mu^N_t=\frac 1N\sum_{n=1}^N\delta_{x^N_n(t)}
    \quad\text{where}\quad
    \dot x^N_n(t)=\ff^N_n(\xx^N(t)), \quad \xx^N(0)=\bar{\xx}^N:=(\bar{x}_1^N, \dots, \bar{x}_N^N) .
\end{displaymath}
Such an evolution admits a unique extension (see Theorem \ref{thm:demi-case}) which in fact corresponds to the unique 
maximal (and invariant by permutation) extension
of the $\lambda$-dissipative vector field
$\ff^N$ to $\overline{\mathsf C_N}$.
It is then possible to follow the path of
each single particle by using the Lagrangian flow
starting from $\mu_0\in\core$ and defining $N$ locally Lipschitz curves  $x^N_n\in\Lip_{loc}([0,T];\X)$, 
$x^N_n(t)=\ss_t(x^N_n,\mu_0)$.
If now $\mu^N\to \mu_0$ as $N\to+\infty$
with a uniform control of the initial velocities, 
i.e.~
$$\sup_N \frac 1N\sum_{n=1}^N |\ff^N_n(\bar{\xx}^N)|^2 <+\infty,$$
then 
the measures 
$\mu^N_t$ will converge to $\mu_t=S_t(\mu_0)$
for every $t\ge0$ in $\prob_2(\X)$ and,
by Theorem \ref{thm:stability},
the measures carried on the discrete trajectories
$\frac 1N \sum_{n=1}^N \delta_{x^N_n}
\in \prob_2(\rmC([0,T];\X))$
will converge to $\mathrm s_\sharp \mu_0$
where $\mathrm s$ is the Lagrangian map
starting from $\mu_0$ as in \eqref{eq:pedante}.
\end{example}

\begin{example}[A kinetic model of collective motion]\label{ex:collmo}
    Consider 
    in the phase space $\X:=\R^d\times \R^d$
    the evolution 
    of $N$-particles characterized
    by position-velocity coordinates 
    $(x_n,v_n)\in \X$, $n=1,\dots, N$, satisfying
    the system
    \cite{DCBC06,CCR11}
    \begin{equation}
        \label{eq:kinetic}
        \left\{
        \begin{aligned}
            \dot x_n(t)&=v_n(t),\\
            \dot v_n(t)&=(\alpha-\beta|v_n(t)|^2)v_n(t)
            +\frac 1N \sum_{m=1}^N 
            \bm h(x_n(t)-x_m(t)),
        \end{aligned}
        \right.
    \end{equation}
    with $\alpha\ge 0,\beta> 0$
    and $\bm h:\R^d\to\R^d$ a given Lipschitz vector field.
    For a given $\mu\in \prob_2(\X)$ 
    we can consider the 
    lower semicontinuous and 
     $(-\alpha)$-totally convex functional 
    $\phi:\prob_2(\X)\to (-\infty,+\infty]$
    \begin{equation}
        \label{eq:v-potential}
        \phi(\mu):= 
        \int_{\X}
        \Big(\frac \beta4 |v|^4
        -\frac \alpha2 |v|^2\Big)\,\d\mu(x,v),
    \end{equation}
    whose proper domain is 
    $\dom(\phi):=
    \displaystyle 
    \Big\{\mu\in \prob_2(\X):\int_{\X} |v|^4\,\d\mu(x,v)<+\infty\Big\}$.
    The minimal selection of 
    $-\btpartial\phi(\mu)$ 
    is given by $(\ii_\X, \gg)_\sharp \mu$ with  
    \begin{equation}
        \label{eq:minimal-kinetic}
        \gg(x,v;\mu):=
        \Big(0,(\alpha -\beta|v|^2) v\Big)
    \end{equation}
    with proper domain $\dom(\btpartial\phi)=
    \displaystyle\Big\{\mu\in \prob_2(\X):\int_\X |v|^6\,\d\mu(x,v)<+\infty\Big\}.$
       
    We can also define the 
    deterministic \PVF induced as in \eqref{eq:f-induces-F} by
    $\bm h:\mathcal S(\X)\to \X$ 
    \begin{equation}
        \label{eq:kinetic-field}
        \bm h(x,v;\mu):=
        \Big(v,\int_{\X}
        \bm h(x-y)\,\d\mu(y,w)\Big).
    \end{equation}
    It is easy to check that 
    a collection of $N$ particles
    $(x_n(t),v_n(t))$ satisfies
    \eqref{eq:kinetic} if and only if 
    the measure 
    $\mu_t=\frac1N\sum_{n=1}^N \delta_{(x_n(t),v_n(t))}$
    is a Lagrangian solution
    of the system 
    \begin{equation*}
    (\dot{x}_n(t),\dot{x}_n(t))=\ff(x_n(t),v_n(t),\mu_t)\quad
    \text{a.e.~in }(0,+\infty)
\end{equation*}
    associated with the 
    deterministic \PVF
    \begin{equation}
        \label{eq:sum}
        \ff(x,v;\mu):=\gg(x,v;\mu)+
        \bm h(x,v;\mu),\quad\mu\in \dom(\btpartial\phi).
    \end{equation}
    Since the Lagrangian representation 
    of $\ff$ corresponds to the sum
    of a maximal $\alpha$-dissipative 
    operator 
    (the subdifferential of 
    $\psi=\phi\circ\iota$)
    and a Lipschitz operator,
    it is maximal $\alpha$-dissipative
    thanks to \cite[Lemma 2.4, Chapter~II]{BrezisFR},
    so that the deterministic \PVF associated
    with \eqref{eq:sum}
    is totally $\alpha$-dissipative
    and we can apply all the results of 
    Section \ref{sec:totdissMPVF-flow}.
\end{example}
In the following we give an example of totally dissipative \MPVF $\frF$ having a core contained in its domain.
 \begin{example}
Let $W:\X \to (-\infty, + \infty]$ be a proper, lower semicontinuous, even and convex function and denote by $\dom(W)$ its proper domain. Let $\Bb\subset\X\times\X$ be a maximal dissipative set (see Appendix \ref{sec:brezis}) and  suppose that $0 \in \intt{\dom(W)}$ and $\intt{\dom(\Bb)}\ne \emptyset$. Possible examples of $W$ and $\Bb$ are given by the indicator of a convex set in $\X$ (or a function diverging at the boundary of a convex set) and the gradient of a convex function in $\X$ (or its sum with a linear and antisymmetric function) respectively. 
Let $\uu_W$ be an odd single-valued  measurable selection of $\partial W$ and let $\vv_{\Bb}$ be an arbitrary single-valued selection of $\Bb$. We define the set
\[E:=\left\{\mu\in\prob_c(\X)\,:\,\supp\mu \subset \intt{\dom(\Bb)}, \, \supp \mu - \supp \mu \subset \intt{\dom(W)}\right\},\]
 where $\prob_c(\X)$ denotes the subset of measures in $\prob(\X)$ with compact support. 
We define the single-valued probability vector field $\frF$ as follows:
\[\frF[\mu]:=\begin{cases}
\left(\ii_\X,-(\uu_W \ast \mu)+\vv_{\Bb}\right)_\sharp\mu,&\textrm{if }\mu\in E\\
\emptyset&\textrm{otherwise}
\end{cases},\qquad \mu\in\prob_2(\X).\]
Notice that the convolution between $\uu_W$ and $\mu$ is well posed since  the support of $\mu$ is compact and by definition of $E$; moreover  $(\uu_W \ast \mu)+\vv_{\Bb}\in L^2(\X,\mu;\X)$ if $\mu\in E$; indeed $\vv_{\Bb}$ and $\uu_W$ are both locally bounded in the interior of the respective domains (see Corollary \ref{cor:phi} and Theorem \ref{thm:brezis2}(3) and recall that $\intt{\dom(\partial W)}=\intt{\dom(W)}$), so that $\dom(\frF)=E$ and $\frF\subset\prob_2(\TX)$. It is not difficult to check that $\frF$ is totally  dissipative: for every $\ggamma \in \Gamma(\mu, \nu)$ and every $\mu, \nu \in E$,
\begin{align*}
&\frac{1}{2} \int_{\X^2} W(y_1-y_2) \de (\nu \otimes \nu)(y_1,y_2) - \frac{1}{2} \int_{\X^2} W(x_1-x_2) \de (\mu \otimes \mu)(x_1,x_2)\\
    &\ge \frac{1}{2} \int_{\X^4} \la \uu_W(x_1-x_2), (y_1-y_2) -(x_1-x_2) \ra \de (\ggamma \otimes \ggamma)(x_1,y_1,x_2,y_2) \\
    &= \frac{1}{2} \int_{\X^3} \la \uu_W(x_1-x_2),y_1-x_1 \ra \de \mu(x_2)\de \ggamma(x_1,y_1)  \\ 
    &\quad +\frac{1}{2} \int_{\X^3} \la \uu_W(x_2-x_1), y_2-x_2 \ra \de \mu(x_1) \de \ggamma(x_2,y_2)\\
    &= \int_{\X^2}\la (\uu_W \ast \mu)(x),y-x \ra \de \ggamma(x,y),
\end{align*}
where we have used Fubini's theorem and the fact that $\uu_W$ is odd. This immediately gives that
\begin{equation}\label{eq:tobeused}
\int_{\X^2}\la (-\uu_W \ast \mu)(x)+ (\uu_W \ast \nu)(y),x-y \ra \de \ggamma(x,y) \le 0.
\end{equation}
Thus
\begin{align*}
& \int_{\X^2} \la - (\uu_W \ast \mu)(x)+ \vv_{\Bb}(x)+(\uu_W \ast \nu)(y)- \vv_{\Bb}(y), x-y \ra \de \ggamma(x,y)  \\
&= \int_{\X^2} \la (-\uu_W \ast \mu)(x)+ (\uu_W \ast \nu)(y),x-y \ra \de \ggamma(x,y) \\
&\quad + \int_{\X^2} \la \vv_{\Bb}(x)-\vv_{\Bb}(y), x-y \ra \de \ggamma(x,y) \\
& \le 0,
\end{align*}
where we have used \eqref{eq:tobeused} and the dissipativity of $\Bb$.\\
Given any unbounded directed subset $\cN \subset \N$, we can define $\tcore$ as 
\[\tcore:=\left\{\mu\in\prob_{f,\cN}(\X)\,:\,\supp\mu \subset \intt{\dom(\Bb)}, \, \supp \mu - \supp \mu \subset \intt{\dom(W)}\right\}.\]

Trivially, since $\tcore\subset\prob_c(\X)$, then
$\tcore\subset\dom(\frF)\cap\prob_{f,\cN}(\X)$. Moreover, for any
$N\in\cN$, the set $\tcore\cap\prob_{f,N}(\X)$ is open in
$\prob_{f,N}(\X)$  and convex along couplings in $\prob_{f,N}(\X
\times \X)$, since both $\intt{\dom(\partial W)}$ and
$\intt{\dom(\Bb)}$ are convex sets (see Corollary \ref{cor:phi} and
Theorem \ref{thm:brezis2}(3)). Thus, setting $\core:=\tcore \cap
\prob_{\#\cN}(\X)$ and recalling
Lemma \ref{le:equivalent}, then Definition \ref{ass:core} is satisfied for $\core$. 
\end{example}

\begin{example}
    \label{ex:ultimo}
     Assume $\dim \X\ge2$.
    Let $\mathsf U\subset \X$ be an open convex
    subset of $\X$ containing $0$ 
    (e.g.~an open ball of radius $r>0$
    centered at $0$)
    and let $\mathrm A$ be the set 
    of all measures $\mu\in \prob_2(\X)$ such that 
    $$\supp\mu-\int_\X x\,\d\mu(x)\subset \mathsf U.$$
    In the case $\mathsf U$ is an open ball, $\mathrm A$
    imposes the constraint that 
    the support of $\mu$ is contained in the ball with same
    radius as $\mathsf U$ centered at the barycenter of $\mu$.
    We can then consider the 
    set $\tcore:= \bigcup_{N\in \N} (\mathrm A\cap \prob_{f,N})$
    and inducing
    a corresponding core $\core$ as in Lemma
    \ref{le:equivalent}.

    Let 
     $\ff:\Sp \X\to \X$
     be a map as in Theorem \ref{thm:demi-total} 
     inducing a $\lambda$-dissipative demicontinuous
    \PVF $\frF$ by \eqref{eq:f-induces-F}. 

    The restriction of $\ff$ to $\Sp{\X,\core}$
    induces a unique maximal totally $\lambda$-dissipative
    \MPVF $\frF'$,
    whose evolution corresponds to the evolution driven by $\ff$ and constrained by $\mathrm A$.
\end{example}

We conclude with an example 
of two probability vector fields $\frF,\frG$ generating the 
same evolution semigroup.
The assumptions could be considerably refined: we just discuss a simple case, 
for ease of exposition.
\begin{example}[Superposition of \PVF{s}]
    \label{ex:SGD}
    Let $(\Theta,\mathcal T,\mm)$
    be a probability space and
    let 
    $\ff:\X\times \Theta\to \X$
    be a $\mathcal B(\X)\otimes \mathcal T$-measurable map satisfying 
    the properties
    \begin{gather*}
     \ff(\cdot,\theta):\X\to\X \quad\text{is $\lambda$-dissipative and demicontinuous for 
        $\mm$-a.e.~$\theta\in \Theta,$}        \\
  \text{there exists $A>0$ such that }
        |\ff(x,\theta)|\le A(1+|x|^2)
        \text{ for every }x\in \X
        \text{ and $\mm$-a.e.~$\theta\in \Theta$.}
    \end{gather*}
    We denote by 
    $\pi^\X:\X\times \Theta\to \X$
    the projection on the first component,
    $\pi^\X(x,\theta):=x$,
    and we set
    \begin{equation}
        \label{eq:superposition}
        \begin{aligned}
        \frF[\mu]:={}&
        (\pi^\X,\ff)_\sharp (\mu\otimes 
        \mm),\quad\mu\in\prob_2(\X).
        \end{aligned}
    \end{equation}
    Clearly
    \begin{equation*}
        |\frF[\mu]|_2^2=
        \int_\X\Big(
        \int_\Theta |\ff(x,\theta)|^2\,\d\mm(\theta)\Big)\,\d\mu(x)
        \le A(1+\mathsf m_2^2(\mu))<+\infty
    \end{equation*}
    so that $\dom(\frF)=\prob_2(\X)$.
    Using the plan $\Sigma:= (\sfx^0, \ff(\sfx^0, \ii_{\Theta}), \sfx^1, \ff(\sfx^1, \ii_{\Theta}))_\sharp (\mmu \otimes \mm)$ where $\mmu \in \Gamma_o(\mu_0, \mu_1)$, we see that 
    $\frF$ is $\lambda$-dissipative.
    Its barycentric selection (cf.~\eqref{eq:brimpvf})
    $\frG:=\bri \frF$ is a deterministic
    \PVF induced by the 
    demicontinuous map
    \begin{equation}
        \gg(x):=\int_\Theta \ff(x,\theta)\,\d\mm(\theta).
    \end{equation}
    $\frG$ is a maximal totally $\lambda$-dissipative \PVF (cf. Theorem \ref{thm:trivial-but-useful-to-fix}).
    Whenever $\ff(\cdot,\theta)$ is not constant 
    in a set $\Theta_0\subset \Theta$ of positive $\mm$-measure 
    (and therefore $\frF\neq \frG$), then
    $\frF$ cannot be totally $\lambda$-dissipative since this would lead to a contradiction with the maximality of its barycentric projection $\frG$. Applying
    \cite[Corollary 5.23, Theorem 5.27]{CSS},
    we know that $\frF$ generates
    a unique $\lambda$-EVI flow
    whose trajectories 
    have the barycentric property, and therefore coincide with the 
    Lagrangian solutions 
    of the flow generated by $\frG$, i.e.~$\frF$ and
    $\frG$ generate the same evolution semigroup.  
    It would not be difficult to check that 
    $\frG$ coincides with the 
    operator $\hat \frF$
    of Theorem \ref{thm:main-discrete}
    constructed from the restriction of $\frF$
    to the core of 
    discrete measures.
\end{example}
\section{Geodesically convex functionals with a core dense in energy are totally convex}\label{sec:jko}
In this section, we provide sufficient conditions for the total $(-\lambda)$-convexity property (cf.~Section \ref{sec:jkobis}), $\lambda\in\R$, of a functional $\phi:\prob_2(\X) \to (-\infty, + \infty]$ which is proper, lower semicontinuous and  geodesically $(-\lambda)$-convex (see \cite[Definition 9.1.1]{ags}) with proper domain $\dom(\phi):=\{\mu\in\prob_2(\X)\,:\,\phi(\mu)<+\infty\}$, where we assume $\dim(\X)\ge2$. This ensures the applicability of the results of Section \ref{sec:jkobis}, in particular Theorem \ref{prop:GFiotabis}.

Recall that
$\phi:\prob_2(\X)\to(-\infty,+\infty]$ is geodesically
$(-\lambda)$-convex if for any $\mu_0,\mu_1$ in $\dom(\phi)$ there exists $\mmu\in\Gamma_o(\mu_0,\mu_1)$ such that
\[\phi(\mu_t)\le (1-t)\phi(\mu_0)+t\phi(\mu_1)+\frac{\lambda}{2}t(1-t) W_2^2(\mu_0,\mu_1)\qquad\forall \, t\in[0,1],\]
where $\mu_t:=\sfx^t_{\sharp}\mmu$. 
\begin{theorem}
[Geodesic convexity vs total convexity]\label{prop:ab} 
  Assume that
  $\dim \X\ge 2$, $\phi:\prob_2(\X)\to (-\infty,+\infty]$
  is a proper l.s.c.~geodesically $(-\lambda)$-convex functional such that
  $\dom(\phi)$ contains
  a $\cN$-core $\core$ (see Definition \ref{ass:core}) which is \emph{dense in energy,} meaning that for every $\mu \in \dom(\phi)$ there exists $(\mu_n)_n \subset \core$ such that
    \[ \mu_n \to \mu \quad \text{ and } \quad \phi(\mu_n) \to \phi(\mu).\]
Then $\phi$ is totally $(-\lambda)$-convex (cf.~Section \ref{sec:jkobis}).
\end{theorem}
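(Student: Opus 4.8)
The strategy is to reduce the total $(-\lambda)$-convexity of $\phi$ to the \emph{convexity} of its Lagrangian lift $\psi := \phi\circ\iota$ on the Hilbert space $\cH = L^2(\Omega,\cB,\P;\X)$, and then prove that latter convexity holds by first establishing it on the dense subspace $\cH_\infty = \bigcup_{N\in\cN}\cH_N$ of ``finitely-valued'' random variables, exploiting the $\cN$-core structure. Recall from the discussion right after Definition (Total $(-\lambda)$-convexity) that $\phi$ is totally $(-\lambda)$-convex iff $\phi^\lambda := \phi + \tfrac{\lambda}{2}\mathsf m_2^2$ is totally convex, so as usual it suffices to treat $\lambda = 0$; thus assume $\phi$ is proper, l.s.c.~and geodesically convex with a dense-in-energy $\cN$-core $\core\subset\dom(\phi)$. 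It is elementary (and analogous to the computation $\phi(\sfx^t_\sharp\mmu)\le(1-t)\phi(\sfx^0_\sharp\mmu)+t\phi(\sfx^1_\sharp\mmu)$ being rewritten through a pair $(X,Y)\in\cH^2$ with $(X,Y)_\sharp\P = \mmu$) that total convexity of $\phi$ is \emph{equivalent} to convexity of $\psi$ on all of $\cH$: any coupling $\mmu\in\prob_2(\X\times\X)$ can be realized as $(X,Y)_\sharp\P$ for some $X,Y\in\cH$, and then $\sfx^t_\sharp\mmu = ((1-t)X+tY)_\sharp\P = \iota((1-t)X+tY)$, so $\phi(\sfx^t_\sharp\mmu)\le(1-t)\phi(\sfx^0_\sharp\mmu)+t\phi(\sfx^1_\sharp\mmu)$ for all $\mmu,t$ reads exactly as $\psi((1-t)X+tY)\le(1-t)\psi(X)+t\psi(Y)$ for all $X,Y\in\cH$, $t\in[0,1]$.

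So the task is: \emph{show $\psi$ is convex on $\cH$.} First I would fix $N\in\cN$ and work inside $\cH_N\cong\X^N$. Using Lemma \ref{le:trivial} (applicable since $\core$ is an $\cN$-core, hence satisfies $(d)$ of Lemma \ref{le:equivalent}), the set $\newODDom N = \{X\in\cH_N : \iota_X\in\core_N\}$ is relatively open in $\cH_N$, dense in the open convex set $\newDDom N = \conv{\newODDom N}$, and $\overline{\newODDom N} = \overline{\newDDom N}$. On $\newODDom N$ we know $\psi$ is finite (since $\core\subset\dom(\phi)$). The crucial geometric input is Proposition \ref{prop:perturbation} together with Theorem \ref{thm:easy-but-not-obvious}: given $X_0,X_1\in\newODDom N$, a small perturbation $X_1(s)$ of $X_1$ (with $X_1(s)\to X_1$ as $s\downarrow 0$, $X_1(s)\in\newODDom N$) makes the segment $[X_0,X_1(s)]$ entirely collisionless, so the coupling $\iotaT_{X_0,X_1(s)}\in\prob_{\#N}(\X\times\X)$ is collisionless and, by Theorem \ref{thm:easy-but-not-obvious}, its linear interpolation decomposes into finitely many optimal (geodesic) pieces $\mu|_{[t_{k-1},t_k]}$, each of which is a genuine Wasserstein geodesic between measures in $\core_N$. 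On each such piece, geodesic convexity of $\phi$ gives the convexity inequality; since the map $t\mapsto\phi(\mu_t)$ then satisfies the convexity inequality on each subinterval $[t_{k-1},t_k]$ and is finite at the breakpoints, a standard gluing argument (convex on consecutive intervals with matching endpoint values $\Rightarrow$ convex on the union) yields convexity of $t\mapsto\psi((1-t)X_0+tX_1(s))$ on $[0,1]$. Letting $s\downarrow 0$ and using lower semicontinuity of $\psi$ (inherited from that of $\phi$ via $W_2$-continuity of $\iota$) on the left-hand side together with continuity of $\psi(X_1(s))\to\psi(X_1)$ — here I would invoke the dense-in-energy hypothesis, or more simply upper semicontinuity along the specific approximating sequence — gives $\psi((1-t)X_0+tX_1)\le(1-t)\psi(X_0)+t\psi(X_1)$ for all $X_0,X_1\in\newODDom N$, $t\in[0,1]$. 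By density of $\newODDom N$ in $\newDDom N$ and lower semicontinuity, $\psi$ is convex on the open convex set $\newDDom N$, hence (again by l.s.c.) convex on $\overline{\newDDom N}$.

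Next I would pass from the sections $\overline{\newDDom N}$ to all of $\cH_\infty$ and then to $\cH$. By Lemma \ref{le:trivial}(4),(6), for $M\mid N$ we have $\overline{\newDDom M} = \overline{\newDDom N}\cap\cH_M$ and $\overline{\newDDom N} = \mathcal E_\infty\cap\cH_N$ with $\mathcal E_\infty = \overline{\newODDom\infty}$; since $\cN$ is directed, $\psi$ is convex on $\newDDom\infty = \bigcup_N\newDDom N$ and hence on $\mathcal E_\infty = \overline{\newDDom\infty}$ (consistency of the convexity inequality on the nested sets, plus l.s.c.). Now $\mathcal E_\infty$ is a closed, convex, \emph{law-invariant} subset of $\cH$ whose $\iota$-image is $\overline{\core}$, and by hypothesis $\core$ — hence $\overline{\core}$ — is dense in $(\dom(\phi),$ energy$)$; in particular $\overline{\dom(\phi)}\subset\overline{\core}$ so that $\iota^{-1}(\overline{\dom(\phi)})\subset\mathcal E_\infty$, while outside $\overline{\dom(\phi)}$ we have $\phi\equiv+\infty$ hence $\psi\equiv+\infty$. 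To upgrade convexity from $\mathcal E_\infty$ to all of $\cH$, take $X_0,X_1\in\cH$ with $\psi(X_i)<\infty$, so $\iota_{X_i}\in\dom(\phi)$; pick approximating sequences $\mu^n_i\in\core$ with $\mu^n_i\to\iota_{X_i}$ and $\phi(\mu^n_i)\to\phi(\iota_{X_i})$, lift them (using the last part of Theorem \ref{thm:gpfinal}) to $X^n_i\in\cH_\infty$ with $X^n_i\to X_i$ in $\cH$ and $\iota(X^n_i)=\mu^n_i$; apply the established convexity on $\mathcal E_\infty$ to $X^n_0,X^n_1$, then pass to the limit using $W_2$-continuity of $\iota$, lower semicontinuity of $\psi$ on the interpolant, and the energy convergence $\psi(X^n_i)=\phi(\mu^n_i)\to\phi(\iota_{X_i})=\psi(X_i)$ on the right. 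This gives convexity of $\psi$ on $\cH$, hence total convexity of $\phi$, and the $\lambda\neq 0$ case follows by the $\phi^\lambda$ reduction. The final sentence of the statement (continuous everywhere-finite geodesically $(-\lambda)$-convex $\phi$ is totally $(-\lambda)$-convex) is then immediate: take $\mathsf U = \X$, $\core = \prob_{\#\cN}(\X)$, which is an $\cN$-core dense in energy by continuity of $\phi$.

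\textbf{Main obstacle.} The delicate point is the passage to the limit $s\downarrow 0$ in the perturbation argument at the level of a single section $\cH_N$: one needs the convexity inequality, proved for the perturbed segment $[X_0,X_1(s)]$, to survive in the limit. Lower semicontinuity handles the left-hand (interpolant) term, but one must ensure $\psi(X_1(s))\to\psi(X_1)$, i.e.~that the \emph{energy} does not jump up in the limit along the perturbation. This is exactly where the dense-in-energy hypothesis is essential and must be used carefully — a priori the perturbation $X_1(s)$ produced by Proposition \ref{prop:perturbation} is chosen only for the geometric (collisionless) property, with no control on $\phi(\iota_{X_1(s)})$; one must either arrange the perturbation to stay inside a sublevel set of $\phi$ near $X_1$ (possible because $\newODDom N$ is relatively open and $\phi$ restricted to $\core_N$ has the density-in-energy approximation property available at $\iota_{X_1}$), or run the whole argument with $X_1$ itself replaced by a dense-in-energy approximant from $\core$ and only then perturb. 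Getting this interplay between the open-perturbation lemma and the energy approximation right is the crux; everything else is the by-now-standard machinery of Lemma \ref{le:trivial}, Theorem \ref{thm:easy-but-not-obvious}, and l.s.c.~closure arguments.
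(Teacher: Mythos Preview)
Your overall architecture matches the paper's: lift to $\psi=\phi\circ\iota$, establish convexity first on $\newODDom N$ via perturbation and the piecewise-geodesic structure of Theorem~\ref{thm:easy-but-not-obvious}, then extend to $\overline{\newDDom N}$, then to $\cH$ via density in energy. But two steps in the execution have genuine gaps.

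First, the gluing ``convex on consecutive intervals with matching endpoint values $\Rightarrow$ convex on the union'' is false: the tent $t\mapsto \tfrac12-|t-\tfrac12|$ is affine on each half of $[0,1]$ but strictly concave globally. What is missing is \emph{continuity}: a continuous, locally convex real function on an interval is convex. The paper secures this up front by observing (via Lemma~\ref{le:quantitative}) that for $X',X''$ in a sufficiently small ball inside $\newODDom N$ the coupling $(X',X'')_\sharp\P$ is the unique element of $\Gamma_o(\iota_{X'},\iota_{X''})$, so geodesic convexity forces $\psi$ to be locally convex on the open set $\newODDom N$; being also l.s.c.\ and finite there, $\psi$ is continuous on $\newODDom N$. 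This continuity incidentally dissolves your ``main obstacle'': since $X_1(s)\to X_1$ within $\newODDom N$, one has $\psi(X_1(s))\to\psi(X_1)$ automatically, with no need for the dense-in-energy hypothesis at that stage.

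Second, and more substantially, the extension ``by density of $\newODDom N$ in $\newDDom N$ and lower semicontinuity, $\psi$ is convex on $\newDDom N$'' does not work. If you approximate $X_i\in\newDDom N$ by $X_i^n\in\newODDom N$, l.s.c.\ controls the interpolant, but you need $\limsup_n\psi(X_i^n)\le\psi(X_i)$ on the right-hand side, and for $X_i$ with collisions ($\iota_{X_i}\notin\core_N$) there is no such control---indeed $\phi(\iota_{X_i})$ may well be $+\infty$. The paper devotes three steps to this: it proves a Jensen inequality for finite convex combinations of points of $\newODDom N$ (via an open-mapping perturbation trick), forms the l.s.c.\ convex envelope $\bar\psi_N$ of $\psi|_{\newODDom N}$ on $\overline{\newDDom N}$, and then shows $\psi=\bar\psi_N$ by choosing, for each $X\in\overline{\newDDom N}$ with $\psi(X)<\infty$, a nearby $Y\in\newODDom N$ such that $\iotaT(X,Y)$ is the \emph{unique} element of $\Gamma_o(\iota_X,\iota_Y)$, and invoking geodesic convexity along that particular segment to obtain $\bar\psi_N(X)\le\liminf_{t\downarrow0}\psi((1-t)X+tY)\le\psi(X)$. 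This second, targeted use of the geodesic-convexity hypothesis---to control $\psi$ at collision points from inside $\newODDom N$---is the idea your sketch is missing. The dense-in-energy hypothesis enters only at the final extension to all of $\cH$, exactly as you describe.
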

\begin{proof}
  Notice that $\phi$ is geodesically (resp.~totally) $(-\lambda)$-convex
  if and only if $\phi_{\lambda}:=\phi +\frac{\lambda}{2}\sqm{\cdot}$
  is geodesically (resp.~totally) convex. Moreover the
  assumptions
  of the present Theorem
  hold for $\phi$ if and only if they hold for $\phi_\lambda$. We can
  thus prove the Theorem only in case $\lambda=0$.
We proceed in a
  few steps, keeping the notation of Section
  \ref{sec:contropL}.
  First of all, we introduce
  a standard Borel space  $(\Omega, \cB)$ endowed with a nonatomic probability measure $\P$
  as in Definition \ref{def:sbs}
  and let $\cH:= L^2(\Omega, \cB, \P;
  \X)$. We lift $\phi$ to the l.s.c.~functional
  $\psi:\cH\to (-\infty,+\infty]$
  defined
  as
\begin{equation}
  \label{eq:45}
  \psi(X):=\phi(\iota_X)\quad\text{for every }X\in \cH.
\end{equation}
\noindent\textbf{Claim 1.}
\emph{The restriction of
  $\psi$ to $\newODDom
  N$
  is continuous and locally convex.}
  \smallskip

By construction the function $\psi$ is finite and lower
semicontinuous in $\newODDom N$. It is also clear, recalling
Lemma \ref{le:quantitative}, that for every $X\in \newODDom N$
there is an open ball $\mathcal U$ of $\cH_N$ and centered at $X$ such that
$\mathcal U\subset \newODDom N$
and the restriction of $\psi$ to $\mathcal U$ is convex.
Since $\mathcal U$ is open,
it follows that $\psi$ is locally convex and continuous in
$\newODDom N$.

\smallskip

\noindent\textbf{Claim 2.}
\emph{For every $X_0,X_1 \in \newODDom N$ we have}
  \begin{equation}
  \psi((1-t)X_0+tX_1)
  \le (1-t)\psi(X_0)+t\psi(X_1).\label{eq:46}
\end{equation}

Let $X_0,X_1\in \newODDom N$; setting $A:=\supp(\iota_{X_0})$
and $B:=\supp(\iota_{X_1})$ we can apply Proposition
\ref{prop:perturbation} and use the fact that $\newODDom N$ is relatively open
to find $X_1'\in \newODDom N$ such that
$X_1(s):=(1-s)X_1+s X_1'\in \newODDom N$ for every $s\in [0,1]$
and $X_{s,t}:=(1-t)X_0+tX_1(s)$ belongs to $\mathcal{O}_N$
for every $t\in [0,1]$ and $s\in (0,1]$.
Since $\core_N$ is convex along collisionless couplings, we deduce
that $X_{s,t}\in \newODDom N$ for every $s,t\in (0,1)$
and $\psi(X_{s,t})\le (1-t)\psi(X_0)+t\psi(X_1(s))$.
Passing to the limit as $s\downarrow0$, using the the lower
semicontinuity
of $\psi$ and its continuity
in $\newODDom N$ we deduce
\eqref{eq:46}.

\smallskip

\noindent\textbf{Claim 3.}
\emph{Let $K \in \N$, $X_1,X_2,\cdots X_K\in \newODDom N$
  and $\beta_1,\cdots, \beta_K\ge0$ with $\sum_{k=1}^K\beta_k=1$ .
  For every $\eps>0$ there exist $X_k'\in \newODDom N$ with
  $|X_k-X_k'|<\eps$, $k=1,\cdots, K$, such that
  $\sum_{k=1}^K\beta_kX_k'\in \newODDom N$.}
  \smallskip

It is sufficient to observe that the map $S_K:\cH^K\to \cH$,
$S_K(X_1,\cdots,X_K):=\sum_{k=1}^K\beta_k X_k$ is linear, continuous,
and surjective,  in particular it is an open map.
If $X_1,X_2,\cdots X_K\in \newODDom N$ and
$\mathcal U_\eps$ is an open ball of radius $\eps$
around the corresponding vector in $\cH^K$ and contained in $(\newODDom N)^K$,
$S_K\big(\mathcal U_\eps)$ is open in
$\newDDom N=\conv{\newODDom N}$ so that its intersection with
the open and dense subset $\newODDom N$ (see Lemma \ref{le:trivial}(2)) is not empty.

\smallskip

\noindent\textbf{Claim 4.}
\emph{For every $K \in \N$, $X_1,X_2,\cdots X_K\in \newODDom N$
  and $\alpha_1,\cdots, \alpha_K\ge0$ with $\sum_{k=1}^K\alpha_k=1$
  we have}
\begin{equation}
  \psi\left(\sum_{k=1}^K \alpha_k X_k\right)\le \sum_{k=1}^K
  \alpha_k\psi(X_k).\label{eq:47}
\end{equation}
\smallskip

We argue by induction on the number $K$. By Claim 2 the statement is true if
$K=2$. Let us assume that it is true for $K\in \N$
and let us consider $X_k\in \newODDom N$, $1\le k\le K+1$ and
corresponding coefficients $\alpha_k$. It is not restrictive to assume
$0<\alpha_{K+1}<1$ and we set $\beta_k:=\alpha_k/(1-\alpha_{K+1})$,
$1\le k\le K$, 
so that $\beta_k\ge0$ and $\sum_{k=1}^K\beta_k=1$.

We can use Claim 3 and for every $\eps>0$
we can find $X_k'(\eps)\in \newODDom N$ with $|X_k'(\eps)-X_k|<\eps$
such that $X'(\eps):=\sum_{k=1}^K\beta_kX_k'(\eps)\in \newODDom N$.

Using Claim 2, we get
\begin{displaymath}
\psi\big((1-\alpha_{K+1})X'(\eps)+\alpha_{K+1}X_{K+1}\big)
\le (1-\alpha_{K+1})\psi\big(X'(\eps)\big)+\alpha_{K+1}\psi\big(X_{K+1}\big).
\end{displaymath}
Using the induction step we also get
\begin{displaymath}
  (1-\alpha_{K+1})\psi\big(X'(\eps)\big)
  \le \sum_{k=1}^K \alpha_k \psi\big(X_k'(\eps)\big).
\end{displaymath}
Combining the two inequalities and passing to the limit as
$\eps\downarrow0$
using the lower semicontinuity of $\psi$ and its continuity
in $\newODDom N$ we conclude.

\smallskip

\noindent\textbf{Claim 5.}
\emph{$\psi$ is convex in $\overline{\newDDom N}$.}
\smallskip

Let us consider 
the convex envelope of the restriction of $\psi$ to $\newDDom N=\conv{\newODDom N}$ 
defined by 
\begin{equation*}
    \psi_N(X):=\inf\Big\{\sum_{k=1}^K\alpha_k\psi(X_k):
    X_k\in \newODDom N,\ \alpha_k\ge 0,\ \sum_{k=1}^K\alpha_k=1,\ 
    \sum_{k=1}^K\alpha_k X_k=X,\ K\in \N\Big\}, \quad X \in \newDDom N.
\end{equation*}
By the Claim 4, $\psi(X)\le \psi_N(X)$ for every $X\in \newDDom N.$
We then consider the lower semicontinuous envelope
$\bar\psi_N:\overline{\newDDom N}\to (-\infty,+\infty]$ 
of $\psi_N$
defined by 
\begin{equation*}
    \bar\psi_N(X):=\inf\Big\{\liminf_{n\to+\infty}\psi_N(X_n):
    (X_n)_{n\in\N}\subset \newDDom N,\ X_n\to X\quad\text{as }n\to+\infty\Big\}, \quad X \in \overline{\newDDom N}.
\end{equation*}
Since $\psi$ is lower semicontinuous and $\psi_N$ is continuous in $\newODDom N$, 
we have 
\begin{equation}
    \label{eq:coincidence}
    \psi(X)\le \bar\psi_N(X)\quad\text{for every }X\in \overline{\newDDom N},\quad
    \bar\psi_N(X)=\psi_N(X)=\psi(X)\quad\text{if }X\in \newODDom N.
\end{equation}

We want to show that $\psi\equiv \bar\psi_N$ in $\overline{\newDDom N}.$ 
Let us suppose that  $X\in \overline{\newDDom N}$, with $\psi(X)<+\infty.$ We take 
$Y\in \newODDom N$, so that $X_t:=(1-t)X+tY\in \newDDom N$ for every $t\in (0,1]$ 
(since $\overline{\newDDom N}$ is convex and its relative interior coincides with $\newDDom N$ by Lemma \ref{le:trivial})
 and $X_t\in \newODDom N$ with possibly finite exceptions.
 Therefore, 
possibly replacing $Y$ with $X_{t_0}$ for a sufficiently small $t_0>0$, 
it is not restrictive to assume that $X_t\in \newODDom N$ for every $t \in (0,1]$ and $\iotaT_{X,Y}$ is the unique optimal coupling between its marginals (see Lemma \ref{thm:easy-but-not-obvious}) , so that 
$\psi$ is convex along $(X_t)_{t \in [0,1]}$ since $\phi$ is geodesically convex. We deduce that 
\begin{equation*}
    \bar{\psi}_N(X_t)=\psi(X_t)\le (1-t)\psi(X)+t\psi(Y)\quad\text{for every }t\in (0,1],
\end{equation*}
so that $\bar\psi_N(X)\le \liminf_{t\downarrow0}\bar{\psi}_N(X_t)\le \psi(X)$.

\smallskip

\noindent\textbf{Claim 6.}
\emph{$\psi$ is convex}.
\smallskip

Let $X,Y\in \dom(\psi)$,
and let $\mu=\iota_X, \nu=\iota_Y \in \prob_2(\X)$. We thus have that $\mu, \nu \in \dom(\phi) \subset \overline{\core}$.

By density, we can find sequences $(\mu_n)_{n\in\N}, (\nu_n)_{n\in\N} \subset \core$
such that $W_2(\mu_n, \mu) \to 0$, $W_2(\nu_n, \nu) \to 0$,
$\phi(\mu_n) \to \phi(\mu)$ and  $\phi(\nu_n) \to \phi(\nu)$ as $n \to
+ \infty$.
By the last part of Theorem \ref{thm:gpfinal}, we can find sequences
$(X_n)_{n\in\N}, (Y_n)_{n\in\N} \subset \newODDom \infty$
such that $\iota_{X_n}= \mu_n$, $\iota_{Y_n}=\nu_n$, $X_n \to X$ and
$Y_n \to Y$. 
Since  $X_n\in \newODDom {M(n)}$,
$Y_n\in \newODDom {N(n)}$ for some $M(n),N(n)\in \cN$
and $\cN$ is a directed set, we can find $P(n)\in \cN$ such that 
$M(n)\mid P(n)$, $N(n)\mid P(n)$; so that 
$X_n,Y_n\in \overline{\newDDom {P(n)}}$. By Claim 5, we  
we have that
\begin{align*}
  \psi((1-t)X_n+tY_n) \le (1-t) \psi(X_n) + t \psi(Y_n),\quad \text{for any }n\in\N.
\end{align*}
Passing to the limit as $n\to+\infty$
and using the lower semicontinuity of $\psi$ yield the sought convexity.
\end{proof}
\begin{remark}[Geodesic convexity implies total convexity for continuous functionals] 
\label{rem:surprise}
Let $\phi:\prob_2(\X)\to \R$ be a lower semicontinuous
  and geodesically $(-\lambda)$-convex functional
  which is approximable by discrete measures, i.e.~for every
  $\mu\in \prob_2(\X)$ there exists a sequence
  $\mu_n\in \prob_{\#\N}(\X)$ converging to $\mu$ such that
  $\phi(\mu_n)\to\phi(\mu)$ (e.g.~$\phi$ is continuous).
  Then
  $\phi$ satisfies the assumptions of Theorem \ref{prop:ab}
  with $\core=\prob_{\#\N}(\X)$. 
  This in particular gives that such kind of functionals
  are totally $(-\lambda)$-convex and locally Lipschitz.
  
As a consequence, we notice that non totally $(-\lambda)$-convex
functionals cannot be approximated in the Mosco sense by everywhere
finite, continuous and geodesically $(-\lambda)$-convex functionals
defined on $\prob_2(\X)$
(this is because total $(-\lambda)$-convexity
is preserved by the Mosco limit).
\end{remark}

\begin{remark} An analogous result as in Remark \ref{rem:surprise} has been obtained independently in \cite{Parker}. There, the author proves the equivalence of geodesic convexity and total convexity, assuming that the functional is additionally differentiable, with no restrictions on $\dim \X$. Notice that, if the functional is just continuous, the result doesn't hold in general if $\dim \X=1$, as shown in \cite[Example 3.9]{Parker}.
\end{remark}
\medskip

As previously mentioned, thanks to Theorem \ref{prop:ab} we are
allowed to apply all the results obtained in Section \ref{sec:jkobis}
to the totally $(-\lambda)$-convex functional $\phi$. In particular,
we get existence and uniqueness of the $\lambda$-\EVI solution for
the \MPVF $\frF:=-\bm{\partial}\phi$ starting from
$\mu_0\in\overline{\dom(\phi)}$ and its Lagrangian characterization as
the law of the semigroup generated by $-\partial\psi$, where $\psi$ is defined as in \eqref{eq:45}.

We conclude the section by
showing that the total subdifferential
$-\btpartial\phi:=\iotaT(-\partial \psi)$ coincides with the
operator $\hat\frF$ obtained by the $\cN$-core construction of
Theorem \ref{thm:main-discrete}.
\begin{proposition}\label{prop:jkoprimaprop}
  Let us suppose that
  $\dim \X\ge 2$, $\phi:\prob_2(\X)\to (-\infty,+\infty]$
  is a proper, l.s.c.~geodesically $(-\lambda)$-convex functional such that
  $\dom(\bm\partial\phi)$ contains
  a $\cN$-core $\core$ which is dense in energy in the sense that for every $\mu \in \dom(\phi)$ there exists $(\mu_n)_{n\in\N} \subset \core $ s.t.
  \[ \mu_n \to \mu, \quad \phi(\mu_n) \to \phi(\mu).\]
  The maximal totally $\lambda$-dissipative
  \MPVF $\hat\frF$, obtained by Theorem \ref{thm:main-discrete}
  starting from the minimal selection $-\bm\partial^\circ\phi$ restricted to $\core$,
  coincides with $-\btpartial\phi$ defined as in Section \ref{sec:jkobis}.
  Equivalently,  
  if $\psi:=\phi\circ\iota$
  and $\hat\fF$ is the Lagrangian representation of
  $\hat\frF$, 
  then 
\[ \hat\fF = -\partial \psi.\]
\end{proposition}
\begin{proof}
  By Theorem \ref{prop:ab}, we have that $\phi$ is totally $(-\lambda)$-convex so that we can apply the results of Section \ref{sec:jkobis}. By Propositions \ref{prop:invphi} and \ref{prop:JtauvsJKObis} we know that $\bm\partial^\circ\phi$ coincides with
  $\btpartial^\circ\phi$ and $\btpartial^\circ\phi$ is totally  $\lambda$-dissipative.

  Theorem \ref{thm:total-case} shows that
  $\hat\frF$ provides the unique maximal totally $\lambda$-dissipative
  extension of the restriction of $\btpartial^\circ\phi$
  to $\core$ with domain included in $\overline{\core}$. Therefore, $\hat\frF$ must coincide with $\btpartial\phi$, since $\btpartial\phi$ is maximal totally $\lambda$-dissipative as well (cf. Proposition \ref{prop:invphi}) and observing that by Proposition \ref{prop:JtauvsJKObis}(3) we have $\dom(\btpartial \phi)= \dom(\bm\partial \phi) \subset \overline{\core}$.
\end{proof}

\appendix 

\section{Dissipative operators in Hilbert spaces and extensions}\label{sec:brezis}

This appendix recalls and establishes useful results on $\lambda$-dissipative operators in Hilbert spaces, which are used throughout the paper. We divide the appendix into three parts. Section \ref{sec:lambdabrezis} lists classical results on $\lambda$-dissipative operators; these are stated for the case $\lambda=0$ in the monograph \cite{BrezisFR}. We stress that the proofs for a general $\lambda\in\R$ are adaptations of the $\lambda=0$ case, and the emphasis should be placed on the statements rather than on the proofs, which we include only for completeness. In the short Section \ref{sec:mmofin}, we state and prove two results concerning the behavior of $\lambda$-dissipative operators when restricted to closed subspaces of the ambient space, and when the space is finite-dimensional. Finally, in Section \ref{sec:mmoext} we discuss the problem of uniqueness and characterization of the maximal extension of dissipative operators in several situations; the only non‑original result here is Proposition \ref{prop:robert}.

\subsection{Classical results on $\lambda$-dissipative operators}\label{sec:lambdabrezis}

In this section, we recall useful definitions, properties and results on $\lambda$-dissipative operators in Hilbert spaces used in Sections \ref{sec:invmpvf} and \ref{sec:constructionFlagr}, with $\lambda\in\R$. Our main reference is \cite{BrezisFR}.

Let $\H$ be a Hilbert space with norm $|\cdot|$ and scalar product $\scalprod{\cdot}{\cdot}$. Given $E\subset \H$, we denote by $\conv E$ the convex hull of $E$  and
by $\clconv E$ 
its closure. Given an operator $\Bb\subset
\H\times \H$ (which we identify with its graph) we define its sections $\Bb (x):=\{v\in \H:(x,v)\in
\Bb\}$,  its domain
$\dom(\Bb):=\{x\in \H:\Bb (x)\neq \emptyset\}$, and its inverse $\Bb^{-1}:=\{ (v,x) \in \H \times \H : (x,v) \in \Bb\}$.  
 An operator $\Bb\subset \H\times
\H$
is $\lambda$-dissipative ($\lambda \in \R$) if
\begin{equation}
  \label{eq:140}
  \la v-w,x-y\ra\le \lambda |x-y|^2\quad \text{for every }(x,v),\ (y,w)\in \Bb.
\end{equation}
A $\lambda$-dissipative operator $\Bb$ is maximal if it is maximal w.r.t.~inclusion in the class of $\lambda$-dissipative operators  or, equivalently, (see e.g.~\cite[Chap.~II, Def.~2.2]{BrezisFR}) if
\begin{equation}
  \label{eq:141}
  (x,v)\in \H\times \H,\quad
  \la v-w,x-y\ra\le \lambda |x-y|^2 \quad \text{for every }(y,w)\in
  \Bb\quad\Rightarrow\quad
  (x,v)\in \Bb.
\end{equation}
\begin{remark}[Dissipativity, monotonicity]\label{rem:transff} Let $\Bb \subset \H \times \H$; we define $-\Bb:= \{ (x,-v) : (x,v) \in \Bb\}$ and we say that $\Bb$ is $\lambda$-monotone if $-\Bb$ is $(-\lambda)$-dissipative. It is easy to check that $\Bb$ is $\lambda$-dissipative if and only if $\Bb^{\lambda}:= \Bb-\lambda \ii_\H$ is $0$-dissipative (or simply, dissipative) if and only if $-\Bb^\lambda$ is $0$-monotone (or simply, monotone). The same holds for maximal $\lambda$-dissipativity, maximal dissipativity and maximal monotonicity (with analogous definition). Observe also that $\dom(\Bb)=\dom(\Bb^\lambda)=\dom(-\Bb^\lambda)$.
\end{remark}

We list in the following theorems a few well known properties of $\lambda$-dissipative operators that have been extensively used in the previous sections. Since these results are more commonly known for $\lambda=0$ (cf. \cite{BrezisFR}), we prefer to state them here in the general case. For this reason, in the proofs, we point out only the changes that have to be made compared to the case $\lambda = 0$. Recall that $\lambda^+ := \lambda \vee 0$ and we set $1/\lambda^+ = + \infty$ if $\lambda^+=0$.
\begin{theorem}\label{thm:brezis1}
    Let $\Bb\subset \H \times \H$ be a $\lambda$-dissipative operator. Then:
    \begin{enumerate}
        \item $\Bb$ is maximal if and only if the resolvent operator $\jJ_\tau:=(\ii_\H-\tau \Bb)^{-1}$ is a $(1-\lambda \tau)^{-1}$-Lipschitz continuous map defined on the whole $\H$ for every $0<\tau<1/\lambda^+$;
        \item there exists a maximal extension $\hat{\Bb}$ of $\Bb$ (meaning that $\Bb \subset \hat{\Bb}$ and $\hat{\Bb}$ is maximal $\lambda$-dissipative) whose domain is included in $\clconv{\dom(\Bb)}$.
    \end{enumerate}
\end{theorem}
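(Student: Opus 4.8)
\textbf{Proof plan for Theorem \ref{thm:brezis1}.}

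The plan is to reduce everything to the classical case $\lambda=0$ via the transformation recalled in Remark \ref{rem:transff}. Set $\Bb^\lambda:=\Bb-\lambda\ii_\H$, which is dissipative (i.e.\ $0$-dissipative), and recall that $\dom(\Bb^\lambda)=\dom(\Bb)$ and that $\Bb$ is maximal $\lambda$-dissipative if and only if $\Bb^\lambda$ is maximal dissipative. For the classical case, the statements are exactly the content of \cite[Chap.~II, Prop.~2.2 and Thm.~2.2]{BrezisFR} (maximality $\Leftrightarrow$ resolvent is a $1$-Lipschitz map defined on all of $\H$, resp.\ existence of a maximal extension with domain in $\clconv{\dom(\Bb^\lambda)}$, the latter going back to the Debrunner--Flor / Zorn argument). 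So the only real work is bookkeeping the change of variables.

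For claim (1), I would relate the resolvent of $\Bb$ at parameter $\tau$ with the resolvent of $\Bb^\lambda$ at a rescaled parameter. Writing $x-\tau v = y$ with $v\in\Bb x$ is equivalent to $x-\tau(v-\lambda x) = y-\lambda\tau x$, i.e.\ $(1-\lambda\tau)x - \tau\, w = y$ with $w:=v-\lambda x\in\Bb^\lambda x$; dividing by $1-\lambda\tau$ (legitimate since $0<\tau<1/\lambda^+$ forces $1-\lambda\tau>0$) gives $x-\sigma w = (1-\lambda\tau)^{-1}y$ with $\sigma:=\tau/(1-\lambda\tau)>0$. Hence $\jJ^{\Bb}_\tau = \jJ^{\Bb^\lambda}_\sigma\circ\big((1-\lambda\tau)^{-1}\ii_\H\big)$, and as $\tau$ ranges over $(0,1/\lambda^+)$, $\sigma$ ranges over all of $(0,+\infty)$. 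Therefore $\jJ^{\Bb}_\tau$ is everywhere defined and single-valued for all such $\tau$ iff the same holds for $\jJ^{\Bb^\lambda}_\sigma$ for all $\sigma>0$, which by the classical result is equivalent to maximal dissipativity of $\Bb^\lambda$, i.e.\ to maximal $\lambda$-dissipativity of $\Bb$. The Lipschitz constant: since $\jJ^{\Bb^\lambda}_\sigma$ is $1$-Lipschitz, composition with the homothety of ratio $(1-\lambda\tau)^{-1}$ yields a $(1-\lambda\tau)^{-1}$-Lipschitz map, as claimed. I would also note the converse direction (if the resolvent has these properties then $\Bb$ is maximal) follows by the same equivalence, or directly: given $(x,v)$ satisfying the inequality in \eqref{eq:141} against all of $\Bb$, one solves $x-\tau v=\jJ^{\Bb}_\tau(\xi)$ for suitable $\xi$ and uses the standard monotonicity trick to conclude $(x,v)\in\Bb$.

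For claim (2), apply the classical existence theorem to $\Bb^\lambda$: there is a maximal dissipative $\widehat{\Bb^\lambda}\supset\Bb^\lambda$ with $\dom(\widehat{\Bb^\lambda})\subset\clconv{\dom(\Bb^\lambda)}=\clconv{\dom(\Bb)}$. Then set $\hat\Bb:=\widehat{\Bb^\lambda}+\lambda\ii_\H$; by Remark \ref{rem:transff} this is maximal $\lambda$-dissipative, it extends $\Bb$ since $\widehat{\Bb^\lambda}\supset\Bb^\lambda$, and $\dom(\hat\Bb)=\dom(\widehat{\Bb^\lambda})\subset\clconv{\dom(\Bb)}$. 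The main (and essentially only) obstacle is simply being careful that the parameter rescaling $\sigma=\tau/(1-\lambda\tau)$ is an increasing bijection of $(0,1/\lambda^+)$ onto $(0,+\infty)$ in the case $\lambda>0$, and trivially the identity when $\lambda\le 0$, so that quantifying "for every $\tau$ in the admissible range" on the two sides genuinely matches; everything else is a direct translation of \cite{BrezisFR}.
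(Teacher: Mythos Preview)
Your proposal is correct and follows essentially the same route as the paper: reduce to the classical dissipative case via the shift $\Bb^\lambda=\Bb-\lambda\ii_\H$ and then invoke \cite[Prop.~2.2, Cor.~2.1]{BrezisFR}, with the resolvent relation $\jJ^{\Bb}_\tau=\jJ^{\Bb^\lambda}_{\sigma}\circ((1-\lambda\tau)^{-1}\ii_\H)$ doing the bookkeeping. One small slip worth flagging: for $\lambda<0$ the map $\tau\mapsto\sigma=\tau/(1-\lambda\tau)$ is not the identity and its range is $(0,-1/\lambda)$ rather than $(0,+\infty)$, but this is harmless since the forward implication only needs each admissible $\tau$ to correspond to \emph{some} $\sigma>0$, and for the converse Minty's theorem requires surjectivity of $\ii_\H-\sigma\Bb^\lambda$ at a single $\sigma>0$ (the paper's own proof has the same imprecision in its bijection claim).
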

\begin{proof} 
(1) We can use Remark \ref{rem:transff} and apply \cite[Proposition 2.2]{BrezisFR} to $-\Bb^\lambda$ and then obtain that $\Bb$ is maximal $\lambda$-dissipative if and only if $((1+\lambda \vartheta)\ii_\H-\vartheta \Bb)^{-1}$ is a contraction on $\H$ for every $\vartheta>0$. Since $x \mapsto x/(1-\lambda x)$ is a bijection between $(0,1/\lambda^+)$ and $(0,+\infty)$, this is equivalent to saying that $((1-\lambda \tau)^{-1}(\ii_\H-\tau \Bb))^{-1}$ is a contraction on $\H$ for every $0<\tau< 1/\lambda^+$ which is to say that $\jJ_\tau$ is a $(1-\lambda \tau)^{-1}$-Lipschitz map defined on the whole $\H$.\\
\medskip
(2) This follows immediately from Remark \ref{rem:transff} and \cite[Corollary 2.1]{BrezisFR}.
\end{proof}

\begin{remark}[Characterization of the resolvent]\label{rem:charres} Property (1) in Theorem \ref{thm:brezis1} can be equivalently stated saying that, for every $x \in \H$ and $\tau \in (0, 1/\lambda^+)$, $\jJ_\tau(x)$ is the unique solution $y$ of the inclusion $(y-x)/\tau \in \Bb(y)$ or, equivalently, that $(\jJ_\tau(x), (\jJ_\tau(x)-x)/\tau)$ is the unique pair $(y,v)$ satisfying $y= x + \tau v$, $v \in \Bb(y)$.
\end{remark}

\begin{theorem}\label{thm:brezis2}
    Let $\Bb$ be a maximal $\lambda$-dissipative operator. Then:
    \begin{enumerate}
        \item $\Bb$ is closed in the strong-weak (or the weak-strong) topology in $\H \times \H$;
        \item for every $x \in \dom(\Bb)$, the section $\Bb (x)$ is closed and convex so that it contains a unique element of minimal norm denoted by $\Bb^\circ (x)$;
         \item if $\intt{\conv{\dom(\Bb)}} \ne \emptyset$, then $\intt{\dom(\Bb)}$ is convex, $\intt{\dom(\Bb)} = \intt{\overline{\dom(\Bb)}}\ne \emptyset$ and $\Bb$ is locally bounded in the interior of its domain;
        \item $\overline{\dom(\Bb)}$ is convex and for every $x \in \overline{\dom(\Bb)}$, $\jJ_\tau (x) \to x $ as $\tau \downarrow 0$;
        \item for every $0<\tau<1/\lambda^+$, the Moreau-Yosida approximation of $\Bb$, $\Bb_\tau:=\frac{\jJ_\tau-\ii_\H}{\tau}$, is maximal $\frac{\lambda}{1-\lambda \tau}$-dissipative and $\frac{2-\lambda \tau}{\tau(1-\lambda \tau)}$-Lipschitz continuous. Moreover, for every $x \in~\dom(\Bb)$,
        \begin{equation*}
        \begin{split}
        &(1-\lambda \tau)\left|\Bb_\tau (x)\right| \uparrow \left|\Bb^\circ (x)\right|,\quad\text{as }\tau \downarrow 0,\\
        &\Bb_\tau (x) \to \Bb^\circ (x),\quad\text{as }\tau \downarrow 0,\\
        &\left|\Bb_\tau (x) - \Bb^\circ (x)\right|^2 \le \left|\Bb^\circ (x)\right|^2 -(1-2\lambda\tau)\left|\Bb_\tau (x)\right|^2,\quad\text{for } 0<\tau<1/\lambda^+.
        \end{split}
        \end{equation*}
        If $x \notin \dom(\Bb)$, then $\left|\Bb_\tau (x)\right| \to + \infty$. Finally, $\Bb_\tau \to \Bb$ in the graph sense:
\begin{equation*}
\text{for every }(x,v)\in\Bb\text{ there exists }(x_\tau)_{\tau>0}\subset\H\text{ such that }x_\tau\to x,\,\Bb_\tau (x_\tau)\to v,\text{ as }\tau\downarrow0.
\end{equation*}
\item $\Bb^\circ$ is a \emph{principal selection} of $\Bb$ i.e.
        \begin{equation}
\label{eq:143}
  (x,v)\in \overline {\dom(\Bb)}\times\H,\quad
  \la v-\Bb^\circ (y),x-y\ra\le \lambda |x-y|^2 \quad \text{for every }y\in
  \dom(\Bb)\quad\Rightarrow\quad
  (x,v)\in \Bb.
\end{equation}
    \end{enumerate}
\end{theorem}
\begin{proof}
(1) and (2) follow immediately from \eqref{eq:141}. \\
(3) follows immediately by Remark \ref{rem:transff} and \cite[Proposition 2.9]{BrezisFR}.\\
(4) follows by Remark \ref{rem:transff} and \cite[Theorem 2.2]{BrezisFR} observing that
\[ \lim_{\tau \downarrow 0} \jJ_\tau (x) = \lim_{\vartheta \downarrow 0} (1+\lambda \vartheta) (\ii_\H+\vartheta (-\Bb^\lambda))^{-1}(x) = x.\]
(5) The Lipschitz constant of $\Bb_\tau$ can be estimated by $\frac{1}{\tau}(L+1)$, where $L$ is the Lipschitz constant of $\jJ_\tau$, so that the value of the constant follows by Theorem \ref{thm:brezis1}(1). The fact that $\Bb_\tau$ is $\lambda/(1-\lambda \tau)$ dissipative is a consequence of the inequality
\[ \langle \Bb_\tau (x) - \Bb_\tau (y) , x-y \rangle = \frac{1}{\tau} \la \jJ_\tau (x) - \jJ_\tau (y), x-y \ra  -\frac{1}{\tau} |x-y|^2 \le \frac{\lambda}{1-\lambda \tau} |x-y|^2,  \]
where we used the Lipschitz continuity of $\jJ_\tau$. Maximality of $\Bb_\tau$ follows by Remark \ref{rem:transff} and \cite[Proposition 2.6]{BrezisFR}. The fact that $(1-\lambda\tau)|\Bb_\tau x|$ is increasing and bounded from above by $\left|\Bb^\circ (x)\right|$ follows precisely as in the proof of \cite[Proposition 2.6]{BrezisFR}: exploiting the dissipativity inequality
\[ \langle \Bb^\circ (x) - \Bb_\tau (x), x-\jJ_\tau (x) \rangle \le \lambda |x-\jJ_\tau (x)|^2\]
one gets that $\left|\Bb_\tau (x)\right|^2 (1-\lambda\tau) \le \la \Bb^\circ (x), \Bb_\tau (x) \ra $ for every $x \in \dom(\Bb)$. Substituting to $\Bb$, in the same inequality, the $\lambda/(1-\lambda \eta)$-dissipative operator $\Bb_\eta$, we get that 
\[ \left|\Bb_{\eta+\tau} (x) \right|^2 (1-\lambda(\tau + \eta)) \le (1-\lambda \eta) \langle \Bb_{\eta}(x) , \Bb_{\eta+\tau}(x) \rangle \quad \text{ for every } x \in \H \text{ and every } 0<\eta, \tau < 1/\lambda^+.\]
This shows that the quantity $(1-\lambda\tau)\left|\Bb_\tau (x)\right|$ is nondecreasing as $\tau \downarrow 0$ for every $x \in \H$. This means in particular that there exists the limit $\ell:= \lim_{\tau \downarrow 0} \left|\Bb_\tau (x)\right| \in [0,+\infty]$. The above estimate also gives that 
\begin{equation}\label{eq:giulialavuole}
\left|\Bb_{\eta+\tau}(x) - \Bb_{\eta}(x)\right|^2 \le \left|\Bb_\eta (x)\right|^2 - \frac{1-\lambda(\eta+2\tau)}{1-\lambda \eta}\left|\Bb_{\eta+\tau}(x)\right|^2 \quad \text{ for every } x \in \H,
\end{equation}
so that $\left(\Bb_\tau (x)\right)_\tau$ is Cauchy whenever it is bounded. Thus, if $x \in \dom(\Bb)$, then $(1-\lambda \tau)\left|\Bb_\tau (x)\right| \le \left|\Bb^\circ (x)\right|$ so that $\Bb_\tau (x) \to v$ for some $v \in \H$. By (1), $(x,v) \in \Bb$ and $|v| \le \left|\Bb^\circ (x)\right|$ which implies that $v= \Bb^\circ (x)$. On the other hand,  if $x \notin \dom(\Bb)$, we have that $\left|\Bb_\tau (x) \right| \to + \infty$: indeed, if by contradiction $\left|\Bb_\tau (x) \right|$ is bounded, then we have shown that $\Bb_\tau (x)$ must converge to some $v \in \H$ so that we also have $\jJ_\tau (x) = \tau \Bb_\tau (x) + x \to x$. Since $\left(\jJ_\tau (x), \Bb_\tau (x)\right) \in \Bb$ and $\left(\jJ_\tau (x), \Bb_\tau (x)\right) \to (x,v)$, by (1) we deduce that $(x,v) \in \Bb$, a contradiction. Observe that passing to the limit as $\eta \downarrow 0$ in \eqref{eq:giulialavuole}, we get that $\left|\Bb_\tau (x) - \Bb^\circ (x)\right|^2 \le \left|\Bb^\circ (x)\right|^2 -(1-2\lambda\tau)\left|\Bb_\tau (x)\right|^2$. To conclude the proof of (5) we only need to show the graph convergence of $\Bb_\tau$ to $\Bb$. Let $(x,v) \in \Bb$ and let us define $x_\tau:= x-\tau v$. Then $x_\tau \to x$ and $\jJ_\tau (x_\tau) = x$. Then $\Bb_\tau (x_\tau) = (x-x_\tau)/\tau=v$.\\
(6) Follows exactly as in \cite[Proposition 2.7]{BrezisFR}: performing similar computations, we get
\[ \frac{1}{2} \langle y_1-y_2, x_1-x_2 \rangle \le -\langle y_1+y_2, x-\jJ_\tau (x) \rangle + \lambda (|\jJ_\tau (x) - x_1|^2 + |\jJ_\tau (x) -x_2|^2)\]
for every $(x_1,y_1), (x_2,y_2) \in \Mm$, where
\[ \Mm=\{ (y,w) \in \overline{\dom(\Bb)}\times \H : \langle \Bb^\circ (z) - w,z-y \rangle \le \lambda |z-y|^2 \quad \text{ for every } z \in \dom(\Bb) \},\]
and $x:= (x_1+x_2)/2$. Passing to the limit as $\tau \downarrow 0$ we obtain that $\Mm$ is $\lambda$-dissipative so that, since $\Bb \subset \Mm$, we get that $\Mm=\Bb$.
\end{proof}

For the next result, we recall that a proper functional $\psi:\H \to (-\infty, + \infty]$ is said to be $\lambda$-convex if the map $x \mapsto \psi(x)-\frac{\lambda}{2}|x|^2$ is convex. Its Fr\'echet subdifferential $\partial \psi$ is characterized by
\[ (x,v) \in \partial \psi \quad \Leftrightarrow \quad x \in \dom(\psi)\text{ and } \psi(y)-\psi(x) \ge \la v,y-x \ra + \frac{\lambda}{2}|x-y|^2 \quad \text{ for every } y \in \H.
\]
 In the next corollary, for $0 < \tau < 1/\lambda^+$, we connect the resolvent $\jJ_\tau$ of the (opposite of the) subdifferential $-\partial \psi$ with the Moreau--Yosida regularization of $\psi$, i.e. 
\[
\psi_\tau(x) := \inf_{y \in \H}  \Psi(\tau,x;y), \quad x \in \H,
\]
where
\begin{equation}\label{eq:PsitauMY}
\Psi(\tau,x;y):= \frac{1}{2\tau} |x-y|^2 + \psi(y).
\end{equation}

\begin{corollary}\label{cor:phi}
Let $\psi:\H \to (-\infty, + \infty]$ be a proper, lower semicontinuous and $(-\lambda)$-convex function,  $0 < \tau < 1/\lambda^+$.  Then $-\partial \psi$ is a maximal $\lambda$-dissipative operator. Moreover, denoting by $\Bb:= -\partial \psi$, we have that
\begin{equation*}
    \lim_{\tau \downarrow 0} \frac{\psi(x)-\psi(\jJ_\tau (x))}{\tau} = |\Bb^\circ (x)|^2 \quad \text{ for every } x \in \dom(\Bb),
\end{equation*}
\[ \frac{1}{2\tau}|x-\jJ_\tau (x)|^2 +\psi(\jJ_\tau (x)) <   \frac{1}{2\tau}|x-y|^2 +\psi(y) \quad \text{ for every } x,y \in \H, \, y \ne \jJ_\tau (x).\]
 In particular, $\psi_\tau(x) = \Psi(\tau,x;\jJ_\tau(x))$, for every $x\in\H$. 
\end{corollary}
\begin{proof} Notice that $\psi^\lambda:= \psi + \frac{\lambda}{2} |\cdot|^2$ is convex and that $\partial\psi^\lambda = \partial \psi + \lambda \ii_\cH$ so that by \cite[Example 2.3.4]{BrezisFR} and Remark \ref{rem:transff}, the operator $-\partial \psi^\lambda$ is maximal dissipative and thus $-\partial \psi$ is maximal $ \lambda$-dissipative. By definition of subdifferential of a $(-\lambda)$-convex function, we have that for every $0 < \tau < 1/\lambda^+$ it holds
\begin{align*}
    \psi(x)-\psi(\jJ_\tau (x)) &\ge \la \Bb_\tau (x), \jJ_\tau (x)-x \ra - \frac{\lambda}{2}|\jJ_\tau (x)-x|^2 = \tau |\Bb_\tau (x)|^2-  \frac{\lambda}{2}|\jJ_\tau (x)-x|^2, \\
    \psi(\jJ_\tau (x)) - \psi (x) &\ge \la \Bb^\circ (x) , x-\jJ_\tau (x) \ra - \frac{\lambda}{2}|\jJ_\tau (x)-x|^2 = -\tau \la \Bb^\circ (x) , \Bb_\tau (x) \ra  - \frac{\lambda}{2}|\jJ_\tau (x)-x|^2.
\end{align*}
Dividing the first (resp.~the second) inequality by $\tau>0$ (resp.~$-\tau<0$) and passing to the $\liminf$ (resp.~to the $\limsup$) as $\tau \downarrow 0$, gives the desired equality thanks to Theorem \ref{thm:brezis2}(5). The fact that the limit diverges outside the domain of $\Bb$ follows again by Theorem \ref{thm:brezis2}(5) and the first inequality above. The last assertion follows simply observing that $y\mapsto \Psi(\tau,x;y)$,  defined in \eqref{eq:PsitauMY},  is proper and strictly convex, so that $z$ is a strict minimum point for $\Psi(\tau,x;\cdot)$ if and only if $0 \in \partial \Psi(\tau,x;z)$, which is satisfied if and only if $z=\jJ_\tau (x)$.
\end{proof}

\begin{theorem}\label{thm:brezis3}
    Let $\Bb$ be a maximal $\lambda$-dissipative operator and let $x_0 \in \dom(\Bb)$. There exists a unique locally Lipschitz function $x:[0,+\infty)\to\H$, with $x(0)=x_0$, such that:
    \begin{enumerate}
        \item $x(t) \in \dom(\Bb)$ for every $t>0$;
        \item $\dot{x}(t) \in \Bb \left(x(t)\right)$ for a.e.~$t>0$;
        \item the map $t \mapsto \Bb^\circ \left(x(t)\right)$ is right continuous, $t\mapsto x(t)$ is right differentiable at every $t \ge 0$ and its right derivative at $t$ coincides with $\Bb^\circ (x(t))$ for every $t \ge 0$;
        \item the function $t \mapsto e^{-\lambda t}|\Bb^\circ \left(x(t)\right)|$ is decreasing in $[0,+\infty)$.
    \end{enumerate}
Moreover, if $x,y:[0,+\infty)\to\H$ are solutions of the differential inclusion in (2), then 
\[ |x(t)-y(t)| \le e^{\lambda t} |x(0)-y(0)| \quad \text{ for every } t \ge 0.\]    
\end{theorem}
\begin{proof} The proof of the last assertion is trivial. The proof of the points (1),(2),(3) and (4) is completely analogous to the one of \cite[Theorem 3.1]{BrezisFR} with only few differences that we point out in case $\lambda \ne 0$. In what follows, we take $0<\tau,\eta< 1/\lambda^+$. To prove existence one starts from the approximate problems 
\[ \dot{x}_\tau(t) -\Bb_\tau (x_\tau(t))=0, \quad x_\tau(0)=x, \]
which have unique smooth solutions thanks to e.g.~\cite[Theorem 1.6]{BrezisFR} together with the estimate
\begin{equation}\label{eq:estimate}
|\Bb_\tau \left(x_\tau(t)\right)| = |\dot{x}_\tau(t)| \le e^{\frac{\lambda t}{1-\lambda \tau}} |\Bb_\tau (x_0)| \le \frac{e^{\frac{\lambda t}{1-\lambda \tau}}}{1-\lambda \tau}|\Bb^\circ (x_0)| \quad \text{ for every } t \ge 0,
\end{equation}
still provided by \cite[Theorem 1.6]{BrezisFR} and Theorem \ref{thm:brezis2}(5). Performing the same computations of the proof of \cite[Theorem 3.1]{BrezisFR}, using $\lambda$-dissipativity instead of monotonicity, one obtains 
\[ |x_\tau(t) - x_\eta(t)| \le C(\lambda,t)\,\left|\Bb^\circ (x_0)\right|\,\sqrt{\tau+\eta}\quad \text{ for every }t \ge 0,\]
where $C(\lambda, t)$ is a positive constant that depends in a continuous way only on $\lambda$ and $t$. This proves that $x_\tau$ converges locally uniformly to $x$ on $[0,+\infty)$ with the estimate
\begin{equation}\label{eq:estsemi}
|x_\tau(t) - x(t) | \le C(\lambda,t)\,\left|\Bb^\circ (x_0)\right|\, \sqrt{\tau} \quad \text{ for every } t \ge 0.
\end{equation}
Since 
\[ |\jJ_\tau (x_\tau) - x_\tau | = \tau \left|\Bb_\tau (x_\tau) \right| \le \tau\,\frac{e^{\frac{\lambda t}{1-\lambda \tau}}}{1-\lambda \tau}\left|\Bb^\circ (x_0)\right|,\]
we also get that $\jJ_\tau (x_\tau)$ converges to $x$ locally uniformly in $[0,+\infty)$ and this, together with the estimate \eqref{eq:estimate} and Theorem \ref{thm:brezis2}(1), shows that $x(t) \in \dom(\Bb)$ and $\left|\Bb^\circ (x(t))\right| \le e^{\lambda t}\left|\Bb^\circ (x_0)\right|$ for every $t \ge 0$; in particular this proves (1). Since $|\dot{x}_\tau|$ is uniformly bounded on every interval $[0,T]$ by \eqref{eq:estimate}, it converges weakly$^*$ in $L^{\infty}([0,T]; \H)$ (and thus also weakly in $L^{2}([0,T]; \H)$) to a function $v \in L^{\infty}([0,T]; \H)$ which turns out to be the almost everywhere derivative of $x$ in $[0,T]$ (cf.~\cite[Appendix]{BrezisFR}) so that, applying Theorem \ref{thm:brezis2}(1) to the extension of $\Bb$ to $L^2([0,T];\H)$ (see \cite[Examples 2.1.3, 2.3.3]{BrezisFR} and Remark \ref{rem:transff}), we obtain (2) and also the inequality
\begin{equation}\label{eq:seconde}
|\dot{x}(t)| \le e^{\lambda t} \left|\Bb^\circ (x_0)\right| \quad \text{ for a.e.~} t>0.
\end{equation}
Observing now that, for every $t_0 \ge 0$, $t \mapsto x(t+t_0)$ is a solution of (2) with initial datum $x(t_0)$, we get that $\left|\Bb^\circ \left(x(t+t_0)\right)\right| \le e^{\lambda t} \left|\Bb^\circ \left(x(t_0)\right)\right|$ which proves (4). It remains only to prove (3). The right continuity of $t \mapsto \left|\Bb^\circ \left(x(t)\right)\right|$ follows precisely as in \cite[Theorem 3.1]{BrezisFR}: it is enough to prove it at $t=0$; if $0 < t_n < 1$ is such that $t_n \downarrow 0$, then $\left|\Bb^\circ (x(t_n))\right| \le e^{\lambda_+} \left|\Bb^\circ (x_0)\right|$ by (4), so that, up to a unrelabeled subsequence, $\Bb^\circ (x(t_n))$ converges weakly to some $v \in \H$. Since $x(t_n) \to x_0$ and thanks to Theorem \ref{thm:brezis2}(1), $v$  belongs to $\Bb (x_0)$. However $|v|\le \left|\Bb^\circ (x_0)\right|$ so that it must be $v= \Bb^\circ (x_0)$. The strong convergence follows observing that $\limsup |\Bb^\circ (x(t_n))| \le |v| = \left|\Bb^\circ (x_0)\right|$. Since the limit is independent of the subsequence, we obtain convergence of the whole sequence. We still follow the proof of \cite[Theorem 3.1]{BrezisFR} to prove the right differentiability of $x$ and the inclusion for its right derivative: for every $t_0,h>0$ we have that 
\[ |x(t_0+h)-x(t_0)|= \left | \int_{t_0}^{t_0+h} \dot{x}(s) \de s \right | \le \frac{e^{\lambda h}-1}{\lambda}|\Bb^\circ (x(t_0))|, \]
where we have applied \eqref{eq:seconde} to $t \mapsto x(t+t_0)$. If $t_0$ is a point of differentiability for $x(t)$ such that $\dot{x}(t_0) \in \Bb \left(x(t_0)\right)$, dividing by $h$ and passing to the limit as $h \downarrow 0$ in the above inequality, we get that $|\dot{x}(t_0)| \le \left|\Bb^\circ \left(x(t_0)\right)\right|$ so that $\dot{x}(t_0) =\Bb^\circ \left(x(t_0)\right)$. We can thus integrate this equality in $[t_0, t_0+h]$ for every $t_0 \ge 0$ and every $0<h<1$ to obtain that
\[ \lim_{h \downarrow 0} \frac{x(t_0+h)-x(t_0)}{h} = \lim_{h \downarrow 0} \int_{0}^{1} \Bb^\circ \left(x(t_0+sh)\right) \de s = \Bb^\circ \left(x(t_0)\right),\]
where we used the right continuity of $t \mapsto \Bb^\circ (x(t))$ and the dominated convergence theorem that we can apply since $\left|\Bb^\circ \left(x(t_0+rh)\right)\right| \le e^{\lambda_+} \left|\Bb^\circ \left(x(t_0)\right)\right|$ by (4). This concludes the proof of (3).
\end{proof}

\begin{theorem}\label{thm:brezis4}
    If $\Bb$ is maximal $\lambda$-dissipative, there exists a semigroup of Lipschitz transformations $\Sgp_t: \overline{\dom(\Bb)} \to \overline{\dom(\Bb)}$ such that, for every $x \in \dom(\Bb)$, the curve $t\mapsto x(t):=\Sgp_t (x)$ is the unique solution of the differential inclusion $\dot{x}(t) \in \Bb \left(x(t)\right)$, for a.e.~$t>0$, starting from $x$. Moreover, we have
\begin{equation}\label{smgprop}
    |\Sgp_t (x) - \Sgp_t (y)| \le e^{\lambda t} |x-y| \quad \text{ for every } x,y \in \overline{\dom(\Bb)} \text{ and every } t \ge 0.
\end{equation}
Finally, for every $x \in \overline{\dom(\Bb)}$ we have that
\begin{equation}\label{eq:convJS}
\jJ_{t/n}^n (x) \to \Sgp_t (x)\quad\text{as } n \to + \infty
\end{equation}
and for every $T \ge 0$ there exist $N(\lambda, T) \in \N$, $C(\lambda, T)>0$ (with $C(0,T)=2T$) such that
\begin{equation}\label{eq:estimateforjn}
| \jJ_{t/n}^n (x) -\Sgp_t (x)| \le C(\lambda, T)\frac{|\Bb^\circ (x)|}{\sqrt{n}} \quad \text{ for every } 0 \le t \le T,\, n \ge N(\lambda, T),\, x \in \dom(\Bb).   
\end{equation}
\end{theorem} 
\begin{proof} The first assertion follows by extending by continuity the semigroup (whose existence follows by Theorem \ref{thm:brezis3}) from $\dom(\Bb)$ to the whole $\overline{\dom(\Bb)}$ (see also \cite[Remark 3.2]{BrezisFR}). The second assertion for $\lambda<0$ follows immediately from \cite[Corollaries 4.3, 4.4]{BrezisFR} applied to $-\Bb$. We only prove the second assertion in case $\lambda>0$ following the same strategy of \cite[Corollaries 4.3, 4.4]{BrezisFR}. We fix $x_0 \in \dom(\Bb)$ and we consider as in the proof of Theorem \ref{thm:brezis3} the approximated problems
\[ \dot{x}_\tau(t) - \Bb_\tau \left(x_\tau(t)\right)=0, \quad x_\tau(0)=x_0,  \]
where we are assuming from now on that $0<\tau<1/\lambda$.
By \cite[Theorem 1.7]{BrezisFR} we have that
\begin{align*}
|x_\tau(t) - \jJ_\tau^n(x_0)| &\le (1-\lambda \tau)^{-n} e^{\lambda t} |x_0-\jJ_\tau (x_0)| \left ( \left (n-\frac{t}{\tau (1-\lambda \tau)} \right )^2 + \frac{t}{\tau (1-\lambda\tau)} \right )^{1/2} \\
&\le |\Bb^\circ (x_0)|(1-\lambda \tau)^{-n-1}e^{\lambda t}   \left ( \left (\tau n -\frac{t}{1-\lambda \tau} \right )^2 + \frac{t \tau}{1-\lambda \tau} \right )^{1/2}, 
\end{align*}
where we have also used that $\jJ_\tau$ is $(1-\lambda \tau)^{-1}$-Lipschitz continuous (see Theorem \ref{thm:brezis1}(1)) and Theorem \ref{thm:brezis2}(5). Using this inequality together with \eqref{eq:estsemi} with $\tau=t/n$ we get that for every $T \ge0$ we can find an integer $N(\lambda, T)$ and a positive constant $C(\lambda, T)$ such that
\[ |\jJ_\tau (x_0) - \Sgp_t (x_0)| \le C(\lambda, T) \frac{|\Bb^\circ (x_0)|}{\sqrt{n}}  \quad \text{ for every } n \ge N(\lambda, T) \text{ and every } t\in [0,T].\]
This proves \eqref{eq:estimateforjn} and also the convergence of $\jJ_{t/n}^n (x_0)$ to $\Sgp_t (x_0)$, whenever $x_0 \in \dom(\Bb)$. In case $y_0 \in \overline{\dom(\Bb)}$ and $x_0 \in \dom(\Bb)$ we can estimate
\begin{align*}
|\jJ_{t/n}^n (y_0) - \Sgp_t (y_0)| &\le |\jJ_{t/n}^n (y_0) - \jJ_{t/n}^n (x_0)| + |\Sgp_t (y_0) - \Sgp_t (x_0)| + |\Sgp_t (x_0) - \jJ^n_{t/n}(x_0)| \\
& \le |x_0-y_0|  \left ( (1-\lambda t/n)^{-n} + e^{\lambda t} \right ) + |\Sgp_t (x_0) - \jJ^n_{t/n}(x_0)|,
\end{align*}
where we have used again Theorem \ref{thm:brezis1}(1). Passing to the limit as $n \to + \infty$ gives that
\[ \limsup_{n \to + \infty} |\jJ_{t/n}^n (y_0) - \Sgp_t (y_0)| \le 2e^{\lambda t} |x_0-y_0|\]
ans passing to the $\inf$ w.r.t.~$x_0 \in \dom(\Bb)$ gives the sought convergence.
\end{proof}

The following result corresponds to \cite[Theorem 3.3]{BrezisFR} and concerns the regularizing effect for the semigroup generated by maximal $\lambda$-dissipative operators whose domain has nonempty interior.

\begin{theorem}\label{thm:brezreg} Let $\mmo$ be a maximal $\lambda$-dissipative operator such that $\intt{\dom(\Bb)} \ne \emptyset$ and let $x_0 \in \overline{\dom(\Bb)}$. Then the curve $x(t):=\Sgp_t (x_0)$, $t \ge 0$ (cf.~Theorem \ref{thm:brezis4}) has the following properties:
\begin{enumerate}
\item $x$ is locally absolutely continuous in $[0,+\infty)$ and locally Lipschitz in $(0,+\infty)$;
\item $x(t) \in \dom(\Bb)$ for every $t>0$;
\item there exists a constant $C>0$ (depending solely on $x_0, \lambda$ and $\Bb$) such that
\begin{equation}\label{eq:ilambda}
    I_\lambda (t) |\dot{x}(t)| \le C \quad \text{ for a.e.~$t \in (0,1)$},
\end{equation}
where 
\begin{equation}\label{eq:defIlambda}
I_\lambda (t) := \int_0^t \mathrm e^{\lambda(s-t)} \de s = \begin{cases} \frac{1-\mathrm{e}^{-\lambda t}}{\lambda} \quad &\text{ if } \lambda \ne 0, \\
t \quad &\text{ if } \lambda=0, \end{cases} \quad t \ge 0.
\end{equation}
\end{enumerate}
\end{theorem}
\begin{proof} The proof closely follows the one of \cite[Theorem 3.3]{BrezisFR} and it is divided in several claims.

\noindent\textbf{Claim 1.}
\emph{For every $y \in \intt{\dom(\Bb)}$ there exist $\varrho,M>0$ such that
 \[\varrho|v| \le \la v,y-x \ra + M(|x-y|+ \varrho) + \lambda^+(|x-y|+ \varrho )^2 \quad \text{ for every } (x,v) \in \Bb. \]}
 
Let $y \in \intt{\dom(\Bb)}$ and let $(x,v) \in \Bb$ be fixed. By Theorem \ref{thm:brezis2}(3), there exist $\varrho,M>0$ such that, for every $z \in \H$ with $|z|=1$ and every $w \in \mmo (y-\varrho z)$, it holds $|w| \le M$. Testing the $\lambda$-dissipativity of $\mmo$ with $(x,v),(y-\varrho z,w) \in \mmo$, we get
\[ \la v-w,x-y+\varrho z \ra \le \lambda |x-y+\varrho z|^2\]
so that
\begin{align*}
    \varrho \la v,z \ra &\le \la v, y-x\ra +\lambda^+ ( |x-y|^2+2\varrho \la x-y,z \ra + \varrho^2 |z|^2) + M (|x-y|+ \varrho |z|) \\
    & \le \la v, y-x\ra +M (|x-y|+ \varrho) + \lambda^+(|x-y|+\varrho)^2.
\end{align*}
Passing to the supremum in $z \in \H$ with $|z|=1$ proves the claim.\\

We consider, as in the proof of Theorem \ref{thm:brezis3}, the approximated problems
\[ \dot{x}_\tau(t) - \Bb_\tau (x_\tau(t))=0, \quad x_\tau(0)=x_0,  \]
where we are assuming from now on that $0<\tau<1/\lambda^+$.

\smallskip

\noindent\textbf{Claim 2.}
\emph{For every $T>0$, the curves $ x_\tau$ and $ \jJ_\tau (x_\tau)$ converge to $t \mapsto \Sgp_t (x_0)$ uniformly in $[0,T]$ as $\tau \downarrow 0$.}
\smallskip

Let us first show that $x_\tau$ converges to $t \mapsto \Sgp_t (x_0)$ uniformly in $[0,T]$: let us denote by $(\Sgp^\tau_t)_{t \ge 0}$ the semigroup associated by Theorem \ref{thm:brezis4} to the maximal $\frac{\lambda}{1-\lambda \tau}$-dissipative operator $\Bb_\tau$ (cf.~Theorem \ref{thm:brezis2}(5)), so that in particular $x_\tau (t)= \Sgp^\tau_t (x_0)$ for every $t \ge 0$. 
For every $y_0 \in \dom(\Bb)$ and $t \in [0,T]$, we estimate
\begin{align*}
|x_\tau(t) - \Sgp_t (x_0)| &\le |\Sgp^\tau_t (x_0) - \Sgp^\tau_t (y_0) | + | \Sgp^\tau_t (y_0) - \Sgp_t (y_0)| + |\Sgp_t (y_0) - \Sgp_t (x_0) | \\
& \le e^{\frac{\lambda}{1-\lambda \tau} t } |x_0-y_0| + C(\lambda, t) |\mmo^\circ (y_0)| \sqrt{\tau} + e^{\lambda t} |x_0-y_0| \\ 
& \le \left ( e^{\frac{\lambda^+}{1-\lambda \tau} T} + e^{\lambda^+ T} \right ) |x_0-y_0| + \sup_{t \in [0,T]} C(\lambda, t) |\mmo^\circ (y_0)|\sqrt{\tau},
\end{align*}
where we have used \eqref{smgprop} for $\mmo$ and $\mmo_\tau$ and \eqref{eq:estsemi}. 
Passing first to $\sup_{t \in [0,T]}$, then to the limit as $\tau \downarrow 0$ and finally to the infimum w.r.t.~$y_0 \in \dom(\Bb)$, gives the sought uniform convergence of $x_\tau$ to $t \mapsto \Sgp_t (x_0)$ in $[0,T]$. The argument for $\jJ_\tau (x_\tau)$ is similar: for every $t \in [0,T]$ and every $y_0 \in \dom(\Bb)$ we estimate
\begin{align*}
    &|\jJ_\tau \left(x_\tau (t)\right) - \Sgp_t (x_0)|\\
    &\le |\jJ_\tau \left(x_\tau (t)\right) - \jJ_\tau \left(\Sgp_t (x_0)\right)| + |\jJ_\tau \left(\Sgp_t (x_0)\right) - \jJ_\tau \left(\Sgp_t (y_0)\right)| + |\jJ_\tau \left(\Sgp_t (y_0)\right) - \Sgp_t (y_0) | + |\Sgp_t (y_0) - \Sgp_t (x_0)| \\ 
    & \le \frac{1}{1-\lambda \tau} |x_\tau(t)-\Sgp_t (x_0)| + \left (\frac{e^{\lambda t}}{1-\lambda \tau} + e^{\lambda t} \right )|x_0-y_0| + \tau |\Bb_\tau \left(\Sgp_t (y_0)\right)| \\
    & \le \frac{1}{1-\lambda \tau} |x_\tau(t)-\Sgp_t (x_0)| + \left (\frac{e^{\lambda t}}{1-\lambda \tau} + e^{\lambda t} \right )|x_0-y_0| + \frac{\tau e^{\lambda t}}{1-\lambda \tau} |\mmo^\circ (y_0)|\\
    & \le \frac{1}{1-\lambda \tau} \sup_{t \in [0,T]}|x_\tau(t)-\Sgp_t (x_0)| + \left (\frac{e^{\lambda^+ T}}{1-\lambda \tau} + e^{\lambda^+ T} \right )|x_0-y_0| + \frac{\tau e^{\lambda^+ T}}{1-\lambda \tau} |\mmo^\circ (y_0)|
\end{align*}
where we have used the $(1-\lambda \tau)^{-1}$-Lipschitzianity of $\jJ_\tau$ coming from Theorem \ref{thm:brezis1}(1), \eqref{smgprop} for $\Bb$, the definition of $\Bb_\tau$, Theorem \ref{thm:brezis2}(5) and Theorem \ref{thm:brezis3}(4) applied to $\Bb$ (notice that this is possible since $y_0 \in \dom(\Bb)$). Passing first to $\sup_{t \in [0,T]}$, then to the limit as $\tau \downarrow 0$ and finally to the infimum w.r.t.~$y_0 \in \dom(\Bb)$, concludes the proof of the claim.

\smallskip

\noindent\textbf{Claim 3.}
\emph{For every $T>0$ there exists a constant $M>0$ (not depending on $\tau$) such that $|\Bb_\tau \left(x_\tau (T)\right)| \le M$ for every $0<\tau<1/\lambda^+$.}
\smallskip

We fix some $y \in \intt{\dom(\Bb)}$ and we apply Claim 1 to $(x,v):= \left(\jJ_\tau \left(x_\tau(t)\right), \Bb_\tau \left(x_\tau (t)\right)\right) \in \Bb$, with $t \in [0,T]$ and $0<\tau<1/\lambda^+$ so that
\[ \varrho\, \left|\mmo_\tau \left(x_\tau (t)\right)\right| \le -\frac{1}{2} \frac{\de}{\de t} |x_\tau(t)-y|^2 + M\varrho + M\left|\jJ_\tau \left(x_\tau (t)\right)-y\right|+ \lambda^+ \left ( \left|\jJ_\tau \left(x_\tau (t)\right)-y\right|+\varrho \right )^2.\]
Integrating in $[0,T]$ and using Theorem \ref{thm:brezis3}(4) applied to $\mmo_\tau$, we get
\begin{align*}
     &\varrho\, \left|\mmo_\tau \left(x_\tau (T)\right)\right| \,I_{\frac{\lambda}{1-\lambda \tau}}(T) \\
     &\le \frac{1}{2} |x_0-y|^2 +M\varrho T + \int_0^T \left [  M\left|\jJ_\tau \left(x_\tau (t)\right)-y\right|+ \lambda^+ \left ( \left|\jJ_\tau \left(x_\tau (t)\right)-y\right|+\varrho \right )^2 \right ] \de t.
\end{align*}
By Claim 2, the right hand side of the previous inequality is uniformly bounded (w.r.t.~$\tau \in (0,1/\lambda^+)$) so that we conclude the proof of the claim.

\smallskip

\noindent\textbf{Claim 4.}
\emph{Proof of items (1), (2) and (3).}
\smallskip

By Claim 3, we have that for every $t>0$, up to an unrelabeled subsequence, $\mmo_\tau \left(x_\tau(t)\right) \weakto v$ for some $v \in \H$. By Claim 2, we have that $\jJ_\tau \left(x_\tau (t)\right) \to \Sgp_t (x_0)$ so that we deduce by Theorem \ref{thm:brezis2}(1) that $\Sgp_t (x_0) \in \dom(\Bb)$; this proves (2). We can then fix some $y \in \intt{\dom(\Bb)}$ and apply Claim 1 to $(x,v):=(x(t), \dot{x}_+(t))$, $t>0$, where $\dot{x}_+(t)$ is the right derivative of $t\mapsto x(t)$ at $t$. Indeed, since $\Sgp_t (x_0) = \Sgp_{t-\delta} \left(\Sgp_\delta (x_0)\right)$ and $\Sgp_\delta (x_0) \in \dom(\Bb)$ for every $0<\delta < t$ by (2), we can apply  Theorem \ref{thm:brezis3}(3) to get that $(x(t), \dot{x}_+(t))\in\mmo$. We then obtain
\[ \varrho |\dot{x}_+(t)| \le -\frac{1}{2} \frac{\de}{\de t} |x(t)-y|^2 + M\varrho + M |x(t)-y|+ \lambda^+( |x(t)-y|+\varrho)^2.\]
Integrating the above inequality in $[s,1]$ for any $0<s<1$, we get
\begin{align*}
    \varrho \int_s^1 |\dot{x}_+(t)| \de t \le \frac{1}{2} |x_0-y|^2 + M\varrho + \int_s^1 \left [ M |x(t)-y|+ \lambda^+( |x(t)-y|+\varrho)^2 \right] \de t.
\end{align*}
Thanks to \eqref{smgprop} and Theorem \ref{thm:brezis3}(4) we have that for every $t \in [s,1]$ it holds
\[ |x(t)-y| \le e^{\lambda t} |x_0-y| + |\Sgp_t (y) - y| \le e^{\lambda^+ } (|x_0-y| + |\mmo^\circ (y)|).\]
This proves that there exists some constant $C>0$ (depending solely on $x_0,\lambda,y,\varrho$ and $M$) such that 
\[ \int_s^1 |\dot{x}_+(t)| \de t \le C \quad \text{ for every } s \in (0,1).\]
Since the constant is independent on $s$, we conclude that $x$ is absolutely continuous in $(0,1)$; using also Theorem \ref{thm:brezis3}, this proves (1).  To prove (3), it is enough to use the above estimate with Theorem \ref{thm:brezis3}(3),(4).
\end{proof}

\begin{corollary}\label{cor:samesgp} Let $\Bb_1$ and $\Bb_2$ be maximal $\lambda$-dissipative operators with $\overline{\dom(\Bb_1)}=\overline{\dom(\Bb_2)}$ and let $\Sgp^1_t$ and $\Sgp^2_t$ be the semigroups of Lipschitz transformations associated to $\Bb_1$ and $\Bb_2$ respectively given by Theorem \ref{thm:brezis4}. If for every $x \in \overline{\dom(\Bb_1)}=\overline{\dom(\Bb_2)}$ there exists $\delta>0$ such that $\Sgp^1_t(x) = \Sgp^2_t(x)$ for every $0\le t<\delta$, then $\Bb_1=\Bb_2$.
\end{corollary}
\begin{proof} This can be proven as in \cite[Theorem 4.1]{BrezisFR}: let $x \in \dom(\Bb_1)$ and let $y \in \dom(\Bb_2)$; by hypotesis, we can find some $\delta>0$ such that $\Sgp_t^1 (x) =\Sgp_t^2 (x)$ and $\Sgp_t^1 (y) =\Sgp_t^2 (y)$ for every $0\le t < \delta$. Thus, for every $0\le t < \delta$, we have
 \begin{align*}
     \la \frac{\Sgp_t (x) -x}{t} - \frac{\Sgp_t (y) -y}{t}, x-y \ra &\le\frac{1}{t} \left|\Sgp_t (x) - \Sgp_t (y)\right| |x-y| - \frac{1}{t}|x-y|^2 \\
     & \le \frac{e^{\lambda t}-1}{t}|x-y|^2,
 \end{align*}
 where we have used that $\Sgp_t:=\Sgp^1_t=\Sgp^2_t$ is $e^{\lambda t}$-Lipschitz by \eqref{smgprop}.
 Passing to the limit as $t \downarrow 0$ and using Theorem \ref{thm:brezis3}(3), we get that
 \[ \la \Bb_1^\circ (x)-\Bb_2^\circ (y), x-y \ra \le \lambda |x-y|^2.\]
 By \eqref{eq:141} we get that $\dom(\Bb_1)=\dom(\Bb_2)$ and thus that $\Bb_1^\circ = \Bb_2^\circ$. By \eqref{eq:143} we thus get that $\Bb_1= \Bb_2$.
\end{proof} 

\subsection{Dissipative operators and closed/finite-dimensional spaces}\label{sec:mmofin}

\begin{proposition}\label{prop:brezz} Let $\mmo$ be a maximal $\lambda$-dissipative operator, let $\cY \subset \H$ be a closed subspace and suppose that $\cY$ is invariant for the resolvent of $\mmo$, i.e.~$\jJ_\tau (x) \in \cY$ for every $x \in \cY$. Then the operator $\mmo_\cY:=\mmo \cap (\cY \times \cY)$ has the following properties:
\begin{enumerate}[$(i)$]
    \item $\mmo_\cY$ is maximal $\lambda$-dissipative in $\cY$;
    \item the resolvent (resp.~the semigroup) of $\mmo$ coincides with the resolvent (resp.~the semigroup) of $\mmo_\cY$ when restricted to $\cY$.
    \item $\dom(\mmo_\cY)=\dom(\mmo) \cap \cY$;
    \item $\overline{\dom(\mmo_\cY)}= \overline{\dom(\mmo)} \cap \cY$;
    \item $(\mmo_\cY)^\circ (x) =\mmo^\circ (x) $ for every $x\in \dom(\mmo_\cY)$.
\end{enumerate}
\begin{proof}    
It is clear that the restriction of $\jJ_\tau$, the resolvent of $\mmo$, to $\cY$ provides the resolvent operator for $\mmo_\cY$ and it is a $(1-\lambda\tau)^{-1}$-Lipschitz map defined on the whole $\cY$: by Theorem \ref{thm:brezis1}(1), $\mmo_\cY$ is maximal $\lambda$-dissipative in $\cY$. This proves $(i)$ and $(ii)$, also using the exponential formula (cf.~Theorem \ref{thm:brezis4}). To prove $(iii)$, it is enough to show the inclusion ``$\supset$'': if $x \in \dom(\mmo) \cap \cY$, then $(\jJ_\tau (x)-x)/\tau \in \cY$ is bounded by Theorem \ref{thm:brezis2}(5) and, by the same result together with $(ii)$, it must be that $x \in \dom(\mmo_\cY)$. The inclusion ``$\subset$'' in $(iv)$ follows by $(iii)$, while the inclusion ``$\supset$'' follows simply noticing that, if $x \in \overline{\dom(\mmo)}\cap \cY$, then $\jJ_\tau (x) \to x$ by Theorem \ref{thm:brezis2}(4) and $\jJ_\tau (x) \in \dom(\mmo) \cap \cY=\dom(\mmo_\cY)$. Assertion $(v)$ follows again by Theorem \ref{thm:brezis2}(5).
\end{proof}   
\end{proposition}

\begin{corollary}\label{cor:vipregobasta} Let $\mmo$ be a maximal $\lambda$-dissipative operator and suppose that $\H$ has finite dimension. Then the conclusions of Theorem \ref{thm:brezreg} hold.
\end{corollary}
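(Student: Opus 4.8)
The plan is to reduce to Theorem \ref{thm:brezreg}, whose only hypothesis beyond maximal $\lambda$-dissipativity is that the domain have nonempty interior; in finite dimensions one recovers this after restricting $\mmo$ to the affine hull of its domain, where the relative interior is automatically nonempty.

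First I would perform a harmless translation. Fix $a\in\dom(\mmo)$ and set $\mmo':=\{(x-a,v):(x,v)\in\mmo\}$. Then $\mmo'$ is again maximal $\lambda$-dissipative with $\dom(\mmo')=\dom(\mmo)-a\ni 0$, its generated semigroup is $\Sgp'_t x_0=\Sgp_t(x_0+a)-a$, and $(\mmo')^\circ(x)=\mmo^\circ(x+a)$. All three conclusions of Theorem \ref{thm:brezreg} are preserved under such a translation: the curve $t\mapsto\Sgp'_t(x_0-a)$ is exactly $t\mapsto\Sgp_t x_0-a$, hence it has the same velocity, the same local regularity, the same estimate $I_\lambda(t)|\dot x(t)|\le C$ in $(0,1)$, and $\Sgp'_t(x_0-a)\in\dom(\mmo')$ if and only if $\Sgp_t x_0\in\dom(\mmo)$. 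So it suffices to treat the case $0\in\dom(\mmo)$.

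Next I would introduce the closed linear subspace $\cY:=\operatorname{span}(\dom(\mmo))$ of $\H$ (closed because $\dim\H<\infty$), which, since $0\in\overline{\dom(\mmo)}$ and $\overline{\dom(\mmo)}$ is convex, coincides with the affine hull of $\overline{\dom(\mmo)}$. For every $0<\tau<1/\lambda^+$ the resolvent satisfies $\jJ_\tau(\H)=\dom(\mmo)\subset\cY$, so $\cY$ is invariant for $\jJ_\tau$, and Proposition \ref{prop:brezz} applies: the operator $\mmo_\cY:=\mmo\cap(\cY\times\cY)$ is maximal $\lambda$-dissipative in the Hilbert space $\cY$, with $\dom(\mmo_\cY)=\dom(\mmo)$, $\overline{\dom(\mmo_\cY)}=\overline{\dom(\mmo)}$, $(\mmo_\cY)^\circ=\mmo^\circ$ on $\dom(\mmo)$, and its resolvent and generated semigroup coincide with the restrictions to $\cY$ of those of $\mmo$; in particular $\Sgp_t x_0=\Sgp^{\cY}_t x_0$ for every $x_0\in\overline{\dom(\mmo)}$.

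Finally, $\overline{\dom(\mmo_\cY)}=\overline{\dom(\mmo)}$ is a nonempty convex subset of the finite-dimensional space $\cY$ whose affine hull is all of $\cY$, so its interior relative to $\cY$ — equivalently its relative interior, which is also the relative interior of $\operatorname{co}(\dom(\mmo_\cY))$ — is nonempty. By Theorem \ref{thm:brezis2}(3) applied inside $\cY$ this forces the interior of $\dom(\mmo_\cY)$ relative to $\cY$ to be nonempty, so Theorem \ref{thm:brezreg} is applicable to $\mmo_\cY$ in $\cY$. Applying it, reading the conclusions back through Proposition \ref{prop:brezz} (which identifies $\Sgp_t$ with $\Sgp^{\cY}_t$ on $\overline{\dom(\mmo)}$, $\dom(\mmo_\cY)$ with $\dom(\mmo)$, and the two minimal selections), and then undoing the translation, gives items (1)--(3) of Theorem \ref{thm:brezreg} for $\mmo$. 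The only genuinely non-formal ingredient is the convex-analytic fact that a nonempty convex set in finite dimensions has nonempty relative interior, coinciding with its interior inside its affine hull; everything else is bookkeeping with Proposition \ref{prop:brezz}, which is where I expect the write-up (rather than the ideas) to require some care.
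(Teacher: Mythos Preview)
Your argument is correct and follows essentially the same route as the paper: translate so that $0\in\dom(\mmo)$, pass to the closed subspace $\cY$ spanned by $\dom(\mmo)$, use Proposition~\ref{prop:brezz} to identify $\mmo_\cY$ with $\mmo$ on all the relevant data, invoke the finite-dimensional fact that a nonempty convex set has nonempty relative interior together with Theorem~\ref{thm:brezis2}(3), and then apply Theorem~\ref{thm:brezreg} to $\mmo_\cY$ in $\cY$. Your write-up is slightly more explicit about the translation and the affine-hull bookkeeping, but the structure and ingredients are the same.
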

\begin{proof} Up to a translation, we can assume that $0 \in \dom(\Bb)$. Let $\cY$ be the subspace generated by $\dom(\Bb)$. Since $\H$ is finite dimensional, then $\cY$ is closed. We can thus apply Proposition \ref{prop:brezz} and obtain that $\mmo_\cY:= \mmo \cap (\cY \times \cY)$ is maximal $\lambda$-dissipative in $\cY$, has the same domain of $\mmo$ and its semigroup coincides with the semigroup generated by $\mmo$. Since $\H$ is finite dimensional, the relative interior of $\conv{\dom(\mmo_\cY)}$ in $\cY$ is nonempty and thus we conclude by Theorem \ref{thm:brezis2}(3) that the relative interior of $\dom(\mmo_\cY)$ in $\cY$ is nonempty, so that we can apply Theorem \ref{thm:brezreg} to $\mmo_\cY$ and obtain the conclusion of such theorem for the semigroup generated by $\mmo$.
\end{proof}

\subsection{Extensions of dissipative operators}\label{sec:mmoext}

The following proposition is a slight generalization of \cite[Lemma 2.3]{attouch} but we report its proof for the reader's convenience.

\begin{proposition} \label{prop:robert} Let $\Bb \subset \H \times \H$ be maximal $\lambda$-dissipative and let $\Gg \subset \Bb$ be s.t.~$\dom(\Gg)$ is dense in $\dom(\Bb)$. Then for every $x \in \intt{\dom(\Bb)}$ it holds
\begin{equation}
    \Bb (x) = \clconv{ \left \{ v \in \H \mid \exists (x_n,v_n)_{n\in\N} \subset\Gg \text{ s.t. } x_n \to x, \, v_n \weakto v \right \}}.
\end{equation}
\end{proposition}
\begin{proof} Let $x \in \intt{\dom(\Bb)}$ and let us define
\[ \Mm (x):= \clconv{ \left \{ v \in \H \mid \exists (x_n,v_n)_{n\in\N} \subset \Gg \text{ s.t. } x_n \to x, \, v_n \weakto v \right \}}.\]
If $(x_n, v_n)_{n\in\N} \subset \Gg \subset \Bb$ with $x_n \to x$ and $v_n \weakto v$, by $\lambda$-dissipativity of $\Bb$, we have that
\[ \scalprod{v_n-w}{x_n-y} \le  \lambda|x_n-y|^2 \quad \forall \, (y,w) \in \Bb.\]
Passing to the limit we get
\[ \scalprod{v-w}{x-y} \le \lambda|x-y|^2 \quad \forall \, (y,w) \in \Bb,\]
so that $v \in \Bb (x)$ by \eqref{eq:141}. This, together with the closure and convexity of $\Bb (x)$ given by Theorem \ref{thm:brezis2}(2), proves that $\Mm (x) \subset \Bb (x)$. Let us prove the other inclusion by contradiction: suppose that there is some $v \in \Bb (x)$ s.t.~$v \notin \Mm (x)$. The sets $\{v\}$ and $\Mm (x)$ are disjoint, closed, convex and $\{v\}$ is also compact. By Hahn-Banach's theorem we can find some $z \in \H$ with $|z|=1$ s.t.
\begin{equation} \label{eq:contr}
    \scalprod{v}{z} > \scalprod{u}{z} \quad \forall \, u \in \Mm (x).
\end{equation}
Since $x \in \intt{\dom(\Bb)}$, if we define $z_n:= x+ z/n$, we have that $z_n \in \intt{\dom(\Bb)}$ for $n$ sufficiently large. We can thus find $x_n \in \dom(\Gg)$ s.t.~$|x_n-z_n| < n^{-2}$. Clearly $x_n \to x$ and it is easy to check that $(x_n-x)/|x_n-x| \to z$. Since $x_n \in \dom(\Gg)$, we can find $v_n \in \Gg (x_n)$. Since $\Bb$ is maximal, it is locally bounded (cf.~Theorem \ref{thm:brezis2}(3)) at $x$. Given that $\Gg \subset \Bb$ and since $x_n \to x$, the sequence $(v_n)_{n\in\N}$ is bounded so that, up to an unrelabeled subsequence, it converges weakly to some point $u \in \H$. By $\lambda$-dissipativity of $\Bb$ we have
\[ \scalprod{v-v_n}{x-x_n} \le  \lambda|x-x_n|^2  \quad \forall \, n \in \N,\]
so that, dividing by $|x_n-x|$ and passing to the limit, we obtain
\[ \scalprod{v-u}{z} \le 0,\]
a contradiction with \eqref{eq:contr} since, obviously, $u \in \Mm (x)$.
\end{proof}

The following proposition is an immediate consequence of \cite[Theorem 1]{Qi} and Remark \ref{rem:transff}.
\begin{proposition}\label{prop:qi} Let $\Bb \subset \H \times \H$ be $\lambda$-dissipative with open non empty convex domain. Then there exists a unique maximal $\lambda$-disipative $\hat{\Bb} \supset \Bb$ with $\dom(\hat{\Bb}) \subset \overline{\dom(\Bb)}$ and it is characterized by
\[ \hat{\Bb} = \left \{ (x,v) \in \overline{\dom(\Bb)} \times \H \mid \scalprod{v-w}{x-y} \le \lambda |x-y|^2 \quad \forall \, (y,w) \in \Bb \right\}.\]
\end{proposition}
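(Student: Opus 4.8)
The plan is to reduce the statement to the analogous result for monotone operators, \cite[Theorem 1]{Qi}, by means of the linear bijection already exploited in Remark \ref{rem:transff}. Concretely, I would introduce the auxiliary operator $\mathbb A:=\lambda\,\ii_\H-\Bb=\big\{(x,\lambda x-v):(x,v)\in\Bb\big\}$, which is exactly $-\Bb^\lambda$ in the notation of that remark. Since the map $\mathbb C\mapsto\lambda\,\ii_\H-\mathbb C$ leaves first components unchanged, $\dom(\mathbb A)=\dom(\Bb)$ is open, non-empty and convex; and by Remark \ref{rem:transff} the $\lambda$-dissipativity of $\Bb$ is equivalent to the monotonicity of $\mathbb A$. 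Hence \cite[Theorem 1]{Qi} applies to $\mathbb A$ and yields a unique maximal monotone extension $\hat{\mathbb A}\supset\mathbb A$ with $\dom(\hat{\mathbb A})\subset\overline{\dom(\mathbb A)}=\overline{\dom(\Bb)}$, characterized by
\[
\hat{\mathbb A}=\Big\{(x,z)\in\overline{\dom(\Bb)}\times\H:\ \scalprod{z-w'}{x-y}\ge0\ \ \text{for every }(y,w')\in\mathbb A\Big\}.
\]

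Next I would transfer this back to $\Bb$. The involution $\mathbb C\mapsto\lambda\,\ii_\H-\mathbb C$ preserves domains, respects inclusions, and --- again by Remark \ref{rem:transff} --- sends monotone (resp.~maximal monotone) operators to $\lambda$-dissipative (resp.~maximal $\lambda$-dissipative) ones. Therefore $\hat\Bb:=\lambda\,\ii_\H-\hat{\mathbb A}$ is a maximal $\lambda$-dissipative operator extending $\Bb=\lambda\,\ii_\H-\mathbb A$, with $\dom(\hat\Bb)=\dom(\hat{\mathbb A})\subset\overline{\dom(\Bb)}$. It is moreover the unique such extension: any maximal $\lambda$-dissipative $\Bb'\supset\Bb$ with $\dom(\Bb')\subset\overline{\dom(\Bb)}$ would produce, via the same involution, a maximal monotone extension $\lambda\,\ii_\H-\Bb'\supset\mathbb A$ with domain contained in $\overline{\dom(\mathbb A)}$, so the uniqueness clause of \cite[Theorem 1]{Qi} forces $\lambda\,\ii_\H-\Bb'=\hat{\mathbb A}$ and hence $\Bb'=\hat\Bb$.

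Finally I would unwind the characterization. A pair $(x,v)$ belongs to $\hat\Bb$ if and only if $(x,\lambda x-v)\in\hat{\mathbb A}$, that is, if and only if $x\in\overline{\dom(\Bb)}$ and, for every $(y,w)\in\Bb$ (so that $(y,\lambda y-w)\in\mathbb A$),
\[
\scalprod{(\lambda x-v)-(\lambda y-w)}{x-y}=-\scalprod{v-w}{x-y}+\lambda|x-y|^2\ \ge\ 0,
\]
which is precisely $\scalprod{v-w}{x-y}\le\lambda|x-y|^2$. This gives the asserted description of $\hat\Bb$. I expect the whole argument to be bookkeeping around this change of variables; the only point demanding care is to match the precise hypotheses and closed-form conclusion used above with those actually stated in \cite[Theorem 1]{Qi} (openness and convexity of the domain, the Minty-type description of the extension, and its uniqueness), after which nothing substantial remains.
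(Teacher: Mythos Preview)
Your proposal is correct and follows exactly the approach the paper indicates: the paper simply states that the proposition is an immediate consequence of \cite[Theorem 1]{Qi} and Remark \ref{rem:transff}, and your argument is precisely the unpacking of that reduction via the involution $\Bb\mapsto\lambda\,\ii_\H-\Bb$.
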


As a consequence of Propositions \ref{prop:robert} and \ref{prop:qi} we can prove the following.

\begin{theorem} \label{theo:chefatica} Let $\Bb \subset \H \times \H$ be $\lambda$-dissipative with 
\[ C:= \overline{\dom(\Bb)} \text{ convex}, \quad \intt{\dom(\Bb)} \ne \emptyset.\]
Then there exists a unique maximal $\lambda$-dissipative $\hat{\Bb} \supset \Bb$ with $\dom(\hat{\Bb}) \subset C$ and it is characterized by
\begin{equation}\label{eq:thechar}
 \hat{\Bb} = \left \{ (x,v) \in C \times \H \mid \scalprod{v-w}{x-y} \le \lambda |x-y|^2 \quad \forall \, (y,w) \in \Bb \right\}.
\end{equation}
Moreover, for every $x \in \intt{\dom(\hat{\Bb})}$ it holds
\begin{equation}\label{eq:sectionasconvex}
    \hat{\Bb} (x) = \clconv{ \left \{ v \in \H \mid \exists (x_n,v_n)_{n\in\N} \subset \Bb \text{ s.t. } x_n \to x, \, v_n \weakto v \right \}}.
\end{equation}
Finally
\begin{equation} \label{eq:inclusions}
 \intt{C} = \intt{\dom(\hat{\Bb})} \subset \dom(\hat{\Bb}) \subset \overline{\dom(\hat{\Bb})} = C.
\end{equation}
\end{theorem}
\begin{proof}
Let $\Bb'$ be a $\lambda$-dissipative  maximal extension of $\Bb$ with $\dom(\Bb') \subset C$, whose existence is granted by Theorem \ref{thm:brezis1}(2); by $\lambda$-dissipativity of $\Bb'$ and since $\Bb \subset \Bb'$, then $\Bb' \subset \hat{\Bb}$, where $\hat{\Bb}$ is defined as in \eqref{eq:thechar}. We need to prove the other inclusion.\\
Since $\dom(\Bb) \subset \dom(\Bb') \subset C$, we have that $\overline{\dom(\Bb')}=C$. Moreover, given that $\Bb'$ is maximal $\lambda$-dissipative and since the interior of its domain is nonempty, we have by Theorem \ref{thm:brezis2}(3) that
\[ \intt{\dom(\Bb')} \text{ is convex }, \quad \intt{\dom(\Bb')} = \intt{\overline{\dom(\Bb')}} = \intt{C}.\]
It is then clear that $\Bb_0:= \Bb' \cap (\intt{\dom(\Bb')} \times \H)$ is $\lambda$-dissipative with open and nonempty convex domain so that, by Proposition \ref{prop:qi}, there exists a unique maximal $\lambda$-dissipative  $\Bb'' \supset \Bb_0$ with $\dom(\Bb'') \subset \overline{\dom(\Bb_0)} = \overline{\intt{\dom(\Bb')}}=\overline{\intt{C}} =C$ ($C$ is convex) and it is characterized by
\begin{equation}\label{eq:almostdone} \Bb'' = \left \{ (x,v) \in C \times \H \mid \scalprod{v-w}{x-y} \le \lambda |x-y|^2 \quad \forall \, (y,w) \in \Bb_0 \right\}.
\end{equation}
Since $\Bb' \supset \Bb_0$, $\Bb'$ is maximal $\lambda$-dissipative  and $\dom(\Bb') \subset C$, it must be that $\Bb' = \Bb''$.\\
By \eqref{eq:almostdone}, we need to prove that
\begin{equation}\label{eq:theend} \hat{\Bb} \subset \left \{ (x,v) \in C \times \H \mid \scalprod{v-w}{x-y} \le  \lambda|x-y|^2 \quad \forall \, (y,w) \in \Bb_0 \right\}.
\end{equation}
To this aim we apply Proposition \ref{prop:robert} to the maximal $\lambda$-dissipative $\Bb'$ and its subset $\Bb$ noticing that $\dom(\Bb)$ is dense in $\dom(\Bb')$. In this way, we obtain that
\begin{equation} \label{eq:reachb0}
    \Bb_0 (y) = \clconv{ \overline{\Bb}(y)}, \quad  y \in \dom(\Bb_0),
\end{equation}
where
\[ \overline{\Bb} (y)= \left \{ u \in \H \mid \exists (y_n,u_n)_{n\in\N} \subset \Bb \text{ s.t. } y_n \to y, \, u_n \weakto u \right \}.\]
If $(x,v) \in \hat{\Bb}$ and $(y,w) \in \dom(\Bb_0) \times \H$ is such that $w \in \overline{\Bb} (y)$, we can find a sequence $(y_n, u_n)_{n\in\N} \subset \Bb$ s.t.~$y_n \to y$ and $u_n \weakto w$; then, by the very definition of $\hat{\Bb}$, we have
\[ \scalprod{v-u_n}{x-y_n} \le \lambda |x-y_n|^2 \quad \forall \, n \in \N,\]
so that, passing to the limit, we get
\[ \scalprod{v-w}{x-y} \le\lambda |x-y|^2.\]
This proves that, if $(x,v) \in \hat{\Bb}$, then
\begin{equation} \label{eq:please}
 \scalprod{v-w}{x-y} \le \lambda |x-y|^2  \quad \forall \, w \in \overline{\Bb}(y), \quad \forall \, y \in \dom(\Bb_0).
\end{equation}
Finally, if $(x,v) \in \hat{\Bb}$ and $(y,w) \in \Bb_0$, we can find a sequence $(N_n)_{n\in\N} \subset \N$, numbers $(\alpha_i^n)_{i=1}^{N_n} \subset [0,1]$ and points $(w_i^n)_{i=1}^{N_n} \subset \overline{\Bb}(y)$ s.t.
\[ \sum_{i=1}^{N_n}\alpha_i^n =1 \, \, \forall n \in \N, \quad \lim_{n \to + \infty} \sum_{i=1}^{N_n} \alpha_i^n w_i^n = w.\]
By \eqref{eq:please}
\[ \scalprod{v-w_i^n}{x-y} \le\lambda |x-y|^2  \quad \forall i=1, \dots, N_n, \quad \forall n \in \N, \]
so that, multiplying by $\alpha_i^n$ and summing up w.r.t.~$i$ we obtain
\[ \scalprod{ v- \sum_{i=1}^{N_n} \alpha_i^n w_i^n }{x-y} \le \lambda |x-y|^2 \quad \forall n \in \N.\]
Passing to the limit as $n \to + \infty$, we obtain
\[ \scalprod{v-w}{x-y} \le \lambda |x-y|^2,\]
so that \eqref{eq:theend} holds. Finally notice that \eqref{eq:sectionasconvex} is already stated in \eqref{eq:reachb0} since we just proved that $\Bb'=\Bb''=\hat{\Bb}$.
\end{proof}

As a consequence, we  have the following corollary.

\begin{corollary}\label{cor:maximal} Let $\Bb \subset \H \times \H$ be as in Theorem \ref{theo:chefatica} and let $\Gg: \intt{C} \to \H$ be a single-valued selection of the maximal $\lambda$-dissipative extension $\hat{\Bb}$ of $\Bb$. Then the unique maximal $\lambda$-dissipative extension of $\Gg$ with domain included in $C$, $\hat \Gg$, coincides with $\hat \Bb$ and in particular
\begin{equation}\label{eq:condmax}
(x,v) \in \hat \Bb \Leftrightarrow x \in C, \scalprod{v-\Gg (y)}{x-y}\le \lambda |x-y|^2 \quad  \forall y \in \intt{C}.
\end{equation}
\end{corollary}
\medskip

Let us consider a different situation when
we do not assume that $\dom(\Bb)$ 
contains interior points
but there exists a subset $D$ dense in $\dom(\Bb)$
which is invariant with respect to the resolvent map $\resolvent\tau$, i.e.
\begin{equation}
  \label{eq:147}
  \overline{D}\supset \dom(\Bb)\ 
  \text{and}\quad
  \forall \, x\in D,\  0<\tau<1/\lambda^+  \quad
  \exists \,x_\tau\in D:\,\,
  x_\tau-\tau \Bb (x_\tau)\ni x.
\end{equation}
Since $\Bb$ is $\lambda$-dissipative, the point $x_\tau$
solving the inclusion in
\eqref{eq:147} is unique and defines a map $\resolvent\tau:D\to
D\cap \dom(\Bb)$.
\begin{lemma}
  \label{le:easier}
  Let $\Bb \subset \H \times \H$ be $\lambda$-dissipative
  with $C:=\overline{\dom(\Bb)}$ convex, let us
  assume that $D\subset \H$ satistifies \eqref{eq:147}, and let us
  set $\Bb_0:=\Bb\cap (D\times \H)$. The following hold:
  \begin{enumerate}
  \item $\Bb$ admits a unique maximal $\lambda$-dissipative
    extension $\hat \Bb$ with $\dom(\hat{\Bb}) \subset C$
    characterized by
    \begin{equation}
      \label{eq:145}
      \begin{aligned}
        \hat{\Bb} = \Big\{ &(x,v) \in C \times \H \mid \la v-v_0, x-x_0
        \ra \le \lambda |x-x_0
        |^2 \text{ for every } (x_0,v_0)\in \Bb_0
        \Big \}.
      \end{aligned}
    \end{equation}
    \item If moreover the interior of $\overline D$ contains $C$,
      we have
    \begin{equation}
      \label{eq:152}
      \hat\Bb=
      \Big\{(x,v)\in \H\times \H:
      \exists\,(x_n,v_n)_{n\in\N}\subset \Bb_0:x_n\to x,v_n\to v\quad\text{as }n\to+\infty\Big\}.
    \end{equation}
  \end{enumerate}
\end{lemma}
\begin{proof}

  We first prove item (1).  
  Let $\Bb'$ be any maximal $\lambda$-dissipative extension of $\Bb$ with domain
  included in $C$ (whose existence is granted by Theorem \ref{thm:brezis1}(2)) and let $\jJ_\tau'$ be the resolvent
  associated with $\Bb'$. By
  dissipativity of $\Bb'$ and since $\Bb_0  \subset \Bb \subset \Bb'$, we have that $\Bb' \subset \hat{\Bb}$ defined as in \eqref{eq:145}. We need to prove the other inclusion.\\
  Clearly, the restriction of $\jJ_\tau'$ to
  $D$
  coincides with $\resolvent\tau$; since $ \jJ_\tau'$
  is Lipschitz and $D$ is dense in $C$, it is the unique Lipschitz extension of
  $\resolvent\tau$ to $\overline D \supset C$.
  
  If $(x,v) \in \hat{\Bb}$, \eqref{eq:145}
  and the fact that for every $y\in D$,
  $\frac 1\tau(\jJ_\tau (y)-y)\in \Bb \left(\jJ_\tau (y)\right)$ yield by density that
  \begin{equation}
    \label{eq:149}
    \la v-\tau^{-1} (\jJ_\tau' (y)-y),x-\jJ_\tau' (y)\ra \le 
     \lambda |x- \jJ_\tau' (y)|^2
    \quad
    \forall\, y\in \dom(\Bb'), \quad \forall \,  0<\tau<1/\lambda^+,
  \end{equation}
  and passing to
  the limit as $\tau\downarrow0$ we obtain that
  \begin{equation}
    \label{eq:150}
    \la v-\Bb'^\circ (y),x-y\ra \le  \lambda |x-y|^2 \quad
    \forall\, y\in \dom(\Bb'),
  \end{equation}
  where we also used Theorem \ref{thm:brezis2}(4), (5).
  We can then apply \eqref{eq:143} and conclude that $(x,v)\in
  \Bb'$.
\smallskip

 We prove item (2).
  Since $\overline{\Bb_0}\subset
  \hat\Bb$, it is sufficient to prove the opposite inclusion
  $\hat\Bb\subset \overline{\Bb_0}$.
  Let $(x,v)\in \hat \Bb$, let $0<\tau<1/\lambda^+$ and set $y:=x- \tau v$.
  Clearly $\jJ_\tau'  (y)=x$;
  since $\overline D$ contains a
  neighborhood of every element of $\dom(\hat \Bb)\subset C$,
  for sufficiently small $\tau>0$
  there exists a sequence $(y_n)_{n\in\N} \subset D$ converging to
  $y$ as $n\to+\infty$.
  Setting $x_{n}:=\jJ'_\tau (y_{n})$ and
  $v_{n}:=(x_{n}-y_n)/\tau \in \Bb (x_{n})$, we clearly have
  $\lim_{n\to+\infty} x_{n}=x,$ $\lim_{n\to+\infty}v_{n}=v$.
\end{proof}

\begin{corollary}
  \label{cor:speriamo-che-sia-ultimo}
  Let $\Bb \subset \H \times \H$ be
  maximal $\lambda$-dissipative, let us
  assume that $D\subset \H$ satistifies \eqref{eq:147} and the
  interior of $\overline D$ contains $C:=\overline{\dom(\Bb)}$. The following hold:
  \begin{enumerate}
  \item For every $x\in\dom(\Bb)$ there exists a sequence
    $x_n\in D\cap \dom(\Bb)$ converging to $x$ such that
    $\Bb^\circ (x_n)\to \Bb^\circ (x)$ as $n\to+\infty$.
  \item $\Bb$ can be determined by the restriction of the minimal
    section $\Bb^\circ$ to $D$ i.e.
    \begin{equation}
      \label{eq:145bis}
      \begin{aligned}
        \Bb = \Big\{ &(x,v) \in \overline{\dom(\Bb)} \times \H \mid
        \la v-\Bb^\circ (x_0),x-x_0
        \ra \le \lambda |x-x_0
        |^2 \text{ for every } x_0\in D\cap \dom(\Bb)
        \Big \}.
      \end{aligned}
    \end{equation}
  \end{enumerate}
\end{corollary}
\begin{proof}
  We first prove item (1). Since $\Bb$ is maximal $\lambda$-dissipative, the closure
  of its domain $C$ is convex (see Theorem \ref{thm:brezis2}(4)).
  We can thus apply the second item of the previous Lemma
  \ref{le:easier}
  (in this case $\hat\Bb=\Bb$)
  to find a sequence $(x_n,v_n)_{n\in\N}\subset \Bb\cap (D\times \H)$
  such that
  $x_n\to x$ and $v_n\to \Bb^\circ (x)$.
  Let us first prove that $\Bb^\circ (x_n)\weakto \Bb^\circ (x)$ weakly in
  $\H$ as $n\to +\infty$: extracting an unrelabeled subsequence, since $|\Bb^\circ (x_n)|\le |v_n|$ is bounded,
  we can suppose that there exists an increasing subsequence $\left(n(k)\right)_{k\in\N}$
  and an element $v\in \H$ such that
  $\Bb^\circ \left(x_{n(k)}\right)\weakto v$ as $k\to+\infty$.
  Since the graph of $\Bb$ is strongly-weakly closed (cf.~Theorem \ref{thm:brezis2}(1)), we
  deduce that $(x,v)\in \Bb$ so that $|v|\ge|\Bb^\circ (x)|$.
  On the other hand, the lower semicontinuity of the norm yields
  \[ |\Bb^\circ (x)| \le |v|\le \liminf_{k\to+\infty}\left|\Bb^\circ \left(x_{n(k)}\right)\right| \le \limsup _{k\to+\infty}\left|\Bb^\circ \left(x_{n(k)}\right)\right|
  \le \limsup_{k\to+\infty}|v_{n(k)}|=|\Bb^\circ (x)|.\]

  We deduce that $\Bb^\circ \left(x_{n(k)}\right)\weakto \Bb^\circ (x)$
  and $\lim_{k\to+\infty}\left|\Bb^\circ \left(x_{n(k)}\right)\right|=|\Bb^\circ (x)|$ so that the
  convergence is also strong. Since the starting (unrelabeled) subsequence was arbitrary, we deduce the strong convergence of the whole sequence.

  Item (2) now follows easily by approximation using
  the item (1) and Theorem \ref{thm:brezis2}(6).
\end{proof}

\section{Borel partitions and almost optimal couplings}\label{sec:appborel}
In this appendix we summarize some of the results of \cite{CSS2piccolo} related to standard Borel spaces, Borel partitions and optimal couplings between probability measures that have been used throughout the whole paper. We refer to \cite[Section 3]{CSS2piccolo} for the proofs.

\begin{definition}\label{def:sbs}
A \emph{standard Borel space} $(\Omega, \cB)$ is a measurable space that is isomorphic (as a measurable space) to a Polish space. Equivalently, there exists a Polish topology $\tau$ on $\Omega$ such that the Borel sigma algebra generated by $\tau$ coincides with $\cB$. We say that a  probability   measure $\P$ on $(\Omega, \cB)$ is nonatomic  if $\P(\{\omega\}) = 0$ for every $\omega \in \Omega$ (notice that $\{\omega\} \in \cB$ since it is compact in any Polish topology on $\Omega$).
\end{definition}
If $(\Omega, \cB)$ is a standard Borel space endowed with a nonatomic probability measure $\P$, we denote by $\rmS(\Omega, \cB, \P)$ the class of $\cB$-$\cB$-measurable maps $g:\Omega\to\Omega$ which are essentially injective and measure-preserving, meaning that there exists a full $\P$-measure set $\Omega_0 \in \cB$ such that $g$ is injective on $\Omega_0$ and $g_\sharp \P =\P$. If $\mathcal{A} \subset \cB$ is a sigma algebra on $\Omega$ we denote by $\rmS(\Omega, \cB, \P ; \mathcal{A})$ the subset of $\rmS(\Omega, \cB, \P)$ of $\mathcal{A}-\mathcal{A}$ measurable maps.

We will often use the notation
\[ I_N:=\{0, \dots, N-1\}, \quad N \in \N, \, N \ge 1 \]
while $\symg{I_N}$ denotes the set of permutations of $I_N$
i.e.~bijective maps $\sigma: I_N \to I_N$.
We will consider the partial order on $\N$ given by
 \begin{equation*}
   \text{$m\preccurlyeq
 n\quad\Leftrightarrow\quad m\mid n$}
\end{equation*}
where $m\mid n$ means that $n/m\in \N$. We write $m \prec n$ if $m
\preccurlyeq n$ and $m \ne n$.

This first result shows a correspondence between permutations and measure-preserving isomorphisms.
\begin{lemma}\label{cor:isomor}
Let $(\Omega, \cB)$ be a standard Borel space endowed with a nonatomic probability measure $\P$,
and let $\mathfrak P_N=\{\Omega_{N,k}\}_{k \in I_N} \subset \cB$ be a \emph{$N$-partition} of $(\Omega, \cB)$ for some $N \in \N$, i.e.
\[\bigcup_{k\in I_N}\Omega_{N,k}=\Omega, \quad \Omega_{N,k}\cap \Omega_{N,h}=\emptyset \text{ if }h,k\in I_N,\, h\neq k;\]
assume moreover that $\P(\Omega_{N,k})= \P(\Omega)/N$ for every $k \in I_N$. If $\sigma \in \symg{I_N}$, there exists a measure-preserving isomorphism $g \in \rmS(\Omega, \cB, \P; \sigma(\mathfrak P_N))$ such that 
\[ (g_k)_{\sharp}\P|_{\Omega_{N,k}} = \P|_{\Omega_{N,\sigma(k)}} \quad \forall k \in I_N, \]
where $g_k$ is the restriction of $g$ to $\Omega_{N,k}$.
\end{lemma}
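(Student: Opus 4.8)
The plan is to build $g$ one atom at a time. For each $k\in I_N$ the set $\Omega_{N,k}$, endowed with the trace $\sigma$-algebra $\cB_k:=\cB\cap\Omega_{N,k}$ and the renormalised measure $\P_k:=N\,\P|_{\Omega_{N,k}}$, is a standard Borel space carrying a nonatomic probability measure: a Borel subset of a standard Borel space is itself standard Borel, and the restriction of a nonatomic measure to a set of positive mass stays nonatomic. First I would invoke the classical measure-isomorphism theorem for nonatomic standard Borel probability spaces: any two such spaces are isomorphic mod $\P$-null. Applied to the pair $\big(\Omega_{N,k},\P_k\big)$ and $\big(\Omega_{N,\sigma(k)},\P_{\sigma(k)}\big)$ (which have the same mass, since $\P(\Omega_{N,k})=\P(\Omega)/N$ for every $k$), it yields conull Borel sets $\Omega_{N,k}^0\subseteq\Omega_{N,k}$ and a Borel isomorphism $h_k:\Omega_{N,k}^0\to h_k(\Omega_{N,k}^0)\subseteq\Omega_{N,\sigma(k)}$ onto a conull Borel subset, with $(h_k)_\sharp\big(\P|_{\Omega_{N,k}}\big)=\P|_{\Omega_{N,\sigma(k)}}$ (the normalising constants cancel).

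Then I would glue the $h_k$ and fill in the leftover null sets carefully. Set $g|_{\Omega_{N,k}^0}:=h_k$ for each $k$, and on the $\P$-null set $\Omega_{N,k}\setminus\Omega_{N,k}^0$ let $g$ be a constant equal to some fixed point $\omega_k^\ast\in\Omega_{N,\sigma(k)}$; this forces $g(\Omega_{N,k})\subseteq\Omega_{N,\sigma(k)}$ \emph{exactly}, for every $k$. Since the $\Omega_{N,k}$ are finitely many Borel sets partitioning $\Omega$ and $g$ is Borel on each of them, $g:\Omega\to\Omega$ is $\cB$-$\cB$-measurable. Moreover $g$ restricts to a bijection of the conull set $\Omega_0:=\bigcup_{k\in I_N}\Omega_{N,k}^0$ onto $\bigcup_{k}h_k(\Omega_{N,k}^0)$, which is conull because as $k$ ranges over $I_N$ so does $\sigma(k)$; indeed $g$ is injective on each $\Omega_{N,k}^0$ and sends distinct atoms into disjoint targets. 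Hence $g$ is essentially injective.

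Then I would verify the remaining two properties. For $g_\sharp\P=\P$: for a Borel set $A\subseteq\Omega$, since $g$ maps $\Omega_{N,k}^0$ into $\Omega_{N,\sigma(k)}$ we have $\P\big(g^{-1}(A)\cap\Omega_{N,k}^0\big)=\P|_{\Omega_{N,k}}\big(h_k^{-1}(A)\big)=\P|_{\Omega_{N,\sigma(k)}}(A)=\P\big(A\cap\Omega_{N,\sigma(k)}\big)$; summing over $k\in I_N$ (the sets $\Omega_{N,k}\setminus\Omega_{N,k}^0$ being $\P$-null) and using that $k\mapsto\sigma(k)$ is a bijection of $I_N$ gives $\P(g^{-1}(A))=\sum_j\P(A\cap\Omega_{N,j})=\P(A)$. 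For $\sigma(\mathfrak P_N)$-$\sigma(\mathfrak P_N)$-measurability: $\sigma(\mathfrak P_N)$ is generated by the atoms $\Omega_{N,j}$, and since $g$ maps each $\Omega_{N,k}$ exactly into $\Omega_{N,\sigma(k)}$ and these atoms partition $\Omega$, we get $g^{-1}(\Omega_{N,j})=\Omega_{N,\sigma^{-1}(j)}\in\sigma(\mathfrak P_N)$. Therefore $g\in\rmS(\Omega,\cB,\P;\sigma(\mathfrak P_N))$. Finally, for the restriction $g_k:=g|_{\Omega_{N,k}}$, the identity $(g_k)_\sharp\big(\P|_{\Omega_{N,k}}\big)=\P|_{\Omega_{N,\sigma(k)}}$ is exactly the push-forward property of $h_k$ recorded in the first step, the null-set modification not affecting push-forwards.

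The only genuinely delicate point is the bookkeeping with the various null sets: one must define $g$ so that each atom $\Omega_{N,k}$ is mapped \emph{exactly}, not merely up to a $\P$-null set, into $\Omega_{N,\sigma(k)}$, because the required $\sigma(\mathfrak P_N)$-measurability is an exact statement rather than an almost-everywhere one; this is precisely why the leftover sets $\Omega_{N,k}\setminus\Omega_{N,k}^0$ must be pushed inside $\Omega_{N,\sigma(k)}$ and not, say, fixed by the identity. Everything else is a routine application of standard Borel and measure-isomorphism theory.
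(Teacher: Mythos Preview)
Your proof is correct. The paper itself does not prove this lemma: it is stated in Appendix~B with the proof deferred to the companion paper \cite{CSS2piccolo}, so there is no in-paper argument to compare against. Your approach --- applying the classical measure-isomorphism theorem for nonatomic standard Borel probability spaces to each atom $\Omega_{N,k}$ separately and then gluing --- is the natural and standard construction, and your careful treatment of the leftover null sets (sending $\Omega_{N,k}\setminus\Omega_{N,k}^0$ into $\Omega_{N,\sigma(k)}$ rather than fixing it) correctly addresses the one subtle point, namely that the $\sigma(\mathfrak P_N)$--$\sigma(\mathfrak P_N)$ measurability must hold exactly and not merely almost everywhere.
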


We introduce now the notion of \emph{refined} standard Borel measure space which turns out to be useful when dealing with approximation of general measures with discrete ones.

\begin{definition}\label{def:segm}Let $(\Omega, \cB)$ be a standard Borel space endowed with a nonatomic probability measure $\P$, and let $\cN \subset \N$ be an unbounded directed set w.r.t.~$\preccurlyeq$. We say that a collection of partitions $(\mathfrak P_N)_{N \in \cN}$ of $\Omega$, with corresponding sigma algebras $\cB_N:= \sigma(\mathfrak P_N)$, is a \emph{$\cN$-segmentation of $(\Omega, \cB, \P)$} if 
\begin{enumerate}
\item $\mathfrak P_N= \{\Omega_{N,k}\}_{k \in I_N}$ is a $N$-partition of $(\Omega, \cB)$ for every $N \in \cN$,
\item $\P(\Omega_{N,k})=\P(\Omega)/N$ for every $k\in I_N$ and every $N \in \cN$,
\item if $M\mid N$ and $K:=N/M$ then
  $\bigcup_{k=0}^{K-1}\Omega_{N,mK+k}=\Omega_{M,m}$, $m\in
  I_{M}$,
\item $\sigma \left ( \left \{ \cB_N \mid N \in \cN \right \} \right) = \cB$.
\end{enumerate} 
In this case we call $(\Omega, \cB, \P , (\mathfrak P_N)_{N \in \cN})$ a \emph{$\cN$-refined standard Borel probability  space}.
\end{definition}

\begin{proposition}\label{prop:strook} For any standard Borel space $(\Omega, \cB)$ endowed with a nonatomic probability measure $\P$ and any unbounded directed set $\cN \subset \N$ w.r.t.~$\preccurlyeq$, there exists a $\cN$-segmentation of $(\Omega, \cB,\P)$. If $\cN \subset \N$ is an unbounded directed subset w.r.t.~$\preccurlyeq$, then there exists a totally ordered diverging sequence $(b_n)_{n\in\N} \subset \cN$ satisfying
\begin{itemize}
    \item  $b_{n} \prec b_{n+1}$ for every $n \in \N$,
    \item for every $N\in \N$ there exists $n\in \N$ such that $N\mid b_n.$
\end{itemize}
In particular,
for every $\cN$-refined standard Borel measure space $(\Omega, \cB, \mm, (\mathfrak P_N)_{N \in \cN})$ it holds that
$(\cB_{b_n})_{n \in \N}$ is a filtration on $(\Omega, \cB)$,
\begin{equation}\label{eq:goodseq}
\text{for every $N \in \cN$ there exists $n \in \N$ such that } \cB_N \subset \cB_{b_n},
\end{equation}
and 
$\sigma \left ( \left \{ \cB_{b_n} \mid n \in \N \right \} \right ) = \cB$.

For every every separable Hilbert space $\X$, we 
thus have that 
\begin{equation}
    \bigcup_{N \in \cN} L^{2}(\Omega, \cB_N, \mm; \X) \text{ is dense in } L^{2}(\Omega, \cB, \mm; \X).
\end{equation}
\end{proposition}

The next theorem contains approximation results for couplings by means of maps in different situations.

\begin{theorem}\label{thm:gpfinal} Let $(\Omega, \cB, \P, (\mathfrak P_N)_{N \in \cN})$ be a $\cN$-refined standard Borel probability space. Then:
\begin{enumerate}
    \item For every $\ggamma \in \Gamma(\P, \P)$ there exist a totally ordered strictly increasing sequence $(N_n)_{n\in\N} \subset \cN$ and maps $g_n \in \rmS(\Omega, \cB, \P; \cB_{N_n})$ such that, for every separable Hilbert space $\X$ and every $X,Y \in L^2(\Omega, \cB, \P; \X)$ it holds
\begin{equation}\label{eq:gangboconv}
  (X,Y)_\sharp (\ii_{\Omega}, g_n)_\sharp \P \to
 (X\otimes Y)_\sharp \ggamma
   \text{ in } \prob_2(\X^2).
\end{equation}
\item If $\X$ is a separable Hilbert space and $X,X'\in L^2(\Omega, \cB, \P; \X)$, then for every $\bm\mu\in \Gamma(X_\sharp \P,X'_\sharp \P)$ there exist a totally ordered strictly increasing sequence $(N_n)_n{n\in\N}\subset \cN$ and maps $g_n \in \rmS(\Omega, \cB, \P; \cB_{N_n})$ such that
\begin{equation}\label{eq:gangboconv2}
(X,X'\circ g_n)_\sharp
\P \to \bm \mu \text{ in } \prob_2(\X^2).
\end{equation} 
In particular, if $X_\sharp \P = X'_\sharp \P$, there exist a totally ordered strictly increasing sequence $(N_n)_{n\in\N} \subset \cN$ and maps $g_n \in \rmS(\Omega, \cB, \P; \cB_{N_n})$ such that
$X'\circ g_n\to X$ in $L^2(\Omega, \cB, \P; \X)$ as $n\to+\infty$.
\end{enumerate}
Finally, if $(\Omega, \cB)$ is a standard Borel space endowed with a nonatomic probability measure $\P$, $\X$ is a separable Hilbert space, $\mu, \nu \in \prob_2(\X)$ and $X \in L^2(\Omega, \cB,\P; \X)$ is s.t.~$X_\sharp \P = \mu$, then, for every $\eps>0$, there exists $Y \in L^2(\Omega, \cB,\P; \X)$ s.t. $Y_\sharp \P = \nu$ and
\[ |X-Y|_{L^2(\Omega, \cB,\P; \X)} \le W_2(\mu, \nu) + \eps.\]
\end{theorem}

Before stating the next result, we fix a $\cN$-refined standard Borel probability space $(\Omega, \cB, \P, (\mathfrak P_N)_{N \in \cN})$ and we set
\[
\cH_N:= L^2(\Omega, \cB_N, \P; \X), \quad N \in \cN, \quad \cH_\infty:= \bigcup_{N \in \cN} \cH_N. \]
We show that a sufficient condition for a a set $\mathcal A\subset
\cH_\infty$  to be law invariant according to Definition \ref{def:inv} is that its sections $\mathcal A\cap \cH_N$
are invariant by the action of $\Sym{I_N}$, meaning that, for every $N \in \cN$ and $g \in \rmS(\Omega, \cB, \P ; \mathcal{B}_N)$, it holds
\[
X \in \mathcal A\cap \cH_N \Rightarrow X \circ g \in \mathcal A\cap \cH_N.
\]
 
\begin{lemma}
  \label{le:general-invariance}
  Let $\mathcal A\subset \cH_\infty$ be a set such that 
  $\mathcal A_N:=\mathcal A\cap \cH_N$ are invariant
  w.r.t.~$\Sym{I_N}$ for every $N\in \cN$. Then 
  $\overline{\mathcal A}$
  is law invariant.
\end{lemma}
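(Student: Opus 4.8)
The plan is to prove law invariance of $\overline{\mathcal A}$ directly: since $\overline{\mathcal A}$ is closed by construction, it suffices to show that if $Z\in\overline{\mathcal A}$ and $Z'\in\cH$ satisfies $\iota_{Z'}=\iota_Z$, then $Z'\in\overline{\mathcal A}$. The crucial ingredient is the following \emph{saturation property}, which is where the hypothesis on $\mathcal A$ is used: for every $N\in\cN$, every $Z\in\mathcal A_N$ and every $W\in\cH$ with $\iota_W=\iota_Z$ one has $W\in\overline{\mathcal A}$. Granting this, law invariance follows by combining it with the continuity of liftings.

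To prove the saturation property, fix $Z\in\mathcal A_N$ and $W\in\cH$ with $\iota_W=\iota_Z$. Whenever $M\in\cN$ with $N\mid M$ we have $Z\in\cH_M$, hence $Z\in\mathcal A\cap\cH_M=\mathcal A_M$, so that $\sigma Z\in\mathcal A_M\subset\mathcal A$ for every $\sigma\in\Sym{I_M}$. Let $y_1,\dots,y_r$ be the distinct values of $Z$ and let $m_i$ be the number of atoms $\Omega_{N,n}$, $n\in I_N$, on which $Z$ takes the value $y_i$, so $\iota_Z=\sum_{i=1}^r\frac{m_i}{N}\delta_{y_i}$ with $\sum_i m_i=N$; then $\iota_W=\iota_Z$ forces $W$ to take values in $\{y_1,\dots,y_r\}$ with $\P(W^{-1}(y_i))=m_i/N$. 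Since $\bigcup_{M\in\cN}\cB_M$ generates $\cB$ up to $\P$-negligible sets (Definition \ref{def:segm}, or via the density of $\cH_\infty$ in $\cH$ from Proposition \ref{prop:strook}), for $M\in\cN$ with $N\mid M$ we can find $\cB_M$-measurable sets $C^M_1,\dots,C^M_r$ that partition $\Omega$, each a union of atoms of $\mathfrak P_M$ with exactly $m_iM/N$ atoms, and with $\P\big(W^{-1}(y_i)\triangle C^M_i\big)\to0$ along the cofinal set $\{M\in\cN:N\mid M\}$ (which is unbounded in $\N$) — the exact cardinalities are achieved after redistributing an asymptotically vanishing fraction of atoms. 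Then $W^M:=\sum_{i=1}^r y_i\,\mathbf 1_{C^M_i}\in\cH_M$ satisfies $\iota_{W^M}=\sum_i\frac{m_iM/N}{M}\delta_{y_i}=\iota_Z$, so $W^M=\sigma_M Z$ for some $\sigma_M\in\Sym{I_M}$ (two elements of $\cH_M$ with the same law differ by a permutation in $\Sym{I_M}$), whence $W^M\in\mathcal A_M\subset\mathcal A$. Since $W$ takes finitely many values and $W^M-W$ is supported in $\bigcup_i\big(W^{-1}(y_i)\triangle C^M_i\big)$, we get $|W^M-W|_\cH\to0$, and therefore $W\in\overline{\mathcal A}$.

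To conclude, let $Z\in\overline{\mathcal A}$ and $Z'\in\cH$ with $\iota_{Z'}=\iota_Z$, and pick $Y_k\in\mathcal A$ with $Y_k\to Z$ in $\cH$; for each $k$ choose $N_k\in\cN$ with $Y_k\in\mathcal A_{N_k}$. From $W_2(\iota_{Y_k},\iota_{Z'})=W_2(\iota_{Y_k},\iota_Z)\le|Y_k-Z|_\cH\to0$ we get $\iota_{Y_k}\to\iota_{Z'}$ in $\prob_2(\X)$, so the lifting statement of Theorem \ref{thm:gpfinal}, applied to the parametrization $Z'$ of $\iota_{Z'}$, yields $W_k\in\cH$ with $\iota_{W_k}=\iota_{Y_k}$ and $W_k\to Z'$ in $\cH$. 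The saturation property with $Z:=Y_k$, $N:=N_k$, $W:=W_k$ gives $W_k\in\overline{\mathcal A}$, and since $\overline{\mathcal A}$ is closed we conclude $Z'=\lim_k W_k\in\overline{\mathcal A}$. Hence $\overline{\mathcal A}$ is law invariant.

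The conceptual point — and the place where all the hypotheses enter — is the saturation property: the idea is that an arbitrary parametrization of the discrete law $\iota_Z$ can be reached, in the $\cH$-norm, by permutations of $Z$ inside the finer spaces $\cH_M$, which belong to $\mathcal A$ by the assumed $\Sym{I_M}$-invariance. The genuinely fiddly (but routine) part is the simultaneous combinatorial approximation of the level sets $W^{-1}(y_i)$ by disjoint unions of $\mathfrak P_M$-atoms of prescribed cardinality; once that is in place, the rest is just the isometry of right composition, the density of $\cH_\infty$, and the continuity of liftings from Theorem \ref{thm:gpfinal}.
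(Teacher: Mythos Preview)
Your proof is correct and takes a genuinely different route from the paper's. The paper first invokes Lemma~\ref{le:noncera} to reduce law invariance of the closed set $\overline{\mathcal A}$ to invariance by measure-preserving isomorphisms $g\in\rmS(\Omega)$, and then approximates a given $g$ by discrete isomorphisms $g'_n\in\rmS_{N_n}(\Omega)$ via the coupling-approximation part of Theorem~\ref{thm:gpfinal}(1); the permutation invariance of $\mathcal A_{N_n}$ then gives $X_n\circ g'_n\in\mathcal A$ and the rest is a strong--weak convergence argument. You instead prove law invariance directly, with the key step being your ``saturation property'': any $W$ with the same (necessarily discrete) law as some $Z\in\mathcal A_N$ lies in $\overline{\mathcal A}$, proved by a hands-on combinatorial approximation of the level sets $W^{-1}(y_i)$ by unions of $\mathfrak P_M$-atoms of the exact cardinality, which yields $W^M=\sigma_M Z\in\mathcal A_M$. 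The remainder uses only the elementary lifting statement at the end of Theorem~\ref{thm:gpfinal} rather than its deeper part~(1), and never calls Lemma~\ref{le:noncera}. So your argument is more self-contained and uses lighter external machinery, at the price of the ad~hoc (but, as you note, routine) partition-adjustment step; the paper's argument is cleaner once those tools are on the shelf.
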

\begin{remark}
  \label{rem:trivial}
  The same statement applies to subsets of $\cH_\infty\times \cH_\infty$.
\end{remark}
\begin{proof}
  Since $\overline{\mathcal A}$ is a closed set, by Lemma \ref{le:noncera},
  it is sufficient to prove that
  it is invariant by measure-preserving isomorphisms:
  for every $X\in \overline{\mathcal
  A}$ and $g\in \mathrm  \rmS(\Omega, \cB, \P) $ we want to show that $X\circ g\in \overline{\mathcal A}$.  
  It is enough to prove that there exist
  $(Z_n)_{n\in\N}\subset \mathcal A$ s.t. $Z_n \to X \circ g$. Let $(X_n)_{n\in\N}$ be a sequence in $\mathcal A$ such that $X_n \to X$; since $\mathcal A \subset \cH_\infty$, for every $n \in \N$, there exists some $N_n \in \cN$ such that $X_n \in \mathcal A_{N_n}$. Let $(b_k)_{k\in\N} \subset \cN$ be the sequence given by Proposition \ref{prop:strook}; by  Theorem \ref{thm:gpfinal}(1) applied to $(\Omega, \cB, \P, (\mathfrak P_{b_k})_{k \in \N})$ and $\ggamma:=(\ii_{\Omega}, g)_\sharp \P$, we can find a strictly increasing sequence $(M_j )_{j\in\N}\subset \N$ and maps $g_j \in  \rmS(\Omega, \cB, \P ; \mathcal{B}_{b_{M_j}})$ such that 
  \[ (U,W)_\sharp (\ii_{\Omega}, g_j)_\sharp \P \to (U,W)_\sharp (\ii_{\Omega}, g)_\sharp \P \, \text{ in } \prob_2(\X^2)\]
 for every $U,W \in \cH$. Since $(M_j )_{j\in\N}$ is strictly increasing and \eqref{eq:goodseq} holds, then we can find a strictly increasing sequence $(j(n))_{n\in\N}$ such that $g_{j(n)} \in  \rmS(\Omega, \cB, \P ; \mathcal{B}_{N_n})  $. Thus setting $g'_n := g_{j(n)}$, $n \in \N$, by the invariance of $\mathcal A_{N_n}$, we get that $Z_n:= X_n \circ g'_n \in \mathcal{A}_{N_n} \subset \mathcal A$ and of course we have
\begin{equation}\label{eq:theoneiwant} (U,W)_\sharp (\ii_{\Omega}, g'_n)_\sharp \P \to (U,W)_\sharp (\ii_{\Omega}, g)_\sharp \P  \,\text{ in } \prob_2(\X^2)
\end{equation}
 for every $U,W \in \cH$.
We are left with showing that 
\begin{equation}\label{eq:ord1}
Z_n \to X \circ g \text{ in }\cH.
\end{equation}
Since $|Z_n - X \circ g'_n|_{\cH}= |X_n-X|_{\cH}$, in order to get \eqref{eq:ord1}
it is enough to show that $X\circ  g'_n \to X \circ g$ which, on the
other hand, is implied by $X\circ  g'_n \weakto X \circ g$, since $|X\circ  g'_n |_{\cH} = |X|_{\cH} = |X \circ g|_{\cH}$.
Let $Y \in \cH$ and let us take $U=Y, W=X$ in \eqref{eq:theoneiwant} so that 
\[ \scalprod{X\circ  g'_n}{Y}_{\cH}= \int_{\X^2}\scalprod{x}{y} \de ((Y,X) \circ (\ii_\Omega, g'_n))_{\sharp} \P \to \int_{\X^2}\scalprod{x}{y} \de ((Y,X) \circ (\ii_\Omega, g))_{\sharp} \P = \scalprod{X \circ g}{Y}_{\cH}, 
\]
since $\varphi(x,y) := \scalprod{x}{y}$ is a real valued function on
$\X^2$ with less than quadratic growth (see e.g.~\cite[Proposition
7.1.5, Lemma 5.1.7]{ags}). This shows that $X\circ  g'_n \weakto X
\circ g$ as desired, thus \eqref{eq:ord1} and so $X \circ g\in \overline{\mathcal A}$.
\end{proof}

\end{document}